\newtheorem{theorem}{Theorem}[section]
\newtheorem{prop}[theorem]{Proposition}
\newtheorem{lemma}[theorem]{Lemma}
\newtheorem{cor}[theorem]{Corollary}
\newtheorem{fact}[theorem]{Fact}
\newtheorem{defi}[theorem]{Definition}
\newtheorem{main}[theorem]{Main Result}
\newtheorem{rem}[theorem]{Remark}
\newenvironment{proof}{\par\noindent\textbf{Proof}\hspace{1em}}{\qed}
\newenvironment{proof*}{\par\noindent\textbf{Proof}\hspace{1em}}{}
\def\<{\langle}
\def\>{\rangle}
\newcommand{\PAG}{\mathbb{P}}
\newcommand{\A}{\mathbb{A}}
\newcommand{\K}{\mathbb{K}}
\renewcommand{\L}{\mathbb{L}}
\newcommand{\cP}{\mathcal{P}}
\newcommand{\cL}{\mathcal{L}}
\newcommand{\CD}{\mathsf{CD}}
\renewcommand{\O}{\mathbb{O}}
\newcommand{\bH}{\mathbb{H}}
\newcommand{\Res}{\mathrm{Res}}
\newcommand{\kar}{\mathrm{char}\,}
\newcommand{\B}{\mathbb{B}}
\newcommand{\bO}{\mathbb{O}}
\newcommand{\bS}{\mathbb{S}}
\def\qed{{\hfill\hphantom{.}\nobreak\hfill$\Box$}}
  \newlength\longest
\begin{document}

  \author{Anneleen De Schepper\thanks{Supported by the Fund for Scientific Research - Flanders (FWO - Vlaanderen)}}
\title{A geometric characterisation of subvarieties of the standard $\mathsf{E}_6$-variety related to the ternions, degenerate split quaternions and sextonions over arbitrary fields}
\date{\footnotesize  Department of Mathematics: Algebra and Geometry\\
Ghent University, Krijgslaan 281-S25, B-9000 Ghent\\
 \texttt{Anneleen.DeSchepper@UGent.be}\\ } 
 \maketitle

\begin{abstract}
The main achievement of this paper is a geometric characterisation of certain subvarieties of the  Cartan variety (the standard projective variety associated to the split exceptional group of Lie type $\mathsf{E}_6$) over an arbitrary field $\K$. The characterised varieties arise as Veronese representations of certain ring projective planes over quadratic subalgebras of the split octonions $\mathbb{O}'$ over $\K$ (among which the sextonions, a 6-dimensional non-associative algebra). We describe how these varieties are linked to the Freudenthal-Tits magic square, and discuss how they would even fit in, when also allowing the sextonions and other ``degenerate composition algebras'' as the algebras used to construct the square.
\end{abstract}

{\footnotesize
\emph{Keywords:}  Veronese varieties, ring geometries, composition algebras, Freudenthal-Tits magic square, E6\\
\emph{MSC 2010 classification:} 14N05, 51A45, 51B25,51C05, 51E24
}


\section{Introduction}
The characterisation which forms the core of this paper could be carried out without knowing the existence of the Freudenthal-Tits magic square (FTMS). However, the latter carries both the idea and motivation for it, in the sense that the characteristic behaviour of the varieties of the FTMS (in particular, its second row) hints at the existence of similarly behaving varieties across the borders of the square (leaving the non-degenerate world). This gives rise to an extended version of square; in particular of the split version of its second row, the varieties of which are exactly the ones we wish to study and characterise. Below, we explain this in more detail.

\subsection{Context: Characterisations related to the FTMS}
 The FTMS is a $4\times4$ array of, depending on the viewpoint, Lie algebras, Dynkin diagrams, buildings, projective varieties.  Our viewpoint will be geometric in the sense of Tits (\cite{Tit:55}), and over an arbitrary field $\K$. The square can be constructed (in various ways) using a pair of composition algebras $(\A_1,\A_2)$ over $\K$.  The algebra $\A_1$ indexes the rows and indicates the  the rank of the varieties in that row; the algebra $\A_2$ indexes the columns and encodes the algebraic structure over which the varieties in that column are defined. In the geometric version of the square that we consider, the algebra $\A_1$ is always split, whereas $\A_2$ can be either division or split, giving rise to two versions of the square (referred to as `non-split' and `split', respectively). Let us illustrate this by zooming in on the second row, which will be most relevant for this paper. 

\begin{compactenum}
\item[$-$] In the non-split version, this row contains projective planes over division composition algebras over $\K$; 
\item[$-$] In the split version, the three last entries of this row are the Segre variety $\mathcal{S}_{2,2}(\K)$ (Dynkin type $\mathsf{A}_2 \times \mathsf{A}_2$), the line Grassmannian variety $\mathcal{G}_{6,2}(\K)$ (Dynkin type $\mathsf{A}_5$) and the Cartan variety $\mathcal{E}_{6,1}(\K)$ (Dynkin type $\mathsf{E}_6$) (i.e., the projective version of the well known 27-dimensional module of the (split) exceptional  group of Lie type $\mathsf{E}_6$), respectively. Abstractly, these are ring projective planes over the split composition algebras over $\K$, and the mentioned varieties can be obtained by taking the Veronese representation of these planes (\cite{thirdrow}). The first entry, which coincides with the non-split case,  could thus be seen as the quadric Veronese variety $\mathcal{V}_2(\K)$ (Dynkin type $\mathsf{A}_2$), i.e., the Veronese representation of the projective plane over $\K$. 
\end{compactenum}

The work of J.\ Schillewaert H.\ Van Maldeghem (e.g., \cite{SVM, Kra-Sch-Mal:15})  recently culminated in a common characterisation of the Veronese representations of the varieties of the second row of the FTMS~\cite{conjecture}. These Veronese varieties are point sets in a projective space equipped with a family of quadrics of a certain kind, dependent on the composition algebra. Their characterisation was achieved by means of three simple axioms, and was accomplished among an infinite\footnote{This family is infinite since the only restriction on these non-degenerate quadrics is that subspaces they generate inside  $\mathbb{P}$ should all have the same, yet arbitrary, dimension $d+1$ with $d\in \mathbb{N}$.} family of objects consisting of points and arbitrary (non-degenerate) quadrics in projective space $\mathbb{P}$ over $\K$. The fact that such a general characterisation singles out exactly the varieties of the FTMS demonstrates the latter's special behaviour once more. It is especially remarkable that this can be done for the split and non-split version simultaneously.  The dichotomy of the composition algebras (division/split) translates geometrically in the fact that in the above-mentioned Veronese varieties of the FTMS, the quadrics are  either all  line-free (i.e., of minimal Witt index) or  all  hyperbolic (i.e., of maximal Witt index), respectively.

\subsection{Motivation: Characterisations across the borders of the FTMS}
Inspired by a low dimensional test case elaborated in \cite{Sch-Mal:14},  the author and H.\ Van Maldeghem extended  the above setting  to certain ``degenerate'' composition algebras $\mathbb{B}$ (\cite{ADSHVM}). These algebras $\mathbb{B}$ are setwise given by $\A \oplus t\A$, where $\A$ is an associative division composition algebra over $\K$ and $t$ an indeterminate with $t^2=0$, and satisfy the Cayley-Dickson multiplication formulas (for example, when $\A=\K$, this yields the dual numbers over $\K$). Equivalently, $\B$ is the result of applying the Cayley-Dickson process to $\A$ with $0$ as a primitive element; we will hence refer to $\B$ by $\mathsf{CD}(\A,0)$. Just like the composition algebras, $\mathsf{CD}(\A,0)$ is  quadratic and alternative (since $\A$ is associative), and its norm form $a+tb \mapsto \mathsf{N}(a)$ (where $\mathsf{N}$ is the norm form of $\A$) is multiplicative, though degenerate. When taken to the above setting, where the quadrics are determined by the norm form, this translates geometrically to projective varieties equipped with \emph{degenerate} quadrics  whose base is a line-free quadric. Using similar axioms as in  \cite{conjecture}, it was shown  in \cite{ADSHVM}  that point-sets equipped with such quadrics (a priori inside arbitrary dimensions)  arise from the Veronese representation of a  projective Hjelmslev plane defined over an algebra $\mathsf{CD}(\A,0)$ with $\A$ an associative division composition algebra. Moreover, the resulting varieties are related to the affine buildings of absolute type $\tilde{\mathsf{A}}_2$, $\tilde{\mathsf{A}}_5$ and $\tilde{\mathsf{E}}_6$ in the sense that they can be seen as the radius 2 sphere of a special vertex of such a building. The current paper investigates the following question:

\textit{What happens for the algebras setwise given by $\A \oplus t\A$, where $\A$ is an associative composition algebra over $\K$ which is \emph{not} division?}

A first essential difference with the former case is that we should not only consider the algebras $\mathsf{CD}(\A,0)$. Indeed, also the \emph{ternions} $\mathbb{T}$, a non-commutative 3-dimensional subalgebra of the split quaternions $\bH'$,  and \emph{sextonions} $\mathbb{S}$, a strictly alternative 6-dimensional subalgebra of the split octonions $\bO'$, can be written as $\L' \oplus t\L'$ (where $\L' = \K \times \K$) and $\bH'  \oplus t\bH'$, respectively (cf.\ Proposition~\ref{qadd}). The way we convey it, this gives rise to an additional layer for the FTMS. Denoting the division and split composition algebras of dimensions 2,4,8 over $\K$ by $\L$ and $\L'$, $\bH$ and $\bH'$ and $\bO$ and $\bO'$, respectively,  we see this second layer of the second row as ring projective planes over the following algebras (giving both the non-split and the split case):
\[\begin{array}{|c|c|c|c|c|c|c|} \hline
 & 1 	& 2 	& 3 	& 4 	& 6 	&8 \\ \hline
\text{non-split} &/	& \mathsf{CD}(\K,0)	&	/&\mathsf{CD}(\L,0)	 &/	& \mathsf{CD}(\bH,0)	 \\\hline
\text{split} &/	& \mathsf{CD}(\K,0)		& \text{ternions } \mathbb{T}	& \mathsf{CD}(\L',0)	 & \text{sextonions } \bS & \mathsf{CD}(\bH',0)	 \\\hline
\end{array}\]

 A second difference is that it turns out that the Veronese variety associated to the octonion algebra $\mathsf{CD}(\bH',0)$, where $\bH'$ are the split quaternions, behaves differently compared to the ones associated to the other algebras in that series. It does not fit in, not in any natural way. This manifests itself in some sense in the fact that, opposed to the Veronese varieties associated to the other algebras in that row, it cannot be seen as a subvariety of the Cartan variety $\mathcal{E}_{6,1}(\K)$. 

The link between the FTMS and the sextonions  $\mathbb{S}$ was already explored in \cite{Wes:06} by Westbury, who suggested to extend the FTMS (which he considers as a square  of complex semisimple Lie algebras) by adding a row/column between the third and the fourth one. Around the same time, also Landsberg and Manivel considered this intermediate Lie algebra between $\mathfrak{e}_7$ and $\mathfrak{e}_8$ in \cite{Lan-Man:06}. In Section 8 of that paper, they in particular study some  (algebraic) geometric properties of the sextonionic plane, i.e., a Veronese variety associated to $\mathbb{S}$.
Our approach on the other hand starts from the (incidence) geometric properties of this Veronese variety and its smaller siblings related to $\mathbb{T}$ and $\mathsf{CD}(\L',0)$ (the one related to $\mathsf{CD}(\K,0)$ is viewed as a part of the non-split case). By means of three simple axioms, we then characterise these varieties. We also provide two additional ways of viewing them: on the one hand, constructed from two (dual) representations of the non-degenerate varieties they are composed of (involving $\mathcal{S}_{2,2}(\K)$ and $\mathcal{G}_{6,2}(\K)$) (cf. Section~\ref{DLG}), and on the other hand, as subvarieties of the $26$-dimensional projective $\mathcal{E}_{6,1}(\K)$-variety, obtained by slicing it with certain subspaces of dimension $11, 14$ or $20$ (cf. Section~\ref{splitgeom}). This hence also gives us additional insight in the geometric structure of  $\mathcal{E}_{6,1}(\K)$.

\subsection{Main result: Characterisation of the Veronese varieties related to the ``new'' split second row of the FTMS}
 Stating Main Result~\ref{main} requires more notation and a slightly technical set-up,  so we refer to Section~\ref{mainsub} for that. For the purpose of this introduction, we prefer a simplified set-up, by which means we can explain a related characterisation, proved in \cite{SVM}. With this, we cannot only informally situate the current main result, but also  point out   similarities and differences compared to other results.

Consider a set of points $X$ in a projective space $\mathbb{P}^N(\K)$, with $N \in \mathbb{N}\cup \{\infty\}$, equipped with a family $\Xi$ of subspaces of $\mathbb{P}^N(\K)$ (of arbitrary yet fixed dimension $d+1< \infty$), $|\Xi|\geq 2$, such that, for each  $\xi\in\Xi$, the intersection $X(\xi):=X\cap \xi$ is a parabolic or hyperbolic quadric generating $\xi$ (i.e., the maximal isotropic subspaces on $X(\xi):=X \cap \xi$ have projective dimension $\lfloor \frac{d}{2} \rfloor$). Then the pair $(X,\Xi)$ is called a \emph{split Veronese set (of type $d$)} if the following axioms are satisfied:

\begin{compactenum}
\item[(SV1)] Each pair of distinct points $p_1,p_2 \in X$ is contained in a member of $\Xi$;
\item[(SV2)] If $\xi_1,\xi_2$ are distinct members of $\Xi$, then $\xi_1 \cap \xi_2 \subseteq X$
\item[(SV3)] for each point $x \in X$,  then there are $\xi_1,\xi_2\in \Xi$ containing $x$ such that the subspace $T_x$ generated by all $d$-spaces $T_x(X(\xi))$ is generated by $T_x(X(\xi_1))$ and $T_x(X(\xi_2))$. \end{compactenum}

Suppose that  $(X,\Xi)$ is a Veronese set of split type $d$ in $\mathbb{P}^N(\K)$. The result obtained in~\cite{SVM} says that $d \in \{1,2,4,8\}$ and $N \leq 3d+2$, and moreover, if $N =3d+2$, then  $(X,\Xi)$ is projectively unique and the resulting varieties are exactly the (Veronese) varieties of the split version of the second row of the FTMS, mentioned earlier. If $N < 3d+2$ then $(X,\Xi)$ is either $\mathcal{S}_{1,2}(\K)$ (a subvariety of $\mathcal{S}_{2,2}(\K)$) or $\mathcal{G}_{5,2}(\K)$ (a subvariety of $\mathcal{G}_{6,2}(\K)$).

A natural way to define  Veronese varieties related to $\mathbb{T}$, $\mathsf{CD}(\L',0)$, $\mathbb{S}$ and $\mathsf{CD}(\bH',0)$ is by giving an affine description (see Section~\ref{splitgeom}). A study of these reveals that, except for the variety associated to $\mathsf{CD}(\bH',0)$, they also come with a  set of points and quadrics  ``more or less'' satisfying axioms (SV1), (SV2) and (SV3) above. It was not obvious to see why this was ``more or less'', but it turns out that, this time, the structure is not homogenous: there are two types of points and two types of quadrics. Taking this into account when rephrasing axioms (SV1) up to (SV3), the resulting axioms are nothing but a natural extension of them (cf.\ Section~\ref{Defvv}).

It remains slightly mysterious why the variety related to $\mathsf{CD}(\bH',0)$ does not satisfy these axioms, not even ``more or less''. As hinted at above, it stands out from the other varieties in that it is not a subvariety of $\mathcal{E}_{6,1}(\K)$, which is explained by the fact that all other algebras under consideration are subalgebras of the split octonions $\bO'$, whereas $\mathsf{CD}(\bH',0)$  is not.   Axioms (SV1) and (SV2) (and also their natural extensions) imply that the convex closure of two points of $X$ should form a quadric (corresponding to a member of $\Xi$). However, for the variety related to  $\mathsf{CD}(\bH',0)$, the convex closure of two points is not a quadric anymore, it rather is a bunch of quadrics. One could argue that it is not a surprise that octonions come with different behaviour, though we did not anticipate this. Indeed, in the non-split case (treated in~\cite{ADSHVM}), the Veronese variety related to the octonion algebra $\mathsf{CD}(\bH,0)$ behaves as do the Veronese varieties related to $\mathsf{CD}(\L,0)$ and $\mathsf{CD}(\K,0)$ (with notation as above). 

If we denote by $X$ and $Z$ the two types of point sets, and by $\Xi$ and $\Theta$ the two types of subspaces intersecting $X \cup Z$ in certain quadrics, and call $(X,Z,\Xi,\Theta)$ a \emph{dual split Veronese set} if it satisfies the axioms extending (SV1), (SV2) and (SV3) as explained above, then informally the main result reads as follows (where we only exclude the field with two elements, see~Remark~\ref{f2}).

\textbf{Main Result--informal statement}~\textit{If $(X,Z,\Xi,\Theta)$ is a dual split Veronese set in $\mathbb{P}^N(\K)$, where $\K$ is an arbitrary field with $|\K|>2$, then, up to projectivity and up to projection from a subspace contained in each member of $\Xi\cup \Theta$,  either $\Theta$ is empty and then $(X,\Xi)$ is a split Veronese set, or $\Theta$ is non-empty and there are four possibilities,  all of which are subvarieties of $\mathcal{E}_{6,1}(\K)$;  three of them can be obtained as a Veronese variety associated to one of  $\mathbb{T}, \mathsf{CD}(\L',0), \bS$,  the fourth and smallest case is a subvariety of the Veronese variety associated to $\mathbb{T}$.}

\subsection{Structure of the paper}

In \textbf{Section~\ref{VV}} the ``degenerate composition algebras'' are formally introduced  and discussed to the extent that we will need them. Afterwards, the Veronese varieties associated to  $\mathbb{T}$, $\mathsf{CD}(\L',0)$, $\bS$ and $\mathsf{CD}(\bH',0)$ are defined and studied briefly. In Proposition~\ref{standardex}, we show that,  apart from the last one, they all satisfies properties that naturally generalise (SV1), (SV2) and (SV3) above.  The study of these varieties can be found in more detail in the author's Ph.D.\ thesis (\cite{thesis}), as now we focus on their description rather than on the calculations leading to them.

In \textbf{Section~\ref{sVV}}, the axiomatic set-up for dual split Veronese sets is given, as is a purely geometric description of certain families of varieties that we will encounter later on, each of them containing examples of dual split Veronese sets. This geometric description does not rely on an underlying algebraic structure, but is of course in accordance with the coordinate description given in Section~\ref{VV}. In \textbf{Section~\ref{mainsub}} we state the formal version of our main result. The remainder of the paper (\textbf{Sections~\ref{bp}~to~\ref{r>1}}) deals with its proof, the structure of which is explained in \textbf{Section~\ref{sotp}}.

\section{``Degenerate composition algebras" and associated Veronese varieties}\label{VV}

Henceforth, let $\K$ be an arbitrary field. Seeing that composition algebras and (non-degenerate) quadratic alternative algebras are equivalent notions, we will use the latter setting to incorporate the degenerate case. In the literature, one does not find too much on quadratic alternative algebras  with a possibly non-trivial radical \'and without restriction on the characteristic of $\K$. 
What follows is a combination of elements from~\cite{McC:04,McC'} (allowing characteristic 2, but restricting to the non-degenerate case) and~\cite{Kun-Sch:86} (where the possibilities for the radical are examined, excluding characteristic 2). 

\subsection{Quadratic alternative algebras and their radical}
Let $\A$ be a unital quadratic alternative  $\K$-algebra, i.e., the associator $[a,b,c]:=(ab)c-a(bc)$ yields a trilinear alternating map and each $a\in \A$ satisfies a quadratic equation $x^2-\mathsf{T}(a)x+\mathsf{N}(a)=0$, where the \emph{trace} $\mathsf{T}: \A \rightarrow \K: a \mapsto \mathsf{T}(a)$ is a linear map with $\mathsf{T}(1)=2$ and the \emph{norm} $\mathsf{N}: \A \rightarrow \K: a \mapsto \mathsf{N}(a)$ is a quadratic map with $\mathsf{N}(1)=1$.
The canonical involution associated to $\A$ is given by the map $\A \rightarrow \A: x \mapsto \overline{x}:=\mathsf{T}(x)-x$, which is indeed an involutive anti-automorphism ($\overline{xy}=\overline{y}\,\overline{x}$ for all $x,y\in \A$),  fixing $\K$. Note that $\mathsf{N}(a)=a\overline{a}$ for each $a\in \A$.

 The bilinear form $f$ associated to the quadratic form $\mathsf{N}$ is given by $f(x,y)=\mathsf{N}(x+y)-\mathsf{N}(x)-\mathsf{N}(y)=x\overline{y}+y\overline{x}$. Its \emph{radical} is the set  $\mathsf{rad}(f)=\{x \in \A \mid f(x,y)=0 \; \forall y \in \A\}$.  
 We call $\A$  \emph{non-degenerate} if its norm form $\mathsf{N}$ is non-degenerate, i.e., if $\mathsf{N}$ is anisotropic on $\mathsf{rad}(f)$, so if  $\{r \in \mathsf{rad}(f) \mid \mathsf{N}(r)=0\}$ is trivial. We call the latter set the \emph{radical} $R$ of $\A$. One could also describe $R$ as  the \emph{nil radical of $\A$}, which is the maximal ideal of $\A$ with the property that each of its elements is nilpotent. 
 
Our interest goes out to the quadratic alternative algebras $\A$ for which $R$ is, as a ring, generated by a single element. Since $R$ is a 2-sided ideal of $\A$ and $\A(\A r)=\A r=r\A=(r\A)\A$ for each $r\in R$ (even for each $r\in \mathsf{rad}(f)$), this is equivalent to requiring that $R$ is a principal ideal of $\A$.

The non-degenerate quadratic alternative $\K$-algebras $\A$ with $\dim_\K(\A)<\infty$ can all be produced using the Cayley-Dickson doubling process. Below, we give an extended version of this process, extended in the sense that it also produces \emph{degenerate} algebras, i.e., with a non-trivial radical~$R$.

\subsection{The (extended) Cayley-Dickson doubling process}\label{ecd}

Let $\zeta$ be any element in $\K$.
One application of the \textit{Cayley-Dickson doubling process} on the algebra $\A$ using $\zeta$ as a \textit{primitive element} results in a $\K$-algebra which setwise equals $\A \times \A$,  addition is defined componentwise  and  multiplication is given by \[(a,b)\times(c,d)=(ac+ \zeta d\overline{b}, \overline{a}d +cb).\] This resulting $\K$-algebra is denoted by $\mathsf{CD}(\A,\zeta)$ and  is quadratic too. Its associated involution and norm form are given by $\overline{(a,b)}=(\overline{a},-b)$ and   $\mathsf{N}(a,b)=(a,b)\cdot \overline{(a,b)}= (\mathsf{N}(a)-\zeta \mathsf{N}(b),0)$, respectively. The fact that we allow the primitive element $\zeta$ to be $0$ is the point at which the above process extends the standard one, and with this option, $N$ will be degenerate.

 We list some well-known features of  $\mathsf{CD}(\A,\zeta)$ in terms of $\A$ and $\zeta$:
\begin{fact}\label{struct}
The algebra $\mathsf{CD}(\A,\zeta)$ is 
\begin{compactenum}
\item[$(i)$] a division algebra $\Leftrightarrow$ $\zeta \notin \mathsf{N}(\A)$ and $\A$ is division;
\item[$(ii)$] non-degenerate $\Leftrightarrow$  $\zeta \neq 0$ and $\A$ is non-degenerate;
\item[$(iii)$] \emph{commutative} $\Leftrightarrow$ $\A$ is commutative and $\overline{a}=a$ for each $a \in \A$;
\item[$(iv)$] \emph{associative}  $\Leftrightarrow$  $\A$ is commutative and associative;
\item[$(v)$] \emph{alternative} $\Leftrightarrow$ $\A$ is associative.
\end{compactenum}
\end{fact}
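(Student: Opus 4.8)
The plan is to read all five equivalences off the explicit data already recorded for $\mathsf{CD}(\A,\zeta)$: the product $(a,b)(c,d)=(ac+\zeta d\overline b,\ \overline a d+cb)$, the involution $\overline{(a,b)}=(\overline a,-b)$ and norm $\mathsf{N}(a,b)=\mathsf{N}_{\A}(a)-\zeta\mathsf{N}_{\A}(b)$, the embedding $\A\hookrightarrow\mathsf{CD}(\A,\zeta)$, $a\mapsto(a,0)$, which respects the involution (and so makes $\A$ inherit any identity — associativity, alternativity — holding in $\mathsf{CD}(\A,\zeta)$), and the element $\varepsilon:=(0,1)$, for which $\varepsilon^{2}=\zeta\cdot 1$, $\varepsilon(b,0)=(0,b)$ and $(b,0)\varepsilon=(0,\overline b)$.

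For $(i)$ and $(ii)$ I would first reduce to the norm form. Since $\mathsf{CD}(\A,\zeta)$ is quadratic, $x\overline x=\overline x x=\mathsf{N}(x)\cdot 1$ and $x^{2}=\mathsf{T}(x)x-\mathsf{N}(x)\cdot 1$; hence a nonzero $x$ with $\mathsf{N}(x)\neq 0$ is invertible, while a nonzero $x$ with $\mathsf{N}(x)=0$ satisfies $x\big(x-\mathsf{T}(x)\cdot 1\big)=0$ and so is a zero divisor unless $x\in\K\cdot 1$, in which case $x=0$. Thus $\mathsf{CD}(\A,\zeta)$ is a division algebra exactly when its norm is anisotropic, and — by the definition recalled above — non-degenerate exactly when its norm is anisotropic on the radical of its polar form, which here is the orthogonal sum of $f_{\A}$ and $-\zeta f_{\A}$. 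One then argues from $\mathsf{N}(a,b)=\mathsf{N}_{\A}(a)-\zeta\mathsf{N}_{\A}(b)$. For $(i)$: a nonzero norm-zero $a\in\A$ gives the isotropic vector $(a,0)$; $\zeta=\mathsf{N}_{\A}(c)$ gives the isotropic vector $(c,1)$ (using multiplicativity of $\mathsf{N}_{\A}$ on the division algebra $\A$); and conversely, if $\A$ is division and $\zeta\notin\mathsf{N}_{\A}(\A)$, then $\mathsf{N}_{\A}(a)=\zeta\mathsf{N}_{\A}(b)$ forces $b=0$, hence $a=0$. For $(ii)$: $\zeta=0$ makes every $(0,b)$ an isotropic radical vector, and a degenerate $\A$ lifts through $\A\times\{0\}$; conversely, for $\zeta\neq 0$ the polar radical of $\mathsf{CD}(\A,\zeta)$ is $\mathsf{rad}(f_{\A})\times\mathsf{rad}(f_{\A})$, and the equation $\mathsf{N}_{\A}(r)=\zeta\mathsf{N}_{\A}(s)$ there, together with non-degeneracy of $\A$, is meant to give $r=s=0$. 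I expect $(ii)$ to be the point requiring the most care, because the radical has to be tracked throughout and its behaviour in characteristic $2$ (where inseparable phenomena occur) is more delicate.

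For $(iii)$ I would simply equate $(a,b)(c,d)$ with $(c,d)(a,b)$: first components with $b=d=0$ give $ac=ca$, and second components with $c=0$ give $\overline a d=ad$ for all $d$, hence $\overline a=a$; conversely, for commutative $\A$ with trivial involution both components of the two products agree for every $\zeta$. Parts $(iv)$ and $(v)$ are the classical structure theorems for the Cayley--Dickson doubling, and I would treat them as such. In each case $\A\times\{0\}$ being a subalgebra already yields the corresponding identity for $\A$; for $(iv)$ one still needs commutativity, which drops out of $[(a,0),\varepsilon,(c,0)]=(0,\ c\overline a-\overline a c)=(0,\ ac-ca)$, and the converse is a routine expansion of the associator $[(a,b),(c,d),(e,h)]$ when $\A$ is commutative associative. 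For $(v)$ the real content — and the step I expect to be the main obstacle — is that alternativity of $\mathsf{CD}(\A,\zeta)$, i.e.\ the associator being an alternating trilinear map, forces $[a,b,c]=0$ in $\A$: this is the classical computation underlying the whole Cayley--Dickson hierarchy, carried out by comparing the associators of suitably chosen triples of generators involving $\varepsilon$, while the converse is the equally classical verification that the double of an associative algebra is alternative. Finally, every computation in $(iii)$, $(iv)$ and $(v)$ goes through unchanged with $\zeta=0$, since $\zeta$ only ever enters as a bookkeeping parameter.
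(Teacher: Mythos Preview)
The paper does not prove this statement: it is listed as a \emph{Fact}, introduced by ``We list some well-known features of $\mathsf{CD}(\A,\zeta)$ in terms of $\A$ and $\zeta$'', and no argument is given. So there is no proof in the paper to compare your proposal against.

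Your sketch follows the standard textbook route (essentially what one finds in McCrimmon, which the paper cites) and is correct in outline. A few remarks. Your computation in $(iv)$ is fine: using $\overline a=\mathsf T(a)-a$ one indeed gets $c\overline a-\overline a c=ac-ca$, so the associator $[(a,0),\varepsilon,(c,0)]$ detects commutativity of $\A$. For $(v)$, ``the classical computation'' is an acceptable placeholder here since the paper itself takes the whole fact for granted. Your instinct about $(ii)$ is well placed: in characteristic~$2$ the polar form of $\mathsf N_{\A}$ may vanish identically (e.g.\ $\A=\K$), and then your radical calculation $\mathsf{rad}(f)=\mathsf{rad}(f_{\A})\times\mathsf{rad}(f_{\A})$ becomes $\K\times\K$, so the equation $\mathsf N_{\A}(r)=\zeta\,\mathsf N_{\A}(s)$ need not force $r=s=0$ merely from $\zeta\neq 0$ and non-degeneracy of $\A$ --- one also needs that $\zeta$ is not a square (equivalently, not a norm). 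The paper sidesteps this by replacing the first doubling step in characteristic~$2$ with $\K[x]/(x^{2}+x+\zeta)$, which has a nontrivial involution; once the involution on $\A$ is nontrivial, $1\notin\mathsf{rad}(f_{\A})$ and your argument for $(ii)$ goes through cleanly. So your caveat is exactly the right one, and it matches the paper's own handling of the characteristic~$2$ case.
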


When $\A=\K$, the induced involution in $\mathsf{CD}(\K,\zeta)$ is given by $(a,b) \mapsto (a,-b)$. This involution is non-trivial precisely if $\kar(\K)\neq 2$. Hence, Fact~\ref{struct}$(iii)$-$(v)$ implies that successive applications of the Cayley-Dickson doubling process on $\K$,  if $\kar(\K) \neq 2$, eventually lead to a strictly alternative (i.e., non-associative) algebra; and, if $\kar(\K) = 2$, only produce commutative associative algebras with a trivial involution. If $\kar(\K) = 2$, then the first step of the process is replaces by considering the quotient of $\K[x]$ by the ideal $(x^2+x+\zeta)$ for some $\zeta \in \K$, which has a non-trivial involution and after which we can again apply the usual process. 

\subsection{Non-degenerate split quadratic alternative algebras}\label{sqaa}

Let $\A$ be a non-degenerate quadratic alternative algebra. It is a well-known fact that its norm form $\mathsf{N}$ is either anisotropic on $\A$ or hyperbolic (i.e., has maximal Witt index). In the former case, $\A$ is a division algebra, since $x \in \A$ is invertible if and only if $\mathsf{N}(x) \neq 0$,

\begin{defi} If $\mathsf{N}$ is not anisotropic, then $\A$  is called \emph{split}. \end{defi}

Since the norm form of $\A$ completely determines $\A$ (two non-degenerate quadratic algebras are isomorphic if and only if their respective norm forms are equivalent quadratic forms), and since any two hyperbolic quadratic forms in the same (even) dimension are equivalent, we have that all non-degenerate split quadratic alternative algebras over $\K$ with the same dimension over $\K$ are isomorphic. This allows us to speak of  \emph{the}  non-degenerate split quadratic alternative algebras over $\K$, which we will refer to as $\K$, $\L'$, $\bH'$ and $\bO'$. They can be described as follows (independently of the characteristic),  (see for instance \cite{Kap:53}):

 \begin{fact}\label{qad} Let $\A$ be a non-degenerate split quadratic alternative algebra over a field $\K$. Then $\A$ is isomorphic to either $\K$, $\K \times \K$, the $2 \times 2$-matrices $\mathcal{M}_{2}(\K)$ over $\K$ or the split octonions $\mathsf{CD}(\mathcal{M}_{2}(\K),1)$. 
\end{fact}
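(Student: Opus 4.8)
The plan is to reduce the statement to two inputs that are already on the table. The first is the classical classification of composition algebras over an arbitrary field: a non-degenerate quadratic alternative $\K$-algebra has $\K$-dimension $1$, $2$, $4$ or $8$. The second is the uniqueness-per-dimension observation just recorded --- since the norm form determines the algebra and any two hyperbolic quadratic forms of a fixed even dimension are equivalent, in each dimension there is, up to isomorphism, at most one non-degenerate split quadratic alternative $\K$-algebra. Granting these, the proof reduces to producing one explicit representative of each admissible dimension. The case $\dim_\K\A=1$ forces $\A\cong\K$, whose norm is the anisotropic square form; strictly this algebra is a division algebra and appears in the list only for uniformity with the non-split picture, so from now on $\A$ is genuinely split, i.e.\ $\mathsf{N}$ is isotropic, and then $\dim_\K\A\in\{2,4,8\}$ (dimension $1$ cannot occur there, its norm being anisotropic).

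By the uniqueness statement it then suffices to exhibit one non-degenerate split example of each of the dimensions $2$, $4$ and $8$:
\begin{compactenum}
\item[$-$] Dimension $2$: $\A=\K\times\K$, with componentwise operations, trace $(a,b)\mapsto a+b$ and norm $(a,b)\mapsto ab$; this norm form is hyperbolic, so $\K\times\K$ is non-degenerate and split. It is obtained from $\K$ by one Cayley--Dickson step for a suitable primitive element (the modified first step $\K[x]/(x^2+x+\zeta)$ when $\kar(\K)=2$).
\item[$-$] Dimension $4$: $\A=\mathcal{M}_2(\K)$, the $2\times 2$ matrices, where trace and norm are the matrix trace and the determinant; the determinant is a hyperbolic quadratic form on the $4$-dimensional space $\mathcal{M}_2(\K)$, so $\mathcal{M}_2(\K)$ is non-degenerate, split and (even) associative. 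Equivalently $\mathcal{M}_2(\K)\cong\mathsf{CD}(\K\times\K,1)$: this doubling is associative by Fact~\ref{struct}$(iv)$ and non-division by Fact~\ref{struct}$(i)$ (because $\K\times\K$ is not division), hence split, and the split quaternion $\K$-algebra is isomorphic to $\mathcal{M}_2(\K)$.
\item[$-$] Dimension $8$: $\A=\mathsf{CD}(\mathcal{M}_2(\K),1)$, the split octonions; this is alternative by Fact~\ref{struct}$(v)$ (as $\mathcal{M}_2(\K)$ is associative), non-degenerate by Fact~\ref{struct}$(ii)$ (as $1\neq 0$ and $\mathcal{M}_2(\K)$ is non-degenerate), and non-division by Fact~\ref{struct}$(i)$ (as $\mathcal{M}_2(\K)$ is not division), hence split, of dimension $8$.
\end{compactenum}

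Combining the two halves: a non-degenerate split quadratic alternative $\K$-algebra has dimension $1$, $2$, $4$ or $8$, and by uniqueness it is isomorphic to the representative of that dimension exhibited above, namely one of $\K$, $\K\times\K$, $\mathcal{M}_2(\K)$, $\mathsf{CD}(\mathcal{M}_2(\K),1)$. Beyond bookkeeping, the substantive ingredients are the dimension bound --- which one may take from Hurwitz-type composition-algebra theory, or, staying inside the Cayley--Dickson framework of Section~\ref{ecd}, from the observation (via Fact~\ref{struct}$(iv)$--$(v)$) that a further doubling would require the previous algebra to be commutative and associative, which fails once one has reached the non-commutative split quaternions --- together with the classical identification of the split quaternion algebra with $\mathcal{M}_2(\K)$ (equivalently, the recognition of $\det$ as its norm form). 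The point that demands genuine care, and the one I expect to be the main obstacle, is characteristic $2$: there the polar form of the first-step norm $a^2-\zeta b^2$ of $\mathsf{CD}(\K,\zeta)$ vanishes identically, so $\mathsf{CD}(\K,\zeta)$ is never simultaneously non-degenerate and split; this is precisely why the first Cayley--Dickson step is replaced by the separable quadratic algebra $\K[x]/(x^2+x+\zeta)$, after which $\K\times\K$, $\mathcal{M}_2(\K)$ and $\mathsf{CD}(\mathcal{M}_2(\K),1)$ behave uniformly and the classification becomes characteristic-free, cf.\ \cite{Kap:53}.
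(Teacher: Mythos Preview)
The paper does not give its own proof of this statement: it records it as a \emph{Fact} with the reference \cite{Kap:53} and relies on the uniqueness-per-dimension argument stated just before it. Your proposal is the standard route and is correct as a sketch; it is exactly the reasoning the paper alludes to (dimension restriction $1,2,4,8$ from composition-algebra theory, uniqueness of the split model in each dimension via equivalence of hyperbolic forms, explicit representatives), so there is nothing to compare.
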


The split octonions, being non-associative, cannot be given by ordinary matrices and their ordinary multiplication. Zorn's vector-matrices however are a special way of writing the split octonions as $2\times 2$-matrices, the off-diagonal elements of which are vectors:

\[\bO' \cong \left\{\begin{pmatrix} a & \left[\begin{array}{ccc} b & x & y\end{array}\right] \\
\left[\begin{array}{c} c\\ z\\ u\end{array}\right] & d \end{pmatrix}:a,b,c,d,x,y,u,z\in\K\right\},\]

and the multiplication is given as follows (with the usual dot product and vector product):

\[\begin{pmatrix} a & \boldsymbol{v} \\ \boldsymbol{w} & d \end{pmatrix} \begin{pmatrix} a' & \boldsymbol{v'} \\ \boldsymbol{w'} & d' \end{pmatrix} = 
\begin{pmatrix} aa' +\boldsymbol{v}\cdot\boldsymbol{w'} & a\boldsymbol{v'}+d'\boldsymbol{v} + \boldsymbol{w} \times \boldsymbol{w'} \\ a'\boldsymbol{w}+d\boldsymbol{w'} - \boldsymbol{v}\times\boldsymbol{v'} & dd' + \boldsymbol{v'}\cdot\boldsymbol{w} \end{pmatrix},
 \]
and 
\[\begin{pmatrix} a & \boldsymbol{v} \\ \boldsymbol{w} & d \end{pmatrix} 
\overline{\begin{pmatrix} a' & \boldsymbol{v'} \\ \boldsymbol{w'} & d' \end{pmatrix}} = \begin{pmatrix} a & \boldsymbol{v} \\ \boldsymbol{w} & d \end{pmatrix} 
\begin{pmatrix} d & -\boldsymbol{v} \\ -\boldsymbol{w} & a\end{pmatrix} =  (ad-\boldsymbol{v}\cdot\boldsymbol{v}) \, \mathsf{I}_2.
 \]

For short, we denote the matrices in the above set by $M(a,b,c,d,x,y,z,u)$.

\subsection{Split quadratic alternative algebras with a level 1 degeneracy}

Let $\A$ be a degenerate quadratic alternative unital $\K$-algebra with a non-trivial radical~$R$. Then one can show that $\A$ contains a non-degenerate quadratic associative unital algebra $\B$ such that $\A=\B \oplus R$. For the proof of the following proposition we refer to Section 4.4.1 of~\cite{thesis}, adding that it is based methods occurring in \cite{McC'} to classify the composition algebras (non-degenerate by definition).

\begin{prop}\label{qadd}
Let $\A$ be a degenerate quadratic alternative $\K$-algebra whose radical $R$ is generated (as a ring) by a single element $t\in \A\setminus\{0\}$. Then $\A$ has a non-degenerate quadratic associative unital algebra $\B$ such that $\A=\B \oplus t\B$. Moreover, if $\B$ is split, then either $\A$ is isomorphic to $\CD(\B,0)$ where $\B \in \{\K, \L', \bH' \}$, or $\dim_\K(\A)\in\{3,6\}$. In the latter case, $\A$ is isomorphic to the following respective quotients of $\CD(\B,0)$:
\begin{compactenum}
\item[\emph{(a)}] the upper triangular $2 \times 2$-matrices over $\K$ (the \emph{ternions}  $\mathbb{T}$);
\item[\emph{(b)}] $\{M(a,b,c,d,0,y,z,0) \mid a,b,c,d,y,z \in \K\}$ (the \emph{sextonions} $\mathbb{S}$);
\end{compactenum}
If $\B$ is split and $\dim_\K(\A) < 8$, then $\A$ is isomorphic to a subalgebra of the split octonions~$\bO'$.
\end{prop}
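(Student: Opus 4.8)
I would split the statement into the decomposition $\A=\B\oplus R$ — which, like the excerpt, I would essentially quote — and the classification in the split case, where the real work lies.

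\textbf{The decomposition.} Since $\dim_\K\A<\infty$ and $R$ is the nil radical, a Wedderburn‑type principal theorem for finite‑dimensional alternative algebras yields a subalgebra $\B$ with $\A=\B\oplus R$ and $\B\cong\A/R$ semisimple; it inherits a quadratic structure ($\mathsf T,\mathsf N$ restrict and $1\in\B$), and $\mathsf N|_\B$ is non‑degenerate because $R$ already contains all the isotropic vectors of $\mathsf{rad}(f)$. The only delicate point is that $\B$ is \emph{associative}: a priori a semisimple non‑degenerate quadratic alternative algebra may carry octonion (non‑associative) factors, and these must be excluded using that $R$ is a \emph{principal} ideal — an octonion factor would generate an ideal inside $R$ that is too large to be cyclic. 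This is exactly what the excerpt defers to \cite{thesis}; I would reproduce it following \cite{McC'}. From here on $\B$ is non‑degenerate quadratic associative unital.

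\textbf{Pinning down $R$ over $\B$.} For $r\in R$ one has $\mathsf T(r)=f(r,1)=0$ and $\mathsf N(r)=0$, so the quadratic equation forces $r^2=0$. Combining this with the alternative laws and the identities $\A r=r\A=\A(\A r)$ for $r\in\mathsf{rad}(f)$ recalled above, a short associator computation gives $(at)(bt)=0$ for all $a,b\in\A$, hence $R^2=(\A t)(\A t)=0$; in particular $Rt\subseteq R^2=0$, so $R=\A t=\B t$ and likewise $R=t\B$, whence $\dim_\K R\le\dim_\K\B$. Applying the quadratic equation to $b+r$ with $b\in\B,\,r\in R$ and using $\mathsf T(b+r)=\mathsf T(b)$, $\mathsf N(b+r)=\mathsf N(b)$, $r^2=0$, the $R$‑component yields $br+rb=\mathsf T(b)\,r$, i.e.\ $rb=\overline b\,r$. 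With this, the left‑ and right‑multiplication operators of $\B$ on $R$ (linked by $r\mapsto rb$ and $r\mapsto\overline b\,r$) make $R$ a $\B$‑module, cyclic because $R=t\B$; inspecting the kernel of $\B\to R,\ b\mapsto tb$, one gets $R\cong\B/I$ for a one‑sided ideal $I$ of $\B$, and since the products $\B\times R\to R$ and $R\times\B\to R$ are now determined by this module together with $rb=\overline b\,r$, the algebra $\A$ is fixed up to isomorphism by the pair $(\B,I)$. Conversely each such pair occurs: $I=0$ is realised by $\CD(\B,0)$, whose radical $\{(0,b):b\in\B\}$ is the regular $\B$‑module by the Cayley–Dickson formulas, and a general $I$ by the quotient $\CD(\B,0)/\{(0,b):b\in I\}$, which makes sense precisely because $R^2=0$ (a left ideal of $\B$ is then a two‑sided ideal inside the radical of $\CD(\B,0)$, and $\mathsf T,\mathsf N$ descend).

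\textbf{The split case.} If $\B$ is split then, being associative and non‑degenerate quadratic, $\B\in\{\K,\L',\bH'\}$ by Fact~\ref{qad} ($\bO'$ being non‑associative is excluded). Now list the left ideals $I\ne\B$, so that $R\ne 0$: for $\K$ only $I=0$, giving $\A\cong\CD(\K,0)$ (dimension $2$); for $\L'\cong\K\times\K$ the ideals $0$ and $\K e_1,\K e_2$, giving $\A\cong\CD(\L',0)$ (dimension $4$) or, for a one‑dimensional $I$, $\A\cong\CD(\L',0)/\{(0,b):b\in\K e_i\}$ (dimension $3$), the two choices of $i$ being swapped by the conjugation automorphism of $\L'$ and hence yielding isomorphic algebras; for $\bH'\cong\cM_2(\K)$ the ideals $0$, the two‑dimensional column ideals, and $\cM_2(\K)$ itself, giving $\A\cong\CD(\bH',0)$ (dimension $8$) or, for a two‑dimensional $I$, $\A\cong\CD(\bH',0)/\{(0,b):b\in I\}$ (dimension $6$). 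One then checks by explicit coordinates that the dimension‑$3$ algebra is the upper‑triangular $2\times2$ matrices $\mathbb T$ and the dimension‑$6$ one is $\{M(a,b,c,d,0,y,z,0)\}\subseteq\bO'$; and for the last assertion one uses $\CD(\K,0)\subseteq\CD(\L',0)\subseteq\bS\subseteq\bO'$ (the middle inclusion because restricting the radical of $\bS$ to $\L'$ gives the regular $\L'$‑module) together with $\mathbb T\subseteq\bH'\subseteq\bO'$, so that every such $\A$ of dimension $<8$ embeds in $\bO'$.

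\textbf{Main obstacle.} The genuinely hard step is the associativity of $\B$ in the decomposition: ruling out non‑associative semisimple complements is exactly where principality of $R$ must be exploited, which is why the excerpt outsources it to \cite{thesis}. Everything afterwards is a finite case analysis over the three split associative algebras together with routine identifications with explicit matrix models.
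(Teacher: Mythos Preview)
The paper itself defers the entire proof to \cite{thesis}, so there is nothing to compare against directly; your overall strategy---decompose $\A=\B\oplus R$, show $R^2=0$ and $rb=\bar b\,r$, then exhibit $\A$ as a quotient of $\CD(\B,0)$ by a sub-ideal of its radical, and finally list the proper left ideals of $\K,\L',\bH'$---is the natural one and your split case analysis is fine.

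There is, however, a genuine gap in the middle step. You assert that left/right multiplication makes $R$ a $\B$-module, and you use this to conclude that ``$\A$ is fixed up to isomorphism by $(\B,I)$''. That module claim is \emph{false} as soon as $\B$ is non-commutative: in $\CD(\bH',0)$ with $t=(0,1)$ one computes $[b',b,t]=(0,[\bar b,\bar{b'}])$, so neither $(b'b)r=b'(br)$ nor $(rb)b'=r(bb')$ holds. The repair is to skip the module language and instead check directly that the $\K$-linear map $\Phi:\CD(\B,0)\to\A$, $(b,c)\mapsto b+tc$, is an \emph{algebra} homomorphism. This reduces to the two identities
\[
b(tc')=t(\bar b\,c')\qquad\text{and}\qquad (tc)b'=t(b'c),
\]
valid in any quadratic alternative algebra with $t$ in the radical: for the first, write $b(tc')-(bt)c'=-[b,t,c']$ and $t(\bar b c')-(t\bar b)c'=-[t,\bar b,c']$, use $bt=t\bar b$, and combine the alternating law $[b,t,c']=-[t,b,c']$ with $[\bar b,\cdot,\cdot]=-[b,\cdot,\cdot]$ (since $b+\bar b\in\K$ lies in the nucleus); the second then follows from the first via $rb'=\bar{b'}r$. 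Once $\Phi$ is a surjective homomorphism, its kernel is $\{(0,c):tc=0\}$, and applying the first identity with $tc'=0$ shows $I=\{c:tc=0\}$ is a left ideal of $\B$; the rest of your argument goes through unchanged. (For $\B=\bH'$ you should also note that $\mathrm{Inn}(\bH')\cong\PGL_2(\K)$ preserves the involution and acts transitively on the two-dimensional left ideals, so all dimension-$6$ quotients are isomorphic.)
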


\begin{rem}\em The fact that possibilities $(a)$ and $(b)$ occur is because $b \mapsto tb$ is a not necessarily injective linear map between $\B$ and $t\B$. Note that, if $tb=0$ for some  $b \in \B\setminus \{0\}$, then $\mathsf{N}(b) = 0$ (hence this does not occur for $b\neq 0$ when $\B$ is division). \end{rem}

 We will refer to the algebras $\A=\B\oplus t\B$ of the above proposition as \emph{``split quadratic alternative algebras with a level 1 degeneracy''}.

\subsection{Veronese varieties associated to $\mathbb{T}$, $\mathbb{H}'$, $\mathbb{S}$ and $\mathbb{O}'$}\label{splitgeom}

Let $\A$ be a split quadratic alternative $\K$-algebra with a level 1 degeneracy, i.e., an algebra as in Proposition~\ref{qadd}:  $\mathbb{T}$, $\CD(\L',0)$, $\mathbb{S}$ or $\CD(\bH',0)$. To each of those, we associate a plane Veronese variety, i.e., we consider the Veronese representation of an abstract ring projective plane over $\A$. The points and lines of the latter geometry are given by the triples in $\A$ such that there is a left resp.\ right $\A$-linear combination that gives 1.  However, since $\A$ contains many non-invertible elements, it is hard to list these triples. Instead, we start with an affine part of the abstract geometry and use the following \emph{partial Veronese map} $\rho$, with $d:=\dim_\K(\A)$.

\[\rho:\A\times\A \rightarrow \mathbb{P}^{3d+2}(\K):(B,C)\mapsto (1,B\overline{B},C\overline{C},BC, C,B)\]

\begin{rem} \em 
 Usually, the Veronese map takes $(y,z)$ to $(1,y\overline{y},z\overline{z},y\overline{z}, z,\overline{y})$, but we can change $z$ to $\overline{z}$, and then obtain $(1,y\overline{y},z\overline{z},yz,\overline{z},\overline{y})$, which linearly transforms into the above definition. 
\end{rem}

 If $|\K|>2$, a calculation shows that a line $L$ of $\mathbb{P}^{3d+2}(\K)$ containing three points of $\rho(\A\times \A)$ has all its points in $\rho(\A\times \A)$, except for the unique point on $L$ in the hyperplane $H_0$ given by the equation $X_0=0$. So, as a first step, we add the points $L\cap H_0$ for such lines $L$. Repeated steps of this process (in fact, one could show that two steps suffice) yield a point-set which is \emph{projectively closed}: each line of $\mathbb{P}^{3d+2}(\K)$ is either contained in it, or meets it in at most two points. If $|\K|=2$, one can also define this closure, but we do not do this effort as $\mathbb{F}_2$ is the only field we will not consider.

\begin{defi}\label{vv} \em For $|\K|>2$, we define the Veronese variety $\mathcal{V}_2(\K,\A)$ as the \emph{projective closure} of $\rho(\A \times \A)$.
\end{defi}


\begin{prop}[\cite{thirdrow}]\label{thegeom} The geometries $\mathcal{V}_2(\K,\L')$,  $\mathcal{V}_2(\K,\bH')$, $\mathcal{V}_2(\K,\bO')$ are isomorphic to the Segre variety $S_{2,2}(\K)$, the line Grassmannian $\mathcal{G}_{6,2}(\K)$, $\mathcal{E}_{6,1}(\K)$, respectively. \end{prop}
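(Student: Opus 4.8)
The plan is to treat the three split composition algebras $\A\in\{\L',\bH',\bO'\}$, of $\K$-dimensions $d=2,4,8$ respectively, in parallel. In each case I would first identify the abstract ring projective plane $\mathbb{P}^2(\A)$ with a classical geometry, and then show that the partial Veronese map $\rho$ (and its projective closure, Definition~\ref{vv}) realises the standard projective model of that geometry inside $\mathbb{P}^{3d+2}(\K)$. A preliminary step common to all three cases is to pin down the closure: the collinearity computation quoted just before Definition~\ref{vv} (valid for $|\K|>2$) determines exactly which point of the hyperplane $H_0$ is adjoined for each secant line, and one uses this to see that $\mathcal{V}_2(\K,\A)$ is precisely the image of the \emph{entire} plane $\mathbb{P}^2(\A)$ under the total Veronese map, the adjoined points being the images of the points lying outside the affine chart $\A\times\A$.

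For $\A=\L'=\K\times\K$: since $\L'$ is a product ring every direct summand of $\L'^3$ splits, so $\mathbb{P}^2(\L')\cong\mathbb{P}^2(\K)\times\mathbb{P}^2(\K)$ with the canonical involution swapping the two factors. Writing $B=(b_1,b_2)$ and $C=(c_1,c_2)$, the $3d+3=9$ homogeneous coordinates of $\rho(B,C)$ are, up to relabelling, the monomials $1,b_1,b_2,c_1,c_2$ together with $\mathsf{N}(B)=b_1b_2$, $\mathsf{N}(C)=c_1c_2$ and the two coordinates of $BC$. After a fixed linear change of coordinates absorbing the conjugation these become the nine products $x_iy_j$ of $[x_0:x_1:x_2]=[1:b_1:c_1]$ with $[y_0:y_1:y_2]=[1:b_2:c_2]$ (suitably twisted), i.e.\ the Segre embedding $S_{2,2}(\K)\hookrightarrow\mathbb{P}^8(\K)$; taking closures on both sides yields the stated isomorphism. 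This case is essentially bookkeeping.

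For $\A=\bH'=\mathcal{M}_2(\K)$: Morita equivalence between right $\mathcal{M}_2(\K)$-modules and $\K$-modules identifies $\mathcal{M}_2(\K)^3$ with $\K^6$ and its free rank-one direct summands with the $2$-dimensional subspaces of $\K^6$, so $\mathbb{P}^2(\bH')\cong\mathcal{G}_{6,2}(\K)$ as point-line geometries. One then checks that $\rho$ followed by this identification is, up to a projectivity of $\mathbb{P}^{14}(\K)$, the Plücker embedding: the $15$ coordinates $1$, $\mathsf{N}(B)=\det B$, $\mathsf{N}(C)=\det C$ and the twelve entries of $B,C,BC$ are a linear rearrangement of the fifteen $2\times2$ minors of the $2\times6$ matrix $\bigl(I_2\mid B\mid C\bigr)$ (here $\overline{X}=\operatorname{adj}(X)$ makes the cross-term $BC$ appear among the $\{3,4\}\times\{5,6\}$ minors), and the quadratic relations they satisfy are exactly the Plücker relations cutting out $\mathcal{G}_{6,2}(\K)$. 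This is a finite, essentially mechanical verification.

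The case $\A=\bO'$ is the heart of the matter, and here I would bring in the exceptional Jordan algebra. Let $H_3(\bO')$ be the $27$-dimensional $\K$-space of $\bO'$-Hermitian $3\times3$ matrices with its Freudenthal cubic norm; by definition $\mathcal{E}_{6,1}(\K)\subseteq\mathbb{P}(H_3(\bO'))=\mathbb{P}^{26}(\K)$ is the variety of rank-one elements, equivalently the closed orbit of the split group of type $\mathsf{E}_6$. The point is that $\rho(B,C)=(1,\mathsf{N}(B),\mathsf{N}(C),BC,C,B)$ lists precisely the entries of the rank-one Hermitian matrix attached to the vector $(1,\overline{B},\overline{C})$ — the scalars $1,\mathsf{N}(B),\mathsf{N}(C)$ on the diagonal, the octonions $B,C,BC$ (up to conjugation) off it — so $\rho(\bO'\times\bO')$ is the affine chart of the rank-one variety where the first diagonal entry is nonzero. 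The main obstacle is to prove that its projective closure is exactly the full rank-one variety: since $\bO'$ is non-associative one must check carefully, using the closure steps, that every rank-one Hermitian matrix is reached (including those supported away from the first row and column, the ``points at infinity'' of the split Cayley plane) and that the closure overshoots nothing. This can be settled by a transitivity argument for the $\mathsf{E}_6$-action on the rank-one variety; alternatively, and perhaps more cleanly, one verifies directly from the coordinate description that $(\mathcal{V}_2(\K,\bO'),\Xi)$ is a split Veronese set of type $8$ in $\mathbb{P}^{26}(\K)$ with the parameter $N=3d+2$ at its maximum, whereupon the projective-uniqueness statement recorded in \cite{SVM} forces $\mathcal{V}_2(\K,\bO')=\mathcal{E}_{6,1}(\K)$ — and the same argument simultaneously re-proves the $\L'$ and $\bH'$ cases, a welcome consistency check.
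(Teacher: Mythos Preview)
The paper does not prove this proposition: it is stated with the citation \cite{thirdrow} and no proof environment follows, so the result is imported from that external reference (``in progress'' in the bibliography). There is therefore no in-paper argument to compare your proposal against.

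On its own merits your sketch is sound and follows the standard lines one would expect. The $\L'$ case is indeed pure bookkeeping via coordinate matching with the Segre map; the $\bH'$ case via Morita equivalence and the identification of the $15$ Veronese coordinates with the $2\times 2$ minors of $(I_2\mid B\mid C)$ is the classical route to the Pl\"ucker embedding; and for $\bO'$ the rank-one locus in $\mathbb{P}(H_3(\bO'))$ is exactly how $\mathcal{E}_{6,1}(\K)$ is usually defined, with the affine chart corresponding to a nonzero first diagonal entry. Your two proposed ways to handle the closure in the octonion case are both viable: transitivity of the $\mathsf{E}_6$-action is the cleaner conceptual argument, while the alternative via the projective-uniqueness theorem of \cite{SVM} is legitimate provided you independently verify that $\mathcal{V}_2(\K,\bO')$ satisfies (SV1)--(SV3) and spans $\mathbb{P}^{26}(\K)$---a coordinate computation of the same flavour as the ones in Section~\ref{splitgeom} of the present paper. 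Just be aware that invoking \cite{SVM} requires knowing that $\mathcal{E}_{6,1}(\K)$ is itself a split Veronese set of type $8$, which is established there, so no circularity arises.
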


We  briefly discuss the geometries $\mathcal{V}_2(\K,\A)$ for each $\A \in \{\mathbb{T}, \CD(\L',0), \mathbb{S},\CD(\bH',0)\}$. 

\subsubsection{The case $\A=\mathbb{S}$} We use $\bO' \cong \{M(a,b,c,d,x,y,z,u)\mid a,b,c,d,x,y,z,u \in \K\}$ and $\mathbb{S} \cong \{M(a,b,c,d,0,y,z,0)\mid a,b,c,d,y,z \in \K\}$.  Moreover, note that $\mathbb{S}':= M(a,b,c,d,x,0,0,u)\mid a,b,c,d,x,u \in \K\} \cong \mathbb{S}$ and $\mathbb{H}' \cong  \{M(a,b,c,d,0,0,0,0)\mid a,b,c,d \in \K\}$, so we may asume that $\mathbb{H}' = \mathbb{S} \cap \mathbb{S}' \subseteq \mathbb{O}'$.
Clearly, if $\A' \subseteq \A$ then $\rho(\A' \times \A') \subseteq \rho(\A \times \A)$ and hence also $\mathcal{V}_2(\K,\A') \subseteq \mathcal{V}_2(\K,\A)$. So $\mathcal{V}_2(\K,\bH')$ is a subgeometry of both $\mathcal{V}_2(\K,\mathbb{S})$ and $\mathcal{V}_2(\K,\mathbb{S}')$, and the latter two are subgeometries of $\mathcal{V}_2(\K,\mathbb{O}')$, which is isomorphic to $\mathcal{E}_6(\K)$ by Proposition~\ref{thegeom}.

\begin{figure}
\centering
\includegraphics[width=0.7\textwidth]{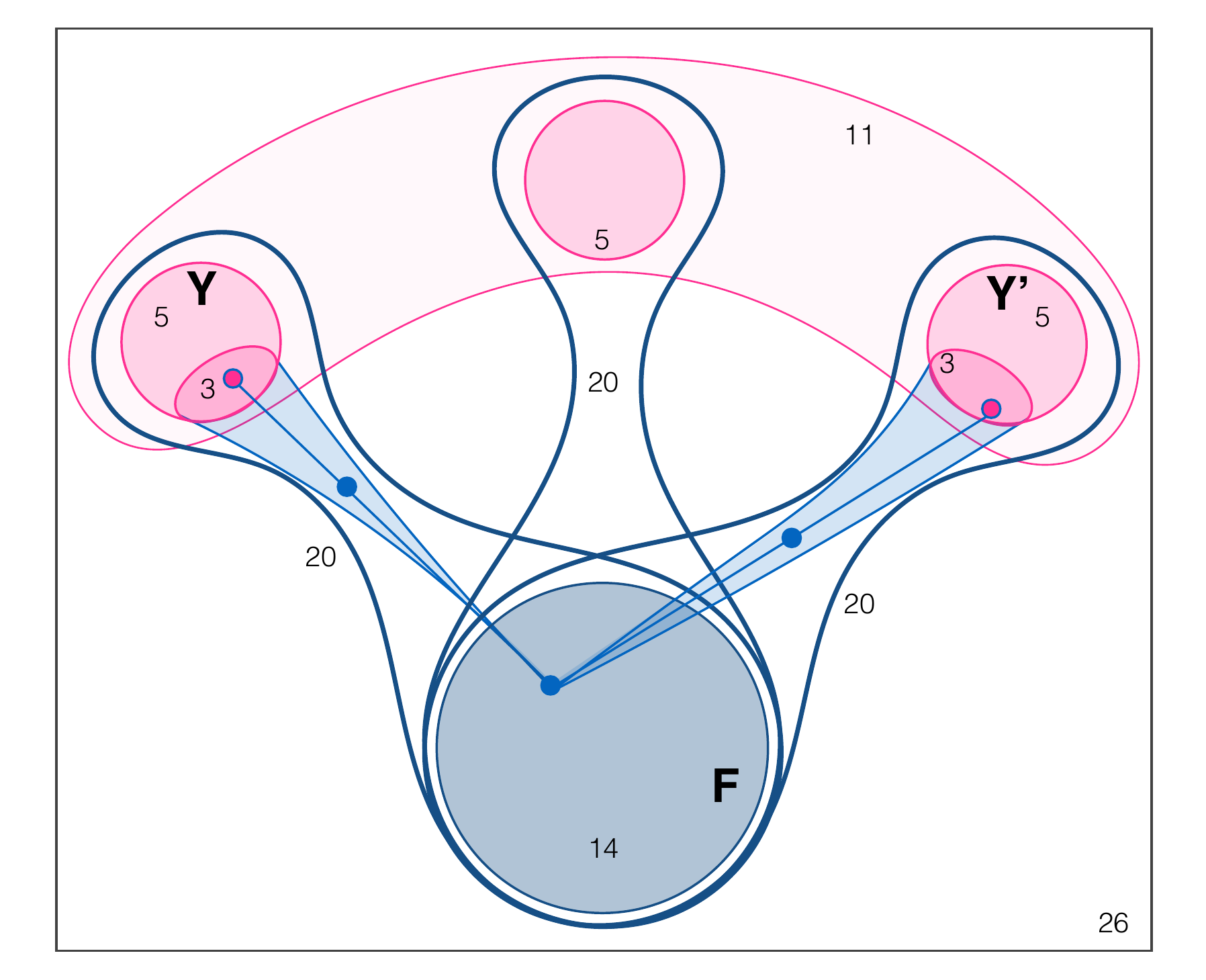}
\caption{A schematic representation of varieties isomorphic to $\mathcal{V}_{2}(\K,\mathbb{S})$, sharing the variety $\mathcal{V}_{2}(\K,\mathbb{H}')$, viewed  inside $\mathcal{V}_{2}(\K,\O')$ or $\mathcal{V}_{2}(\K,\CD(\bH',0))$.\label{e6sub}}
\end{figure}

Take $B=M(a,b,c,d,x,y,z,u)$ and $C=M(a',b',c',d',x',y',z',u')$ in $\mathbb{O}'$. Then $\rho(B,C)=(X_{0-26})$, where $X_{i-j}=(x_i,...,x_j)$, and we have
\begin{align*}
X_{0-2}&=(1,ad-bc-zx-uy, a'd'-b'c'-z'x'-u'y'),\\
X_{3-6}&=(aa'+bc'+xz'+yu', d'b+ab'+zu'-z'u, a'c+dc'+x'y-xy',dd'+b'c+x'z+y'u),\\
X_{7-10}&=(ax'+d'x+uc'-cu',d'y+ay'+cz'-c'z,a'z+dz'+by'-b'y,a'u+du'+b'x-bx'),\\
X_{11-26}&=(a',b',c',d',x',y',z',u',a,b,c,d,x,y,z,u).\end{align*}

Note that, if $B,C \in \mathbb{S}$ (i.e., $x=y=x'=y=0$), then $x_i=0$ for $i\in \{7,10,15,18,23,26\}=:J$; likewise, if $B,C \in \mathbb{S}'$, then $x_i=0$ for $i\in \{8,9,16,17,24,25\}=:J'$.  
Let $(e_0,...,e_{26})$ be the standard basis of $\mathbb{P}^{26}(\K)$. Put $I=\{0,...,26\}\setminus J$ and $I'=\{0,...,26\}\setminus J'$ and define $Y:=\<e_i \mid i \in J\>$ and $Y'=\<e_i\mid i\in J'\>$, and finally $F:=\<e_i:i\in I\cap I'\>$. Clearly, $\mathcal{V}_2(\K,\mathbb{S})$ is contained in the $20$-dimensional subspace $\<F,Y\>=\<e_i \mid i \in I\>$ and $\mathcal{V}_2(\K,\bH')$ is contained in $F$. We list our findings, the proofs of which can be found in the author's Ph.D. thesis (Section 5.2 of~\cite{thesis}); and as the pairs $\mathcal{V}_2(\K,\mathbb{S}), Y$ and $\mathcal{V}_2(\K,\mathbb{S}'), Y'$ play the same role, we only mention the former.

\begin{compactenum}[$(1)$]
\item The variety $\mathcal{V}_2(\K,\mathbb{S})$ is the intersection of the $20$-space $\<F,Y\>$ and $\mathcal{V}_2(\K,\bO')$.
\item Likewise,  $F \cap \mathcal{V}_2(\K,\bO')=F \cap \mathcal{V}_2(\K,\mathbb{S})$ coincides with $\mathcal{V}_2(\K,\mathbb{H}')$ and is hence isomorphic to the line Grassmannian $\mathcal{G}:=\mathcal{G}_{6,2}(\K)$ by Proposition~\ref{thegeom}. 
\item The subspace $Y$ is a singular subspace of $\mathcal{V}_2(\K,\mathbb{S})$, which is not contained in $\rho(\mathbb{S} \times \mathbb{S})$, so it is ``at infinity''.
\item Each point $p$ of  $\mathcal{V}_2(\K,\mathbb{S})\setminus Y$ corresponds to a unique point $p_G$ of $\mathcal{G}$ and  the set of points of $\mathcal{V}_2(\K,\mathbb{S})\setminus Y$ corresponding with this point $p_G$ forms an affine $4$-space, whose $3$-space at infinity $U_p$ belongs to $Y$. 
\item In view of the foregoing and the transitivity properties in $\mathcal{G}$, we have that transitivity on the set of pairs of collinear points of $\mathcal{V}_2(\K,\mathbb{S})\setminus Y$ and on the set of non-collinear points of $\mathcal{V}_2(\K,\mathbb{S})\setminus Y$.
\item The correspondence between $\mathcal{G}$ and $Y$, taking a point $p$ of $\mathcal{G}$ to the $3$-space $U_p$, is a linear duality. 
\end{compactenum}

In general, we  call two points of $\mathcal{V}_2(\K,\A)$ \emph{collinear} if the line of the ambient projective space determined by them is fully contained in  $\mathcal{V}_2(\K,\A)$.
In $\mathcal{V}_2(\K,\bO') \cong \mathcal{E}_{6}(\K)$, the convex closure of any pair of non-collinear points is isomorphic to a hyperbolic quadric in $\mathbb{P}^{9}(\K)$ (called a \emph{symp}, following the parapolar spaces terminology). The following proposition, whose proof is largely based on the correspondence given in Observation (6) above, lists the possibilities for the intersection of $\mathcal{V}_2(\K,\mathbb{S})$ with a symp of $\mathcal{V}_2(\K,\mathbb{O}')$. 

\begin{prop}[\cite{thesis}]\label{symps}
Let $\Sigma$ be a symp of $\mathcal{V}_2(\K,\mathbb{O}')$ such that $\zeta:=\Sigma\cap \mathcal{V}_2(\K,\mathbb{S})$ contains a pair of non-collinear points of $\mathcal{V}_2(\K,\mathbb{S})$. Then either
\begin{compactenum}[$(i)$]
\item $Y \cap \Sigma$ is a line $L$, in which case $\zeta$ is a cone with $1$-dimensional vertex $L$ and base isomorphic to a hyperbolic quadric in $\mathbb{P}^{5}(\K)$, and hence $\zeta=L^\perp \cap \Sigma$);
\item $Y \cap \Sigma$ is a $4$-space, in which case $\zeta=\Sigma$.
\end{compactenum}
In both cases, the convex closure (viewed in $\mathcal{V}_2(\K,\mathbb{S})$) of two non-collinear points of $\zeta$ is~$\zeta$.
\end{prop}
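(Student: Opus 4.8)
The plan is to exploit the correspondence recorded in Observations (1)--(6) above, which identifies the points of $\mathcal{V}_2(\K,\mathbb{S})\setminus Y$ with the points of an affine cone over the line Grassmannian $\mathcal{G}=\mathcal{G}_{6,2}(\K)$ sitting in $F$, fibred over $\mathcal{G}$ by affine $4$-spaces whose $3$-spaces at infinity lie in $Y$, together with the linear duality $\delta\colon \mathcal{G}\to Y$ of Observation (6). First I would set up notation for a symp $\Sigma\cong \mathcal{Q}^+(9,\K)$ of $\mathcal{V}_2(\K,\mathbb{O}')$ meeting $\mathcal{V}_2(\K,\mathbb{S})$ in a set $\zeta$ containing two non-collinear points $p,q\in\mathcal{V}_2(\K,\mathbb{S})\setminus Y$ (non-collinear points of $\zeta$ cannot both lie in $Y$, since $Y$ is singular by Observation (3)). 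Project $p$ and $q$ to their images $p_G,q_G\in\mathcal{G}$; the key point to establish is that $p_G$ and $q_G$ are then distinct (if they were equal, $p$ and $q$ would lie in a common affine $4$-space, hence be collinear, contradiction), and that the symp $\Sigma$, being the convex closure of $p$ and $q$ in $\mathcal{E}_{6,1}(\K)$, is determined by a pair of points at ``symplectic distance'' in the parapolar-space geometry whose trace on $\mathcal{G}$ is controlled by the line of $\mathcal{G}$ through $p_G,q_G$ (or the fact that they are non-collinear in $\mathcal{G}$).

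Next I would split into the two cases according to the dimension of $Y\cap\Sigma$, which is the heart of the argument. Since $Y$ is a $4$-space (by the discussion preceding the proposition) and $\Sigma$ is a $9$-space inside $\mathbb{P}^{20}(\K)=\langle F,Y\rangle$ with $F$ of complementary type, the possibilities for $Y\cap\Sigma$ are constrained; using Observation (4) (the affine $4$-space fibres have their $3$-spaces at infinity in $Y$) one sees that $\Sigma$ must contain the $3$-space at infinity $U_{p}$ of the fibre through $p$ whenever it contains that whole fibre, which forces $\dim(Y\cap\Sigma)\ge 1$. A dimension count, combined with the fact that $\Sigma\cap F$ is a sub-symp-type intersection of $\mathcal{V}_2(\K,\mathbb{H}')\cong\mathcal{G}$, then pins $\dim(Y\cap\Sigma)$ down to either $1$ or $4$. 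In case $(i)$, $Y\cap\Sigma$ is a line $L$; I would show $L$ is singular (it lies in $Y$), compute $L^{\perp}\cap\Sigma$ inside the hyperbolic quadric $\Sigma$ — a $1$-space vertex of a singular point of $\mathcal{Q}^+(9,\K)$ cuts out a cone over $\mathcal{Q}^+(5,\K)$ — and then argue via the correspondence that $\zeta$ is exactly this cone $L^{\perp}\cap\Sigma$, using that the points of $\zeta\setminus Y$ over each point of the corresponding line of $\mathcal{G}$ form an affine plane (a slice of the affine $4$-space fibre by $\Sigma$). In case $(ii)$, $Y\cap\Sigma$ is a $4$-space; then $\Sigma\subseteq\langle F\cap\Sigma,\,Y\rangle$ forces every point of $\Sigma$ to be a limit of points of $\mathcal{V}_2(\K,\mathbb{S})$, i.e.\ $\zeta=\Sigma$, which I would make precise by showing $\Sigma\subseteq\mathcal{V}_2(\K,\mathbb{S})$ via projective closedness (Definition~\ref{vv}) of $\mathcal{V}_2(\K,\mathbb{S})$: every line of $\Sigma$ meets $\mathcal{V}_2(\K,\mathbb{S})$ in at least three points, hence lies in it.

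For the final assertion — that in both cases the convex closure in $\mathcal{V}_2(\K,\mathbb{S})$ of two non-collinear points of $\zeta$ equals $\zeta$ — I would recall that convex closure in a parapolar space means the smallest convexly closed subspace, equivalently (for two symplectic points) the symp they span; in $\mathcal{V}_2(\K,\mathbb{O}')$ that symp is $\Sigma$. In case $(ii)$ this is immediate since $\zeta=\Sigma$. In case $(i)$ I would argue that the cone $\zeta=L^{\perp}\cap\Sigma$ is itself convexly closed inside $\mathcal{V}_2(\K,\mathbb{S})$: any two non-collinear points of $\zeta$ still span $\Sigma$ (their images in the base $\mathcal{Q}^+(5,\K)$ are non-collinear, and one lifts the convex closure of the base quadric through the vertex $L$), and since $\zeta$ is the full intersection of $\mathcal{V}_2(\K,\mathbb{S})$ with $\Sigma=L^\perp\cap\Sigma$ it is closed under the relevant line- and symp-operations.

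The main obstacle I anticipate is the dimension bookkeeping in the case split: showing cleanly that $\dim(Y\cap\Sigma)\in\{1,4\}$ and no intermediate value occurs, and that $Y\cap\Sigma$ of dimension $1$ is forced to be a \emph{singular} line positioned as the vertex — this requires carefully tracking how a symp of $\mathcal{E}_{6,1}(\K)$ sits relative to the splitting $\langle F,Y\rangle$ and invoking the duality $\delta$ of Observation (6) to translate ``$\Sigma$ meets $Y$ in a line $L$'' into ``$\delta^{-1}(L^{\perp}\cap Y)$ is a sub-line-Grassmannian of the right corank inside $\mathcal{G}$''. The computations underlying Observations (1)--(6) are in \cite{thesis}; here I would treat them as given and keep the argument at the level of the geometry of $\mathcal{G}_{6,2}(\K)$, its $\mathsf{A}_5$-parapolar structure, and the cone structure of its slices.
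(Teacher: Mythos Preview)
Your overall strategy---project to $\mathcal{G}=\mathcal{G}_{6,2}(\K)$ via Observation~(4), use the linear duality of Observation~(6), and split on $\dim(Y\cap\Sigma)$---is exactly what the paper indicates (it states that the proof ``is largely based on the correspondence given in Observation~(6)'' and defers the details to \cite{thesis}). So at the level of approach you are aligned with the intended argument.

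That said, your plan contains several dimensional slips that would derail the bookkeeping you correctly identify as the main obstacle. First, $Y$ is a $5$-space, not a $4$-space: it is spanned by the six coordinates $e_i$, $i\in J=\{7,10,15,18,23,26\}$, and Observation~(6) says the duality $\delta$ identifies $Y$ with the dual of the $5$-space underlying $\mathcal{G}_{6,2}(\K)$. In case~$(ii)$ the $4$-space $Y\cap\Sigma$ is therefore a hyperplane of $Y$, not all of $Y$, which matters for how you match it up via $\delta$ with a point of the underlying $\mathbb{P}^5$. Second, $\Sigma$ spans a $9$-space in $\mathbb{P}^{26}(\K)$, not in $\mathbb{P}^{20}(\K)$; the clean way to compute $\zeta$ is to use Observation~(1), which gives $\zeta=\Sigma\cap\langle F,Y\rangle$, so that $\zeta$ is the section of the hyperbolic quadric $\Sigma$ by the subspace $W:=\langle\Sigma\rangle\cap\langle F,Y\rangle$; the case split is then $\dim W\in\{7,9\}$, and the duality $\delta$ is what rules out the intermediate values. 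Third, in case~$(i)$ the image of $\zeta\setminus Y$ under the projection $\rho$ to $F$ is a \emph{symp} of $\mathcal{G}$ (a Klein quadric $\mathcal{Q}^+(5,\K)$), not a ``line of $\mathcal{G}$''; the affine planes you mention are the $\rho$-fibres over the \emph{points} of that Klein quadric, each being the intersection of an affine $4$-space fibre with $\langle\Sigma\rangle$. None of these errors is fatal to the strategy, but each would have to be corrected before the dimension count you outline can actually be carried out.
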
 

We conclude that the set of symps of $\mathcal{V}_2(\K,\bO')$ containing a pair of non-collinear points of $\mathcal{V}_2(\K,\mathbb{S})$ gives rise to two types of symps of $\mathcal{V}_2(\K,\mathbb{S})$, which we distinguish as follows:
\[\Xi:=\{ \Sigma \cap \mathcal{V}_2(\K,\mathbb{S}) \mid \dim(Y \cap \Sigma)=1\},\quad \Theta:=\{\Sigma \cap  \mathcal{V}_2(\K,\mathbb{S}) \mid \dim(Y \cap \Sigma)=4\}.\]

\subsubsection{The case $\A=\CD(\L',0)$}
Here we use that  $\bH'':=\CD(\L',0)$ is isomorphic to $\{M(a,0,0,d,0,y,z,0)\mid a,d,y,z\in\K\}$. Clearly, $\bH'' \subseteq \mathbb{S}$ and hence $\mathcal{V}_2(\K,\bH'')$ belongs to $\mathcal{V}_2(\K,\mathbb{S})$ and is, as one can easily verify, contained in the $14$-space $\<e_i \mid i\in I \setminus \{4, 5,12,13,20,21\}\>$, which can also be given as $\<Y,F'\>$ where $F'$ is an $8$-space in $F$. 
The subspace $Y$ is a singular $5$-space of $\mathbb{V}_2(\K,\bH'')$ as well. As in the first case, $\mathbb{V}_2(\K,\mathbb{S}) \cap \<Y,F'\>=\mathbb{V}_2(\K,\bH'')$ and $\mathbb{V}_2(\K,\mathbb{H}'') \cap F'=\mathbb{V}_2(\K,\L')=:\mathcal{S}$, and the latter is isomorphic to the Segre variety $\mathcal{S}_{2,2}(\K)$ by Proposition~\ref{thegeom}. 

Now, let $U \cong \mathbb{P}^{5}(\K)$ be (an abstract) projective space whose line Grassmannian gives $\mathcal{G}$. Then the Segre variety $\mathcal{S}$, as sub-variety of $\mathcal{G}$, arises as the set of lines of $U$ intersecting two given disjoint planes  $\pi_1$ and $\pi_2$  non-trivially. The correspondence between $\mathcal{G}$ and $Y$ (as given in Observation (6))  implies that $Y$ is isomorphic to the dual of $U$, and hence the lines of $U$ intersecting both $\pi_1$ and $\pi_2$ non-trivially correspond to $3$-spaces having a line in common with two planes $Z_1$ and $Z_2$ in $Y$, and these $3$-spaces all arise as $U_p=p^\perp \cap Y$, for some point $p\in \mathcal{S}$.

We then have  the analogue of Proposition~\ref{symps}.

\begin{prop}[\cite{thesis}]\label{sympssegre}
Let $\Sigma$ be a symp of $\mathcal{V}_2(\K,\mathbb{O}')$ such that $\zeta:=\Sigma\cap \mathcal{V}_2(\K,\bH'')$ contains at least two non-collinear points of $(\mathcal{V}_2(\K,\bH'')\setminus Y) \cup Z_1 \cup Z_2$. Then either 
\begin{compactenum}[$(i)$]
\item $Y \cap \Sigma$ is a line $V$, in which case $\zeta$ is a cone with vertex $V$ and base isomorphic to a hyperbolic quadric in $\PAG^3(\K)$ (so $\zeta \subseteq L^\perp \cap \Sigma$); or,
\item $Y \cap \Sigma$ is a $4$-space $W$ generated by a line $V_i$ in $Z_i$ and the plane $Z_j$, with $\{i,j\}=\{1,2\}$, in which case $\zeta$ is a cone with vertex $V_i$ and base isomorphic to a Klein quadric over $\K$.
\end{compactenum}
In both cases, the convex closure (viewed in $\mathcal{V}_2(\K,\mathbb{H}'')$) of two non-collinear points of $\zeta$ is~$\zeta$.
\end{prop}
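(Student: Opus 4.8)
The statement to prove is Proposition~\ref{sympssegre}, which is the exact analogue of Proposition~\ref{symps} but for $\mathcal{V}_2(\K,\bH'')$ with $\bH''=\CD(\L',0)$ instead of $\mathcal{V}_2(\K,\mathbb{S})$. The plan is to mimic the proof of Proposition~\ref{symps} while using the extra structural information that $\mathcal{V}_2(\K,\bH'')$ lives inside $\mathcal{V}_2(\K,\mathbb{S})$, and that the duality between $\mathcal{G}$ and $Y$ restricts to a duality between the Segre sub-variety $\mathcal{S}=\mathcal{V}_2(\K,\L')$ and the relevant part of $Y$. Concretely, recall from the discussion just before the statement that $\mathcal{S}$, viewed inside $\mathcal{G}=\mathcal{G}_{6,2}(\K)$, is the set of lines of an abstract $\PAG^5(\K)$, call it $U$, meeting two fixed disjoint planes $\pi_1,\pi_2$; and that under the $\mathcal{G}\leftrightarrow Y$ duality these correspond to the $3$-spaces $U_p=p^\perp\cap Y$ meeting two planes $Z_1,Z_2\subseteq Y$ each in a line. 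The key point is that the position of a symp $\Sigma$ of $\mathcal{V}_2(\K,\mathbb{O}')$ relative to $Y$ is already classified by Proposition~\ref{symps}: $Y\cap\Sigma$ is either a line or a $4$-space. So the whole argument reduces to analysing, within each of those two cases, how $\Sigma$ sits relative to the additional planes $Z_1,Z_2$ (equivalently, relative to $\pi_1,\pi_2$ on the $U$-side).

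\textbf{Key steps.} First I would invoke Proposition~\ref{symps} to get the dichotomy on $\dim(Y\cap\Sigma)\in\{1,4\}$, and note that since $\mathcal{V}_2(\K,\bH'')\subseteq\mathcal{V}_2(\K,\mathbb{S})$ we have $\zeta=\Sigma\cap\mathcal{V}_2(\K,\bH'')\subseteq\Sigma\cap\mathcal{V}_2(\K,\mathbb{S})$; the hypothesis that $\zeta$ contains two non-collinear points of $(\mathcal{V}_2(\K,\bH'')\setminus Y)\cup Z_1\cup Z_2$ guarantees we are in one of the non-degenerate situations of Proposition~\ref{symps}. Second, in case $(i)$ ($Y\cap\Sigma$ a line $V$): here Proposition~\ref{symps} already tells us $\Sigma\cap\mathcal{V}_2(\K,\mathbb{S})$ is a cone with vertex $V$ over a hyperbolic quadric in $\PAG^5$; intersecting further with the $14$-space $\<Y,F'\>$ (equivalently, intersecting the base hyperbolic $\mathcal{Q}^+(5,\K)$ with the appropriate $3$-space coming from the Segre structure), one should get a cone with vertex $V$ over $\mathcal{Q}^+(3,\K)$, and the containment $\zeta\subseteq L^\perp\cap\Sigma$ follows as in Proposition~\ref{symps}. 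Third, in case $(ii)$ ($Y\cap\Sigma$ a $4$-space $W$): one uses the duality to translate ``$W$ is the $4$-space $Y\cap\Sigma$'' into a statement about which lines of $U$ the symp corresponds to; since the symp on the $\mathcal{G}$-side corresponds (via the standard parapolar dictionary for $\mathcal{G}_{6,2}$) to a pencil of lines through a point in a plane of $U$, the intersection with the Segre condition (meeting $\pi_1$ and $\pi_2$) forces the dual picture $W=\<V_i,Z_j\>$ with $V_i\subseteq Z_i$ a line, and $\zeta$ becomes a cone over a Klein quadric $\mathcal{Q}^+(5,\K)$ with vertex the line $V_i$. Finally, for the convex-closure claim, I would argue as in Proposition~\ref{symps}: a cone over a hyperbolic quadric is convex inside the relevant Veronese geometry once one knows any two non-collinear points of it span a plane-pair/quadric inside it, and the transitivity properties (the analogue of Observation (5)) reduce this to one explicit pair; since $\zeta$ is itself (a cone over) a hyperbolic quadric, its convex closure is itself.

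\textbf{Main obstacle.} The genuinely delicate part is case $(ii)$ — pinning down that $W$ is spanned by a \emph{line} $V_i$ lying in one of the $Z_i$ together with the \emph{whole} plane $Z_j$, rather than some other $4$-dimensional configuration inside $Y$. This requires using the precise incidence geometry of the Segre $\mathcal{S}_{2,2}$ inside $\mathcal{G}_{6,2}$ (or dually, of $Z_1,Z_2$ inside $Y$): one must show that a $4$-space of $Y$ arising as $Y\cap\Sigma$ and containing enough points of $Z_1\cup Z_2$ must meet $\{Z_1,Z_2\}$ asymmetrically, in a line and a plane. I expect this to come down to a short argument about how a maximal singular subspace of the symp-geometry of $\mathcal{G}_{6,2}$ (which on the $U$-side is a pencil of lines, i.e.\ a $\PAG^2$ of lines through a fixed point of a fixed plane) intersects the two-plane condition defining $\mathcal{S}$; the asymmetry $V_i\subseteq Z_i$, $Z_j$ entirely, is then forced because a point of $U$ lies on at most one of $\pi_1,\pi_2$ while the plane it spans may meet both. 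Everything else — the shape of the base quadrics, the containment $\zeta\subseteq L^\perp\cap\Sigma$, and the convex-closure statement — should follow formally from Proposition~\ref{symps} together with the dimension bookkeeping of intersecting with the $14$-space $\<Y,F'\>$, so I would keep those parts brief and refer to Section~5.2 of~\cite{thesis} for the coordinate verifications.
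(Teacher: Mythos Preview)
The paper does not actually prove this proposition: like Propositions~\ref{symps} and~\ref{sympshalfsegre}, it is merely stated with a citation to Section~5.2 of~\cite{thesis}, so there is no in-paper proof to compare your proposal against. Your overall strategy---bootstrap from Proposition~\ref{symps} using $\mathcal{V}_2(\K,\bH'')\subseteq\mathcal{V}_2(\K,\mathbb{S})$ and then refine via the duality between $\mathcal{S}\subseteq\mathcal{G}$ and the pair $Z_1,Z_2\subseteq Y$---is the natural one and almost certainly the approach taken in the thesis.

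That said, your sketch of case~$(ii)$ contains a geometric slip that would derail the argument if left uncorrected. You write that ``the symp on the $\mathcal{G}$-side corresponds \ldots\ to a pencil of lines through a point in a plane of $U$''; but a pencil of lines is a singular \emph{plane} of $\mathcal{G}_{6,2}$, not a symp and not a $4$-space. What you actually need here is that in case~$(ii)$ of Proposition~\ref{symps} the full symp $\Sigma$ lies in $\mathcal{V}_2(\K,\mathbb{S})$, and the $4$-space $W=Y\cap\Sigma$ is a \emph{generator} of the $\mathcal{Q}^+(9,\K)$. Under the $\mathcal{G}\leftrightarrow Y$ duality (Observation~(6)), a singular $4$-space of $\mathcal{G}$ corresponds to the set of lines of $U$ through a fixed point $q\in U$; the Segre condition (lines meeting both $\pi_1$ and $\pi_2$) then forces $q\in\pi_1\cup\pi_2$, say $q\in\pi_i$, and the lines through $q$ meeting $\pi_j$ form a plane in $\mathcal{G}$ whose dual image in $Y$ gives $W=\langle V_i,Z_j\rangle$ with $V_i$ the line of $Z_i$ dual to $q$. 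Once you replace your ``pencil'' picture with this ``lines through a fixed point'' picture, the asymmetric decomposition $W=\langle V_i,Z_j\rangle$ falls out cleanly, and the rest of your plan (dimension count for the base quadric, convex closure via Proposition~\ref{symps}) goes through.
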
 

Again, we define
\[\Xi:=\{ \Sigma \cap \mathcal{V}_2(\K,\bH'') \mid \dim(Y \cap \Sigma)=1\},\, \Theta:=\{\Sigma \cap  \mathcal{V}_2(\K,\bH'') \mid Y \cap \Sigma \text{ is a } 4\text{-space containing } Z_1 \text{ or } Z_2\}.\] 

\subsubsection{The case $\A=\mathbb{T}$}

Here we use $\mathbb{T} \cong\{M(a,0,0,d,0,y,0,0)\mid a,d,y \in \K\}$, and we obtain that $\mathcal{V}_2(\K,\mathbb{T})$ arises as the intersection of $\mathcal{V}_2(\K,\bH'')$ with the $11$-space generated by the Segre-variety $\mathcal{S}$ in $F$ and by the plane $Z_1$ in $Y$. Again, we have:

\begin{prop}[\cite{thesis}]\label{sympshalfsegre}
Let $\Sigma$ be a symp of $\mathcal{V}_2(\K,\mathbb{O}')$ such that $\zeta:=\Sigma\cap \mathcal{V}_2(\K,\mathbb{T})$ contains at least two non-collinear points of $\mathcal{V}_2(\K,\mathbb{T})$. Then either
\begin{compactenum}[$(i)$]
\item $\Sigma \cap Z_1$ is a point $V$, in which case $\zeta$ is a cone with vertex $V$ and base isomorphic to a grid quadric over $\K$;
\item $\Sigma \cap Z_1 =Z_1$, in which case  $\zeta$ is isomorphic to a Klein quadric over $\K$,.
\end{compactenum}
In both cases, the convex closure (viewed in $\mathcal{V}_2(\K,\mathbb{T})$) of two non-collinear points of $\zeta$ is~$\zeta$.
\end{prop}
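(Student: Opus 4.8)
~The plan is to imitate the proof strategy of Propositions~\ref{symps} and~\ref{sympssegre}, using the chain of inclusions $\mathcal{V}_2(\K,\mathbb{T}) \subseteq \mathcal{V}_2(\K,\bH'') \subseteq \mathcal{V}_2(\K,\mathbb{S}) \subseteq \mathcal{V}_2(\K,\bO')$ together with the duality of Observation~(6) to reduce the statement about symps to a statement about the Grassmannian $\mathcal{G}$ and its dual. Concretely, let $\Sigma$ be a symp of $\mathcal{V}_2(\K,\bO')$ with $\zeta := \Sigma \cap \mathcal{V}_2(\K,\mathbb{T})$ containing two non-collinear points of $\mathcal{V}_2(\K,\mathbb{T})$. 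Since $\mathcal{V}_2(\K,\mathbb{T}) \subseteq \mathcal{V}_2(\K,\bH'')$, Proposition~\ref{sympssegre} already applies and tells us that $Y \cap \Sigma$ is either a line $V$ (case~$(i)$ there) or a $4$-space $W$ containing $Z_1$ or $Z_2$ (case~$(ii)$ there), and in each case describes $\Sigma \cap \mathcal{V}_2(\K,\bH'')$. What remains is to intersect further with the $11$-space $\Pi := \langle \mathcal{S}, Z_1 \rangle$ (the ambient space of $\mathcal{V}_2(\K,\mathbb{T})$) and identify the result.

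First I would dispose of the bookkeeping: since $\mathcal{V}_2(\K,\mathbb{T}) = \mathcal{V}_2(\K,\bH'') \cap \Pi$ and $\zeta$ already lies in $\Sigma \cap \mathcal{V}_2(\K,\bH'')$, we simply have $\zeta = (\Sigma \cap \mathcal{V}_2(\K,\bH'')) \cap \Pi$. Now I would split into the two cases coming from Proposition~\ref{sympssegre} and track what $\Sigma \cap Z_1$ can be. In the case where $Y \cap \Sigma$ is a line $V$ transverse to $Z_1$ and $Z_2$, one expects $V \cap Z_1$ to be a single point (the generic behaviour), and then $\zeta$ is the slice of the cone $\Sigma \cap \mathcal{V}_2(\K,\bH'')$ by $\Pi$, which should be a cone with that point as vertex over a smaller hyperbolic quadric — this is the grid-quadric case~$(i)$. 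In the case where $Y \cap \Sigma$ is the $4$-space $W = \langle V_i, Z_j\rangle$, the relevant subcase for $\mathbb{T}$ is $i=2$, $j=1$, so that $Z_1 \subseteq W \subseteq \Sigma$; slicing the Klein-quadric cone of Proposition~\ref{sympssegre}$(ii)$ by $\Pi$, with $Z_1$ now entirely inside $\Sigma$, should collapse the vertex $V_2$ and leave precisely a Klein quadric — case~$(ii)$. The subcase $i=1$, $j=2$ would force $V_1 \subseteq Z_1$ but $Z_2 \not\subseteq \Pi$, and one checks this cannot produce two non-collinear points of $\mathcal{V}_2(\K,\mathbb{T})$, so it is excluded.

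For the final sentence (convex closure of two non-collinear points of $\zeta$ equals $\zeta$), I would argue exactly as in the earlier propositions: the convex closure of two non-collinear points in a parapolar space is the symp through them, so inside $\mathcal{V}_2(\K,\bO')$ it is $\Sigma$; one then intersects with the relevant subspace. Since $\zeta$ is itself a quadric (a grid quadric, respectively a Klein quadric, possibly on a vertex) which is ``convex'' in the sense that any two of its non-collinear points already span, within $\zeta$, a hyperbolic sub-quadric filling out $\zeta$, the convexity statement follows. Here the transitivity of $\Aut(\mathcal{V}_2(\K,\mathbb{T}))$ on pairs of non-collinear points of a fixed type (inherited, via the duality of Observation~(6) and its analogue for the Segre picture, from the transitivity in $\mathcal{G}$ and hence in $U$) lets one reduce to a single explicit pair, where the computation is transparent.

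The main obstacle I anticipate is the careful case analysis of how the $11$-space $\Pi$ meets the $4$-space $W$ and the plane $Z_1$: one must verify that the only genuinely occurring configurations are $\Sigma\cap Z_1$ a point or all of $Z_1$, ruling out intermediate dimensions (a line), and must check that in the point-vertex case the base of the resulting cone is genuinely a \emph{grid} (i.e.\ a hyperbolic quadric in $\PAG^3(\K)$) rather than a larger hyperbolic quadric — this is where the drop from the $14$-space of $\bH''$ to the $11$-space of $\mathbb{T}$, equivalently the passage from a plane-pair $(\pi_1,\pi_2)$ in $U$ to lines through a point, has to be used. I expect this to be a short computation in coordinates using the explicit description $\mathbb{T} \cong \{M(a,0,0,d,0,y,0,0)\}$ and the $X_{0-26}$ formulas, combined with the structural facts already established for $\mathcal{V}_2(\K,\bH'')$; the thesis~\cite{thesis} presumably carries this out in detail.
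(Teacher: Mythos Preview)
The paper does not actually contain a proof of this proposition: like Propositions~\ref{symps} and~\ref{sympssegre}, it is stated with citation~\cite{thesis} and no argument is given in the present paper. So there is no ``paper's own proof'' to compare against; the proof lives in the author's thesis.

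That said, your strategy --- using the chain $\mathcal{V}_2(\K,\mathbb{T}) \subseteq \mathcal{V}_2(\K,\bH'') \subseteq \mathcal{V}_2(\K,\mathbb{S})$, invoking Proposition~\ref{sympssegre} first, and then slicing by the $11$-space $\Pi=\langle \mathcal{S},Z_1\rangle$ --- is exactly the sort of reduction the paper's exposition invites, and is almost certainly in the spirit of the thesis argument (which, per the text preceding Proposition~\ref{symps}, is ``largely based on the correspondence given in Observation~(6)''). Two points in your sketch deserve care. First, when you apply Proposition~\ref{sympssegre} you must check its hypothesis: the two non-collinear points of $\mathcal{V}_2(\K,\mathbb{T})$ need to lie in $(\mathcal{V}_2(\K,\bH'')\setminus Y)\cup Z_1\cup Z_2$; this is fine since $\mathcal{V}_2(\K,\mathbb{T})\cap Y\subseteq Z_1$, but it should be said. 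Second, in case~$(i)$ of Proposition~\ref{sympssegre} the vertex line $V\subseteq Y$ is not \emph{a priori} transverse to $Z_1$; you assert ``one expects $V\cap Z_1$ to be a single point (the generic behaviour)'' but must actually prove this is forced by the hypothesis that $\zeta$ meets $\mathcal{V}_2(\K,\mathbb{T})$ in two non-collinear points --- ruling out $V\cap Z_1=\emptyset$ and $V\subseteq Z_1$. This is where the duality of Observation~(6), restricted to the Segre picture, does real work: the vertex $V$ arises as an intersection of the $3$-spaces $U_p$ for $p$ in the base grid of $\mathcal{S}$, and those $U_p$ each meet $Z_1$ in a line, so $V$ meets $Z_1$; conversely $V\subseteq Z_1$ would force the base grid to degenerate. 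Your handling of the $W$-case and the exclusion of the $(i,j)=(1,2)$ subcase is along the right lines but similarly needs the duality to be made precise.
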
 

Also here, we define:
\[\Xi:=\{ \Sigma \cap \mathcal{V}_2(\K,\mathbb{T}) \mid \dim(Z_1 \cap \Sigma)=0\},\quad \Theta:=\{\Sigma \cap  \mathcal{V}_2(\K,\mathbb{T}) \mid  Z_1\subseteq \Sigma \}.\] 

\subsubsection{The case $\A=\CD(\mathbb{H}',0)$}
As alluded to before, $\mathcal{V}_2(\K,\CD(\mathbb{H}',0))$ does not exhibit the same behaviour as its three siblings $\mathcal{V}_2(\K,\mathbb{T})$, $\mathcal{V}_2(\K,\CD(\mathbb{L}',0))$ and $\mathcal{V}_2(\K,\mathbb{S})$. One thing that goes wrong for instance, is the following. Firstly, the schematic representation (cf.\ Figure~\ref{e6sub}) of the embedding of varieties isomorphic to $\mathcal{V}_2(\K,\mathbb{S})$ and containing the same line Grassmannian variety $\mathcal{V}_2(\K,\bH')$, is still applicable in this case. Consider the $5$-spaces $Y$, which are pairwise disjoint singular $5$-spaces in an $11$-dimensional subspaces. A calculation shows that these are on a regulus of $\mathcal{V}_2(\K,\CD(\mathbb{H}',0))$ (meaning that for each point of $Y$ there is a unique line of $\mathcal{V}_2(\K,\CD(\mathbb{H}',0))$ which meets the other such $5$-spaces), whereas in $\mathcal{V}_2(\K,\mathbb{S})$, these $5$-spaces were pairwise opposite (meaning that no point of one of them is on a line of $\mathcal{V}_2(\K,\mathbb{S})$ with a point of one of the others).  This is the reason why the convex closure of two non-collinear points of  $\mathcal{V}_2(\K,\bH')$ is not a quadric, as opposed to the situation in $\mathcal{V}_2(\K,\mathbb{S})$ (cf.\ Propositions~\ref{symps},~\ref{sympssegre} and~\ref{sympshalfsegre}).

\subsection{Properties of the Veronese varieties $\mathcal{V}_2(\K,\mathbb{T})$, $\mathcal{V}_2(\K,\CD(\L',0))$ and $\mathcal{V}_2(\K,\mathbb{S})$}

We show some properties satisfied by each of the varieties $\mathcal{V}_2(\K,\mathbb{T})$, $\mathcal{V}_2(\K,\CD(\L',0))$ and $\mathcal{V}_2(\K,\mathbb{S})$ (which we will later on use as their characterising properties). 

In case of $\mathcal{V}_2(\K,\mathbb{T})$ and $\mathcal{V}_2(\K,\mathbb{S})$, we define $Z$ as the points of the subspace $Y$; in  $\mathcal{V}_2(\K,\CD(\L',0))$ we define $Z$ as the union of the two subspaces $Z_1$ and $Z_2$ and $Y$ as $\<Z_1,Z_2\>$. In the three varieties $\mathcal{V}_2(\K,\cdot)$, we set $X$ equal to the points in $\mathcal{V}_2(\K,\cdot)\setminus Y$. Recall the definitions of $\Xi$ and $\Theta$.

\begin{prop}\label{standardex} The Veronese  varieties $\mathcal{V}_2(\K,\mathbb{T})$, $\mathcal{V}_2(\K,\CD(\L',0))$ and $\mathcal{V}_2(\K,\mathbb{S})$ satisfy the following three properties:
\begin{compactenum}
\item[\emph{(S1)}] Each pair of distinct points $p_1,p_2 \in X \cup Z$ is contained in a member of $\Xi \cup \Theta$;
\item[\emph{(S2)}] for each pair of distinct members $\zeta_1, \zeta_2 \in \Xi \cup \Theta$, the intersection $\zeta_1 \cap \zeta_2$ is a singular subspace;
\item[\emph{(S3)}] for each point $x \in X$,  there exists $\xi_1,\xi_2$ in $\Xi$ such that $T_x=\<T_x(\xi_1),T_x(\xi_2)\>$.
\end{compactenum}
\end{prop}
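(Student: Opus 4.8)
The plan is to derive the three properties (S1)--(S3) from the corresponding properties of the ambient variety $\mathcal{V}_2(\K,\bO')\cong\mathcal{E}_{6,1}(\K)$, using throughout that each of $\mathcal{V}_2(\K,\mathbb{T})$, $\mathcal{V}_2(\K,\CD(\L',0))$ and $\mathcal{V}_2(\K,\mathbb{S})$ is a linear section of $\mathcal{E}_{6,1}(\K)$ (sliced with the $11$-, $14$- resp.\ $20$-space described in Section~\ref{splitgeom}), together with Propositions~\ref{symps}, \ref{sympssegre} and \ref{sympshalfsegre}. Since the three varieties are handled by essentially the same argument, I would treat $\mathcal{V}_2(\K,\mathbb{S})$ in detail and indicate the (notational) changes for the other two. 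Throughout write $\cV$ for the variety under consideration, $\Pi$ for the subspace with which $\mathcal{E}_{6,1}(\K)$ is sliced, and recall that $X$ consists of the points of $\cV$ outside $Y$, while $Z$ consists of the points of $Y$ (resp.\ of $Z_1\cup Z_2$).

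For \textbf{(S1)}, let $p_1,p_2\in X\cup Z$ be distinct. If they are collinear in $\cV$, the line $p_1p_2$ lies on $\cV$ and hence in a singular subspace; one then checks that any singular subspace of $\cV$ lies in some member of $\Xi\cup\Theta$ (for instance, a singular line of $\mathcal{E}_{6,1}(\K)$ always lies in a symp, and the various cases of Propositions~\ref{symps}--\ref{sympshalfsegre} show the induced symp lies in $\Xi\cup\Theta$). If $p_1,p_2$ are non-collinear, then they are non-collinear in $\mathcal{E}_{6,1}(\K)$ as well, so by the parapolar geometry of $\mathcal{E}_{6,1}(\K)$ there is a (unique) symp $\Sigma$ of $\mathcal{V}_2(\K,\bO')$ containing both; since $p_1,p_2\in\cV\cap\Sigma$, this intersection contains a non-collinear pair and Proposition~\ref{symps} (resp.\ \ref{sympssegre}, \ref{sympshalfsegre}) applies, giving $\Sigma\cap\cV\in\Xi\cup\Theta$ (here one must be mildly careful in the $\CD(\L',0)$ and $\mathbb{T}$ cases that the non-collinear pair lies in the right point set, which it does since $p_1,p_2\in X\cup Z$). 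For \textbf{(S2)}, take distinct $\zeta_1,\zeta_2\in\Xi\cup\Theta$, say $\zeta_i=\Sigma_i\cap\cV$ with $\Sigma_1\neq\Sigma_2$ symps of $\mathcal{V}_2(\K,\bO')$. The intersection of two distinct symps of $\mathcal{E}_{6,1}(\K)$ is a singular subspace (empty, a point, or a line), hence so is $\zeta_1\cap\zeta_2\subseteq\Sigma_1\cap\Sigma_2$; one also needs the case $\Sigma_1=\Sigma_2$ but $\zeta_1\neq\zeta_2$ does not occur, which follows from the descriptions in Section~\ref{splitgeom} (the slicing subspace $\Pi$ meets a given $\Sigma$ in at most one way). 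For \textbf{(S3)}, fix $x\in X$; by the transitivity statement (Observation (5), and its analogues), it suffices to verify the tangent-space identity $T_x=\langle T_x(\xi_1),T_x(\xi_2)\rangle$ for one conveniently chosen point $x$, e.g.\ $x=\rho(0,0)=(1,0,\dots,0)$, where the tangent spaces can be read off directly from the coordinate formulas for $\rho$; choosing $\xi_1,\xi_2$ to be the two "coordinate" symps through $x$ analogous to those used in axiom (SV3), a short computation shows their tangent spaces already span $T_x$.

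The \textbf{main obstacle} I anticipate is bookkeeping rather than conceptual: making sure that in each of the three cases the members of $\Xi$ and $\Theta$, as \emph{defined} in Section~\ref{splitgeom} via the dimension of $Y\cap\Sigma$ (or $Z_1\cap\Sigma$), are \emph{exactly} the sets of quadrics produced by intersecting symps of $\mathcal{V}_2(\K,\bO')$ with $\cV$ — i.e.\ that every relevant symp of $\mathcal{E}_{6,1}(\K)$ falls into one of the listed cases of Propositions~\ref{symps}--\ref{sympshalfsegre}, and conversely — and that the induced objects genuinely generate the subspace they span (so that "quadric generating $\xi$" in the sense of the axioms is met). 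For (S3) specifically, the subtlety is to exhibit \emph{two} symps in $\Xi$ (not in $\Theta$) whose tangent spaces at $x$ span $T_x$; this is where one must use the detailed structure of $\cV$ near a point of $X$ — concretely, the affine $4$-space (resp.\ $3$-, $2$-space) fibres over $\mathcal{G}$ (resp.\ $\mathcal{S}$) from Observation (4) and its analogues — to see that the "infinite" directions contributed by $Y$ are already reached by line-type symps. I would carry out the $\mathbb{S}$-case computation explicitly at $x=\rho(0,0)$ and then invoke homogeneity plus the parallel structure of the $\CD(\L',0)$- and $\mathbb{T}$-cases to finish.
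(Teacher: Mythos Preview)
Your proposal is correct and follows essentially the same strategy as the paper: for (S1) and (S2) you both exploit that $\cV$ is a linear slice of $\mathcal{E}_{6,1}(\K)$, invoke the symp structure of the latter, and then apply Propositions~\ref{symps}--\ref{sympshalfsegre}. The paper handles the collinear case of (S1) by explicitly producing a third point $p_3\in X\cup Z$ collinear to $p_1$ but not to $p_2$ (so that the symp through $p_2,p_3$ contains $p_1$ and has a non-collinear pair, making the Propositions applicable); your parenthetical sketch is heading to the same place but would need exactly this step to be complete. One small slip: in $\mathcal{E}_{6,1}(\K)$ two distinct symps can meet in a singular subspace of dimension up to $4$, not merely a line---but since ``singular'' is all (S2) needs, this does not affect your argument.

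The only genuine methodological difference is (S3): the paper argues structurally via the linear duality between the base variety and $Y$ (Observation~(6)), whereas you propose to invoke the transitivity of Observation~(5) and verify $T_x=\langle T_x(\xi_1),T_x(\xi_2)\rangle$ by direct coordinate computation at $x=\rho(0,0)$. Both work; your route is perhaps more hands-on but perfectly adequate here, while the paper's route generalises more cleanly to the analogous statements for the other two varieties without recomputing.
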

\begin{proof}
Consider $\mathcal{V}_2(\K,\mathbb{A})$, where $\A \in \{\mathbb{T},\CD(\L',0),\mathbb{S}\}$.

\begin{enumerate}
\item[(S1)] If $p_1$ and $p_2$ are non-collinear, then they determine a unique symp $\Sigma$ of $\mathcal{V}_2(\K,\bO')$, which by assumption intersects $\mathcal{V}_2(\K,\mathbb{A})$ in two non-collinear points, so $\Sigma \cap \mathcal{V}_2(\K,\bO') \in \Xi \cup \Theta$ by Propositions~\ref{sympshalfsegre},~\ref{sympssegre} and~\ref{symps}. 

If $p_1$ and $p_2$ are on a line, then we can always find a point $p_3$ in $X\cup Z$ which is collinear to $p_1$ and not to $p_2$. Then the symp of $\mathcal{V}_2(\K,\mathbb{O}')$ containing $p_2$ and $p_3$ also contains $p_1$ and the same argument as above applies.

\item[{(S2)}] This is immediate as each member of $\Xi \cup \Theta$ is contained in a symp of $\mathcal{V}_2(\K,\bO')$ and two symps of the latter intersect in a singular subspace, which at its turn will intersect $\mathcal{V}_2(\K,\mathbb{A})$ in a singular subspace.

\item[{(S3)}] This can be shown by using the correspondence between the ``base'' variety (the part contained in $F$) and the subspace $Y$ (see Observation (6)), together with the properties of the base variety. 
\end{enumerate}
\end{proof}

\section{Dual split Veronese sets}\label{sVV}
We  introduce dual split Veronese sets formally. 
\subsection{Definition}\label{Defvv}
We work with a point set in $\mathbb{P}^N(\K)$ (where $\K$ is still an arbitrary field) and a family of quadrics:

\begin{defi}\label{svs} \em Let $R,V$ be integers with $V\geq -1$ and $R \geq 1$. An \emph{$(R,V)$-cone} $C$ is a cone with a $V$-dimensional vertex and as base a hyperbolic quadric of rank $R+1$ (i.e., a non-degenerate quadric of maximal Witt index in $\mathbb{P}^{2R+1}(\K)$); $C$ without its vertex is called an \emph{$(R,V)$-tube}.
\end{defi}

As can be seen in Propositions~\ref{symps}, \ref{sympssegre} and~\ref{sympshalfsegre}, we need to work with  two families of $(R,V)$-tubes with a different behaviour (called ``ordinary tubes'' and ``special tubes''); likewise, we also work with two types of point sets (called ``ordinary points'' and ``special points''). Informally speaking, the ``special points'' are points belonging to vertices of ``ordinary tubes'' and the ``special tubes'' are special in the sense that their base also contains points which are contained in the vertex of ``ordinary tubes''.

 Let $r,v,r',v',N$ be integers which are at least $-1$ with $r'>r \geq 1$. Suppose that $X \cup Z$ is a spanning point set of $\mathbb{P}^N(\K)$. We define $Y$ as the subspace spanned by the points of $Z$. Put  $d:=2r+v+1$ and $d':=2r'+v'+1$. Let $\Xi$ be a  collection of $(d+1)$-dimensional subspaces of $\mathbb{P}^N(\K)$ with $|\Xi|>1$ and $\Theta$ a possibly empty collection of $(d'+1)$-dimensional subspaces of $\mathbb{P}^N(\K)$ such that:
 \begin{itemize}
 \item For each $\xi \in \Xi$, the intersection $XY(\xi):=(X\cup Y)  \cap \xi$ is an $(r,v)$-cone $C_\xi$, $X(\xi):=X\cap \xi$ is a $(r,v)$-tube $T_\xi$ and $Y(\xi):=Y\cap \xi$ is the vertex of $C_\xi$;
 \item for each $\theta \in \Theta$, the intersection $XY(\theta):=(X \cup Y) \cap \theta$ is an $(r',v')$-cone $C_\theta$, $Y(\theta):=Y \cap \theta$ is precisely a generator $M$ of the quadric $C_\theta$ (which in particular contains the vertex $V_\theta$ of $C_\theta$),  and  $Z(\theta):=Z \cap \theta$ is the (disjoint) union of $V_\theta$ and some $r'$-space of $M$ complementary to it; lastly, $X(\theta):=X \cap \theta$ is $C_\theta\setminus M$.
\end{itemize}

A subspace $S$ of $\mathbb{P}^N(\K)$ is called \emph{singular} if all its points are contained in $X \cup Y$. For each point $x\in X$, we denote by $T_x$  the subspace spanned by all singular lines through $x$. 
\par\medskip

\begin{tcolorbox}
A quadruple $(X,Z,\Xi,\Theta)$ is called a \textbf{dual split Veronese set (DSV for short)}, with parameters $(r,v,r',v')$ if $\Theta$ non-empty and parameters $(r,v)$ if $\Theta$ empty, if the following axioms are satisfied:
\begin{itemize}
\item[(S1)] Each pair of distinct points $p_1,p_2 \in X \cup Z$ is contained in a member of $\Xi \cup \Theta$;
\item[(S2)] the intersection $\zeta_1 \cap \zeta_2$ of two distint members $\zeta_1, \zeta_2 \in \Xi \cup \Theta$ is singular;
\item[(S3)] for each point $x \in X$,  there exist $\xi_1,\xi_2$ in $\Xi$ such that $T_x=\<T_x(\xi_1),T_x(\xi_2)\>$;
\end{itemize}
If $(X,Z,\Xi,\Theta)$ satisfies (S1) and (S2), then we call it a \textbf{dual split \emph{pre}-Veronese set (pre-DSV for short)}. If $\Theta$ is empty, we use the adjective \textbf{mono-symplectic}, and otherwise \textbf{duo-symplectic}.
\end{tcolorbox}
\par\medskip

The goal is to classify the DSVs. We list a class of varieties of (pre-)DSVs first.

\subsection{Mono-symplectic DSVs}
In Lemma~\ref{existencesupersymps} and Proposition~\ref{trivialcase}, we show that a mono-symplectic DSV is projectively equivalent to a cone (possibly with empty vertex) over a mono-symplectic DSV with parameters $(r,-1)$ in which $Z$ is the empty set. Such sets have been classified by Schillewaert and Van Maldeghem in \cite{SVM}, who call these sets \emph{Mazzocca-Melone sets of split type $d$} (recall $d=2r+v+1$). There is a slight difference in the axioms used in \cite{SVM}, indeed, they consider the following two variants of (S3):

\begin{compactenum}
\item[(S3')] for each point $x \in X$, $\dim(T_x) \leq 2d$;
\item[(S3'')] for each point $x \in X$, $\dim(T_x)=2d$.
\end{compactenum}

If we filter out the varieties of the main result of \cite{SVM} that not only satisfy (S1), (S2) and (S3'), but also the stronger requirement (S3), we obtain that the mono-symplectic DSV with parameters $(r,-1)$ are the following:
\begin{compactenum}
\item[($r=1$)] The Segre varieties $\mathcal{S}_{1,2}(\K)$ and $\mathcal{S}_{2,2}(\K)$; 
\item[($r=2$)] the line Grassmannians  $\mathcal{G}_{5,2}(\K)$ and $\mathcal{G}_{6,2}(\K)$;
\item[($r=4$)]  the exceptional $\mathcal{E}_{6,1}(\K)$-variety.
\end{compactenum}

\subsection{Segre varieties, line Grassmannians and their projections}
We zoom in on the Segre varieties $\mathcal{S}_{k,\ell}(\K)$ (for general but finite $k,\ell$) and the line Grassmannians $\mathcal{G}_{n+1,2}(\K)$ of a projective space $\mathbb{P}^N(\K)$,  as they will be key ingredients for the examples of duo-symplectic (pre-)DSVs.

\subsubsection{Segre varieties with two families of maximal singular subspaces}\label{s}\label{HDS}\label{ds}
Let $\ell$ and $k$ be natural numbers with $\ell,k \geq 1$. The Segre variety $\mathcal{S}_{\ell,k}(\K)$ is the set of points in the image of the \emph{Segre map} $\sigma$, where $m:=(\ell+1)(k+1)-1$:
\[\sigma: \mathbb{P}^{\ell}(\K) \times \mathbb{P}^{k}(\K) \rightarrow  \mathbb{P}^{m}(\K): ((x_0,..,x_\ell),(y_0,...,y_k)) \mapsto  (x_iy_j)_{0 \leq i \leq \ell, 0 \leq j \leq k}.\]
This product can be visualised in $\mathbb{P}^{m}(\K)$ by taking an $\ell$-space $\Pi_\ell$ and a $k$-space $\Pi_k$ intersecting each other in precisely a point, and considering their direct product of $\Pi_\ell$ and $\Pi_k$ geometrically. There are two natural families of maximal singular subspaces, if we say that two maximal singular subspaces belong to the same family if and only if they are disjoint.

\subsubsection{Line Grassmannians of projective spaces}\label{DLG}
Let $n$ be a natural number with $n \geq 2$. The line Grassmannian $\mathcal{G}_{n+1,2}(\K)$ of $\mathbb{P}^n(\K)$ is the set of points in $\mathbb{P}^{\frac{1}{2}(n^2+n) -1}(\K)$ obtained by taking the images of all lines of $\mathbb{P}^n(\K)$ under the Pl\"ucker map
\[ \mathsf{pl}: (\<x_0,x_1,...,x_n),(y_0,y_1,...,y_n)\> \mapsto \left( \begin{vmatrix} x_i & x_j \\ y_i & y_j \end{vmatrix} \right)_{0 \leq i<j \leq n}.\] 

\subsubsection{Legal and injective projections}
We will encounter more general versions of the above defined geometries. This is caused by the fact that the varieties $\mathcal{S}_{\ell,k}(\K)$ and $\mathcal{G}_{n+1,2}(\K)$ are embeddings of the abstract geometries $\mathsf{A}_{\ell,1}(\K)\times\mathsf{A}_{k,1}(\K)$ and $\mathsf{A}_{n,2}(\K)$, respectively, but the latter geometries could admit other embeddings. The varieties $\mathcal{S}_{\ell,k}(\K)$ and $\mathcal{G}_{n+1,2}(\K)$  are however  their (absolutely) universal embeddings, as follows from the main results in Wells (\cite{Wel}) and Zanella (\cite{Zan}):

\begin{fact}\label{factuniemb}
Let $(\cP,\cL)$ be a point-line geometry isomorphic to either $\mathsf{A}_{\ell,1}(\K)\times\mathsf{A}_{k,1}(\K)$, for $\ell,k\geq 1$, or $\mathsf{A}_{n,2}(\K)$, for $n\geq 2$, such that $\cP$ be a spanning subset of a projective space $\mathbb{P}^{m}(\K)$, and each member of $\mathcal{L}$ is the set of all points on a certain line of $\mathbb{P}^{m}(\K)$.   Then $\cP$ arises as an injective projection of $\mathcal{S}_{\ell,k}(\K)$, or $\mathcal{G}_{n+1,2}(\K)$, respectively. \end{fact}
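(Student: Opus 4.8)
The plan is to derive this directly from the fact that the Segre variety $\mathcal{S}_{\ell,k}(\K)$ and the line Grassmannian $\mathcal{G}_{n+1,2}(\K)$ are the \emph{absolutely universal} projective embeddings of $\mathsf{A}_{\ell,1}(\K)\times\mathsf{A}_{k,1}(\K)$ and of $\mathsf{A}_{n,2}(\K)$, respectively. Recall that a \emph{projective embedding} of a point-line geometry $(\cP,\cL)$ is an injective map $\varepsilon$ from $\cP$ into the point set of a projective space $\mathbb{P}^m(\K)$ whose image spans $\mathbb{P}^m(\K)$ and which sends every member of $\cL$ onto the full point set of a line of $\mathbb{P}^m(\K)$; an embedding $\tilde\varepsilon$ into $\mathbb{P}^M(\K)$ is absolutely universal if every embedding $\varepsilon$ of the same geometry into some $\mathbb{P}^m(\K)$ is of the form $\varepsilon=\pi\circ\tilde\varepsilon$ for a projection $\pi$ of $\mathbb{P}^M(\K)$ onto $\mathbb{P}^m(\K)$. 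The center of such a $\pi$ is automatically disjoint from $\tilde\varepsilon(\cP)$ and from each line $\langle\tilde\varepsilon(L)\rangle$, $L\in\cL$ (otherwise $\varepsilon$ would identify two points of $\cP$ or contract a line to a point, contradicting that $\varepsilon$ is an embedding), so $\pi$ restricts to an isomorphism of geometries onto $(\varepsilon(\cP),\{\varepsilon(L)\})$; this is precisely what ``injective projection'' means here.

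First I would quote the two universality results. By Wells \cite{Wel}, over any field $\K$ and for any finite $\ell,k\geq 1$, the Segre map $\sigma$ of Section~\ref{s} realises $\mathcal{S}_{\ell,k}(\K)\subseteq\mathbb{P}^{(\ell+1)(k+1)-1}(\K)$ as the absolutely universal embedding of $\mathsf{A}_{\ell,1}(\K)\times\mathsf{A}_{k,1}(\K)$; by Zanella \cite{Zan}, for any finite $n\geq 2$ the Pl\"ucker map $\mathsf{pl}$ of Section~\ref{DLG} realises $\mathcal{G}_{n+1,2}(\K)\subseteq\mathbb{P}^{\frac12(n^2+n)-1}(\K)$ as the absolutely universal embedding of $\mathsf{A}_{n,2}(\K)$.

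Then I would feed the geometry of the statement into this. The hypotheses on $(\cP,\cL)$ --- that $\cP$ spans $\mathbb{P}^m(\K)$ and each member of $\cL$ is the full point set of a line of $\mathbb{P}^m(\K)$ --- say exactly that, after composing with a chosen isomorphism from $\mathsf{A}_{\ell,1}(\K)\times\mathsf{A}_{k,1}(\K)$ (resp.\ $\mathsf{A}_{n,2}(\K)$) onto $(\cP,\cL)$, the inclusion $\cP\hookrightarrow\mathbb{P}^m(\K)$ is a projective embedding of the corresponding abstract geometry. By the universality recalled above, this embedding factors through $\sigma$ (resp.\ $\mathsf{pl}$) via a projection $\pi$; and, as observed, $\pi$ is then an injective projection mapping $\mathcal{S}_{\ell,k}(\K)$ (resp.\ $\mathcal{G}_{n+1,2}(\K)$) onto $\cP$ and its natural lines onto $\cL$. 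That is the assertion.

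The only real content lies in the two quoted theorems, which I do not intend to reprove; I expect no genuine obstacle beyond aligning the definitions. Concretely, one should check that the notion of embedding assumed in the statement coincides with the one used by Wells and Zanella --- it does, since injectivity of $\cP\hookrightarrow\mathbb{P}^m(\K)$ is automatic, lines map to full lines by hypothesis, and spanning is assumed --- and that ``the projection factoring a given embedding through the universal one'' is the same as the ``injective (legal) projection'' considered elsewhere in this section. Finiteness of $\ell,k,n,m$, which is part of the hypotheses, guarantees that all projective spaces in sight are finite-dimensional, so that ``projection'' carries its usual meaning.
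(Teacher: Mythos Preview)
Your proposal is correct and follows exactly the paper's approach: the paper states this as a Fact without proof, merely citing Wells and Zanella for the absolute universality of the Segre and Pl\"ucker embeddings, and you have spelled out the routine deduction from universality to ``injective projection''. One bibliographic slip: you attribute the Segre case to \cite{Wel} and the Grassmannian case to \cite{Zan}, but the titles indicate the reverse---Zanella treats the Segre embedding and Wells the Grassmannian.
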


We could, in the above descriptions of the (half) dual Segre varieties and dual line Grassmannians, replace the Segre varieties and line Grassmannians by injective projections of them, if any, and apply  the same construction. However, if we want the resulting geometry to satisfy (S2), we cannot use \emph{any} injective projection, as we will now explain.

Let $\Omega$ be either a Segre variety $\mathcal{S}_{\ell,k}(\K)$ or a line Grassmannian  $\mathcal{G}_{n+1,2}(\K)$. Let $X$ be its point set and $\Xi$ be the set of subspaces spanned by the quadrics one obtains by taking the convex closures of points of $X$ at distance 2. As can be verified from their respective algebraic definitions, $(X,\Xi)$ is a mono-symplectic split pre-DSV with $Z=\emptyset$. We will only consider projections of them which preserve the fact that they are pre-DSVs. In general, suppose $(X,\Xi)$ is a mono-symplectic pre-DSV with $Z =\emptyset$ and $\<X\>=\mathbb{P}^{M}(\K)$ for some $M\in \mathbb{N}$. 

\begin{defi}\label{legal} \em We say that a subspace $S$ of $\mathbb{P}^{m}(\K)$ is \emph{legal with respect to $(X,\Xi)$} if $S$ is disjoint from $\<\xi_1,\xi_2\>$ for each pair of symps $\xi_1,\xi_2\in\Xi$. The projection of $(X,\Xi)$ from $S$ onto a subspace of $\mathbb{P}^{m}(\K)$  complementary to $S$ is called a \emph{legal projection} of $(X,\Xi)$. \end{defi}

Note that a legal projection is automatically injective. By definition, any legal projection of $(X,\Xi)$ is also a mono-symplectic pre-DSV.

\subsection{Examples of duo-symplectic (pre-)DSVs}\label{HDS}\label{ds}\label{DLG}
We  use the above geometries to construct duo-symplectic pre-DSVs.

\textbf{Half dual Segre varieties.} Inside $\mathbb{P}^{m+\ell+1}(\K)$,  we consider a Segre variety $\mathsf{S}:=\mathcal{S}_{\ell,k}(\K)$ and an $\ell$-space $Y$ complementary to $\<\mathsf{S}\>$. Let $S$ be any $\ell$-space of $\mathsf{S}$ and $\chi_S$  a linear duality  between $\Pi$ and $Y$, which hence takes a point of $S$ to a hyperplane of $Y$. We extend $\chi_S$ to a map $\chi$ from all points of $\mathsf{S}$ to $Y$  by defining, for a point $x\in \mathsf{S} \setminus S$, its image $\chi(x)$ as $\chi_S(x_S)$, where $x_S$ is the unique point in $S$ collinear to $x$.  The union of all points in  $\{\<x,\chi(x)\> \setminus \chi(x) \mid x\in \mathsf{S}\}$ is the point set $X$ of what we call a \emph{half dual Segre variety} and denote by $\mathcal{HDS}_{\ell,k}(\K)$. 

\textbf{Dual Segre varieties.}  Inside $\mathbb{P}^{m+\ell+k+2}(\K)$, we consider a Segre variety $\mathsf{S}:=\mathcal{S}_{\ell,k}(\K)$, and in an $(\ell+k+1)$-space $Y$ complementary to it, we take two disjoint subspaces $Z_1$ and $Z_2$ of respective dimensions $\ell$ and $k$.
As above, let $S_1$ be any $\ell$-space of $\mathsf{S}$, and also take any $k$-space $S_2$ of $\mathsf{S}$ which intersects $S_1$ in a point. For $i=1,2$, let $\chi_{S_i}$ be a linear duality between $S_i$ and $Z_i$, thus taking a point of $S_i$ to a hyperplane of $Z_i$. We extend the maps $\chi_{S_1}$ and $\chi_{S_2}$ to a map $\chi$ from all points of $\mathsf{S}$ to $\<Z_1,Z_2\>$  by defining, for a point $x$ of $\mathsf{S}$, its image $\chi(x)$ as $\<\chi_{S_1}(x_{S_1}),\chi_{S_2}(x_{S_2})\>$, where $x_{S_i}$ is equal to $x$ if $x\in S_i$, or, if not, it is the unique point in $S_i$ collinear to $x$. 
The union of all points in $\{\<x,\chi(x)\> \setminus \chi(x) \mid x\in \mathsf{S}\}$ is the point set $X$ of a \emph{dual Segre variety}, which we will denote by $\mathcal{DS}_{\ell,k}(\K)$. 

\begin{rem} \em The half dual Segre variety $\mathcal{S}_{\ell,k}(\K)$ is the projection of the dual Segre variety $\mathcal{S}_{\ell,k}(\K)$ from the subspace $Z_2$. 
\end{rem}

\textbf{Dual line Grassmannians.}
Consider, inside $\mathbb{P}^{\frac{1}{2}(n^2+3n)}(\K)$, an $n$-space $Y$ and a complementary subspace $F$ of dimension $\frac{1}{2}(n^2+n)-1$. In $F$,  take a line Grassmannian $\mathcal{G}:=\mathcal{G}_{n+1,2}(\K)$, which is the image under $\mathsf{pl}$ of a certain $n$-dimensional projective space $\mathbb{P}$. Let $\chi':\mathbb{P}\rightarrow Y$ be a linear duality, and note that each line of $\mathbb{P}$ corresponds to a $(n-2)$-space of $Y$. As such, we can define a map $\chi$ between $\mathcal{G}$ and $Y$ which is defined by, for each point $x\in \mathcal{G}$, taking $x$ to $\chi'(\mathsf{pl}^{-1}(x))$.
The union of all points in $\{\<x,\chi(x)\> \setminus \chi(x) \mid x\in \mathcal{G}\}$ is the point set $X$ of a \emph{dual line Grassmannian}, which we will denote by $\mathcal{DG}_{n+1,2}(\K)$. 

\par\medskip
Each of these three classes of geometries contains a duo-symplectic DSV (where the convex closures of two points at distance 2 gives the members of $\Xi \cup \Theta$, and such a convex closure belongs to $\Xi$ if and only if it only shares its vertex with $Y$):
\par\medskip
\begin{prop}\label{vervar}
The varieties $\mathcal{HDS}_{2,2}(\K)$, $\mathcal{DS}_{2,2}(\K)$ and $\mathcal{DG}_{6,2}(\K)$ are isomorphic to $\mathcal{V}_2(\K,\mathbb{T})$, $\mathcal{V}_2(\K,\CD(\L',0))$ and $\mathcal{V}_2(\K,\mathbb{S})$, respectively, and hence they are dual split Veronese sets, with respective parameters $(1,0,2,-1)$, $(1,1,2,1)$, $(2,1,4,-1)$. 
\end{prop}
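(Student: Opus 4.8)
The plan is to prove the isomorphism for each of the three pairs separately, using the coordinate descriptions of the Veronese varieties $\mathcal{V}_2(\K,\A)$ worked out in Section~\ref{splitgeom} and matching them with the synthetic constructions of $\mathcal{HDS}_{2,2}$, $\mathcal{DS}_{2,2}$ and $\mathcal{DG}_{6,2}$. For each case the strategy is the same: identify the ``base variety'' (the part of $\mathcal{V}_2(\K,\A)$ contained in $F$, i.e.\ $\mathcal{V}_2(\K,\bH')\cong\mathcal{G}_{6,2}(\K)$ for $\mathbb{S}$, $\mathcal{V}_2(\K,\L')\cong\mathcal{S}_{2,2}(\K)$ for $\CD(\L',0)$, and the Segre-type variety $\mathcal{S}$ for $\mathbb{T}$), identify the subspace $Y$ (resp.\ $Z_1\cup Z_2$, resp.\ $Z_1$) at infinity, and then check that the correspondence $\chi$ appearing in the synthetic construction is exactly the linear duality between the base variety and $Y$ recorded in Observation~(6) of Section~\ref{splitgeom} (and its analogues). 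Concretely, one reads off from the explicit coordinates $X_{0-26}$ for $\rho(B,C)$ that each point of $\mathcal{V}_2(\K,\mathbb{S})\setminus Y$ lies in an affine $4$-space over a point $p_G$ of $\mathcal{G}$, with $3$-space at infinity $U_p\subseteq Y$, and that $p\mapsto U_p$ is a linear duality; this is precisely the data $\{\<x,\chi(x)\>\setminus\chi(x)\mid x\in\mathcal{G}\}$ defining $\mathcal{DG}_{6,2}(\K)$. Thus the point sets coincide, and since in both descriptions the lines are exactly the lines of the ambient projective space contained in the set, the point--line geometries are isomorphic.

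For $\mathcal{V}_2(\K,\CD(\L',0))$ and $\mathcal{DS}_{2,2}(\K)$ I would argue the same way, using that $\CD(\L',0)\cong\{M(a,0,0,d,0,y,z,0)\}$ so that the base variety is $\mathcal{V}_2(\K,\L')\cong\mathcal{S}_{2,2}(\K)$, that $Y=\<Z_1,Z_2\>$ with $Z_1,Z_2$ the two planes coming from the two pencils $y$ and $z$, and that the two dualities $\chi_{S_1},\chi_{S_2}$ are the restrictions of the global correspondence between $\mathcal{G}$ and $Y$ to the two planes $\pi_1,\pi_2$ (the two families of maximal subspaces of $\mathcal{S}$), exactly as described just before Proposition~\ref{sympssegre}. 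For $\mathcal{V}_2(\K,\mathbb{T})$ and $\mathcal{HDS}_{2,2}(\K)$, one uses $\mathbb{T}\cong\{M(a,0,0,d,0,y,0,0)\}$, so only one of the two planes, $Z_1$, survives in $Y$; the remark after the construction of the dual Segre variety already notes that $\mathcal{HDS}_{\ell,k}(\K)$ is the projection of $\mathcal{DS}_{\ell,k}(\K)$ from $Z_2$, and on the coordinate side passing from $\CD(\L',0)$ to $\mathbb{T}$ is exactly killing the coordinates involving the $z$-slot, i.e.\ projecting from $Z_2$; so this case follows from the previous one.

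Having established the three isomorphisms, the statement that $\mathcal{HDS}_{2,2}(\K)$, $\mathcal{DS}_{2,2}(\K)$ and $\mathcal{DG}_{6,2}(\K)$ are dual split Veronese sets then follows directly from Proposition~\ref{standardex}: that proposition says $\mathcal{V}_2(\K,\mathbb{T})$, $\mathcal{V}_2(\K,\CD(\L',0))$ and $\mathcal{V}_2(\K,\mathbb{S})$ satisfy (S1), (S2), (S3), and the descriptions of $\Xi,\Theta$ as the two types of symps (from Propositions~\ref{symps}, \ref{sympssegre}, \ref{sympshalfsegre}) show that $\Xi$ consists of $(r,v)$-cones meeting $Y$ only in the vertex and $\Theta$ of $(r',v')$-cones with $Y\cap\theta$ a full generator, matching the definition of a DSV. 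The parameters are then just bookkeeping: from Proposition~\ref{symps}, a symp of type $\Xi$ in $\mathcal{V}_2(\K,\mathbb{S})$ is a cone with $1$-dimensional vertex over a hyperbolic quadric in $\mathbb{P}^5$, so $(r,v)=(2,1)$, while a symp of type $\Theta$ equals the whole $\mathbb{P}^9$ symp with $Y\cap\Sigma$ a $4$-space, giving $(r',v')=(4,-1)$; similarly Proposition~\ref{sympssegre} gives $(1,1,2,1)$ and Proposition~\ref{sympshalfsegre} gives $(1,0,2,-1)$.

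The main obstacle I expect is the verification that the synthetic construction of $\chi$ (via a chosen $\ell$-space $S$ of the Segre, resp.\ a line of $\mathbb{P}$, and a linear duality on it, then extended by collinearity) really reproduces, up to a projectivity fixing the base variety and $Y$, the specific duality $p\mapsto U_p$ coming from the octonion coordinates — in other words, that the synthetic construction is projectively rigid enough that \emph{any} choice of duality yields a variety projectively equivalent to $\mathcal{V}_2(\K,\A)$. This amounts to checking that the stabiliser (inside $\mathrm{P\Gamma L}$) of the pair (base variety, $Y$) acts transitively on the admissible dualities $\chi$, which one gets from the transitivity properties of $\mathcal{G}_{6,2}(\K)$ resp.\ $\mathcal{S}_{2,2}(\K)$ on their maximal singular subspaces together with the fact that the relevant group acts as the full $\mathrm{P\Gamma L}$ on $Y$; the bulk of the remaining work is the (routine but tedious) coordinate check that the $3$-spaces $U_p$ depend \emph{linearly} and dually on $p_G$, which is already asserted as Observation~(6) and whose proof is referenced to \cite{thesis}.
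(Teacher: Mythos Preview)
Your proposal is correct and follows essentially the same approach as the paper's own proof: the paper simply refers back to the decomposition established in Section~\ref{splitgeom} (base variety plus subspace $Y$, with $p\mapsto p^\perp\cap Y$ the linear duality of Observation~(6)) to get the isomorphisms, and then invokes Proposition~\ref{standardex} for the DSV property, exactly as you outline. Your extra paragraph on projective rigidity of the synthetic construction (that any admissible choice of $\chi$ yields projectively equivalent varieties) is a point the paper glosses over, but your transitivity argument handles it and in any case one only needs that \emph{some} choice reproduces the Veronese variety, which is what matching Observation~(6) gives.
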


\begin{proof}
The first assertion follows from the description of $\mathcal{V}_2(\K,\mathbb{T})$, $\mathcal{V}_2(\K,\CD(\L',0))$ and $\mathcal{V}_2(\K,\mathbb{S})$ in Section~\ref{splitgeom}: the decomposition into a subspace $Y$ and a ``base variety'' (which is the Segre variety $\mathcal{S}_{2,2}(\K)$ in the smallest two cases and the line Grassmannian $\mathcal{G}_{6,2}(\K)$ in the largest case) is given, together with the structure of $p^\perp \cap Y$ for each point $p$ in the base variety. The second assertion then follows from Proposition~\ref{standardex}.
\end{proof}

\begin{rem}
\em The (half) dual Segre varieties and dual line Grassmannians other than these in Proposition~\ref{vervar} do not satisfy Axiom (S3). 
\end{rem}


\begin{defi}\label{defmut}\em Consider the (half) dual variety $\mathsf{(H)D\Omega}$ associated to $\Omega$. Then we may replace $\Omega$ by a legal projection $\Pi$ of $\Omega$, and we may re-position $Y$ in such a way that, after applying the same construction as before (which does not depend on the mutual position of $\Pi$ and $Y$) and obtaining a point set $X$, the projection of $\<\Pi\>\cap X$ from $\<\Pi\> \cap Y$ (onto a subspace of $\<\Pi\>$ complementary to $\<\Pi\> \cap Y$) yields an injective projection of $\Pi$; the ambient projective space is afterwards restricted to $\<\Pi,Y\>$. The resulting structure is called a \emph{mutant} of  $\mathsf{(H)D\Omega}$.\end{defi}

 Luckily, for the DSVs,  (S3) forces the occurring Segre varieties and line Grassmannians to be rather small, in which case they do not admit proper legal projections:
 
\begin{prop}\label{noproj}The following varieties do not admit proper legal projections:\begin{compactenum}[$(i)$] 
\item  The Segre varieties  $\mathcal{S}_{\ell,k}(\K)$ with $\ell \leq 3$ and $k\geq 1$; 
\item the line Grassmannians $\mathcal{G}_{5,2}(\K)$ and $\mathcal{G}_{6,2}(\K)$.\end{compactenum}
\end{prop}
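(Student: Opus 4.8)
\textbf{Plan of proof for Proposition~\ref{noproj}.}
The plan is to show that a proper legal projection would force the existence of too few points, or would collapse some symp, contradicting the size and the dimension count inside $\Omega$. Recall a subspace $S$ is legal with respect to $(X,\Xi)$ precisely when $S$ is disjoint from $\<\xi_1,\xi_2\>$ for every pair of symps $\xi_1,\xi_2\in\Xi$; in particular, $S$ is disjoint from every single symp of $\Omega$, and more strongly, from the span of any two of them. The key observation is that in both families the symps are small and numerous, and together with their pairwise spans they cover enough of the ambient projective space that any subspace disjoint from all such spans must be empty. So the strategy is: (1) identify explicitly the symps of $\Omega$ and compute the dimension of $\<\xi_1,\xi_2\>$ for pairs of symps in generic and in special mutual position; (2) show that the union (or rather the collection of spans) of these subspaces leaves no room for a nonempty legal $S$.

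First I would treat the line Grassmannians. In $\mathcal{G}_{n+1,2}(\K)$ (with $n\in\{4,5\}$) the symps are the quadrics $\Sigma_{p,H}$ consisting of all lines through a fixed point $p$ lying in a fixed hyperplane $H\ni p$; each such symp is a hyperbolic quadric of rank $n$ (i.e.\ a $\mathcal{G}_{n-1,2}$-type quadric, hence of projective dimension $\binom{n}{2}-1$, spanning a subspace of that dimension). A legal subspace $S$ must be disjoint from $\<\Sigma_1,\Sigma_2\>$ for every pair. Taking $\Sigma_1=\Sigma_{p,H}$ and $\Sigma_2=\Sigma_{p',H'}$ with $p\neq p'$ and $H\neq H'$ in general position, one computes $\dim\<\Sigma_1,\Sigma_2\>$ and checks that, already for a well-chosen finite collection of such pairs, $\bigcap$ of their complementary behaviour — i.e.\ the locus of subspaces disjoint from all of them — is empty; equivalently, the pairwise spans already cover $\PAG^{\frac12(n^2+n)-1}(\K)$ as $p,p',H,H'$ vary. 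Concretely, it suffices to exhibit, for every point $q$ of the ambient space, a pair of symps whose span contains $q$: since $q$ lies on a line $\ell_q$ of $\mathcal{G}$ (the Grassmannian is not contained in any hyperplane and the secant variety fills up — but more carefully one uses that $q$ lies in the span of two symps sharing a maximal singular subspace), we get $q\in\<\Sigma_1,\Sigma_2\>$, so $S$ cannot contain $q$. Hence $S=\emptyset$. The cases $n=4,5$ are small enough that the relevant dimension inequality ($\dim\<\Sigma_1,\Sigma_2\>\geq \frac12(n^2+n)-1$ for a suitable pair) holds with room to spare, whereas for large $n$ it fails, which is exactly why the statement is restricted.

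For the Segre varieties $\mathcal{S}_{\ell,k}(\K)$ with $\ell\leq 3$, the symps are the grids $\sigma(L_1\times L_2)$ where $L_1,L_2$ are lines of $\PAG^\ell(\K)$, $\PAG^k(\K)$ respectively; each spans a $3$-space and is a hyperbolic quadric in $\PAG^3(\K)$. Given two symps $\sigma(L_1\times L_2)$ and $\sigma(L_1'\times L_2')$, the dimension of their span depends on how $L_1,L_1'$ and $L_2,L_2'$ meet; I would run through the (few) cases and show that for $\ell\leq 3$ one can, for any point $q\in\PAG^m(\K)$ with $m=(\ell+1)(k+1)-1$, choose $L_1,L_2,L_1',L_2'$ so that $q\in\<\sigma(L_1\times L_2),\sigma(L_1'\times L_2')\>$ — using that $\PAG^\ell\times\PAG^k$ with $\ell\leq 3$ is spanned by the union of two ``bi-planes'' $\PAG^1\times\PAG^1$ suitably placed, together with the bilinearity of $\sigma$. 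Again this forces any legal $S$ to be empty, and the bound $\ell\leq 3$ is sharp because for $\ell\geq 4$ two $\PAG^1\times\PAG^1$-grids no longer span enough. I expect the main obstacle to be the bookkeeping in step (1): carefully enumerating the mutual positions of two symps and getting the span-dimension right in each case (especially the Grassmannian case, where a pair of symps can share a point, a line, or a maximal singular subspace, each giving a different span dimension). Once those dimension counts are in hand, the conclusion that no nonempty legal subspace survives is a short covering argument. $\qed$
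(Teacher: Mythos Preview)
Your covering strategy---show that every point of the ambient space lies in $\<\xi_1,\xi_2\>$ for some pair of symps, hence no nonempty legal subspace exists---is exactly what the paper does for $(i)$, but the paper executes it more cleanly via the matrix model: a point of $\PAG^{(\ell+1)(k+1)-1}(\K)$ is an $(\ell+1)\times(k+1)$ matrix $A$, and writing $A$ as a sum of $\mathrm{rank}(A)\leq 4$ rank-one matrices exhibits $A$ in the span of (at most) two grids immediately, with no case analysis on mutual positions of $L_i,L_i'$. Your plan would work but is heavier.

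For $(ii)$ there is a genuine error: you have misidentified the symps. In $\mathcal{G}_{n+1,2}(\K)$ the set of lines through a fixed point $p$ inside a fixed hyperplane $H\ni p$ is a \emph{singular} subspace (a projective space of dimension $n-2$), not a quadric. The symps of $\mathcal{G}_{n+1,2}(\K)$ are the Klein quadrics $\{\text{lines in a fixed }\PAG^3\text{-subspace}\}$, each a hyperbolic quadric of rank $3$ in a $\PAG^5$, independently of $n$. So your dimension bookkeeping (``rank $n$'', ``$\binom{n}{2}-1$'') is off, and the subsequent span computations would need to be redone from scratch. The paper sidesteps this entirely: part $(ii)$ is obtained as a direct consequence of the classification in~\cite{SVM}, since a proper legal projection of $\mathcal{G}_{5,2}(\K)$ or $\mathcal{G}_{6,2}(\K)$ would produce a Mazzocca--Melone set of split type not on the list there. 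If you want a self-contained argument along your lines, start from the correct symps and note that for $n\in\{4,5\}$ any point of the ambient space lies on a secant to the Grassmannian joining two non-collinear points, hence in the span of the (unique) symp through them---but you still need to handle points on tangent lines, which is where two symps are required.
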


\begin{proof}
Since $\mathcal{S}_{\ell,k}(\K)$ with $\ell <3$ is contained in $\mathcal{S}_{3,k}(\K)$ it suffices to show that the latter does not admit proper legal projections. The Segre variety $\mathcal{S}_{3,k}(\K)$ is defined by the $4\times (k+1)$ matrices over $\K$ of rank 1 in the projective space defined by the vector space of all $4\times (k+1)$ matrices over $\K$. If $A$ is such a matrix of rank 4, then $A$ is the sum of four rank $1$ matrices $A_1$, $A_2$, $A_3$ and $A_4$ which are pairwise not collinear. Let $\xi_1$ and $\xi_2$ be the respective members of $\Xi$ determined by the pairs  $(A_1,A_2)$ and $(A_3,A_4)$.  Then $A \in \<\xi_1,\xi_2\>$ is not a legal point w.r.t.\ $\mathcal{S}_{3,k}(\K)$. If $A$ has rank $2$ or $3$, $M$ is already the sum of two or three rank 1 matrices, respectively, and the same conclusion can be reached analogously. 

The second assertion is a direct consequence of the main result in \cite{SVM}. 
\end{proof}

Moreover, in these small cases we will, in Lemmas~\ref{1':FX},~\ref{1:FX} and~\ref{2:FX},  be able to show that $\<\Pi\> \cap Y$ needs to be empty, implying that no mutants occur for the DSVs. The following two lemmas will later on help us with that:

\begin{lemma}\label{s22p}
Let $\Sigma$ be a $4$-space contained in the $8$-dimensional projective space generated by the Segre variety $S:=\mathcal{S}_{2,2}(\K)$ and suppose that $\Sigma$ intersects $S$ in exactly a grid $G$. Then there exists a grid $G'$ of $S$ such that $\<G'\>$ intersects $\Sigma \setminus G$ non-trivially.
\end{lemma}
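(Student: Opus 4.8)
The plan is to set up explicit coordinates on $\mathcal{S}_{2,2}(\K)$ as the rank $1$ matrices in the space $\mathbb{P}^8(\K)$ of $3\times 3$ matrices over $\K$, and to understand the $4$-space $\Sigma$ concretely. A grid of $S$ is a sub-Segre variety $\mathcal{S}_{1,1}(\K)$, which we may realise as the rank $1$ matrices supported in a fixed $2\times 2$ block, say the block on rows $\{1,2\}$ and columns $\{1,2\}$; then $\<G\>$ is the $3$-space of all matrices supported in that block, and $G$ itself is the hyperbolic quadric (determinant $=0$) inside it. Since $\dim\Sigma = 4$ and $\dim\<G\> = 3$ with $\Sigma \cap S = G \subseteq \<G\>$, we have $\<G\> \subseteq \Sigma$ (because $\Sigma\cap\<G\>$ is a subspace containing the quadric $G$, which spans $\<G\>$), so $\Sigma = \<\<G\>, p\>$ for a single point $p = (m_{ij})$ not in $\<G\>$, i.e.\ with at least one entry outside the chosen block nonzero.

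The core of the argument is then a case analysis on which entries of $p$ outside the $2\times 2$ block are nonzero, aiming to exhibit a second grid $G'$ with $\<G'\>$ meeting $\Sigma\setminus G$. The natural candidates for $G'$ are grids supported in a $2\times 2$ block sharing exactly one row (or one column) with the original block, e.g.\ rows $\{1,2\}$ and columns $\{2,3\}$, or rows $\{2,3\}$ and columns $\{1,2\}$; such a $G'$ satisfies $\<G'\> \cap \<G\> = $ a line (the matrices supported in the common row/column pair of size $2\times 1$ or $1\times 2$). I would argue that, for a suitable choice of the shared row/column matching the nonzero off-block entries of $p$, one can write down a matrix in $\<G'\>$ that is not in $\<G\>$ (hence not in $G$) but does lie in $\Sigma = \<\<G\>,p\>$: concretely, subtract from $p$ an appropriate element of $\<G\>$ to kill all entries except those in the block supporting $G'$, using that the block of $G'$ overlaps that of $G$ in enough positions to absorb the unwanted entries of $p$. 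If $p$ has a nonzero entry in the ``far corner'' position (row $3$, column $3$ relative to a block $\{1,2\}\times\{1,2\}$) but none in the ``cross'' positions, one instead picks $G'$ supported on rows $\{1,3\}$ and columns $\{1,3\}$ and runs the same reduction; the remaining sub-cases are symmetric under the obvious $\mathrm{PGL}_3(\K)\times\mathrm{PGL}_3(\K)$-action on $S$.

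The main obstacle I anticipate is bookkeeping: one must verify in each sub-case both that the constructed matrix genuinely lies in $\Sigma$ (an explicit linear combination of $\<G\>$-matrices and $p$) and that it genuinely lies outside $G$ (a nonvanishing condition on a coordinate outside $\<G'\>\cap\<G\>$, or that it is a rank $1$ matrix not in the block of $G$); care is also needed to ensure $\<G'\>\cap(\Sigma\setminus G)\neq\emptyset$ rather than merely $\<G'\>\cap\Sigma\neq\emptyset$, i.e.\ the intersection point must avoid the quadric $G$ as well as avoid $\<G'\>\cap\<G\>$ lying inside it. A cleaner route, which I would try first to cut down the casework, is to use transitivity: the stabiliser of the grid $G$ inside $\Aut(S)=\mathrm{PGL}_3(\K)\times\mathrm{PGL}_3(\K)$ (extended by the transpose when $\ell=k$) acts on the set of points $p\notin\<G\>$ with very few orbits on the relevant projective quotient $\mathbb{P}^8/\<G\> \cong \mathbb{P}^4$, so it suffices to treat one representative per orbit; identifying those orbits (distinguished by the ``shape'' of the off-block part of $p$, e.g.\ rank of the $2\times 1$, $1\times 2$ and $1\times 1$ pieces) reduces the problem to a small finite check.
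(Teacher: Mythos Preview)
Your approach is sound and would yield a proof, but it takes a substantially more laborious route than the paper's. You normalise $G$ to a coordinate $2\times 2$ block, write $\Sigma=\langle\langle G\rangle,p\rangle$, and then either case-split on the off-block entries of $p$ or reduce via the stabiliser of $G$ acting on $\mathbb{P}^8/\langle G\rangle\cong\mathbb{P}^4$; the orbits are indeed few (governed by whether $p_{33}$ vanishes and which of the pieces $(p_{13},p_{23})$, $(p_{31},p_{32})$ vanish), and for each representative a coordinate-block grid $G'$ does the job. The paper's proof sidesteps all of this with a single coordinate-free dimension count: choose two planes $\pi_1,\pi_2$ of $S$ from different families, sharing a point and both disjoint from $G$ (possible since each family is parametrised by $\mathbb{P}^2$ and $G$ uses only a line in each); then $\langle\pi_1,\pi_2\rangle$ is a $4$-space, hence meets the $4$-space $\Sigma$ in a point $q$, which cannot lie in $G$ because $\langle\pi_1,\pi_2\rangle\cap S=\pi_1\cup\pi_2$ is disjoint from $G$; and $q$ automatically lies in the plane $\langle L_1,L_2\rangle\subseteq\langle G'\rangle$ for suitable lines $L_i\subseteq\pi_i$ through the common point. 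Your method buys explicitness and exhibits $G'$ as a coordinate block; the paper's buys a three-line proof with no casework, no coordinates, and no appeal to the hypothesis $\Sigma\cap S=G$ beyond $G\subseteq\Sigma$.
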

\begin{proof}
Let $\pi_1$ and $\pi_2$ be two planes of $S$ intersecting each other in a unique point $p$, and which are disjoint from $G$.  The $4$-space $\<\pi_1,\pi_2\>$  has a point $q$ in common with $\Sigma$, which does not belong to $G$ as $\<\pi_1,\pi_2\> \cap S=\pi_1 \cup \pi_2$. Then $q$ is contained in a unique plane $\<L_1,L_2\>$ with $L_i$ a line of $\pi_i$ through $p$, for $i=1,2$. Hence $q$ belongs to the subspaces spanned by the  grid $G'$ of $S$ determined by $L_1$ and $L_2$.
\end{proof}

\begin{lemma}\label{a52p}
Let $\Sigma$ be a $10$-space contained in the $14$-dimensional projective space generated by the line Grassmannian $A:=\mathsf{A}_{5,2}(\K)$. If $\Sigma \cap A$ contains a line Grassmannian $A':=\mathsf{A}_{4,2}(\K)$, then $A' \subsetneq \Sigma \cap A$.\end{lemma}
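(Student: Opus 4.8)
The plan is to argue by contradiction: suppose $A' = \Sigma \cap A$, i.e. the $10$-space $\Sigma$ meets $A = \mathsf{A}_{5,2}(\K)$ in exactly a sub-line-Grassmannian $A' = \mathsf{A}_{4,2}(\K)$. Recall that $A'$ arises from a hyperplane $\mathbb{P}^4 \subset \mathbb{P}^5$ in the underlying projective space $\mathbb{P}$, and $\langle A' \rangle$ is a $9$-space inside the $14$-space $\langle A \rangle$. So $\Sigma$ is a hyperplane of $\langle A \rangle$ containing the $9$-space $\langle A' \rangle$; the strategy is to produce a point of $A$ in $\Sigma$ that is not in $A'$, contradicting $\Sigma \cap A = A'$.

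First I would set up coordinates: let $\mathbb{P} = \mathbb{P}^5(\K)$ with coordinates $x_0,\dots,x_5$, let the hyperplane defining $A'$ be $x_5 = 0$, and use Plücker coordinates $p_{ij}$ ($0\le i<j\le 5$) on $\langle A \rangle = \mathbb{P}^{14}(\K)$. Then $\langle A' \rangle$ is the $9$-space $\{p_{05}=p_{15}=p_{25}=p_{35}=p_{45}=0\}$, and $A'$ consists of the rank-$2$ points there. The hyperplane $\Sigma \supseteq \langle A'\rangle$ is cut out by a single linear form that is (up to scalar) a combination $\sum_{i=0}^{4} \lambda_i p_{i5} = 0$ of the five ``new'' coordinates. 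I would then exhibit lines of $\mathbb{P}$ through the ``extra'' point $e_5 = (0,\dots,0,1)$: for a point $q = (q_0,\dots,q_4,0)$ of the hyperplane $x_5=0$, the line $\langle e_5, q\rangle$ has Plücker point with $p_{i5} = q_i$ for $i \le 4$ and all other $p_{ij}=0$. Such a point lies on $A$ but not in $A'$ (it is not in the subspace $\langle A'\rangle$ unless $q=0$), and it lies in $\Sigma$ precisely when $\sum_i \lambda_i q_i = 0$ — which has nonzero solutions $q$ since it is one linear equation in five unknowns. This already gives a point of $A$ in $\Sigma \setminus A'$, the desired contradiction.

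Actually the cleanest version avoids even choosing the hyperplane form explicitly: the pencil of lines through $e_5$ inside $\mathbb{P}$ is parametrised by the hyperplane $x_5=0$, i.e. by a $\mathbb{P}^4$, and its Plücker image is a $4$-space $\Pi$ of $\langle A\rangle$ disjoint from $\langle A'\rangle$ (since the coordinates $p_{i5}$, $i\le 4$, are exactly the ones vanishing on $\langle A'\rangle$). Every point of $\Pi$ lies on $A$. Since $\Sigma$ is a hyperplane of $\langle A \rangle$, it meets the $4$-space $\Pi$ in at least a $3$-space; any point of $\Pi \cap \Sigma$ other than points of $\langle A'\rangle$ — and $\Pi \cap \langle A'\rangle = \emptyset$, so \emph{every} point of $\Pi\cap\Sigma$ works — is a point of $A \cap \Sigma$ not in $A'$. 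Hence $A' \subsetneq \Sigma \cap A$.

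The only genuinely non-routine point is the claim that $\Sigma$ is forced to be a hyperplane of $\langle A\rangle$ containing $\langle A'\rangle$: a priori $\Sigma$ is merely a $10$-space. But $\langle A' \rangle$ is a $9$-space and $\langle A'\rangle = \langle A' \rangle \subseteq \langle \Sigma \cap A\rangle \subseteq \Sigma$, so $\Sigma$ is a $10$-space through a $9$-space inside the $14$-space $\langle A\rangle$; that is enough for the dimension count $\dim(\Pi \cap \Sigma) \ge 4 + 10 - 14 = 0$ to guarantee a point, and since $\Pi \subseteq A$ and $\Pi \cap \langle A'\rangle = \emptyset$ this point lies in $(\Sigma \cap A)\setminus A'$. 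The main thing to be careful about is the dimension bookkeeping ($\dim\langle A\rangle = \binom{6}{2}-1 = 14$, $\dim\langle A'\rangle = \binom{5}{2}-1 = 9$, $\dim \Pi = 4$) and verifying that the Plücker image of the star of lines through $e_5$ is indeed a $4$-dimensional \emph{subspace} lying entirely on the Grassmannian and disjoint from $\langle A'\rangle$ — both of which are immediate from the Plücker coordinate description above.
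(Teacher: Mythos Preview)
Your proposal is correct and follows essentially the same approach as the paper's proof: both take the star of lines through a point outside the defining hyperplane $P'\subset P$, observe that its Pl\"ucker image is a $4$-space $\Pi\subseteq A$ disjoint from $\langle A'\rangle$, and then use the dimension count $\dim(\Pi\cap\Sigma)\ge 4+10-14=0$ to produce a point of $A\cap\Sigma$ outside $A'$. The paper's version is terser (it does not introduce coordinates), but the idea is identical; your only slip is in the middle paragraph where you momentarily treat $\Sigma$ as a hyperplane of $\langle A\rangle$, but you catch and correct this in the final paragraph with the right dimension count.
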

\begin{proof}
Note that $A' \subseteq A$ corresponds to a  $4$-dimensional subspace $P'$ of a projective $5$-space $P$; and consider any $4$-space $V$ in $A$ corresponding to the set of lines through a point of $P \setminus P'$. Then $V$ is disjoint from $A'$ and  $V \cap \Sigma$ is non-empty.
\end{proof}

\subsection{Main result}\label{mainsub}
We are ready to state the main result.

\begin{main}\label{main}
Let $(X,Z,\Xi,\Theta)$ be a dual split Veronese set with parameters $(r,v,r',v')$ where $\<X,Z\>=\mathbb{P}^N(\K)$ for some arbitrary field $\K$ with $|\K|>2$. If mono-symplectic, then  $X$ is projectively equivalent to a cone with a vertex of dimension $v^*$ (possibly, $v^*=-1$), whose points are those of $Z$, over one of the following geometries:
\begin{enumerate}[$(i)$ ]
\item A Segre variety $\mathcal{S}_{1,2}(\K)$ or $\mathcal{S}_{2,2}(\K)$, a line Grassmannian  $\mathcal{G}_{5,2}(\K)$  or $\mathcal{G}_{6,2}(\K)$, or the variety $\mathcal{E}_{6,1}(\K)$; in this case $v=v'=v^*$.
\end{enumerate}

If duo-symplectic, then $X$ is either projectively equivalent to a cone with a vertex of dimension~$v^*$ (possibly, $v^*=-1$)  over one of the following geometries:

\begin{enumerate}
\item[$(ii)$] A half dual Segre variety $\mathcal{HDS}_{2,k}(\K)$, where $k\in \{1,2\}$, which is a dual split Veronese set with parameters $(1,0,2,-1)$;
\item[$(iii)$] A dual line Grassmannian variety $\mathcal{DG}_{6,2}(\K)$, which is a dual split Veronese set with parameters $(2,1,4,-1)$,
\end{enumerate}
or projectively equivalent to the following geometry:
\begin{itemize}
\item[$(iv)$] A dual Segre variety $\mathcal{DS}_{2,2}(\K)$, with parameters $(1,1,2,1)$.
\end{itemize}
In particular, the varieties in $(i)$ up to $(iv)$ are subvarieties of the Veronese variety $\mathcal{V}_2(\K,\bO')$ over the split octonions $\bO'$, and apart from $\mathcal{S}_{1,2}(\K)$,  $\mathcal{G}_{5,2}(\K)$ and $\mathcal{HDS}_{2,1}(\K)$, all of them are a Veronese variety $\mathcal{V}_2(\K,\A)$ for some split quadratic alternative algebra $\A$ whose radical is either empty or generated by a single element $t$. 
\end{main}

\par\medskip

\begin{rem}\label{f2}\em 
We exclude the field of two elements in the above result because already one of the very preliminary lemmas (Lemma~\ref{uniquesymp}) might fail if $|\K|=2$. An alternative approach is required, and seeing the high cost and low benefits, we did not pursue this. No counterexamples are known however. \end{rem}

\subsection{Structure of the proof}\label{sotp}

In \textbf{Section~\ref{bp}}, we deduce some general properties and set up the inductive proof. In Lemma~\ref{existencesupersymps} we show that in a \emph{mono-symplectic} (pre-)DVS all members of $\Xi$ have the same vertex. Proposition~\ref{trivialcase} then reduces this case to the result of~\cite{SVM}, which leads to Main Result~\ref{main}$(i)$.  From that point onwards, we assume that the (pre-)DSV is \textbf{duo-symplectic}, in which case it easily follows (cf.\ Lemma~\ref{xinSS}) that through each $x\in X$, there is a member of $\Theta$.

The proof uses \textbf{induction on $r$}, the base case being $r=1$. For a (pre-)DSV with parameters $(r,v,r',v')$ with $r >1$, we show in  Lemma~\ref{res} that its $X$-point-residues are pre-DSVs with parameters $(r-1,v,r'-1,v')$, not necessarily satisfying (S3); whence the need to study \textbf{pre}-DSVs.

In \textbf{Section~\ref{r=1,2SS}} we deal with  (pre-)DSVs $(X,Z,\Xi,\Theta)$ with parameters $(1,v,r',v')$, with the additional assumption that through each point $x \in X$, there are at least two members of~$\Theta$. This case leads to the  \textbf{dual Segre varieties}: Theorem~\ref{1':projuni} shows that $X$ is a mutant of the dual Segre variety $\mathcal{DS}_{r',r'}(\K)$, and if (S3) holds, $r'=2$ ($\rightarrow$ Main Result~\ref{main}$(iv)$). 

In \textbf{Section~\ref{1SS/point}}, we treat (pre-)DSVs $(X,Z,\Xi,\Theta)$ in which there is a point in $X$ through which there is a unique member of $\Theta$, in which case it turns out that $r=1$ (cf.\ Lemma~\ref{1:r=1}). 
 This time, we obtain the \textbf{half dual Segre varieties}: Theorem~\ref{1:projuni} shows that, up to projection from a subspace of $Y$ contained in each member of $\Xi \cup \Theta$, $X$ is the point set of a mutant of the half dual Segre variety $\mathcal{HDS}_{r',k}(\K)$ and, if (S3) holds, then $(r',k) \in \{(2,1), (2,2)\}$  ($\rightarrow$ Main Result~\ref{main}$(ii)$).

In \textbf{Section~\ref{r>1}}, we turn our attention to the (pre-)DSV $(X,Z,\Xi,\Theta)$ with parameters $(r,v,r',v')$ where $r > 1$. We first show in Lemma~\ref{2:uniSS} that each singular line is contained in at least one member of $\Theta$. In view of the above, we distinguish between the case where there is a singular line contained in a unique member of $\Theta$ (Section~\ref{cas1}) and the case where no such line exists (Section~\ref{cas2}). In the first case, we immediately deduce $r=2$ and  we obtain \textbf{dual line Grassmannians}: Theorem~\ref{2:projuni} shows that, up to projection from a subspace of $Y$ contained in each member of $\Xi \cup \Theta$, $X$ is the point set of a mutant of the dual line Grassmannian $\mathcal{DG}_{r'+2,2}(\K)$ and, if (S3) holds, $r'=4$ ($\rightarrow$ Main Result~\ref{main}$(iii)$).  Finally, Proposition~\ref{r>2kanni} shows, relying on Case 1, that a DSV with parameters $(r,v,r',v')$ with $r>1$ always has a singular line contained in a unique member of $\Theta$, leading us to Case 1. 

\section{Preliminaries}\label{bp}

Let $(X,Z,\Xi,\Theta)$ be a pre-DSV with parameters $(r,v,r',v')$ with $\<X,Z\>=\mathbb{P}^N(\K)$ for an arbitrary field $|\K| >2$.

\subsection{Basic properties}

Recall that a subspace $S$ of $\mathbb{P}^N(\K)$ is called \emph{singular} if its points are contained in $X \cup Y$. If moreover $S \subseteq X$, then we call $S$ an $X$-space. Two subspaces are called \emph{collinear} if there is a singular subspace containing them. 

\begin{lemma}\label{XYpointsonline}
A line $L$ of $\mathbb{P}^N(\K)$ containing at least three points of $X\cup Y$ is singular. Moreover, a singular line contains at most one point in $Y$.
\end{lemma}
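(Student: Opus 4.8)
The plan is to split Lemma~\ref{XYpointsonline} into its two assertions and prove each by a short combinatorial argument using only the defining axioms (S1) and (S2), together with the local structure of the cones $C_\xi$ and $C_\theta$ and the fact established in Lemma~\ref{XYpointsonline}'s hypotheses (namely that $X\cup Z$ spans $\mathbb{P}^N(\K)$ and $Y=\<Z\>$).

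\textbf{First assertion.} Let $L$ be a line of $\mathbb{P}^N(\K)$ containing three distinct points $a,b,c\in X\cup Y$. First suppose at least two of them, say $a,b$, lie in $X\cup Z$. By (S1) there is a member $\zeta\in\Xi\cup\Theta$ with $a,b\in\zeta$, hence $L=\<a,b\>\subseteq\zeta$. Now $L$ contains three points of $X\cup Y$; inside $\zeta$ the set $XY(\zeta)$ is exactly the cone $C_\zeta$ (either an $(r,v)$-cone or an $(r',v')$-cone), which is a quadric. A line meeting a quadric in at least three points is a line of that quadric, so $L\subseteq C_\zeta\subseteq X\cup Y$, i.e.\ $L$ is singular. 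The remaining possibility is that at most one of $a,b,c$ lies in $X\cup Z$, so at least two of them lie in $Y\setminus Z$ — but then I need to produce a member of $\Xi\cup\Theta$ through two such points, and (S1) does not directly apply. Here I would use that $Y=\<Z\>$ together with (S1) applied to points of $Z$: I expect that every point of $Y$ lies on a singular line through a point of $X$ (this is the kind of statement that should be extracted, perhaps from the cone structure, since the vertex of each $C_\xi$ is a singular subspace meeting $Y$), and then reduce to the previous case by choosing an auxiliary point of $X\cup Z$ on a suitable member of $\Xi\cup\Theta$ containing $L$. Concretely, pick $\zeta\in\Xi\cup\Theta$; its vertex (resp.\ the space $Y(\theta)$) lies in $Y$ and its tube part lies in $X$, and one can slide along generators of the cone to connect a given point of $Y$ to $X$. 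The cleanest route may be: take any $\zeta_0\in\Xi$ (there are at least two), note $Y(\zeta_0)\subseteq Y$; for a point $y\in Y$, if $y\in\<\zeta_0\>$ use the quadric $C_{\zeta_0}$ directly, otherwise invoke (S1)/(S2) to build a chain. I will need to verify this covers all of $Y$, which is exactly where the spanning hypothesis $\<X,Z\>=\mathbb{P}^N$ and $Y=\<Z\>$ get used.

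\textbf{Second assertion.} Let $L$ be a singular line and suppose for contradiction that it contains two distinct points $y_1,y_2\in Y$, hence $L\subseteq Y$. Since $L$ is singular, every point of $L$ lies in $X\cup Y$; as $L\subseteq Y$ this gives no information directly, so instead I use that a singular line is by definition spanned by singular sub-structure and, more usefully, that $X$ is nonempty and spans together with $Z$. The point is that $L\subseteq Y$ and $L$ singular would force, via (S1) applied to points of $X$ near $L$, a member $\zeta\in\Xi\cup\Theta$ whose intersection with $Y$ is too large: for $\xi\in\Xi$, $Y(\xi)$ is the vertex of the $(r,v)$-cone, a $v$-space, and for $\theta\in\Theta$, $Y(\theta)$ is a single generator $M$, an $r'$-space; in either case adding an extra line of $Y$ inside $\zeta$ not accounted for contradicts the precise description of $XY(\zeta)$ as that specific cone, since a quadratic cone contains no line meeting its vertex-complement in the wrong way. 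I expect the actual argument is: pick $x\in X$; by (S1) choose $\zeta\in\Xi\cup\Theta$ through $x$ and (a point of) $L$; then $L\cap X=\emptyset$ forces $L\subseteq Y\cap\zeta$, but $Y\cap\zeta$ is a singular subspace of $C_\zeta$ disjoint from the tube $X(\zeta)$, whereas $x\in X(\zeta)$ is joined to points of $Y\cap\zeta$ by singular lines — counting generators through $x$ shows $L$ would have to be the whole vertex, impossible since $\dim L=1$ while we can choose $v\ne 1$ or use that the two points of $Y$ on $L$ are then forced to coincide.

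\textbf{Main obstacle.} The routine part — a line meeting a quadric in three points lies on the quadric — is trivial once a containing member of $\Xi\cup\Theta$ is found. The genuine difficulty is the case of the first assertion in which the three collinear points lie (partly) in $Y\setminus Z$, where (S1) is not literally applicable because (S1) only guarantees members of $\Xi\cup\Theta$ through pairs of points of $X\cup Z$, not through arbitrary points of $Y$. I expect the resolution to require a preliminary observation that each point of $Y$ lies in $X\cup Y$ ``accessibly'' — e.g.\ that $Y\subseteq\bigcup_{\zeta\in\Xi\cup\Theta}\<\zeta\>$, or at least that one can always find a member of $\Xi\cup\Theta$ whose ambient span contains a prescribed point of $Y$ — which in turn leans on $\<X,Z\>=\mathbb{P}^N(\K)$ and $Y=\<Z\>$. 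If that observation is not yet available at this point in the paper, the honest proof of the first assertion would instead only treat the case where two of the three points are in $X\cup Z$ and defer the fully-general statement; but I would aim to close the gap directly using the cone structure of a single fixed $\xi\in\Xi$ and the vertex inclusion $Y(\xi)\subseteq Y$.
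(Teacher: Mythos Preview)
Your ``main obstacle'' is not an obstacle at all, and your attempt at the second assertion is aimed at a statement that is literally false. Both issues dissolve once you use one fact you never invoke: $Y=\langle Z\rangle$ is a \emph{subspace}.

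For the first assertion, the paper's case split is not ``$\ge 2$ points in $X\cup Z$'' versus ``$\ge 2$ points in $Y\setminus Z$'', but simply ``$\ge 2$ points in $Y$'' versus ``$\ge 2$ points in $X$''. If $L$ contains two points of $Y$, then $L\subseteq Y$ because $Y$ is a subspace, so $L$ is singular and we are done. Otherwise $L$ has at most one point in $Y$, hence at least two points $x_1,x_2\in X\subseteq X\cup Z$, and now (S1) applies exactly as you wrote: $L\subseteq\zeta$ for some $\zeta\in\Xi\cup\Theta$, and since $XY(\zeta)$ is a quadric meeting $L$ in at least three points, $L\subseteq XY(\zeta)\subseteq X\cup Y$. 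There is no need to argue that points of $Y\setminus Z$ are ``accessible'' via members of $\Xi\cup\Theta$, and the spanning hypothesis plays no role here.

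For the second assertion, your contradiction argument cannot succeed: lines contained in $Y$ \emph{are} singular and \emph{do} contain many $Y$-points, so there is nothing to contradict. The statement should be read (and the paper's proof makes this clear) as: a singular line not entirely contained in $Y$ has at most one point in $Y$. This is immediate from the first sentence of the proof --- two $Y$-points on a line force the line into $Y$. All of your machinery about generators, vertices, and dimension counts is unnecessary.

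In short: the paper's entire proof is three lines, and the single observation you are missing is that $Y$ is a subspace.
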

\begin{proof}
If $L$ contains two points of $Y$, then $L$ belongs to $Y$ since the latter is a subspace by definition. So, if $|L \cap (X \cup Y)| \geq 3$, then we may assume that $L$ contains at least two points $x_1,x_2$ of $X$. By (S1), these are contained in a member $\zeta$ of $\Xi \cup \Theta$. Since $XY(\zeta)$ is a quadric containing $L$, it follows that $L$ is singular.
\end{proof}

\begin{lemma}\label{uniquesymp}
If $p_1$ and $p_2$ are non-collinear points of $X\cup Z$, then there is a unique member of $\Xi \cup \Theta$ containing them, which is denoted by $[p_1,p_2]$.
\end{lemma}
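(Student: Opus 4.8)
The plan is to prove existence first (which is immediate from (S1)), and then uniqueness. For existence, by (S1) there is some $\zeta \in \Xi \cup \Theta$ containing both $p_1$ and $p_2$; since $XY(\zeta)$ is a (possibly degenerate) quadric and $p_1,p_2$ are non-collinear, the line $p_1p_2$ is not singular, so $p_1$ and $p_2$ are ``non-collinear inside $\zeta$'' in the quadric sense. It remains to rule out a second member $\zeta'$ of $\Xi \cup \Theta$ through both points.

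So suppose $\zeta, \zeta' \in \Xi \cup \Theta$ both contain $p_1$ and $p_2$. By (S2), the intersection $\zeta \cap \zeta'$ is a singular subspace $S$, and it contains both $p_1$ and $p_2$. Since $S$ is singular, every line of $S$ is singular; in particular the line $p_1p_2 \subseteq S$ would be singular, contradicting that $p_1$ and $p_2$ are non-collinear. Hence $\zeta = \zeta'$, which gives uniqueness. The notation $[p_1,p_2]$ for this unique member is then well-defined.

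I expect the only subtlety to be making sure the non-collinearity hypothesis is used correctly: ``non-collinear'' for points of $X \cup Z$ means there is no singular subspace containing them both, equivalently (by Lemma~\ref{XYpointsonline} and the definition of singular) the line $p_1p_2$ is not contained in $X \cup Y$. This is exactly what is contradicted when $p_1, p_2$ both lie in a singular subspace $S = \zeta \cap \zeta'$. One small point worth checking is that $p_1, p_2 \in X \cup Z$ do lie in $\zeta$ as genuine points of the configuration and not, say, in some part of $\zeta$ outside $X \cup Y$; but since $p_1, p_2 \in X \cup Z$ and $Z \subseteq Y$, they lie in $X \cup Y$, and $\zeta \cap \zeta'$ being singular means it is contained in $X \cup Y$, so no issue arises. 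This step is where the field size hypothesis could in principle matter (cf.\ Remark~\ref{f2}), presumably because one needs enough points to guarantee that a quadric through $p_1,p_2$ with a non-singular connecting line behaves as expected; but at the level of this lemma the argument above seems robust, and I would simply flag that the genericity of $\K$ is not obviously needed here beyond what is already assumed.
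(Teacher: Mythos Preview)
Your proof is correct and follows essentially the same approach as the paper's: existence from (S1), then uniqueness by noting that two distinct members of $\Xi\cup\Theta$ through $p_1,p_2$ would intersect in a singular subspace by (S2), forcing $p_1p_2$ to be singular and contradicting non-collinearity. Your extra remarks on the meaning of non-collinearity and on the field size are accurate but not needed for this particular lemma.
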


\begin{proof}
By (S1), there exists a member of $\Xi \cup \Theta$ containing $p_1$ and $p_2$. If there were at least two of them, then by (S2) their intersection is a singular subspace containing $p_1p_2$. But then the line $p_1p_2$ is singular, a contradiction.
\end{proof}

\begin{lemma}\label{planes}
Let $L_1$ and $L_2$ be two singular lines not entirely contained in $Y$, intersecting each other in a unique point $s$. Then either $\<L_1,L_2\>$ is a singular plane or $L_1$ and $L_2$ are contained in a unique member of $\Xi \cup \Theta$, denoted $[L_1,L_2]$.
\end{lemma}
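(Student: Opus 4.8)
The plan is to analyze how the two singular lines $L_1,L_2$ through the common point $s$ sit inside the variety, using only the axioms (S1), (S2) and the basic lemmas already established. First I would pick a point $x_1 \in L_1 \setminus \{s\}$ and a point $x_2 \in L_2\setminus\{s\}$, both of which may be assumed to lie in $X$ (since by Lemma~\ref{XYpointsonline} a singular line not contained in $Y$ meets $Y$ in at most one point, so all but at most one point of each $L_i$ lies in $X$). Now there are two cases. If $x_1$ and $x_2$ are collinear, then there is a singular line $M$ through $x_1$ and $x_2$; I would argue that $\<L_1,L_2\> = \<s,x_1,x_2\>$ is then a singular plane, by showing every point of this plane lies on a singular line joining two points already known to be in $X\cup Y$: indeed, a generic point of the plane lies on a line meeting $L_1$, $L_2$ and $M$ in three distinct points of $X\cup Y$, hence that line is singular by Lemma~\ref{XYpointsonline}, so the point lies in $X\cup Y$; doing this for all points shows the plane is singular.

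If on the other hand $x_1$ and $x_2$ are non-collinear, then by Lemma~\ref{uniquesymp} there is a unique member $\zeta=[x_1,x_2]$ of $\Xi\cup\Theta$ containing both. The key point is that $\zeta$ must then contain all of $L_1$ and all of $L_2$: the quadric $XY(\zeta)$ (an $(r,v)$- or $(r',v')$-cone) contains the point $s$ and the point $x_1$, and both lie on the singular line $L_1$; since a quadric containing two points of a line either contains the whole line or meets it in exactly those two points, and since $s,x_1$ are collinear while $\zeta$'s quadric is the union of singular subspaces through pairs of its points in the appropriate way — more carefully, $s$ and $x_1$ are collinear so they are \emph{not} a pair giving rise to a symp, so the line $sx_1=L_1$ lies on the quadric $XY(\zeta)$ — hence $L_1\subseteq\zeta$, and likewise $L_2\subseteq\zeta$. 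Thus $L_1,L_2$ are contained in a member of $\Xi\cup\Theta$, and uniqueness follows: if $\zeta'$ were another member containing both lines, then $\zeta'$ contains $x_1$ and $x_2$, contradicting the uniqueness in Lemma~\ref{uniquesymp}. I would then note that exactly one of the two alternatives occurs (they are mutually exclusive: if $\<L_1,L_2\>$ is a singular plane, no member of $\Xi\cup\Theta$ can contain it since the quadrics carry no singular planes through a line in the relevant small-rank cases — or more simply, this dichotomy just records which case we are in, so strict exclusivity need not even be asserted).

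The step I expect to be the main obstacle is the claim in the non-collinear case that the singular line $L_1$ (through $s$ and $x_1$) actually lies \emph{on} the quadric $XY(\zeta)$, rather than meeting it in just the two points $s, x_1$. This requires knowing that $s$ and $x_1$, being collinear, cannot be a "symplectic pair" for $\zeta$ — i.e. that within the cone $C_\zeta$, two collinear points always span a line lying on $C_\zeta$. This is true because a hyperbolic (or parabolic, in the cone-base) quadric meets any line either in $\le 2$ points or contains it, and if $s,x_1\in X(\zeta)$ were collinear via a singular line not on $\zeta$, that line would still be singular, so all its points lie in $X\cup Y$; but a line meeting the quadric-cone $C_\zeta$ in exactly two points, with all its other points in $X\cup Y$, would have to have those other points lying in $X\cup Y$ but outside $\zeta$ — which is fine a priori, so the real content is that collinearity of $s,x_1$ forces $L_1\subseteq C_\zeta$ because otherwise $s$ and $x_1$ span a secant line of the quadric, contradicting that the line $sx_1$ is singular and hence (being a line with three points of $X\cup Y$, or by the cone structure) must be a generator. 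I would pin this down carefully using the explicit structure of $XY(\zeta)$ as an $(r,v)$- or $(r',v')$-cone: two of its points are collinear inside the variety precisely when the line joining them lies on the cone, which is how "collinear" interacts with the quadric, and that is exactly the dichotomy Lemma~\ref{uniquesymp} and the definition of singular encode.
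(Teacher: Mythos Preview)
Your collinear case (Case 1) is correct and in fact slightly cleaner than the paper's: you observe that a \emph{single} collinear pair $x_1\perp x_2$ already forces the plane to be singular, via the triangle $L_1,L_2,M$ and Lemma~\ref{XYpointsonline} (using $|\K|>2$ to find a transversal through any given point avoiding the vertices). The paper instead phrases this case as ``assume \emph{every} pair is collinear'', which is logically equivalent but your formulation is more direct.

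The non-collinear case, however, has a genuine gap, and it is not the one you flag. You assert that ``the quadric $XY(\zeta)$ contains the point $s$'', but nothing you have said establishes $s\in\zeta$. You only know $x_1,x_2\in\zeta$; the point $s$ is collinear to both, but at this stage of the paper there is no convexity statement (that comes in Lemma~\ref{convexclosure}, which is \emph{proved using} the present lemma). Your ``main obstacle''---whether $L_1$ is a secant or a generator of the quadric---is actually a non-issue: recall that $\zeta\in\Xi\cup\Theta$ is by definition a \emph{subspace} of $\mathbb{P}^N(\K)$, so as soon as two points of $L_1$ lie in $\zeta$, all of $L_1$ does, and since $L_1$ is singular it then lies in $XY(\zeta)$ automatically. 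The problem is getting a second point of $L_1$ into $\zeta$ in the first place.

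The paper's fix is this: using $|\K|>2$, choose further $X$-points $x'_i\in L_i\setminus\{s,x_i\}$. The secant lines $x_1x_2$ and $x'_1x'_2$ meet in a point $p$ of the plane; since $x_1x_2$ meets $X\cup Y$ only in $\{x_1,x_2\}$ (by Lemma~\ref{XYpointsonline}), $p\notin X\cup Y$, whence $x'_1x'_2$ is not singular either and $[x'_1,x'_2]$ exists. If $[x_1,x_2]\neq[x'_1,x'_2]$ then (S2) forces $p$ into their singular intersection, a contradiction. So $\zeta=[x_1,x_2]=[x'_1,x'_2]$ contains $x_1,x'_1$, hence $L_1\subseteq\zeta$, and likewise $L_2\subseteq\zeta$.
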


\begin{proof}
Take $X$-points $x_i \in L_i \setminus \{s\}$, $i=1,2$.  Suppose first that $x_1$ is not collinear to $x_2$. By Lemma~\ref{XYpointsonline}, $x_1$ and $x_2$ are the only points of $X\cup Y$ on $x_1x_2$. Since $|\K|>2$, we can take $X$-points $x'_i \in L_i \setminus\{s\}$ distinct from $x_i$, $i=1,2$. Since $\<L_1,L_2\>$ is a plane of $\mathbb{P}^N(\K)$, the line $x'_1x'_2$ intersects $x_1x_2$ in a point $p$, distinct from $x_1,x_2$ and hence, by the foregoing, $p \notin X\cup Y$. Therefore, the line $x'_1x'_2$ is not singular. If $[x_1,x_2]$ and $[x'_1,x'_2]$ are distinct, then by (S2) we get $p \in X\cup Y$ after all, a contradiction. Hence $[x_1,x_2]$ and $[x'_1,x'_2]$  coincide and contain both $L_1$ and $L_2$. 

So we may assume that $x_1 \perp x_2$ for each pair of $X$-points $x_i \in L_i\setminus\{s\}$, $i=1,2$. Let $p \in \<L_1,L_2\>\setminus (L_1 \cup L_2)$ be arbitrary. As $|\K|>2$, there is a line through $p$ meeting $L_1$ and $L_2$ in distinct $X$-points. By assumption, these $X$-points are collinear and hence $p \in X \cup Y$. We conclude that the plane $\<L_1,L_2\>$ is singular indeed. 
\end{proof}

\begin{defi}\em
For each point $p \in X \cup Y$, we denote by $p^\perp$ the union of all singular lines through $p$ with at most one point in $Y$ (i.e., not entirely contained in $Y$). \end{defi}

\begin{lemma}\label{convexclosure}
Let $\zeta \in \Xi \cup \Theta$ and $p \in (X \cup Y)\setminus \zeta$ arbitrary. Then the set $p^\perp \cap \zeta$ contains no two non-collinear points.  Consequently, $XY(\zeta)$ is a convex subspace of $X \cup Y$ with respect to singular lines not entirely contained in $Y$, whose vertex is the subspace of $Y$ collinear to any two non-collinear $X$-points of $\zeta$.
\end{lemma}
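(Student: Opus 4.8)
The plan is to first prove the local statement — that $p^\perp \cap \zeta$ contains no two non-collinear points — and then deduce the global convexity statement from it. So take $\zeta \in \Xi \cup \Theta$ and $p \in (X\cup Y)\setminus\zeta$, and suppose for contradiction that there are two non-collinear points $q_1, q_2 \in p^\perp \cap \zeta$. By definition of $p^\perp$, there are singular lines $L_i = pq_i$ ($i=1,2$), each with at most one point in $Y$; note $q_i \in X$ (if $q_i \in Y$ and $p q_i$ is singular with $q_i$ its only $Y$-point, that is fine, but since $q_1,q_2$ are non-collinear and lie in the quadric $XY(\zeta)$, at least the relevant genericity arguments below go through — I will want $q_1, q_2 \in X$, which I can arrange since two non-collinear points of a quadric are automatically not in a common singular subspace, and points of $Y(\zeta)$ are the vertex, hence collinear to everything in $\zeta$; so $q_1, q_2 \in X(\zeta)$). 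Now $L_1$ and $L_2$ meet in the single point $p \notin \zeta$, so by Lemma \ref{planes} either $\langle L_1, L_2\rangle$ is a singular plane, or $L_1, L_2$ lie in a common member $[L_1,L_2] \in \Xi\cup\Theta$. In the first case $q_1 \perp q_2$, contradiction. In the second case, $[L_1,L_2]$ contains $q_1, q_2$; but $q_1, q_2$ are non-collinear, so by Lemma \ref{uniquesymp} the member of $\Xi\cup\Theta$ through them is unique, forcing $[L_1,L_2] = \zeta$ — but $p \in L_1 \subseteq [L_1,L_2] = \zeta$, contradicting $p \notin \zeta$. This settles the first assertion.

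For the consequence, I must show $XY(\zeta)$ is convex with respect to singular lines not entirely in $Y$: if $x, y \in XY(\zeta)$ and $L$ is a singular line through $x$ and $y$ not contained in $Y$, then $L \subseteq XY(\zeta)$. If $x,y$ are collinear inside $\zeta$ there is nothing to do once I check $L$ is forced into $\zeta$; the real content is a "closure" argument. Take any point $z \in L$. If $z \in \zeta$ we are done, so assume $z \notin \zeta$; then $z \in (X\cup Y)\setminus\zeta$ and $z \perp x$, $z\perp y$ (via $L$, which is not in $Y$), so $x, y \in z^\perp \cap \zeta$; by the first part $x \perp y$. This shows only that $x,y$ must be collinear, not yet that $L$ lies in $\zeta$. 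To push further: since $\zeta$ contains a hyperbolic quadric of rank $\geq 2$ (as $r \geq 1$), $x$ and $y$ lie in a common singular subspace of $\zeta$, and I want to argue that the singular subspace of $X\cup Y$ through $x$ and $y$ is pinned down — this is where I expect to lean on the quadric structure of $XY(\zeta)$: a singular line through two collinear points of a quadric either lies on the quadric or meets it in exactly those two points, and the latter is excluded because any third point $z$ of $L$ outside $\zeta$ would, as just shown, be collinear to all of $x^\perp \cap \zeta \cap y^\perp$, and by choosing a suitable non-collinear pair inside $\zeta$ collinear to both $x$ and $y$ one derives a contradiction with the first assertion applied to $z$. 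Finally, the vertex statement: the vertex of $XY(\zeta)$ is $Y(\zeta)$ by the definition of an $(r,v)$- or $(r',v')$-cone, and $Y(\zeta)$ is precisely the set of points of $\zeta$ collinear to every point of $XY(\zeta)$, in particular to any two fixed non-collinear $X$-points of $\zeta$; conversely a point of $Y$ collinear to two non-collinear $X$-points $a,b \in \zeta$ lies in $[a,b] = \zeta$ (by Lemma \ref{uniquesymp}, using that it is not collinear to... — more carefully: it lies in the intersection of the singular subspaces $\langle a, \cdot\rangle$, which forces it into the radical of the quadric), hence in $Y(\zeta)$.

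The main obstacle I anticipate is the convexity step: translating "no two non-collinear points in $p^\perp \cap \zeta$" into the statement that a singular line joining two points of the quadric $XY(\zeta)$ actually lies inside $\zeta$. The subtlety is that $L$ could a priori be a singular line of $X\cup Y$ that exits $\zeta$ immediately; ruling this out requires using that $XY(\zeta)$ is not just any point set but a cone over a hyperbolic quadric, so that through the (collinear) pair $x,y$ there genuinely is a rich supply of points of $\zeta$ non-collinear with a third point of $L$. I would organize this by picking a non-collinear pair $c_1, c_2$ inside $XY(\zeta)$ with $c_1, c_2 \perp x$ and $c_1,c_2 \perp y$ (possible because the rank is at least $2$, after passing to the residue of the singular subspace spanned by $x,y$ inside the cone), and then applying the first assertion to the hypothetical outside point $z \in L$, deriving the contradiction $c_1 \perp c_2$. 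The bookkeeping about which points lie in $X$ versus $Y$ — since $Y(\zeta)$ points are collinear to everything and must be treated separately — is routine but needs care, and is the kind of thing I would spell out fully rather than sketch.
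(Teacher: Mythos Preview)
Your proof of the first assertion is correct and matches the paper's: with $q_1,q_2 \in p^\perp \cap \zeta$ non-collinear, Lemma~\ref{planes} applied to $L_i = pq_i$ forces $L_1,L_2$ into a common member of $\Xi\cup\Theta$, which by (S2) must be $\zeta$, so $p\in\zeta$ --- contradiction. (Your digression about arranging $q_i\in X$ is unnecessary and slightly wrong for $\zeta\in\Theta$, where $Y(\zeta)$ is a full generator rather than the vertex; but neither Lemma~\ref{planes} nor the uniqueness step cares whether $q_i\in X$ or $q_i\in Y$.)

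For the second assertion you have missed the key structural point and are working far too hard. By the very definition of the setup, each $\zeta\in\Xi\cup\Theta$ is a \emph{projective subspace} of $\mathbb{P}^N(\K)$ (of dimension $d+1$ or $d'+1$). Hence any line through two points $x,y\in\zeta$ lies entirely in $\zeta$; there is no way for a singular line through $x,y\in XY(\zeta)$ to ``exit $\zeta$'', and so $xy\subseteq\zeta\cap(X\cup Y)=XY(\zeta)$ immediately. The paper dispatches this in a single sentence. The \emph{convexity} content --- that a common neighbour of two non-collinear points of $XY(\zeta)$ must already lie in $\zeta$ --- is precisely the contrapositive of the first assertion, and the vertex description follows in the same way. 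Your planned workaround (find non-collinear $c_1,c_2$ in $x^\perp\cap y^\perp\cap\zeta$ and apply the first assertion to $z$) would in fact fail when $\zeta\in\Xi$ and $r=1$, since in a grid quadric the common perp of a singular line is just that line together with the vertex; fortunately the argument is never needed.
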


\begin{proof}
Suppose  $p_1,p_2$ are non-collinear points in $p^\perp \cap \zeta$. Put $L_i:=pp_i$. Then $L_1$ and $L_2$ are singular lines not entirely contained in $Y$ and hence, Lemma~\ref{planes} implies that $p \in [p_1,p_2]=\zeta$, a contradiction. For the second assertion, it suffices to note that the unique line between two collinear points of $XY(\zeta)$ is contained in $XY(\zeta)$. 
\end{proof}

We can  extend Lemma~\ref{planes} to higher-dimensional subspaces.

\begin{lemma}\label{kspaces}
Let $S_1$ and $S_2$ be two singular subspaces of dimension $k$, with $k\geq 1$, not entirely contained in $Y$, intersecting each other in a $(k-1)$-space $S$. Then either $\<S_1,S_2\>$ is a singular $(k+1)$-space, or $S_1$ and $S_2$ are contained in a unique member of $\Xi \cup \Theta$.
\end{lemma}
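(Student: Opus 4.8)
The plan is to generalise Lemma~\ref{planes} by induction on $k$, using the $k=1$ case as the base. So suppose the statement holds for all dimensions up to $k-1$, and let $S_1,S_2$ be singular $k$-spaces meeting exactly in a $(k-1)$-space $S$, neither of them inside $Y$. First I would dispose of the degenerate sub-case where $S_1$ or $S_2$ lies in a member of $\Xi\cup\Theta$ that also contains the other: then we are done immediately. So assume not. The strategy is to pick a singular line $L_i\subseteq S_i$ through a suitable point $s\in S$, not lying in $Y$, with $L_1\cap L_2=\{s\}$, apply Lemma~\ref{planes} to get either a singular plane $\langle L_1,L_2\rangle$ or a member $\zeta=[L_1,L_2]$ of $\Xi\cup\Theta$, and then ``spread'' this conclusion across all of $S_1$ and $S_2$.

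More precisely, here is the order of steps I would carry out. \textbf{Step 1:} Choose $X$-points and lines carefully. Since $S_i\not\subseteq Y$ and $S_i$ is singular, by Lemma~\ref{XYpointsonline} $S_i\cap Y$ is a hyperplane of $S_i$ or smaller, so there are plenty of $X$-points; pick $x_i\in S_i\setminus(S\cup Y)$. \textbf{Step 2:} Analyse the line $L:=x_1x_2$ (when $x_1\neq x_2$ and they are not both forced to be collinear). As in Lemma~\ref{planes}, using $|\K|>2$, perturb $x_1,x_2$ within $S_1,S_2$ to get that the unique member $[x_1,x_2]$ is independent of the choice, hence contains generic points of both $S_1$ and $S_2$, and therefore (being a subspace that contains a spanning set of each) contains $S_1$ and $S_2$. \textbf{Step 3:} Handle the ``collinear'' alternative: if every $X$-point of $S_1\setminus S$ is collinear to every $X$-point of $S_2\setminus S$, then I would argue, again via the $|\K|>2$ line-through-a-point trick, that every point of $\langle S_1,S_2\rangle$ lies in $X\cup Y$, i.e. $\langle S_1,S_2\rangle$ is a singular $(k+1)$-space; one must also check this space really has dimension $k+1$ (i.e. $S_1\neq S_2$, which holds since they meet in a proper subspace $S$). \textbf{Step 4:} Uniqueness of the member of $\Xi\cup\Theta$: in the non-singular-plane case, any member containing $S_1$ and $S_2$ contains two non-collinear $X$-points, and by Lemma~\ref{uniquesymp} such a member is uniquely determined by those two points; since $S_1\cup S_2$ is not contained in a singular subspace in this case, two non-collinear $X$-points exist inside it, pinning down the member.

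The main obstacle I expect is \textbf{Step 3}, the propagation of the ``singular plane'' conclusion from a single pair of lines to the whole $(k+1)$-space $\langle S_1,S_2\rangle$. With lines, every interior point of the plane lies on a transversal meeting $L_1$ and $L_2$; in higher dimension one has to cover $\langle S_1,S_2\rangle$ by transversal lines joining an $X$-point of $S_1$ to an $X$-point of $S_2$ and know these $X$-points exhaust enough of $\langle S_1,S_2\rangle$. The subtlety is the interplay with $Y$: a priori an interior point could be a point of $Y$, and one needs Lemma~\ref{XYpointsonline} (a singular line has at most one point in $Y$) to keep the argument from collapsing, plus the $|\K|>2$ hypothesis to have room to choose the transversals away from $S\cup Y$. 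A clean way to organise this, which I would use, is to induct: fix a hyperplane $S_i'$ of $S_i$ containing a fixed line out of $S$ but not lying in $Y$, apply the induction hypothesis to $S_1',S_2'$, and then extend by one more dimension using the base case $k=1$ for the remaining line; this reduces the genuinely new content at each stage to a single application of Lemma~\ref{planes} together with bookkeeping about which member of $\Xi\cup\Theta$ (if any) is produced, and consistency of these members across the pieces (again forced by Lemma~\ref{uniquesymp}).
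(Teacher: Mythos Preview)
Your overall dichotomy matches the paper's: either some pair of $X$-points $x_1\in S_1\setminus S$, $x_2\in S_2\setminus S$ is non-collinear (and then $S_1\cup S_2$ lies in a unique member of $\Xi\cup\Theta$), or all such pairs are collinear (and then $\langle S_1,S_2\rangle$ is singular). Your Step~3 is essentially what the paper does, including the manoeuvre of re-choosing the transversal when the intersection point with $S_2$ lands in $Y$.

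The difference, and the one soft spot in your plan, is Step~2. The perturbation trick from Lemma~\ref{planes} relies on the fact that two secant lines in a \emph{plane} must meet; in a $(k+1)$-space with $k\geq 2$ the lines $x_1x_2$ and $x_1'x_2'$ are generically skew, so ``$[x_1,x_2]$ is independent of the choice'' is not immediate, and your inductive workaround, while feasible, is heavier than necessary. The paper bypasses this entirely via Lemma~\ref{convexclosure}: if $x_1,x_2$ are non-collinear and $\zeta=[x_1,x_2]$, then every point of $S$ lies in $x_1^\perp\cap x_2^\perp$ and hence in $\zeta$; since $\zeta$ is a projective subspace containing $x_1$ and $S$, it contains $S_1=\langle x_1,S\rangle$, and likewise $S_2$. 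This is a one-line argument and makes the perturbation and the induction on $k$ unnecessary. Uniqueness then follows exactly as you say, from Lemma~\ref{uniquesymp}.
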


\begin{proof}
If $k=1$, this follows from Lemma~\ref{planes}, so let $k > 1$. We may assume that each pair of $X$-points $x_1,x_2$ with $x_1 \in S_1 \setminus S$ and $x_2\in S_2 \setminus S$ is collinear: if not,  Lemma~\ref{convexclosure} implies that $[x_1,x_2]$ contains $\<S_1,S_2\>$. 
Let $p$ be any point in $\<S_1,S_2\>\setminus(S_1\cup S_2)$. For any $X$-point $x_1 \in S_1 \setminus S$, the line $x_1p$ intersects $S_2 \setminus S$ in a point $p_2$. If $p_2\in X$, then $p\in x_1p_2 \subseteq X\cup Y$. If $p_2 \in Y$,  take an $X$-point $x'_1$ in $S_1 \setminus (S\cup\{x_1\})$ such that the line $x_1x'_1$ intersects $S$ in an $X$-point $x$ (note that $S\nsubseteq Y$ since $p_2\in Y$ and $S_2 \nsubseteq Y$). Let $p'_2$ be the point in $S_2\setminus S$ on $x'_1p$.  Then $p'_2$ belongs to $xp_2$ and, as $x\in X$, also $p'_2\in X$.  So $p\in X \cup Y$ as before. We conclude that $\<S_1,S_2\>$ is singular indeed.
\end{proof}

\begin{defi}\label{YS}\em
For each $X$-space $S$, we denote by $Y_S$ the set of points of $Y$ collinear to (all points $p$ of) $S$, i.e., $Y_S:= \bigcap_{p \in S} (p^\perp \cap Y)$. \end{defi}

\begin{cor}\label{collY}
For each $X$-space $S$ of dimension $k-1\geq 0$, $Y_S$ is a subspace of $Y$.
\end{cor}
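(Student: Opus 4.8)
The plan is to reduce the statement to the one-point case: by Definition~\ref{YS} we have $Y_S = \bigcap_{p \in S}(p^\perp \cap Y)$, and an intersection of subspaces of $Y$ is again a subspace of $Y$, so it suffices to prove that for a single point $p \in X$ the set $p^\perp \cap Y$ (the set of points of $Y$ collinear to $p$) is a subspace of $Y$.

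So fix $p \in X$ and two distinct points $y_1,y_2 \in p^\perp \cap Y$; the goal is to show that the line $y_1y_2$ (which lies in $Y$, as $Y$ is a subspace) is contained in $p^\perp \cap Y$. Put $L_i := py_i$. These are singular lines not contained in $Y$, and by Lemma~\ref{XYpointsonline} each of them meets $Y$ in exactly the point $y_i$; hence $L_1 \neq L_2$ and they meet precisely in $p$, so Lemma~\ref{planes} applies to $L_1,L_2$. In the first alternative of that lemma, $\<L_1,L_2\>$ is a singular plane; then for every $y$ on $y_1y_2$ the line $py$ lies in this plane, so it is singular and not contained in $Y$, whence $y \in p^\perp \cap Y$, and we are done. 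In the second alternative there is a (unique) $\zeta \in \Xi \cup \Theta$ with $L_1,L_2 \subseteq \zeta$. Then $y_1,y_2 \in Y \cap \zeta =: Y(\zeta)$, which is a subspace (the vertex of the cone $C_\xi$ if $\zeta = \xi \in \Xi$, a generator of $C_\theta$ if $\zeta = \theta \in \Theta$), while $p$ is an $X$-point of $\zeta$, hence a smooth point of the quadric $C_\zeta := XY(\zeta)$ lying off $Y(\zeta)$ (since $X \cap Y = \emptyset$). The lines $L_1,L_2$ lie on $C_\zeta$ and pass through $p$, so $y_1,y_2$ lie in the tangent hyperplane $T_p(C_\zeta)$; thus $y_1y_2 \subseteq Y(\zeta) \cap T_p(C_\zeta)$. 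For any $y$ on this segment we have $y \in Y(\zeta) \subseteq C_\zeta$ and $y \in T_p(C_\zeta)$, so the line $py$ is tangent to $C_\zeta$ at $p$ and meets $C_\zeta$ in the further point $y \neq p$; since the restriction of the defining quadratic form to $py$ then has a double root at $p$ and another root at $y$, it vanishes identically, i.e. $py \subseteq C_\zeta \subseteq X \cup Y$. Hence $py$ is a singular line not contained in $Y$, so $y \in p^\perp \cap Y$. In both alternatives $y_1y_2 \subseteq p^\perp \cap Y$, so $p^\perp \cap Y$ is a subspace of $Y$, and the corollary follows.

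Nothing here is deep: no induction and no appeal to the Main Result are needed, only Lemmas~\ref{XYpointsonline} and~\ref{planes} together with the description of the members of $\Xi \cup \Theta$. The only points warranting a little care are the degenerate configurations — checking that $L_1 \neq L_2$ so that Lemma~\ref{planes} genuinely applies, and that $p \notin Y(\zeta)$ so that $p$ is a smooth point of $C_\zeta$ — and, in the second alternative, the slight conceptual step that "collinear to $p$ inside $\zeta$'' for a point of $Y(\zeta)$ is exactly "lying in $T_p(C_\zeta)$''. This identification $p^\perp \cap Y \cap \zeta = Y(\zeta) \cap T_p(C_\zeta)$ is what makes the set of $Y$-points collinear to $p$ closed under taking lines, and is the one place where the elementary geometry of the cone/quadric $C_\zeta$ enters.
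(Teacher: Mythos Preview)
Your proof is correct. The approach is close in spirit to the paper's but with a worthwhile simplification: you first observe that $Y_S=\bigcap_{p\in S}(p^\perp\cap Y)$, reducing to the single-point case, and then apply Lemma~\ref{planes} to the two lines $py_1,py_2$. The paper instead works directly with the $(k-1)$-space $S$, forming the singular $k$-spaces $\langle S,y_i\rangle$ and invoking the higher-dimensional Lemma~\ref{kspaces}; in both arguments one ends up either in a singular subspace or inside some $\zeta\in\Xi\cup\Theta$, where the quadric structure of $XY(\zeta)$ finishes the job. Your reduction buys you the use of the more elementary Lemma~\ref{planes} in place of Lemma~\ref{kspaces}, at the cost of a slightly longer discussion of the tangent-hyperplane argument (which could be shortened by simply noting that in $XY(\zeta)$ the perp $p^\perp$ is linear, so $y_1,y_2\in p^\perp$ forces $y_1y_2\subseteq p^\perp$).
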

\begin{proof}
Let $S_1$ and $S_2$ be singular $k$-spaces through $S$ such that $S\setminus S_i$ contains a point $y_i \in Y$, $i=1,2$. By Lemma~\ref{kspaces}, $\<S_1,S_2\>$ is either singular or contained in a member $\zeta$ of $\Xi \cup \Theta$. In both cases we conclude that $y_1y_2$ is singular, noting that $Y(\zeta)$ is by definition a singular subspace of~$\zeta$. \end{proof}

\begin{lemma}\label{subspaceinsymp}
Let $S$ be a singular $k$-space with $k \geq 1$ and suppose $\zeta \in \Xi \cup \Theta$. If $S \cap \zeta$ is a hyperplane of $S$ not entirely contained in $Y$ and not a maximal singular subspace of $\zeta$, then there is a $\zeta' \in \Xi \cup \Theta$ through $S$ such that $\zeta' \cap \zeta$ is not collinear to $S$.
\end{lemma}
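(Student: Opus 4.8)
We must show: given a singular $k$-space $S$ ($k \geq 1$) and $\zeta \in \Xi \cup \Theta$ such that $S \cap \zeta$ is a hyperplane of $S$, not contained in $Y$, and not a maximal singular subspace of $\zeta$, there exists $\zeta' \in \Xi \cup \Theta$ through $S$ with $\zeta' \cap \zeta$ not collinear to $S$. The plan is to produce $\zeta'$ as $[x_1, x_2]$ for a carefully chosen non-collinear pair, where $x_1 \in S$ realizes a direction transverse to $\zeta$ and $x_2 \in \zeta$ lies outside the ``perp'' of the hyperplane $S \cap \zeta$.

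**Step 1: Choose the players.** Write $H := S \cap \zeta$, a $(k-1)$-dimensional hyperplane of $S$ with $H \not\subseteq Y$, so $H$ contains an $X$-point. Pick an $X$-point $x_1 \in S \setminus H$ (possible since $S$ is singular of dimension $k$ and, using $|\K| > 2$, $S \setminus H$ cannot consist entirely of $Y$-points if we also exploit that $H \not\subseteq Y$ — or simply note $S \not\subseteq Y$ by Lemma~\ref{XYpointsonline} once $S$ has a line with three $X\cup Y$-points; in fact $S$ being a singular $k$-space with $k\geq1$ and $H\not\subseteq Y$ forces an $X$-point off $H$). Now, inside $\zeta$: since $H$ is not a maximal singular subspace of the quadric $C_\zeta = XY(\zeta)$, there is a singular subspace of $C_\zeta$ properly containing $H$, hence there is an $X$-point $q \in \zeta$ collinear to all of $H$ but with $\langle H, q\rangle \not\subseteq$ (any maximal singular subspace). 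The key point I want: there exists an $X$-point $x_2 \in X(\zeta)$ which is \emph{not} collinear to $x_1$. To get this, observe $x_1$ is collinear to $H$ (as $S$ is singular), so $x_1^\perp \cap \zeta$ is, by Lemma~\ref{convexclosure}, a convex (hence singular-ish) subset containing $H$; since $\zeta$ is a non-degenerate hyperbolic cone of rank $r+1 \geq 2$ (or $r'+1$) and $x_1 \notin \zeta$, the set $x_1^\perp \cap \zeta$ cannot be all of $X(\zeta)$ — a quadric of positive rank has points not collinear to a given singular subspace. So choose $x_2 \in X(\zeta)$ non-collinear to $x_1$.

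**Step 2: Build $\zeta'$ and verify.** By Lemma~\ref{uniquesymp}, set $\zeta' := [x_1, x_2] \in \Xi \cup \Theta$. Since $x_1 \in S$, I need $S \subseteq \zeta'$: for each $X$-point $h \in H$, $h$ is collinear to both $x_1$ (as $H \subseteq S$ singular) and $x_2$ (choosing $x_2$ among points collinear to $H$ — this is the extra care needed: pick $x_2$ collinear to $H$ but non-collinear to $x_1$, which exists because $H^\perp \cap X(\zeta)$ properly contains a maximal singular subspace yet still contains points off $x_1^\perp$, using that $H$ is non-maximal in $\zeta$); then the line $x_1 h$ is singular and lies in $\zeta'$ by convexity of $\zeta'$ through the non-collinear pair $x_1,x_2$, giving $\langle x_1, H\rangle = S \subseteq \zeta'$. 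Finally, $\zeta' \cap \zeta$ is singular by (S2); it contains $x_2$ and $H$, but it does not contain $x_1$ (as $x_1 \in S \setminus \zeta$), and $\zeta'\cap\zeta$ is not collinear to $S$: if it were, then $x_2$ (lying in $\zeta'\cap\zeta$) would be collinear to all of $S$, in particular to $x_1$, contradicting the choice of $x_2$.

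**Main obstacle.** The delicate point is Step 1 — simultaneously arranging $x_2 \in X(\zeta)$ to be collinear to the whole hyperplane $H$ (so that $S = \langle x_1, H\rangle$ ends up inside $\zeta' = [x_1,x_2]$) yet non-collinear to $x_1$. This is exactly where the hypothesis ``$H$ is not a maximal singular subspace of $\zeta$'' is used: in a hyperbolic cone, the perp of a non-maximal singular subspace $H$ (namely $H^\perp \cap C_\zeta$) is strictly larger than any maximal singular subspace through $H$, so it contains $X$-points spanning a subquadric of positive rank with $H$; among these there must be one outside $x_1^\perp$, since $x_1^\perp \cap \zeta$ meets this subquadric in a proper (singular) subset. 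Making this counting/rank argument precise — ideally by passing to the residue/quotient of $\zeta$ by $H$, where $C_\zeta/H$ is again a non-degenerate hyperbolic quadric of rank $\geq 1$ and one just needs two non-collinear points in it, one in $x_1^\perp$-image-complement — is the technical heart; everything after that is a routine application of Lemmas~\ref{planes},~\ref{uniquesymp},~\ref{convexclosure} and axiom (S2).
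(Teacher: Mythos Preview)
Your proof is correct and follows essentially the same strategy as the paper's. The paper's version is slightly more economical: instead of working point-by-point, it picks two non-collinear singular $k$-spaces $S_1,S_2$ of $\zeta$ through $H=S\cap\zeta$ (not contained in $Y$), observes via Lemma~\ref{convexclosure} that not both of $\langle S,S_1\rangle$, $\langle S,S_2\rangle$ can be singular (else an $X$-point of $S\setminus\zeta$ would be collinear to two non-collinear points of $\zeta$), and then applies Lemma~\ref{kspaces} directly to the non-singular pair to obtain $\zeta'$. Your choice of an $X$-point $x_2\in H^\perp\cap X(\zeta)$ not collinear to $x_1$ amounts to picking a point in one of these $S_i$, and your invocation of Lemma~\ref{uniquesymp} plus convexity replaces the single call to Lemma~\ref{kspaces}; the underlying mechanism---non-maximality of $H$ forces two non-collinear directions in $\zeta$ above $H$, and $x_1^\perp\cap\zeta$ can only see one of them---is identical.
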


\begin{proof}
The assumptions on $S\cap \zeta$ yield a pair of non-collinear singular subspaces $S_1$ and $S_2$ of $\zeta$ through $S \cap \zeta$ that are not entirely contained in $Y$.  If both $\<S,S_1\>$ and $\<S,S_2\>$ were singular, then an $X$-point of $S\setminus \zeta$ (which exists because $S \nsubseteq Y$) would be collinear to a pair of non-collinear $X$-points of $S_1, S_2$, contradicting Lemma~\ref{convexclosure}. So we may assume that $\<S,S_1\>$ is not singular. By Lemma~\ref{kspaces}, $\<S,S_1\>$ is contained in a member of $\Xi \cap \Theta$.
\end{proof}

We record a special case of the previous lemma. Observe that the crucial difference between $\Xi$ and $\Theta$ is that for each  $\xi\in \Xi$, each point of $Y(\xi)$ is collinear to each point of $X(\xi)$, whereas for each $X$-point of $\theta\in\Theta$  there is a point $y \in Y(\theta)$ not collinear to it. 

\begin{cor}\label{supersymp/plane}
Let $L$ be a singular line with a unique point $y\in Y$, meeting some $\zeta \in \Xi\cap\Theta$ in an $X$-point $x$. Then there is a $\theta \in \Theta$ through $L$ sharing an $X$-line with $\zeta$.
\end{cor}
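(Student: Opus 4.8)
\textbf{Plan of proof for Corollary~\ref{supersymp/plane}.}
The statement is a specialisation of Lemma~\ref{subspaceinsymp} to the case of a singular line $L$ that is not contained in $Y$, applied to a member $\zeta\in\Xi\cup\Theta$ met in an $X$-point $x$; the extra content is that the resulting member of $\Xi\cup\Theta$ through $L$ must in fact lie in $\Theta$ (not in $\Xi$), and that $L$ is \emph{not} entirely contained in it but rather shares an $X$-line with $\zeta$. The plan is to first apply Lemma~\ref{subspaceinsymp} with $S=L$ and $S\cap\zeta=\{x\}$: indeed $\{x\}$ is a hyperplane of $L$, it is not contained in $Y$ (as $x\in X$), and a single $X$-point is never a maximal singular subspace of $\zeta$ (the quadric $XY(\zeta)$ has rank $r+1\ge 2$ or $r'+1\ge 2$, so it contains singular lines through $x$). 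Hence Lemma~\ref{subspaceinsymp} produces a $\zeta'\in\Xi\cup\Theta$ through $L$ such that $\zeta'\cap\zeta$ is not collinear to $L$.

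Next I would pin down what $\zeta'\cap\zeta$ is. By (S2) it is a singular subspace, and it contains $x$ (since $x\in L\subseteq\zeta'$ and $x\in\zeta$). Because $\zeta'\cap\zeta$ is not collinear to $L$, it is not contained in the line $L$ itself and in particular $L\not\subseteq\zeta'\cap\zeta$; moreover the unique point $y\in L\cap Y$ is not collinear to $\zeta'\cap\zeta$ (if it were, and $x$ is collinear to $\zeta'\cap\zeta$, then $L=\langle x,y\rangle$ would be collinear to $\zeta'\cap\zeta$ by Corollary~\ref{collY}-type reasoning applied inside the singular structure — one checks that $\langle\zeta'\cap\zeta, x\rangle$ and $\langle\zeta'\cap\zeta, y\rangle$ singular force $\langle\zeta'\cap\zeta,L\rangle$ singular). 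So there is a point $w\in\zeta'\cap\zeta$ with $y\not\perp w$, i.e.\ the line $yw$ is not singular. This is exactly the defining feature that distinguishes $\Theta$ from $\Xi$ recorded just before the Corollary: in a member of $\Xi$, every point of the vertex is collinear to every point of the tube, and more generally every point of $XY(\xi)$ is collinear to every point of $Y(\xi)$; whereas here $y$ lies in a member of $\Xi\cup\Theta$ through $L$ but fails to be collinear to a point $w$ of it. Applying this to $\zeta'$: if $\zeta'\in\Xi$, then since $y\in Y$ and $y\in\zeta'$, either $y\in Y(\zeta')$ (the vertex) — whence $y\perp$ everything in $XY(\zeta')\supseteq\{w\}$, contradiction — or $y\in X(\zeta')$, impossible as $y\in Y$. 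Hence $\zeta'\in\Theta$.

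Finally I would argue that $\zeta'$ shares an $X$-line with $\zeta$. Inside $\zeta$, the point $x$ together with the singular subspace $\zeta'\cap\zeta$ (which contains $x$ and has dimension $\ge 1$, else $\zeta'\cap\zeta=\{x\}$ would be collinear to $L$ trivially) spans at least a singular line of $\zeta$ through $x$; I would pick a singular line $L'$ of $\zeta$ through $x$ inside $\zeta'\cap\zeta$. Then $L'\subseteq\zeta'$ and $L'\subseteq\zeta$, so $L'$ is a shared singular line; it remains to see $L'$ can be taken to be an $X$-line, i.e.\ with no point in $Y$. Since $\zeta\in\Xi\cup\Theta$ and $x\in X(\zeta)$, the tube part $X(\zeta)$ through $x$ contains singular lines avoiding $Y(\zeta)$ whenever the base quadric has rank $\ge 2$; intersecting with the singular subspace $\zeta'\cap\zeta\ni x$ (of positive dimension) and using $|\K|>2$ to avoid the finitely many ``bad'' points one obtains such an $X$-line $L'$. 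That $L'\subseteq\zeta'$ shows $L'$ is shared.

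\textbf{Main obstacle.} The delicate point is the dichotomy step — proving $\zeta'\in\Theta$ cleanly — which hinges on correctly exploiting the asymmetry ``every point of $Y(\xi)$ is collinear to every point of $X(\xi)$ for $\xi\in\Xi$''; one must be careful that $y$ genuinely lies \emph{in} $\zeta'$ and that the only two possibilities for a $Y$-point of a member of $\Xi$ are vertex or (impossibly) tube. A secondary nuisance is the bookkeeping to guarantee the shared line $L'$ is an honest $X$-line rather than one meeting $Y$, which is where the hypothesis $|\K|>2$ is used, exactly as in the proof of Lemma~\ref{planes}.
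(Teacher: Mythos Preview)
Your overall approach is the same as the paper's --- apply Lemma~\ref{subspaceinsymp} to get $\zeta'\in\Xi\cup\Theta$ through $L$ with $\zeta\cap\zeta'$ not collinear to $L$, then use the $\Xi$/$\Theta$ asymmetry on $Y$-points --- and your argument that $\zeta'\in\Theta$ is correct. But your final step (producing the shared $X$-line) has a genuine gap, and the fix is already sitting in your step~2.

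Your appeal to ``$|\K|>2$ to avoid finitely many bad points'' is a red herring. The danger is not that finitely many lines through $x$ in $\zeta\cap\zeta'$ are bad; it is that \emph{all} of them could meet $Y$ (namely if $\zeta\cap\zeta'\subseteq\langle x,Y_x\rangle$), in which case no amount of field size helps. What actually rules this out is precisely the point $w\in\zeta\cap\zeta'$ with $y\not\perp w$ that you already found: set $L':=xw$. This is the shared $X$-line. Indeed $L'\subseteq\zeta\cap\zeta'$ is singular; if it had a $Y$-point $y'$, then $y'\in Y_x$, and since $Y_x$ is a subspace (Corollary~\ref{collY}) the plane $\langle x,y,y'\rangle$ is singular (every point lies on a line $xq$ with $q\in yy'\subseteq Y_x$), forcing $y\perp w$ --- contradiction. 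So $L'$ is an $X$-line, automatically not collinear to $L$, and you are done. No separate step~3 is needed, and $|\K|>2$ plays no role here.

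The paper's write-up is slightly more economical: it \emph{starts} from the observation (via Corollary~\ref{collY}) that every singular line through $x$ meeting $Y$ is collinear to $L$, so the non-collinearity of $\zeta\cap\zeta'$ with $L$ immediately forces an $X$-line $L'\subseteq\zeta\cap\zeta'$ not collinear to $L$; then $y\not\perp L'$ gives $\zeta'\in\Theta$. This is your steps~2 and~3 done in one stroke, in the reverse order.
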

\begin{proof}
Lemma~\ref{subspaceinsymp} implies that $L$ is contained in a member $\zeta'$ of $\Xi \cup \Theta$, $i=1,2$, with $\zeta \cap \zeta'$ a singular subspace through $x$ not collinear to $L$. Since all lines through $x$ containing a point of $Y$ are collinear to $L$ by Corollary~\ref{collY}, this implies that $\zeta \cap \zeta'$ contains an $X$-line $L'$ not collinear to $L$. In particular, $y$ is not collinear to $L'$, and hence $\zeta' \in \Theta$.
\end{proof}

\subsection{Projections of $(X,Z,\Xi,\Theta)$}
The next lemma explains why, in the main theorem, we speak of ``a cone with vertex $V^*$''.

\begin{lemma}\label{resY}
If $V^*$ is a subspace of $Y$ of dimension $v^*$ collinear to all points of $X$, then the projection of a (pre-)DSV $(X,Z,\Xi,\Theta)$ with parameters $(r,v,r',v')$ from $V^*$ onto a complementary subspace of $\mathbb{P}^N(\K)$ is a (pre-)DSV with parameters $(r,v-v^*-1,r',v'-v^*-1)$ inside $\mathbb{P}^{N-v^*-1}(\K)$.
\end{lemma}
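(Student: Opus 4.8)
The idea is that projecting from a subspace $V^*\subseteq Y$ that is collinear to every point of $X$ simply "collapses" $V^*$ into each quadric's vertex, so that the projected configuration is again of the same combinatorial shape with the vertex dimensions reduced by $v^*+1$. Concretely, write $\pi$ for the projection from $V^*$ onto a complementary subspace $\Pi\cong\mathbb{P}^{N-v^*-1}(\K)$, and set $\overline{X}:=\pi(X)$, $\overline{Z}:=\pi(Z)$, $\overline{\Xi}:=\{\pi(\xi)\mid\xi\in\Xi\}$, $\overline{\Theta}:=\{\pi(\theta)\mid\theta\in\Theta\}$. First I would check that $\pi$ is injective on $X\cup Z$: if $p_1,p_2\in X\cup Z$ projected to the same point, the line $p_1p_2$ would meet $V^*$, hence would contain three points of $X\cup Y$ (two of them, plus a point of $V^*\subseteq Y$) and so would be singular by Lemma~\ref{XYpointsonline}; but $V^*$ is collinear to all of $X$, so this forces $p_1\perp p_2$ via a line meeting $V^*$ — one then argues using $|\K|>2$ and Lemma~\ref{planes}/convexity that the whole plane through $p_1,p_2$ and a point of $V^*$ is singular, which is consistent, and injectivity on $X$ follows because distinct points of $X$ stay distinct after projecting from a subspace collinear to all of them (the join $\langle V^*,p\rangle$ is a singular subspace meeting $X$ in more than just $p$ only along lines, and two such joins for $p_1\neq p_2$ are distinct). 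The cleaner route: $V^*\cap\langle p_1,p_2\rangle=\emptyset$ for $p_1\ne p_2\in X$ because otherwise $p_1p_2$ is singular and, being in $X$, the projection is still injective on it since a singular subspace through $V^*$ is a cone with vertex containing $V^*$ and base an $X$-set on which $\pi$ is a bijection.

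Next I would verify that each $\xi\in\Xi$ satisfies $V^*\subseteq\langle\xi\rangle$ and in fact $\pi(\xi)$ is an $(r,v-v^*-1)$-cone. The key point is that $V^*\subseteq Y(\xi)$ for every $\xi\in\Xi$: pick two non-collinear $X$-points of $\xi$; by Lemma~\ref{convexclosure} the vertex $Y(\xi)$ is exactly the subspace of $Y$ collinear to both, and $V^*$, being collinear to all of $X$, lies in it. Projecting an $(r,v)$-cone from a $v^*$-subspace of its vertex yields an $(r,v-v^*-1)$-cone; the base quadric (rank $r+1$) is untouched, and $X(\xi)=T_\xi$, $Y(\xi)$ the vertex, behave correctly under $\pi$ because $\pi$ is injective on $X(\xi)$ and sends $Y(\xi)$ onto a $(v-v^*-1)$-subspace. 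For $\theta\in\Theta$ the same reasoning shows $V^*\subseteq V_\theta$ (the vertex of $C_\theta$): indeed $V^*$ is collinear to every $X$-point of $\theta$, and $V_\theta$ is precisely the set of points of the maximal generator $M=Y(\theta)$ collinear to all $X$-points of $\theta$ — so $V^*\subseteq V_\theta\subseteq M$, $\pi(\theta)$ is an $(r',v'-v^*-1)$-cone, its generator $\pi(M)$ has the right dimension, and $\pi(Z(\theta))$ is again the union of the (reduced) vertex $\pi(V_\theta)$ and an $r'$-space of $\pi(M)$ complementary to it. Thus $\overline{Y}=\pi(Y)=\langle\overline{Z}\rangle$ has dimension $\dim Y-v^*-1$, and the parameter bookkeeping $d=2r+v+1\mapsto 2r+(v-v^*-1)+1$, $d'\mapsto 2r'+(v'-v^*-1)+1$ works out, so all dimension requirements in the definition of a (pre-)DSV hold for $(\overline{X},\overline{Z},\overline{\Xi},\overline{\Theta})$ in $\mathbb{P}^{N-v^*-1}(\K)$.

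Finally I would transfer the axioms. Axiom (S1): given $\overline{p_1}\ne\overline{p_2}$ in $\overline{X}\cup\overline{Z}$, lift to $p_1,p_2\in X\cup Z$ (using injectivity), apply (S1) upstairs to get $\zeta\in\Xi\cup\Theta$ with $p_1,p_2\in\zeta$, and then $\overline{p_1},\overline{p_2}\in\pi(\zeta)\in\overline{\Xi}\cup\overline{\Theta}$. Axiom (S2): for distinct $\pi(\zeta_1),\pi(\zeta_2)$, note $\zeta_1\ne\zeta_2$ (since $V^*$ lies in both, $\pi$ is "injective on the set $\Xi\cup\Theta$" in the sense that $\pi(\zeta_1)=\pi(\zeta_2)$ forces $\langle\zeta_1,V^*\rangle=\langle\zeta_2,V^*\rangle$; as $V^*\subseteq\zeta_i$ this gives $\zeta_1=\zeta_2$); then $\zeta_1\cap\zeta_2$ is singular upstairs, i.e.\ contained in $X\cup Y$, and since $\pi(\zeta_1\cap\zeta_2)\supseteq\pi(\zeta_1)\cap\pi(\zeta_2)$ — actually equality holds because $V^*\subseteq\zeta_1\cap\zeta_2$ so the fibres match — the intersection $\pi(\zeta_1)\cap\pi(\zeta_2)$ is contained in $\overline{X}\cup\overline{Y}$, hence singular. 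Axiom (S3) (only in the DSV, not pre-DSV, case): for $\overline{x}\in\overline{X}$ lift to $x\in X$; the singular lines through $x$ not lying in $Y$ project to the singular lines through $\overline{x}$ not lying in $\overline{Y}$ (a singular line $L\ni x$ with $L\subseteq Y$ would still have $\pi(L)\subseteq\overline{Y}$, and conversely any singular line through $\overline{x}$ lifts, since the fibre of $\overline{x}$ is $\langle x,V^*\rangle$ which is singular), so $T_{\overline{x}}=\pi(T_x)$; picking $\xi_1,\xi_2\in\Xi$ with $T_x=\langle T_x(\xi_1),T_x(\xi_2)\rangle$ and applying $\pi$ gives $T_{\overline{x}}=\langle T_{\overline{x}}(\pi(\xi_1)),T_{\overline{x}}(\pi(\xi_2))\rangle$, as the tangent space of an $(r,v)$-cone at an $X$-point projects onto the tangent space of its $(r,v-v^*-1)$-projection at the image point.

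The main obstacle I expect is being careful and honest about the singular-subspace structure around $V^*$ — in particular proving cleanly that $\pi$ is injective on $X\cup Z$ and that the fibre $\langle x,V^*\rangle$ of each $x\in X$ is singular and meets $X$ only in a controlled way, so that tangent spaces, vertices, and the membership "$p\in X$ versus $p\in Y$" are all respected. Everything else is routine bookkeeping of Witt indices and vertex dimensions under a linear projection; the content is entirely in the fact that $V^*$ collinear to all of $X$ forces $V^*$ into the vertex of every member of $\Xi\cup\Theta$, which is the engine of the whole argument and follows from Lemma~\ref{convexclosure} together with the explicit description of the vertices of members of $\Theta$.
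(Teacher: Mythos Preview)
Your approach is essentially the same as the paper's: the key observation is that $V^*$ lies in the vertex of every member of $\Xi\cup\Theta$ (via Lemma~\ref{convexclosure}), after which the axioms transfer by routine bookkeeping. The paper dispatches the verification in a single sentence; you attempt to fill in the details, and most of that is fine.

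There is one genuine misstep, however: your claim that $\pi$ is injective on $X\cup Z$ is \emph{false}, and your attempted arguments for it are correspondingly muddled. If $x_1,x_2\in X$ lie on a singular line through a point of $V^*$ (and such lines certainly exist --- indeed every $x\in X$ is collinear to every point of $V^*$ by hypothesis), then $\pi(x_1)=\pi(x_2)$. This is entirely expected: projecting a cone from a subspace of its vertex collapses each ruling through that subspace to a single point of the base. Fortunately, injectivity is nowhere needed. For (S1) you only need \emph{some} preimages $p_1,p_2$ of $\overline{p_1}\neq\overline{p_2}$, and these are automatically distinct; for (S2) the equality $\pi(\zeta_1)\cap\pi(\zeta_2)=\pi(\zeta_1\cap\zeta_2)$ follows purely from $V^*\subseteq\zeta_1\cap\zeta_2$ (which you correctly note), not from injectivity; for (S3) what matters is that $V^*\subseteq T_x(\xi_i)$ for each $\xi_i$, so that $\pi(T_x(\xi_i))=T_{\overline{x}}(\pi(\xi_i))$ and spanning is preserved. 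Once you drop the injectivity discussion, your outline is correct and matches the paper's argument.
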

\begin{proof}
If all points of $X$ are collinear to $V^*$, then all members of $\Xi$ and $\Theta$ have $V^*$ in their vertex (cf.\ Lemma~\ref{convexclosure}). A straightforward verification shows that the projection of $(X,Z,\Xi,\Theta)$ from $V^*$ satisfies (S$i$) if $(X,Z,\Xi,\Theta)$ does, for each $i\in\{1,2,3\}$.
\end{proof}

There is another projection that we will frequently make use of. Let $F$ be a subspace of $\mathbb{P}^N(\K)$ complementary to $Y$.

\begin{defi}\label{RHO} \em The projection of $(X,Z,\Xi,\Theta)$ onto $F$ is induced by the following map.
\[\rho: X \rightarrow F: x \mapsto \<Y,x\> \cap F.\]
\end{defi}

\begin{defi}\label{CHI} \em
The \emph{connection} map between $\rho(X)$ and $Y$ (recalling $Y_x= x^\perp \cap Y$) is defined as follows:
\[\chi: \rho(X) \mapsto Y: \rho(x) \mapsto Y_x.\]
\end{defi}

We show some general properties on $\rho$ and $\chi$ (in particular that $\chi$ is well defined).

\begin{lemma}\label{proprhochi}\label{1:inj}\label{2':inj}
\begin{compactenum}[$(i)$]
\item For each $x\in X$, $\rho^{-1}(\rho(x))=\<x,Y_x\> \cap X$ and hence $\chi$ is well defined;
\item for each $\xi \in \Xi$, $\rho(X(\xi))$ is a non-degenerate hyperbolic quadric of rank $r+1$;
\item  for each $\theta \in \Theta$, $\rho(X(\theta))$ is a singular subspace of dimension $r'$.
\end{compactenum}
\end{lemma}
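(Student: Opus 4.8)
The plan is to prove the three parts of Lemma~\ref{proprhochi} largely by transferring known structural facts about the members of $\Xi$ and $\Theta$ through the linear projection $\rho$ from $Y$ onto $F$, and by exploiting the rigidity coming from axiom (S2). Throughout I will use that each $\xi\in\Xi$ meets $Y$ exactly in the vertex $Y(\xi)$ of the $(r,v)$-cone $C_\xi=XY(\xi)$, while each $\theta\in\Theta$ meets $Y$ in a generator $M$ of the $(r',v')$-cone $C_\theta=XY(\theta)$ that contains the vertex $V_\theta$. The key geometric input is Lemma~\ref{convexclosure}, which identifies $XY(\zeta)$ as a convex subspace whose $Y$-part is exactly the ``vertex'' collinear to all $X$-points of $\zeta$.

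For $(i)$: I would first show that $\rho^{-1}(\rho(x))\subseteq\<x,Y_x\>$. Suppose $x'\in X$ with $\rho(x')=\rho(x)$; then $x'\in\<Y,x\>$, so the line $xx'$ meets $Y$ in a point $y$. If $x\not\perp x'$ this line would lie in $XY([x,x'])$ by (S1), but a line joining two non-collinear $X$-points meets $X\cup Y$ only in those two points (Lemma~\ref{XYpointsonline}), a contradiction; hence $x\perp x'$, so $xx'$ is singular, and then $y\in x^\perp\cap Y=Y_x$, giving $x'\in\<x,Y_x\>$. Conversely every $X$-point on a singular line through $x$ meeting $Y_x$ clearly projects to $\rho(x)$, and $\<x,Y_x\>\cap X$ is exactly the union of such points because $Y_x$ is a subspace (Corollary~\ref{collY}) and $\<x,Y_x\>$ is therefore a singular subspace with $x\in X$. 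This shows $\rho^{-1}(\rho(x))=\<x,Y_x\>\cap X$. Well-definedness of $\chi$ then follows: if $\rho(x)=\rho(x')$ then $x'\in\<x,Y_x\>$, which forces $Y_{x'}=Y_x$ since each is the full $Y$-part collinear to the singular subspace $\<x,Y_x\>$ (again Corollary~\ref{collY}), so the assignment $\rho(x)\mapsto Y_x$ does not depend on the chosen preimage.

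For $(ii)$: Fix $\xi\in\Xi$. The vertex $Y(\xi)$ of $C_\xi$ lies in $Y$, so $\rho$ restricted to $\xi$ collapses exactly $Y(\xi)=\xi\cap Y$ and is injective on a complement; since $C_\xi$ is the cone over a hyperbolic quadric of rank $r+1$ with vertex $Y(\xi)$, its image $\rho(XY(\xi))$ is that base quadric, a non-degenerate hyperbolic quadric of rank $r+1$ spanning a $(2r+1)$-space; and $\rho(X(\xi))=\rho(XY(\xi))$ because $X(\xi)=C_\xi\setminus Y(\xi)$ already meets every generator of $C_\xi$ outside the vertex, so nothing is lost. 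One must only check injectivity of $\rho$ on $X(\xi)$ in the ambient space rather than just inside $\<\xi\>$: if $x,x'\in X(\xi)$ had $\rho(x)=\rho(x')$ then by $(i)$ the singular line $xx'$ meets $Y$, hence meets $\xi\cap Y=Y(\xi)$, forcing $x,x'$ on the same generator of $C_\xi$, hence $x=x'$ since a generator minus the vertex is a single residue point class — here one uses that $X(\xi)=T_\xi$ is an $(r,v)$-tube, so distinct points of it on the same cone-line are impossible. This gives $(ii)$.

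For $(iii)$: Fix $\theta\in\Theta$. Here $\theta\cap Y=M$ is a generator of the $(r',v')$-cone $C_\theta$, and $X(\theta)=C_\theta\setminus M$. Projecting $C_\theta$ from $M$: since $M$ is a maximal singular subspace of the quadric $C_\theta$, the image of $C_\theta$ under projection from $M$ is a single singular subspace — indeed, for a cone over a hyperbolic quadric of rank $r'+1$, projection from a generator yields a projective space of dimension $r'$ (the ``opposite'' family parametrising generators meeting $M$ in a hyperplane). So $\rho(X(\theta))=\rho(C_\theta)$ is a singular subspace of dimension $r'$; it is singular in the DSV because every line of this image is the $\rho$-image of a cone-line of $C_\theta$ lying in $X\cup Y$, hence meets $X\cup Y$ in at least three points and is singular by Lemma~\ref{XYpointsonline}. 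Injectivity of $\rho$ on $X(\theta)$ in the ambient space follows exactly as in $(ii)$ via $(i)$: a coincidence $\rho(x)=\rho(x')$ forces $xx'$ to meet $\theta\cap Y=M$, but two points of $C_\theta\setminus M$ on a line through $M$ lie on the same generator transversal, and $X(\theta)$ meets each such line in one point, so $x=x'$.

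The main obstacle I anticipate is the injectivity bookkeeping in $(ii)$ and $(iii)$: one must be careful that $\rho$, defined as projection from the whole of $Y$ rather than from $\xi\cap Y$ or $\theta\cap Y$, does not accidentally identify points of $X(\xi)$ (resp.\ $X(\theta)$) with each other via a singular line through a point of $Y$ lying outside $\xi$ (resp.\ $\theta$). Part $(i)$ is precisely the tool that resolves this — it pins any $\rho$-fibre inside a single singular subspace $\<x,Y_x\>$ — so the real content is establishing $(i)$ cleanly and then invoking it in $(ii)$ and $(iii)$. A secondary subtlety is verifying that the image in $(iii)$ is genuinely a full $r'$-dimensional singular subspace and not something smaller; this is a direct computation with the cone $C_\theta$ and the fact that $Z(\theta)$ supplies a genuine $r'$-space of $M$ complementary to $V_\theta$, guaranteeing the base of the cone has the claimed rank.
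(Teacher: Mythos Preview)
Your approach matches the paper's (which is very terse: it dismisses (ii) and (iii) as ``obvious'' once one notes that $\xi\cap Y$ is the vertex of $C_\xi$ and $\theta\cap Y$ is a generator of $C_\theta$). Your argument for (i) is fine; the clause ``this line would lie in $XY([x,x'])$ by (S1)'' is a red herring (a secant joining two non-collinear quadric points is \emph{not} contained in the quadric), but you recover immediately via Lemma~\ref{XYpointsonline}, which is all that is needed.

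There is, however, a genuine error in the extra injectivity passages you append to (ii) and (iii): the map $\rho$ is \emph{not} injective on $X(\xi)$ (nor on $X(\theta)$) in general. When $v\geq 0$, any two distinct points of the tube $X(\xi)=C_\xi\setminus Y(\xi)$ lying on a common line through the vertex $Y(\xi)$ have the same $\rho$-image, so your conclusion ``hence $x=x'$ \dots\ distinct points of it on the same cone-line are impossible'' is false. What your argument actually establishes---and what suffices---is that the fibres of $\rho$ on $\xi$ agree with those of the internal projection from $Y(\xi)$ (because any line of $\xi$ meeting $Y$ already meets $\xi\cap Y=Y(\xi)$); this identifies $\rho(X(\xi))$ with the base quadric without any injectivity claim. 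The same correction applies verbatim to (iii): a singular line in $\theta$ through a point of $M$ meets $X(\theta)=C_\theta\setminus M$ in all but one of its points, so ``$X(\theta)$ meets each such line in one point'' is wrong; again the correct (and sufficient) statement is that $\rho$-fibres on $\theta$ coincide with fibres of projection from $M=\theta\cap Y$, whence $\rho(X(\theta))$ is the full $r'$-dimensional image of that projection.
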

\begin{proof}
$(i)$ If $\rho(x)=\rho(x')$ for points $x,x' \in X$ with $x\neq x'$, then $\<x,x',Y\>$ contains $Y$ as a hyperplane. Therefore, the line $xx'$ meets $Y$ in a point $y\in Y$, which by Lemma~\ref{XYpointsonline} means that $xx'$ is singular. In particular, $y\in Y_x$,  and so $x' \in \<x,Y_x\>\cap X$ indeed. Conversely it is clear that all points of the latter set are mapped onto the same point by $\rho$.  Consequently, $\<x',Y_{x'}\>=\<x,Y_x\>$, so in particular  $Y_{x'}=Y_x$, from which we conclude that $\chi$ is well defined.

Assertions $(ii)$ and $(iii)$ are obvious noting that, for each $\xi \in \Xi$, $Y \cap \xi$ is the vertex of $\xi$ and for each $\theta \in \Theta$, $Y \cap \theta$ is a maximal singular subspace of $\theta$.
\end{proof}

\begin{rem}\em
Notwithstanding the fact that $Y$ is a subspace, we cannot just immediately use the projection $\rho$ and study the pair $(\rho(X),\rho(\Xi))$. Indeed, if $p_1,p_2$ are two non-collinear points of $\rho(X)$, and $x_i,x'_i \in \rho^{-1}(p_i)$ for $i=1,2$, then $x_1$ and $x_2$ and also $x'_1$ and $x'_2$ are non-collinear points of $X$ for sure, but we do not even know whether $\rho([x_1,x_2])=\rho([x'_1,x'_2])$. Moreover, to establish the inverse image of a line of $\rho(X)$, we need to know more on the structure of $Y$, et cetera.
\end{rem}

\subsection{Mono-symplectic DSVs}
We  express the fact that $(X,Z,\Xi,\Theta)$ is mono-symplectic (i.e., $|\Theta|=0$) in terms of the vertices of members of $\Xi$.

\begin{lemma}\label{existencesupersymps}  The following are equivalent:
\begin{compactenum}[$(i)$]
\item $\Theta$ is empty;
\item each member of $\Xi$ has $Y$ as its vertex;
\item there is a member of $\Xi$ having $Y$ as its vertex.
\end{compactenum}
\end{lemma}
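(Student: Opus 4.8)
The plan is to prove the three equivalences $(i)\Leftrightarrow(ii)\Leftrightarrow(iii)$ by a short cycle of implications, with $(ii)\Rightarrow(iii)$ being trivial (provided $\Xi$ is non-empty, which it is since $|\Xi|>1$), and the real content lying in $(iii)\Rightarrow(i)$ and $(i)\Rightarrow(ii)$.

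For $(i)\Rightarrow(ii)$, suppose $\Theta=\emptyset$ and let $\xi\in\Xi$ be arbitrary. By definition the vertex $V_\xi$ of $\xi$ equals $Y(\xi)=Y\cap\xi$, so I must show $Y\subseteq\xi$. Take any point $y\in Y$; since $Z$ spans $Y$ and $\xi$ is a subspace, it suffices to handle $y\in Z$. Pick an $X$-point $x\in X(\xi)$ not in the vertex, and also a point $p\in X(\xi)$ non-collinear to $x$ inside $\xi$ (this exists because $X(\xi)$ is a $(r,v)$-tube with $r\geq 1$, so its base hyperbolic quadric contains non-collinear points). If $y$ is already in $\xi$ we are done; otherwise, by (S1) applied to $y$ and $x$, there is a member $\zeta\in\Xi\cup\Theta=\Xi$ containing both. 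Now $x$ lies in $\xi\cap\zeta$, which by (S2) is singular; I want to force $\zeta=\xi$, which would give $y\in\xi$. The mechanism is Corollary~\ref{collY} (or directly Lemma~\ref{XYpointsonline}): every line through $x$ meeting $Y$ is singular, so $y\in x^\perp$; but then $y$ is collinear to $x$ and, since $\zeta\in\Xi$ has $Y(\zeta)$ collinear to \emph{all} of $X(\zeta)$, one checks that the non-collinear partner $p$ of $x$ must lie in $\zeta$ too. With $x,p,y\in\zeta$ non-collinear and $x,p\in\xi$, Lemma~\ref{uniquesymp} gives $\zeta=[x,p]=\xi$, hence $y\in\xi$, so $Y\subseteq\xi$ as desired. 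The one delicate point is ensuring the argument is set up so that $x$ and $p$ really are a pair of non-collinear points lying in $\zeta$; this is where I expect to spend the most care, and it is essentially the observation recorded just before Corollary~\ref{supersymp/plane} about the difference between $\Xi$ and $\Theta$.

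For $(iii)\Rightarrow(i)$, suppose some $\xi_0\in\Xi$ has vertex $Y$, i.e.\ $Y\subseteq\xi_0$, and suppose for contradiction that $\Theta\neq\emptyset$, say $\theta\in\Theta$. By definition $Y(\theta)=Y\cap\theta$ is a maximal singular (generator) subspace $M$ of the cone $C_\theta$, and there is an $X$-point $x\in X(\theta)$ together with a point $y\in Y(\theta)$ not collinear to $x$. Now take $x'\in X(\xi_0)$ and use (S1) to connect $x$ and $x'$: if they are non-collinear we get a member $\zeta=[x,x']$, whose intersection with $\xi_0$ contains $x'$ and is singular by (S2); chasing as above, since $\xi_0$ contains all of $Y$ and $Y(\xi_0)=Y$ is collinear to all of $X(\xi_0)$, one deduces that $Y$ is collinear to $x$ as well — but this contradicts the existence of $y\in Y\subseteq\xi_0$ non-collinear to $x$. (If $x$ and $x'$ happen to be collinear, first replace $x'$ by a point of $X(\xi_0)$ non-collinear to $x$, which exists since $\Xi$ contains a tube with $r\geq 1$; and if $x\in\xi_0$ already, then $x\in X(\xi_0)$ directly contradicts $y\in Y=Y(\xi_0)$ being non-collinear to $x$.) Either way $\Theta$ must be empty.

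Finally $(ii)\Rightarrow(iii)$ is immediate since $\Xi\neq\emptyset$, and the cycle $(i)\Rightarrow(ii)\Rightarrow(iii)\Rightarrow(i)$ closes the argument. The main obstacle throughout is bookkeeping collinearity: I must repeatedly invoke Lemma~\ref{XYpointsonline} to turn ``a line through $x$ meets $Y$'' into ``that line is singular'', and then use the defining asymmetry between $\Xi$-members (whole vertex collinear to whole tube) and $\Theta$-members (some vertex point non-collinear to some tube point) to pin down which symp two given points span via Lemma~\ref{uniquesymp}. None of the steps requires coordinates; it is purely a matter of combining (S1), (S2), Lemma~\ref{XYpointsonline}, Lemma~\ref{uniquesymp} and Corollary~\ref{collY} in the right order.
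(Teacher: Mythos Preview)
Your overall cycle $(i)\Rightarrow(ii)\Rightarrow(iii)\Rightarrow(i)$ matches the paper's, but both non-trivial implications contain genuine gaps.

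In $(i)\Rightarrow(ii)$: the sentence ``every line through $x$ meeting $Y$ is singular, so $y\in x^\perp$'' misquotes Lemma~\ref{XYpointsonline}, which requires \emph{three} points of $X\cup Y$ on the line; what actually gives $y\perp x$ is that $y\in Y(\zeta)$ is the vertex of $\zeta\in\Xi$. More seriously, the step ``one checks that $p$ must lie in $\zeta$ too'' is a non-sequitur: nothing you have written forces $p\in\zeta$. Your idea can be rescued, but differently: argue that for \emph{every} $z\in Z$ and \emph{every} $x\in X$, (S1) with $\Theta=\emptyset$ puts $z,x$ in some member of $\Xi$, whose vertex contains $z$, whence $z\perp x$; applying this to two non-collinear $X$-points of a fixed $\xi$ and invoking Lemma~\ref{convexclosure} forces $z\in\xi$, so $Y=\<Z\>\subseteq\xi$. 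The paper takes a different route, using Corollary~\ref{supersymp/plane} to show that if $x\in X(\xi)$ were collinear to some $y\in Y\setminus V_\xi$, a member of $\Theta$ would be forced.

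In $(iii)\Rightarrow(i)$: your ``chasing as above, one deduces that $Y$ is collinear to $x$'' hides the entire content. You never observe the key point: since one $\xi_0$ has vertex $Y$, we have $\dim Y=v$; hence for $\zeta=[x,x']$, if $\zeta\in\Xi$ its vertex is a $v$-dimensional subspace of $Y$ and therefore equals $Y$, giving $x\perp Y$. You also do not rule out $\zeta\in\Theta$ (which the paper excludes because $x'\perp Y$ while any $\theta\in\Theta$ has a $Y$-point non-collinear to each of its $X$-points). Without these two observations the deduction does not go through.
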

\begin{proof} Since $(ii) \Rightarrow (iii)$ is trivial, it suffices to show  $(i) \Rightarrow (ii)$ and  $(iii) \Rightarrow (i)$.

$(i) \Rightarrow (ii)$: Suppose that $\Theta$ is empty. Take any member $\xi \in \Xi$, say with vertex $V$, and let $x$ be one of its $X$-points. If $x$ would be collinear to some point $y \in Y\setminus V$ then, by Corollary~\ref{supersymp/plane}, $xy$ is contained in a member of $\Theta$ together with an $X$-line of $\xi$ through $x$, contradicting the assumption. So $Y_x=V$. If there would be a $z\in Z$ outside $Y_x$, then (S1) implies a member of $\Theta$ through $x$ and $z$, contradicting the assumption. Since $Y=\<Z\>$, we have $V=Y_x=Y$. As each member of $\Xi$ has a $v$-dimensional vertex, $(ii)$ follows. 

$(iii) \Rightarrow (i)$:  Suppose that there is some $\xi \in \Xi$ having $Y$ as its vertex. In particular, all $X$-points of $\xi$ are collinear to $Y$. Let $x$ be a point of $X\setminus X(\xi)$. We show that $x \perp Y$ as well. By Lemma~\ref{convexclosure}, $X(\xi)$ contains a point $x'$ non-collinear to $x$ and hence $[x,x'] \in \Xi \cup \Theta$. If $[x,x'] \in \Xi$, then it has vertex  $Y$ (since $v= \dim(Y)$); if $[x,x'] \in \Theta$ then $Y([x,x'])$ should contain a point non-collinear to $x'$, a contradiction. Hence $Y$ is collinear to all points in $X$ and consequently, $\Theta$ is empty.
\end{proof}

Since we are dealing with mono-symplectic sets, $\Theta$ is empty. Moreover, by the previous lemma, all members of $\Xi$ have $Y=\<Z\>$ is their vertex. Hence, when considering the projection $\rho$ of $(X,Z,\Xi,\Theta)$ from $Y$, only $\rho(X)$ and $\rho(\Xi)$ carry information:

\begin{prop}\label{trivialcase} Let $(X,Z,\Xi,\Theta)$ be a mono-symplectic dual split Veronese set. Then the projection $(\rho(X),\rho(\Xi))$ is isomorphic to one of the following point-quadric varieties: a Segre variety $\mathcal{S}_{1,2}(\K)$ or  $\mathcal{S}_{2,2}(\K)$  $(r=1)$, a line Grassmannian  $\mathcal{G}_{5,2}(\K)$ or $\mathcal{G}_{6,2}(\K)$ $(r=2)$ or the variety $\mathcal{E}_{6,1}(\K)$ ($r=4$). 
\end{prop}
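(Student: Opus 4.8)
The plan is to reduce the statement to the classification obtained by Schillewaert and Van Maldeghem in \cite{SVM}, after establishing that the projection $(\rho(X),\rho(\Xi))$ is precisely a \emph{Mazzocca--Melone set of split type $d$} satisfying the stronger axiom (S3). By Lemma~\ref{existencesupersymps}, since $(X,Z,\Xi,\Theta)$ is mono-symplectic we have $\Theta=\emptyset$ and every $\xi\in\Xi$ has $Y=\<Z\>$ as its vertex; so for each $\xi$, the tube $X(\xi)$ is a genuine $(r,v)$-tube over a hyperbolic quadric of rank $r+1$ with vertex $Y(\xi)\subseteq Y$ of fixed dimension $v$. First I would verify that $\rho$ is injective on $X$ in the sense that matters: by Lemma~\ref{proprhochi}$(i)$, $\rho^{-1}(\rho(x))=\<x,Y_x\>\cap X$, and since here $Y_x=Y$ for all $x$ (all $X$-points being collinear to the whole vertex $Y$), the fibres of $\rho$ are exactly the affine cones $\<x,Y\>\cap X$ collapsing onto single points. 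Hence $(\rho(X),\rho(\Xi))$ genuinely lives in the complementary space $F$ with $\rho(X(\xi))$ a non-degenerate hyperbolic quadric of rank $r+1$ in a $(2r+1)$-dimensional subspace of $F$ (Lemma~\ref{proprhochi}$(ii)$).

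The second step is to check the Mazzocca--Melone axioms for $(\rho(X),\rho(\Xi))$. Axiom (SV1)/(S1) transfers directly: two distinct points of $\rho(X)$ pull back to two non-collinear (or suitably chosen) points of $X$, which lie in a common $\xi\in\Xi$ by (S1), and $\rho(\xi)$ contains their images. Axiom (SV2)/(S2): if $\xi_1,\xi_2\in\Xi$ are distinct, then $\xi_1\cap\xi_2$ is singular by (S2), hence contained in $X\cup Y$; projecting, $\rho(\xi_1)\cap\rho(\xi_2)$ consists of points of $\rho(X)$ (the $Y$-part having been contracted), which is what (SV2) demands. For (SV3), one uses that in a mono-symplectic DSV axiom (S3) gives, for each $x\in X$, members $\xi_1,\xi_2\in\Xi$ with $T_x=\<T_x(\xi_1),T_x(\xi_2)\>$; since $Y\subseteq T_x$ always (the vertex $Y=Y(\xi)\subseteq T_x(\xi)$ for every $\xi$ through $x$), projecting from $Y$ yields the analogous tangent-space equality for $\rho(X)$ at $\rho(x)$. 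Thus $(\rho(X),\rho(\Xi))$ is a split Veronese set in the sense of \cite{SVM} satisfying the strong form of the third axiom.

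The third step is to invoke the main theorem of \cite{SVM}: a split Veronese set of type $d=2r+v+1$ in $\mathbb{P}^{N'}(\K)$ satisfies $d\in\{1,2,4,8\}$ and either $N'=3d+2$ — in which case it is projectively unique and equals one of $\mathcal{V}_2(\K)$, $\mathcal{S}_{2,2}(\K)$, $\mathcal{G}_{6,2}(\K)$, $\mathcal{E}_{6,1}(\K)$ (with $r$ equal to $1,1,2,4$ respectively, the first being a point-quadric that does not occur here because $|\Xi|>1$ forces at least the Segre case) — or $N'<3d+2$ and it is $\mathcal{S}_{1,2}(\K)$ or $\mathcal{G}_{5,2}(\K)$. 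Filtering by the strong axiom (S3) (rather than merely (S3$'$)) removes no new cases beyond what is already listed, as already observed in the discussion preceding the statement of Proposition~\ref{trivialcase}. This yields exactly the five possibilities $\mathcal{S}_{1,2}(\K)$, $\mathcal{S}_{2,2}(\K)$ ($r=1$), $\mathcal{G}_{5,2}(\K)$, $\mathcal{G}_{6,2}(\K)$ ($r=2$), $\mathcal{E}_{6,1}(\K)$ ($r=4$), matching the dimensions $d\in\{3,5,9,11,21\}$ with $v=d-2r-1$.

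The main obstacle I anticipate is the careful bookkeeping in step two: one must be sure that after contracting $Y$, the family $\rho(\Xi)$ still consists of \emph{distinct} subspaces of the right dimension meeting $\rho(X)$ in non-degenerate hyperbolic quadrics, and that no two distinct $\xi_1,\xi_2\in\Xi$ with $\xi_1\cap\xi_2$ containing an $X$-line collapse to the same image (which would break (SV2) in the strict form needed). Here one uses Lemma~\ref{uniquesymp} — two non-collinear points determine a unique member of $\Xi$ — together with the fact that $\rho$ restricted to $X$ identifies exactly the points lying in a common affine cone over $Y$, so non-collinear points of $X$ never have the same image. Once this is in place, the reduction to \cite{SVM} is essentially formal, and the conclusion about the possible values of $r$ follows immediately from their Witt-index dichotomy.
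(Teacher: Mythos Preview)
Your approach is essentially the same as the paper's: project from $Y$ and invoke \cite{SVM}. The paper does this in two lines by citing Lemma~\ref{resY} (which packages exactly the verification you carry out by hand in your step two), so your manual check of (S1)--(S3) for $(\rho(X),\rho(\Xi))$ is correct but redundant given the surrounding lemmas.

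Two small corrections in your step three. First, your reason for excluding $\mathcal{V}_2(\K)$ is wrong: $|\Xi|>1$ does not rule it out (the quadric Veronese has plenty of conics). The real reason is that after projecting from $Y$ the quadrics are hyperbolic of rank $r+1$ in a $(2r+1)$-space, so the split-type parameter is $d=2r\in\{2,4,8\}$, which is even and hence never equal to $1$. Second, the list ``$d\in\{3,5,9,11,21\}$ with $v=d-2r-1$'' is incoherent: after projection $v=-1$ and $d=2r$, so there is nothing to back out. These are cosmetic and do not affect the argument.
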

\begin{proof}
As $\Theta$ is empty by assumption, Lemma~\ref{existencesupersymps} implies that each member of $\Xi$ has $Y$ as its vertex. In particular,  for each point $x\in X$ holds that $x \perp Y$.  By Lemma~\ref{resY}, the projection $(\rho(X),\rho(\Xi))$ satisfies axioms (S1), (S2) and (S3) as well and as such, it is a \emph{Mazzocca-Melone set of split type $2r$}. The result follows from \cite{SVM}.
\end{proof}
\par\medskip

We have shown Main Result~\ref{main}$(i)$. Henceforth we assume that $(X,Z,\Xi,\Theta)$ is a {duo-symplectic} (pre-)DSV with the property that no point in $Y$ is collinear to all points of $X$ (cf.\ Lemma~\ref{resY}).

\subsection{Local properties}
Given $|\Theta| \geq 1$, one can show that each $X$-point belongs to a member of $\Theta$.

\begin{lemma}\label{xinSS}
For each $X$-point $x$, there is a point $z\in Z$ not collinear to $x$. In particular, $x$ is contained in at least one member of $\Theta$. 
\end{lemma}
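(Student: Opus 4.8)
\textbf{Proof plan for Lemma~\ref{xinSS}.}

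The plan is to argue by contradiction: assume there is an $X$-point $x$ collinear to every point of $Z$. Since $Y=\<Z\>$ and $Y_x:=x^\perp\cap Y$ is a subspace of $Y$ by Corollary~\ref{collY}, collinearity of $x$ with all of $Z$ forces $Y_x=Y$, i.e.\ $x\perp Y$. I would first record what this means locally for the symps through $x$: if $\theta\in\Theta$ contains $x$, then $Y(\theta)$ is a maximal singular subspace of $\theta$ containing the vertex $V_\theta$, and by the defining property of $\Theta$ (emphasised in the remark before Corollary~\ref{supersymp/plane}) there is a point $y\in Y(\theta)$ not collinear to $x$ — contradicting $x\perp Y$. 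Hence no member of $\Theta$ passes through $x$; every $\zeta\in\Xi\cup\Theta$ through $x$ lies in $\Xi$, and each has $Y_x=Y$ as (a subspace of) its vertex.

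Next I would propagate this to all of $X$, essentially mimicking the proof of the implication $(iii)\Rightarrow(i)$ in Lemma~\ref{existencesupersymps}. Take an arbitrary $X$-point $x'$. If $x'\perp x$, then any singular line joining them, together with the fact that $x\perp Y$, should already give $x'$ a large collinear subspace of $Y$; more robustly, pick (using Lemma~\ref{convexclosure}) for $x'\notin x^\perp$ the symp $[x,x']\in\Xi\cup\Theta$. It cannot lie in $\Theta$, since $Y([x,x'])$ would then contain a point non-collinear to the $X$-point $x$ of $[x,x']$, contradicting $x\perp Y$; so $[x,x']\in\Xi$ and, as $v=\dim Y$, it has $Y$ as its full vertex, whence $x'\perp Y$. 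For $x'\in x^\perp\setminus\{x\}$ one first moves to a point of $X$ non-collinear with $x$ (such points exist by Lemma~\ref{convexclosure} applied to any $\zeta\ni x$) and repeats the argument, or simply notes that $x'$ lies in a member of $\Xi$ through $x$, which has vertex $Y$. Either way every $X$-point is collinear to all of $Y$.

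But then $Y$ is a subspace of $Y$ collinear to all points of $X$, so by Lemma~\ref{resY} (or directly by Lemma~\ref{existencesupersymps}, since now a member of $\Xi$ — indeed every member — has $Y$ as vertex) $\Theta$ is empty, contradicting the standing hypothesis $|\Theta|\ge 1$. This yields the first assertion: there is $z\in Z$ with $z\not\perp x$. For the second assertion, $x$ and $z$ are then non-collinear points of $X\cup Z$, so by (S1) (and uniqueness, Lemma~\ref{uniquesymp}) they lie in a unique $\zeta=[x,z]\in\Xi\cup\Theta$; since $z\in Z\subseteq Y$, the point $z$ lies in $Y\cap\zeta$, which for $\zeta\in\Xi$ is the vertex and hence collinear to the $X$-point $x$ of $\zeta$ — impossible. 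Therefore $\zeta\in\Theta$, so $x$ lies on a member of $\Theta$. The only point requiring a little care is the passage through $X$-points collinear to $x$; I expect that to be the main (minor) obstacle, handled exactly as the corresponding step of Lemma~\ref{existencesupersymps} by first exhibiting a point of $X$ non-collinear with the given one inside some $\xi\in\Xi$ through $x$.
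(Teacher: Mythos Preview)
Your overall strategy is sound, and the second assertion (that $[x,z]\in\Theta$ because $z$ would otherwise lie in the vertex of a member of $\Xi$ and hence be collinear to $x$) is handled correctly. However, the first part has a genuine gap: the claim that every $\xi\in\Xi$ through $x$ has $Y_x=Y$ ``as (a subspace of) its vertex'', and the subsequent assertion ``as $v=\dim Y$'', are not justified. The vertex of $\xi$ is $Y\cap\xi$, and knowing only that the single point $x$ is collinear to all of $Y$ does not force $Y\subseteq\xi$. In fact, since we are in the duo-symplectic case ($|\Theta|\ge 1$), Lemma~\ref{existencesupersymps} tells you that \emph{no} member of $\Xi$ has $Y$ as its vertex, so $v<\dim Y$ always. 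Your propagation argument, modelled on $(iii)\Rightarrow(i)$ of that lemma, therefore cannot get off the ground: that implication starts from a $\xi$ whose vertex is already all of $Y$, which is precisely what you do not have.

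The paper avoids this by not trying to propagate at all. Once $x\perp Y$, take any $\xi\in\Xi$ through $x$ with vertex $V$; since $|\Theta|\ge 1$, Lemma~\ref{existencesupersymps} gives $V\subsetneq Y$, so there is $y\in Y\setminus V$. The singular line $xy$ meets $\xi$ only in $x$, and Corollary~\ref{supersymp/plane} then produces a $\theta\in\Theta$ through $xy$. But $x\in X(\theta)$ forces a point of $Y(\theta)$ non-collinear to $x$, contradicting $x\perp Y$. This is the missing local step that replaces your global propagation.
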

\begin{proof}
Suppose for a contradiction that $x$ is collinear to each point of $Z$. Since $Y=\<Z\>$, Corollary~\ref{collY} implies that $x \perp Y$. By (S1), this means that $x$ is contained in some $\xi \in \Xi$, say with vertex $V$. By Lemma~\ref{existencesupersymps} and $|\Theta|\geq 1$,  there is a point $y \in Y\setminus V$. Corollary~\ref{supersymp/plane} implies that the singular line $xy$ is contained in some $\theta\in\Theta$ together with an $X$-line of $\xi$ through $x$. But then $Y(\theta)$ contains a point non-collinear to $x$ after all, a contradiction. We conclude that there is a point $z \in Z$ non-collinear to $x$, and then $x\in[x,z]\in\Theta$. 
\end{proof}

We take a closer look at the $X$-lines.
\begin{defi} \em
An $X$-line contained in $0$, $1$ or at least $2$ members of $\Theta$ is called a $0$-line, a $1$-line or a $2$-line, respectively. \end{defi}

 It turns out that the nature of an $X$-line $L$ can be expressed in terms of $Y_L$ and $Y_x$ with $x\in L$ (cf.\  Definition~\ref{YS} and Corollary~\ref{collY}). 

\begin{lemma}\label{lineSS}
Let $L$ be an $X$-line and $x$ any of its points. If $\theta \in \Theta$ contains $L$, then $Y_L \subseteq \theta$ is a hyperplane in  $\theta \cap Y_x$. Conversely, for each subspace $H \subseteq Y_x$ in which $Y_L$ is a hyperplane, there is a unique $\theta_{H,L} \in \Theta$ through $L$ with $\theta_{H,L} \cap Y_x = H$. Consequently:
\begin{compactenum}[$(i)$]
\item $L$ is a $0$-line if and only if $Y_x= Y_L$, in which case $Y_x=Y_{x'}$ for each $x' \in L$;
\item $L$ is a $1$-line if and only if $Y_L$ is a hyperplane of $Y_x$;
\item $L$ is a $2$-line if and only if $Y_L$ is strictly contained in a hyperplane of $Y_x$.
\end{compactenum}
\end{lemma}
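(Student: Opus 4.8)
The statement bundles together: a ``dimension skeleton'' living inside one $\theta \in \Theta$ through $L$ (namely $\theta \cap Y_x$ is a hyperplane of $Y(\theta)$, and $\theta \cap Y_L$ is a hyperplane of $\theta \cap Y_x$), the global fact that $Y_L$ is \emph{already} contained in every such $\theta$, and the bijection $\theta \mapsto \theta \cap Y_x$ between the members of $\Theta$ through $L$ and the set $\mathcal{H}$ of subspaces $H$ with $Y_L \subseteq H \subseteq Y_x$ and $\dim H = \dim Y_L + 1$; granting this bijection, $(i)$--$(iii)$ are simply the cases $|\mathcal{H}| = 0$, $|\mathcal{H}| = 1$ and $|\mathcal{H}| \geq 2$. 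I would first record two facts used repeatedly. By Corollary~\ref{collY} (with $S = L$), $Y_L$ is a subspace of $Y$. Moreover $Y_L = Y_x \cap Y_{x'}$ for any two distinct $x,x' \in L$: if $y \perp x$ and $y \perp x'$, then by Lemma~\ref{planes} either $\langle y,L\rangle$ is singular, in which case $y$ is collinear to all of $L$, or $\langle y,L\rangle$ lies in a member $\zeta \in \Xi \cup \Theta$, and then $y$ lies in the polar of the line $L$ with respect to the quadric $XY(\zeta)$, so again $y \perp L$. The same dichotomy shows that for $w \in Y_L$ the plane $\langle w,L\rangle$ is always singular.

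\textbf{Inside $\theta$.} Here $XY(\theta)$ is a cone with $v'$-dimensional vertex over a hyperbolic quadric of rank $r'+1$, the subspace $M := Y(\theta)$ is a generator of this cone of dimension $r'+v'+1$, and $L$ is a line of the cone with $L \cap M = \emptyset$ (since $X(\theta) = XY(\theta)\setminus M$). Using the quadric polarity of $XY(\theta)$: for $x \in L$ the set of points of $M$ collinear to $x$ is $x^{\perp} \cap M$, a hyperplane of $M$ because $M$ is self-polar and $x \notin M$; thus $\theta \cap Y_x = x^{\perp} \cap M$ has dimension $r'+v'$. For distinct $x,x' \in L$ one has $\theta \cap Y_L = x^{\perp} \cap {x'}^{\perp} \cap M$, and this is a hyperplane of $x^{\perp} \cap M$: if instead $x^{\perp} \cap M = {x'}^{\perp} \cap M =: H_0$, then $H_0 \subseteq L^{\perp}$, so $L$ and $H_0$ are mutually perpendicular singular subspaces and $\langle L, H_0\rangle$ is a singular subspace of $XY(\theta)$ of dimension $r'+v'+2$, exceeding the maximum $r'+v'+1$. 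Hence $\dim(\theta \cap Y_L) = r'+v'-1$.

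\textbf{$Y_L \subseteq \theta$ (the crux).} Suppose there is $w \in Y_L \setminus \theta$; then $\langle w,L\rangle \cap \theta = L$. For $x \in L$, the line $wx$ is singular (since $\langle w,L\rangle$ is), has $w$ as its only point of $Y$ (Lemma~\ref{XYpointsonline}), and meets $\theta$ in the $X$-point $x$. By Corollary~\ref{supersymp/plane} there is $\theta' \in \Theta$ with $wx \subseteq \theta'$ and with $\theta \cap \theta'$ containing an $X$-line $L'' \ni x$ not collinear to $wx$; since $\langle w,L\rangle$ is a singular plane containing both $L$ and $wx$, we must have $L'' \neq L$. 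I would then derive a contradiction from the mutual position of $L$, $L''$, $wx$ inside $\theta$ and $\theta'$: $w^{\perp} \cap \theta$ is a singular subspace of $XY(\theta)$ disjoint from $M$ and containing $L$ (by Lemma~\ref{convexclosure}), the members of $\Xi \cup \Theta$ through each of the lines involved are controlled by Lemmas~\ref{planes} and~\ref{subspaceinsymp}, and eventually the bound $r'+v'+1$ on the dimension of a singular subspace of such a cone is violated. I expect this to be the main technical obstacle of the lemma.

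\textbf{The bijection and the consequences.} Given the above, for each $\theta \in \Theta$ through $L$ the subspace $\theta \cap Y_x$ contains $\theta \cap Y_L = Y_L$ and has dimension $\dim Y_L + 1$, so $\theta \mapsto \theta \cap Y_x$ maps the members of $\Theta$ through $L$ into $\mathcal{H}$. It is injective: if $\theta_1 \cap Y_x = \theta_2 \cap Y_x = H$ with $\theta_1 \neq \theta_2$, then $\theta_1 \cap \theta_2$ is singular by (S2) and contains the line $L$ and the subspace $H$ (disjoint from $L$, as $X$ and $Y$ are disjoint point sets), so $\langle L, H\rangle$ is a singular subspace of $XY(\theta_1)$ of dimension $\dim H + 2 = r'+v'+2 > r'+v'+1$, impossible. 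It is onto $\mathcal{H}$: given $H \in \mathcal{H}$, choose $h \in H \setminus Y_L$; since $Y_L = Y_x \cap Y_{x'}$ for any $x' \in L \setminus \{x\}$, we have $h \perp x$ and $h \not\perp x'$ for some such $x'$, so $\theta := [x',h]$ is defined (Lemma~\ref{uniquesymp}); applying Lemma~\ref{planes} to $xh$ and $L$ (whose span is not singular, as $h \not\perp x'$) gives $L = xx' \subseteq \theta$; as $h \in Y \cap \theta = Y(\theta)$, if $\theta \in \Xi$ then $h$ would lie in the vertex of $XY(\theta)$ and hence be collinear to $x' \in X(\theta)$, a contradiction, so $\theta \in \Theta$; finally $H = \langle Y_L, h\rangle \subseteq \theta$, whence $\theta \cap Y_x = H$ by the dimension count. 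So $\theta \mapsto \theta \cap Y_x$ is a bijection onto $\mathcal{H}$, which is precisely the assertion that each such $H$ is realized by a unique $\theta_{H,L} \in \Theta$. Consequently the number of members of $\Theta$ through $L$ equals $|\mathcal{H}|$: it is $0$ exactly when $Y_x = Y_L$, it is $1$ exactly when $Y_L$ is a hyperplane of $Y_x$, and it is $\geq 2$ exactly when $Y_L$ is properly contained in a hyperplane of $Y_x$. This gives $(ii)$, $(iii)$, and the first part of $(i)$; for the last clause of $(i)$, note the whole argument works with any point of $L$ in place of $x$, so if $L$ is a $0$-line then $Y_{x'} = Y_L$ for every $x' \in L$.
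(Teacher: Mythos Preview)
Your proof has a genuine gap precisely where you flagged it: the ``crux'' step ($Y_L \subseteq \theta$ for every $\theta \in \Theta$ through $L$) is left incomplete, and the route you sketch via Corollary~\ref{supersymp/plane} does not obviously close. The paper's argument is much more direct and sidesteps this difficulty entirely. Given $\theta \in \Theta$ through $L$, your inside-$\theta$ computation already shows that $x^\perp \cap Y(\theta)$ strictly contains $L^\perp \cap Y(\theta)$, so there exists $y \in (Y_x \cap \theta) \setminus Y_L$. Since $y \notin Y_L$, the plane $\langle y,L\rangle$ is not singular, so Lemma~\ref{planes} gives a unique member $[xy,L] \in \Xi \cup \Theta$ containing $xy$ and $L$; as $\theta$ itself does, $\theta = [xy,L]$. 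Now one proves $Y_L \subseteq \theta$ via Lemma~\ref{convexclosure}: for $w \in Y_L$, pick an $X$-point $p \neq x$ on $xw$. The plane $\langle x,w,y\rangle$ is singular (each of its points lies on $wy \subseteq Y_x \subseteq Y$ or on a line $xz$ with $z \in wy \subseteq Y_x$), so $p \perp y$; and $p \perp x'$ for the point $x' \in L$ with $y \not\perp x'$, since $p$ lies in your singular plane $\langle w, L\rangle$. As $y$ and $x'$ are non-collinear points of $\theta$, Lemma~\ref{convexclosure} forces $p \in \theta$, whence $xw \subseteq \theta$ and $w \in \theta$.

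Once this is in hand, your bijection and the deduction of $(i)$--$(iii)$ are correct and match the paper. The difference is organisational: you try to establish $Y_L \subseteq \theta$ as a standalone preliminary before building the map $H \mapsto \theta_{H,L}$, whereas the paper starts from a point $y \in Y_x \setminus Y_L$, constructs $\theta_{H_y,L} := [xy,L]$, and reads off $Y_L \subseteq \theta$, $\dim Y_L$, and $\theta \cap Y_x = H_y$ all at once. Your surjectivity paragraph already contains this same construction; you only need to insert the convexity step above (rather than invoking the unproven crux) and then observe, as above, that every $\theta$ through $L$ arises this way.
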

\begin{proof}
Suppose that there is a point $y \in Y_x \setminus Y_L$ and put $H_y:=\<Y_L,y\>$. By Lemma~\ref{planes} and $y \notin Y_L$, $[xy,L]$ is the unique member of $\Theta$ containing $L$ and $y$. According to  Lemma~\ref{convexclosure}, $[xy,L]$ contains each $X$-point of $\<x,Y_L\>$, and hence also $Y_L$ and $H_y$. Inside the quadric $[xy,L]$, the subspace $Y_L \cap [xy,L]$ is a hyperplane of $Y_x \cap [xy,L]$, implying that $[xy,L] \cap Y_x = H_y$.   Therefore $\theta_{H_y,L}:=[xy,L]$ is the unique member of $\Theta$ through $L$ with $\theta_{H_y,L} \cap Y_x=H_y$. Conversely, each $\theta\in \Theta$ through $L$ arises as $\theta_{H,L}$ with $H=Y_x \cap \theta$.

We conclude that the number of members of $\Theta$ through $L$ depends on the number of subspaces of $Y_x$ containing $Y_L$ as a hyperplane. Note that, if $Y_L=Y_x$, so if there are no members of $\Theta$ through $L$, then, as $x\in L$ was arbitrary, $Y_{x'}=Y_L=Y_x$ for each  $x'\in L$. 
\end{proof}

\begin{cor}\label{noXplane}\label{Xlinevertex}
If $\theta \in \Theta$ and $\zeta \in \Xi \cup \Theta\setminus \{\theta\}$ share an $X$-line $L$, then $\zeta \cap \theta = \<L,L^\perp \cap Y(\zeta)\>$, i.e., $\zeta \cap \theta =\<L,Y(\zeta)\>$ if $\zeta \in \Xi$ and $\zeta \cap \theta = \<L,Y_L\>$ if $\zeta \in \Theta$.  Moreover, two members of $\Theta$ sharing an $X$-plane coincide.
\end{cor}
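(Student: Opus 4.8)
The statement to prove is Corollary~\ref{noXplane}: if $\theta\in\Theta$ and $\zeta\in\Xi\cup\Theta\setminus\{\theta\}$ share an $X$-line $L$, then $\zeta\cap\theta=\<L,L^\perp\cap Y(\zeta)\>$ (which reads $\<L,Y(\zeta)\>$ when $\zeta\in\Xi$ and $\<L,Y_L\>$ when $\zeta\in\Theta$), and moreover two members of $\Theta$ sharing an $X$-plane coincide. My plan is to reduce everything to (S2) together with the structure of the intersection of two quadrics inside one symp, using the characterisations of $X$-lines already developed.

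\emph{First part.} By (S2), $S:=\zeta\cap\theta$ is a singular subspace containing $L$, so by Corollary~\ref{collY} and Lemma~\ref{convexclosure} its points in $Y$ all lie in the vertex $Y(\zeta)$ of $\zeta$ as well as in the maximal singular subspace $Y(\theta)$ of $\theta$. I would first argue that $S$ cannot strictly contain any $X$-plane: if $S$ contained an $X$-plane $P$, then inside $\zeta$ (a quadric which, when $\zeta\in\Xi$, is an $(r,v)$-cone, and when $\zeta\in\Theta$ an $(r',v')$-cone), the singular subspace $S$ meets $X(\zeta)$ in too large a subspace to be contained in a single further quadric $\theta$ — more precisely, I would use Lemma~\ref{planes}/Lemma~\ref{kspaces}: two $X$-lines in $P$ through a common point already force $\zeta$ and $\theta$ to coincide (since $\<L_1,L_2\>=P$ is singular, so the ``or'' of Lemma~\ref{planes} doesn't apply to produce a second symp distinct from the one containing $P$), contradiction. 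This gives $\dim S\le 1$ for the $X$-part; combined with the fact that the $Y$-points of $S$ lie in $L^\perp\cap Y(\zeta)$ (they are collinear to every point of $L\subseteq S$ since $S$ is singular), we get $S\subseteq\<L,L^\perp\cap Y(\zeta)\>$. For the reverse inclusion: every point $y\in L^\perp\cap Y(\zeta)$ is collinear to all of $L$, hence $\<L,y\>$ is a singular plane inside $\zeta$; I must check this plane lies in $\theta$ too. This is where Lemma~\ref{lineSS} enters: $Y_L\subseteq\theta$ already by that lemma (as $\theta\in\Theta$ contains $L$, $Y_L$ is a hyperplane of $\theta\cap Y_x$, in particular $Y_L\subseteq\theta$), and when $\zeta\in\Xi$ its vertex $Y(\zeta)=Y_x\cap\zeta$ for $x\in L$ — one checks $Y(\zeta)\subseteq Y_L$ actually holds, or rather $Y(\zeta)=L^\perp\cap Y(\zeta)$ since the vertex is collinear to everything in $\zeta$ — so $Y(\zeta)\subseteq Y_x$; then $\<L,Y(\zeta)\>$ is a singular subspace on which I invoke convexity (Lemma~\ref{convexclosure}) applied to $\theta$ to conclude it lies in $\theta$. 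When $\zeta\in\Theta$, the same argument with $Y_L$ in place of $Y(\zeta)$, using that $Y_L$ is the common refinement appearing in Lemma~\ref{lineSS}, shows $\<L,Y_L\>\subseteq\zeta\cap\theta$. Matching dimensions (the putative $S$ has the asserted dimension and we've shown both inclusions) finishes the first part.

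\emph{Second part.} Suppose $\theta_1,\theta_2\in\Theta$ share an $X$-plane $P$. Pick an $X$-point $x\in P$ and two distinct $X$-lines $L_1,L_2\subseteq P$ through $x$. By Lemma~\ref{lineSS} applied in each $\theta_i$, the sets $Y_{L_1},Y_{L_2}$ are hyperplanes of $\theta_i\cap Y_x$; but by (S2), $\theta_1\cap\theta_2$ is singular and contains $P$, and by the first part applied to the pair $(\theta_1,\theta_2)$ sharing, say, $L_1$, we get $\theta_1\cap\theta_2=\<L_1,Y_{L_1}\>$. Yet $\theta_1\cap\theta_2$ also contains $L_2\not\subseteq\<L_1,Y_{L_1}\>$ (since $L_2$ is an $X$-line meeting $L_1$ only in $x$ and $\<L_1,Y_{L_1}\>$ contains no $X$-point off $L_1$ by convexity, Lemma~\ref{convexclosure}), a contradiction — unless $\theta_1=\theta_2$. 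Alternatively, and perhaps more cleanly: $P$ is a singular plane, so for the pair $L_1,L_2$ the dichotomy of Lemma~\ref{planes} lands on the first horn ($\<L_1,L_2\>$ singular), hence Lemma~\ref{planes} provides no symp at all through both lines; but $\theta_1$ and $\theta_2$ are both such symps, so they cannot be distinct members of $\Xi\cup\Theta$ obtained this way — one concludes $\theta_1=\theta_2$ by noting a quadric of a symp is determined by a singular plane in it together with one further non-collinear $X$-point, which exists in each $\theta_i$ and must coincide by (S2).

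\emph{Main obstacle.} The delicate point is the reverse inclusion in the first part: showing that the singular subspace $\<L,L^\perp\cap Y(\zeta)\>$ genuinely lies in $\theta$, not merely that $S=\zeta\cap\theta$ is contained in it. This requires correctly identifying $L^\perp\cap Y(\zeta)$ with the relevant subspace of $Y_x$ appearing in Lemma~\ref{lineSS} (namely recognising it equals $Y_L$ when $\zeta\in\Theta$, and is contained in $Y_L$ — equivalently collinear to $L$ and inside $Y_x$ — when $\zeta\in\Xi$), and then invoking convexity of $XY(\theta)$ in the right direction. I expect the bookkeeping of which subspace of $Y$ is which (vertex of $\zeta$ vs.\ $Y_L$ vs.\ $\theta\cap Y_x$) to be the only real friction; the geometric content is entirely supplied by Lemmas~\ref{planes},~\ref{convexclosure},~\ref{lineSS} and axiom (S2).
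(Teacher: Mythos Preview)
Your reverse inclusion $\<L,L^\perp\cap Y(\zeta)\>\subseteq\zeta\cap\theta$ is correct and matches the paper: Lemma~\ref{lineSS} gives $Y_L\subseteq\theta$, and since $L^\perp\cap Y(\zeta)\subseteq Y_L$ this subspace lies in both $\zeta$ and $\theta$. Your Option~1 for the second part is also exactly the paper's argument.

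The gap is in your forward inclusion. You try to show that $S=\zeta\cap\theta$ contains no $X$-plane by invoking Lemma~\ref{planes}: you claim that two $X$-lines $L_1,L_2$ spanning a singular plane $P$ force $\zeta=\theta$. But Lemma~\ref{planes} says nothing of the sort. Its dichotomy is: either $\<L_1,L_2\>$ is singular, \emph{or} there is a unique symp through both. When $\<L_1,L_2\>=P$ is singular, the lemma gives no uniqueness statement whatsoever; a singular plane can perfectly well sit inside several symps. Indeed, the assertion ``no two members of $\Theta$ share an $X$-plane'' is precisely the second half of the corollary you are proving, so using it here is circular.

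The paper avoids this entirely. The key observation you are missing is that $\<L,Y_L\>$ is a \emph{maximal} singular subspace of $\theta$: by Lemma~\ref{lineSS}, $\dim Y_L=r'+v'-1$, so $\dim\<L,Y_L\>=r'+v'+1$, which is the dimension of a generator of the $(r',v')$-cone $XY(\theta)$. When $\zeta\in\Theta$, the same computation shows $\<L,Y_L\>$ is also maximal singular in $\zeta$. Since $\zeta\cap\theta$ is a singular subspace (axiom~(S2)) of both $\theta$ and $\zeta$ containing $\<L,Y_L\>$, maximality forces $\zeta\cap\theta=\<L,Y_L\>$ immediately. No analysis of $X$-planes is needed; rather, the absence of $X$-planes in $\<L,Y_L\>$ (any plane in it meets the codimension-$2$ subspace $Y_L$) is a \emph{consequence}, yielding the second assertion. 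For $\zeta\in\Xi$ the paper only records that $L^\perp\cap Y(\zeta)=Y(\zeta)$ and establishes the inclusion $\<L,Y(\zeta)\>\subseteq\zeta\cap\theta$; this inclusion is all that is ever used downstream.
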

\begin{proof}
By Lemma~\ref{lineSS}, $\theta$ contains $\<L,Y_L\>$, so in particular it contains $\<L,L^\perp \cap Y(\zeta)\>$. If $\zeta \in \Xi$, then $L^\perp \cap Y(\zeta) = Y(\zeta)$; if $\zeta \in \Theta$ then $Y_L \subseteq \zeta$ too, and as $\<L,Y_L\>$ is a maximal singular subspace of both $\theta$  (because $\<L,L^\perp \cap Y(\theta)\>$ is maximal in $\theta$), likewise for $\zeta$, we get that $\theta \cap \zeta = \<L,Y_L\>$.

Two distinct members of $\Theta$ sharing an $X$-plane, in particular share an $X$-line $L$ and hence the maximal singular subspace $\<L,Y_L\>$, which contains no $X$-planes, a contradiction. \end{proof}

\begin{lemma}\label{collbetweenS}
Let $\zeta_1$ and $\zeta_2$ be two members of $\Xi \cup \Theta$, and put $S=\zeta_1 \cap \zeta_2$. Then two $X$-points $x_1,x_2$ in $\zeta_1\setminus S$ and $\zeta_2\setminus S$, respectively, with $x_1^\perp \cap S \neq x_2^\perp \cap S$ are not collinear.
\end{lemma}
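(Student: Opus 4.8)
The plan is to argue by contradiction: suppose $x_1$ and $x_2$ are collinear, so that the line $L:=x_1x_2$ is singular. The key observation is that $L$ must then meet $S=\zeta_1\cap\zeta_2$ in a point. Indeed, $L$ is not contained in $\zeta_1$ (as $x_2\notin\zeta_1$), yet $x_1\in\zeta_1$; since $XY(\zeta_1)$ is a quadric and $L$ is a singular line meeting it in the point $x_1$, either $L\subseteq \zeta_1$ or $L\cap\zeta_1$ is exactly $\{x_1\}$ — wait, that is not quite it either, since the line $L$ lives in $\PAG^N(\K)$ and could meet the subspace $\zeta_1$ in a point other than $x_1$. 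So more carefully: $L\cap \zeta_1$ is a subspace of the line $L$, hence a point or all of $L$; it is not all of $L$ since $x_2\notin\zeta_1$, so $L\cap\zeta_1=\{y_1\}$ for a single point $y_1$, and $x_1\in L\cap\zeta_1$ forces $y_1=x_1$. Symmetrically $L\cap\zeta_2=\{x_2\}$.

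The point is then that the singular line $L$ yields a point $p$ of $X\cup Y$ forcing a containment in $S$. Here is the cleaner route: consider a third point $p$ on $L$ distinct from $x_1,x_2$ (possible since $|\K|>2$). By Lemma~\ref{XYpointsonline}, $p\in X\cup Y$. Now apply Lemma~\ref{convexclosure} twice. Since $XY(\zeta_1)$ is convex with respect to singular lines not entirely in $Y$, and $x_1\in\zeta_1$ while the segment from $x_1$ towards $x_2$ leaves $\zeta_1$ immediately (as $x_2\notin\zeta_1$), in fact $L\not\subseteq Y$ — here I use that $x_1,x_2\in X$, so $L$ has at least two $X$-points and hence at most one point in $Y$. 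Thus $L$ is a singular line not entirely in $Y$ with $x_1\in XY(\zeta_1)$; convexity of $XY(\zeta_1)$ would force $L\subseteq XY(\zeta_1)\subseteq\zeta_1$, contradicting $x_2\notin\zeta_1$. This already gives the contradiction, so I do not even need the hypothesis $x_1^\perp\cap S\neq x_2^\perp\cap S$ for the collinearity conclusion — which suggests I have misread the statement.

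Re-reading: the hypothesis $x_1^\perp\cap S\neq x_2^\perp\cap S$ must be doing real work, which means convexity of $XY(\zeta_1)$ cannot be invoked so naively — the subtlety is that $S$ may not be a hyperplane situation and $x_1$ need not be non-collinear with $x_2$ a priori; the claim is precisely identifying \emph{when} they fail to be collinear. So the correct plan is: assume $x_1\perp x_2$ and derive $x_1^\perp\cap S=x_2^\perp\cap S$. Set $L=x_1x_2$, a singular line not entirely in $Y$ (two $X$-points). By Lemma~\ref{convexclosure} applied with $\zeta=\zeta_1$ and $p=x_2\notin\zeta_1$: the set $x_2^\perp\cap\zeta_1$ contains no two non-collinear points. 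Dually with $\zeta=\zeta_2$ and $p=x_1$: $x_1^\perp\cap\zeta_2$ contains no two non-collinear points. Now $x_1^\perp\cap S\subseteq x_1^\perp\cap\zeta_2$ and $x_2^\perp\cap S\subseteq x_2^\perp\cap\zeta_1$; moreover $L\subseteq x_1^\perp\cap x_2^\perp$. Any point $s\in x_1^\perp\cap S$: then $s\perp x_1$ and $s\in\zeta_2$, and also $x_2\perp x_1$, so $s$ and $x_2$ both lie in $x_1^\perp\cap\zeta_2$; hence $s\perp x_2$ (using $|\K|>2$ to pass from "no two non-collinear points" to genuine collinearity of the relevant pair, via Lemma~\ref{planes} on the lines $x_1s$ and $x_1x_2$ which meet in $x_1$ — if $\langle x_1s,x_1x_2\rangle$ were in a member of $\Xi\cup\Theta$ it would contain $\zeta$-data forcing $s,x_2$ together, and the plane case gives $s\perp x_2$ directly). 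So $s\in x_2^\perp\cap S$, giving $x_1^\perp\cap S\subseteq x_2^\perp\cap S$, and by symmetry equality.

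The main obstacle I anticipate is the careful handling of the step "$s\perp x_1$ and $x_2\perp x_1$ imply $s\perp x_2$": this is a local triangle-closing argument that must invoke Lemma~\ref{planes} (or Lemma~\ref{convexclosure}) correctly, distinguishing the singular-plane case from the symp case, and it is exactly where $|\K|>2$ enters. Everything else is routine bookkeeping with the intersection $S$ and the perp notation. I would write the proof as: (1) assume $x_1\perp x_2$; (2) fix $s\in x_1^\perp\cap S$ and form the two singular lines $x_1s$, $x_1x_2$ through $x_1$; (3) apply Lemma~\ref{planes} to conclude either a singular plane (then $s\perp x_2$ immediately) or a common member $[x_1s,x_1x_2]\in\Xi\cup\Theta$, in which case $s,x_2$ lie in a common symp through $x_1$ and, being joined to $x_1$, are collinear inside it; (4) deduce $s\in x_2^\perp\cap S$ and hence $x_1^\perp\cap S\subseteq x_2^\perp\cap S$; (5) symmetric argument for the reverse inclusion.
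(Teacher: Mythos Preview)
Your middle paragraph actually contains the complete and correct argument, and it is essentially the paper's proof. Once you apply Lemma~\ref{convexclosure} with $p=x_1\notin\zeta_2$ and $\zeta=\zeta_2$, the conclusion ``$x_1^\perp\cap\zeta_2$ contains no two non-collinear points'' \emph{immediately} gives $s\perp x_2$ for any $s\in x_1^\perp\cap S$, since both $s$ and $x_2$ lie in $x_1^\perp\cap\zeta_2$. No further work is needed: ``no two non-collinear points'' literally means any two such points are collinear. Your parenthetical about $|\K|>2$ and Lemma~\ref{planes} is unnecessary, and your final plan (1)--(5) is a step backwards.

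In particular, step (3) of your final plan is flawed. In the case where $[x_1s,x_1x_2]\in\Xi\cup\Theta$, you assert that $s$ and $x_2$, ``being joined to $x_1$, are collinear inside it''. This is false in general: in a quadric, two points both collinear with a third need not be collinear with each other. (You could rescue this case by instead observing that if $s\not\perp x_2$ then, since $s,x_2\in\zeta_2$, uniqueness forces $[x_1s,x_1x_2]=\zeta_2$, whence $x_1\in\zeta_2$, a contradiction --- but that is exactly the convexclosure argument you already had.)

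The paper's proof is the same idea, just phrased as a direct one-liner: take $p_1\in S\cap x_1^\perp\setminus x_2^\perp$ (which exists, up to swapping indices, by the hypothesis); then $p_1$ and $x_2$ are non-collinear points of $\zeta_2$, so $[p_1,x_2]=\zeta_2$, and since $x_1$ is collinear to both, Lemma~\ref{convexclosure} forces $x_1\in\zeta_2$, contradicting $x_1\in\zeta_1\setminus S$.
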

\begin{proof}
Suppose for a contradiction that $x_1 \perp x_2$ while $x_1^\perp \cap S \neq x_2^\perp \cap S$. Then there is a point $p_1 \in S \cap x_1^\perp\setminus x_2^\perp$ and by Lemma~\ref{convexclosure}, $x_1 \in[x_2,p_1]=\zeta_2$, a contradiction. 
\end{proof}

\begin{lemma}\label{Hperp}
Suppose $\theta_1,\theta_2 \in \Theta$ are such that $\theta_1 \cap \theta_2$ is a maximal singular subspace of both $\theta_1$ and $\theta_2$ of the form $\<x,H\>$, with $x\in X$ and $H \subseteq Y$. For $i\in\{1,2\}$, let $L_i \in \theta_i$ be an $X$-line through $x$ and put $H_i :=L_i^\perp \cap H$. Then $v=v'+r'-2$ and
\begin{compactenum}[$(i)$]
\item if $H_1 \neq H_2$, then $[L_1,L_2]\in\Xi$ (and there are always $L_1$ and $L_2$ such that this occurs);
\item if $H_1 = H_2$, then $L_1 \perp L_2$ (and there are  $L_1$ and $L_2$ such that this occurs $\Leftrightarrow V_1=V_2$).
\end{compactenum}
\end{lemma}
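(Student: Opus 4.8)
Consider the setup: $\theta_1,\theta_2\in\Theta$ meet in a maximal singular subspace $\langle x,H\rangle$ of both, with $x\in X$ and $H\subseteq Y$. For each $i$, $L_i\in\theta_i$ is an $X$-line through $x$ and $H_i:=L_i^\perp\cap H$. We must prove $v=v'+r'-2$ and establish $(i)$ and $(ii)$.

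\begin{proof*}
First I would pin down the dimension of $H$. Since $\langle x,H\rangle$ is a maximal singular subspace of $\theta_i\in\Theta$, and a maximal singular subspace of an $(r',v')$-cone has dimension $r'+v'+1$ (a generator of the hyperbolic base has projective dimension $r'$ inside the $(2r'+1)$-dimensional base, joined with the $v'$-dimensional vertex), we get $\dim\langle x,H\rangle=r'+v'+1$, hence $\dim H=r'+v'$. Now $H\subseteq Y_x=x^\perp\cap Y$: indeed every point of $H$ lies on a singular line through $x$ (namely inside $\langle x,H\rangle$), so $H\subseteq Y_x$, and $Y_x$ is a $v$-dimensional subspace of $Y$ by Corollary~\ref{collY} together with the fact that $x$ lies in some $\xi\in\Xi$ via (S1) (or directly from $x$ lying in a $\theta$ whose $Y(\theta)$-structure forces $\dim Y_x=v$). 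To get the equality $v=v'+r'-2$, rather than merely $v\geq r'+v'$, I would use the two $X$-lines: by Lemma~\ref{lineSS}, $Y_{L_i}$ is a hyperplane of $\theta_i\cap Y_x$; since $\langle x,H\rangle$ is maximal in $\theta_i$ we have $\theta_i\cap Y_x=H$ (it cannot be larger without $\langle x,\theta_i\cap Y_x\rangle$ exceeding the maximal singular subspace), so $H$ itself is the relevant subspace and one computes $\dim Y_{L_i}$. Combining the constraint that $L_i^\perp\cap H=H_i$ has codimension $1$ in $H$ (as $L_i$ is a line, $L_i^\perp$ cuts down $Y_x$ by exactly one dimension on the non-vertex part) with the global structure near $x$ — in particular that $T_x$ has dimension controlled by $r$ and $v$ — yields $v=v'+r'-2$. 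The cleanest route is: $\dim Y_x=v$, $\dim H=r'+v'$, and the defect $v-\dim H$ measures how much of $Y_x$ is \emph{not} in a single generator-slice $H$; the geometry of the cone $C_\theta$ forces this defect to be exactly $r'-2-(\text{something})$. I would carry this out by explicit dimension count inside one fixed $\theta\in\Theta$ through $x$, using $\dim\theta=d'+1=2r'+v'+2$ and $\dim X(\theta)$-related data.

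For $(i)$, suppose $H_1\neq H_2$. Both $L_1,L_2$ are singular lines through $x$ not contained in $Y$, meeting only in $x$, so by Lemma~\ref{planes} either $\langle L_1,L_2\rangle$ is a singular plane or $L_1,L_2\in[L_1,L_2]\in\Xi\cup\Theta$. I would rule out the singular-plane case and the $\Theta$ case. If $\langle L_1,L_2\rangle$ were singular, pick $X$-points $x_i\in L_i\setminus\{x\}$; then $x_1\perp x_2$, and by Lemma~\ref{convexclosure} applied inside $\theta_i$ one derives a contradiction with $H_1\neq H_2$ (the join $\langle L_1,L_2,H\rangle$ would be singular, exceeding maximality of $\langle x,H\rangle$ in some member of $\Theta$). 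So $[L_1,L_2]$ exists. To see $[L_1,L_2]\in\Xi$ and not $\Theta$: the vertex/generator structure. Every point of $H_1=H_2$ — wait, here $H_1\neq H_2$, so consider $H_1\cap H_2$, which has codimension $2$ in $H$. Each point of $H_1$ is collinear to $L_1$, each point of $H_2$ to $L_2$; a point in $H_1\cap H_2$ is collinear to both, hence collinear to $\langle L_1,L_2\rangle$, hence lies in the vertex of $[L_1,L_2]$. I would then argue by counting that the vertex has dimension $v$ (the maximal possible for a member of $\Xi$), i.e. $[L_1,L_2]\in\Xi$ because its vertex contains $H_1\cap H_2$ together with the extra point forced by $H_1\neq H_2$ inside $H$, reaching dimension $v=v'+r'-2$. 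Conversely, if $[L_1,L_2]\in\Theta$, its $Y$-part would be a maximal singular subspace meeting $Y_x$ in a hyperplane, but the presence of $H_1\cap H_2$ in the vertex plus the dimension equation contradicts that. The existence of $L_1,L_2$ with $H_1\neq H_2$ follows since $H$ has dimension $r'+v'\geq 1$ (as $r'\geq 1$) so there are at least two distinct hyperplanes $L^\perp\cap H$ arising from the $r'\geq 1$-dimensional family of $X$-lines of $\theta_i$ through $x$.

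For $(ii)$, suppose $H_1=H_2=:H_0$, a hyperplane of $H$. Every point of $H_0$ is collinear to both $L_1$ and $L_2$. If $L_1\not\perp L_2$, then by Lemma~\ref{planes} again $[L_1,L_2]\in\Xi\cup\Theta$, and $H_0$ lies in its vertex; but $\dim H_0=r'+v'-1=v-1$ (using the main equation), so the vertex of $[L_1,L_2]$ has dimension $\geq v-1$; if $[L_1,L_2]\in\Xi$ its vertex is exactly $v$-dimensional, forcing $H_0$ to extend — consistent, so I need the finer point that the two $X$-points $x_1\in L_1$, $x_2\in L_2$ have $x_1^\perp\cap S$ vs $x_2^\perp\cap S$ differing for $S=\theta_1\cap\theta_2=\langle x,H\rangle$, and invoke Lemma~\ref{collbetweenS}. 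Precisely: $x_1\in\theta_1\setminus S$, $x_2\in\theta_2\setminus S$, and $x_i^\perp\cap S\supseteq\langle x,H_i\rangle=\langle x,H_0\rangle$ for both, so they agree on that; but the full $x_i^\perp\cap S$ might still differ in the $x$-direction or within $H$. The right statement is that $x_1\perp x_2$ forces, via Lemma~\ref{convexclosure}, the whole $\langle L_1,L_2,H_0\rangle$ to be singular, and maximality of $\langle x,H\rangle$ in $\theta_1$ (which contains $L_1$ and $H_0$ but not $L_2$) gives no contradiction yet — so I must instead show $L_1\perp L_2$ directly: since $H_1=H_2$, the lines $L_1,L_2$ "see" the same part of $Y$, and the generic-collinearity half of Lemma~\ref{planes}'s proof (the ``$x_1\perp x_2$ for each pair'' branch) applies, giving the singular plane $\langle L_1,L_2\rangle$, i.e. $L_1\perp L_2$. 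The "$\Leftrightarrow V_1=V_2$" is about when such $L_1,L_2$ exist: here $V_i$ must denote the vertex $V_{\theta_i}$ of $C_{\theta_i}$; an $X$-line $L_i$ through $x$ with $L_i^\perp\cap H=H_0$ a fixed hyperplane exists with matching $H_0$ on both sides precisely when the two vertices $V_1,V_2$ (each a $v'$-subspace of $H$) coincide — if $V_1=V_2$ then both cones share the same vertex inside $S$ and a common hyperplane $H_0\supseteq V_i$ is achievable simultaneously; if $V_1\neq V_2$ then no common $H_0$ containing both vertices has the right codimension, so no such pair exists and we are necessarily in case $(i)$.

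I expect the \textbf{main obstacle} to be the dimension bookkeeping establishing $v=v'+r'-2$ cleanly: one must carefully track, inside a single $\theta\in\Theta$, the dimensions $\dim\theta=2r'+v'+2$, $\dim Y(\theta)=r'+v'$ ($=$ a generator), $\dim Y_x=v$, and the fact that $\langle x,H\rangle$ is simultaneously maximal singular in \emph{two} members of $\Theta$, which is a genuine constraint linking the "big" parameters $(r,v)$ to the "small" ones $(r',v')$. Everything else is an application of Lemmas~\ref{planes}, \ref{convexclosure}, \ref{collbetweenS} and \ref{lineSS} already in hand; the identification $\Xi$ versus $\Theta$ in $(i)$ and $(ii)$ reduces to whether the candidate vertex reaches the full dimension $v$, which the dimension equation decides.
\end{proof*}
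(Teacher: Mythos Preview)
Your proposal has a real structural problem and several missing steps.

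\textbf{The dimension equation is derived backwards.} You try to establish $v=v'+r'-2$ first, via a ``defect'' and a claim that $\dim Y_x=v$. That claim is not available (nothing so far bounds $Y_x$ above by $v$), and your derivation never actually closes. In the paper the equation is a \emph{consequence} of $(i)$: once you know $[L_1,L_2]\in\Xi$, its vertex $V$ consists of the $Y$-points collinear to both $L_1$ and $L_2$; Corollary~\ref{Xlinevertex} forces $V\subseteq Y(\theta_1)\cap Y(\theta_2)=H$, whence $V=H_1\cap H_2$ and so $v=\dim H-2=v'+r'-2$. You should prove $(i)$ first and read off the equation afterwards; your attempt to use the equation inside the proof of $(i)$ is circular.

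\textbf{The exclusion arguments in $(i)$ and $(ii)$ are not carried out.} For $(i)$, non-collinearity of $L_1,L_2$ when $H_1\neq H_2$ follows in one line from Lemma~\ref{collbetweenS} (take $x_i\in L_i\setminus\{x\}$; then $x_i^\perp\cap\langle x,H\rangle=\langle x,H_i\rangle$ differ). To exclude $[L_1,L_2]\in\Theta$: by Corollary~\ref{Xlinevertex} the symp $[L_1,L_2]$ would contain $H_1$ and $H_2$; but $L_1$ is not collinear to $H_2$, so $[L_1,L_2]$ must equal $\theta_1$, contradicting $L_2\not\subseteq\theta_1$. For $(ii)$ with $H_1=H_2=:T$, assume $L_1\not\perp L_2$ and set $\zeta=[L_1,L_2]$. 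If $\zeta\in\Xi$ then $T$ lies in its vertex, but $\dim T=\dim H-1>v$, impossible. If $\zeta\in\Theta$ then $\langle L_1,T\rangle$ and $\langle L_2,T\rangle$ are two maximal singular subspaces of $\zeta$ through the submaximal $\langle x,T\rangle$, both containing $X$-lines through $x$; but only one such subspace exists in a quadric of this type. Your sketch circles around these ideas without landing on either exclusion.

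\textbf{The $V_1=V_2$ equivalence needs the $Z$-structure.} Your ``no common $H_0$ containing both vertices has the right codimension'' is not the mechanism. One writes $Z(\theta_i)=V_i\cup R_i$ and notes $V_1,V_2\subseteq H$. If $V_1\neq V_2$ then $V_1=R_2\cap H$ and $V_2=R_1\cap H$ (these are the only $Z$-subspaces available in $H$), so every $X$-line $L_2\subseteq\theta_2$ through $x$ is collinear to $V_2$ while no $X$-line $L_1\subseteq\theta_1$ through $x$ is; hence $H_1\neq H_2$ always. If $V_1=V_2$, then for any $L_1$ the hyperplane $H_1\supseteq V_1=V_2$ determines a unique maximal singular subspace of $\theta_2$ through $\langle x,H_1\rangle$ containing $X$-lines, and any such $L_2$ has $H_2=H_1$.
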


\begin{proof} Let $V_1$ and $V_2$ denote the respective vertices of $\theta_1$ and $\theta_2$ and note that these belong to $x^\perp$ and hence to $H$.  

$(i)$ We first show that there always is a pair of $X$-lines $L_1$ and $L_2$ with $H_1 \neq H_2$. If not, then all pairs  $L_1,L_2$ are collinear to the same hyperplane of $H$, that hence coincides with both $V_1$ and $V_2$, whereas $\dim(H)=v'+r' \geq v'+2$.  

Take $L_1$ and $L_2$ with $H_1 \neq H_2$. By Lemma~\ref{collbetweenS}, $L_1$ and $L_2$ are not collinear. Suppose for a contradiction that $[L_1,L_2] \in \Theta$. By Corollary~\ref{Xlinevertex}, $H_1$ and $H_2$ belong to $[L_1,L_2]$. However, since $L_1$ and $H_2$ are not collinear, $[L_1,L_2]=\theta_1$,  contradicting $L_2 \subsetneq \theta_1$. So $[L_1,L_2]$ belongs to $\Xi$. Recall that the vertex $V$ of $[L_1,L_2]$ consists of all $Y$-points collinear with both $L_1$ and $L_2$ (cf.\ Lemma~\ref{convexclosure}). By Corollary~\ref{Xlinevertex}, $V \subseteq Y(\theta_1) \cap Y(\theta_2)=H$, so $V=H_1 \cap H_2$. In particular, $v=\dim(H)-2=v'+r'-2$.

$(ii)$ Next, suppose that $L_1$ and $L_2$ are such that $H_1 = H_2=:T$. Suppose for a contradiction that $L_1$ and $L_2$ are not collinear and put $\zeta:=[L_1,L_2] \in \Xi \cup \Theta$. If $\zeta \in \Xi$, then, as in the previous paragraph, we get that $T$ is its vertex, violating $\dim(T)=\dim(H)-1>v$. If $\zeta \in \Theta$, then $\<L_1,T\>$ and $\<L_2,T\>$ are two maximal singular subspaces of $\zeta$ by Corollary~\ref{Xlinevertex}. However, these two subspaces go through the submaximal subspace $\<T,x\>$ and contain $X$-lines through $x$, whereas $\zeta$ only contains one such subspace, a contradiction. We conclude that $L_1 \perp L_2$.

By definition, $Z(\theta_i)$ is the disjoint union of $V_i$ and some $r'$-dimensional subspace, say $R_i$ ($i=1,2$). Recall that $V_1 \cup V_2 \subseteq H$. Suppose first that $V_1 \neq V_2$. Then $V_1 = R_2 \cap H=R_2 \cap x^\perp$ and $V_2 = R_1 \cap H=R_1 \cap x^\perp$; in particular, $V_1 \cap V_2 = \emptyset$.  Each $X$-line $L_1$ in $\theta_1$ through $x$ is not collinear to $V_2$, whereas  each $X$-line $L_2$ in $\theta_2$ through $x$ is collinear to $V_2$. So if $V_1\neq V_2$, there are no $L_1,L_2$ with $H_1=H_2$. On the other hand, if $V_1=V_2$, then for each $X$-line $L_1$ in $\theta_1$ through $x$, there is a unique maximal singular subspace in $\theta_2$ through $\<x,H_1\>$ containing  $X$-lines through $x$, and for any such $X$-line $L_2$, clearly $H_2=H_1$.
\end{proof}

\subsection{Point-residues and the inductive approach}

 \begin{defi}\label{ptres} \em For each $x\in X$, we define the \emph{point residue $\Res_X(x):=(X_x,Z_x,\Xi_x,\Theta_x)$} as follows. Take any  hyperplane $H_x$ of $T_x$ containing $Y_x$ and not containing $x$. We let $X_x$ be the points in $H_x \cap  X$ on an $X$-line with  $x$ and $Z_x$  as $Y_x \cap Z$; furthermore, $\Xi_x$ is the set $\{T_x(\xi) \cap H_x \mid x\in \xi\in \Xi\}$ and $\Theta_x$ as $\{ T_x(\theta) \cap H_x \mid x\in\theta\in \Theta\}$. \end{defi}
 
Note that, for $\zeta \in \Xi \cup \Theta$ and $x\in X(\zeta)$, we have that $X_x(\zeta):=X_x \cap \zeta$ corresponds precisely to the point residue $\Res_{X(\zeta)}(x)$ of $X(\zeta)$ as a (degenerate) hyperbolic quadric. As such, it follows that the definition is independent of the choice of the hyperplane $H_x$.  Moreover, each member of $\Xi_x \cup \Theta_x$ contains a pair of non-collinear points of $X_x$ and hence corresponds to a unique member of $\Xi \cup \Theta$ through $x$. 
 
To show that $\Res_X(x)$ is a pre-DSV as well, we first need that $\<Z_x\>=\<x,Y_x\>$. 
 \begin{lemma}\label{resYx}
 For each $x\in X$, $\<Z_x\>=Y_x$.
 \end{lemma}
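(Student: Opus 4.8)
The plan is to show the equality $\langle Z_x\rangle = Y_x$ by a double inclusion, the reverse inclusion $\langle Z_x\rangle \subseteq Y_x$ being essentially immediate: by definition $Z_x = Y_x \cap Z \subseteq Y_x$, and $Y_x$ is a subspace (Corollary~\ref{collY} applied to the $X$-point $x$), so the span of $Z_x$ stays inside $Y_x$. Hence the real content is the forward inclusion $Y_x \subseteq \langle Z_x\rangle$, i.e.\ every point of $Y_x$ lies in the subspace spanned by those points of $Z$ that happen to be collinear with $x$.

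For the forward inclusion, the natural strategy is to use the members of $\Theta$ through $x$, which exist by Lemma~\ref{xinSS}. Pick a point $y\in Y_x$; I want to exhibit it as a combination of points of $Z\cap Y_x$. The key structural input is that for each $\theta\in\Theta$ with $x\in X(\theta)$, the set $Z(\theta)$ is (by the very definition of $\Theta$) the disjoint union of the vertex $V_\theta$ of $\theta$ and an $r'$-space of the maximal singular subspace $M=Y(\theta)$ complementary to $V_\theta$; in particular $Z(\theta)$ spans $M = Y(\theta)$, and $Y(\theta)$ is exactly the part of $Y$ lying in $\theta$. Now $x^\perp\cap\theta$ is, inside the quadric $XY(\theta)=C_\theta$, the polar subspace of $x$; its intersection with $Y(\theta)$ is a hyperplane $H_\theta$ of $Y(\theta)$ — and crucially $H_\theta \subseteq Y_x\cap Z$-span, because $H_\theta$ is a hyperplane of the $Z(\theta)$-spanned space $Y(\theta)$ and one checks that a hyperplane of a space spanned by the union $V_\theta \sqcup (\text{$r'$-space})$ is itself spanned by its intersections with $Z(\theta)$ (this uses $|\K|>2$ to have enough points, exactly as in Lemma~\ref{planes}). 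So each $\theta\ni x$ contributes the subspace $H_\theta = x^\perp\cap Y(\theta)$ to $\langle Z_x\rangle$. It then remains to see that these $H_\theta$, as $\theta$ ranges over all members of $\Theta$ through $x$, together span all of $Y_x$. This is where I would invoke Axiom (S1): given $y\in Y_x$, take any $z\in Z$; if $z$ is collinear with $x$ we are fine, otherwise $[x,z]\in\Xi\cup\Theta$, and since $z\in Z\setminus Y_x$ forces this to lie in $\Theta$ (a member of $\Xi$ has all of its $Y$-part collinear with each of its $X$-points), we get a $\theta\in\Theta$ through $x$ with $z\in Y(\theta)$; running $z$ over a spanning set of $Z$ (hence of $Y$) and intersecting with $Y_x$ should force $\bigcup_\theta H_\theta$ to span $Y_x$.

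More precisely, I expect the clean argument to run as follows: let $W:=\langle H_\theta : x\in\theta\in\Theta\rangle \subseteq \langle Z_x\rangle$. Suppose for contradiction $W\subsetneq Y_x$ and pick $y\in Y_x\setminus W$. The line, or rather some auxiliary point, together with the points of $Z$ and Corollary~\ref{collY}: since $Y=\langle Z\rangle$ and $Y_x$ is a subspace containing $W$, there is a point $z\in Z$ with $z\notin$ (the subspace of $Y$ needed to keep $y$ out of $W$) — here I would argue that since $Z$ spans $Y$ and $y\in Y_x$, some $z\in Z\cap Y_x$ is "not already accounted for", or alternatively that $y$ lies on a singular line $xy$ which by Corollary~\ref{supersymp/plane} lies in a member of $\Theta$ through $x$, placing $y\in H_\theta\subseteq W$. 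This last route — realising each $y\in Y_x$ inside some $\theta\in\Theta$ through $x$ via Corollary~\ref{supersymp/plane} (applied to the singular line $L=xy$, which has its unique $Y$-point $y$ and meets some $\xi\in\Xi$ through $x$ in the $X$-point $x$, provided such a $\xi$ exists, which it does once $x$ is collinear with enough of $Y$) — looks like the most economical one, so I would try it first and fall back on the spanning-of-$Z(\theta)$ computation only if the hypotheses of Corollary~\ref{supersymp/plane} are not automatically met.

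The main obstacle I anticipate is the bookkeeping in the forward inclusion: making sure that (a) a hyperplane of the space $Y(\theta)=\langle Z(\theta)\rangle$ is actually spanned by its points in $Z(\theta)$ — true because $Z(\theta)$ contains the full $r'$-space of $M$ together with all of $V_\theta$, and $|\K|>2$ lets us avoid the degenerate small-field obstruction — and (b) that every point of $Y_x$ is realised inside $x^\perp\cap Y(\theta)$ for some $\theta\in\Theta$ through $x$, rather than only landing in a subspace strictly smaller than $Y_x$. Step (b) is the crux; it amounts to showing that $Y_x$ is covered, in the linear-span sense, by the subspaces $\{x^\perp\cap Y(\theta)\}_\theta$, and the cleanest lever for that is either Corollary~\ref{supersymp/plane} (a singular line with a $Y$-point through $x$ lies in a $\theta\in\Theta$) or a direct appeal to (S1) with $z\in Z$ ranging over a basis of $Y$. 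Everything else (the reverse inclusion, well-definedness) is routine given Corollary~\ref{collY}.
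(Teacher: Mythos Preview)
Your approach is essentially the paper's: both arguments place a given $y\in Y_x$ inside some $\theta'\in\Theta$ through $x$ and then observe that $x^\perp\cap Y(\theta')$ is spanned by points of $Z(\theta')\cap x^\perp\subseteq Z_x$. The paper does this by contradiction, taking an $X$-line $L$ in some $\theta\ni x$, choosing $y\in Y_x\setminus(Y_L\cup S)$, and setting $\theta'=[xy,L]$; your ``cleanest route'' via Corollary~\ref{supersymp/plane} is the same construction packaged differently.

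Two small corrections. First, your general claim that ``a hyperplane of $\langle V_\theta\sqcup R\rangle$ is spanned by its intersections with $Z(\theta)$'' is false: a hyperplane meeting both $V_\theta$ and $R$ in proper hyperplanes is not spanned by those intersections. What saves you is that the \emph{specific} hyperplane $H_\theta=x^\perp\cap Y(\theta)$ always contains $V_\theta$ (the vertex is collinear with every $X$-point of $\theta$), whence $H_\theta=\langle V_\theta,\,x^\perp\cap R\rangle$ with both pieces lying in $Z_x$; no appeal to $|\K|>2$ is needed here. Second, your hedge about needing some $\xi\in\Xi$ through $x$ is unnecessary: take the $\theta\in\Theta$ through $x$ supplied by Lemma~\ref{xinSS}. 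If $y\in\theta$ then $y\in H_\theta\subseteq\langle Z_x\rangle$ already; if $y\notin\theta$ then $xy\cap\theta=\{x\}$ and Corollary~\ref{supersymp/plane} applies with $\zeta=\theta$ to produce the desired $\theta'\in\Theta$ through $xy$.
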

 \begin{proof}
 Recall that $Y_x$ is indeed a subspace by Corollary~\ref{collY}. Suppose for a contradiction that $\<Z_x\>$ is a strict subspace $S$ of $Y_x$. Let $\theta \in \Theta$ through $x$ be arbitrary (cf.\ Lemma~\ref{xinSS}). Take an $X$-line $L$ through $x$ in $\theta$. By Lemma~\ref{lineSS}$(i)$, $Y_L<Y_x$ and hence there is a point $y \in Y_x\setminus (Y_L \cup S)$. According to Lemma~\ref{planes}, $\theta':=[xy,L] \in \Theta$. The definition of $\theta'$ implies that $x^\perp \cap Y(\theta')=\<x^\perp \cap Z(\theta')\>$. As the former subspace contains $y$ and the latter is contained in  $S$, this contradicts $y \notin S$.
 \end{proof}
 
\begin{lemma}\label{res}
Let $(X,Z,\Xi,\Theta)$ be a duo-symplectic pre-DSV with parameters $(r,v,r',v')$ with $r \geq 2$. For each $x\in X$, $(X_x,Z_x,\Xi_x,\Theta_x)$ is a duo-symplectic pre-DSV in $\mathbb{P}^{N_x}(\K)$ with $N_x=\dim(T_x)-1$, with parameters $(r-1,v,r'-1,v')$. Furthermore, $(X_x,Z_x,\Xi_x,\Theta_x)$ contains no $2$-lines, and, if (S3) holds in $(X,Z,\Xi,\Theta)$, then $N_x \leq 2d-1$.
\end{lemma}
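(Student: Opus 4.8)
The plan is to unpack Definition~\ref{ptres} and check the axioms (S1), (S2) one by one inside the slice $H_x$, using the local structure theory built up in the preliminaries. First I would fix $x\in X$, a hyperplane $H_x$ of $T_x$ containing $Y_x$ but not $x$, and recall the observation already recorded after Definition~\ref{ptres}: for $\zeta\in\Xi\cup\Theta$ with $x\in X(\zeta)$, the set $X_x(\zeta)=X_x\cap\zeta$ is the point residue of the (possibly degenerate) hyperbolic quadric $X(\zeta)$ at $x$, hence is again a hyperbolic quadric (of one rank lower), with vertex reduced by one dimension for those $\zeta\in\Xi$, and $Y_x\cap\theta$ a maximal singular subspace of the residual quadric for $\theta\in\Theta$. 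This is where the parameter drop $(r,v,r',v')\mapsto(r-1,v,r'-1,v')$ comes from: in $\Xi$-members the Witt rank drops by one while the vertex dimension $v$ is preserved (we mod out by $Y_x$, which \emph{contains} the vertex of $\xi$ only partially — one has to be slightly careful that $T_x(\xi)\cap H_x$ keeps exactly a $v$-dimensional vertex, which follows because the vertex of $\xi$ lies in $Y_x$ and $x\notin H_x$), and likewise for $\Theta$-members $r'\mapsto r'-1$ with $v'$ unchanged, using Lemma~\ref{Hperp} to identify $Z_x(\theta)$ as the disjoint union of the (unchanged) vertex and an $(r'-1)$-space. The fact that $\langle Z_x\rangle=Y_x$ is exactly Lemma~\ref{resYx}, so the ambient span and the subspace ``$Y$'' of the residue are correctly matched.

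For (S1) in the residue: two distinct points $p_1,p_2\in X_x\cup Z_x$ lift to singular lines $L_1=xp_1$, $L_2=xp_2$ (or, for points of $Z_x\subseteq Y_x$, to singular lines through $x$ meeting $Y$), and I would apply Lemma~\ref{planes} to the pair $L_1,L_2$: either $\langle L_1,L_2\rangle$ is a singular plane — in which case $p_1,p_2$ are collinear in the residue and we can instead pick a third residue point non-collinear to one of them (using $|\K|>2$ and Lemma~\ref{xinSS}, exactly as in the proof of Proposition~\ref{standardex}(S1)) — or $[L_1,L_2]\in\Xi\cup\Theta$ and then $T_x([L_1,L_2])\cap H_x$ is the required member of $\Xi_x\cup\Theta_x$. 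For (S2): given two distinct members $\zeta_1^x,\zeta_2^x\in\Xi_x\cup\Theta_x$, they come from $\zeta_1,\zeta_2\in\Xi\cup\Theta$ through $x$, and $\zeta_1^x\cap\zeta_2^x=T_x(\zeta_1\cap\zeta_2)\cap H_x$; since $\zeta_1\cap\zeta_2$ is singular by (S2), its tangent-space image is singular, so (S2) descends. Duo-symplecticity of the residue is immediate once one checks $\Theta_x\neq\emptyset$, which follows from Lemma~\ref{xinSS} (pick $\theta\ni x$, then $T_x(\theta)\cap H_x\in\Theta_x$) together with the observation that no point of $Y_x$ is collinear to all of $X_x$ (a point of $Y_x$ collinear to all $X$-lines through $x$ would force, via Lemma~\ref{lineSS}, those lines to be $0$- or $1$-lines only, but duo-symplecticity already guarantees genuine $\Theta$-members through $x$).

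For the ``no $2$-lines'' claim: an $X_x$-line $\ell$ through the residue corresponds to an $X$-plane $\pi\ni x$ in $X$, and a member of $\Theta_x$ containing $\ell$ corresponds to a member of $\Theta$ containing $\pi$; by Corollary~\ref{noXplane} two members of $\Theta$ sharing an $X$-plane coincide, so there is at most one member of $\Theta_x$ through $\ell$, i.e.\ $\ell$ is a $0$-line or a $1$-line. Finally, for the dimension bound $N_x=\dim(T_x)-1\le 2d-1$ under (S3): (S3) says $T_x=\langle T_x(\xi_1),T_x(\xi_2)\rangle$ for some $\xi_1,\xi_2\in\Xi$, and each $T_x(\xi_i)$ is the tangent space at $x$ to an $(r,v)$-cone, hence has dimension $d=2r+v+1$; therefore $\dim(T_x)\le 2d$, giving $N_x\le 2d-1$. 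The main obstacle I anticipate is the bookkeeping for the parameters — specifically, verifying that modding out $T_x(\xi)$ by the hyperplane $H_x$ (which contains $Y_x$, hence the \emph{whole} vertex of $\xi$) really leaves a $v$-dimensional vertex rather than collapsing it, and symmetrically that for $\theta\in\Theta$ the slice $T_x(\theta)\cap H_x$ has its $Y$-part $Y_x\cap\theta$ equal to a maximal singular subspace decomposing as vertex (dimension $v'$) plus an $(r'-1)$-space; this is where Lemma~\ref{Hperp}, and the identity $v=v'+r'-2$ it provides, does the real work, and one must track it carefully to see the asserted parameters are consistent.
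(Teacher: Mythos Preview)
Your skeleton is largely right --- (S2), the ``no $2$-lines'' claim via Corollary~\ref{noXplane}, and the dimension bound under (S3) are all handled correctly and match the paper. Two points need correction, though.

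\textbf{(S1) in the collinear case.} Your plan to ``pick a third residue point non-collinear to one of them'' in the spirit of Proposition~\ref{standardex} is not fleshed out and the analogy is misleading: that proposition works inside the known variety $\mathcal{V}_2(\K,\bO')$ with its symp structure already in hand, whereas here you need to \emph{produce} the member of $\Xi_x\cup\Theta_x$ through $p_1,p_2$ from the axioms alone. The paper's argument is different and uses the hypothesis $r\ge 2$ essentially: given the singular plane $\pi=\langle L_1,L_2\rangle$, take any $\zeta\in\Xi\cup\Theta$ through $L_1$; if $L_2\subseteq\zeta$ you are done, and otherwise (since $r,r'\ge 2$) there is a singular plane $\pi'\subseteq\zeta$ through $L_1$ not collinear to $L_2$, and then Lemma~\ref{kspaces} applied to the pair $\pi,\pi'$ (two singular $2$-spaces sharing the line $L_1$) yields a member of $\Xi\cup\Theta$ containing $L_1\cup L_2$. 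Your approach could be salvaged (once you have $p_3\in X_x(\zeta)$ with $p_3\notin p_2^\perp$, Lemma~\ref{convexclosure} forces $L_1\subseteq[L_2,L_3]$), but you would still need to arrange $p_3\perp p_1$, and the cleanest way to do that is precisely to pass to planes and invoke Lemma~\ref{kspaces}.

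\textbf{Lemma~\ref{Hperp} is the wrong tool.} You repeatedly invoke Lemma~\ref{Hperp} and the relation $v=v'+r'-2$ for the parameter bookkeeping. That lemma applies only in the special configuration where two members of $\Theta$ share a maximal singular subspace of the form $\langle x,H\rangle$; the identity $v=v'+r'-2$ is a \emph{consequence} there, not a standing relation, and it plays no role in the proof of Lemma~\ref{res}. The parameter drop is a direct computation with quadric cones: for $\xi\in\Xi$, the tangent cone $T_x(XY(\xi))$ is a cone with vertex $\langle x,V\rangle$ over a rank-$r$ hyperbolic quadric, and slicing with $H_x\not\ni x$ (which contains the original vertex $V\subseteq Y_x$) leaves exactly an $(r-1,v)$-cone; for $\theta\in\Theta$ the same computation gives an $(r'-1,v')$-cone, and $Y_x\cap\theta=x^\perp\cap Y(\theta)$ is a hyperplane of the generator $Y(\theta)$, hence a generator of the residual quadric, with $Z_x\cap\theta=V_\theta\sqcup(R\cap x^\perp)$ the required decomposition. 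No appeal to Lemma~\ref{Hperp} is needed (or valid) here. Your worry about ``modding out'' is also misplaced: you are intersecting with $H_x$, and since the vertex of $\xi$ lies in $Y_x\subseteq H_x$, it survives intact.
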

\begin{proof}
Recall that $T_x$ is generated by all singular lines through $x$. By definition of $X_x$ and $Y_x$, these lines are precisely the ($X$-)lines $\<x,x'\>$ where $x'$ is a point of $X_x$ and the lines $\<x,y\>$ where $y$ is a point of $Y_x$.  So $T_x=\<x,X_x,Y_x\>$. By Lemma~\ref{resYx} we obtain that  $X_x$ and $Z_x$ generate the projective space $H_x$ and hence $H_x\cong  \mathbb{P}^{N_x}(\K)$ with $N_x=\dim(T_x)-1$ (so if (S3) holds, then indeed $N_x\leq 2d-1$).

A straightforward verification tells us that each $\zeta \in \Xi \cup \Theta$ with $x\in \zeta$, intersects the sets $X_x$, $Y_x$ and $Z_x$ as described in Definition~\ref{svs}, and that the associated parameters are $(r-1,v,r'-1,v')$. 

Since (S2) holds in $(X,Z,\Xi,\Theta)$, it also holds in $\Res_X(x)$. We show that this is also the case for (S1).  Two points $p_1,p_2$ in $X_x\cup Z_x$ correspond to two singular lines $L_1$ and $L_2$ through $x$. 
 By Lemma~\ref{planes}   either $[L_1,L_2] \in \Xi \cup \Theta$ and $x\in [L_1,L_2]$ by Lemma~\ref{convexclosure}, in which case (S1) follows; or  $\<L_1,L_2\>$ is a singular plane $\pi$. In the latter case, (S1) implies the existence of a member $\zeta$ of $\Xi \cup \Theta$ containing $L_1$. If $\zeta$ also contains $L_2$, we are good, so suppose it does not. Since $r,r' \geq 2$, there is a singular plane $\pi'$ in $\zeta$ through $L$ not collinear to $L_2$ (cf. Lemma~\ref{convexclosure}). By Lemma~\ref{kspaces}, $\pi$ and $\pi'$ determine a unique member of  $\Xi \cup \Theta$ containing $L_1$ and $L_2$, so also in this case (S1) follows. 

By Corollary~\ref{noXplane}, $(X,Z,\Xi,\Theta)$ has no $X$-planes contained in multiple members of $\Theta$. Consequently, $(X_x,Z_x,\Xi_x,\Theta_x)$ contains no $2$-lines.
\end{proof}

Recall that we assume that there are no $Y$-points collinear to all points of $X$ (cf.\ Lemma~\ref{resY}). This is a residual property:

\begin{lemma}\label{projV}
If, for some $x\in X$, $y\in Y_x$ is collinear to all points of $X_x$,  then $y$ is collinear to all points of $Y$.\end{lemma}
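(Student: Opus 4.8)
The plan is to argue by contradiction and exploit the fact that the point residue $\Res_X(x)$ already "sees" which members of $\Xi \cup \Theta$ pass through $x$, and that $Y$ is spanned by $Z$. Suppose that $y \in Y_x$ is collinear to all of $X_x$ but that there is some point $x' \in X$ with $y \not\perp x'$. First I would translate the hypothesis into the statement that $y$ is collinear to every $X$-line through $x$: indeed a point of $X_x$ corresponds to such an $X$-line $L$, and $y \perp L$ means $\langle x, L, y\rangle$ (equivalently $\langle L, y\rangle$) is a singular plane. Hence every singular line through $x$ (both the $X$-lines and the lines $\langle x, y'\rangle$ with $y' \in Y_x$) is collinear to $y$, so in fact $y$ is collinear to every point of $T_x$, in particular to every point of $X$ that is collinear to $x$.

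The next step is to propagate this to points of $X$ \emph{not} collinear to $x$. Take $x' \in X$ with $y \not\perp x'$; then in particular $x' \not\perp x$ (by the previous paragraph), so $[x,x'] \in \Xi \cup \Theta$ by Lemma~\ref{uniquesymp}. Now I would look inside the quadric $\zeta := [x,x']$. Since $\langle x, X_x(\zeta), Y_x(\zeta)\rangle = T_x(\zeta)$ and $y$ is collinear to all of $X_x$, $y$ is collinear to $T_x(\zeta)$, which is a tangent hyperplane (or tangent space) of the quadric $XY(\zeta)$ at $x$. Using the geometry of (degenerate) hyperbolic quadrics: the set of points of $XY(\zeta)$ collinear (on $\zeta$) with all of $T_x(\zeta) \cap XY(\zeta)$ is precisely $\{x\}$ together with the vertex of $\zeta$; but $x'$ lies on $XY(\zeta)$ and is collinear to all the $X$-lines of $\zeta$ through $x$ via $y$ only if $x'$ is in that tangent locus — contradiction, unless $x' \perp x$, which we excluded. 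Here the key point to nail down carefully is exactly which points of a degenerate hyperbolic quadric are collinear to a full tangent hyperplane-section; this is the step I expect to be the main obstacle, since one must handle the vertex of $\zeta$ and the two possibilities $\zeta \in \Xi$ versus $\zeta \in \Theta$ separately, and for $\theta \in \Theta$ one must check that the relevant point of $Y(\theta)$ cannot play the role of $y$.

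Finally, having shown that $y \perp x'$ for \emph{every} $x' \in X$, I invoke Corollary~\ref{collY}: the set $\{y\}^{\perp} \cap Y$ contains, for each $X$-space $S$ (in particular each point of $X$), the subspace $Y_S$, and more directly $y$ is collinear to every point of $X$, hence to $\langle X \rangle$; since $Y = \langle Z \rangle \subseteq \langle X \cup Z\rangle = \mathbb{P}^N(\K)$ and $Z \subseteq X \cup Y$ with each $z \in Z$ lying on $X$-lines, one gets $y$ collinear to all of $Y$ as claimed. (Alternatively, and more cleanly: $y \perp X$ forces $y$ to lie in the vertex of every member of $\Xi \cup \Theta$, hence $y$ is collinear to $Y(\zeta)$ for all such $\zeta$; since these $Y(\zeta)$ together with $Z$ span $Y$ — each $z \in Z$ lies in some $Z(\theta) \subseteq Y(\theta)$ — we conclude $y$ is collinear to all of $Y$.) This contradicts the standing assumption (cf.\ Lemma~\ref{resY}) that no point of $Y$ is collinear to all of $X$, completing the proof.
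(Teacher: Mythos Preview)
Your overall strategy matches the paper's: first observe that $y$ is collinear to every point of $X$ that is collinear to $x$; then, for $x'\in X$ not collinear to $x$, work inside $\zeta:=[x,x']$. However, your second paragraph does not carry through. You correctly note that $y$ is collinear to all of $T_x(\zeta)\cap XY(\zeta)$, but the sentence ``$x'$ lies on $XY(\zeta)$ and is collinear to all the $X$-lines of $\zeta$ through $x$ via $y$ only if $x'$ is in that tangent locus'' has the roles of $y$ and $x'$ swapped and does not yield a contradiction. What you actually need is a statement about $y$, not about $x'$: since $X_x\cap\zeta$ contains two non-collinear points (as $r\geq 2$) and $y$ is collinear to both, Lemma~\ref{convexclosure} gives $y\in\zeta$; the last clause of that same lemma then says that a point of $Y$ collinear to two non-collinear $X$-points of $\zeta$ lies in the vertex of $\zeta$, whence $y\perp x'$. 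This is exactly the paper's argument, and it handles $\zeta\in\Xi$ and $\zeta\in\Theta$ uniformly --- no case distinction is needed.

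Two further issues. First, the lemma as printed has a typo: the conclusion should read ``collinear to all points of $X$'' (the paper's own proof ends with ``all $X$-points are collinear to $y$'', and every later use of the lemma is about $X$, not $Y$). Your third paragraph, which tries to upgrade $y\perp X$ to $y\perp Y$, is therefore unnecessary --- and the step ``$y$ is collinear to every point of $X$, hence to $\langle X\rangle$'' is false in general (collinearity does not pass to spans). Second, the lemma is a direct implication, not a proof by contradiction; your closing line about contradicting the standing assumption of Lemma~\ref{resY} is how the lemma gets \emph{applied} later, not part of its proof.
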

\begin{proof}
Suppose that $y\in Y_x$ collinear to all points of $X_x$. First note that this implies, since $x \perp y$, that $y$ is collinear to all points in $X$ collinear to $x$. 
Take any $x' \in X$ not collinear to $x$. Then $[x,x'] \in \Xi \cup \Theta$. Then $[x,x'] \cap X_x$ contains two non-collinear points, which by assumption are collinear to $y$, so the vertex of $[x,x']$ contains $y$ and hence $x' \perp y$. We conclude that all $X$-points are collinear to $y$.
\end{proof}

In the next two sections, we will treat the ``induction hypothesis'': the cases where $r=1$, in which case we will deal with \emph{pre}-DSVs, unless we are sure that they cannot occur as a residue (for example, when there are $2$-lines), then we may also use (S3). In the final section we treat the general case with $r \geq 2$. In all cases, Lemmas~\ref{resY} and~\ref{projV} allow us to assume that no point of $Y$ is collinear to all points of $X$.

\section{The dual Segre varieties}\label{r=1,2SS}

In this section we suppose that $(X,Z,\Xi,\Theta)$ is a duo-symplectic pre-DSV with $r=1$ such there are at least two members of $\Theta$ through each $X$-point and such that no $Y$-point is collinear to all points of $X$.

  We will show that, as soon that there is a $1$-line, all $X$-lines are $1$-lines (cf.\ Proposition~\ref{1':xlines}), but first we exclude the possibility that there are no $1$-lines (cf.\ Proposition~\ref{1':nonex}).  

\subsection{The occurrence of $1$-lines}

\begin{prop}\label{1':nonex}
Let $(X,Z,\Xi,\Theta)$ be a duo-symplectic pre-DSV with $r=1$ such there are at least two members of $\Theta$ through each $X$-point. Then there is a $1$-line.
\end{prop}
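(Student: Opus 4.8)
The goal is to rule out the possibility that every $X$-line is a $0$-line, under the standing hypothesis that through every $X$-point there are at least two members of $\Theta$. The plan is to argue by contradiction: suppose every $X$-line is a $0$-line. By Lemma~\ref{lineSS}$(i)$, the value $Y_x$ is then constant along each $X$-line, and hence $Y_x$ is constant on each member of $\Xi\cup\Theta$ through $x$ (since such a member is connected through $X$-lines); iterating, $Y_x$ would be the same subspace $W$ of $Y$ for all $x\in X$ — but every such $x$ is collinear to all of $W$, and $\<Z\>=Y$ together with Corollary~\ref{collY} would force $W$ to be collinear to all points of $X$, contradicting the standing assumption that no $Y$-point is collinear to all points of $X$ (provided $W\neq\emptyset$; and $W=\emptyset$ is equally impossible since through each $X$-point there is a member of $\Theta$, whose vertex lies in $Y_x$). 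I would first verify the connectivity claim carefully: that $Y_x=Y_{x'}$ whenever $x,x'$ lie in a common member $\zeta\in\Xi\cup\Theta$. For $\xi\in\Xi$ this is clear from Lemma~\ref{convexclosure} (the vertex $Y(\xi)\subseteq Y_x$ for all $X$-points $x$ of $\xi$, and if all $X$-lines of $\xi$ are $0$-lines then in fact $Y_x=Y(\xi)$ for each such $x$). For $\theta\in\Theta$ one uses Lemma~\ref{lineSS} and the fact that $X(\theta)$ minus its generator is connected via $X$-lines inside the quadric $C_\theta$.

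The subtlety is that the argument as sketched seems to use only one member of $\Theta$ through each point, not two — so I expect that the "$0$-lines only" hypothesis collapses more directly, and the real content of needing \emph{two} members of $\Theta$ through each point is to guarantee that the residue structure behaves well, or to exclude a degenerate configuration where $Y_x$ is genuinely constant but the set-up still makes sense. So the key step I would pin down is: if every $X$-line is a $0$-line, does a single $\theta\in\Theta$ through $x$ already force $Y_x$ to equal the vertex $V_\theta$? By Lemma~\ref{lineSS}$(i)$, for an $X$-line $L$ in $\theta$ we get $Y_L=Y_x$; but $Y_L$ should be a hyperplane of $\theta\cap Y_x$ by the first part of Lemma~\ref{lineSS} when $\theta$ contains $L$ — wait, that only holds if $\theta$ genuinely contains $L$ as a line with members of $\Theta$ counting correctly. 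The tension here is exactly what yields the contradiction: $L\subseteq\theta\in\Theta$ means $L$ is contained in at least one member of $\Theta$, so $L$ is \emph{not} a $0$-line, contradicting the assumption outright. So in fact the hypothesis "every $X$-line is a $0$-line" is incompatible with Lemma~\ref{xinSS}, which guarantees every $X$-point lies in a member of $\Theta$, and every member of $\Theta$ contains $X$-lines through each of its $X$-points.

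Thus the cleanest route is: assume for contradiction there is no $1$-line. If there were also no $2$-line, every $X$-line would be a $0$-line, contradicting Lemma~\ref{xinSS} as above (any $\theta\in\Theta$ through an $X$-point $x$ contains an $X$-line through $x$, which is then a $0$-line but lies in $\theta$ — absurd). Hence there is a $2$-line $L$; pick $x\in L$. By Lemma~\ref{lineSS}$(iii)$, $Y_L$ is strictly contained in a hyperplane of $Y_x$, so $\dim Y_x\geq \dim Y_L+2$. Now I would use the point residue $\Res_X(x)$: by Lemma~\ref{lineSS}, the members of $\Theta$ through $L$ correspond to the subspaces of $Y_x$ having $Y_L$ as a hyperplane, a set whose structure is that of the dual of the quotient space $Y_x/Y_L$ (of dimension $\geq 2$). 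The plan is to exhibit a member $\theta'\in\Theta$ through some $X$-line $L'\neq L$ through $x$ with $Y_{L'}$ a hyperplane of $Y_x$ — i.e. a $1$-line — by choosing $\theta'$ appropriately among the $\Theta$-members through $x$ and selecting a suitable $X$-line of $\theta'$ through $x$ that is collinear to exactly a hyperplane of $Y_x$; the existence of \emph{two} members of $\Theta$ through $x$, intersecting in a maximal singular subspace (Lemma~\ref{Hperp} with $H=Y_x$), provides enough room to find such an $L'$. The main obstacle I anticipate is the bookkeeping in this last step — carefully controlling $Y_{L'}$ for $X$-lines $L'$ inside a given $\theta'\in\Theta$ through $x$, using Lemma~\ref{lineSS} applied inside the quadric $X(\theta')$ and Lemma~\ref{Hperp}$(i)$ to locate an $L'$ whose $Y_{L'}$ has the right codimension — so that one genuinely lands on a $1$-line rather than another $2$-line or $0$-line.
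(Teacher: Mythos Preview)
Your proof plan has a genuine gap in the final step, and it is not merely a matter of bookkeeping. You correctly observe that if there were neither $1$-lines nor $2$-lines, then every $X$-line would be a $0$-line, immediately contradicting the fact that members of $\Theta$ contain $X$-lines. So under the contradiction hypothesis there must be a $2$-line. The problem is your proposed mechanism for then producing a $1$-line.

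For an $X$-line $L'$ through $x$ lying in some $\theta'\in\Theta$, Lemma~\ref{lineSS} tells you that $Y_{L'}$ is a hyperplane of $\theta'\cap Y_x$, \emph{not} of $Y_x$ itself. Thus $L'$ is a $1$-line if and only if $Y_x\subseteq\theta'$. But there is no a~priori reason for any $\theta'\in\Theta$ through $x$ to contain all of $Y_x$; indeed, the hypothesis ``at least two members of $\Theta$ through each $X$-point'' is precisely what separates this section from Section~\ref{1SS/point}, where uniqueness of $\theta$ through $x$ forces $Y(\theta)=Y\supseteq Y_x$. Your appeal to Lemma~\ref{Hperp} does not help either: its hypothesis that $\theta_1\cap\theta_2$ is a \emph{maximal} singular subspace $\langle x,H\rangle$ is not established, and even when it applies, it yields information about $[L_1,L_2]\in\Xi$ and the value of $v$, not a $1$-line.

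The paper's actual proof is substantially longer and takes a completely different route. It first observes (Lemma~\ref{S3}) that the presence of $2$-lines means the pre-DSV cannot arise as a point-residue, so one may assume (S3) holds---something your plan never invokes but which is essential. Using (S3) to bound $\dim T_x$, the paper then establishes in sequence that $r'=2$ (Lemma~\ref{1':r'=2}), $v'\leq 2v-1$ (Lemma~\ref{1':v,v'}), $v'=-1$ (Lemma~\ref{1':v'=-1}), and $v=0$ (Lemma~\ref{1':symps}). With these numerical constraints in hand, it passes to the projection $\rho$ from $Y$, analyses the connected components of $\rho(X)$ with respect to $2$-lines (Lemmas~\ref{1':inj2lines}--\ref{1':Y_Pi}), and finally derives a contradiction by exhibiting a singular $3$-space whose $\rho$-image contains a pair of disjoint $2$-lines in the same connected component---impossible by Lemma~\ref{1':Y_Pi}. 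None of this global structure is accessible from your local approach.
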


This proposition is the key to the classification. Its proof is rather long and hence divided into several lemmas over a couple of subsections.

Suppose for a contradiction that $(X,Z,\Xi,\Theta)$ has no $1$-lines.

\subsubsection{The values $v'$ and $v$ if there are no $1$-lines}
\begin{lemma}\label{S3}\label{2L}
There is at least one $2$-line through each $X$-point. In particular,  (S3) holds.
\end{lemma}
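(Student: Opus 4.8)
The plan is to show that every $X$-point $x$ already lies on an $X$-line that is contained in some member of $\Theta$; such a line is, by definition, not a $0$-line, and since we have assumed there are no $1$-lines, it is forced to be a $2$-line.

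First I would invoke Lemma~\ref{xinSS}: for an arbitrary $x\in X$ there is a $\theta\in\Theta$ with $x\in X(\theta)=C_\theta\setminus M$, where $M=Y(\theta)$ is the generator and $V_\theta\subseteq Z(\theta)$ the vertex of the $(r',v')$-cone $C_\theta$. In particular $x\notin M$ (hence $x\notin V_\theta$). The geometric heart of the argument is then the claim that the tube $C_\theta\setminus M$ contains an $X$-line through $x$. To prove it I would project $C_\theta$ from $V_\theta$, carrying $x$ to a point $\bar x$ of a hyperbolic quadric $Q$ of rank $r'+1$ in $\mathbb{P}^{2r'+1}(\K)$ lying off the generator $g:=M/V_\theta$ of $Q$; since $r'\geq 2$ (because $r'>r=1$), the quadric $\bar x^{\perp}\cap Q$ spans the tangent hyperplane $\bar x^{\perp}\cong\mathbb{P}^{2r'}(\K)$, which properly contains the $r'$-space $\langle\bar x,g\rangle\cap\bar x^{\perp}$, so there is a line $\bar L$ of $Q$ through $\bar x$ with $\bar L\cap g=\emptyset$. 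Lifting $\bar L$ back through $V_\theta$ produces a singular subspace $\Pi$ of $C_\theta$ with $x\in\Pi$ and $\Pi\cap M=V_\theta$, and inside $\Pi$ any line $L=\langle x,q\rangle$ with $q\in\Pi\setminus\langle x,V_\theta\rangle$ misses $V_\theta$, hence misses $M$, hence (as $L\subseteq\theta$ and $Y\cap\theta=M$) misses $Y$ altogether; thus $L$ is an $X$-line through $x$ with $L\subseteq\theta$.

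Since $L$ lies in $\theta\in\Theta$ it is not a $0$-line, and as there are no $1$-lines by hypothesis, $L$ is a $2$-line. As $x$ was arbitrary, this proves the first assertion. For the final claim I would argue that, by Lemma~\ref{res}, any pre-DSV arising as a point-residue of a duo-symplectic pre-DSV with parameter $\geq 2$ contains no $2$-lines; having just produced a $2$-line through every $X$-point, $(X,Z,\Xi,\Theta)$ cannot be such a residue, so within the inductive set-up of Section~\ref{bp} it must be one of the dual split Veronese sets to be classified, for which (S3) holds by definition.

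The only point requiring genuine care is the geometric lemma that the tube $C_\theta\setminus M$ meets an $X$-line through each of its points: one has to run the dimension counts on $Q$, $\bar x^{\perp}\cap Q$ and $\langle\bar x,g\rangle$ uniformly for all $r'\geq 2$ and all $v'\geq-1$ (the degenerate case $v'=-1$, where $C_\theta=Q$, being the simplest), and verify that the lift $\Pi$ really meets $M$ in exactly $V_\theta$. Beyond that the argument is formal, and I do not anticipate a serious obstacle.
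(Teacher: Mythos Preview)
Your proposal is correct and follows essentially the same route as the paper's proof. The paper is simply terser: it notes that by the standing hypothesis of the section (at least two members of $\Theta$ through each $X$-point, so in particular one) there is some $\theta\in\Theta$ containing $x$, and then asserts without further comment that any $X$-line of $\theta$ through $x$ is a $2$-line---the existence of such a line being implicit in $r'\geq 2$. Your explicit dimension count on the hyperbolic base of $C_\theta$ makes this existence step rigorous, which is a worthwhile addition even if the paper omits it; the (S3) argument via Lemma~\ref{res} is identical in spirit.
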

\begin{proof}
By assumption, there is at least one member $\theta\in\Theta$ containing $x$. As all $X$-lines in $\theta$ are $2$-lines, this shows the first assertion. Since there are $2$-lines, Lemma~\ref{res} assures that $(X,Z,\Xi,\Theta)$ cannot occur as a point-residue of a DSV, and therefore we may assume that (S3) holds. 
\end{proof}

The next aim is to show that $v'=-1$ (given the assumption that no $Y$-point is collinear to all $X$-points, cf.\ Lemma~\ref{resY}).

\begin{lemma}\label{1':r'=2}
Any two members of $\Theta$ sharing a $2$-line have the same vertex and $r'=2$.
\end{lemma}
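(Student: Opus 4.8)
The plan is to work entirely inside the quadrics $\theta_1,\theta_2 \in \Theta$ sharing a $2$-line $L$, exploiting Corollary~\ref{Xlinevertex} to control the intersection $\theta_1 \cap \theta_2$, and then to invoke Lemma~\ref{Hperp} in the degenerate situation produced by the absence of $1$-lines. First I would fix a $2$-line $L$ and distinct $\theta_1,\theta_2\in\Theta$ through $L$, pick an $X$-point $x\in L$, and record that by Lemma~\ref{lineSS}(iii) the subspace $Y_L$ is strictly contained in a hyperplane of $Y_x$. By Corollary~\ref{Xlinevertex} we have $\theta_1 \cap \theta_2 = \<L, Y_L\>$, which is a maximal singular subspace of each $\theta_i$; since $x\in L$, this subspace has the form $\<x,H\>$ with $H$ a singular subspace of $Y$ (one checks $H = \<(L\setminus\{x\})^{\perp}\cap Y_L, \dots\>$ — more cleanly, $\<x,H\> = \<L,Y_L\>$ forces $H$ to be the span of $Y_L$ together with the second point of $L$ projected appropriately; the point is just that $\theta_1\cap\theta_2$ is a maximal singular subspace of both $\theta_i$ of the shape $\<x,H\>$ with $H\subseteq Y$). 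This is precisely the hypothesis of Lemma~\ref{Hperp}.

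Next I would apply Lemma~\ref{Hperp}. It gives $v = v' + r' - 2$ and, in part $(i)$, that whenever two $X$-lines $L_1\subseteq\theta_1$, $L_2\subseteq\theta_2$ through $x$ satisfy $H_1 := L_1^\perp\cap H \neq L_2^\perp\cap H =: H_2$, then $[L_1,L_2]\in\Xi$; moreover such a pair always exists. But here $L$ itself is a common $X$-line of $\theta_1$ and $\theta_2$ through $x$, and the whole analysis of Lemma~\ref{Hperp} applies with $L_1=L_2=L$: then $H_1=H_2$, so by part $(ii)$ we are in the collinear case, and the final paragraph of that lemma's proof shows that $L_1,L_2$ with $H_1=H_2$ exist if and only if $V_1=V_2$, where $V_i$ is the vertex of $\theta_i$. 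Since $L_1=L_2=L$ is such a pair, we conclude $V_1=V_2$; that is, $\theta_1$ and $\theta_2$ have the same vertex. This proves the first assertion.

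For $r'=2$: now that $V:=V_1=V_2$, and using $v = v'+r'-2$ from Lemma~\ref{Hperp}, I would argue that $r'\le 2$ using the hypothesis that there are no $1$-lines, and then $r'\ge 2$ holds by definition of the parameters. For the upper bound, suppose $r'\ge 3$. Then inside $\theta_1$, the maximal singular subspace $\<L,Y_L\> = \<x,H\>$ has dimension $r'+v'$, and $\dim H = r'+v' - 1 \ge v'+2$, so $H$ properly contains the vertex $V$ (which has dimension $v'$) with codimension $r'-1\ge 2$ in $\<L,Y_L\>$. I would then produce, inside $\theta_1$, an $X$-line $L'$ through $x$ with $L'^\perp\cap H$ a hyperplane of $H$ — possible since $r'\ge 2$ gives enough room in the hyperbolic base — and analyze $Y_{L'}$ versus $Y_x$: the relation $Y_L \subsetneq$ hyperplane of $Y_x$ combined with the fact that in $\theta_1$ the line $L'$ sees a codimension-one part of $H$ forces, after transporting through a second member of $\Theta$ on $L'$ (which exists since $L'$ is a $2$-line as it lies in $\theta_1$), a comparison that either contradicts "no $1$-lines" or contradicts $r'\ge 3$ via a dimension count on $T_x$ against $Y_x$. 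The cleanest route is: the hypothesis "no $1$-lines, at least two members of $\Theta$ per point" means every $X$-line is a $0$-line or a $2$-line, and I would show $r'\ge 3$ forces some $X$-line in $\theta_1$ to be a $1$-line by choosing it so that $Y_{L'}$ has codimension exactly $1$ in $Y_x$ — this is where the strict inequality $\dim H\ge \dim V+2$ is used, giving the needed slack.

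\textbf{Main obstacle.} The delicate point is the $r'=2$ part: pinning down that no $1$-lines forces $r'\le 2$ requires carefully exhibiting an $X$-line whose $Y_L$-value has codimension exactly one in $Y_x$, and this hinges on the precise interplay between the vertex $V$, the subspace $H$, and the hyperbolic base of $\theta_1$ via Lemma~\ref{lineSS}. Getting the dimension bookkeeping exactly right there — rather than the equal-vertex claim, which falls out cleanly from Lemma~\ref{Hperp} — is where the real work lies.
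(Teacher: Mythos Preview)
Your central move --- invoking Lemma~\ref{Hperp} with $\theta_1\cap\theta_2=\langle L,Y_L\rangle$ --- does not work, because the hypothesis of that lemma is not met. Lemma~\ref{Hperp} requires $\theta_1\cap\theta_2=\langle x,H\rangle$ with a \emph{single} $X$-point $x$ and $H\subseteq Y$; in particular the $Y$-part of the intersection is $H$ and its dimension is $\dim H+1$. Here, however, $L$ is an $X$-line disjoint from $Y$, so $\langle L,Y_L\rangle\cap Y=Y_L$ while $\dim\langle L,Y_L\rangle=\dim Y_L+2$. There is no way to write $\langle L,Y_L\rangle$ as $\langle x,H\rangle$ with $H\subseteq Y$: that would force $H=Y_L$ and simultaneously $\dim Y_L+2=\dim H+1$, a contradiction. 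So the equal-vertex conclusion you draw from Lemma~\ref{Hperp}$(ii)$ is unsupported, and the relation $v=v'+r'-2$ you record is likewise not available from that lemma in this configuration.

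Your argument for $r'=2$ is also only a sketch, and the proposed mechanism (manufacturing a $1$-line from $r'\ge3$ via dimension bookkeeping on $Y_{L'}$) is not the right lever. The paper's proof proceeds in the opposite order and uses a much more direct idea: take $X$-\emph{planes} $\pi_i$ through $L$ inside $\theta_i$. Since $r=1$, no member of $\Xi$ can contain an $X$-plane, and by Corollary~\ref{noXplane} no two distinct members of $\Theta$ share an $X$-plane; hence by Lemma~\ref{kspaces} the $3$-space $\langle\pi_1,\pi_2\rangle$ is forced to be singular. But then $\pi_1^\perp\cap\theta_2$ is singular (Lemma~\ref{convexclosure}), so $\theta_2$ cannot contain two non-collinear $X$-planes through $L$, which forces $r'=2$. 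Only \emph{after} establishing $r'=2$ does the paper deduce $V_1=V_2$, via $\pi_i^\perp\cap Y_L=V_i$ together with Lemma~\ref{collbetweenS} applied to the collinear planes $\pi_1,\pi_2$. This $X$-plane argument is the missing idea in your proposal.
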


\begin{proof} Let $L$ be any 2-line and take two members $\theta_1,\theta_2 \in \Theta$ containing $L$. Recall that $\theta_1 \cap \theta_2 = \<L,Y_L\>$ by Corollary~\ref{Xlinevertex}. Take $X$-planes $\pi_i$ through $L$ in $\theta_i$ for $i=1,2$. Then $\pi_1$ and $\pi_2$ are collinear, for they can neither be contained in a member of $\Xi$ (because $r=1$), nor in a member of $\Theta$  (by Corollary~\ref{noXplane}). Since $\pi_1^\perp \cap\theta_2$ is singular by Lemma~\ref{convexclosure}, this implies that there cannot be two non-collinear $X$-planes  in $\theta_2$ through $L$. As such, $r'=2$. 

Note that $r'=2$ implies that $\pi_i^\perp \cap Y_L$ is precisely the vertex $V_i$ of $\theta_i$, $i=1,2$. As $\pi_1$ and $\pi_2$ are collinear, we get that $\pi_1^\perp \cap Y_L = \pi_2^\perp \cap Y_L$ (cf.\ Lemma~\ref{collbetweenS}), and hence $V_1 = V_2$. 
\end{proof}

Combining the two previous lemmas, we can easily put an upper bound on $v'$. Afterwards we show that $v'=-1$ (in view of Lemma~\ref{resY}), which requires some more work.

\begin{lemma}\label{1':v,v'}
We have $v' \leq 2v-1$.
\end{lemma}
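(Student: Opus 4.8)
The plan is to exploit the fact, established in Lemma~\ref{1':r'=2}, that $r'=2$ and that any two members of $\Theta$ sharing a $2$-line have the same vertex, together with Lemma~\ref{S3}, which guarantees at least one $2$-line through every $X$-point. Fix an $X$-point $x$ and let $L$ be a $2$-line through it. By Lemma~\ref{lineSS}, the members of $\Theta$ through $L$ correspond bijectively to the subspaces of $Y_x$ in which $Y_L$ is a hyperplane; since $L$ is a $2$-line, $Y_L$ is a subspace of codimension at least $2$ in $Y_x$, so the collection of such subspaces forms (the point set of) a projective space of dimension $\dim(Y_x)-\dim(Y_L)-1 \geq 1$. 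Now all of these members $\theta$ have a common vertex $V$ (Lemma~\ref{1':r'=2}), and $V \subseteq Y_L$ since $V$ is collinear to every $X$-point of $\theta$, in particular to $L$; moreover $V$ has dimension $v'$ by definition. Because each such $\theta \cap Y_x$ is an $(\dim Y_L + 1)$-space of $Y_x$ containing $V$, and $V$ is contained in every one of them, we get $v' \leq \dim(Y_L)$.

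**Bounding $\dim(Y_L)$ and $\dim(Y_x)$.**
The next step is to relate $\dim(Y_x)$ to $v$. Since $x \in X$ is arbitrary and no $Y$-point is collinear to all of $X$ (our standing assumption, cf.\ Lemma~\ref{resY}), the subspace $Y_x$ is a proper subspace of $Y$; I would extract from the structure of the members of $\Theta$ through $x$ the inequality $\dim(Y_x) \leq v$. Indeed, pick any $\theta \in \Theta$ through $x$: then $Y(\theta) \cap Y_x$ is the part of the maximal singular subspace $Y(\theta)$ of $\theta$ collinear to $x$, which is the hyperplane of $Y(\theta)$ through the vertex $V_\theta$ (of dimension $v'$) together with the $r'$-space, intersected with $x^\perp$; using $r'=2$ one computes this intersection has dimension $v'+r'-1 = v'+1$, and since by (S2)/Lemma~\ref{convexclosure} every point of $Y_x$ arises inside some such $\theta$ or $\xi$, a gluing argument over the members through $x$ yields $\dim(Y_x) = v'+r'-1 = v'+1$; more robustly, one knows $v = \dim(Y)$ is the vertex dimension of members of $\Xi$, and Lemma~\ref{Hperp} (with the hypothesis $r'=2$) gives the relation $v = v'+r'-2 = v'$ whenever two members of $\Theta$ meet in a maximal singular subspace $\langle x, H\rangle$ — but here I expect instead to argue directly that through $x$ there is a $\xi \in \Xi$ (via (S1) applied to $x$ and a suitable $Y$-point, using Corollary~\ref{supersymp/plane}) whose vertex lies in $Y_x$, giving $v \leq \dim(Y_x)$.

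**Assembling the inequality.**
Putting these together: from the two-member-of-$\Theta$ count through $L$ we have $v' \leq \dim(Y_L)$ and $\dim(Y_L) \leq \dim(Y_x) - 2$ (as $L$ is a $2$-line, so $Y_L$ has codimension $\geq 2$ in $Y_x$), and from the previous paragraph $\dim(Y_x) \leq v + \text{something}$; the clean chain I am aiming for is $v' \leq \dim(Y_L) \leq \dim(Y_x) - 2$ combined with $\dim(Y_x) \leq 2v - (v') $ hidden in the vertex relations, or more simply $\dim(Y_x) \le v + v' + 1$ refined appropriately, collapsing to $v' \leq 2v - 1$. I expect the honest route is: $v \le \dim(Y_x)$ (a $\Xi$-member through $x$ has its vertex in $Y_x$) and $\dim(Y_x) \le \dim(Y_L) + (\text{the dimension of a } \theta\text{-residue direction})$, with the $2$-line giving $\dim(Y_x) - \dim(Y_L) \ge 2$, and finally $v' \le \dim(Y_L) = \dim(Y_x) - k$ for some $k\ge 2$; eliminating $\dim(Y_x)$ and $\dim(Y_L)$ between $v \le \dim(Y_x)$, $v' \le \dim(Y_L)$, and a reverse inequality of the form $\dim(Y_x) + \dim(Y_L) \le 2v - 1 + (\text{correction})$ extracted from the vertex-dimension bookkeeping yields the claim.

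**Main obstacle.**
The delicate point is the bookkeeping linking $\dim(Y_x)$, $\dim(Y_L)$, $v$ and $v'$ precisely — in particular establishing the reverse bound $\dim(Y_x) \leq 2v - v'$ or its equivalent, which is where the full force of the hypothesis ``at least two members of $\Theta$ through each $X$-point'' (hence $2$-lines everywhere, Lemma~\ref{S3}) and ``no $Y$-point collinear to all of $X$'' must be combined. I would handle it by choosing two $2$-lines $L_1, L_2$ through $x$ lying in two distinct members $\theta_1, \theta_2$ of $\Theta$ and analysing $\theta_1 \cap \theta_2 = \langle L', Y_{L'}\rangle$ via Corollary~\ref{Xlinevertex} and Lemma~\ref{Hperp}, reading off that the two vertices coincide and forcing $Y_x$ to be covered by the $Y(\theta_i)$'s, each of which contributes dimension $v'+1$ worth of new directions beyond the common $Y_L$; counting dimensions in $\langle Y(\theta_1), Y(\theta_2)\rangle \subseteq Y_x$ then gives $\dim(Y_x) \le 2(v'+1) - v' - 1 = v'+1$ is too weak, so the correct covering argument must instead use a $\Xi$-member to pick up the extra $v - v'$ of room — and that comparison is exactly the crux. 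Once that reverse inequality is in hand, $v' \le 2v-1$ is immediate arithmetic.
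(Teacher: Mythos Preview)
Your approach has a genuine gap: you never establish the reverse inequality you need. You correctly obtain $v' \leq \dim Y_L$, $\dim Y_L \leq \dim Y_x - 2$, and $v \leq \dim Y_x$, but these are all \emph{lower} bounds on $\dim Y_x$; without an upper bound on $\dim Y_x$ in terms of $v$ there is no way to deduce $v' \leq 2v-1$. You recognise this yourself as the ``main obstacle'' but offer only vague suggestions. The attempt to invoke Lemma~\ref{Hperp} does not work as stated: that lemma requires two members of $\Theta$ to meet in a maximal singular subspace of the form $\langle x, H\rangle$ with $H \subseteq Y$, which is not the situation for $\theta_1 \cap \theta_2 = \langle L, Y_L\rangle$ with $L$ an $X$-line. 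Also, the claim ``$\dim Y_x \leq v$'' in your second paragraph is simply false (the vertex of any $\xi \in \Xi$ through $x$ already forces $v \leq \dim Y_x$, as you note a few lines later).

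The paper's argument avoids all of this by counting in the full tangent space $T_x$ rather than in its intersection with $Y$. Axiom (S3), which holds here by Lemma~\ref{S3}, supplies $\xi_1,\xi_2 \in \Xi$ through $x$ with $T_x = \langle T_x(\xi_1), T_x(\xi_2)\rangle$; since $r=1$, this gives an \emph{upper} bound on $\dim T_x$ linear in $v$. On the other hand, two members $\theta_1,\theta_2 \in \Theta$ sharing the $2$-line $L$ satisfy $\theta_1 \cap \theta_2 = \langle L, Y_L\rangle$ by Corollary~\ref{Xlinevertex}, and $\langle T_x(\theta_1), T_x(\theta_2)\rangle \subseteq T_x$ gives a \emph{lower} bound on $\dim T_x$ linear in $v'$ (using $r'=2$). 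Comparing the two bounds yields $v' \leq 2v-1$ at once. The key idea you are missing is that (S3) --- which you cite in passing but never actually exploit --- is precisely what furnishes the elusive upper bound.
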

\begin{proof}
Let $x\in X$ be arbitrary. By Lemma~\ref{S3}, there are $\xi_1,\xi_2 \in \Xi$ through $x$ with ${T_x=\<T_x(\xi_1),T_x(\xi_2)\>}$. Since $r=1$, we get $\dim(T_x) \leq 2(v+4)=2v+8$. On the other hand, Lemma~\ref{S3} also implies that there are $\theta_1,\theta_2 \in \Theta$ through $x$ sharing a $2$-line $L$. By Corollary~\ref{Xlinevertex},  $\theta_1 \cap \theta_2 = \<L,Y_L\>$ and hence, recalling $r'=2$ (cf.\ Lemma~\ref{1':r'=2}), we get $\dim(T_x) \geq 2(v'+6) - (v' + 3) = v'+9$. Combining these inequalities yields $v' \leq 2v-1$. 
\end{proof}

\begin{lemma}\label{1':v'=-1}
All members of $\Theta$ have the same vertex $V$. As $V \perp x$ for all $x\in X$,  $v'=-1$.
\end{lemma}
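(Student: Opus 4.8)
The plan is to build on Lemma~\ref{1':r'=2}, which already tells us $r'=2$ and that any two members of $\Theta$ sharing a $2$-line have the same vertex. Since there are no $1$-lines, every $X$-line in a member of $\Theta$ is a $2$-line; and by Lemma~\ref{S3}, through every $X$-point there is a $2$-line, hence a member of $\Theta$. I would first argue that the ``same vertex'' relation propagates from $\Theta$-members sharing a $2$-line to all of $\Theta$, and then derive $v'=-1$ from the hypothesis (Lemma~\ref{resY}, Lemma~\ref{projV}) that no $Y$-point is collinear to all of $X$.

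\textbf{Key steps.} First, fix $\theta_0\in\Theta$ with vertex $V$. I claim every $\theta\in\Theta$ has vertex $V$. Take any $X$-point $x\in X(\theta_0)$; by Lemma~\ref{xinSS} and the fact that all $X$-lines in $\Theta$-members are $2$-lines, through $x$ there are (at least two) members of $\Theta$, all sharing a $2$-line through $x$ inside a common $\theta$; by Lemma~\ref{1':r'=2} they all have the same vertex, namely $V$ (since $\theta_0$ is one of them, or connected to $\theta_0$ through a chain of $2$-lines inside $\theta_0$). More carefully: inside the fixed $\theta_0$, any two $X$-planes through a common $2$-line are collinear (shown in the proof of Lemma~\ref{1':r'=2}), and $\theta_0$ with $r'=2$ has a connected ``$X$-plane graph'', so all $\Theta$-members meeting $\theta_0$ in a $2$-line share vertex $V$. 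Then I propagate: given arbitrary $\theta\in\Theta$, pick $x\in X(\theta)$; by Lemma~\ref{xinSS} again and connectivity of $X$ (which follows from (S1): any two $X$-points lie in a common member of $\Xi\cup\Theta$, which is connected), one can walk from $X(\theta_0)$ to $X(\theta)$ through a chain of $X$-points, at each step staying inside a single member of $\Xi\cup\Theta$; whenever two consecutive $X$-points both lie in $\Theta$-members, those members meet in a $2$-line, forcing equal vertices. The one subtlety is handling steps that pass through a member of $\Xi$; here I would use that each $X$-point $x$ lies in \emph{some} $\theta\in\Theta$ (Lemma~\ref{xinSS}) and that all $\Theta$-members through $x$ share the vertex, so the vertex is really an invariant attached to each $X$-point, constant along $X$-lines contained in $\Theta$-members, and (via the connectivity argument just sketched) constant on all of $X$.

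\textbf{Deriving $v'=-1$.} Once every $\theta\in\Theta$ has the common vertex $V$, I claim $V\perp x$ for every $x\in X$. If $x\in X(\theta)$ for some $\theta\in\Theta$ then $V=V_\theta$ is in the vertex of $\theta$, hence collinear to every $X$-point of $\theta$, in particular to $x$. Since every $X$-point lies in some $\theta\in\Theta$ (Lemma~\ref{xinSS}), we get $V\perp x$ for all $x\in X$. But then every point of the subspace $V\subseteq Y$ is a $Y$-point collinear to all of $X$, contradicting the standing assumption (Lemma~\ref{resY}, Lemma~\ref{projV}) unless $V$ is empty, i.e.\ $v'=-1$.

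\textbf{Main obstacle.} The delicate part is the propagation/connectivity argument: showing that the vertex, well-defined at each $X$-point as the common vertex of all $\Theta$-members through it (using that there are no $1$-lines, so every $X$-line in a $\Theta$-member is a $2$-line and Lemma~\ref{1':r'=2} applies), is actually globally constant. One must verify that moving between $X$-points inside a member of $\Xi$ (where no $\Theta$-member is directly available) still preserves the vertex — this should follow because the endpoints of such a move each separately lie in $\Theta$-members (Lemma~\ref{xinSS}) and one can re-route through those, or because a member $\xi\in\Xi$ sharing an $X$-line with some $\theta\in\Theta$ forces, via Corollary~\ref{supersymp/plane} and Corollary~\ref{Xlinevertex}, a $\Theta$-member containing that $X$-line whose vertex is pinned down. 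I would spell this out by first proving: \emph{for each $X$-point $x$, the vertices of all $\theta\in\Theta$ through $x$ coincide}; call it $V(x)$. Then: \emph{if $x,x'$ are collinear $X$-points, $V(x)=V(x')$} (if the line $xx'$ lies in a $\Theta$-member, done by Lemma~\ref{1':r'=2}; otherwise $xx'$ lies in a $\xi\in\Xi$, and one finds a $\theta\in\Theta$ through $x$ sharing a $2$-line with $\xi$, transferring the vertex). Finally connectivity of the collinearity graph on $X$ (immediate from (S1)) gives that $V(x)$ is globally constant, completing the argument.
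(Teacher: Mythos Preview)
Your overall architecture is right---once every $\theta\in\Theta$ has a common vertex $V$, the conclusion $v'=-1$ follows immediately from Lemma~\ref{xinSS} and the standing assumption that no $Y$-point is collinear to all of $X$. The difficulty is entirely in the first claim, and your connectivity argument does not establish it.

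The gap is at the step ``for each $X$-point $x$, the vertices of all $\theta\in\Theta$ through $x$ coincide''. You invoke Lemma~\ref{1':r'=2}, but that lemma only applies to two members of $\Theta$ \emph{sharing a $2$-line}. Two members $\theta_1,\theta_2\in\Theta$ through $x$ need not share an $X$-line at all: by (S2) their intersection is a singular subspace containing $x$, and it can perfectly well be of the form $\<x,H\>$ with $H\subseteq Y$ and no $X$-line inside (this is exactly the situation analysed in Lemma~\ref{Hperp}). In that case Lemma~\ref{1':r'=2} gives you nothing, and there is no chain of $2$-lines linking $\theta_1$ to $\theta_2$: if some $\theta_3$ shared a $2$-line with $\theta_1$ and another with $\theta_2$, Lemma~\ref{1':r'=2} would force $V_1=V_{\theta_3}=V_2$, so such a chain exists only if the vertices already agree. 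Your proposed ``propagation'' is therefore circular: it assumes what it is meant to prove.

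The paper's proof takes the opposite route. It supposes $v'\geq 0$ and, using Lemma~\ref{projV}, \emph{constructs} two members $\theta_1,\theta_2\in\Theta$ through a fixed $x$ with distinct vertices $V_1\neq V_2$ (so in particular $\theta_1\cap\theta_2=\<x,H\>$ with $H\subseteq Y_x$ and no shared $X$-line). It then runs a detailed structural analysis (Claims~1--4): every pair of $X$-lines $L_1\subseteq\theta_1$, $L_2\subseteq\theta_2$ through $x$ is non-collinear with $[L_1,L_2]\in\Xi$ and vertex inside $H$; the $Z$-structure of $H$ is pinned down as $(V_1\cap H,R_1\cap H)=(R_2\cap H,V_2\cap H)$; this forces $v=1$, $v'\leq 1$, and eventually $Y_x=\<V_1,V_2\>$. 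At that point any $X$-line $L_1\subseteq\theta_1$ through $x$ has $Y_{L_1}$ a hyperplane of $Y_x$, which by Lemma~\ref{lineSS} makes $L_1$ a $1$-line---contradicting the standing hypothesis. This is substantially more work than a connectivity argument, and the work is genuinely needed.
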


\begin{proof}
If all members of $\Theta$ share a vertex $V$, then through each $x\in X$  there is a member of $\Theta$, so $x \perp V$. Since we assume that no $Y$-point is collinear to all points of $X$, we get $v'=-1$. Suppose for a contradiction that $v' \geq 0$. 

 Let $x\in X$ be arbitrary and take any $\theta_1\in \Theta$ through $x$, say with vertex $V_1$.  By Lemma~\ref{projV}, there is a singular line through $x$ not collinear to $V_1$. Clearly, each $0$-line $L$ through $x$ is collinear to $V_1$ since $V_1 \subseteq Y_x=Y_L$ by Lemma~\ref{lineSS}$(i)$; and a line through $x$ with a point in $Y_x$ is collinear to $V_1$ too by Corollary~\ref{collY}. Hence there is a $2$-line $M_2$ through $x$ with $M_2$ not collinear to $V_1$. Take any $\theta_2 \in \Theta$ containing $M_2$ and denote its vertex by $V_2$. Clearly $V_1 \neq V_2$. Lemma~\ref{1':r'=2} says that $\theta_1 \cap \theta_2=\<x,H\>$ with $H \subseteq Y_x$ and implies that no member of $\Theta$ meets both $\theta_1$ and $\theta_2$ in respective $X$-lines.

\textit{Claim 1:  Each pair of $X$-lines $L_1$ and $L_2$ through $x$ in $\theta_1\setminus\theta_2$ and $\theta_2\setminus\theta_1$, respectively, is non-collinear and $[L_1,L_2]\in\Xi$ has vertex $V(L_1,L_2)=Y_{L_1} \cap Y_{L_2}\subseteq H$.}\\
First note that, by the previous paragraph, no member of $\Theta$ contains $L_1\cup L_2$. Suppose for a contradiction that $\<L_1,L_2\>$ is a singular plane $\pi$. As $L_2^\perp \cap \theta_1$ is singular (cf.\ Lemma~\ref{convexclosure}), there is a singular plane $\pi'$ in $\theta_1$ through $L_1$ not collinear to $\pi$. Since $r=1$, there is a member of $\Theta$ containing $\pi \cup \pi'$, contradicting the beginning of this paragraph. So $[L_1,L_2] \in \Xi$ indeed, say with vertex $V(L_1,L_2)$. Corollary~\ref{Xlinevertex} implies that $V(L_1,L_2)$ belongs to $Y(\theta_1) \cap Y(\theta_2)=H$, and then it follows by Lemma~\ref{convexclosure} that $V(L_1,L_2)=Y_{L_1} \cap Y_{L_2}\subseteq H$. This shows the claim. Observe that $H$ is necessarily non-empty, as $v \geq 0$.


By definition, $Y(\theta_i) \cap Z$ is the disjoint union of $V_i$ and some plane $R_i$ (as $r'=2$) and $\<V_i,R_i\>=Y(\theta_i)$ for $i=1,2$.  So $H=Y(\theta_1) \cap Y(\theta_2)$ also contains two (possibly empty) disjoint subspaces in $Z$ (not necessarily spanning $H$ though), and therefore we either have $(V_1\cap H,R_1 \cap H)=(V_2 \cap H,R_2\cap H)$ or $(V_1 \cap H,R_1 \cap H)=(R_2 \cap H,V_2 \cap H)$. `

\textit{Claim 2: $V_i \cap H$ is non-empty for $i=1,2$.}\\
Suppose $V_1 \cap H$ is empty (the case where $V_2 \cap H$ is the same). Then $H$, being contained in $Y(\theta_1) \cap x^\perp$ is at most a line. Let $L_1$ and $L_2$ be $X$-lines through $x$ in $\theta_1 \setminus \theta_2$ and $\theta_2\setminus \theta_1$. By Claim 1, $V(L_1,L_2)=Y_{L_1} \cap Y_{L_2} \subseteq H$. Note that $Y_{L_1} \cap H$ is at most a point, as $\<L_1,H\>$ is skew from $V_1$ and $r'=2$. We conclude that $v=0$. However, Lemma~\ref{1':v,v'} then yields $v'=-1$, contradicting the assumption. The claim follows.

\textit{Claim 3: $(V_1 \cap H,R_1 \cap H)=(R_2 \cap H,V_2 \cap H)$ and $R_i \cap H  = x^\perp \cap R_i$ for $i=1,2$.}\\
Denote by $Z_1$ the unique subspace of $Z(\theta_i) \cap x^\perp$ through $H \cap V_2$ (which is non-empty by Claim 2). Note that, if $(V_1 \cap H,R_1 \cap H)=(R_2 \cap H,V_2 \cap H)$, then $Z_1 \setminus H$ always contains a point $z$ since  $V_1 \cap H \subsetneq V_1$ (otherwise $V_1 = V_2$), and if $(V_1 \cap H,R_1 \cap H)=(R_2 \cap H,V_2 \cap H)$, then $Z_1\setminus H$ contains a point $z$ if  $R_1 \cap H  \subsetneq x^\perp \cap R_1$.

By Lemma~\ref{convexclosure}, $z^\perp \cap X(\theta_2)$ contains no two non-collinear points, and hence there is an $X$-line $L_2$ in $\theta_2 \setminus \theta_1$ through $x$ not collinear to $z$. As such, $\theta'_2:=[zx,L_2] \in \Theta$. By Lemma~\ref{1':r'=2}, $\theta'_2$ has vertex $V_2$; and Lemma~\ref{convexclosure} implies that $\theta'_2$ contains $\<x,H \cap V_2\>$ (since $L_2 \perp \<x,H \cap V_2\> \perp xz$). 
Inside $Y(\theta'_2)$,  $z \notin V_2$ implies $z\in R'_2$ (using similar notation as above) and hence $\<z,H \cap V_2\> \cap Z = z \cup (H \cap V_2)$, whereas in $Y(\theta_1)$ we have that $\<z,H \cap V_2\> \subseteq Z_1 \subseteq Z$. This is only possible  if $V_2 \cap H$ is non-empty, contradicting Claim 2. We conclude that $(V_1 \cap H,R_1 \cap H)=(R_2 \cap H,V_2 \cap H)$ and $R_1 \cap H  = x^\perp \cap R_1$. Changing the roles of $\theta_1$ and $\theta_2$, the claim follows.

Note that Claim 1 and 3 imply that, for $X$-lines $L_1,L_2$ in $\theta_1$ and $\theta_2$, respectively, the vertex $V(L_1,L_2)$ of $[L_1,L_2]$ is a line joining the points $z_1 = R_1 \cap L_1^\perp$ and $z_2 = R_2 \cap L_2^\perp$, so $v=1$. By Lemma~\ref{1':v,v'}, this gives us $v' \leq 1$, so $V_j$ coincides with the line $R_i \cap x^\perp$ for $\{i,j\}=\{1,2\}$. Consequently, $H=\<V_1,V_2\>=\theta_i \cap x^\perp$, for $i=1,2$.

\textit{Claim 4: $Y_x=\<V_1,V_2\>$.}\\
Take any $y\in Y_x$. Up to renumbering, we have $y\notin V_1$.  Lemma~\ref{convexclosure} yields an $X$-line $L_1$ in $\theta_1$ through $x$ not collinear to $xy$, and hence $\theta'_1:=[L_1,xy] \in \Theta$. By Lemma~\ref{1':r'=2}, $\theta'_1$ has vertex $V_1$ too, and hence $\theta'_1$ plays the same role with respect to $\theta_2$ as $\theta_1$. By the foregoing, this means that $y\in Y(\theta'_1) \cap x^\perp=\<V_1,V_2\>$, so $Y_x= \<V_1,V_2\>$ indeed.

As $Y_x=\<V_1,V_2\>$, we have for any $X$-line $L_1$ in $\theta_1$ through $x$ that $Y_{L_1}$ is a hyperplane of $Y_x$. By Lemma~\ref{lineSS}, we get that $L_1$ is a 1-line, a contradiction.
\end{proof}

An immediate consequence of this is the following. 

\begin{cor}\label{1':y_L} For each $2$-line $L$, $Y_L$ is a point belonging to $Z$.
\end{cor}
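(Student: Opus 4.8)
The plan is to read the statement straight off the two structural constants just determined: since $L$ is itself a $2$-line, Lemma~\ref{1':r'=2} gives $r'=2$, while Lemma~\ref{1':v'=-1} gives $v'=-1$. Consequently every $\theta\in\Theta$ is a $(2,-1)$-cone, i.e.\ a hyperbolic quadric of maximal Witt index in $\mathbb{P}^5(\K)$ with empty vertex; its maximal singular subspaces are planes, and its $Y$-generator $Y(\theta)$ coincides with $Z(\theta)$, because the prescribed decomposition $Z(\theta)=V_\theta\sqcup(\text{an }r'\text{-subspace of }Y(\theta)\text{ complementary to }V_\theta)$ collapses to $Z(\theta)=Y(\theta)$ once $V_\theta=\emptyset$.

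First I would pick two distinct members $\theta_1,\theta_2\in\Theta$ through $L$ (they exist precisely because $L$ is a $2$-line) and apply Corollary~\ref{Xlinevertex}: it yields $\theta_1\cap\theta_2=\langle L,Y_L\rangle$ and, from its proof, identifies this intersection as a maximal singular subspace of $\theta_1$, hence a plane. Since $L\subseteq X(\theta_1)$ while $Y_L\subseteq Y\cap\theta_1=Y(\theta_1)$, and $X(\theta_1)$ is disjoint from $Y(\theta_1)$ (the tube $X(\theta)$ being the cone with its vertex-generator deleted), the subspaces $L$ and $Y_L$ are disjoint; therefore $2=\dim\langle L,Y_L\rangle=\dim L+\dim Y_L+1=\dim Y_L+2$, so $\dim Y_L=0$, that is, $Y_L$ is a single point.

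Finally, to place this point in $Z$ I would use the first paragraph once more: $Y_L\subseteq Y\cap\theta_1=Y(\theta_1)=Z(\theta_1)\subseteq Z$, so $Y_L$ is a point of $Z$. I do not anticipate a genuine obstacle — the corollary is essentially a consequence of the values of $r'$ and $v'$ obtained in the two preceding lemmas together with Corollary~\ref{Xlinevertex}; the only step needing a moment's care is the bookkeeping observation that $Z(\theta)$ and $Y(\theta)$ coincide when $v'=-1$, which is exactly what upgrades ``$Y_L\subseteq Y$'' to ``$Y_L\in Z$''.
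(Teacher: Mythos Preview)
Your proof is correct and follows essentially the same approach as the paper: both reduce the claim to the values $r'=2$, $v'=-1$ and the observation that $Z(\theta)=Y(\theta)$ when $v'=-1$. The only minor difference is that the paper uses a single $\theta_L\in\Theta$ and Lemma~\ref{lineSS} to identify $Y_L=L^\perp\cap Y(\theta_L)$ directly (whence $\dim Y_L=\dim Y(\theta_L)-2=r'-2=0$), whereas you take two members $\theta_1,\theta_2$ and invoke Corollary~\ref{Xlinevertex} (and its proof) to get $\langle L,Y_L\rangle$ as a maximal singular subspace; this is slightly more roundabout but equally valid.
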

\begin{proof}
If $\theta_L \in \Theta$ contains $L$, then $Y_L = L^\perp \cap Y(\theta_L)$ (cf.\ Lemma~\ref{lineSS}). Since $v'=-1$ and $r'=2$ it follows that $Y_L$ is a point, which belongs $Z$. 
\end{proof}

\subsubsection{Non-compatible structures if there are no $1$-lines}

We deepen the connections between the  $2$-lines $L$ and the points $Y_L$ and show that this leads to non-compatible structures.

\begin{lemma}\label{1':0and2lines}
Let $L$ and $M$ be two intersecting $2$-lines. Then $L \cup M$ belongs to a member of $\Theta$. Moreover, $Y_L = Y_M$ if and only if $\<L,M\>$ is singular and contains a  point of~$Y$. 
\end{lemma}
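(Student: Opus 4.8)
The plan is to analyze two intersecting $2$-lines $L$ and $M$ meeting in a point $x$, and to use the structure established in the preceding lemmas (most crucially Lemma~\ref{1':v'=-1}, which gives $r'=2$, $v'=-1$, and that all members of $\Theta$ share no vertex, together with Corollary~\ref{1':y_L}, which says $Y_L$ and $Y_M$ are single points of $Z$). First I would establish that $L$ and $M$ lie in a common member of $\Theta$. Suppose not. If $\langle L,M\rangle$ were a singular plane $\pi$, then since $M$ is a $2$-line, there is some $\theta\in\Theta$ through $M$; by Lemma~\ref{convexclosure} applied to $\theta$ and an $X$-point of $L\setminus M$, the set $L^\perp\cap\theta$ is singular, and since $r=1$ the planes $\pi$ and an $X$-plane of $\theta$ through $M$ must be collinear, forcing (via Lemma~\ref{kspaces}, using $r=1$ to exclude a member of $\Xi$, and Corollary~\ref{noXplane}) a member of $\Theta$ through $\pi$ containing both $L$ and $M$ — contradiction. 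So $\langle L,M\rangle$ is not singular, and then by Lemma~\ref{kspaces} $L$ and $M$ lie in a unique member $\zeta$ of $\Xi\cup\Theta$; since $r=1$, $\zeta$ cannot be in $\Xi$ (a member of $\Xi$ with $r=1$ has $X$-part a grid, whose two lines through a point are collinear, but more to the point, I would argue that $L$ being a $2$-line forces $[L,M]\in\Theta$ via the characterisation in Lemma~\ref{lineSS}: a member of $\Xi$ through $L$ contributes nothing to the $\Theta$-count, and the $2$-line condition combined with $Y_L$ a single point means the members of $\Theta$ through $L$ are determined by the subspaces of $Y_x$ containing $Y_L$ as a hyperplane — I would lean on Lemma~\ref{collbetweenS} and the explicit vertex computations to push $M$ into one of these). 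Thus $L\cup M\subseteq\theta$ for some $\theta\in\Theta$.

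Next I would prove the ``if and only if''. For the forward direction, assume $Y_L=Y_M=:\{z\}$. Inside the member $\theta\in\Theta$ containing $L\cup M$ (just produced), $Y(\theta)$ is an $r'$-space, i.e.\ a plane (as $r'=2$), with empty vertex ($v'=-1$), and $Y_L=L^\perp\cap Y(\theta)$, $Y_M=M^\perp\cap Y(\theta)$ are both the point $z$ by Corollary~\ref{1':y_L}. Since in the hyperbolic quadric $X(\theta)$ (a cone over a grid, here just a hyperbolic quadric in $\mathbb{P}^5$ with a plane $Y(\theta)$ as maximal singular subspace), two $X$-lines through $x$ whose polar points in $Y(\theta)$ coincide must themselves be collinear or equal — this is a direct statement about hyperbolic quadrics of rank $3$ — I would conclude $L\perp M$, so $\langle L,M\rangle$ is a singular plane; and this plane contains $z\in Y$ because $z=Y_L\subseteq\langle x,Y_L\rangle$ and, being collinear to all of $L$ and $M$, lies in $\langle L,M\rangle$ (or: $z$ is in the convex closure $X(\theta)$-computation shows $\langle L,M,z\rangle$ is singular). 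For the converse, assume $\langle L,M\rangle$ is a singular plane $\pi$ containing a point $y\in Y$. Then $y\in Y_L\cap Y_M$ since $y$ is collinear to every point of $L$ and of $M$. As $Y_L$ and $Y_M$ are both single points of $Z$ by Corollary~\ref{1':y_L}, we get $Y_L=\{y\}=Y_M$, hence $Y_L=Y_M$.

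The step I expect to be the main obstacle is the first one — showing that two intersecting $2$-lines always lie together in a member of $\Theta$ — and in particular ruling out the case $[L,M]\in\Xi$. The delicate point is that when $\langle L,M\rangle$ is not singular, Lemma~\ref{kspaces} only gives a member of $\Xi\cup\Theta$, and I need the $2$-line hypothesis on $L$ (and the tight numerology $r'=2$, $v'=-1$, $Y_L$ a point) to exclude $\Xi$. I would handle this by taking a third member $\theta_1\in\Theta$ through $L$, noting $\theta_1\cap[L,M]$ is a singular subspace through $L$ by (S2), and then using Corollary~\ref{noXplane}/Corollary~\ref{Xlinevertex} together with the fact that a member of $\Xi$ through the $2$-line $L$ would have its vertex inside $Y_L$ (a single point) and inside $Y(\theta_1)$ — comparing dimensions of the relevant singular subspaces in $[L,M]$ versus $\theta_1$ forces the contradiction, so $[L,M]\in\Theta$. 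The rest is routine hyperbolic-quadric geometry of rank at most $3$.
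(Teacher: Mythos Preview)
Your argument has a genuine gap at precisely the point you flagged as the main obstacle: excluding the case $[L,M]\in\Xi$ when $\langle L,M\rangle$ is not singular. Your proposed handling---taking $\theta_1\in\Theta$ through $L$, observing $\theta_1\cap[L,M]$ is singular, and then ``comparing dimensions''---does not produce a contradiction. Carrying out your computation: Corollary~\ref{Xlinevertex} gives $\theta_1\cap\xi=\langle L,Y(\xi)\rangle=\langle L,Y_L\rangle$, which is a generator of $\xi$ (since $v\geq 0$ forces the vertex $Y_L\cap Y_M$ to equal the single point $Y_L=Y_M$, so $v=0$), and the same holds for $\theta_2\cap\xi$. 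These are simply the two generators of $\xi$ through $\langle x,Y_L\rangle$; nothing is violated dimensionally.

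What the paper does instead is use Lemma~\ref{Hperp}. Having derived $Y_L=Y_M$ from $v\geq 0$, one picks $y'\in Y_x\setminus\{Y_L\}$ (which exists since $L$ is a $2$-line, so $Y_L$ is a proper subspace of $Y_x$ by Lemma~\ref{lineSS}). Then $\theta_L:=[L,xy']$ and $\theta_M:=[M,xy']$ are members of $\Theta$ meeting in the maximal singular subspace $\langle x,H\rangle$ with $H=\langle Y_L,y'\rangle$. Computing $L^\perp\cap H=\{Y_L\}=M^\perp\cap H$, Lemma~\ref{Hperp}$(ii)$ forces $L\perp M$, contradicting that $L$ and $M$ are non-collinear lines in $\xi$. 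Your sketch never invokes Lemma~\ref{Hperp}, and I do not see a way to close the gap without it (or an equivalent argument).

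A secondary issue: your treatment of the singular-plane case is muddled. The sentence ``since $r=1$ the planes $\pi$ and an $X$-plane of $\theta$ through $M$ must be collinear'' is neither justified nor what you need (you need them \emph{not} collinear to apply Lemma~\ref{kspaces}). The paper instead distinguishes whether $\pi$ contains a $Y$-point (then $\pi=\langle L,Y_L\rangle\subseteq\theta_L$ directly) or is an $X$-plane, and in the latter case splits on whether $\pi\perp Y_L$: if so, $\pi$ and the $X$-plane of $\theta_L$ through $L$ give the contradiction via Lemma~\ref{kspaces}; if not, $\pi$ and $\langle L,Y_L\rangle$ determine the desired member of $\Theta$. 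Your final ``if and only if'' paragraph is essentially correct once the common $\theta$ is established.
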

\begin{proof}
Suppose for a contradiction that  $L$ and $M$ belong to some $\xi \in \Xi$. Since $r=1$, $L$ and $M$ are not collinear and hence  $Y(\xi)=Y_L \cap Y_M$. As $v \geq 0$, we get $Y_L=Y_M=:y$. By Lemma~\ref{lineSS}, $Y_x$ contains a point $y' \neq y$. Then $\theta_L:=[L,xy'], \theta_M:=[M,xy'] \in \Theta$ and  $\theta_L \cap \theta_M=\<x,y,y'\>$ is a maximal singular subspace of both $\theta_L$ and $\theta_M$. As such, we can apply Lemma~\ref{Hperp}, which says that $L \perp M$, a contradiction. We conclude that, if $L$ and $M$ are not collinear, then $[L,M]\in \Theta$. 

Suppose that $\<L,M\>$ is a singular plane $\pi$. Take a member $\theta_L \in \Theta$ through $L$. If $\pi$ has a point $y\in Y$, then $y=Y_L=Y_M$, so $\pi=\<L,Y_L\>\subseteq \theta_L$ by Corollary~\ref{1':y_L} and Lemma~\ref{lineSS}. So let $\pi$ be an $X$-plane.  If $\pi$ were collinear to $Y_L$, then $\pi$ is not collinear to the unique $X$-plane $\pi'$ in $\theta_L$ through $L$, implying that $\pi$ and $\pi'$ determine an element of $\Xi$ or $\Theta$, which however violates $r=1$ and Corollary~\ref{noXplane}, respectively. So $\pi$ is not collinear to $Y_L$ and hence $\pi$ and $\<L,y_L\>$ determine a member of $\Theta$ containing $\pi$.

For two intersecting $X$-lines inside a member of  $\Theta$, it is clear that $y_L=y_M$ if and only if $\<L,M\>$ is a singular plane containing $Y_L = Y_M$.  The lemma follows. 
\end{proof}

Every degenerate hyperbolic quadric contains two natural systems of maximal singular subspaces (that all contain the vertex), called generators. Each such system is called a \emph{regulus}.

\begin{lemma}\label{1':symps} 
For each $\xi \in \Xi$, the set of $2$-lines contained in $\xi$ are all $X$-lines contained in the members of a fixed regulus of $X(\xi)$. Moreover, $v=0$. 
\end{lemma}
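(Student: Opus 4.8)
The plan is to work inside a fixed $\xi\in\Xi$. Since $r=1$, the quadric $X(\xi)$ is a $(1,v)$-cone, i.e.\ a cone over a grid $G\subseteq\mathbb{P}^3$ with $v$-dimensional vertex $V_\xi:=Y(\xi)$; write $R_1,R_2$ for the two line-reguli of $G$ and, for a line $g$ of $G$, set $\Pi_g:=\langle V_\xi,g\rangle$, so that $\{\Pi_g:g\in R_i\}$ ($i=1,2$) are the two reguli of $X(\xi)$, each member a singular $(v+2)$-space. A dimension count (a line disjoint from the $v$-space $V_\xi$ cannot lie in a $(v+1)$-space $\langle V_\xi,\text{pt}\rangle$) shows every $X$-line of $\xi$ is disjoint from $V_\xi$ and lies in exactly one generator $\Pi_g$, so each $X$-line of $\xi$ belongs to a unique regulus $R_i$. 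Recall also $v\geq 0$ from Lemma~\ref{1':v,v'} together with $v'=-1$ (Lemma~\ref{1':v'=-1}) and $r'=2$ (Lemma~\ref{1':r'=2}).

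First I would prove $v=0$. The key remark is that if some member of $\Xi$ contains a $2$-line $L$, then $V_\xi\subseteq Y_L$ (the vertex is collinear with all of $X(\xi)$), while $Y_L$ is a single point of $Z$ by Corollary~\ref{1':y_L}; hence $\dim V_\xi\leq 0$ and, with $v\geq 0$, $v=0$. So it suffices to exhibit a member of $\Xi$ containing a $2$-line, which I would do by contradiction: assume no member of $\Xi$ contains a $2$-line. Then no $X$-point can carry both a $0$-line $N$ and a $2$-line $L$; indeed $[N,L]$, if it exists, lies either in $\Xi$ (putting $L$ in a member of $\Xi$) or in $\Theta$ (making $N$ a $2$-line), and if $\langle N,L\rangle$ is a singular plane $\pi$ one reaches a contradiction by juggling Corollaries~\ref{noXplane} and~\ref{Xlinevertex} with Lemma~\ref{subspaceinsymp} (distinguishing whether $\pi$ is an $X$-plane or carries the point $Y_L$, and noting a member of $\Xi$ contains no $X$-plane). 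Since every $X$-point lies on a $2$-line (Lemma~\ref{S3}), there would then be no $0$-lines at all, so every $X$-line of any member of $\Xi$ would be a $2$-line — contradiction. Hence $v=0$, and then every member of $\Xi$ contains a $2$-line: otherwise all its $X$-lines would be $0$-lines, so by Lemma~\ref{lineSS}$(i)$ and connectedness $Y_x=V_\xi$ would be a single point for every $x\in X(\xi)$, impossible since a $2$-line through $x$ forces $\dim Y_x\geq 2$ by Lemma~\ref{lineSS}$(iii)$ and Corollary~\ref{1':y_L}.

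With $v=0$, each $\Pi_g$ is a plane meeting $Y$ exactly in the vertex point $y_\xi$, so two distinct $X$-lines in $\Pi_g$ meet in an $X$-point and span $\Pi_g$. I would then show each generator has a single nature: if $\Pi_g$ carried a $2$-line $L$ and a $0$-line $M$, pick $\theta\in\Theta$ with $L\subseteq\theta$; by Corollary~\ref{Xlinevertex} and $v=0$, $\theta\cap\xi=\langle L,y_\xi\rangle=\Pi_g$, so $M\subseteq\Pi_g\subseteq\theta$, making $M$ a $2$-line — contradiction. Next, the two reguli have opposite nature: two $2$-type generators from different reguli meet in a line through $y_\xi$ carrying an $X$-point $x$, through which we find $2$-lines $L_1\subseteq\Pi_{g_1}$ and $L_2\subseteq\Pi_{g_2}$; since $X(\xi)\cap\langle\Pi_{g_1},\Pi_{g_2}\rangle=\Pi_{g_1}\cup\Pi_{g_2}$, the plane $\langle L_1,L_2\rangle$ meets $X(\xi)$ only in $L_1\cup L_2$, hence is not singular, so $L_1,L_2$ are not collinear; Lemma~\ref{1':0and2lines} then forces $L_1\cup L_2$ into a member $\theta\in\Theta$, and $\langle L_1,L_2\rangle\subseteq\theta\cap\xi$ contradicts (S2). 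So all $2$-type generators lie in one regulus, say $R_1$; as $\xi$ contains a $2$-line, $R_1$ has a $2$-type member, and every generator of $R_2$, not being $2$-type, is $0$-type. Finally, every generator of $R_1$ is $2$-type: a $0$-type $\Pi_{g'}\in R_1$ together with a $0$-type $\Pi_h\in R_2$ would meet in a line carrying an $X$-point $x'$ all of whose $X$-lines in $\xi$ are $0$-lines, whence $Y_{x'}=Y_N$ for each such $N$; since $\dim Y_{x'}\geq 2$ this produces a singular subspace $\langle N,Y_{x'}\rangle$ of dimension $\geq 4$, which is impossible once one verifies that for $v=0,v'=-1,r=1,r'=2$ every singular subspace has dimension at most $2$. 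This identifies the $2$-lines of $\xi$ with precisely the $X$-lines lying in the members of the regulus $R_1$, and completes the proof.

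I expect the main obstacle to be the combinatorial/dimension bookkeeping in the last two steps — in particular the bound ``every singular subspace has dimension at most $2$'' (equivalently, controlling how $0$-lines and $2$-lines can coexist through an $X$-point beyond what (S1)--(S2) forbid), which is where one genuinely needs (S3) and the structure of the members of $\Theta$ rather than just (S1)--(S2). The cone description, the generator-uniformity argument, and the ``opposite reguli'' argument are routine verifications once that bound is available.
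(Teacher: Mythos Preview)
Your final step contains a genuine error: the claim that ``every singular subspace has dimension at most $2$'' is false in this setting. Indeed, for any $x\in X$ the subspace $\langle x,Y_x\rangle$ is singular (every point outside $Y_x$ lies on a line $xy$ with $y\in Y_x$), and since you have already argued $\dim Y_x\geq 2$, this is a singular subspace of dimension $\geq 3$. So the very obstacle you flagged as the main worry is in fact fatal to that step as written. The good news is that you do not need any global bound on singular subspaces: you already proved the right local fact in your step~(f) (``every $\xi$ contains a $2$-line''). Namely, if both generators through $x'$ were $0$-type, then all $X$-lines of $\xi$ through $x'$ are $0$-lines, so $Y_{x'}$ is collinear with two non-collinear points of $X(\xi)$ and hence $Y_{x'}\subseteq Y(\xi)$; thus $\dim Y_{x'}\leq v=0$, contradicting $\dim Y_{x'}\geq 2$. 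Reusing that argument closes the gap immediately.

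That said, your route is considerably more circuitous than the paper's. The paper works pointwise from the start: fix $x\in X(\xi)$ and take $X$-lines $L_1,L_2$ through $x$ in different reguli. Lemma~\ref{1':0and2lines} (plus (S2)) forbids both being $2$-lines. If both were $0$-lines, then $Y_{L_1}=Y_{L_2}=Y_x$, so the vertex of $\xi$ equals $Y_x$; but a $2$-line $L$ through $x$ is non-collinear with one of the $L_i$, whence $[L,L_i]\in\Xi$ has vertex $Y_L\cap Y_x=Y_L$, giving $v=\dim Y_L<\dim Y_x=v$, a contradiction. So through every $x$ exactly one regulus carries $2$-lines and the other $0$-lines; this simultaneously yields the regulus dichotomy \emph{and} $v=0$ (since the vertex of $\xi$ is then $Y_L$ for a $2$-line $L$, a single point by Corollary~\ref{1':y_L}). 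Your approach separates these two conclusions and pays for it with the extra case analysis (``no $\xi$ contains a $2$-line $\Rightarrow$ no $0$-lines''), the generator-uniformity step, and the opposite-reguli step---all of which are subsumed in the paper's single pointwise argument. The hand-wavy ``juggling'' in your singular-plane subcase does work (via Lemma~\ref{subspaceinsymp}), but spelling it out is comparable in length to the paper's entire proof.
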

\begin{proof}
Take $x\in X(\xi)$ arbitrary and let $L_1$ and $L_2$ be $X$-lines through $x$ belonging to different reguli of $X(\xi)$. By Lemma~\ref{1':0and2lines}, at least one of $L_1, L_2$ is a $0$-line. Suppose for a contradiction that both are  $0$-lines. Then $\xi$ has vertex $Y_{L_1} \cap Y_{L_2} = Y_x$ by Lemma~\ref{lineSS}$(i)$. By Lemma~\ref{1':r'=2}, there is a $2$-line $L$ through $x$, which is collinear to at most one of $L_1,L_2$, say $L$ and $L_1$ are not collinear. By definition, $0$-lines are contained in no member of $\Theta$, so we get that $\xi_1:=[L,L_1] \in \Xi$. Then $\xi$ has $Y_L$ as its vertex, but  as $L$ is a $2$-line, $\dim(Y_L)<\dim(Y_x)=v$, a contradiction. Hence two $X$-lines of $X(\xi)$ belonging to members of different reguli, have different types, from which the first assertion follows. If $L$ is one of the $2$-lines of $X(\xi)$, we get that $Y_L$ is the vertex of $\xi$ and as such $v=0$.
\end{proof}

\begin{cor}\label{1':Y=Z}
The set $Z$ coincides with $Y$.
\end{cor}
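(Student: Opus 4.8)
The plan is to show that $Z$ is a projective subspace of $\mathbb{P}^N(\K)$; since $Y=\langle Z\rangle$ by definition, this forces $Z=Y$. I would rely on the facts already proved in this subsection: $v=0$ (Lemma~\ref{1':symps}), $v'=-1$ (Lemma~\ref{1':v'=-1}), and that $Y_L$ is a single point of $Z$ for every $2$-line $L$ (Corollary~\ref{1':y_L}). Note first that $v'=-1$ means that for each $\theta\in\Theta$ the set $Y(\theta)$ is a maximal singular subspace of $\theta$ of dimension $r'$ and that $Z(\theta)=Y(\theta)$; in particular $Y(\theta)\subseteq Z$.

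The first preparatory claim I would establish is that every $\xi\in\Xi$ contains a $2$-line, and hence (as $L^\perp\cap\xi$ is singular and $Y_L$ is a point equal to the vertex by $v=0$ and Corollary~\ref{1':y_L}) has a vertex lying in $Z$. Indeed, if $\xi$ contained no $2$-line, then, there being no $1$-lines, all $X$-lines of $\xi$ would be $0$-lines; by Lemma~\ref{lineSS}$(i)$ the value $Y_{x'}$ would be constant, equal to $Y_x$, along every such line, and choosing two non-collinear $X$-points $x_1,x_2$ of $X(\xi)$ lying on $X$-lines through a common point $x$ in different reguli of the base grid (these are non-collinear since the line $x_1x_2$ lies in $\xi$ but in neither singular plane through $x$), Lemma~\ref{convexclosure} would identify the vertex of $\xi$ with $Y_{x_1}\cap Y_{x_2}=Y_x$; this contradicts $v=0$ because $x$ lies on a $2$-line $L$ (Lemma~\ref{S3}) and then $\dim Y_x\ge \dim Y_L+2=2$ by Lemma~\ref{lineSS}$(iii)$ and Corollary~\ref{1':y_L}. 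The second preparatory claim is that every $z\in Z$ is collinear to some $X$-point: since no $Y$-point is collinear to all of $X$, pick $x'\in X$ with $x'\not\perp z$, so $\zeta:=[x',z]\in\Xi\cup\Theta$ contains $z$ in $Y(\zeta)$; if $\zeta\in\Xi$ then $z$ equals the (point-)vertex, which is collinear to every point of the nonempty tube $X(\zeta)$, and if $\zeta=\theta\in\Theta$ then the tangent hyperplane to the quadric $\theta$ at $z$, having dimension bigger than $\dim Y(\theta)$, meets $X(\theta)$. As a consequence of both claims, $Y_x\subseteq Z$ for every $x\in X$ (each point of $Y_x$ lies, by the argument of the second claim, either in a vertex of a member of $\Xi$, which is in $Z$ by the first claim, or in some $Y(\theta)\subseteq Z$).

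Now I would prove that $Z$ is a subspace: given distinct $z_1,z_2\in Z$, the line $z_1z_2$ lies in $Z$. If $z_1,z_2$ are non-collinear, then $\zeta:=[z_1,z_2]\in\Xi\cup\Theta$ contains both in $Y(\zeta)$; as $Y(\zeta)$ has dimension $v=0$ when $\zeta\in\Xi$, necessarily $\zeta=\theta\in\Theta$, and $z_1z_2\subseteq Y(\theta)=Z(\theta)\subseteq Z$. If $z_1,z_2$ are collinear, choose (second claim) $x\in X$ with $x\perp z_1$; should $x\perp z_2$ as well, then $z_1z_2\subseteq Y_x\subseteq Z$; otherwise $\zeta:=[x,z_2]\in\Xi\cup\Theta$, and since $z_1$ is collinear to the two non-collinear points $x,z_2$ of $\zeta$, Lemma~\ref{convexclosure} forces $z_1\in\zeta$, so $z_1,z_2$ are again two distinct points of $Y(\zeta)$, whence $\zeta\in\Theta$ and $z_1z_2\subseteq Y(\zeta)=Z(\zeta)\subseteq Z$. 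Therefore $Z$ is a subspace and $Z=\langle Z\rangle=Y$.

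The main obstacle is the case of two collinear points of $Z$: the line joining them lies in $Y$ but there is no a priori reason for it to lie in $Z$, and one cannot feed a collinear pair into axioms (S1)–(S2). The device above circumvents this by re-routing through a non-collinear auxiliary $X$-point and using the convexity of the members of $\Xi\cup\Theta$ (Lemma~\ref{convexclosure}) to drop one of the two points into a member of $\Theta$, whose $Y$-part is all of $Z(\theta)$ precisely because $v'=-1$; everything else is dimension bookkeeping with the already-computed parameters.
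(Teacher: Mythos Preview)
Your argument is largely sound but rests on a misreading of Axiom~(S1). You assert that ``one cannot feed a collinear pair into axioms (S1)--(S2)'', yet (S1) only asks that $p_1,p_2\in X\cup Z$ be \emph{distinct}; collinearity is irrelevant (you are perhaps conflating (S1) with Lemma~\ref{uniquesymp}, which does need non-collinearity for uniqueness). Hence for any distinct $z_1,z_2\in Z$ there is already some $\zeta\in\Xi\cup\Theta$ containing both. Since $z_1,z_2$ are two distinct points of $Y(\zeta)=Y\cap\zeta$ and $v=0$, we must have $\zeta\in\Theta$; and since $v'=-1$ gives $Z(\theta)=Y(\theta)$, the line $z_1z_2\subseteq Y(\theta)=Z(\theta)\subseteq Z$. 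That is the paper's proof in its entirety: no case split, no preparatory claims.

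Your detour also has a soft spot. The intermediate claim $Y_x\subseteq Z$ is true, but ``by the argument of the second claim'' does not establish it: that argument forms $[x',z]$ via Lemma~\ref{uniquesymp}, which requires the second point to lie in $X\cup Z$, and a priori a point $y\in Y_x$ need not lie in $Z$. A correct justification goes through Lemma~\ref{lineSS}: pick a $2$-line $L$ through $x$ (Lemma~\ref{2L}); for $y\neq Y_L$ the member $\theta_{\langle Y_L,y\rangle,L}\in\Theta$ contains $y$, and for $y=Y_L$ any $\theta\supseteq L$ does; in either case $y\in Y(\theta)\subseteq Z$ since $v'=-1$. Once (S1) is read correctly, however, none of this machinery---nor your first preparatory claim, which in any case is already contained in Lemma~\ref{1':symps}---is needed.
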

\begin{proof}
Let $z_1$ and $z_2$ be any two points of $Z$. By (S1), there is a $\zeta \in \Xi \cup \Theta$ through them. Since $v=0$,  $\zeta \in \Theta$ and in $\zeta$,  $z_1z_2 \subseteq Z$ because $v'=-1$. We get $Z=\<Z\>=Y$.
\end{proof}

We  make use of the maps $\rho$ and $\chi$, as defined in Definitions~\ref{RHO}~and~\ref{CHI}. Note that, for each $X$-line $L$, $\rho(L)$ is a line, for $X$-lines have no point in $Y$. 

\begin{lemma}\label{1':inj2lines}
Let $L$ and $M$  be $X$-lines with $\rho(L)=\rho(M)$.  If $L$ is a $2$-line, then so is $M$, and  $Y_L = Y_M$.
\end{lemma}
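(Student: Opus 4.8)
I want to show that if $L$ and $M$ are $X$-lines with $\rho(L)=\rho(M)$, and $L$ is a $2$-line, then $M$ is also a $2$-line and $Y_L=Y_M$. The fibre of $\rho$ over a point is governed by Lemma~\ref{proprhochi}$(i)$: all points of $X$ projecting to the same point of $F$ lie in $\<x,Y_x\>$ and share the same $Y_x$. Since $\rho(L)=\rho(M)$, every point of $L$ and every point of $M$ project into the line $\rho(L)=\rho(M)$, and so the $(r'+?)$-dimensional subspace $\<L,Y_*,M\>$ sits inside $\<\rho(L),Y\>$-type configuration. The plan is to first pin down a point $x\in L$ and a point $x'\in M$ with $\rho(x)=\rho(x')$, so that by Lemma~\ref{proprhochi}$(i)$ we have $x'\in\<x,Y_x\>$ and $Y_{x'}=Y_x$; in particular $Y_M$ and $Y_L$ are both computed relative to the same $Y_x$.

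\textbf{Key steps.} First I would recall from Corollary~\ref{1':y_L} that for a $2$-line $L$, $Y_L$ is a single point lying in $Z$, and from Lemma~\ref{lineSS}$(i)$ that a $0$-line $N$ through a point $y$ satisfies $Y_N=Y_y$. So it suffices to rule out that $M$ is a $0$-line and then identify $Y_M$ with $Y_L$. Pick $x\in L$; choose $x'\in M$ with $\rho(x')=\rho(x)$ (this exists since $\rho(L)=\rho(M)$ as point-sets of a common line, and $\rho$ restricted to $L$ is a bijection onto $\rho(L)$ because $X$-lines have no point in $Y$). By Lemma~\ref{proprhochi}$(i)$, $x,x'$ lie on a singular line through a point $y\in Y_x=Y_{x'}$; in fact $\<x,x'\>$ meets $Y$ in such a $y$ unless $x=x'$, and $Y_x=Y_{x'}$ in all cases. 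Now the plane $\pi:=\<L,M\>$ (or the $3$-space $\<L,x',\ldots\>$, depending on whether $L$ and $M$ meet) projects onto the line $\rho(L)$, hence $\<L,M\>\cap Y$ is a subspace of codimension $1$ or $2$ in $\<L,M\>$. Since $L$ is a $2$-line through $x$, by Lemma~\ref{lineSS}$(iii)$ $Y_L$ is strictly contained in a hyperplane of $Y_x$, i.e.\ $\dim(Y_x)-\dim(Y_L)\geq 2$; but $v=0$ by Lemma~\ref{1':symps}, and $Y_L$ is a point, so $Y_x$ is a point and $Y_L=Y_x$ — wait, this would contradict the codimension being $\geq 2$ unless $\dim(Y_x)\geq 1$. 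Let me instead just use: $Y_x\supseteq Y_L$ with $Y_L$ a point (Corollary~\ref{1':y_L}), and $x,x'$ share $Y_x$; hence $Y_M\subseteq Y_{x'}=Y_x$. If $M$ were a $0$-line, then $Y_M=Y_{x'}=Y_x$ by Lemma~\ref{lineSS}$(i)$. I would then derive a contradiction: take $\theta\in\Theta$ containing $L$; by Lemma~\ref{1':0and2lines} and the structure of $\theta$, the line $M$, projecting to the same line as $L$, must also lie in (a translate inside) $\theta$, forcing $M$ to be contained in a member of $\Theta$, so $M$ is a $2$-line; and then $Y_L=L^\perp\cap Y(\theta)$ equals $M^\perp\cap Y(\theta)=Y_M$ because $\rho(L)=\rho(M)$ means $L$ and $M$ are ``parallel'' inside $\theta$ (they span a plane or $3$-space meeting $Y(\theta)$ in a subspace perpendicular to both), so their single perpendicular point in $Y(\theta)$ coincides.

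\textbf{Main obstacle.} The delicate point is the claim that $\rho(L)=\rho(M)$ forces $M$ to lie inside (a singular extension of) a member $\theta\in\Theta$ containing $L$, rather than $L$ and $M$ living in genuinely different symps. The argument will go through the $4$-space (or plane) $\langle L, M\rangle$ together with $Y_x$: since $\rho$ collapses $Y$, the preimage $\rho^{-1}(\rho(L))$ equals $\<L,Y_L\>\cap X$ by Lemma~\ref{proprhochi}$(i)$ applied pointwise and Corollary~\ref{collY} (so $\rho^{-1}(\rho(L))$ is the $X$-part of the plane $\<L,Y_L\>$), and $M\subseteq\rho^{-1}(\rho(L))$ forces $M\subseteq\<L,Y_L\>$. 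Thus $\<L,M\>\subseteq\<L,Y_L\>$, a plane, and $M$ meets $Y$ nowhere, so $M$ is a second $X$-line of this plane through the direction of $L$; applying Lemma~\ref{1':0and2lines} to the intersecting $2$-line $L$ and the line $M$ (after translating to a common point, or directly since $\<L,M\>$ contains the point $Y_L\in Y$) gives that $\<L,M\>$ is a singular plane with a point of $Y$, namely $Y_L$, whence $Y_M=Y_L$ and $M$ is a $2$-line by Lemma~\ref{1':0and2lines} (the ``moreover'' clause, read in reverse). I expect the bookkeeping of whether $L$ and $M$ intersect — and reducing to that case by choosing the fibre point $x'$ — to be the only fiddly part; everything else is a direct appeal to Lemmas~\ref{proprhochi}, \ref{lineSS}, \ref{1':0and2lines}, \ref{1':symps} and Corollary~\ref{1':y_L}.
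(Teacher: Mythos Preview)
Your argument contains a genuine gap at precisely the point you flag as the ``main obstacle.'' You claim that $\rho^{-1}(\rho(L))=\langle L,Y_L\rangle\cap X$, and from this deduce $M\subseteq\langle L,Y_L\rangle$. But Lemma~\ref{proprhochi}$(i)$ only gives $\rho^{-1}(\rho(x))=\langle x,Y_x\rangle\cap X$ for each individual point $x\in L$, and here $Y_x$ is strictly larger than $Y_L$: by Corollary~\ref{1':y_L} the set $Y_L$ is a single point, whereas Lemma~\ref{lineSS}$(iii)$ says that for a $2$-line $L$ the subspace $Y_L$ has codimension at least $2$ in $Y_x$, so $\dim Y_x\geq 2$. (Your aside about $v=0$ is a red herring: $v$ is the dimension of the vertex of a member of $\Xi$, not of $Y_x$.) Consequently $\rho^{-1}(\rho(L))=\bigcup_{x\in L}\langle x,Y_x\rangle\cap X$ is much larger than the plane $\langle L,Y_L\rangle$, and there is no reason for $M$ to sit inside that plane. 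In fact $\langle L,M\rangle$ is generically a $3$-space, and this is exactly the case your argument does not cover.

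The paper's proof treats the intersecting case as you do, but for the disjoint case it proceeds differently: it pairs each $x\in L$ with the unique $\overline{x}\in M$ having $\rho(\overline{x})=\rho(x)$, notes that each line $x\overline{x}$ is singular with a unique $Y$-point $y_x$, and then picks $x_1\in L$ not collinear to $M$ (ruling out $L\perp M$ first) together with $x_2\in L\setminus\{x_1\}$. The symp $\zeta=[x_1,\overline{x}_2]$ then contains $x_2$ and $\overline{x}_1$ (as common neighbours), hence all of $L$, $M$, and the $Y$-points $y_{x_1}\neq y_{x_2}$. Since $v=0$, a member of $\Xi$ cannot contain two distinct $Y$-points in its vertex, so $\zeta\in\Theta$; this places $M$ inside a member of $\Theta$ and lets one read off $Y_M=Y_L$ inside the quadric $XY(\zeta)$. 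The key idea you are missing is this construction of a single $\theta\in\Theta$ absorbing both $L$ and $M$ via a well-chosen pair of non-collinear points, rather than trying to force $M$ into the small plane $\langle L,Y_L\rangle$.
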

\begin{proof}
As $\rho(L)=\rho(M)$, the map  taking a point $x$ on $L$ to the unique point  $\overline{x}$ on $M$ with $\rho(\overline{x})=\rho(x)$ is a bijection.  If $x =\overline{x}$ for some $x\in L$, i.e., if $L$ and $M$ share a point, then either $L=M$, or $\<M,L\>$ contains a point in $Y$ and hence coincides with the plane $\<L,Y_L\>$. In both cases, the assertion follows.

So suppose $x \neq \overline{x}$ for all $x\in L$, in particular, $\<L,M\>$ is a $3$-space.  By Lemma~\ref{proprhochi}, $x\overline{x}$ is a singular line with a unique point $y_x$ in $Y$.  Put $K:=\{y_x \mid x \in L\}$. If $L \perp M$, then for each $x\in L$, we have that $y_x \in \<x,M\>$ and hence $y_x=Y_M$, contradicting the fact that $\<L,M\>$ is a $3$-space. So $L$ contains a point $x_1$ not collinear to $M$.  Let $x_2$ be a point of $L\setminus\{x_1\}$, and note that $\overline{x}_1 \neq \overline{x}_2$. Then $x_1$ and $\overline{x}_2$ are not collinear and therefore $\zeta:=[x_1,x_2]\in \Xi \cup \Theta$. Since $x_2,\overline{x}_1 \in x_1^\perp \cap \overline{x}_2^\perp$, $\zeta$ contains $K \cup L \cup M$. Hence, as $y_{x_1} \neq y_{x_2}$ (otherwise $L$ and $M$ intersect) and $v=0$,  we get that $\zeta \in \Theta$. This means that $M$ is a $2$-line too. In the quadric $XY(\theta)$, we see that $K$ is a line (inside $Y(\theta)$) and that the point $Y_L$, being collinear to both $L$ and $K$, is also collinear to $M$, so $Y_L=Y_M$ indeed. 
\end{proof}
\par\bigskip

By the previous lemma, it makes sense to keep speaking about $0$-lines and $2$-lines in $\rho(X)$ and of the unique point $y_L$ of $Y$ collinear to such a $2$-line $L$ in $\rho(X)$. We do not claim that each line in $\rho(X)$ is the image of an $X$-line; this does not matter.

\begin{lemma}\label{1':pi}
Let $L$ and $M$ be $2$-lines such that $\rho(L)\cap \rho(M)$ is a point $p$. Then each line in the plane $\pi:=\<\rho(L),\rho(M)\>$  is a $2$-line, $Y_L \neq Y_M$ and the set of lines $K$ through $p$ in $\pi$ and the set of points on $\<Y_L,Y_M\>$ are in bijective correspondence by the map $K \mapsto Y_K$. \end{lemma}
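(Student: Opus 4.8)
The plan is to exploit the fact, already established in Lemma~\ref{1':inj2lines}, that the notions of $0$-line and $2$-line descend to $\rho(X)$, together with the geometry of a single member $\theta\in\Theta$ through $L$, which is a degenerate hyperbolic quadric of rank $r'+1=3$ with a $(v'+1)=0$-dimensional vertex (so a non-degenerate hyperbolic quadric in $\mathbb P^5(\K)$), and inside which all $X$-lines through a given point are $2$-lines.

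First I would pick $X$-lines $\widetilde L,\widetilde M$ through a common point $x$ with $\rho(\widetilde L)=\rho(L)$, $\rho(\widetilde M)=\rho(M)$ — this is possible since $\rho$ restricted to a star of singular lines through an $X$-point is injective on $X$-lines by Lemma~\ref{proprhochi}$(i)$ and sends distinct $X$-lines through $x$ to distinct lines through $\rho(x)=p$. By Lemma~\ref{1':0and2lines}, $\widetilde L\cup\widetilde M$ lies in a member $\theta\in\Theta$; and $Y_{\widetilde L}\neq Y_{\widetilde M}$, for otherwise $\<\widetilde L,\widetilde M\>$ would be a singular plane containing the $Y$-point $Y_{\widetilde L}=Y_{\widetilde M}$ by that same lemma, but then $\widetilde L$ and $\widetilde M$ would be collinear $X$-lines through $x$ inside $\theta$ whose common $Y$-point is the vertex of the $X$-plane they span — impossible since in the non-degenerate quadric $X(\theta)$ two $X$-lines of a common generator plane are collinear with the same point of $Y(\theta)$ only if that point is the vertex, whereas $v'=-1$. (More directly: $Y_{\widetilde L}$ and $Y_{\widetilde M}$ are the two distinct points $L^\perp\cap Y(\theta)$ and $M^\perp\cap Y(\theta)$ of the line $Y(\theta)$, by Corollary~\ref{1':y_L} and Lemma~\ref{lineSS}.) Hence $Y(\theta)=\<Y_{\widetilde L},Y_{\widetilde M}\>$ is a line, so $Y_L\neq Y_M$ follows.

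Next, every $X$-line through $x$ inside $\theta$ is a $2$-line, and the set of such lines through $x$ is exactly one of the two reguli of the quadric $X(\theta)$ restricted to $x$, i.e.\ it sweeps out an $X$-plane $\pi_x$ of $\theta$ — more precisely, the generators of $\theta$ of one system through $x$ form a pencil, and their union of $X$-points is a plane. Projecting by $\rho$: since $\rho$ is injective on the $X$-points collinear with $x$ inside $\theta$ (they form a singular subspace meeting $Y$ trivially except along $Y(\theta)$, and $\rho$ collapses only along $Y$), $\rho$ maps this pencil of $X$-lines through $x$ bijectively onto the pencil of all lines through $p$ in a plane, which must be $\pi=\<\rho(L),\rho(M)\>$ since $\rho(\widetilde L),\rho(\widetilde M)$ are two of them. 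Therefore every line through $p$ in $\pi$ is the $\rho$-image of a $2$-line, hence is itself a $2$-line by Lemma~\ref{1':inj2lines}; and every line of $\pi$ not through $p$ meets two lines through $p$, so contains three points of $\rho(X)$ and is thus a line of $\rho(X)$, and by the same token (taking a point $q$ on it and an $X$-line over it through the $X$-point over $q$, which lies in the corresponding $\theta'$) it is a $2$-line. So all lines of $\pi$ are $2$-lines.

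Finally, for the bijection: a line $K$ through $p$ in $\pi$ lifts to an $X$-line $\widetilde K$ through $x$ in $\theta$, and $Y_{\widetilde K}=\widetilde K^\perp\cap Y(\theta)$ runs over all points of the line $Y(\theta)=\<Y_{\widetilde L},Y_{\widetilde M}\>=\<Y_L,Y_M\>$ bijectively as $\widetilde K$ ranges over the pencil, because in a non-degenerate hyperbolic quadric of rank $3$ the correspondence between generators of one system through a point and points of the opposite-type generator through that point is a projectivity. Hence $K\mapsto Y_K$ is the desired bijection between lines through $p$ in $\pi$ and points of $\<Y_L,Y_M\>$. The one point requiring care is the independence of $\theta$ and of the lift — i.e.\ that the plane $\pi$ and the line $\<Y_L,Y_M\>$ do not depend on which $\theta\in\Theta$ through $\widetilde L$ (equivalently through $L\cup M$) we chose; this follows because $\pi$ is determined by $L,M$ alone and, once $\pi$ is fixed, each $K\subseteq\pi$ through $p$ determines its $2$-line lift up to the fiber $\rho^{-1}$, and $Y_K$ is then forced by Lemma~\ref{1':inj2lines}. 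I expect this bookkeeping — keeping the lift, the member $\theta$, and the projectivity statement consistent — to be the only real obstacle; the geometric content is entirely that of a rank-$3$ hyperbolic quadric with a trivial vertex.
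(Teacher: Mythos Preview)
Your overall strategy matches the paper's: reduce to two intersecting $2$-lines through a common $X$-point, place them inside a single $\theta\in\Theta$ via Lemma~\ref{1':0and2lines}, and read off everything from the rank-$3$ hyperbolic quadric $XY(\theta)$. However, there is a genuine gap in your first step. You assert that one can choose $X$-lines $\widetilde L,\widetilde M$ through a common point $x$ with $\rho(\widetilde L)=\rho(L)$ and $\rho(\widetilde M)=\rho(M)$, and you justify this by the \emph{injectivity} of $\rho$ on the star of $X$-lines through $x$. Injectivity is irrelevant here; what you need is \emph{existence}: given a $2$-line $L$ and a point $x\in\rho^{-1}(p)$ with $p\in\rho(L)$, why is there an $X$-line through $x$ with the same $\rho$-image as $L$? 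The paper handles this explicitly: take $p_L\in L$ with $\rho(p_L)=p$; if $p_L\neq x$ then $\<p_L,x\>$ has a unique $Y$-point $y$, and either $y=Y_L$ or $y$ is not collinear to $L$, so in both cases there is a $\theta\in\Theta$ containing $L$ and $\<p_L,x\>$; inside $\theta$ one then finds the required line $L'$ through $x$ with $\rho(L')=\rho(L)$. Without this replacement argument your reduction to a common base point is unsupported.

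A secondary point: once $\widetilde L,\widetilde M\subseteq\theta$ through $x$, you do not need a separate argument for lines of $\pi$ not through $p$. Since $r'=2$ and $v'=-1$, Lemma~\ref{proprhochi}$(iii)$ gives $\rho(X(\theta))$ as a plane, necessarily equal to $\pi$; every line of $\pi$ is then the $\rho$-image of an $X$-line of $\theta$ (e.g.\ one lying in an $X$-plane of $\theta$), hence a $2$-line. Your detour via ``three points of $\rho(X)$'' and an auxiliary $\theta'$ is unnecessary. Likewise, your first argument for $Y_L\neq Y_M$ (invoking a ``vertex of the $X$-plane'') is muddled; the parenthetical version you give afterwards---that $Y_{\widetilde L}$ and $Y_{\widetilde M}$ are distinct points of the line $Y(\theta)$---is the clean one and suffices.
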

\begin{proof}
Since $p \in \rho(L) \cap \rho(M)$, there are points $p_L$ and $p_M$ on $L$ and $M$, respectively, with $\rho(p_L)=\rho(p_M)=p$. Suppose that $p_L \neq p_M$. Then the line $\<p_L,p_M\>$ contains a point $y \in Y$ by Lemma~\ref{1:inj}. Let $p'_L$ be a point on $L\setminus \{p_L\}$. Either $y=Y_L$ or $y$ is not collinear to $L$; anyhow, in both cases there is a member $\theta \in \Theta$ containing the lines $L$ and $\<p_L,p_M\>$. In $\theta$, there is a line $L'$ through $p_M$ with $\rho(L)=\rho(L')$ (and hence also $Y_L=Y_{L'}$ by Lemma~\ref{1':inj2lines}). Replacing $L$ by $L'$, we may assume that $p_L=p_M=:x$.

 By Lemma~\ref{1':0and2lines}, $L$ and $M$ are contained in some $\theta \in \Theta$. As such, the lines $\rho(L)$ and $\rho(M)$ span the singular plane $\rho(\theta)$ (cf.\ Lemma~\ref{1:inj}) and each line in this plane is reached by some $X$-line in $\theta$. This shows the first assertion. By the same lemma, $Y_M \neq Y_L$, as otherwise $\rho(L)=\rho(M)$.  In the quadric $XY(\theta)$, collinearity gives a bijective correspondence between the $X$-lines $K$ through $x$ in $\<L,M\>$ and the points on $Y_K$ on $\<Y_L,Y_M\>$, and since all $X$-lines in $\rho^{-1}(\rho(K))$ correspond to the same point $Y_K$ by Lemma~\ref{1':inj2lines}, this gives us the required bijective correspondence.
\end{proof}

\par\bigskip
Take a connected component $\Pi$ of $\rho(X)$ with respect to $2$-lines intersecting each other in points and let $Y_\Pi$ be the subset of $Y$ consisting of all points of $\{Y_L \mid \rho(L) \text{ a line of } \Pi\}$. 

\begin{lemma}\label{1':bij}
The above defined connected component $\Pi$ is a singular subspace of $\rho(X)$, whose lines are in bijective correspondence to the points of $Y_\Pi$. 
\end{lemma}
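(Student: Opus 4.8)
The plan is to prove that the connected component $\Pi$ (with respect to intersecting $2$-lines) is a singular subspace of $\rho(X)$ whose lines correspond bijectively to the points of $Y_\Pi$. First I would establish that $\Pi$ is closed under taking spans of intersecting lines: given two $2$-lines $\rho(L),\rho(M)$ of $\Pi$ that meet in a point, Lemma~\ref{1':pi} tells us that the whole plane $\<\rho(L),\rho(M)\>$ consists of $2$-lines, so this plane lies entirely in $\Pi$ (all its lines through the intersection point are in $\Pi$ by definition of connected component, and then any further line of the plane meets two of those). Iterating, any two points of $\Pi$ that lie on a common line of $\rho(X)$ have the line joining them inside $\Pi$; to conclude $\Pi$ is a subspace one shows it is linearly closed. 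The key tool is again Lemma~\ref{1':pi}: through a fixed point $p$ of $\Pi$, the $2$-lines of $\Pi$ through $p$ are the lines reached from $p$, and the span of all of them is a subspace all of whose lines are $2$-lines; one checks $\Pi$ equals the point set of that span (any point of $\Pi$ is joined to $p$ by a chain of intersecting $2$-lines, and using the planar statement of Lemma~\ref{1':pi} one compresses such a chain to a single $2$-line through $p$, exactly as in the reduction step carried out at the start of the proof of Lemma~\ref{1':pi}).

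Next I would set up the correspondence $\rho(L)\mapsto Y_L$ between lines of $\Pi$ and points of $Y_\Pi$. It is well defined by Lemma~\ref{1':inj2lines} (the point $Y_L$ depends only on $\rho(L)$, not on the representative $X$-line). Surjectivity onto $Y_\Pi$ is immediate from the definition of $Y_\Pi$. For injectivity, suppose $Y_L=Y_M$ for two $2$-lines of $\Pi$. If they meet in a point of $\rho(X)$, then Lemma~\ref{1':pi} forces $Y_L\neq Y_M$ unless $\rho(L)=\rho(M)$; so intersecting lines with the same $Y$-point coincide. For non-intersecting lines $\rho(L),\rho(M)$ of $\Pi$, I would connect them by a chain of pairwise intersecting $2$-lines inside $\Pi$ and argue along the chain: by the span-of-$2$-lines-through-a-point description, pick a point $p$ on (a representative of) $L$ and a $2$-line $L'$ through $p$ with $\rho(L')$ in the plane spanned by $\rho(L)$ and the next link; Lemma~\ref{1':pi} gives a bijection between lines through $p$ in that plane and points of a line of $Y$, so distinct lines through $p$ carry distinct $Y$-points, while a fixed $Y$-point is carried by a unique line through $p$. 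Transporting this along the chain shows that if $Y_L=Y_M$ then $L$ and $M$ meet, reducing to the previous case.

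Finally I would record the conclusion: $\Pi$ is a singular subspace (its points lie in $X\cup Y$; in fact $\Pi\subseteq\rho(X)$ and all its lines are $X$-line images, hence singular by Lemma~\ref{XYpointsonline}), and $\rho(L)\mapsto Y_L$ is a bijection from the lines of $\Pi$ onto $Y_\Pi$. I would also note that $Y_\Pi$ is a subspace of $Y$, since the bijection is in fact a linear duality: within any plane $\pi$ of $\Pi$ through a point $p$, Lemma~\ref{1':pi} identifies the pencil of lines through $p$ with a line of $Y$, so the image of a line of $\Pi$ lying in a plane is a line of $Y$, and these fit together to span $Y_\Pi$ as a projective subspace dual to $\Pi$.

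The main obstacle I expect is the global injectivity/coherence argument — namely showing that for \emph{non-intersecting} $2$-lines of $\Pi$, $Y_L=Y_M$ implies $L=M$, and more generally that the locally-defined bijections of Lemma~\ref{1':pi} patch into one global bijection. Lemma~\ref{1':pi} is inherently a two-line (planar) statement, so everything hinges on controlling chains of intersecting $2$-lines and verifying that traversing a chain does not create inconsistencies; this is where one must be careful that $\Pi$, being a genuine subspace, really is covered by the planes to which Lemma~\ref{1':pi} applies, and that the reduction of a chain to a single line through a fixed basepoint (as in the opening paragraph of the proof of Lemma~\ref{1':pi}) can always be performed.
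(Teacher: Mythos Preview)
Your approach to showing that $\Pi$ is a singular subspace is essentially the paper's: compress a length-two chain $p,p',p''$ to a single $2$-line $pp''$ using Lemma~\ref{1':pi}, and conclude by connectivity. That part is fine.

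The gap is in your injectivity argument for non-intersecting lines. You propose to ``transport along a chain'' the local bijections of Lemma~\ref{1':pi}, but Lemma~\ref{1':pi} only compares lines through a \emph{common} point of a plane; it gives no handle on comparing $Y_L$ and $Y_M$ when $\rho(L)$ and $\rho(M)$ pass through different points, even in the same plane. Your sketch does not explain how distinctness of $Y$-values survives a change of basepoint, and in fact it need not without an extra ingredient. The paper avoids this entirely with a short collinearity argument: once $\Pi$ is a subspace, pick a single $2$-line $\rho(K)$ joining a point of $\rho(L)$ to a point of $\rho(M)$; since $Y_L$ is collinear to the point of $K$ over $\rho(K)\cap\rho(L)$ and $Y_M=Y_L$ is collinear to the point of $K$ over $\rho(K)\cap\rho(M)$, the point $Y_L$ is collinear to two distinct points of $K$ and hence to all of $K$, forcing $Y_K=Y_L$, which contradicts Lemma~\ref{1':pi} applied to the intersecting pair $\rho(K),\rho(L)$. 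No chain bookkeeping is needed.

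Finally, drop the last paragraph: the lemma only asserts a bijection between lines of $\Pi$ and the \emph{set} $Y_\Pi$. That $Y_\Pi$ is a subspace, let alone that the correspondence is a linear duality, is neither stated nor proved here, and your argument for it (pencils in a single plane going to lines of $Y$) does not globalize without further work.
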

\begin{proof}
Let $p$ be any point of $\Pi$. We claim that all other points of $\Pi$ are on a $2$-line with $p$. If not, then there are points $p',p''\in\Pi$ such that $pp'$ and $p'p''$ are $2$-lines, and $pp''$ is not. But then,  looking in $p'$, it follows from Lemma~\ref{1':pi} that $\<p,p',p''\>$ is a singular plane all of whose lines are $2$-lines and hence $p$ and $p'$ are on a $2$-line after all. So $\Pi$ is indeed a singular subspace of $\rho(X)$. 

Suppose that there are two distinct $2$-lines $\rho(L)$ and $\rho(M)$ in $\Pi$ with $Y_L = Y_M$. It follows from Lemma~\ref{1':pi} that $\rho(L)$ and $\rho(M)$ do not share a point.  Let $\rho(K)$ be a $2$-line in $\Pi$ joining a point of $\rho(L)$ and a point of $\rho(M)$. Then $Y_L=Y_M$ is collinear to two distinct points of $K$ (those corresponding to the intersection points $\rho(K) \cap \rho(L)$ and $\rho(K) \cap \rho(M)$) and hence $Y_K = Y_L=Y_M$, contradicting Lemma~\ref{1':pi}. As by definition, each line of $\Pi$ corresponds to a unique point of $Y_\Pi$, this shows the lemma.
\end{proof}

\begin{lemma}\label{1':Y_Pi}
Let $L$ and $M$ be $2$-lines with $Y_L, Y_M\in Y_\Pi$ distinct. Then $\rho(L)\cap\rho(M) \neq \emptyset$.
\end{lemma}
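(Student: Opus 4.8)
### Proof proposal for Lemma~\ref{1':Y_Pi}

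The plan is to argue by contradiction: assume $L$ and $M$ are $2$-lines with $Y_L,Y_M \in Y_\Pi$ distinct, but $\rho(L) \cap \rho(M) = \emptyset$. Since both $\rho(L)$ and $\rho(M)$ lie in the singular subspace $\Pi$ of $\rho(X)$ (by Lemma~\ref{1':bij}), they span a $3$-space inside $\Pi$, and any point of $\rho(L)$ is joined to any point of $\rho(M)$ by a $2$-line of $\Pi$. First I would pick a point $p\in\rho(L)$ and a point $q\in\rho(M)$, so that the line $\rho(N):=\<p,q\>$ is a $2$-line of $\Pi$, hence $Y_N \in Y_\Pi$ is well-defined. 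The key geometric input is Lemma~\ref{1':pi}: applied in the plane $\<\rho(L),\rho(N)\>$ (whose lines through $p$ biject with the points of $\<Y_L,Y_N\>$) it forces $Y_L$ and $Y_N$ to be collinear in $Y$; similarly, applied in $\<\rho(M),\rho(N)\>$, it forces $Y_M$ and $Y_N$ collinear. So $Y_L, Y_M, Y_N$ all lie on the singular subspace $\<Y_L, Y_M\>$ of $Y$ through these points.

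The crux is then to derive a contradiction from the fact that $Y_L$, $Y_M$ and $Y_N$ are three collinear points, using that the correspondence $\rho(K)\mapsto Y_K$ on the lines of $\Pi$ is injective (Lemma~\ref{1':bij}). Concretely, I would vary $q$ along $\rho(M)$: as $q$ ranges over the (at least three, since $|\K|>2$) points of $\rho(M)$, the line $\<p,q\>$ ranges over a pencil of $2$-lines of $\Pi$ through $p$ lying in the plane $\<\rho(L),\rho(M)\cap(\text{nothing})\>$—more precisely, $p$ together with $\rho(M)$ spans a plane $\pi'$ of $\Pi$, all of whose lines through $p$ are $2$-lines, and by Lemma~\ref{1':pi} the lines of $\pi'$ through $p$ biject with the points of $\<Y_q : q\in\rho(M)\> = \<Y_M\>$... here I need to be careful: since $\rho(M)$ is itself a single $2$-line, all its points share the single value $Y_M$. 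The right move is instead to look in the plane $\pi' = \<p, \rho(M)\>$: its lines through $p$ other than $\rho(L)\cap\pi'$-type lines hit $\rho(M)$, so they all carry the value $Y_M$, while $\rho(L)\cap\pi'$ is a line carrying value $Y_L \neq Y_M$ — but Lemma~\ref{1':pi} (applied with the roles of $L,M$ played by $\rho(L)\cap\pi'$ and one line to $\rho(M)$) says the lines through $p$ in $\pi'$ biject with $\<Y_L,Y_M\>$, which has at least three points since $Y_L\neq Y_M$ and $|\K|>2$; so there must be a third line through $p$ in $\pi'$ with a value of $Y$ distinct from both $Y_L$ and $Y_M$, whereas every line of $\pi'$ through $p$ except possibly one meets $\rho(M)$ and hence has value $Y_M$. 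That is the contradiction: at most one line through $p$ in $\pi'$ misses $\rho(M)$, so at most two distinct $Y$-values can appear among lines through $p$ in $\pi'$, contradicting that $|\<Y_L,Y_M\>|\geq 3$.

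The main obstacle I anticipate is bookkeeping the intersection pattern correctly: one must verify that $p$ together with $\rho(M)$ really does span a plane inside $\Pi$ (using that $\rho(L)\cap\rho(M)=\emptyset$ and $\Pi$ singular), that all of that plane's lines are $2$-lines so that Lemma~\ref{1':pi} applies, and that within that plane a generic line through $p$ meets $\rho(M)$ (so carries value $Y_M$), with at most the one "parallel" line being exceptional. Provided Lemma~\ref{1':pi} is invoked in exactly the configuration it was proved for — a plane spanned by two $2$-lines meeting in a point — the counting argument closes immediately. An alternative, cleaner route avoiding any delicate parallelism discussion: take $p\in\rho(L)$, $q_1,q_2$ two distinct points of $\rho(M)$ (possible as $|\K|>2$), set $\rho(N_i)=\<p,q_i\>$; by the argument above each $N_i$ has $Y_{N_i}$ collinear with both $Y_L$ and $Y_M$, and one shows $Y_{N_1}\neq Y_{N_2}$ (else by injectivity $\rho(N_1)=\rho(N_2)$, forcing $q_1=q_2$); then the plane $\<\rho(N_1),\rho(N_2)\>=\<p,\rho(M)\>$ has its lines through $p$ in bijection with $\<Y_{N_1},Y_{N_2}\>\ni Y_L$ by Lemma~\ref{1':pi}, so some line through $p$ in this plane has value $Y_L$; but the only line through $p$ with value $Y_L$ is $\rho(L)$ (injectivity again), and $\rho(L)\subseteq\<p,\rho(M)\>$ would force $\rho(L)\cap\rho(M)\neq\emptyset$, the desired contradiction.
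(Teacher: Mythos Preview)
Your alternative route hinges on the claim $Y_L \in \langle Y_{N_1}, Y_{N_2}\rangle$, and this is never established. Lemma~\ref{1':pi} applied at $p$ in the plane $\langle\rho(N_1),\rho(N_2)\rangle$ tells you the lines through $p$ in \emph{that} plane biject with $\langle Y_{N_1}, Y_{N_2}\rangle$; but $\rho(L)$ does not lie in that plane (it would then meet $\rho(M)$), so there is no reason its $Y$-value lands on that particular line of $Y$. Your earlier remark that ``$Y_{N_i}$ is collinear with both $Y_L$ and $Y_M$'' is vacuous, since $Y$ is a subspace and any two points of it are collinear; it does not force $Y_L, Y_M, Y_{N_i}$ to lie on a common line. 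Likewise, in the first version the assertion that a line through $p$ meeting $\rho(M)$ ``carries the value $Y_M$'' is simply false: by the very injectivity of Lemma~\ref{1':bij}, such a line $N$ has $Y_N \neq Y_M$. In short, the combinatorics of $2$-lines inside $\Pi$ alone do not pin down where $Y_L$ sits relative to $\langle Y_{N_1}, Y_{N_2}\rangle$.

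The paper's argument is genuinely different and supplies exactly the missing link: it goes back to $X$ itself (not just $\rho(X)$) and uses Axiom~(S1) together with the quadric structure of members of $\Theta$ to produce a point $x\in X$ with $\rho(x)\in\Pi$ that is collinear to the \emph{entire line} $Y_LY_M\subseteq Y$. This step is the heart of the proof and requires a short case analysis (collinear versus non-collinear, $\Xi$ versus $\Theta$). Once such an $x$ exists, with $p=\rho(x)$, one checks directly that $p$ lies on $\rho(L)$: for any $p'\in\rho(L)$ the $2$-line $\rho(K)=pp'$ has both endpoints collinear to $Y_L$, hence $Y_K=Y_L$, hence $\rho(K)=\rho(L)$ by injectivity, so $p\in\rho(L)$; and symmetrically $p\in\rho(M)$. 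Your proposal never invokes (S1) or the quadric $\theta$ containing $Y_LY_M$, and without that input the argument cannot close.
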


\begin{proof}
We claim that there is a point $x\in X$ with $\rho(x)\in \Pi$ such that $x$ is collinear to $Y_LY_M$. Since $Y=Z$ (cf.\ Lemma~\ref{1':Y=Z}) and $v=0$, there is a $\theta\in\Theta$ through $Y_L$ and $Y_M$ by (S1). In $\theta$, there is a point $x \in X$ collinear to the line $y_Ly_M$. If $\rho(x) \in \Pi$ we are good, so suppose it is not. Let $x' \in X$ be any point with $\rho(x') \in \Pi$. Suppose first that $x$ and $x'$ are collinear. As $\rho(x) \neq \rho(x')$, the line $xx'$ is an $X$-line (cf.\ Lemma~\ref{1:inj}). From $\rho(x)\notin \Pi$, it follows that $xx'$ is a $0$-line, and hence $Y_{x'} = Y_x$ (cf.\ Lemma~\ref{lineSS}$(i)$). In particular, $x'$ is also collinear to $Y_LY_M$ and hence is a valid choice. Secondly, suppose $x'$ and $x$ are not collinear.
Then they are contained in a member $\zeta$ of $\Xi$ or of $\Theta$. In the first case, Lemma~\ref{1':symps} implies that there is a $2$-line through $x'$ in $\zeta$ meeting a $0$-line through $x$, say in a point $x''$. Then $x''$ is a good choice: it is collinear to $Y_LY_M$  as it is on a $0$-line with $x$, and $\rho(x'')\in\Pi$ since it is on a $2$-line with $x'$. If $\zeta \in \Theta$, then since $x$ and $x'$ are joined by two intersecting $2$-lines in $\theta$, we obtain $\rho(x) \in \Pi$, a contradiction. The claim follows.

So, let $x\in X$ be a point collinear to $Y_LY_M$ with $p:=\rho(x) \in \Pi$. By Lemma~\ref{1':bij}, $\rho(L)$ and $\rho(M)$ are the unique respective lines in $\Pi$ collinear to $Y_L$ and $Y_M$. Let $p'$ be any point of $\rho(L)$, distinct from $p$ if $p$ were on it. Then $p$ and $p'$ are on a $2$-line $\rho(K)$ of $\Pi$  by Lemma~\ref{1':bij}. Since both $p$ and $p'$ are collinear to $Y_L$, so is $K$. Consequently, $\rho(K)=\rho(L)$ since this was the unique line in $\Pi$ collinear to $Y_L$, so $p \in \rho(L)$. Likewise, $\rho(M)$ contains $p$ and hence $\rho(L)$ and $\rho(M)$ intersect in $p$. 
\end{proof}

Finally we can  show that there have to be $1$-lines.

\textbf{Proof of Proposition~\ref{1':nonex}} \,
Let $L$ be any $2$-line and let $\theta_1,\theta_2$ be two elements of $\Theta$ containing $L$. Let $\pi_1$ and $\pi_2$ be the unique $X$-planes through $L$ in $\theta_1$ and $\theta_2$, respectively. Then, as noted before, $\pi_1$ and $\pi_2$ span a singular $3$-space $S$ (as they cannot be contained in a member of $\Xi$ nor of $\Theta$). If $S \cap Y$ were non-empty, then $S \cap Y$ is a point $y$ (since the planes $\pi_1$ and $\pi_2$ are $X$-planes).  But then $y \perp L$ and hence $y=Y_L$. Consequently $\pi_2 \subseteq \<\pi_1,Y_L\> \subseteq \theta_1$, a contradiction. We conclude that $\rho(S)$ is $3$-dimensional. Moreover, since each line $K$ in $S$ is contained in a plane together with two $2$-lines $M_1$ and $M_2$ of $\pi_1 \cup \pi_2$, Lemma~\ref{1':0and2lines} implies that $K$ belongs to a member of $\Theta$ containing $M_1$ and $M_2$ and hence is a $2$-line as well.   This however implies that the connected component $\Pi$ of $\rho(X)$ containing $\rho(S)$ contains a pair of disjoint $2$-lines, contradicting Lemma~\ref{1':Y_Pi}. We conclude that the assumption that there are no $1$-lines must be false. \qed

\subsubsection{All $X$-lines have to be $1$-lines}
The next goal is to show that \emph{all} $X$-lines are $1$-lines. We need a couple of lemmas for this.

\begin{prop}\label{1':xlines}
All $X$-lines are $1$-lines.
\end{prop}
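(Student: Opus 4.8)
The plan is to show that the existence of a single $1$-line forces every $X$-line to be a $1$-line, using a connectedness-of-types argument. We already know (Proposition~\ref{1':nonex}) that $1$-lines exist, so it suffices to propagate the $1$-line property. The natural strategy is: (1) classify which $X$-lines through a fixed point $x$ can be $1$-lines versus $0$-lines versus $2$-lines, in terms of the subspaces $Y_L \subseteq Y_x$ from Lemma~\ref{lineSS}; (2) exploit the fact that $2$-lines live inside members of $\Theta$ whose residual structure is rigid; (3) exclude $2$-lines entirely by the same kind of ``non-compatible structure'' contradiction used in the proof of Proposition~\ref{1':nonex}; (4) finally exclude $0$-lines by using Lemma~\ref{projV} (no $Y$-point is collinear to all of $X$) together with the local description of $\Xi$-members.

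**First**, I would analyze a member $\xi \in \Xi$ through $x$. Since $r=1$, $X(\xi)$ is a (possibly degenerate) hyperbolic quadric of rank $2$, i.e., essentially a grid-cone, carrying two reguli. By Lemma~\ref{lineSS}, the type of an $X$-line $L$ in $\xi$ is governed by $\dim(Y_x) - \dim(Y_L)$: it is a $0$-line iff $Y_L = Y_x$, a $1$-line iff $Y_L$ is a hyperplane of $Y_x$, a $2$-line iff $\dim(Y_L) < \dim(Y_x) - 1$. The key first observation should be that if $\xi$ contains a $1$-line through $x$, then the vertex $Y(\xi)$ of $\xi$ must be a hyperplane of $Y_x$ (since the vertex is contained in $Y_L$ for every $X$-line $L$ of $\xi$, and a $1$-line's $Y_L$ is itself a hyperplane of $Y_x$, forcing $Y(\xi) = Y_L$); so $v = \dim(Y(\xi))$ satisfies $\dim(Y_x) = v+1$. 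Then every $X$-line $L'$ of $\xi$ has $Y(\xi) \subseteq Y_{L'} \subseteq Y_x$, with the outer terms of codimension $1$ apart, so $Y_{L'}$ is either $Y(\xi)$ (making $L'$ a $1$-line) or $Y_x$ (making $L'$ a $0$-line). Thus inside such a $\xi$ there are no $2$-lines. Now I would argue that if some $X$-line through $x$ is a $2$-line, it lies in some $\theta \in \Theta$; but by Corollary~\ref{1':y_L}-type reasoning (or by directly inspecting $\theta$) one gets a structural constraint on $Y_x$ that is incompatible with $\dim(Y_x) = v+1$ — essentially, two members of $\Theta$ through a $2$-line share only $\langle L, Y_L\rangle$ and force $Y_x$ to strictly contain a hyperplane over $Y_L$, giving $\dim(Y_x) \geq v'+3$ while the $\Xi$-side forces $\dim(Y_x) = v+1$; reconciling these with the bound $v' \leq 2v-1$ from Lemma~\ref{1':v,v'} and a short case analysis should yield a contradiction or force all lines through $x$ to be $1$-lines.

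**The cleanest route**, and the one I would actually follow, is to assume for contradiction that some $X$-line is a $2$-line and re-run the machinery that was set up in the ``no $1$-lines'' subsection but now in the mixed regime: the connected components $\Pi$ of $\rho(X)$ under $2$-lines, the correspondence $L \mapsto Y_L$ (Lemmas~\ref{1':inj2lines}, \ref{1':pi}, \ref{1':bij}), and the disjointness obstruction (Lemma~\ref{1':Y_Pi}). The presence of a $1$-line $L_0$ somewhere means its $\theta$-content is empty, so it sits ``outside'' every $\Pi$; tracking how $L_0$ meets the $2$-line components — via planes $\langle L_0, M\rangle$ with $M$ a $2$-line, which by Lemma~\ref{1':0and2lines} and the analysis of such planes must degenerate — should force a contradiction with the rigidity of the $\Pi$'s. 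Concretely: a point $x$ on both a $1$-line and a $2$-line would have $Y_x$ simultaneously of dimension $v+1$ (from the $1$-line, via an $\xi$) and hosting a chain $Y_L \subsetneq$ (hyperplane) $\subsetneq Y_x$ from the $2$-line, which already bounds things tightly; then transitivity-type arguments (using that through each $X$-point there are $\ge 2$ members of $\Theta$, a standing hypothesis of this section) spread the contradiction.

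**The main obstacle** I anticipate is handling the possibility that $1$-lines and $2$-lines coexist through the same point without immediate numerical contradiction — i.e., the case analysis on $\dim Y_x$, $v$, $v'$ may not close purely by dimension count and will require re-deriving an analogue of Lemma~\ref{1':r'=2} and the vertex-coincidence statements in this mixed setting, where $\Theta$-members through a $2$-line need no longer all share a vertex. I would expect the resolution to mimic Lemma~\ref{1':v'=-1}: pick $x$ on a $2$-line, build two members $\theta_1, \theta_2 \in \Theta$ through $x$, analyze $\theta_1 \cap \theta_2 = \langle x, H\rangle$ via Lemma~\ref{Hperp}, and deduce that some $X$-line through $x$ in $\theta_1 \setminus \theta_2$ paired with one in $\theta_2 \setminus \theta_1$ spans a member of $\Xi$ whose vertex is too large — unless in fact $Y_x$ equals the span of the two vertices, in which case (as at the very end of the proof of Lemma~\ref{1':v'=-1}) the $X$-lines through $x$ in $\theta_1$ turn out to be $1$-lines, contradicting that they are $2$-lines. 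So the endgame is: ``assume a $2$-line through $x$; then an $\xi$ through $x$ containing a $1$-line forces $\dim Y_x = v+1$; but the two-$\Theta$ analysis forces the $\theta_i$-lines through $x$ to be $1$-lines; contradiction.'' That contradiction, combined with the already-excluded no-$1$-line case, leaves only $0$-lines as the alternative to $1$-lines, and those are killed by Lemma~\ref{projV} since a point all of whose $X$-lines are $0$-lines would be collinear to all of $Y_x$ hence (propagating) to a $Y$-point collinear to all of $X$.
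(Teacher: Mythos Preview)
Your global framework --- prove a local statement at a point $x$ lying on a $1$-line, then propagate by connectivity via $X$-lines --- is exactly the paper's, and the connectivity step is fine. The gap is entirely in the local analysis.

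The paper's route is: (1)~Lemma~\ref{1':xperp} shows that if $x$ lies on a $1$-line then $Y_x \subseteq Y(\theta)$ for every $\theta \in \Theta$ through $x$, so $\dim Y_x = r'+v'$; (2)~Lemma~\ref{1':2SS}, via Lemma~\ref{Hperp}, shows there are \emph{exactly two} members $\theta_1^x, \theta_2^x$ of $\Theta$ through $x$, with disjoint vertices spanning $Y_x$, yielding $v' = r'-1$ and (Corollary~\ref{1':symp}) $v = 2r'-3$; (3)~Lemma~\ref{1':xlinesx} then shows every $X$-line $K$ through $x$ lies in $\theta_1^x \cup \theta_2^x$: otherwise $K$ is a $0$-line (only two $\theta$'s exist through $x$), so $Y_K = Y_x$, and pairing $K$ with an $X$-line $L \subseteq \theta_1^x$ yields $[L,K] \in \Xi$ with vertex $Y_L$ of dimension $2r'-2 > v$, a contradiction. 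Once every $X$-line $M$ through $x$ sits in some $\theta_i^x$, the quadric structure gives $Y_M$ a hyperplane of $Y_x$, so $M$ is a $1$-line.

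Your sketch never isolates step~(2); without knowing that exactly two $\theta$'s pass through $x$, you cannot argue that a line outside all of them is a $0$-line, and your dimension bookkeeping has nothing to anchor to. Specifically, your claim that a $1$-line $L \subseteq \xi$ forces $Y(\xi) = Y_L$ is unjustified (only $Y(\xi) \subseteq Y_L$ follows), so ``$\dim Y_x = v+1$'' is never actually established. Re-running the $\Pi$-component machinery of the no-$1$-line subsection is much heavier than needed, and your sketched contradiction there (``the $\theta_i$-lines turn out to be $1$-lines, contradicting that they are $2$-lines'') presupposes the assumed $2$-line is one of those $\theta_i$-lines --- precisely what must be proved. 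Finally, excluding $0$-lines via Lemma~\ref{projV} does not work: a single $0$-line through $x$ does not produce any $y \in Y$ collinear with all of $X$; the paper instead kills $0$-lines through $x$ by the vertex-dimension contradiction in step~(3).
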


\begin{lemma}\label{1':xperp}
Suppose $x\in X$ is on a $1$-line $L$. Then for each $\theta \in \Theta$ through $x$,  $Y_x \subseteq Y(\theta)$.\end{lemma}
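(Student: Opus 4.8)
\textbf{Proof proposal for Lemma~\ref{1':xperp}.}

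The plan is to fix $x \in X$ lying on a $1$-line $L$ and an arbitrary $\theta \in \Theta$ through $x$ (which exists by Lemma~\ref{xinSS}), and to show $Y_x \subseteq Y(\theta)$ by a ``transport'' argument: I want to push the structure of the $1$-line $L$ around inside a member of $\Xi$ or $\Theta$ joining $L$ to lines of $\theta$, and extract from that the position of an arbitrary point $y \in Y_x$ relative to $Y(\theta)$. First I would pick an $X$-line $M$ through $x$ inside $\theta$; since $M \subseteq \theta \in \Theta$, the line $M$ is a $2$-line, so by Lemma~\ref{lineSS}$(iii)$ the subspace $Y_M$ is strictly contained in a hyperplane of $Y_x$, whereas $Y_L$ is exactly a hyperplane of $Y_x$ since $L$ is a $1$-line (Lemma~\ref{lineSS}$(ii)$). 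In particular $Y_L \neq Y_M$, hence $L \neq M$, so $\langle L, M\rangle$ is a plane. The first dichotomy is whether this plane is singular or not.

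The key step is to argue that, in either case, for an arbitrary $y \in Y_x \setminus Y(\theta)$ one reaches a contradiction unless $Y_x \subseteq Y(\theta)$. Suppose $y \in Y_x$ with $y \notin Y(\theta)$. Since $y \perp x$ and $y \perp M$ would force $y \in M^\perp \cap Y(\theta) = Y_M \subseteq Y(\theta)$ (using that $Y(\theta)$ is the full $Y$-part of $\theta$ and $M \subseteq \theta$), we may — after possibly replacing $M$ by another $X$-line through $x$ in $\theta$, using Lemma~\ref{convexclosure} to find one non-collinear to $xy$ — assume $M$ is not collinear to $xy$. Then $\theta' := [M, xy] \in \Theta$ by Lemma~\ref{planes}, and by Lemma~\ref{convexclosure} $\theta'$ contains $\langle x, Y_M \rangle$ together with $y$; and $\theta \cap \theta' = \langle M, Y_M\rangle$ by Corollary~\ref{Xlinevertex}. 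Now I bring in $L$: I claim $L$ must be collinear to $\theta'$ in a way that, combined with $L$ being a $1$-line, forces $Y_L \subseteq \langle Y_M, y\rangle$ — and then running $y$ over all of $Y_x$, since $Y_L$ is a hyperplane of $Y_x$ and $Y_M$ has codimension $\geq 2$ in $Y_x$, the only way all these containments are consistent is if the ``extra'' directions $y$ are already accounted for, i.e. $Y_x \subseteq Y(\theta)$. Concretely, I expect to use that $L$ and $M$ both pass through $x$: if $\langle L, M\rangle$ is singular it has at most one point of $Y$ (Lemma~\ref{XYpointsonline}), and one analyses whether that point is $Y_L$; if $\langle L, M\rangle$ is not singular then $[L,M] \in \Xi \cup \Theta$ by Lemma~\ref{planes}/Lemma~\ref{kspaces}, and since $r=1$ forces $[L,M] \in \Xi$ unless it equals $\theta$, one gets a member of $\Xi$ with vertex $Y_L \cap Y_M$, which by Corollary~\ref{Xlinevertex} lies in $Y(\theta)$, pinning down $Y_L \cap Y_M$ inside $Y(\theta)$; combining with the $\theta'$-analysis above locates all of $Y_x$.

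The main obstacle I anticipate is the bookkeeping of which intersections land in $Y(\theta)$ versus $Y(\theta')$ and reconciling the dimension counts: $Y_L$ is a hyperplane of $Y_x$, $Y_M$ has codimension at least $2$, and $Y(\theta)$ is fixed, so I must be careful to choose the auxiliary $X$-lines through $x$ (inside $\theta$, inside $[L,M]$ if that is a symp, and inside $\theta'$) so that their $Y$-parts jointly span enough of $Y_x$ to conclude. A secondary subtlety is the degenerate case where $\langle L,M\rangle$ happens to be a singular plane meeting $Y$ exactly in $Y_L$ — there I expect to derive directly that $Y_L \subseteq \theta$ (so $Y_x = \langle Y_L, \text{pt}\rangle$ with the extra point forced into $Y(\theta)$ by looking at a second $1$-line or at $M$), which is really the cleanest branch. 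Once every $y \in Y_x$ is shown to lie in $Y(\theta)$, the lemma follows since $Y_x$ is a subspace (Corollary~\ref{collY}).
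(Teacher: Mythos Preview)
Your proposal has a genuine gap right at the outset. You take an $X$-line $M$ through $x$ inside $\theta$ and assert ``since $M \subseteq \theta \in \Theta$, the line $M$ is a $2$-line''. This is unjustified: membership in a single $\theta\in\Theta$ only tells you $M$ is not a $0$-line. The standing hypothesis of the section is that every $X$-\emph{point} lies in at least two members of $\Theta$, not every $X$-line; and in fact the very next results (Lemma~\ref{1':2SS}, Proposition~\ref{1':xlines}) establish that all $X$-lines are $1$-lines, so your claim is actually false in the situation at hand. Consequently the deduction ``$Y_M$ is strictly contained in a hyperplane of $Y_x$'' via Lemma~\ref{lineSS}$(iii)$ collapses, and with it the inequality $Y_L\neq Y_M$ and the whole transport scheme that follows.

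More importantly, the intended argument is far more complicated than necessary; the paper dispatches the lemma with a two-line dimension count that you are overlooking. Since $L$ is a $1$-line, Lemma~\ref{lineSS}$(ii)$ gives that $Y_L$ is a hyperplane of $Y_x$, and the ``conversely'' part of Lemma~\ref{lineSS} (with the unique $\theta_L\in\Theta$ through $L$) then forces $\theta_L\cap Y_x=Y_x$, i.e.\ $Y_x\subseteq Y(\theta_L)$. Hence $Y_x=x^\perp\cap Y(\theta_L)$ has dimension exactly $r'+v'$. Now for \emph{any} $\theta\in\Theta$ through $x$, the subspace $x^\perp\cap Y(\theta)$ is a hyperplane of the generator $Y(\theta)$ and thus also has dimension $r'+v'$; since it is trivially contained in $Y_x$, equality of dimensions gives $Y_x=x^\perp\cap Y(\theta)\subseteq Y(\theta)$. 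No auxiliary symps, no case split on $\langle L,M\rangle$, no transport are needed.
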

\begin{proof}
Let $\theta_L$ be the unique member of $\Theta$ containing $L$. By Lemma~\ref{lineSS}$(ii)$, $Y_x=x^\perp \cap Y(\theta_L)$. In particular, $\dim(Y_x)=r'+v'$ and hence, for each $\theta \in \Theta$ containing $x$ we obtain that $x^\perp \cap Y(\theta)$, which also has dimension $r'+v'$, coincides with $Y_x$. 
\end{proof}

Recall that we assume that there are at least two members of $\Theta$ through any point of $X$, 

\begin{lemma}\label{1':2SS}
Let $\theta^x_1$ and $\theta^x_2$ be two members of $\Theta$ through a point $x$ which is on a $1$-line. Let $V^x_1$ and $V^x_2$ denote their respective vertices, and suppose $Z(\theta^x_i)=V^x_i \cup R^x_i$ for some $r'$-space $R^x_i$, $i=1,2$. Then  $V^x_1=R^x_2 \cap Y_x$ and $V^x_2=R^x_1 \cap Y_x$ are disjoint and generate $Y_x$. In particular, $v'=r'-1$ and there are exactly two members of $\Theta$ through $x$.
\end{lemma}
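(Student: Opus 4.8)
The statement concerns two members $\theta^x_1,\theta^x_2\in\Theta$ through a point $x$ that lies on a $1$-line, and we must show their vertices $V^x_1,V^x_2$ are disjoint, together generate $Y_x$, that $v'=r'-1$, and that these are the only two members of $\Theta$ through $x$. The natural tool is Lemma~\ref{Hperp}: the intersection $\theta^x_1\cap\theta^x_2$ is a singular subspace, and by Corollary~\ref{Xlinevertex} (applied to $X$-lines through $x$) it is a maximal singular subspace of each $\theta^x_i$ of the form $\<x,H\>$ with $H\subseteq Y$. Actually, since $x$ lies on a $1$-line, Lemma~\ref{1':xperp} gives $Y_x\subseteq Y(\theta^x_i)$ for $i=1,2$, so $Y_x\subseteq Y(\theta^x_1)\cap Y(\theta^x_2)=H$; combined with $\dim(Y_x)=r'+v'$ (from Lemma~\ref{lineSS}$(ii)$, since $Y_L$ is a hyperplane of $Y_x$ and $Y_L=L^\perp\cap Y(\theta_L)$ has dimension $r'+v'-1$) and $\dim H\le r'+v'$ (as $H\subseteq x^\perp\cap Y(\theta^x_i)$), we get $H=Y_x$. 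So $\theta^x_1\cap\theta^x_2=\<x,Y_x\>$ is maximal singular in both, and Lemma~\ref{Hperp} applies directly with this $H=Y_x$.

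The first thing Lemma~\ref{Hperp} yields is the relation $v=v'+r'-2$, but more usefully its proof-structure tells us: there is always a pair of $X$-lines $L_1\subseteq\theta^x_1$, $L_2\subseteq\theta^x_2$ through $x$ with $L_1^\perp\cap Y_x\neq L_2^\perp\cap Y_x$, and for such a pair $[L_1,L_2]\in\Xi$ (this uses $r=1$, which forbids $L_1,L_2$ being collinear-into-a-plane inside a member of $\Theta$, exactly as in Claim~1 of Lemma~\ref{1':v'=-1}). Now I would run the combinatorial argument on the decomposition $Z(\theta^x_i)\cap Y_x$. Writing $V^x_i\cup R^x_i$ for $Z(\theta^x_i)$, the intersection $Y_x$ of the two maximal singular subspaces contains two disjoint subspaces from $Z$ coming from each $\theta^x_i$, and (by the argument in Claim~3 of Lemma~\ref{1':v'=-1}, using the convexity Lemma~\ref{convexclosure} and Lemma~\ref{lineSS} to transport lines between the $\theta$'s) the two decompositions of $Y_x$ induced by $\theta^x_1$ and $\theta^x_2$ must be ``swapped'': $(V^x_1\cap Y_x, R^x_1\cap Y_x)=(R^x_2\cap Y_x, V^x_2\cap Y_x)$. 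Since $V^x_i\subseteq x^\perp$ and $V^x_i\subseteq Y(\theta^x_i)$ forces $V^x_i\subseteq Y_x$, this gives $V^x_1=R^x_2\cap Y_x$ and $V^x_2=R^x_1\cap Y_x$. Disjointness of $V^x_1,V^x_2$ is then immediate since $R^x_2$ is disjoint from $V^x_2$, and $\<V^x_1,V^x_2\>\subseteq Y_x$; the reverse inclusion follows because $Z(\theta^x_1)\cap x^\perp=\<V^x_1,R^x_1\cap Y_x\>=\<V^x_1,V^x_2\>$ and $\<Z\cap x^\perp\>=\<Z_x\>=Y_x$ by Lemma~\ref{resYx}.

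For the dimension count: $\dim(Y_x)=\dim V^x_1+\dim V^x_2+1=v'+v'+1=2v'+1$ on one hand, while $\dim(Y_x)=r'+v'$ on the other, giving $r'+v'=2v'+1$, i.e. $v'=r'-1$. Finally, uniqueness: suppose $\theta^x_3\in\Theta$ is a third member through $x$. It too has $\theta^x_3\cap\theta^x_1=\<x,Y_x\>$, and its vertex $V^x_3\subseteq Y_x=\<V^x_1,V^x_2\>$ has dimension $v'$, and by the swap argument applied to the pair $(\theta^x_1,\theta^x_3)$ we get $V^x_3=R^x_1\cap Y_x=V^x_2$; likewise applying it to $(\theta^x_2,\theta^x_3)$ gives $V^x_3=V^x_1$, forcing $V^x_1=V^x_2$, contradicting disjointness (here $v'=r'-1\ge0$ since $r'\ge1$, so $V^x_i$ is genuinely non-empty). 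Hence there are exactly two members of $\Theta$ through $x$. The main obstacle is the bookkeeping in identifying which of the two ``$Z$-parts'' of $Y_x$ coming from $\theta^x_1$ matches which part coming from $\theta^x_2$: establishing the swap $(V^x_1\cap Y_x,R^x_1\cap Y_x)=(R^x_2\cap Y_x,V^x_2\cap Y_x)$ rather than the ``parallel'' identification requires the transport-of-lines argument (as in Claim~3 of Lemma~\ref{1':v'=-1}), exploiting that a point of $Z$ lying in the $R$-part of one $\theta$ but in the $V$-part of another leads to a contradiction with the structure of $Z(\theta)$ for a suitably chosen third member of $\Theta$.
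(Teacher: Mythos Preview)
Your setup is correct and matches the paper: $\theta^x_1\cap\theta^x_2=\langle x,Y_x\rangle$ with $H=Y_x$, so Lemma~\ref{Hperp} applies. But the crucial step---ruling out the ``parallel'' identification $V^x_1=V^x_2$ in favour of the swap---is not justified by your reference to Claim~3 of Lemma~\ref{1':v'=-1}. That argument produced a contradiction by choosing a point $z\in Z_1\setminus H$, and such a $z$ existed there precisely because $V_1\cap H\subsetneq V_1$ (the vertices were \emph{not} contained in $H$). In the present situation $V^x_i\subseteq x^\perp\cap Y(\theta^x_i)=Y_x=H$, so $Z(\theta^x_i)\cap x^\perp\subseteq H$ entirely and there is no point $z$ to transport. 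The Claim~3 mechanism therefore does not get off the ground here, and you have no argument excluding $V^x_1=V^x_2$.

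The paper's route is much shorter and fills exactly this gap. One shows directly that \emph{no} pair of $X$-lines $L_1\subseteq\theta^x_1$, $L_2\subseteq\theta^x_2$ through $x$ can span a singular plane: if $\pi=\langle L_1,L_2\rangle$ were singular, then $L_2$ is collinear with the maximal singular subspace $\langle L_1,H_1\rangle$ of $\theta^x_1$, so for any $X$-plane $\pi_1$ through $L_1$ inside $\theta^x_1$ the subspaces $\pi,\pi_1$ are not collinear and hence lie in some $\zeta\in\Xi\cup\Theta$ by Lemma~\ref{kspaces}; but $\zeta\in\Xi$ contradicts $r=1$ and $\zeta\in\Theta$ contradicts Corollary~\ref{noXplane}. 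Now Lemma~\ref{Hperp}$(ii)$ says that a collinear pair with $H_1=H_2$ exists if and only if $V^x_1=V^x_2$; since no collinear pair exists at all, $V^x_1\neq V^x_2$. The set $Z\cap Y_x$ is then a union of two disjoint subspaces, read either as $V^x_1\cup(R^x_1\cap Y_x)$ or as $V^x_2\cup(R^x_2\cap Y_x)$; such a decomposition is unique, and $V^x_1\neq V^x_2$ forces the swap $V^x_1=R^x_2\cap Y_x$, $V^x_2=R^x_1\cap Y_x$. From this, $v'=\dim V^x_1=\dim(R^x_2\cap Y_x)=r'-1$, and disjointness and $\langle V^x_1,V^x_2\rangle=Y_x$ follow at once. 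For uniqueness the paper simply observes that any two members of $\Theta$ through $x$ have distinct (hence disjoint) vertices lying in $Z\cap Y_x$, which contains only the two $v'$-spaces $V^x_1,V^x_2$; your version via applying the swap to $(\theta^x_1,\theta^x_3)$ and $(\theta^x_2,\theta^x_3)$ also works once the swap is available for every pair.
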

\begin{proof}
By Lemma~\ref{1':xperp}, $\theta^x_1$ and $\theta^x_2$ both contain the maximal singular subspace $\<x,Y_x\>$ and hence $\theta^x_1 \cap \theta^x_2 = \<x,Y_x\>$. Take $X$-lines $L_1$ and $L_2$ through $x$ in $\theta^{x}_1$ and $\theta^{x}_2$, respectively. By Lemma~\ref{Hperp}, $L_1$ and $L_2$ are collinear if and only if $H_1:=L_1^\perp \cap Y_x= L_2^\perp \cap Y_x=:H_2$. We claim that this is not possible. Suppose for a contradiction that $L_1$ and $L_2$ span a singular plane $\pi$. Then $L_2$ is collinear to the maximal singular subspace $\<L_1,H_1\>$ of $\theta^{x}_1$, and hence for any $X$-plane $\pi_1$ through $L_1$ in $\theta^{x}_1$, $L_2$ is not collinear to $\pi_1$. As such, $\pi$ and $\pi_1$ are contained in some $\zeta \in \Xi \cup \Theta$. However, if $\zeta \in \Xi$ this contradicts $r=1$ and if $\zeta \in \Theta$ this contradicts Corollary~\ref{noXplane}, showing the claim. 

We deduce from Lemma~\ref{Hperp} that the respective vertices $V^{x}_1$ and $V^{x}_2$ of $\theta^{x}_1$ and $\theta^{x}_2$ do not coincide. Since the points of $Z$ in $\theta^{x}_i$, $i=1,2$, are precisely those of $V^{x}_i$ and $R^{x}_i$, we have that $V^{x}_1=R^x_2 \cap Y_x$ and $V^{x}_2 = R^x_1 \cap Y_x$.  In particular, $v'=r'-1$ and $V^{x}_1$ and $V^{x}_2$ are disjoint subspaces spanning  $Y_x$. 

We conclude that the members of $\Theta$ through $x$ have pairwise disjoint vertices, which are contained in $Z \cap Y_x$.   Since the latter  only contains two $v'$-spaces, there are precisely  two members of $\Theta$ through $x$. 
\end{proof}

\par\bigskip
\begin{defi} \label{not} \em For each $X$-point $x$ on a $1$-line, we henceforth denote the unique two members of $\Theta$ through ${x}$ by $\theta^{x}_1$ and $\theta^{x}_2$, their respective vertices by $V^x_1$ and $V^x_2$ and the $r'$-spaces in $Z$ which are complementary to $V^x_i$ inside $Y(\theta^x_i)$, $i=1,2$, by $R^x_1$ and $R^x_2$, respectively.
\end{defi}

\begin{cor}\label{1':symp}
Suppose $x\in X$ is on a $1$-line. Let $L_i$  be an $X$-line through $x$ in $\theta^x_i$, $i=1,2$. Then $L_1$ and $L_2$ are non-collinear and  $[L_1,L_2]\in\Xi$, and $v=2v'-1=2r'-3$.
\end{cor}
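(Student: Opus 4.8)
The plan is to build directly on Lemma~\ref{1':2SS} and its notation. Fix an $X$-point $x$ on a $1$-line, and let $\theta^x_1, \theta^x_2$ be the two members of $\Theta$ through $x$, with vertices $V^x_1, V^x_2$. First I would take $X$-lines $L_1 \subseteq \theta^x_1$ and $L_2 \subseteq \theta^x_2$ through $x$; the point is that by Lemma~\ref{1':2SS} we know $\theta^x_1 \cap \theta^x_2 = \<x, Y_x\>$, and we may choose $L_1, L_2$ so that $H_1 := L_1^\perp \cap Y_x$ and $H_2 := L_2^\perp \cap Y_x$ are distinct (the argument in the proof of Lemma~\ref{Hperp}$(i)$ shows such a choice exists, since otherwise every such $X$-line would be collinear to a fixed hyperplane of $Y_x$ which would then have to equal both vertices, forcing $V^x_1 = V^x_2$, contradicting Lemma~\ref{1':2SS}). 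Then by Lemma~\ref{collbetweenS} applied with $\zeta_1 = \theta^x_1$, $\zeta_2 = \theta^x_2$, $S = \<x, Y_x\>$ and the fact that $L_1, L_2$ induce different perp-traces on $S$, the lines $L_1$ and $L_2$ are not collinear. Hence $[L_1,L_2]$ exists by Lemma~\ref{uniquesymp}.

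Next I would rule out $[L_1, L_2] \in \Theta$. If it were, then by Corollary~\ref{Xlinevertex} the subspaces $H_1$ and $H_2$ both lie in $[L_1,L_2]$; but $L_1$ is not collinear to $H_2$ (as $H_2 \ne H_1 = L_1^\perp\cap Y_x$ and $Y_x \supseteq H_2$), so $[L_1,L_2]$ would be forced to equal $\theta^x_1$, contradicting $L_2 \subseteq \theta^x_1$ being false. (This is exactly the argument inside the proof of Lemma~\ref{Hperp}$(i)$, so I can cite it.) Therefore $[L_1,L_2] \in \Xi$. This establishes the first assertion.

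For the dimension count, recall from Lemma~\ref{convexclosure} that the vertex $V$ of the member $[L_1,L_2]\in\Xi$ is precisely the set of $Y$-points collinear with both $L_1$ and $L_2$, i.e.\ $V = Y_{L_1}\cap Y_{L_2}$. By Corollary~\ref{Xlinevertex}, $V \subseteq Y(\theta^x_1)\cap Y(\theta^x_2)$, and since both $L_i^\perp\cap Y(\theta^x_i) \supseteq Y_x$ (Lemma~\ref{1':xperp}), in fact $V = H_1 \cap H_2$ where $H_i$ is a hyperplane of $Y_x$. Since $H_1 \ne H_2$, $\dim(H_1\cap H_2) = \dim(Y_x) - 2$. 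Now $\dim(Y_x) = r' + v'$ because Lemma~\ref{lineSS}$(ii)$ gives $Y_x = L^\perp\cap Y(\theta_L)$ for the $1$-line $L$ through $x$, a hyperplane of the $(r'+v'+1)$-dimensional maximal singular subspace $Y(\theta_L)$, so $\dim Y_x = r'+v'$; combined with $v' = r'-1$ from Lemma~\ref{1':2SS} this gives $\dim Y_x = 2r'-2$. Since $[L_1,L_2]\in\Xi$ has a $v$-dimensional vertex, $v = \dim(H_1\cap H_2) = \dim(Y_x) - 2 = 2r'-4$. Hmm — I should double-check this against $v = 2v'-1$: with $v' = r'-1$ we get $2v'-1 = 2r'-3$, which disagrees with $2r'-4$. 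The resolution is that $\<x,Y_x\>$, not $Y_x$, is the submaximal subspace, and one should count carefully whether $V^x_1, V^x_2$ being disjoint and spanning $Y_x$ forces $\dim Y_x = 2v'+1$; indeed $\dim V^x_i = v'$ and disjoint spanning gives $\dim Y_x = 2v'+1$, hence $r'+v' = 2v'+1$, i.e.\ $v' = r'-1$ (consistent), and then $v = \dim Y_x - 2 = 2v'+1-2 = 2v'-1 = 2r'-3$. So the dimension bookkeeping must be done via the ``$V^x_1, V^x_2$ disjoint and spanning $Y_x$'' statement from Lemma~\ref{1':2SS} rather than via $\dim Y_x = r'+v'$; getting these two descriptions of $\dim Y_x$ to agree is the one genuinely delicate point, and the main obstacle will be making sure I invoke Lemma~\ref{1':2SS} for the value of $\dim Y_x$ and only use $v' = r'-1$ from it, then read off $v = 2v' - 1 = 2r' - 3$.

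\begin{proof}
Let $x\in X$ lie on a $1$-line $L$, and let $\theta^x_1,\theta^x_2$ be the unique two members of $\Theta$ through $x$ (Lemma~\ref{1':2SS}), with vertices $V^x_1,V^x_2$; by that same lemma $V^x_1$ and $V^x_2$ are disjoint subspaces spanning $Y_x$, so in particular $\dim(Y_x)=2v'+1$, and $v'=r'-1$, whence also $\dim(Y_x)=2r'-2$. By Lemma~\ref{1':xperp}, $Y_x\subseteq Y(\theta^x_i)$ for $i=1,2$, and since $\<x,Y_x\>$ is a maximal singular subspace of each $\theta^x_i$ it follows that $\theta^x_1\cap\theta^x_2=\<x,Y_x\>$.

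Pick $X$-lines $L_1\subseteq\theta^x_1$ and $L_2\subseteq\theta^x_2$ through $x$, and set $H_i:=L_i^\perp\cap Y_x$, a hyperplane of $Y_x$ for $i=1,2$. If every such pair of $X$-lines satisfied $H_1=H_2$, then all $X$-lines of $\theta^x_1$ through $x$ (and likewise of $\theta^x_2$) would be collinear to a single fixed hyperplane of $Y_x$, which would then be contained in both $V^x_1$ and $V^x_2$, contradicting that these are disjoint of dimension $v'=r'-1\geq 1$ (note $r'\geq 2$). Hence we may choose $L_1,L_2$ with $H_1\neq H_2$.

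With $S:=\theta^x_1\cap\theta^x_2=\<x,Y_x\>$, the lines $L_1,L_2$ satisfy $L_1^\perp\cap S\neq L_2^\perp\cap S$, so Lemma~\ref{collbetweenS} gives that $L_1$ and $L_2$ are non-collinear. By Lemma~\ref{uniquesymp}, $[L_1,L_2]\in\Xi\cup\Theta$. Suppose for a contradiction that $[L_1,L_2]\in\Theta$. By Corollary~\ref{Xlinevertex}, $H_1$ and $H_2$ both lie in $[L_1,L_2]$; but $H_2\subseteq Y_x$ with $H_2\neq H_1=L_1^\perp\cap Y_x$, so $L_1$ is not collinear to $H_2$, forcing $[L_1,L_2]=\theta^x_1$ and hence $L_2\subseteq\theta^x_1$, a contradiction. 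Therefore $[L_1,L_2]\in\Xi$.

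Finally, by Lemma~\ref{convexclosure} the vertex $V$ of $[L_1,L_2]$ is the set of $Y$-points collinear with both $L_1$ and $L_2$, i.e.\ $V=Y_{L_1}\cap Y_{L_2}$. By Corollary~\ref{Xlinevertex}, $V\subseteq Y(\theta^x_1)\cap Y(\theta^x_2)$, and since $Y_x\subseteq Y(\theta^x_i)$ with $L_i^\perp\cap Y(\theta^x_i)$ of the same dimension as $Y_x$, we get $Y_{L_i}\cap Y_x=H_i$ and $V=H_1\cap H_2$. As $H_1\neq H_2$ are distinct hyperplanes of $Y_x$, $\dim(V)=\dim(Y_x)-2=2v'-1$. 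Since $[L_1,L_2]\in\Xi$ has a $v$-dimensional vertex, $v=2v'-1=2r'-3$.
\end{proof}
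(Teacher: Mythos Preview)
Your argument has a genuine gap: the corollary asserts that \emph{every} pair of $X$-lines $L_1\subseteq\theta^x_1$, $L_2\subseteq\theta^x_2$ through $x$ is non-collinear with $[L_1,L_2]\in\Xi$, but your proof only treats a \emph{specific} pair chosen so that $H_1\neq H_2$. Your sentence ``If every such pair of $X$-lines satisfied $H_1=H_2$, then \dots'' only excludes the extreme case where \emph{all} pairs satisfy $H_1=H_2$; it does not exclude the possibility that some pairs do and some don't. For the pairs with $H_1=H_2$ (if any existed) Lemma~\ref{Hperp}$(ii)$ would give $L_1\perp L_2$, directly contradicting the statement you are trying to prove.

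The fix is short and is exactly what the paper does: by Lemma~\ref{1':2SS} the vertices $V^x_1$ and $V^x_2$ are distinct (indeed disjoint), and then the ``$\Leftrightarrow$'' clause of Lemma~\ref{Hperp}$(ii)$ says that \emph{no} pair $(L_1,L_2)$ can satisfy $H_1=H_2$. Hence every pair falls under Lemma~\ref{Hperp}$(i)$, giving $[L_1,L_2]\in\Xi$ universally. The dimension formula $v=v'+r'-2$ is already part of the conclusion of Lemma~\ref{Hperp}, and combined with $v'=r'-1$ from Lemma~\ref{1':2SS} yields $v=2r'-3=2v'-1$; your independent computation of $v$ via $\dim(H_1\cap H_2)$ is correct but unnecessary once you cite Lemma~\ref{Hperp} properly.
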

\begin{proof}
By Lemma~\ref{1':2SS}, the respective vertices $V^x_1$ and $V^x_2$ of $\theta^x_1$ and $\theta^x_2$ do not coincide. So, according to Lemma~\ref{Hperp}, the lines $L_1$ and $L_2$ are not collinear and $[L_1,L_2] \in \Xi$. Secondly, we have $v=v'+r'-2$ and $v'=r'-1$.
\end{proof}

\begin{lemma}\label{1':xlinesx}
Suppose $x\in X$ is on a $1$-line. Then all $X$-lines through ${x}$ belong to $\theta^{x}_1 \cup \theta^{x}_2$.
\end{lemma}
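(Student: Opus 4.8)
The plan is to examine an arbitrary $X$-line $M$ through $x$ and show it lies in $\theta^x_1$ or $\theta^x_2$ by excluding, according to its $0$-/$1$-/$2$-line type, every other possibility. First I would record that $\theta^x_1\cap\theta^x_2=\langle x,Y_x\rangle$: by Lemma~\ref{1':xperp} both $\theta^x_i$ contain $Y_x$, hence the singular subspace $\langle x,Y_x\rangle$, which (using $\dim Y_x=r'+v'$ from Lemma~\ref{1':xperp} and $v'=r'-1$ from Lemma~\ref{1':2SS}) has dimension $2r'$ and is therefore a maximal singular subspace of each $\theta^x_i\in\Theta$; since $\theta^x_1\cap\theta^x_2$ is singular by (S2), it equals $\langle x,Y_x\rangle$, which moreover contains no $X$-line through $x$ (every line through $x$ inside it meets $Y_x\subseteq Y$). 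Consequently a $2$-line through $x$ would lie in both $\theta^x_1$ and $\theta^x_2$ (its at least two containing members of $\Theta$ all pass through $x$, so are $\theta^x_1,\theta^x_2$ by Lemma~\ref{1':2SS}), hence in $\langle x,Y_x\rangle$ -- impossible; and a $1$-line through $x$ lies in the unique member of $\Theta$ through it, necessarily $\theta^x_1$ or $\theta^x_2$. So the whole task reduces to excluding a $0$-line $M$ through $x$.

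Assume $M$ is a $0$-line through $x$, so by Lemma~\ref{lineSS}$(i)$ $M$ lies in no member of $\Theta$ and $Y_m=Y_x$ for all $m\in M$; fix $m\in M\setminus\{x\}$ and note $m\notin\theta^x_1$ (otherwise $M=xm\subseteq\theta^x_1$). For each $X$-line $L_1\subseteq\theta^x_1$ through $x$, Lemma~\ref{planes} gives either $\langle M,L_1\rangle$ singular or $[M,L_1]\in\Xi$ (the option $[M,L_1]\in\Theta$ being ruled out since $M$ meets no member of $\Theta$). I would then split into two cases.

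If $[M,L_1]\in\Xi$ for some such $L_1$, I pin down the vertex $V(\eta)$ of the $(r,v)$-cone $\eta:=[M,L_1]$. Every point of $L_1$ is an $X$-point of $\eta$, hence collinear to all of $V(\eta)$, so $V(\eta)\subseteq\bigcap_{q\in L_1}Y_q=Y_{L_1}$; conversely, taking non-collinear $X$-points $p_1\in M$, $p_2\in L_1$ of $\eta$, one has $Y_{p_1}=Y_x\supseteq Y_{L_1}$ (as $M$ is a $0$-line and $x\in L_1$) and $Y_{p_2}\supseteq Y_{L_1}$, so by the vertex description in Lemma~\ref{convexclosure}, $V(\eta)=Y_{p_1}\cap Y_{p_2}\supseteq Y_{L_1}$; hence $V(\eta)=Y_{L_1}$. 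But $L_1\subseteq\theta^x_1\in\Theta$ forces, by Lemma~\ref{lineSS}, $Y_{L_1}$ to be a hyperplane of $\theta^x_1\cap Y_x=Y_x$ (using $Y_x\subseteq Y(\theta^x_1)$ from Lemma~\ref{1':xperp}), so $\dim Y_{L_1}=r'+v'-1=2r'-2$, whereas $\dim V(\eta)=v=2v'-1=2r'-3$ by Corollary~\ref{1':symp} -- a contradiction. In the remaining case $\langle M,L_1\rangle$ is singular for \emph{every} $X$-line $L_1\subseteq\theta^x_1$ through $x$; then $m$ is collinear to all such $L_1$, and also $Y_x=Y_m\subseteq m^\perp$ and $x\in m^\perp$, so $m^\perp\cap\theta^x_1$ contains $x$, $Y_x$ and all $X$-lines of $\theta^x_1$ through $x$. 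But by Lemma~\ref{convexclosure} $m^\perp\cap\theta^x_1$ has no two non-collinear points, hence lies in a maximal singular subspace $S$ of $XY(\theta^x_1)$, of dimension $r'+v'+1=2r'$. Then $S$ contains the span of all singular lines through $x$ in $\theta^x_1$, i.e. the tangent hyperplane $T_x(\theta^x_1)$, whose dimension is $d'=2r'+v'+1=3r'>2r'$ -- again a contradiction. This exhausts the cases, so no $0$-line passes through $x$, and the lemma follows.

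The computations in both cases are routine once the vertex-dimension identities of Lemmas~\ref{1':xperp},~\ref{1':2SS} and Corollary~\ref{1':symp} are in hand; the only point needing a little care is the bookkeeping that $m^\perp\cap\theta^x_1$ genuinely contains enough singular lines through $x$ to span $T_x(\theta^x_1)$ (the ``vertical'' lines $\langle x,y\rangle$ with $y\in Y_x$ span $\langle x,Y_x\rangle$, and together with the $X$-lines through $x$ they span the tangent hyperplane), together with the elementary fact that a maximal singular subspace of $XY(\theta^x_1)$ has dimension only $2r'$ while the tangent hyperplane has dimension $3r'$.
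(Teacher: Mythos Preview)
Your proof is correct and the core contradiction is exactly the paper's: once one has $[M,L_1]\in\Xi$ for some $X$-line $L_1\subseteq\theta^x_1$ through $x$, the vertex is $Y_{L_1}$, whose dimension $r'+v'-1=2v'$ clashes with $v=2v'-1$ from Corollary~\ref{1':symp}. The only difference is that the paper dispenses with your Case~B in one line: since a $0$-line $K$ satisfies $Y_K=Y_x$, one has $K^\perp\cap\theta^x_1=\langle x,Y_x\rangle$ (the very maximal singular subspace you identified in your preamble), which contains no $X$-line through $x$; hence \emph{every} $X$-line $L\subseteq\theta^x_1$ through $x$ is non-collinear with $K$, and one is immediately in your Case~A. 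Your Case~B argument via the tangent-hyperplane dimension $3r'$ versus the generator dimension $2r'$ is valid but redundant.
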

\begin{proof}
Suppose for a contradiction that $K$ is an $X$-line through ${x}$ not contained in $\theta^{x}_1 \cup \theta^{x}_2$. Then $K$ is a $0$-line, for otherwise an element of $\Theta$ through $K$ would coincide with one of $\theta^{x}_1,\theta^{x}_2$. So, by Lemma~\ref{lineSS}$(i)$, $K$ is collinear to $Y_x$. Take any $X$-line $L$ through ${x}$ in $\theta^{x}_1$. Then $L$ and $K$ are not collinear, because $K^\perp \cap \theta^x_1$ is the maximal singular subspace $\<x,Y_x\>\not\supseteq L$. As such, $[L,K]$ belongs to $\Xi$  and has vertex $Y_L$. But then $v=r'+v'-1=2v'$, whereas we deduced before that $v=2v'-1$ (cf.\ Corollary~\ref{1':symp}). This contradiction shows the lemma.
\end{proof}

\textbf{Proof of Proposition~\ref{1':xlines}.}
By Proposition~\ref{1':nonex}, there is at least one $1$-line $L$. Let $x$ be any point on $L$. Firstly, consider any other $X$-line $M$ through $x$. Then $M$ is contained in $\theta^x_1$ or $\theta^x_2$ by Lemma~\ref{1':xlinesx} and, in there, it is clear that $Y_M$ is a hyperplane of $Y_x$. It follows from Lemma~\ref{lineSS} that $M$ is a $1$-line indeed. Since $x$ was just any point on a $1$-line, we obtain by connectivity (via $X$-lines) that all $X$-lines are indeed $1$-lines. \hfill $\square$

\subsection{The structure of $Y$ and its connections to $\rho(X)$}\label{1':1line}

Knowing that all $X$-lines are $1$-lines, we can start a structure analysis. We begin by examining the structure of $Y$.

\subsubsection{Properties of $Y$ in terms of $\{Y_x \mid x \in X\}$ and $\{V \mid V \text{ vertex of } \theta \in \Theta\}$}

Recall the notation introduced in Definition~\ref{not}. We show that $R^x_1 \cup R^x_2$ does not depend on $x$. 

\begin{lemma}\label{1':YZ}
For any  $x\in X$, we have $Z=R^{x}_1 \cup R^{x}_2$ and  $\dim(Y)=2r'+1$. Renumbering if necessary,  $R^x_1=R^{x'}_1$ and $R^x_2=R^{x'}_2$ for each $x'\in X\setminus\{x\}$. \end{lemma}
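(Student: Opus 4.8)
We are in the situation where $(X,Z,\Xi,\Theta)$ is a duo-symplectic pre-DSV with $r=1$, every $X$-point lies on at least two members of $\Theta$, no $Y$-point is collinear to all of $X$, and — by Proposition~\ref{1':xlines} — every $X$-line is a $1$-line. Corollary~\ref{1':symp} gives $v' = r'-1$ and $v = 2r'-3$, and Lemma~\ref{1':2SS} tells us that through each $x\in X$ there are exactly two members of $\Theta$, namely $\theta^x_1,\theta^x_2$, with vertices $V^x_1,V^x_2$ partitioning a spanning pair of complementary $v'$-subspaces of $Y_x$, and with $Z(\theta^x_i) = V^x_i \cup R^x_i$ where $R^x_i$ is an $r'$-space complementary to $V^x_i$ inside $Y(\theta^x_i)$. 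Also recall $Y = \langle Z\rangle$.

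**The plan.** First I would pin down $\langle R^x_1, R^x_2\rangle$. Since $V^x_1 = R^x_2 \cap Y_x$ and $V^x_2 = R^x_1 \cap Y_x$ are disjoint and span $Y_x$ (Lemma~\ref{1':2SS}), and $R^x_i \subseteq Y(\theta^x_i)$ with $Y(\theta^x_i) = \langle V^x_i, R^x_i\rangle$, a dimension count gives $\dim\langle R^x_1,R^x_2\rangle \le 2r'+1$; I'd check that $R^x_1$ and $R^x_2$ are in fact disjoint (if they met, a point of $R^x_1 \cap R^x_2$ would lie in $Y_x$ via... — more carefully, $R^x_1 \cap R^x_2 \subseteq R^x_1 \cap Y(\theta^x_2)$; I need to see this forces a point collinear to all of $X$, or use that $\langle x, Y_x\rangle = \theta^x_1 \cap \theta^x_2$ from Lemma~\ref{1':xperp}, so any common point of $R^x_1, R^x_2$ lies in $\theta^x_1 \cap \theta^x_2 = \langle x,Y_x\rangle$, hence in $Y_x$, hence in $V^x_2 \cap V^x_1 = \emptyset$). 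So $\langle R^x_1, R^x_2\rangle$ is a $(2r'+1)$-space. Next, $Z \subseteq \langle R^x_1,R^x_2\rangle$: indeed every point $z\in Z$ lies with $x$ in some member of $\Xi \cup \Theta$ by (S1); it must be a member of $\Theta$ through $x$ (a member of $\Xi$ has a $v$-dimensional vertex, and $z\in Z$ would have to lie in the vertex, but $v > v' = \dim V^x_i$ forces... — alternatively just note $z$ collinear to $x$ via a singular line with a $Y$-point, so $z\in Y_x \cup$(through the $\Theta$'s)); so $z \in Z(\theta^x_1) \cup Z(\theta^x_2) = V^x_1 \cup R^x_1 \cup V^x_2 \cup R^x_2 \subseteq \langle R^x_1, R^x_2\rangle$ since $V^x_i \subseteq R^x_j$. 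Hence $Y = \langle Z\rangle \subseteq \langle R^x_1, R^x_2\rangle$, and equality holds since $R^x_i \subseteq Z \subseteq Y$. This proves $\dim(Y) = 2r'+1$ and $Z = R^x_1 \cup R^x_2$.

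**Independence of $x$.** For the last claim I would argue by connectivity along $X$-lines. Let $x' \in X$ be collinear to $x$ via an $X$-line $M$; by Lemma~\ref{1':xlinesx}, $M$ lies in $\theta^x_1$ or $\theta^x_2$, say $\theta^x_1$. Then $\theta^x_1$ is a common member of $\Theta$ through $x$ and $x'$, so (by uniqueness, Lemma~\ref{1':2SS}) $\{\theta^{x'}_1, \theta^{x'}_2\} \ni \theta^x_1$; relabel so $\theta^{x'}_1 = \theta^x_1$, whence $V^{x'}_1 = V^x_1$ and $R^{x'}_1 = R^x_1$. Applying the result just proved, $Z = R^{x'}_1 \cup R^{x'}_2 = R^x_1 \cup R^{x'}_2$; but also $Z = R^x_1 \cup R^x_2$, and since $Z$ decomposes into exactly two $r'$-spaces (it is the union of two complementary $r'$-spaces — a "grid-like" structure forced by $v'=r'-1$; two distinct such decompositions of a $(2r'+1)$-space into two complementary $r'$-subspaces would give three distinct $r'$-subspaces inside $Z$, but I should verify $Z$ only contains two) we conclude $R^{x'}_2 = R^x_2$. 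Then for general $x'\in X$, pick a path of $X$-lines from $x$ to $x'$ (possible since $X$ is connected via $X$-lines — this follows from (S1) and the fact that any two points of a symp are joined by a chain of singular lines) and propagate the labelling step by step; the labels $\{R^x_1, R^x_2\}$ are transported consistently (at each step one of the two is preserved, forcing the other too), giving $R^{x'}_1 = R^x_1$, $R^{x'}_2 = R^x_2$ after a global renumbering.

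**Main obstacle.** The delicate point is establishing that $Z$ contains \emph{exactly} two $r'$-spaces, i.e. that the decomposition $Z = R^x_1 \cup R^x_2$ is essentially unique — this is what makes the renumbering well-defined and the propagation rigid. I expect this to follow from the structure of $Z$ as the point-set on $Z$ of the degenerate hyperbolic quadrics $XY(\theta)$: inside any $\theta\in\Theta$, $Z(\theta)$ is a generator (a maximal singular subspace $M$) of the quadric $C_\theta$, decomposed as $V_\theta \sqcup (r'\text{-space})$, and the global $Z$ is glued from these along the vertices; the two reguli of the underlying "grid at infinity" then give precisely two systems. I would make this precise by showing any $r'$-space in $Z$ is disjoint from both $V^x_1$ and $V^x_2$ is impossible unless it equals $R^x_1$ or $R^x_2$, using that a third such space together with $R^x_1$ would force $Z$ to have dimension $> 2r'+1$. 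Once that combinatorial rigidity of $Z$ is in hand, the rest is bookkeeping with the already-established lemmas.
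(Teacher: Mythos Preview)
Your argument for $Z=R^{x}_1\cup R^{x}_2$ and $\dim Y=2r'+1$ is essentially the paper's (though your parenthetical about ``$v>v'$'' is off---that inequality fails for $r'=2$; the clean reason a point $z\in Z\setminus Y_x$ cannot land in a $\xi\in\Xi$ is simply that $Y(\xi)\subseteq Y_x$, so $z\in Y(\xi)$ would contradict $z\notin Y_x$). Your disjointness argument for $R^x_1,R^x_2$ via $\theta^x_1\cap\theta^x_2=\langle x,Y_x\rangle$ is also what the paper does.

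Where you diverge is in the independence-of-$x$ step, and here you are working much harder than necessary. The fact you flag as the ``main obstacle''---that the set $Z=R^x_1\cup R^x_2$ admits only one decomposition into two $r'$-spaces---is actually a one-line observation, and this is exactly how the paper proceeds: if $R^x_1,R^x_2$ are disjoint projective subspaces of dimension $r'\geq 1$, then any line joining a point of $R^x_1$ to a point of $R^x_2$ meets $R^x_1\cup R^x_2$ in exactly two points, so any subspace of $Z$ of dimension $\geq 1$ lies entirely inside $R^x_1$ or inside $R^x_2$. Hence the only $r'$-spaces contained in $Z$ are $R^x_1$ and $R^x_2$ themselves, and the pair $\{R^x_1,R^x_2\}$ is intrinsic to $Z$, automatically the same for every $x'$. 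No connectivity argument is needed.

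Your connectivity approach would also succeed (and in fact, once you know $R^{x'}_1=R^x_1$ for an adjacent $x'$, you can get $R^{x'}_2=R^x_2$ directly from $R^{x'}_2\subseteq Z\setminus R^{x'}_1=Z\setminus R^x_1=R^x_2$ using only disjointness---so even along your route the decomposition-uniqueness worry is avoidable). But the paper's one-liner replaces all of it.
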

\begin{proof}
Take a point $z\in Z\setminus Y_x$. Then $[x,z] \in \Theta$, so, by Lemma~\ref{1':2SS}, $[x,z] \in \{\theta^{x}_1, \theta^{x}_2\}$, and hence $z \in R^{x}_1 \cup R^{x}_2$. So $Z=R^x_1 \cup R^x_2$ indeed. We claim that $R^x_1 \cap R^x_2=\emptyset$. Firstly, $R^x_1 \cap R^x_2 \subseteq Y_x$ (for otherwise $\theta^1_x=\theta^2_x$) and secondly, Lemma~\ref{1':2SS} says that  $R^x_1 \cap Y_x=V^x_2$, $R^x_2 \cap Y_x=V^x_1$  and $V^x_1 \cap V^x_2 = \emptyset$. This shows the claim, and $Y=\<Z\>$ then implies that $\dim(Y)=2r'+1$.

A union of two $r'$-spaces only contains two $r'$-spaces, so for each pair of points $x,x'\in X$ we have $\{R_1^x,R_2^x\} = \{R^{x'}_1,R^{x'}_2\}$. So far, the numbering was arbitrary but of course this gives a canonical numbering: for each $x'\in X\setminus\{x\}$, we choose it such that $R_1^x = R^{x'}_1$ and $R_2^x=R^{x'}_2$. 
\end{proof}

The previous lemma allows us to put $R_i:=R^x_i$, $i=1,2$, for any $x\in X$. This hence divides the set $\Theta$ in two: for $i=1,2$, we define $\Theta_i$ as the set $\{\theta \in \Theta \mid R_i \subseteq Z(\theta)\}$. It also divides the set of $X$-lines inside each member of $\Xi$ in two natural reguli, as we show in Lemma~\ref{1':sympx}$(i)$.

\begin{cor}\label{V}
For each $x\in X$, we have $V^x_1=R_2 \cap Y_x$ and $V^x_2= R_1 \cap Y_x$.
\end{cor}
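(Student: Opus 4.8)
The plan is simply to unwind the definitions, since this is an immediate consequence of the two preceding results. First I would recall that Lemma~\ref{1':2SS} already records the identities $V^x_1 = R^x_2 \cap Y_x$ and $V^x_2 = R^x_1 \cap Y_x$ for the two members $\theta^x_1,\theta^x_2$ of $\Theta$ through an $X$-point $x$ lying on a $1$-line; by Proposition~\ref{1':xlines} every $X$-line is a $1$-line, so in particular this applies to every point of $X$.

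Next I would invoke Lemma~\ref{1':YZ}: it shows that $Z = R^x_1 \cup R^x_2$ for every $x\in X$, that $R^x_1 \cap R^x_2 = \emptyset$, and that $\{R^x_1,R^x_2\} = \{R^{x'}_1,R^{x'}_2\}$ for all $x,x'\in X$ --- which is exactly what legitimises the canonical numbering fixed in the discussion following that lemma, under which $R_i := R^x_i$ does not depend on $x$. Substituting $R_i = R^x_i$ into the two identities taken from Lemma~\ref{1':2SS} gives $V^x_1 = R_2 \cap Y_x$ and $V^x_2 = R_1 \cap Y_x$, which is the assertion.

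There is no genuine obstacle; the only point that requires a moment of care is that the global labelling of $R_1,R_2$ is, by construction, the one compatible with the local labelling of $\theta^x_1,\theta^x_2$ (equivalently, of the vertices $V^x_1,V^x_2$) for each $x$, so the indices on the two sides of each displayed identity match up. Once this bookkeeping is observed, the proof is a one-line substitution.

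\begin{proof}
By Proposition~\ref{1':xlines}, every $X$-line is a $1$-line, so Lemma~\ref{1':2SS} applies to each $x\in X$ and yields $V^x_1 = R^x_2 \cap Y_x$ and $V^x_2 = R^x_1 \cap Y_x$. By Lemma~\ref{1':YZ} and the canonical numbering fixed afterwards, $R_i = R^x_i$ for each $i\in\{1,2\}$ and each $x\in X$. Substituting gives $V^x_1 = R_2 \cap Y_x$ and $V^x_2 = R_1 \cap Y_x$.
\end{proof}
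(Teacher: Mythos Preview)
Your proof is correct and matches the paper's approach exactly: the paper's proof is the single sentence ``This follows immediately from Lemmas~\ref{1':YZ} and~\ref{1':2SS},'' and you have simply spelled out that substitution. Your explicit mention of Proposition~\ref{1':xlines} is harmless but not needed at this point, since Lemma~\ref{1':YZ} (and the surrounding subsection) already operate under the standing conclusion that all $X$-lines are $1$-lines.
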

\begin{proof}
This follows immediately from Lemmas~\ref{1':YZ} and~\ref{1':2SS}.
\end{proof}

\begin{lemma}\label{1':sympx}
Let $\xi \in \Xi$  be arbitrary and denote its vertex by $T$. Then, for each $x\in X(\xi)$:
\begin{compactenum}[$(i)$]
\item $\theta^i_x$ and $\xi$ share a generator $G_i=\<T,L_i\>$, for some $X$-line $L_i$ of $\xi$, $i=1,2$, and $T=\<L_1^\perp \cap V^x_2, L_2^\perp \cap V^x_1\>$;
\end{compactenum}
moreover, for each $x'\in X(\xi)$,  putting $(x_1,x_2):=(x',x)$ if $x' \in G_1$;   $(x_1,x_2):=(x,x')$ if $x' \in G_2$; and $(x_1,x_2):=(x'^\perp \cap L_1, x'^\perp \cap L_2)$ if $x'\notin G_1 \cup G_2$, we have 
\begin{compactenum}[$(ii)$]
\item $(V^{x'}_1,V^{x'}_2)=(V_1^{x_2},V_2^{x_1})$ and hence $Y_{x'} = \<R_1 \cap x_1^\perp, R_2 \cap x_2^\perp\>$.
\end{compactenum}
\end{lemma}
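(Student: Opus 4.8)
The plan is to analyze the geometric configuration around a fixed $\xi\in\Xi$ by using the two reguli of $X$-lines it contains, together with the fact (now established in Proposition~\ref{1':xlines}) that every $X$-line is a $1$-line, so each point $x\in X(\xi)$ lies on exactly two members $\theta^x_1,\theta^x_2$ of $\Theta$ (Lemma~\ref{1':2SS}), and $\theta^x_i\in\Theta_i$ by the canonical numbering of Lemma~\ref{1':YZ}. First I would fix $x\in X(\xi)$ and an $X$-line $L_i$ of $\xi$ through $x$ that is contained in $\theta^x_i$; such lines exist because the $X$-lines of $X(\xi)$ through $x$ form two reguli and each $\theta^x_i$ meets $\xi$ in a singular subspace containing $x$. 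By Corollary~\ref{noXplane} (applied with $\zeta=\xi\in\Xi$), the intersection $\theta^x_i\cap\xi$ equals $\<L_i,L_i^\perp\cap Y(\xi)\>=\<L_i,T\>=:G_i$, which is a generator of $\xi$ containing $T$. That gives the first half of $(i)$. For the formula $T=\<L_1^\perp\cap V^x_2,\ L_2^\perp\cap V^x_1\>$: note $T\subseteq Y_x$, and by Corollary~\ref{V} we have $V^x_1=R_2\cap Y_x$, $V^x_2=R_1\cap Y_x$, with $V^x_1\cap V^x_2=\emptyset$ and $\<V^x_1,V^x_2\>=Y_x$ (Lemma~\ref{1':2SS}). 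Since $L_i\subseteq\theta^x_i$ and $T=L_i^\perp\cap Y(\theta^x_i)\cap\xi$, one checks $L_i^\perp\cap V^x_j$ (with $\{i,j\}=\{1,2\}$) lies in $T$, and a dimension count using $\dim V^x_j=v'=r'-1$, $\dim T=v=2r'-3$, and the fact that $L_1^\perp\cap V^x_2$ and $L_2^\perp\cap V^x_1$ are complementary inside $T$ (they sit in the complementary subspaces $V^x_2,V^x_1$ of $Y_x$) forces equality; this is essentially the content of Lemma~\ref{Hperp}$(i)$ giving $V(L_1,L_2)=Y_{L_1}\cap Y_{L_2}=T$ combined with $Y_{L_i}=L_i^\perp\cap Y(\theta^x_i)$ and splitting along $V^x_1\sqcup V^x_2$.

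For part $(ii)$, I would argue by cases on the position of $x'\in X(\xi)$ relative to the two generators $G_1,G_2$. If $x'\in G_1=\<T,L_1\>$, then since $G_1\subseteq\theta^x_1$, the line $xx'$ (or the whole generator) lies in $\theta^x_1\in\Theta_1$, so $\theta^{x'}_1=\theta^x_1$, hence $V^{x'}_1=V^x_1=V^{x_2}_1$ with $x_2=x$; and for the second coordinate, $x'$ lies on an $X$-line $L_2'$ of $\xi$ in the other regulus through $x'$, contained in $\theta^{x'}_2$, and one shows $\theta^{x'}_2$ has vertex $V^{x'}_2=R_1\cap Y_{x'}$; comparing $Y_{x'}$ with $Y_{x}$ along the generator $G_1$ (they differ only in the second factor) gives $V^{x'}_2=V^{x_1}_2$ with $x_1=x'$. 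The case $x'\in G_2$ is symmetric. If $x'\notin G_1\cup G_2$, set $x_i:=x'^\perp\cap L_i$; these are well-defined single $X$-points because $x'$ is non-collinear to $L_i$ in the quadric $X(\xi)$ except for one point (as $G_i$ is the generator through $x$ in that regulus and $x'\notin G_i$), and $x_i\in G_i$, $x_i\in\theta^{x}_i$. Then $x_i$ lies on a generator with $x'$ in the appropriate regulus, so $\theta^{x_i}_i$ and $\theta^{x'}_i$ share an $X$-line through $x_i$, forcing (by Lemma~\ref{1':2SS}, uniqueness of the two members of $\Theta_i$... through a point on a $1$-line, applied at $x_i$) $\theta^{x'}_i=\theta^{x_i}_i=\theta^{x}_i$ and hence $V^{x'}_i=V^{x}_i=V^{x_j}_i$ where $\{i,j\}=\{1,2\}$ — wait, the claimed identity is $(V^{x'}_1,V^{x'}_2)=(V^{x_2}_1,V^{x_1}_2)$, so I must be careful with the indexing: $V^{x'}_1$ should match the vertex attached to $x_2$ and $V^{x'}_2$ to $x_1$. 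This crossing comes from the fact that $x_1\in L_1\subseteq\theta^{x}_1$, so $x_1$ and $x$ lie in $\theta^x_1$, and the relevant $\Theta_2$-member through $x'$ is the one meeting the regulus containing $L_1$; I would nail down the bookkeeping by writing $\theta^{x'}_2$ as $[L_2^{x'}, x']$ for $L_2^{x'}$ the $X$-line of $\xi$ through $x'$ in the regulus of $L_2$, observing $x_2\in L_2^{x'}{}^\perp$... and tracking which $V$ it produces via Corollary~\ref{V}: $V^{x'}_2=R_1\cap Y_{x'}$, and $Y_{x'}$ relates to $Y_{x_1}$ because $x'$ and $x_1$ lie on a common generator in the $L_1$-regulus.

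Once the vertex identities are in place, the final formula $Y_{x'}=\<R_1\cap x_1^\perp,\ R_2\cap x_2^\perp\>$ follows immediately: by Lemma~\ref{1':2SS} applied at $x'$, $Y_{x'}=\<V^{x'}_1,V^{x'}_2\>$, and by $(ii)$ together with Corollary~\ref{V} (which says $V^{x_2}_1=R_2\cap Y_{x_2}$ and $V^{x_1}_2=R_1\cap Y_{x_1}$, and one further simplifies $R_2\cap Y_{x_2}=R_2\cap x_2^\perp$ since $R_2\subseteq Y$ and $R_2\cap x_2^\perp\subseteq Y_{x_2}$ automatically, and conversely) we substitute. I expect the main obstacle to be the case $x'\notin G_1\cup G_2$: keeping the two reguli, the two families $\Theta_1,\Theta_2$, and the cross-indexing $(x_1,x_2)\leftrightarrow(x_2,x_1)$ consistent requires care, and one must be sure that $x'^\perp\cap L_i$ is genuinely a single point (using that $X(\xi)$ is a hyperbolic quadric with vertex $T$ and that $L_i\not\subseteq x'^\perp$), and that the $\Theta_i$-member attached to $x'$ is literally equal to (not merely collinear with) the one attached to $x_j$ — this equality is where Lemma~\ref{1':2SS}'s "exactly two members of $\Theta$ through $x$" and the definition of $\Theta_i$ via $R_i$ do the real work. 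The collinear sub-cases $x'\in G_1$ or $x'\in G_2$ are comparatively routine.
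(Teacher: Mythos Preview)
Your approach matches the paper's: use Lemma~\ref{1':xlinesx} (every $X$-line through $x$ lies in $\theta^x_1\cup\theta^x_2$) to place the two $X$-lines $L_1,L_2$ of $\xi$ through $x$ into $\theta^x_1,\theta^x_2$ respectively, deduce $G_i=\xi\cap\theta^x_i=\<L_i,T\>$ and $T=Y_{L_1}\cap Y_{L_2}=\<L_1^\perp\cap V^x_2,\,L_2^\perp\cap V^x_1\>$, and then argue case by case on the position of $x'$. Part $(i)$ and the two easy cases of $(ii)$ are fine.

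The genuine gap is in the case $x'\notin G_1\cup G_2$, where your indexing goes wrong and your attempted self-correction does not recover. You write that $\theta^{x_i}_i$ and $\theta^{x'}_i$ share an $X$-line through $x_i$, leading to $\theta^{x'}_i=\theta^{x_i}_i=\theta^x_i$; this is false. The $X$-line $x_ix'$ lies in the \emph{other} regulus from $L_i$ (since $x'\notin G_i$), so by Lemma~\ref{1':xlinesx} applied at $x_i$ it lies in $\theta^{x_i}_j$ with $j\neq i$, not in $\theta^{x_i}_i$. Hence $\theta^{x'}_j=\theta^{x_i}_j$, i.e.\ $\theta^{x'}_2=\theta^{x_1}_2$ and $\theta^{x'}_1=\theta^{x_2}_1$, which gives the cross-indexing $(V^{x'}_1,V^{x'}_2)=(V^{x_2}_1,V^{x_1}_2)$ directly. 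This is exactly how the paper argues, and it is a two-line step once you see that $x_1x'\nsubseteq\theta^x_1$ (because $x'\notin\theta^x_1$). Your subsequent attempt to fix things via ``$x_2\in L_2^{x'\,\perp}$'' is also off: with $L_2^{x'}=x_1x'$, the point $x_2$ is in general \emph{not} collinear with $x_1$, so that relation does not hold. Drop that line of reasoning and use the regulus-swap observation above.
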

\begin{proof}
$(i)$ Let $L_1$ and $L_2$ be any two non-collinear $X$-lines through $x$ inside $\xi$. Since all $X$-lines through $x$ are contained in $\theta_1^x\cup\theta_2^x$ by Lemma~\ref{1':xlinesx}, we have (renumbering if necessary) that $L_i \subseteq \theta^x_i$, $i=1,2$. Furthermore we know that $T$ is determined as $Y_{L_1} \cap Y_{L_2}$, which belongs to $Y_x=\<V^x_1,V^x_2\>$. Firstly, this implies that $G_i=\<T,L_i\> = \xi \cap \theta^x_i$ for $i=1,2$; secondly, it implies that $T=\<L_1^\perp \cap V^x_2, L_2^\perp \cap V^x_1\>$ (noting that $L_i \perp V^x_i$, $i=1,2$). 

$(ii)$ Recall from Corollary~\ref{V} that $V_1^{x'}= R_2 \cap Y_{x'}$ and $V_2^{x'} =  R_1 \cap Y_{x'}$ for each $x'\in X$. Firstly, take $x' \in G_1$.  Then $V_1^{x'}=V_1^{x}=V_1^{x_2}$ since  $x,x' \in \theta^x_1$ and $x=x_2$;  $V_2^{x'}=V_2^{x_1}$ is trivial since $x_1=x'$. Likewise, the statement is true if $x' \in G_2$, so suppose $x' \notin G_1 \cup G_2$. Note that this implies that $x_i=x'^\perp \cap L_i$ is indeed a unique point of $L_i$ distinct from $x$ for $i=1,2$, as one can see inside $X(\xi)$; in particular, $x' \notin \theta^x_1 \cup \theta^x_2$.
As such, the line $x_1x'$ is not contained in $\theta^x_1=\theta^{x_1}_1$, and by Lemma~\ref{1':xlinesx} this means that $x_1x' \subseteq \theta_2^{x_1}$, implying that $V_2^{x'}=V_2^{x_1}$. Likewise, we obtain $V_1^{x'}=V_1^{x_2}$.
By Lemma~\ref{1':2SS},  $Y_{x'} = \<V_1^{x'},V_2^{x'}\>$ and by the above this equals $\<V_1^{x_2},V_2^{x_1}\>$, which at its turn coincides with $\<x_2^\perp \cap R_2,x_1^\perp \cap R_2\>$ by Corollary~\ref{V}.
\end{proof}

This allows us to provide a counterpart for Corollary~\ref{V}.

\begin{lemma}\label{1':V1V2}
Let $V_i$ by any hyperplane of $R_i$, $i=1,2$. Then there is a point $x\in X$ such that $Y_x=\<V_1,V_2\>$ and all points of $X$ collinear to $\<V_1,V_2\>$ are precisely those of $\<x,Y_x\> \cap X$.
\end{lemma}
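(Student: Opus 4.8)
There are two things to prove: (a) the existence of $x\in X$ with $Y_x=\<V_1,V_2\>$, and (b) that the $X$-points collinear to $\<V_1,V_2\>$ are exactly those of $\<x,Y_x\>\cap X$. For (b) I would first reduce to a statement about $Y$-images: since $V_1\subseteq R_1$ and $V_2\subseteq R_2$ are disjoint (Lemma~\ref{1':YZ}) of dimension $r'-1=v'$, the span $\<V_1,V_2\>$ has dimension $2v'+1$, which is exactly $\dim Y_{x'}$ for every $x'\in X$ (Lemma~\ref{1':2SS}); hence ``$x'$ collinear to every point of $\<V_1,V_2\>$'' means $\<V_1,V_2\>\subseteq Y_{x'}$, and by the dimension count this forces $Y_{x'}=\<V_1,V_2\>$. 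So (b) becomes the equality $\{x'\in X:Y_{x'}=Y_x\}=\<x,Y_x\>\cap X$, whose inclusion ``$\supseteq$'' is already recorded in Lemma~\ref{proprhochi}$(i)$ (whose proof notes $\rho(x')=\rho(x)\Rightarrow Y_{x'}=Y_x$). So the real work is (a) and the inclusion ``$\subseteq$''.

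\textbf{Proof of (a).}
Here I use $Z=R_1\sqcup R_2$ and the fact (Corollary~\ref{V}, Lemma~\ref{1':2SS}) that $Y_y=\<R_1\cap Y_y,\,R_2\cap Y_y\>$ for each $y\in X$, with $R_1\cap Y_y$ the vertex of $\theta_2^y$ and $R_2\cap Y_y$ the vertex of $\theta_1^y$. The key step is: for each $\theta\in\Theta_1$, as $y$ runs over $X(\theta)$, the subspace $R_1\cap Y_y$ runs over \emph{all} hyperplanes of $R_1$ (and symmetrically for $\Theta_2$ and $R_2$). I would prove this by projecting the cone $\theta$ from its vertex onto a complement: the image is a non-degenerate hyperbolic quadric $\overline\theta$ of rank $r'+1$, on which $R_1$ maps isomorphically onto a generator $\overline{R_1}$, the points of $X(\theta)$ map onto $\overline\theta\setminus\overline{R_1}$, and $R_1\cap Y_y$ corresponds to the hyperplane $\overline{y}^\perp\cap\overline{R_1}$ of $\overline{R_1}$; conversely each hyperplane $\overline V$ of $\overline{R_1}$ is met by the second generator through $\overline V$ (the one distinct from $\overline{R_1}$) exactly in $\overline V$, so any point of it off $\overline V$ realises $\overline V$. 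Granting this, pick any $x_0\in X$, then $x_1\in X(\theta_1^{x_0})$ with $R_1\cap Y_{x_1}=V_1$, then $x_2\in X(\theta_2^{x_1})$ with $R_2\cap Y_{x_2}=V_2$. Since $x_2\in\theta_2^{x_1}\in\Theta_2$ we get $\theta_2^{x_2}=\theta_2^{x_1}$, so they share a vertex, which by Corollary~\ref{V} equals $R_1\cap Y_{x_1}=V_1$; hence $R_1\cap Y_{x_2}=V_1$ as well, and $Y_{x_2}=\<R_1\cap Y_{x_2},\,R_2\cap Y_{x_2}\>=\<V_1,V_2\>$. Take $x:=x_2$.

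\textbf{Proof of ``$\subseteq$''.}
Take $x'\in X$ with $Y_{x'}=Y_x$; I want $x'\in\<x,Y_x\>$. If $x=x'$ this is trivial; if the singular line $xx'$ meets $Y$ in a point $y$ then $y\in Y_x$ and $x'\in\<x,y\>\subseteq\<x,Y_x\>$. If $xx'$ is an $X$-line, it is a $1$-line (Proposition~\ref{1':xlines}), lying in a unique $\theta_L\in\Theta$, say in $\Theta_1$, which is the unique $\Theta_1$-member through both $x$ and $x'$, so $\theta_L=\theta_1^x=\theta_1^{x'}$; projecting from the common vertex, $\overline x$ and $\overline{x'}$ become collinear points of $\overline{\theta_L}$ off $\overline{R_1}$ with $\overline{x}^\perp\cap\overline{R_1}=\overline{x'}^\perp\cap\overline{R_1}=:\overline W$ (as $R_1\cap Y_x=R_1\cap Y_{x'}$), hence $\<\overline x,\overline W\>=\<\overline{x'},\overline W\>$ (the second generator through $\overline W$), so $\overline{x'}\in\<\overline x,\overline W\>$, and unprojecting — $\<\overline x,\overline W\>$ being the image of $\<x,Y_x\>$ — gives $x'\in\<x,Y_x\>$. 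Finally, if $x\not\perp x'$, put $\zeta:=[x,x']$. If $\zeta\in\Theta$ the same projection argument forces $\overline{x'}\in\<\overline x,\overline W\>$, i.e.\ $\overline x\perp\overline{x'}$, contradicting $x\not\perp x'$. If $\zeta\in\Xi$, let $L_1$ be the $X$-line of $\zeta$ through $x$ inside $\theta_1^x$ and $x_1:=x'^\perp\cap L_1$ (well defined and $\neq x$ since $x\not\perp x'$). Lemma~\ref{1':sympx}$(ii)$ gives $R_1\cap Y_{x'}=R_1\cap Y_{x_1}$, so $R_1\cap Y_{x_1}=R_1\cap Y_x$; as $\theta_1^{x_1}=\theta_1^x$ also $R_2\cap Y_{x_1}=R_2\cap Y_x$, whence $Y_{x_1}=Y_x$. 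By the $X$-line case (applied to $x,x_1$ on $L_1$), $x_1\in\<x,Y_x\>$, and since $x_1\notin Y$ this gives $\<x_1,Y_{x_1}\>=\<x,Y_x\>$. Now $x_1\perp x'$, $Y_{x_1}=Y_{x'}$ and $x_1\neq x'$, so applying the two collinear cases above to $(x_1,x')$ gives $x'\in\<x_1,Y_{x_1}\>=\<x,Y_x\>$; as $\<x,Y_x\>$ is a singular subspace (a maximal one of $\theta_1^x$, by Lemma~\ref{1':xperp}) containing both $x$ and $x'$, we get $x\perp x'$, a contradiction. Hence $x\not\perp x'$ cannot occur, and $x'\in\<x,Y_x\>\cap X$ in every remaining case.

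\textbf{Anticipated obstacle.}
The main subtlety is to make the ``project the cone from its vertex'' arguments precise — that the $X$-points of a cone $\theta\in\Theta$, their collinearity, and the subspaces $R_i\cap Y_y$ all transfer correctly to the quotient hyperbolic quadric — since all three of the non-trivial cases rest on it. A secondary point is that in the case $\zeta\in\Xi$ one cannot argue directly but must first produce the auxiliary point $x_1$, show $Y_{x_1}=Y_x$, and reduce to the already-settled collinear case. The rest is straightforward bookkeeping with $Z=R_1\sqcup R_2$ and the vertices of members of $\Theta$.
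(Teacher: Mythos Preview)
Your proof is correct, but considerably more laborious than the paper's for part (b). The reduction at the start --- that ``collinear to $\langle V_1,V_2\rangle$'' is equivalent to $Y_{x'}=\langle V_1,V_2\rangle$ by the dimension count --- is clean and matches the spirit of the paper. Your existence argument (a) is also fine; in fact it is arguably more direct than the paper's, which instead finds auxiliary points $x_1\in\theta_1^x$, $x_2\in\theta_2^x$ on $X$-lines through $x$ and then passes through a member of $\Xi$ (via Corollary~\ref{1':symp} and Lemma~\ref{1':sympx}) to locate the desired point.

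Where you diverge substantially is in the ``$\subseteq$'' direction. The paper dispatches this in two lines: if $[x,x']\in\Xi$, its vertex would contain $Y_x\cap Y_{x'}=\langle V_1,V_2\rangle$, of dimension $2v'+1>v$, a contradiction; otherwise Lemma~\ref{1':xlinesx} or (S1) puts $x,x'$ in a common $\theta\in\Theta$, and since $\langle x,Y_x\rangle$ is the \emph{unique} maximal singular subspace of $\theta$ through $Y_x$ not contained in $Y$, we get $x'\in\langle x,Y_x\rangle$. Your four-case analysis via cone projections reaches the same conclusions but at much greater cost. In particular, your ``$X$-line'' case is actually vacuous: if $x\neq x'$ lie on an $X$-line $L$ with $Y_x=Y_{x'}$, then $Y_x\subseteq Y_L$, contradicting Lemma~\ref{lineSS}(ii). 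Your projection argument there does prove $x'\in\langle x,Y_x\rangle$, but combining this with $x'\in L\setminus\{x\}$ already forces $x'=x$ (since $L\cap\langle x,Y_x\rangle=\{x\}$), so the subsequent use of this case inside your $\Xi$-argument is circuitous --- the contradiction arrives earlier than you notice. None of this is wrong, just redundant; the paper's vertex-dimension observation for the $\Xi$-case is the shortcut you are missing.
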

\begin{proof}
Let $x\in X$ be arbitrary. Suppose first that  $x$ is collinear to $\<V_1,V_2\>$. We claim that $\<x,V_1,V_2\> \cap X$ is precisely the set of $X$-points collinear to $\<V_1,V_2\>$. Clearly, all points of $\<x,V_1,V_2\> \cap X$ are collinear to $\<V_1,V_2\>$. Suppose for a contradiction that $x' \notin \<x,V_1,V_2\>$ is an $X$-point collinear to $\<V_1,V_2\>$. If $x$ and $x'$ determine a unique member of $\Xi$, then $\<V_1,V_2\>$ would be contained in its vertex, contradicting the fact that $\dim(\<V_1,V_2\>)=2v'+1>v$ by Corollary~\ref{1':symp}. In all other cases,  Lemma~\ref{1':xlinesx} or (S1) implies that $x$ and $x'$ belong to some  $\theta\in \Theta$. By Lemma~\ref{1':xperp}, $\theta$ contains $Y_x= \<V_1,V_2\>$, and $\<x,V_1,V_2\>$ is the unique maximal singular subspace in $\theta$ not contained in $Y$ through $\<V_1,V_2\>$, which means that $\<x',V_1,V_2\>=\<x,V_1,V_2\>$ after all.  The claim follows. 

Next, suppose that $x$ is not collinear to $\<V_1,V_2\>$. Hence $V_2^x \neq V_1$ or $V^x_1 \neq V_2$. Without loss of generality, the first option holds. Then there is an $X$-point $x_1$ in $\theta_1^x$ on an $X$-line with $x$ that is  collinear to $V_1\subseteq R_1$.  In case $V_1^x = V_2$, then $x_1 \perp V_2$ since $x,x_1 \in \theta^x_1$, and hence  $x_1$ is collinear to $\<V_1,V_2\>$ and we are done. In case $V_1^x \neq V_2$,  we can likewise find a point $x_2 \in X(\theta_2^x)$ on an $X$-line with $x$ that is collinear to $V_2$.  Putting $L_i:=xx_i$ for $i=1,2$, we obtain from Corollary~\ref{1':symp} that $L_1$ and $L_2$ are contained in a member $\xi$ of $\Xi$. Let $x'$ be a point on $X(\xi)$ collinear to $x_1$ and $x_2$, but not equal or collinear to $x$. Then Lemma~\ref{1':sympx} gives $Y_{x'}=\<x_1^\perp \cap R_1, x_2^\perp \cap R_2\>=\<V_1,V_2\>$. \end{proof}
\par\bigskip
There are some interesting consequences.

\begin{lemma}\label{1':int12}
Let $\theta$ and $\theta'$ be two members of $\Theta$, with respective vertices $V$ and $V'$. Then $\theta \cap \theta'$ contains  an $X$-point $x$ if and only if $\theta$ and $\theta'$ belong to the different classes $\Theta_1$ and $\Theta_2$ of $\Theta$. If this is the case, then $\theta \cap \theta'=\<x,V,V'\>$; if not, say if $\theta,\theta'\in\Theta_i$ for some $i\in\{1,2\}$, then  $\theta \cap \theta'$ is $\<R_i,V \cap V'\>$ and $V \cap V'$ is a hyperplane of~$V$ and~$V'$. 
\end{lemma}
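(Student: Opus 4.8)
The plan is to work entirely with the canonical decomposition $Z=R_1\cup R_2$ and $\Theta=\Theta_1\cup\Theta_2$ fixed around Lemma~\ref{1':YZ}, together with the vertex descriptions in Corollary~\ref{V} and Lemma~\ref{1':2SS}, and to split into the ``different classes'' and ``same class'' cases. I will first dispatch the easy implication together with the precise formula. Suppose $\theta\cap\theta'$ contains an $X$-point $x$. Then $\theta$ and $\theta'$ are two distinct members of $\Theta$ through $x$, and by Lemma~\ref{1':2SS} there are exactly two of those, namely $\theta^x_1\in\Theta_1$ and $\theta^x_2\in\Theta_2$; hence $\{\theta,\theta'\}=\{\theta^x_1,\theta^x_2\}$, so they lie in different classes, their vertices are (up to order) $V=V^x_1$ and $V'=V^x_2$, and $\theta\cap\theta'=\theta^x_1\cap\theta^x_2=\langle x,Y_x\rangle$ (both members contain the maximal singular subspace $\langle x,Y_x\rangle$ by Lemma~\ref{1':xperp}, and the intersection is singular by (S2), hence cannot properly contain it). Since $Y_x=\langle V^x_1,V^x_2\rangle$ by Lemma~\ref{1':2SS}, this is $\langle x,V,V'\rangle$. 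So once I show that different classes force an $X$-point in $\theta\cap\theta'$, both the equivalence and the formula are settled.

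For the core step, let $\theta\in\Theta_1$, $\theta'\in\Theta_2$, and pick $x_1\in X(\theta)$, $x_2\in X(\theta')$. If $x_1\perp x_2$: either the line $x_1x_2$ has a point of $Y$, in which case $\rho(x_1)=\rho(x_2)$ and $x_2\in\rho^{-1}(\rho(x_1))=\langle x_1,Y_{x_1}\rangle\cap X\subseteq X(\theta)$ by Lemma~\ref{proprhochi} (the generator $\langle x_1,Y_{x_1}\rangle$ lies in $\theta=\theta^{x_1}_1$), producing an $X$-point of $\theta\cap\theta'$; or $x_1x_2$ is an $X$-line, hence a $1$-line by Proposition~\ref{1':xlines}, lying in a unique $\theta''\in\Theta$ through both, and from $\theta''\in\{\theta,\theta^{x_1}_2\}\cap\{\theta^{x_2}_1,\theta'\}$ with $\theta^{x_1}_2\in\Theta_2$ incompatible with $\theta^{x_2}_1\in\Theta_1$, we get $\theta''\in\{\theta,\theta'\}$, forcing $x_2\in X(\theta)$ or $x_1\in X(\theta')$. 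If $x_1\not\perp x_2$, then $[x_1,x_2]\in\Xi\cup\Theta$ by Lemma~\ref{uniquesymp}; the same incompatibility rules out the $\Theta$-case unless $[x_1,x_2]\in\{\theta,\theta'\}$; and if $\xi:=[x_1,x_2]\in\Xi$, I will use Lemma~\ref{1':sympx}, which lets one label the two reguli of $X(\xi)$ consistently as $\Theta_1$- and $\Theta_2$-reguli (an $X$-line of the $\Theta_j$-regulus lies in a member of $\Theta_j$). Then $\xi\cap\theta=\xi\cap\theta^{x_1}_1$ contains an $X$-line $L_1\ni x_1$ of the $\Theta_1$-regulus, and $\xi\cap\theta'=\xi\cap\theta^{x_2}_2$ contains an $X$-line $L_2\ni x_2$ of the $\Theta_2$-regulus; since $\xi$ is a cone over a rank-$2$ hyperbolic quadric, $L_1$ and $L_2$ lie in opposite reguli and so meet in an $X$-point, which lies in $\theta\cap\theta'$.

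It remains to treat $\theta\ne\theta'$ in the same class $\Theta_i$. By the implication just proven (contrapositive), $\theta\cap\theta'$ contains no $X$-point, hence is a subspace of $Y$, so it lies in $Y(\theta)\cap Y(\theta')$. Using $v'=r'-1$, the $r'$-space of $Z(\theta)$ complementary to its vertex $V$ spans, together with $V$, the generator $Y(\theta)$, so $Y(\theta)=\langle Z(\theta)\rangle=\langle R_i,V\rangle$ and likewise $Y(\theta')=\langle R_i,V'\rangle$; writing $Y=R_1\oplus R_2$ vectorially with $V,V'$ inside the other $r'$-space $R_j$ gives $Y(\theta)\cap Y(\theta')=\langle R_i,V\cap V'\rangle$, while $R_i\cup(V\cap V')\subseteq\theta\cap\theta'$, so $\theta\cap\theta'=\langle R_i,V\cap V'\rangle$. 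The last assertion, that $V\cap V'$ is a hyperplane of $V$ and of $V'$, amounts to $V\ne V'$ (then $V,V'$ are distinct hyperplanes of $R_j$, so $\langle V,V'\rangle=R_j$ and $V\cap V'$ has the right codimension). \textbf{This is the step I expect to be the main obstacle:} I plan to show that a member of $\Theta_i$ is determined by its vertex. For $x\in X(\theta)$ we have $Y_x=\langle V,V^x_2\rangle$ with $V^x_2=Y_x\cap R_j$ a hyperplane of $R_j$, and from the structure of the degenerate hyperbolic quadric $X(\theta)$ (radical $V$, generator $Y(\theta)=\langle V,R_i\rangle$) the subspace $x^\perp\cap Y(\theta)$ runs over \emph{all} hyperplanes of $Y(\theta)$ through $V$ as $x$ runs over $X(\theta)$, equivalently $V^x_2$ runs over all hyperplanes of $R_j$; similarly for $\theta'$. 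Since $X(\theta)\cap X(\theta')=\emptyset$ (otherwise $\theta=\theta'$), and if $V=V'$ and $V^x_2=V^{x'}_2$ for $x\in X(\theta)$, $x'\in X(\theta')$, then $Y_x=\langle V,V^x_2\rangle=\langle V',V^{x'}_2\rangle=Y_{x'}$, whence by Lemma~\ref{1':V1V2} $x'\in\langle x,Y_x\rangle\cap X\subseteq X(\theta)$, a contradiction — so the two nonempty full sets of hyperplanes of $R_j$ would have to be disjoint, which is absurd. Hence $V\ne V'$, completing the proof.
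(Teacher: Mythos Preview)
Your proof is essentially correct and, in the ``easy'' implication and in the same-class case, it tracks the paper's argument closely (Lemma~\ref{1':2SS} plus Lemma~\ref{1':V1V2}). There is one small imprecision and one notable difference in strategy.

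\textbf{The imprecision.} In the $\Xi$-case of your converse argument you write that $L_1$ and $L_2$, lying in opposite reguli of the cone $XY(\xi)$, ``meet in an $X$-point''. Since $v=2r'-3\ge 1$, the quadric $XY(\xi)$ is a genuine cone over a grid, not a grid; two $X$-lines lying in generators from opposite families need not intersect (only the generators $G_1=\xi\cap\theta$ and $G_2'=\xi\cap\theta'$ are guaranteed to meet, namely in $\langle T,p\rangle$ for a unique $X$-point $p$). The fix is immediate: the $X$-line $L_1\subseteq G_1$ meets the hyperplane $G_1\cap G_2'=\langle T,p\rangle$ of $G_1$ in an $X$-point (since $L_1\cap T=\emptyset$), and this point lies in $\theta\cap\theta'$. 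Alternatively, choose $L_1=x_1p$ and $L_2=x_2p$ from the start.

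\textbf{The difference.} For the implication ``different classes $\Rightarrow$ $X$-point in $\theta\cap\theta'$'' you run a case analysis on the mutual position of $x_1\in X(\theta)$ and $x_2\in X(\theta')$ and invoke Lemma~\ref{1':sympx} in the $\Xi$-case. The paper's route is shorter and avoids this: it observes directly that $V\subseteq R_2\subseteq\theta'$ and $V'\subseteq R_1\subseteq\theta$, so $\langle V,V'\rangle\subseteq\theta\cap\theta'$, and then appeals to Lemma~\ref{1':V1V2} to conclude that the (unique) maximal singular subspaces through $\langle V,V'\rangle$ in $\theta$ and $\theta'$ containing $X$-points coincide. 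Your approach is more hands-on and rederives, via Lemma~\ref{1':sympx}, the regulus labelling that the paper later packages into Proposition~\ref{1':projsegre}; the paper's approach buys brevity by using Lemma~\ref{1':V1V2} as a black box.

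\textbf{A notational slip.} In your same-class paragraph you write $V^x_2=Y_x\cap R_j$ with $j$ the index of the $R$-space containing $V,V'$. By Corollary~\ref{V}, $V^x_2=R_1\cap Y_x$, so if $\theta\in\Theta_1$ (hence $j=2$) the varying hyperplane is $V^x_2=Y_x\cap R_1=Y_x\cap R_i$, not $Y_x\cap R_j$. This does not affect the logic---you correctly argue that this varying part ranges over all hyperplanes and then apply Lemma~\ref{1':V1V2} exactly as the paper does---but the index should be $i$, not $j$.
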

\begin{proof}
If $\theta \cap \theta'$ contains an $X$-point $x$, then without loss, $\theta = \theta^x_1 \in \Theta_1$ and $\theta'=\theta^x_2 \in \Theta_2$, so they belong to different classes indeed and $\theta_1 \cap \theta_2 = \<x,V,V'\>=\<x,Y_x\>$ since $Y_x \subseteq \theta, \theta'$ by Lemma~\ref{1':xperp}. Let $\theta_1 \in \Theta_1$ and $\theta_2 \in \Theta_2$  be arbitrary and denote their respective vertices by $V_1$ and $V_2$.
By definition,  $\theta_i$ contains $R_i$ for $i=1,2$, and therefore $V_1 \subseteq R_2 \subseteq \theta_2$ and $V_2 \subseteq  R_1 \subseteq \theta_1$. So $\<V_1,V_2\> \subseteq \theta_1 \cap \theta_2$. In $\theta_1$ and $\theta_2$, there are (unique) maximal singular subspace through $\<V_1,V_2\>$ containing a point of $X$. By Lemma~\ref{1':V1V2}, these two subspaces coincide, implying that $\theta_1 \cap \theta_2$ contains an $X$-point.

Secondly, take two arbitrary members $\theta, \theta' \in \Theta_1$ ($\Theta_2$ plays the same role). Again, $R_1$ is contained in $\theta \cap \theta'$ by definition. The vertices $V$ and $V'$ are hyperplanes of $R_2$. Let $x \in X(\theta)$ and $x'\in X(\theta')$ be points with $x^\perp \cap R_1 = x'^\perp \cap R_1$. Then, if $V=V'$, also $Y_x=Y_{x'}$, which by Lemma~\ref{1':V1V2} implies that $x' \in \<x,Y_x\> \subseteq \theta$. However, as $\theta$ and $\theta'$ belong to the same class, they cannot share a point of $X$. We conclude that $V\neq V'$ and hence (looking inside $R_2$) we see that $V \cap V'$ is a hyperplane in both $V$ and $V'$.
\end{proof}

\begin{cor}\label{1':SSV}
For each hyperplane $V$ of $R_i$, there is a unique member of $\Theta_j$ having $V$ as its vertex, $\{i,j\}=\{1,2\}$.
\end{cor}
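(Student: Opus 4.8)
The plan is to derive \ref{1':SSV} quickly from the structural lemmas about $Y$ already proved, treating existence and uniqueness separately. Throughout, fix $\{i,j\}=\{1,2\}$, and recall from the definition that $\Theta_j$ consists of exactly those members of $\Theta$ containing $R_j$, and from Corollary~\ref{V} that for $x\in X$ the member $\theta^x_j\in\Theta_j$ has vertex $V^x_j=R_i\cap Y_x$; in particular the vertices of members of $\Theta_j$ are subspaces of $R_i$, of dimension $v'=r'-1$ (Lemma~\ref{1':2SS}), hence hyperplanes of $R_i$.

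\emph{Uniqueness.} Suppose $\theta,\theta'\in\Theta_j$ both have the given hyperplane $V$ of $R_i$ as vertex. If $\theta\neq\theta'$, then since $\theta$ and $\theta'$ lie in the same class $\Theta_j$, Lemma~\ref{1':int12} applies and shows that the intersection of their vertices is a \emph{proper} hyperplane of each of those vertices; this contradicts the fact that both vertices equal $V$. Hence $\theta=\theta'$.

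\emph{Existence.} Let $V$ be a hyperplane of $R_i$. Pick an arbitrary hyperplane $V'$ of $R_j$. Applying Lemma~\ref{1':V1V2} to the pair of hyperplanes $(V,V')$ of $(R_i,R_j)$, we obtain a point $x\in X$ with $Y_x=\<V,V'\>$. Then $\theta^x_j\in\Theta_j$ has vertex $V^x_j=R_i\cap Y_x$ by Corollary~\ref{V}. Since $V\subseteq R_i$ and $V\subseteq Y_x$, we get $V\subseteq V^x_j$; and because $\dim(V)=r'-1=v'=\dim(V^x_j)$, equality holds. Thus $\theta^x_j\in\Theta_j$ is a member of $\Theta_j$ with vertex $V$, as required.

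There is no serious obstacle here; the only point demanding care is keeping the two classes $\Theta_1,\Theta_2$ and the two subspaces $R_1,R_2$ correctly paired, which is settled once one notes (via the definition of $\Theta_j$ and Corollary~\ref{V}) that members of $\Theta_j$ have their vertices inside $R_i$ with $\{i,j\}=\{1,2\}$. The dimension count $\dim V = v' = \dim V^x_j$ is the one substantive computation, and it is immediate from Lemma~\ref{1':2SS}.
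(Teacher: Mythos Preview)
Your proof is correct and follows essentially the same approach as the paper: existence via Lemma~\ref{1':V1V2} and uniqueness via Lemma~\ref{1':int12}. The only cosmetic difference is that for existence the paper explicitly constructs the member of $\Theta_j$ as $[x,z]$ for a point $z\in R_j\setminus V'$ and then verifies its vertex, whereas you invoke the already-named $\theta^x_j$ and read off its vertex from Corollary~\ref{V}; your version is slightly more direct.
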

\begin{proof}
Without loss, $V \subseteq R_1$. Let $V'$ by any hyperplane of $R_2$. Then by Lemma~\ref{1':V1V2}, there is a point $x\in X$ with  $Y_x=\<V,V'\>$. Let $z\in R_2\setminus V'$ be arbitrary. Then $[x,z]$ is a member of $\Theta$ containing $R_2$, i.e., $[x,z]\in \Theta_2$. Moreover, $[x,z]$ contains $Y_x$ by Lemma~\ref{1':xperp} and therefore, $[x,z]$ has $V$ as its vertex. By Lemma~\ref{1':int12}, there is no other member of $\Theta_2$ having $V$ as vertex. 
\end{proof}

\par\bigskip

The relation between two $X$-points can be expressed in terms of the subspaces of $Y$ they are collinear to.
\begin{lemma}\label{1':rel}
Take two distinct points $x_1,x_2 \in X$. Then
\begin{compactenum}[$(i)$]
\item $x_1x_2$ is a singular line with a unique point in $Y$ $\Leftrightarrow V_i^{x_1} = V_i^{x_2}$ for all $i\in\{1,2\}$;
\item $x_1$ and $x_2$ are either on an $X$-line or non-collinear points of a member of $\Theta$ $\Leftrightarrow V_i^{x_1} = V_i^{x_2}$ for precisely one $i\in\{1,2\}$;
\item $x_1$ and $x_2$ are non-collinear points of a member of $\Xi$  $\Leftrightarrow V_i^{x_1} \neq V_i^{x_2}$ for all $i\in\{1,2\}$.
\end{compactenum}
\end{lemma}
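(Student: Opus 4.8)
The plan is to analyze the relation between two distinct $X$-points $x_1,x_2$ by using the correspondence (established in Lemma~\ref{1':YZ} and Corollary~\ref{V}) between $x$ and the pair $(V_1^x,V_2^x)$ of hyperplanes of $R_1$ and $R_2$. Recall $Y_x = \<V_1^x,V_2^x\>$ with $V_i^x = R_{3-i}\cap Y_x$, and that $R_1,R_2$ are the two fixed $r'$-spaces whose union is $Z$. The three cases of the statement are mutually exclusive and, by (S1) together with Lemmas~\ref{XYpointsonline},~\ref{1':xlinesx} and the fact that $r=1$, exhaustive: either $x_1x_2$ is singular with a point in $Y$, or $x_1x_2$ is an $X$-line, or $x_1,x_2$ lie in a unique member $\zeta$ of $\Xi\cup\Theta$ (by Lemma~\ref{uniquesymp}), where $\zeta\in\Xi$ or $\zeta\in\Theta$. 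So it suffices to compute, in each of the four basic configurations, how many of the two equalities $V_i^{x_1}=V_i^{x_2}$ hold; since the right-hand conditions (``both'', ``exactly one'', ``none'') are also mutually exclusive and exhaustive, matching them up in one direction forces all the equivalences.

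First I would treat $(i)$: if $x_1x_2$ is singular with a unique point $y\in Y$, then $y\in Y_{x_1}\cap Y_{x_2}$ and, more importantly, any $X$-line through $x_1$ is collinear to $x_2$ (the plane $\<x_1x_2,L\>$ meets $Y$ only in $y$ if $L\not\perp y$, but one checks via Lemma~\ref{lineSS} that a $0$-line would force $Y_{x_1}=Y_{x_2}$; and every $X$-line through $x_1$ is a $1$-line by Proposition~\ref{1':xlines}). Concretely: by Lemma~\ref{XYpointsonline} the line $x_1x_2$ has exactly the three points $x_1,x_2,y$ of $X\cup Y$; take $\theta^{x_1}_1$, which by Lemma~\ref{1':xperp} contains $Y_{x_1}$; then $x_2\in\<x_1,Y_{x_1}\>\cap X$ (the unique maximal singular subspace of $\theta^{x_1}_1$ not in $Y$ through $Y_{x_1}$ is $\<x_1,Y_{x_1}\>$, and $x_1x_2$ meets $Y_{x_1}$ in $y$), so $x_2\in\theta^{x_1}_1$, and similarly $x_2\in\theta^{x_1}_2$; hence $\theta^{x_2}_i=\theta^{x_1}_i$ and $V_i^{x_2}=V_i^{x_1}$ for both $i$. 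This gives the forward implication of $(i)$.

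Next, for $(ii)$, suppose $x_1$ and $x_2$ lie on a common $X$-line $L$ (the case of non-collinear points of a $\theta\in\Theta$ is analogous, splitting $\theta$ by whether it lies in $\Theta_1$ or $\Theta_2$). Then $L$ is a $1$-line, contained in a unique $\theta_L\in\Theta$, say $\theta_L\in\Theta_1$, so $\theta_L=\theta^{x_1}_1=\theta^{x_2}_1$ and thus $V_1^{x_1}=V_1^{x_2}$. For the other index: $L=\theta^{x_1}_1\cap\theta^{x_1}_2$ fails (as $\theta^{x_1}_1\cap\theta^{x_1}_2=\<x_1,Y_{x_1}\>$ by Lemma~\ref{1':2SS}), so $x_2\notin\theta^{x_1}_2$, whence $\theta^{x_2}_2\neq\theta^{x_1}_2$ and, since each $R_i$ contains only the vertices of members of the opposite class and these are distinct for distinct members (Lemma~\ref{1':int12}), $V_2^{x_1}\neq V_2^{x_2}$. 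For a $\theta\in\Theta_1$ containing non-collinear $x_1,x_2$: again $V_1^{x_1}=V_1^{x_2}$ (both equal the vertex of $\theta$), while $\theta^{x_1}_2\neq\theta^{x_2}_2$ because two members of $\Theta_2$ through a common $X$-point are impossible unless that point lies in their intersection, and here an $X$-point of $\theta^{x_1}_2\cap\theta^{x_2}_2$ would be collinear in $\theta$ to both $x_1$ and $x_2$, contradicting Lemma~\ref{1':2SS} applied at $x_1$; so $V_2^{x_1}\neq V_2^{x_2}$. Thus exactly one equality holds.

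Finally, for $(iii)$, suppose $[x_1,x_2]=\xi\in\Xi$. By Corollary~\ref{1':symp} (applied at $x_1$, with $L_1,L_2$ the two $X$-lines of $\xi$ through $x_1$ lying in $\theta^{x_1}_1,\theta^{x_1}_2$), $\xi$ is determined by $L_1,L_2$ and its vertex $T=\<L_1^\perp\cap V^{x_1}_2, L_2^\perp\cap V^{x_1}_1\>$; Lemma~\ref{1':sympx}$(ii)$ then gives $(V_1^{x_2},V_2^{x_2})=(V_1^{y_2},V_2^{y_1})$ where $y_i=x_2^\perp\cap L_i$. Since $x_2$ is non-collinear to $x_1$ in $\xi$, it lies in neither generator $G_i=\<T,L_i\>$, so $y_1\neq x_1\neq y_2$; as $y_1,x_1$ are distinct points of the $1$-line $L_1\subseteq\theta^{x_1}_1\in\Theta_1$, the previous case gives $V_2^{y_1}\neq V_2^{x_1}$, i.e. $V_2^{x_2}\neq V_2^{x_1}$, and symmetrically $V_1^{x_2}\neq V_1^{x_1}$. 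So neither equality holds. Having verified one direction of each of $(i)$, $(ii)$, $(iii)$ against a partition of all configurations into three classes matching a partition of the conditions, the reverse implications follow automatically. I expect the main obstacle to be bookkeeping the $\Theta_1$-versus-$\Theta_2$ dichotomy cleanly — especially in the ``non-collinear points of a $\theta\in\Theta$'' sub-case of $(ii)$, where one must rule out a shared opposite-class member using Lemmas~\ref{1':2SS} and~\ref{1':int12} — rather than any deep new idea.
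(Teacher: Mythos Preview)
Your strategy matches the paper's exactly: both sides partition the possible mutual positions of $x_1,x_2$ and the possible patterns of equalities among the $V_i$'s into three classes each, verify the forward implications, and conclude by exhaustiveness. Your arguments for $(i)$ and for the $X$-line sub-case of $(ii)$ are correct (the paper simply writes ``clear'' for $(i)$ and handles both sub-cases of $(ii)$ at once by working inside the quadric $XY(\theta)$).

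Two small remarks. First, in the non-collinear sub-case of $(ii)$ your phrasing is tangled: the clean argument is simply that if $\theta^{x_1}_2=\theta^{x_2}_2=:\theta'$, then $x_1,x_2\in\theta\cap\theta'$, which is singular by (S2), contradicting non-collinearity; the reference should be (S2) (or Lemma~\ref{1':int12}), not Lemma~\ref{1':2SS}. You then still need Lemma~\ref{1':int12} to pass from $\theta^{x_1}_2\neq\theta^{x_2}_2$ to $V_2^{x_1}\neq V_2^{x_2}$, as you did in the $X$-line sub-case. Second, for $(iii)$ the paper takes a shorter route: since $Y(\xi)=Y_{x_1}\cap Y_{x_2}$ has dimension $v=2r'-3$, an equality $V_i^{x_1}=V_i^{x_2}$ would force $\dim(Y_{x_1}\cap Y_{x_2})\geq (r'-1)+(r'-2)+1=2r'-2$, a contradiction. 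Your route via Lemma~\ref{1':sympx}$(ii)$ is also correct but longer.
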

\begin{proof}
Since the possibilities for $x_1,x_2$ described in $(i)$, $(ii)$ and $(iii)$ exhaust the mutual positions between $x_1$ and $x_2$, it suffices to verify the ``$\Rightarrow$''s. 

$(i), \Rightarrow$: This is clear.

$(ii), \Rightarrow$: Observe that $x_1,x_2$ are contained in some $\theta\in\Theta$ in both cases (since each $X$-line is a $1$-line by Proposition~\ref{1':xlines}). Recall that $Y_{x_1} \cup Y_{x_2} \subseteq Y(\theta)$ (cf.\ Lemma~\ref{1':xperp}) and that $\theta$ has $R_1$ or $R_2$ as its vertex, say $R_1$. As such, $V_2^{x_1}=V_2^{x_2}$. If also $V_1^{x_1} = V_1^{x_2}$, i.e., $x_1^\perp \cap R_2 = x_2^\perp \cap R_2$, then this would either yield a singular $(r'+1)$-space in $\theta$ (if $x_1 \perp x_2$) or yield three singular $r'$-spaces through a singular $(r'-1)$-space (if $x_1$ and $x_2$ are non-collinear), a contradiction. So

$(iii), \Rightarrow$: Put $\xi=[x_1,x_2]$. Then $Y(\xi)=Y_{x_1} \cap Y_{x_2}$. Since $v=2r'-3$, the assertion follows.
\end{proof}
\par\bigskip

We again consider the maps $\rho$ and $\chi$ (cf.\ Definitions~\ref{RHO} and~\ref{CHI}). 

\subsubsection{The projection $\rho(X)$ and its connection to $Y$}

\textbf{Notation.} Denote by $\mathsf{L}$ the set of $X$-lines.

The next lemma shows in particular that, for each $\rho(X)$-line $L'$, there is an $X$-line $L$ with $\rho(L)=L'$. 

\begin{lemma}~\label{1':rhoXlines}
Suppose $\rho(x_1)$ and $\rho(x_2)$ determine a singular line of $\rho(X)$, for $x_1,x_2 \in X$. Let $x'_i  \in \rho^{-1}(\rho(x_i))$ be arbitrary, for $i=1,2$. Then:
\begin{compactenum}[$(i)$]
\item There is a unique member  $\theta\in\Theta$ containing $x'_1 \cup x'_2$, and  $\rho^{-1}(\rho(x_1)) \cup  \rho^{-1}(\rho(x_2))\subseteq \theta$;
\item there is an $x''_2\in\rho^{-1}(\rho(x_2))$ such that $\<x'_1,x''_2\>$ is an $X$-line. 
\item $\{Y_x \mid \rho(x) \in \<\rho(x_1),\rho(x_2)\>\}$ is the set of all $(2r'-1)$-spaces through the $(2r'-2)$-space $Y_{x_1}\cap Y_{x_2}$ inside the $2r'$-space $Y(\theta)$. 
\end{compactenum}
\end{lemma}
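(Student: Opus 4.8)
The goal is to prove Lemma~\ref{1':rhoXlines}: given that $\rho(x_1)$ and $\rho(x_2)$ span a line of $\rho(X)$, we must produce a common member $\theta\in\Theta$ of the full fibres, an honest $X$-line representing the $\rho$-line, and a description of the $Y_x$'s along the line as the pencil of $(2r'-1)$-spaces through a fixed $(2r'-2)$-space in the $2r'$-space $Y(\theta)$. The plan is to first reduce to the case where $x_1$ and $x_2$ themselves span an $X$-line: if $\rho(x_1)\ne\rho(x_2)$ but $\<\rho(x_1),\rho(x_2)\>$ is singular, then $x_1$ and $x_2$ cannot be non-collinear points of a member of $\Xi$ (otherwise, by Lemma~\ref{1':rel}$(iii)$, $V_i^{x_1}\ne V_i^{x_2}$ for both $i$, and one checks this forces $\rho(x_1)$ and $\rho(x_2)$ to be non-collinear in $\rho(X)$ — indeed a singular line of $\rho(X)$ must lift, via Lemma~\ref{proprhochi}$(i)$, to $X$-points whose mutual position is governed by case $(i)$ or $(ii)$ of Lemma~\ref{1':rel}); so by Lemma~\ref{1':rel} they lie on an $X$-line or are non-collinear in some $\theta\in\Theta$. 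Using the fibre structure $\rho^{-1}(\rho(x))=\<x,Y_x\>\cap X$ from Lemma~\ref{proprhochi}$(i)$, and the fact (Lemma~\ref{1':xperp}) that any $\theta\in\Theta$ through $x_i$ contains the maximal singular subspace $\<x_i,Y_{x_i}\>$, we get that $\theta$ already contains the whole fibre $\rho^{-1}(\rho(x_i))$; this will give $(i)$ once uniqueness of $\theta$ is argued.

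For the uniqueness in $(i)$: two members of $\Theta$ containing both fibres would share, by (S2)/Lemma~\ref{uniquesymp}, a singular subspace containing non-collinear points $x_1',x_2'$ (they are non-collinear because $\rho(x_1')\ne\rho(x_2')$ forces non-collinearity of $x_1',x_2'$ unless $\<x_1',x_2'\>$ is an $X$-line, and a singular line contains at most one $Y$-point by Lemma~\ref{XYpointsonline}) — but a singular line is never contained in two members of $\Xi\cup\Theta$ by Lemma~\ref{uniquesymp}, and more generally two distinct members meet in a singular subspace which cannot contain a pair of non-collinear points; hence $\theta$ is unique. For $(ii)$, inside the hyperbolic quadric $XY(\theta)$ the projection $\rho$ restricted to $X(\theta)$ collapses exactly the maximal singular subspace $Y(\theta)$ (Lemma~\ref{proprhochi}$(iii)$: $\rho(X(\theta))$ is an $r'$-space), so the fibres of $\rho$ inside $\theta$ are the ``affine $r'$-spaces''; picking $x_2''$ in the fibre of $\rho(x_2)$ that is collinear to $x_1'$ along a generator not inside $Y$ yields the required $X$-line — concretely, $x_1'^\perp\cap\theta$ is a maximal singular subspace meeting the fibre of $\rho(x_2)$, and any $X$-point there works. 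For $(iii)$, the description of $Y_x$ for $x$ with $\rho(x)$ on the $\rho$-line: each such $x$ lies in $\theta$ (by $(i)$), so $Y_x=x^\perp\cap Y(\theta)$ is a hyperplane of the $2r'$-space $Y(\theta)$ (using $\dim Y_x=r'+v'$ and, on a $1$-line, $v'=r'-1$ by Lemma~\ref{1':2SS}, so $\dim Y_x=2r'-1$); the intersection $Y_{x_1}\cap Y_{x_2}$ has codimension $2$ in $Y(\theta)$ precisely because $x_1,x_2$ give the line $\rho(x_1)\rho(x_2)$ (two distinct fibres, hence two distinct generators of the quadric), and running over the $r'+1$ points of the $\rho$-line recovers all $2r'$ hyperplanes through that $(2r'-2)$-space — this is just the standard correspondence in a hyperbolic quadric between a generator-line and the pencil of maximal subspaces through a fixed submaximal subspace, exactly as used in Lemmas~\ref{1':pi} and~\ref{1':bij}.

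The main obstacle I anticipate is keeping the bookkeeping straight between the three ambient objects — the quadric $XY(\theta)$ with its two reguli, the fibres of $\rho$ (which are the ``affine'' parts of generators through the maximal subspace $Y(\theta)$), and the target $\rho(X)$ — and in particular pinning down that a singular \emph{line} of $\rho(X)$ cannot arise from a pair of $X$-points in relative position $(iii)$ of Lemma~\ref{1':rel}, since naively $\rho$ of non-collinear points of a $\xi\in\Xi$ could still land on a line. Resolving this cleanly will use that $\rho$ is injective on each $X(\xi)$ up to the vertex (Lemma~\ref{proprhochi}$(ii)$ gives a non-degenerate quadric image) together with $\dim Y(\xi)=v=2r'-3<2r'-1$, so the vertex cannot absorb the discrepancy; alternatively one argues directly that if $\rho(x_1),\rho(x_2)$ span a singular line then picking a third point on it and lifting forces, via Lemma~\ref{XYpointsonline} and the fibre description, that $x_1,x_2$ are collinear in $X\cup Y$. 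I expect everything else to be routine once this point is settled.
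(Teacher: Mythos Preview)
Your overall architecture is correct and matches the paper: reduce to Lemma~\ref{1':rel}, show the pair $(x_1,x_2)$ cannot be in position~$(iii)$, then use Lemma~\ref{1':xperp} to absorb both fibres into the resulting $\theta$; parts~$(ii)$ and~$(iii)$ are then routine inside the quadric $XY(\theta)$, essentially as you describe (minor slip: the line has $|\K|+1$ points and the pencil of hyperplanes has $|\K|+1$ members, not ``$r'+1$'' and ``$2r'$'').

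The gap is precisely at the obstacle you flagged. Neither of your two proposed resolutions works as written. The first --- ``one checks this forces $\rho(x_1)$ and $\rho(x_2)$ to be non-collinear in $\rho(X)$'' --- is circular: that is exactly the statement to be proved, and knowing that $\rho(X(\xi))$ is a non-degenerate grid only tells you $\rho(x_1),\rho(x_2)$ are non-collinear \emph{inside} $\rho(X(\xi))$, not in $\rho(X)$; a singular line of $\rho(X)$ through them could a priori pass through points outside $\rho(X(\xi))$. Your second attempt --- lift a third point $x_3$ and invoke Lemma~\ref{XYpointsonline} to conclude $x_1,x_2$ are collinear --- fails because $x_3$ need not lie on the line $x_1x_2$ at all (the fibres $\rho^{-1}(\rho(x_i))$ are $(2r')$-dimensional), so you do not have three points of $X\cup Y$ on \emph{that} line.

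What the paper does is a dimension count inside $\langle x_1,x_2,Y\rangle$. Assuming position~$(iii)$, one has $\dim(Y_{x_1}\cap Y_{x_2})=2r'-3$, hence $\langle Y_{x_1},Y_{x_2}\rangle=Y$ and $\dim\langle x_1,x_2,Y\rangle=2r'+3$. Lifting a third point $x_3$ on the $\rho$-line, the $(2r'+1)$-space $\langle x_3,x_1,Y_{x_1}\rangle$ must meet the $(2r')$-space $\langle x_2,Y_{x_2}\rangle$ in a subspace of dimension at least $2r'-2$, which strictly contains $Y_{x_1}\cap Y_{x_2}$ and hence contains some $X$-point $x_2''$. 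The line $\langle x_2'',x_3\rangle$ then meets $\langle x_1,Y_{x_1}\rangle$ in a point $x_1''$; now you do have three points of $X\cup Y$ on a line, so it is singular and in fact an $X$-line $L$. Since all $X$-lines are $1$-lines (Proposition~\ref{1':xlines}), $L$ lies in a unique $\theta\in\Theta$, and by Lemma~\ref{1':xperp} this $\theta$ swallows both fibres $\langle x_i,Y_{x_i}\rangle$ --- contradicting position~$(iii)$ via Lemma~\ref{1':rel}. This is the missing mechanism; once you have it, the rest of your plan goes through.
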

\begin{proof}
$(i)$ Also here, for $i=1,2$, $x'_i\in\rho^{-1}(\rho(x_i)) = \<x_i,Y_{x_i}\> \cap X$  by Lemma~\ref{proprhochi}; in particular $Y_{x_i}=Y_{x'_i}$. By Lemma~\ref{1':rel}, it suffices to show that $\dim(Y_{x_1} \cap Y_{x_2})=2r'-2$, as this implies that there is a member $\theta \in \Theta$ containing $x'_1$ and $x'_2$ and hence also $\<x_1,Y_{x_1}\>$ and $\<x_2,Y_{x_2}\>$ (cf.\ Lemma~\ref{1':xperp}). So suppose for a contradiction that $\dim(Y_{x_1} \cap Y_{x_2})\neq2r'-2$. By Lemma~\ref{1':V1V2}, $Y_{x_1} \neq Y_{x_2}$ because $\rho(x_1)\neq \rho(x_2)$ by assumption, and hence,  by Lemma~\ref{1':rel}, the unique other option is $\dim(Y_{x_1} \cap Y_{x_2}) = 2r'-3$, in particular $\<Y_{x_1},Y_{x_2}\>=Y$.

Consider the $(2r'+3)$-space $\<x_1,x_2,Y\>$, which, as noted above, equals $\<x_1,x_2,Y_{x_1},Y_{x_2}\>$. Recall that $\dim(\<x_i,Y_{x_i}\>)=2r'$. We claim that there is an $X$-line $L$ with $\rho(L)=\<\rho(x_1),\rho(x_2)\>$. Indeed, the fact that $\rho(x_1)$ and $\rho(x_2)$ are on  a $\rho(X)$-line, implies that there is a point $x_3 \in X$ with $\rho(x_3)$ on $\rho(x_1)\rho(x_2)\setminus\{\rho(x_1),\rho(x_2)\}$, and hence $x_3$ is a point of $\<x_1,x_2,Y\>$. In the latter subspace, we see that $\<x_3,x_1,Y_{x_1}\>$ (dimension $2r'+1$) intersects $\<x_2,Y_{x_2}\>$ (dimension $2r'$) in a subspace of dimension $(2r'-2)$, which is therefore generated by $Y_{x_1}\cap Y_{x_2}$ and some $X$-point, say $x''_2$. The line $\<x''_2,x_3\>$ then intersects $\<x_1, Y_{x_1}\>$ in a point $x''_1$. Since $x_3$ does not belong to  $Y$, neither does $\<x''_1,x''_2\>$.  As $\<x''_1,x''_2\>$ contains three points of $X\cup Y$, it is singular (cf.\ Lemma~\ref{XYpointsonline}), implying that it is an $X$-line (since $x_3 \notin \<x_1,Y_{x_1}\>$). This shows the claim. The $X$-line $L$ is a $1$-line by Proposition~\ref{1':xlines}, and as such it belongs to a member of $\Theta$ that also contains $x_1$ and $x_2$. By Lemma~\ref{1':rel}, this contradicts the assumption on $Y_{x_1} \cap Y_{x_2}$. Assertion $(i)$ follows.

$(ii)$  Let $\theta$ be  the unique member  containing $x'_1$ and $x'_2$. In $\theta$, we see that $x'_1$ is collinear to a hyperplane of $\<x_2,Y_{x_2}\>$, which does not coincide with $Y_{x_2}$ (since $Y_{x_1} \neq Y_{x_2}$) and hence contains an $X$-point $x''_2$. 

$(iii)$ Let $L$ be an $X$-line in $\theta$ with $\rho(L)=\<\rho(x_1),\rho(x_2)\>$ (possible by $(ii)$). Clearly, $\rho$ gives a bijective correspondence between the points of $L$ and the points of  $\<\rho(x_1),\rho(x_2)\>$. Furthermore, looking in $\theta$, it is also clear that the collinearity relation $x \mapsto Y_x$ is a bijection between the points on $L$ and the $(2r'-1)$-spaces of $Y(\theta)$ containing $Y_{x_1} \cap Y_{x_2}$. Composing these two bijections, the assertion follows. 
\end{proof}

Let $\xi \in \Xi$ be arbitrary. We already noted in Lemma~\ref{proprhochi} that $\rho(X(\xi))$ is a hyperbolic quadric $Q$ in $\rho(X)$ of rank $r=1$. Below, we basically show that $Q$ corresponds in a ``nice'' way to the members of $\Xi$ determined by pairs of non-collinear $X$-points in $\rho^{-1}(Q)$: they all have the same vertex and the same image under $\rho$. We introduce some notation first.

\textbf{Notation.} Let $V \subseteq Y$ be the vertex of any $\xi \in \Xi$. Then we denote by  $\Xi_V$ the subset of $\Xi$ whose members have vertex $V$ and we denote by $X_V$ the $X$-points collinear to $V$. Clearly, $X(\xi) \subseteq X_V$ for each $\xi\in \Xi_V$. 

\begin{lemma}~\label{1':welldef}
Suppose $\xi=[x_1,x_2]\in \Xi$ for points $x_1,x_2\in X$ and put $T=Y(\xi)$ and $T_i:=R_i\cap T$. Then:
\begin{compactenum}[$(i)$]
\item $\sigma_\xi: \rho(X(\xi)) \rightarrow \{\<H_1,H_2\> \mid T_i \subsetneq H_i \subsetneq R_i, i=1,2\}: \rho(x) \mapsto Y_x$ is a isomorphism;
\item If $x'_i \in \rho^{-1}(\rho(x_i))$ for $i=1,2$, then $\xi':=[x'_1,x'_2]$ belongs to $\Xi_T$;
\item for each $\xi' \in \Xi_T$, $\rho(X(\xi'))=\rho(X(\xi))$.
\end{compactenum}
\end{lemma}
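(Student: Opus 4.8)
\textbf{Proof proposal for Lemma~\ref{1':welldef}.}

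The plan is to prove the three assertions in the order $(i)$, then $(ii)$, then $(iii)$, since $(i)$ establishes the dictionary that makes the other two essentially bookkeeping. For $(i)$, I would start from the fact (Lemma~\ref{proprhochi}(ii)) that $X(\xi)$ is a non-degenerate hyperbolic quadric of rank $r+1=2$ with vertex $T$, so that $\rho(X(\xi))$ is a grid-quadric $Q$ in $\rho(X)$. The two reguli of $X(\xi)$ are, by Lemma~\ref{1':sympx}$(i)$, exactly the sets of $X$-lines lying in the generators $\xi\cap\theta^x_1$ and $\xi\cap\theta^x_2$; so one regulus lies in members of $\Theta_1$ and the other in members of $\Theta_2$. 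For $x\in X(\xi)$, Corollary~\ref{V} gives $V^x_1=R_2\cap Y_x$ and $V^x_2=R_1\cap Y_x$, and $Y_x=\<V^x_1,V^x_2\>$ by Lemma~\ref{1':2SS}; combined with Lemma~\ref{1':sympx}$(i)$, which says $T=\<L_1^\perp\cap V^x_2,\,L_2^\perp\cap V^x_1\>$, one reads off that $T_i:=R_i\cap T$ is a hyperplane of both $V^x_i$ (the point $x$ ranges over a generator, moving $V^x_i$ within a pencil of hyperplanes of $R_j$ containing $T_i$). Concretely: moving $x$ along the generator $G_1=\xi\cap\theta^x_1$ fixes $V^x_1$ (hence $R_2\cap Y_x$) and moves $V^x_2=R_1\cap Y_x$ through the hyperplanes of $R_1$ containing $T_1$, and symmetrically for $G_2$. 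So $Y_x=\<H_1,H_2\>$ with $T_i\subsetneq H_i\subsetneq R_i$, and the two "coordinates" $H_1,H_2$ are independent — this is exactly the claimed bijection $\sigma_\xi$, and that it is an isomorphism of grids follows because collinearity in $Q$ along one regulus corresponds to fixing one coordinate $H_i$, matching the product structure on $\{\<H_1,H_2\>\}$. I would invoke Lemma~\ref{1':YZ} (giving $\dim(R_i)=r'$ and $\dim Y=2r'+1$) and Corollary~\ref{1':symp} (giving $v=2r'-3$, i.e.\ $\dim T = 2r'-3$, so $\dim T_i=r'-2$) to check the dimension count is consistent.

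For $(ii)$: take $x'_i\in\rho^{-1}(\rho(x_i))=\<x_i,Y_{x_i}\>\cap X$ (Lemma~\ref{proprhochi}$(i)$), so $Y_{x'_i}=Y_{x_i}$. Since $\rho(x'_1)=\rho(x_1)$ and $\rho(x'_2)=\rho(x_2)$ are non-collinear in $\rho(X)$, the points $x'_1,x'_2$ are non-collinear in $X$, so $\xi':=[x'_1,x'_2]\in\Xi\cup\Theta$; but its vertex $Y(\xi')$ (if in $\Xi$) equals $Y_{x'_1}\cap Y_{x'_2}=Y_{x_1}\cap Y_{x_2}=Y(\xi)=T$ by Lemma~\ref{convexclosure} and Lemma~\ref{1':rel}$(iii)$. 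It remains to rule out $\xi'\in\Theta$: if it were, Lemma~\ref{1':xperp} would force $Y_{x'_1}=Y_{x_1}\subseteq Y(\xi')$ with $\dim Y(\xi')\ge \dim Y_{x_1}$, but $\dim Y_{x_1}=2r'-1 > r'+v'=2r'-2$ (using $v'=r'-1$ from Lemma~\ref{1':2SS}), a contradiction. Hence $\xi'\in\Xi_T$. For $(iii)$: given $\xi'\in\Xi_T$, pick non-collinear $x'_1,x'_2\in X(\xi')$; by $(i)$ applied to $\xi'$, $\rho(X(\xi'))=\sigma_{\xi'}^{-1}(\{\<H_1,H_2\>\mid T_i\subsetneq H_i\subsetneq R_i\})$, and the same description holds for $\xi$ since $Y(\xi)=Y(\xi')=T$ forces the same $T_i=R_i\cap T$ and hence the same target set; so $\rho(X(\xi'))$ and $\rho(X(\xi))$ are both the preimage under the (common, by Lemma~\ref{proprhochi}$(i)$ and Lemma~\ref{1':V1V2}) collinearity map $x\mapsto Y_x$ of the same set of $(2r'-1)$-spaces, whence they coincide. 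The key linking fact here is that $\rho$ restricted to $X_T$ is determined by $\chi$ up to the fibres $\<x,Y_x\>$, so $Y_x$ alone pins down $\rho(x)$.

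The main obstacle I anticipate is $(i)$: one must carefully verify that the assignment $\rho(x)\mapsto(H_1,H_2)$ really is a \emph{bijection} onto the full product of punctured pencils and is compatible with the grid structure — in particular that distinct points of $X(\xi)$ never yield the same pair $(H_1,H_2)$ (injectivity), which uses that $Y_x$ determines $\rho(x)$ via Lemma~\ref{1':V1V2}, and that every pair occurs (surjectivity), which uses Lemma~\ref{1':V1V2} in the reverse direction to produce, for any admissible $\<H_1,H_2\>$, an $X$-point collinear to it lying in $X_T$, together with the fact that all such points meet the generators $G_1,G_2$ of $\xi$ appropriately. Assertions $(ii)$ and $(iii)$ should then be short, relying mostly on Lemma~\ref{1':xperp}, Lemma~\ref{1':rel}, and the dimension bookkeeping $v=2r'-3$, $v'=r'-1$, $\dim R_i=r'$.
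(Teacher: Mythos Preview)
Your overall architecture is sound and close to the paper's, but there is a genuine error in your argument for $(ii)$, and a gap in your surjectivity argument for $(i)$.

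\textbf{The error in $(ii)$.} Your dimension count is wrong. With $v'=r'-1$ one has $\dim Y_{x_1}=r'+v'=2r'-1$, while for $\theta\in\Theta$ the subspace $Y(\theta)$ is a \emph{generator} of an $(r',v')$-cone, hence has dimension $r'+v'+1=2r'$. So the inclusion $Y_{x_1}\subseteq Y(\xi')$ is dimensionally perfectly possible; there is no contradiction where you claim one. The clean way to rule out $\xi'\in\Theta$ is simply to apply Lemma~\ref{1':rel} directly: since $Y_{x'_i}=Y_{x_i}$, also $V^{x'_i}_j=V^{x_i}_j$ for $j=1,2$, and because $[x_1,x_2]\in\Xi$ we have $V^{x_1}_j\neq V^{x_2}_j$ for both $j$; hence $V^{x'_1}_j\neq V^{x'_2}_j$ for both $j$, and Lemma~\ref{1':rel}$(iii)$ gives $[x'_1,x'_2]\in\Xi$ with vertex $Y_{x'_1}\cap Y_{x'_2}=T$. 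This is exactly what the paper does.

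\textbf{The gap in $(i)$.} Your injectivity argument via Lemma~\ref{1':V1V2} is fine, and your description of the codomain is correct. But surjectivity is not established by your sketch: Lemma~\ref{1':V1V2} produces, for a given $\langle H_1,H_2\rangle$, an $X$-point $x$ with $Y_x=\langle H_1,H_2\rangle$, but you have not shown $\rho(x)\in\rho(X(\xi))$; the vague phrase ``all such points meet the generators $G_1,G_2$ of $\xi$ appropriately'' is not an argument. What you actually need is that, as $\rho(x)$ runs along a line of one regulus of $\rho(X(\xi))$, the image $Y_x$ runs through \emph{all} members of the corresponding pencil --- this is precisely Lemma~\ref{1':rhoXlines}$(iii)$, which you do not cite. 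The paper handles this differently: it passes to the residue $\Res_Y(T)\cong\mathbb{P}^3(\K)$, identifies the target set with the grid of lines meeting two fixed skew lines, and then argues that $\sigma_\xi$ is an isometric embedding of one rank-$2$ hyperbolic quadric over $\K$ into another; the ``same field'' observation (together with fullness of lines, again from Lemma~\ref{1':rhoXlines}) forces equality. Either route works, but you must invoke Lemma~\ref{1':rhoXlines}$(iii)$ somewhere.

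Your argument for $(iii)$ is essentially the paper's.
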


\begin{proof} 
$(i)$ First of all, note that $\sigma_\xi$ is well-defined by Lemma~\ref{proprhoci} and the fact that $Y_x$ contains $T$ and shares a hyperplane with each of $R_1$, $R_2$. Since $\dim(T)=2r'-3$ and $\dim(Y)=2r'+1$,  the residue $\Res_Y(T)$ is isomorphic to  a projective $3$-space over $\K$, say $\Pi_T(\K)$, in which $R_i$ corresponds to a line $L_i$ and $Y_x$ to a line $L(x)$ meeting both $L_1$ and $L_2$ in a point. Let $x,x'$ be two points of $X(\xi)$. By Lemma~\ref{1':rel}, $L(x)=L(x')$ if and only if $x$ and $x'$ belong to the same generator of $X(\xi)$, i.e., if and only if $\rho(x)=\rho(x')$; $L(x)$ and $L(x')$ intersect in precisely a point (which belongs to $L_1$ or $L_2$) if and only if $xx'$ is an $X$-line in $X(\xi)$ and $L(x)$ and $L(x')$ are disjoint if and only if $x$ and $x'$ are non-collinear. Moreover, Lemma~\ref{1':rhoXlines} implies that each $X$-line of $X(\xi)$ corresponds to a full planar point pencil in $\Pi_T$. 

On the other hand, the point-line geometry whose point set is the set of lines of  $\Pi_T(\K)$  meeting both $L_1$ and $L_2$ non-trivially and whose lines are the full planar line pencils contained in it, is (as can be seen by dualising) isomorphic to the point-line geometry associated to a hyperbolic quadric $Q$ in $\mathbb{P}^{3}(\K)$. Moreover,  $\rho(X(\xi))$ is a hyperbolic quadric in $\mathbb{P}^{5}(\K)$ as $r=1$.  By the previous paragraph, $\sigma_\xi(\rho(X(\xi)))$  is embedded isometrically into $Q$. Since the fields of definition are the same, $\sigma_\xi(\rho(X(\xi)))=Q$, i.e., $\sigma_\xi$ is an isomorphism. Assertion $(i)$ follows.

$(ii)$ By Lemma~\ref{proprhochi}, $x'_i \in \<x_i, Y_{x_i}\> \cap X$, for $i=1,2$. Since $\xi=[x_1,x_2]$ and $Y_{x_i}=Y_{x'_i}$ for $i\in\{1,2\}$, Lemma~\ref{1':rel} implies that also $x'_1$ and $x'_2$ are non-collinear points of some $\xi' \in \Xi$. Moreover, $Y(\xi')=Y_{x'_1} \cap Y_{x'_2}=Y_{x_1}\cap Y_{x_2}=T$. 

$(iii)$ Let $\xi'$ be any member of $\Xi$ with vertex~$T$. By $(i)$, $\rho(X(\xi'))$ is isomorphic to $\rho(X(\xi))$ via $\sigma^{-1}_{\xi'} \circ \sigma_\xi$. This means that, for each point $\rho(x')\in \rho(X(\xi'))$, there is a unique point $\rho(x)\in \rho(X(\xi))$ with $Y_x=Y_{x'}$, so by Lemma~\ref{1':V1V2}, $\rho(x)=\rho(x')$. 
\end{proof}

We use $Y$ to define the following point-line geometry $(\mathsf{P},\mathsf{B})_Y$.

\begin{defi}\label{1':PB}
Let  $\mathsf{P}$ denote the set $\{\<H_1,H_2\> \mid H_i \subseteq R_i, \dim H_i=r'-1\}$. For subspaces $T_1$ and $T_2$ of $R_1$ and $r_2$, respectively, with $\dim T_i=r'-2$, and a subspace $H_j$ with $T_j \subsetneq H_j \subsetneq T_j$ for $j\in \{1,2\}$,  we define the pencil $P_j(T_1,T_2)$ as the set $\{P \in \mathsf{P} \mid \<H_j,T_1,T_2\> \subseteq P\}$. Then we denote by $\mathsf{B}$ the set $\bigcup_{j\in\{1,2\}} \{P_j(T_1,T_2) \mid T_i \subseteq R_i, \dim T_i = r'-2\}$. 
\end{defi}

\begin{lemma}\label{1':Ystructuur}
The point-line geometry $(\mathsf{P},\mathsf{B})_Y$ is isomorphic to an injective projection of the Segre geometry $\mathcal{S}_{r',r'}(\K)$.
\end{lemma}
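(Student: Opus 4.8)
The plan is to identify $(\mathsf{P},\mathsf{B})_Y$ with a concrete subgeometry of the dual of the projective space $\Pi$ underlying $R_1$ and $R_2$, and then to recognise the latter as a (possibly projected) Segre geometry via Fact~\ref{factuniemb}. Recall $\dim(Y)=2r'+1$ and $R_1,R_2$ are disjoint $r'$-spaces spanning $Y$ (cf.\ Lemma~\ref{1':YZ}), so every point $P=\<H_1,H_2\>$ of $\mathsf{P}$ is the span of a hyperplane $H_i$ of $R_i$ for $i=1,2$; these two hyperplanes are recovered from $P$ as $P \cap R_1$ and $P \cap R_2$ (since $R_1 \cap R_2 = \emptyset$ forces $\dim(P)=2r'-1$ and $P\cap R_i$ to have dimension $r'-1$). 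Thus the map $P \mapsto (P\cap R_1, P\cap R_2)$ is a bijection from $\mathsf{P}$ onto the product of the hyperplanes of $R_1$ with the hyperplanes of $R_2$, i.e.\ onto the point set of $\mathsf{A}_{r',r'-1}(\K) \times \mathsf{A}_{r',r'-1}(\K)$, which (dualising each factor) is isomorphic to $\mathsf{A}_{r',1}(\K)\times\mathsf{A}_{r',1}(\K)$, the abstract geometry of which $\mathcal{S}_{r',r'}(\K)$ is the universal embedding.

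First I would check that this bijection is an isomorphism of point-line geometries. A pencil $P_j(T_1,T_2)$ with, say, $j=1$, consists of all $\<H_1,H_2\>$ with $T_1 \subsetneq H_1$ and $H_2 = T_2$ (here $T_2$ is itself a hyperplane of $R_2$, $\dim T_2 = r'-2$ being a misprint-free reading checked against $\dim H_i = r'-1$; more precisely $P_1(T_1,T_2)$ fixes the $R_2$-part to the hyperplane $\<H_1,T_1,T_2\>\cap R_2$, forced once we note $\dim T_2=r'-2$ demands $H_2$ to be the unique hyperplane of $R_2$ through $T_2$ that together with the fixed data spans a point of $\mathsf{P}$ — I would spell out the indexing exactly as in Definition~\ref{1':PB}). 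Under the bijection, such a pencil maps to the set of pairs $(H_1,H_2)$ with $H_2$ fixed and $H_1$ ranging over all hyperplanes of $R_1$ through the fixed $(r'-2)$-space $T_1$; dually, this is precisely a line of $\mathsf{A}_{r',1}(\K)$ in the first factor with the second coordinate fixed — hence a line of $\mathsf{A}_{r',1}(\K)\times \mathsf{A}_{r',1}(\K)$. Symmetrically for $j=2$. So $(\mathsf{P},\mathsf{B})_Y \cong \mathsf{A}_{r',1}(\K)\times\mathsf{A}_{r',1}(\K)$ as abstract point-line geometries.

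Next I would exhibit the linear embedding. The ambient projective space is the residue $\Res_Y(T)$ is not the right ambient; rather, $\mathsf{P}$ naturally lives in the dual $\check Y$ of $Y$: a hyperplane-span $\<H_1,H_2\>$ of dimension $2r'-1$ is a point of the dual space $\check{\mathbb{P}}^{2r'+1}(\K) = \mathbb{P}^{2r'+1}(\K)^\vee$ precisely when... — but $2r'-1 < 2r'$, so these are not hyperplanes of $Y$ and do not span a subspace of $\check Y$ directly. The cleaner route: the subspace $Y$ itself is, by Lemma~\ref{1':int12} and Corollary~\ref{1':SSV}, linearly identified with (a space carrying) the duality $\chi$ of Section~\ref{splitgeom} Observation (6); more self-containedly, map $P=\<H_1,H_2\>$ to the point $X_0(P) \in \mathbb{P}^N(\K)$ which is the unique $X$-point collinear to $P$ given by Lemma~\ref{1':V1V2} (any hyperplane $V$ of $R_i$ being an instance of the $V_i$ there). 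Then $P \mapsto X_0(P)$ sends $\mathsf{P}$ into $X \subseteq \mathbb{P}^N(\K)$, sends each pencil of $\mathsf{B}$ to a set of points on a line of $\mathbb{P}^N(\K)$ (the relevant lines being the generators of the members of $\Xi$ through $T$, by Lemma~\ref{1':sympx}$(i)$ and Lemma~\ref{1':welldef}$(i)$, which identifies $\rho(X(\xi))$ with exactly the pencils of $\mathsf{P}$ with a fixed common $T$), and is injective (Lemma~\ref{1':V1V2}). So the image is a spanning — after restricting the ambient space to $\<X_0(\mathsf{P})\>$ — point set isomorphic to $\mathsf{A}_{r',1}(\K)\times\mathsf{A}_{r',1}(\K)$ with lines realised as projective lines. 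Fact~\ref{factuniemb} then yields that this is an injective projection of $\mathcal{S}_{r',r'}(\K)$, proving the lemma.

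The main obstacle I anticipate is bookkeeping rather than a conceptual gap: getting the indices in Definition~\ref{1':PB} exactly right (the statement as printed has $\dim T_i = r'-2$ against $\dim H_i = r'-1$, and one nested subspace chain $T_j \subsetneq H_j \subsetneq T_j$ that must be read as $T_j \subsetneq H_j \subsetneq R_j$), and then verifying carefully that the pencils $P_j(T_1,T_2)$ really are the \emph{full} lines of the product geometry and not merely sub-pencils — this is where one must use that $\mathsf{P}$ is the \emph{entire} product of hyperplane-sets, with no collinearity constraint, so that the second coordinate is genuinely free to be any hyperplane through the fixed $T_2$ while $H_1$ sweeps a full pencil. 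Once the abstract isomorphism and the realisation of lines as projective lines are in place, the appeal to Wells–Zanella via Fact~\ref{factuniemb} is immediate.
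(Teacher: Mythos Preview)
Your core idea is correct and matches the paper's one-line proof: the map $\<H_1,H_2\>\mapsto (H_1,H_2)$ identifies $(\mathsf{P},\mathsf{B})_Y$ abstractly with the product of the dual of $R_1$ and the dual of $R_2$, i.e.\ with $\mathsf{A}_{r',1}(\K)\times\mathsf{A}_{r',1}(\K)$, and then Fact~\ref{factuniemb} (or just the definition of $\mathcal{S}_{r',r'}(\K)$) finishes. That is exactly what the paper does.

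Where you diverge is in your third step, and there you are doing more than the lemma asks. Once you have the abstract isomorphism with $\mathsf{A}_{r',1}(\K)\times\mathsf{A}_{r',1}(\K)$, the statement follows immediately: $\mathcal{S}_{r',r'}(\K)$ is \emph{by definition} an embedding of that geometry, so $(\mathsf{P},\mathsf{B})_Y$ is isomorphic to it (which is trivially an ``injective projection'' of itself). You do not need to manufacture a linear embedding of $(\mathsf{P},\mathsf{B})_Y$ inside $\mathbb{P}^N(\K)$ via $P\mapsto X_0(P)$. In fact that map is precisely the inverse of the connection map $\chi$, and the argument you sketch is essentially the content of Proposition~\ref{1':projsegre}, which is the \emph{next} result and is where the embedding is actually exploited. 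So your step~3 is not wrong, but it is work that belongs one proposition later, not here.

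Two small points of bookkeeping. First, you have the roles of the factors reversed in your description of $P_1(T_1,T_2)$: with $j=1$ it is $H_1$ that is fixed (the chosen hyperplane of $R_1$ through $T_1$) and $H_2$ that ranges over the hyperplanes of $R_2$ through $T_2$, not the other way round; this does not affect the conclusion. Second, your reading of the typo $T_j\subsetneq H_j\subsetneq T_j$ as $T_j\subsetneq H_j\subsetneq R_j$ is correct.
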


\begin{proof}
By dualising, we see that the point-line geometry $(\mathsf{P},\mathsf{B})_Y$ (with natural incidence relation) is isomorphic to the direct product of two projective $r'$-spaces (namely, $R_1$ and $R_2$), and hence by Fact~\ref{factuniemb},   $(\mathsf{P},\mathsf{B})_Y$ is an injective projection of $\mathcal{S}_{r',r'}(\K)$. 
\end{proof}

\begin{prop}\label{1':projsegre}
The point-line geometry $\mathcal{S}:=(\rho(X),\rho(\mathsf{L}))$ is isomorphic to an injective projection of the Segre geometry $\mathcal{S}_{r',r'}(\K)$. Moreover, we have
\begin{compactenum}
\item[$(i)$] for each singular $r'$-space $S$ in $\mathcal{S}$, there is a unique $\theta_S \in \Theta$ with
 $\rho^{-1}(S)=X(\theta_S)$.
 \item[$(ii)$] the sets $\mathcal{S}i:=\{\rho(X(\theta)) \mid \theta \in \Theta_i\}$, for $i=1,2$, are the two natural families of singular $r'$-spaces of $\mathcal{S}$.
\item[$(iii)$] for each grid $G$ of $\mathcal{S}$, there is a unique  $v$-space $V$ in $Y$ with $\rho^{-1}(G)=X_V$ and vice versa.
\end{compactenum}
\end{prop}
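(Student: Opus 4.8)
The plan is to build on the analysis already carried out for $Y$ and transport it to $\rho(X)$. First I would set up the ``transfer map'' between $\mathcal{S}=(\rho(X),\rho(\mathsf{L}))$ and the point-line geometry $(\mathsf{P},\mathsf{B})_Y$ of Definition~\ref{1':PB}: to a point $\rho(x)$ we associate $\chi(\rho(x))=Y_x\in\mathsf{P}$ (well-defined and a point of $\mathsf{P}$ since $Y_x$ contains no vertex of a $\theta$ properly and shares a hyperplane with each of $R_1,R_2$, by Corollary~\ref{V} and Lemma~\ref{1':2SS}). By Lemma~\ref{1':V1V2} this map is a bijection (injectivity is exactly the statement that $\rho^{-1}(\rho(x))=\<x,Y_x\>\cap X$ combined with $Y_x$ determining $\rho(x)$; surjectivity is the ``there is a point $x$ with $Y_x=\<V_1,V_2\>$'' half). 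Then I would check it sends $\rho(\mathsf{L})$ to $\mathsf{B}$: an $X$-line $L$ is a $1$-line (Proposition~\ref{1':xlines}), so by Lemma~\ref{1':rhoXlines}$(iii)$ the set $\{Y_x\mid \rho(x)\in\rho(L)\}$ is exactly the set of $(2r'-1)$-spaces through a fixed $(2r'-2)$-space inside some $Y(\theta)$, which — reading off that $Y(\theta)=\<R_1,R_2\>$ and the $(2r'-2)$-space is of the form $\<H_j,T_1,T_2\>$ with $T_i=R_i\cap(\text{that space})$ of dimension $r'-2$ — is precisely a pencil $P_j(T_1,T_2)\in\mathsf{B}$. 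Conversely every such pencil arises: given $T_1,T_2$ and $H_j$, pick $\theta\in\Theta_i$ through $\<R_i,\,\text{appropriate span}\>$ (Corollary~\ref{1':SSV}) and take an $X$-line in it via Lemma~\ref{1':rhoXlines}$(ii)$. This gives an isomorphism $\mathcal{S}\cong(\mathsf{P},\mathsf{B})_Y$, and Lemma~\ref{1':Ystructuur} then yields that $\mathcal{S}$ is an injective projection of $\mathcal{S}_{r',r'}(\K)$.

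For $(i)$: given a singular $r'$-space $S$ of $\mathcal{S}$, pick two non-collinear-in-$\mathcal{S}$... rather, pick $\rho(x_1),\rho(x_2)\in S$ on a $\rho(X)$-line; by Lemma~\ref{1':rhoXlines}$(i)$ there is a unique $\theta\in\Theta$ with $\rho^{-1}(\rho(x_1))\cup\rho^{-1}(\rho(x_2))\subseteq\theta$, and $\rho(X(\theta))$ is a singular $r'$-space (Lemma~\ref{proprhochi}$(iii)$) containing the $\rho(X)$-line $\rho(x_1)\rho(x_2)$. Running over all $\rho(X)$-lines in $S$ and using that $S$ is connected via $\rho(X)$-lines and that distinct $\theta$'s give distinct singular $r'$-spaces (their images meet in a proper subspace, by Lemma~\ref{1':int12} and the injectivity of $\rho$ on $X$-lines plus Corollary~\ref{noXplane}), all these $\theta$ coincide; call it $\theta_S$. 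Then $S\subseteq\rho(X(\theta_S))$, and since both are $r'$-spaces, $S=\rho(X(\theta_S))$, whence $\rho^{-1}(S)=\rho^{-1}(\rho(X(\theta_S)))=X(\theta_S)$ by Lemma~\ref{proprhochi}$(i)$ (the fibres of $\rho$ over points of $X(\theta_S)$ lie in $\<x,Y_x\>\subseteq\theta_S$ by Lemma~\ref{1':xperp}). Uniqueness of $\theta_S$ is immediate from the same ``distinct $\theta$'s have distinct images'' fact. For $(ii)$: the two families $\Theta_1,\Theta_2$ are distinguished by which of $R_1,R_2$ the member contains (definition of $\Theta_i$), and by Lemma~\ref{1':int12} two members of $\Theta$ meet in an $X$-point iff they lie in different classes; translating through the isomorphism, $\rho(X(\theta))$ and $\rho(X(\theta'))$ are disjoint iff $\theta,\theta'$ are in the same class, which is exactly the defining property of the two natural families of maximal singular subspaces of a Segre variety (recalled in \S\ref{s}). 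Since each $\Theta_i$ accounts for ``one line's worth'' of $r'$-spaces, they are precisely the two families.

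For $(iii)$: a grid $G$ of $\mathcal{S}$ is spanned by an $r'$-space from each family, i.e.\ $G=\rho(X(\theta))\cap\rho(X(\theta'))$-closure with $\theta\in\Theta_1$, $\theta'\in\Theta_2$; by Lemma~\ref{1':int12} these meet in $\<x,V,V'\>$ with $x\in X$ and $V,V'$ the vertices, so the grid corresponds to the set of $X$-points collinear to $\<V,V'\>$. Set $V_G:=\<V,V'\>$ — wait, that has dimension $2v'+1=v$ by Corollary~\ref{1':symp}; but I should rather take $V_G$ to be the vertex of the member $\xi\in\Xi$ determined by two non-collinear $X$-points lying above $G$. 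By Lemma~\ref{1':welldef} all such $\xi$ share the same vertex $T=Y(\xi)$ (a $v$-space) and the same image $\rho(X(\xi))$, and Lemma~\ref{1':welldef}$(i)$ identifies $\rho(X(\xi))$ with exactly a grid (the isomorphism $\sigma_\xi$ lands on lines of $\Pi_T$ meeting two fixed lines, i.e.\ a hyperbolic quadric in $\mathbb{P}^3$, i.e.\ a grid). So $G\mapsto T$ and $T\mapsto\rho(X(\xi))$ for any $\xi\in\Xi_T$ are mutually inverse; that $\rho^{-1}(G)=X_V$ for the corresponding $v$-space $V=T$ follows because $X_V=\bigcup_{\xi\in\Xi_V}X(\xi)$ (any $X$-point collinear to the $v$-space $V$ lies with any $x\in X(\xi)$, $\xi\in\Xi_V$, non-collinearly in a member of $\Xi$ with vertex $\supseteq V$, hence $=V$, by the dimension count $\dim V=v$) and $\rho$ is constant on each $\rho^{-1}(\rho(x))=\<x,Y_x\>\cap X\subseteq X_V$.

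The main obstacle I anticipate is $(iii)$, specifically pinning down precisely which $v$-space corresponds to a grid and proving the bijection is two-sided — one must juggle three descriptions (the grid as intersection of two $\Theta$-images, as the $\rho$-image of an $\Xi$-member, and as $X_V$ for a vertex $V$) and show they match up, leaning heavily on Lemma~\ref{1':welldef} and Lemma~\ref{1':int12}; the parts $(i)$ and $(ii)$ and the base isomorphism are more mechanical once Lemmas~\ref{1':rhoXlines}, \ref{1':V1V2} and \ref{1':Ystructuur} are in hand.
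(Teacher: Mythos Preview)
Your plan is essentially the paper's own proof: establish that $\chi$ is an isomorphism between $(\rho(X),\rho(\mathsf{L}))$ and $(\mathsf{P},\mathsf{B})_Y$ using Lemma~\ref{1':V1V2} for bijectivity and Lemma~\ref{1':rhoXlines}$(iii)$ for lines, then invoke Lemma~\ref{1':Ystructuur} and Fact~\ref{factuniemb}; for $(i)$ use Lemma~\ref{1':rhoXlines}$(i)$ and uniqueness of the maximal singular subspace of a Segre variety through a given line; for $(iii)$ pick two non-collinear points of $G$, lift to $\xi\in\Xi$, and use Lemma~\ref{1':welldef}. One streamlining: in $(iii)$ you do not need the detour through $X_V=\bigcup_{\xi\in\Xi_V}X(\xi)$ --- the surjectivity of $\sigma_\xi$ in Lemma~\ref{1':welldef}$(i)$ already gives that $x'\in X_V$ (i.e.\ $T\subseteq Y_{x'}$) forces $Y_{x'}$ to lie in the image of $\sigma_\xi$, hence $\rho(x')\in\rho(X(\xi))=G$, which together with the easy reverse inclusion yields $\rho^{-1}(G)=X_V$ directly.
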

\begin{proof} We  claim that $\chi$ induces an isomorphism between the abstract point-line geometries $(\rho(X),\rho(\mathsf{L}))$ and $(\mathsf{P},\mathsf{B})_Y$. Indeed, the fact that $\chi: \rho(X) \rightarrow \mathsf{P}: x \mapsto \chi(x)=Y_x$ is a bijection between $\rho(X)$ and $\mathsf{P}$ follows immediately from Lemma~\ref{1':V1V2} and the fact that $\rho^{-1}(x)=\<x,Y_x\>\cap X$ for each $x\in X$. The fact that a member $\rho(\mathsf{L})$ is mapped by $\chi$ to a member of $\mathsf{B}$ follows from Lemma~\ref{1:rhoXlines}$(iii)$. This shows the claim. By Fact~\ref{factuniemb} and Lemma~\ref{1':Ystructuur}, $(\rho(X),\rho(\mathsf{L}))\subseteq F$ arises as an injective projection of the Segre geometry $\mathcal{S}_{r',r'}(\K)$.

$(i), (ii)$ Let $S$ be a maximal singular subspace of $\rho(X)$ of dimension $r'$ and take a line $L$ in $S$. By Lemma~\ref{1:rhoXlines}$(i)$, there is a unique $\theta\in \Theta$ containing $L$. The properties of the Segre variety $\mathcal{S}_{r',r'}(\K)$ imply that there is a unique $r'$-space of $\rho(X)$ through $L$; as such, the $r'$-space $\rho(X(\theta))$ coincides with $S$.

$(iii)$ Lastly, let $G$ be any grid of $\mathcal{G}$.  By Lemma~\ref{1:welldef}$(iii)$, it suffices to show that $G$  coincides with $\rho(X(\xi))$ for some $\xi \in \Xi$.  Let $p_1$ and $p_2$ be non-collinear points of $G$ and take points $x_1,x_2 \in X$ with $\rho(x_i)=p_i$. Then, since $p_1$ and $p_2$ are distinct and non-collinear,  $\xi:=[x_1,x_2]\in \Xi$. Now, $\rho(X(\xi))$ is a grid in $\rho(X)$ containing the points $p_1$ and $p_2$, and since two non-collinear points determine a unique grid in $\rho(X)$ (by taking the convex closure), we obtain $\rho(X(\xi))=G$. 
\end{proof}

\begin{cor}\label{NDS}
We have $N\leq r'^2+4r'+2$. 
\end{cor}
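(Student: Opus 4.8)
The plan is to derive the bound purely dimension-theoretically from the structure already extracted, namely: that the ambient space is spanned by $X\cup Y$, that $\dim Y=2r'+1$, and that the projection $\rho(X)$ from $Y$ cannot span more than the ambient space of a Segre variety $\mathcal{S}_{r',r'}(\K)$.

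First I would record that $\mathbb{P}^N(\K)=\<X,Z\>=\<X\cup Y\>$, using $Y=\<Z\>$. Next, by the very definition of $\rho$ (Definition~\ref{RHO}), for each $x\in X$ the point $\rho(x)$ lies on $\<Y,x\>$, so conversely $x\in\<Y,\rho(x)\>$; hence $X\subseteq\<Y\cup\rho(X)\>$ and therefore
\[
\mathbb{P}^N(\K)=\<X\cup Y\>\subseteq\<\rho(X),Y\>.
\]
Since $F$ is complementary to $Y$ and $\rho(X)\subseteq F$, the join on the right is a direct sum, so $\dim\<\rho(X),Y\>=\dim\<\rho(X)\>+\dim Y+1$, and thus
\[
N\ \le\ \dim\<\rho(X)\>+\dim Y+1 .
\]

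It then remains only to plug in the two dimensions. By Lemma~\ref{1':YZ}, $\dim Y=2r'+1$. For the other term, Proposition~\ref{1':projsegre} exhibits $(\rho(X),\rho(\mathsf{L}))$ as an injective projection of the Segre geometry $\mathcal{S}_{r',r'}(\K)$; as a projection never increases the dimension of the linear span and $\mathcal{S}_{r',r'}(\K)$ spans $\mathbb{P}^{(r'+1)^2-1}(\K)=\mathbb{P}^{r'^2+2r'}(\K)$, we obtain $\dim\<\rho(X)\>\le r'^2+2r'$. Substituting both bounds gives $N\le (r'^2+2r')+(2r'+1)+1=r'^2+4r'+2$, which is the claim.

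This is essentially a one-line computation once Proposition~\ref{1':projsegre} and Lemma~\ref{1':YZ} are available, so I do not expect any genuine obstacle; the only point requiring a moment's care is that $\rho$ is the projection from the \emph{whole} subspace $Y$, which is exactly what makes $\<\rho(X),Y\>$ contain $X$, and this is immediate from Definition~\ref{RHO}. (One could add the remark that equality in the corollary forces $\rho(X)$ to be the full Segre variety $\mathcal{S}_{r',r'}(\K)$, i.e. its universal embedding, which is relevant for the subsequent uniqueness discussion.)
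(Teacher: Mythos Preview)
Your proof is correct and follows essentially the same approach as the paper's: bound $\dim Y$ via Lemma~\ref{1':YZ} and $\dim\<\rho(X)\>$ via Proposition~\ref{1':projsegre}, then add. You are slightly more explicit than the paper in justifying that $\<\rho(X),Y\>=\mathbb{P}^N(\K)$ (equivalently, that $\rho(X)$ spans $F$), which the paper leaves implicit when it writes ``$\dim(F)\le (r'+1)^2-1$''.
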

\begin{proof}
By Lemma~\ref{1':YZ}, we know $\dim(Y)=2r'+1$ and by Proposition~\ref{1':projsegre}, $\dim(F)\leq (r'+1)^2-1$. Since $F$ and $Y$ generate $\mathbb{P}^N(\K)$, we obtain $N\leq r'^2+4r'+2$. 
\end{proof}

Recall that $(\rho(X),\rho(\mathsf{L}))$ is a \emph{legal} projection of $\mathcal{S}_{r',r'}(\K)$ if (S2) also holds here (cf.\ Definition~\ref{legal}).

\begin{prop}\label{1':FX}
The set $X$ contains a legal projection $\Omega$ of $\mathcal{S}_{r',r'}(\K)$ which is such that: 
\begin{compactenum}[$(i)$]
\item $\bigsqcup_{x\in \Omega} \<{x},Y_{{x}}\>\setminus Y_x=X$ and hence, putting $F^*=\<\Omega\>$, $\<F^*,Y\>=\mathbb{P}^N(\K)$;
\item Re-choosing the subspace $F$ so that it is inside $F^*$, the projection $\rho^*$ of $F^* \cap X$ from $F^*\cap Y$ onto $F$ is the restriction of $\rho$ to $F^* \cap X$;
\item  Containment gives a bijection between the two natural families of $r'$-spaces of $\Omega$ and the sets $\Theta_1$ and $\Theta_2$;
\item If $r' =2$, then $F^* \cap Y = \emptyset$.
\end{compactenum} \end{prop}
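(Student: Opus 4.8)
The plan is to construct $\Omega$ as a section of $X$ by a suitable subspace, deduce its structure from Proposition~\ref{1':projsegre}, and then analyse the small case $r'=2$ separately.

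\begin{proof}
By Proposition~\ref{1':projsegre}, the projection $\mathcal{S}=(\rho(X),\rho(\mathsf{L}))$ is an injective projection of $\mathcal{S}_{r',r'}(\K)$; since $(X,Z,\Xi,\Theta)$ satisfies (S2), a straightforward check (using that each line of $\rho(\mathsf{L})$ is the image of an $X$-line and that $\rho^{-1}$ of a subspace through $\rho(x_1),\rho(x_2)$ meets each member of $\Xi\cup\Theta$ as prescribed) shows that this projection is \emph{legal}, so we may speak of it as a legal projection $\mathcal{S}$ of $\mathcal{S}_{r',r'}(\K)$.

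\textbf{Constructing $\Omega$.} Fix two members $S_1\in\mathcal{S}1$ and $S_2\in\mathcal{S}2$ meeting in a point, and fix a grid $G$ of $\mathcal S$ containing a line $L_i$ of $S_i$ through that point; let $V$ be the $v$-space of $Y$ with $\rho^{-1}(G)=X_V$ (Proposition~\ref{1':projsegre}$(iii)$). The idea is to lift $\mathcal S$ to an actual subvariety of $X$ by choosing, in each fibre $\rho^{-1}(\rho(x))=\<x,Y_x\>\cap X$, one point; concretely, I would first lift a single grid $G$ to an $X$-grid inside some $\xi\in\Xi_V$ (possible since $\rho(X(\xi))=G$ by Lemma~\ref{1':welldef}$(iii)$ and $(i)$), and then, using Lemma~\ref{1':rhoXlines}, extend this lift line by line along members of $\Theta$: for each $\rho(X)$-line $\rho(L)$ meeting the already-lifted part in a point $x$, Lemma~\ref{1':rhoXlines}$(ii)$ provides an $X$-line through $x$ projecting onto $\rho(L)$, and one checks using connectivity of $\mathcal S$ that the resulting point set $\Omega$ is well-defined, maps bijectively onto $\rho(X)$ under $\rho$, and has all $\rho(\mathsf L)$-lines lifted to genuine $X$-lines. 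Thus $\Omega$ is (projectively) a legal projection of $\mathcal{S}_{r',r'}(\K)$ sitting inside $X$, and its two natural families of $r'$-spaces are the lifts of the members of $\mathcal S1$ and $\mathcal S2$; by Proposition~\ref{1':projsegre}$(i)$--$(ii)$ these correspond bijectively (via containment) to $\Theta_1$ and $\Theta_2$, giving $(iii)$.

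\textbf{Items $(i)$ and $(ii)$.} For $(i)$: for $x'\in X$ we have $\rho(x')=\rho(x)$ for a unique $x\in\Omega$, hence $x'\in\<x,Y_x\>\cap X=\rho^{-1}(\rho(x))$ by Lemma~\ref{proprhochi}$(i)$; conversely each such fibre lies in $X\cup Y$ and its $X$-points are exactly $\<x,Y_x\>\setminus Y_x$, intersected with $X$, and since $X$-lines have no point in $Y$ one checks $\<x,Y_x\>\cap X=\<x,Y_x\>\setminus Y_x$. These fibres are pairwise disjoint (distinct points of $\Omega$ have distinct $\rho$-images), so $\bigsqcup_{x\in\Omega}(\<x,Y_x\>\setminus Y_x)=X$. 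As $\<Y\>=Y$ and $\<\Omega,Y\>\supseteq X$, we get $\<F^*,Y\>=\mathbb{P}^N(\K)$ with $F^*=\<\Omega\>$. For $(ii)$: since $\Omega$ meets each fibre $\<x,Y_x\>$ in exactly one point and $\rho$ is projection from $Y$, the subspace $F^*\cap Y$ is precisely the "part of $Y$ used", and projecting $F^*\cap X$ from $F^*\cap Y$ within $F^*$ agrees, after identifying $F$ with a complement inside $F^*$, with the restriction of $\rho$; one then re-chooses $F\subseteq F^*$ complementary to $F^*\cap Y$ in $F^*$, which is harmless since $\rho(X)=\rho(\Omega)$ is unchanged.

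\textbf{The case $r'=2$: $(iv)$.} Here $\dim(Y)=2r'+1=5$ and $\Omega$ is a legal projection of $\mathcal{S}_{2,2}(\K)$, which by Proposition~\ref{noproj}$(i)$ admits no proper legal projection, so $\Omega$ is (projectively) a genuine Segre variety $\mathcal{S}_{2,2}(\K)$, and $F^*=\<\Omega\>$ is $8$-dimensional. Suppose for contradiction $F^*\cap Y\neq\emptyset$. Then $\Sigma:=\<F^*\cap Y, \text{(something)}\>$ — more precisely, I would argue as follows: $F^*\cap Y$ is a subspace $S$ of $Y$ of some dimension $s\geq0$; since $\<F^*,Y\>=\mathbb{P}^N$ and $\dim F^*=8$, $\dim\mathbb{P}^N=8+6-s$, but also $N\leq r'^2+4r'+2=14$ by Corollary~\ref{NDS}, consistent for $s\geq0$, so this alone does not suffice and the real obstruction is geometric. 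Instead, pick a grid $G$ of $\Omega$ with $\rho^{-1}(G)=X_V$; then $\<\Omega\>\cap\<G,V\>$ contains $G$, and the $4$-space $\<G\>$ satisfies $\<G\>\cap\Omega=G$ exactly (a grid), so Lemma~\ref{s22p} yields a grid $G'$ of $\Omega$ with $\<G'\>$ meeting $\<G\>\setminus G$; but a point of $\<G\>\setminus G$ lies in $\<F^*\cap Y\>$ only via the fibres, forcing a point of $\Theta$'s vertex structure to collapse against (S2) — concretely, it produces a point of $X_V$ outside $\rho^{-1}(G)$, contradicting Proposition~\ref{1':projsegre}$(iii)$. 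Hence $F^*\cap Y=\emptyset$, proving $(iv)$.
\end{proof}

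\textbf{Main obstacle.} The delicate part is the construction of the lift $\Omega$ and verifying it is well-defined and legal: one must check that extending the lift along different chains of $\Theta$-members arrives at the same point set, which relies on the local bijections of Lemma~\ref{1':rhoXlines}$(iii)$ and the planarity statements of Lemma~\ref{1':pi}-type arguments inside each $\theta$; and in $(iv)$, correctly translating "$F^*\cap Y\neq\emptyset$" into a violation of (S2) or of Proposition~\ref{1':projsegre}$(iii)$ via Lemma~\ref{s22p} is where the argument must be made airtight.
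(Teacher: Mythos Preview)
Your overall strategy---lift $\rho(X)$ back into $X$ to obtain $\Omega$---is the same as the paper's, but two steps are not carried out and one is set up incorrectly.

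\textbf{The lift is not well-defined as you describe it.} You write ``extend this lift line by line along members of $\Theta$'' and ``one checks using connectivity of $\mathcal S$ that the resulting point set $\Omega$ is well-defined''. But Lemma~\ref{1':rhoXlines}$(ii)$ only gives \emph{existence} of an $X$-line through a chosen point $x'_1$ projecting to a given $\rho(X)$-line; it does not give uniqueness. Indeed, inside the relevant $\theta\in\Theta$, the set of $X$-points in $\rho^{-1}(\rho(x_2))$ collinear to $x'_1$ is a full affine $(2r'-2)$-space, so there are many lifts of each line. Consequently, extending along two different chains of lines from the initial grid to a target point $p\in\rho(X)$ will in general produce different points of $\rho^{-1}(p)$, and no consistency argument is offered. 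The paper avoids this by making explicit finite choices: it fixes bases of hyperplanes $V_{1,0},\dots,V_{1,r'}$ of $R_2$ and $V_{2,0},\dots,V_{2,r'}$ of $R_1$, and then chooses points $x_{t,u}$ consecutively in the intersections $\theta_{1,t}\cap\theta_{2,u}$, each time imposing collinearity with the previously chosen points (which cuts down the allowed locus by one dimension per constraint, shown to remain non-empty). Only after this finite frame is fixed does one fill in the remaining points via unique grids in members of $\Xi$, where the quadric structure forces uniqueness. Your sketch contains no analogue of this.

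\textbf{The claim that $\rho(X)$ is a legal projection is neither proven nor needed.} Legality concerns the embedding in $F$, and (S2) in $X$ does not obviously descend through $\rho$. The paper never asserts this; instead it constructs $\Omega$ \emph{inside} $X$, so that each grid of $\Omega$ sits in a genuine $\xi\in\Xi$ and (S2) in $X$ gives legality of $\Omega$ in $F^*$ directly.

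\textbf{Part $(iv)$ is not set up correctly.} Lemma~\ref{s22p} requires a $4$-space $\Sigma\subseteq\langle\Omega\rangle$ with $\Sigma\cap\Omega$ exactly a grid; you invoke ``the $4$-space $\langle G\rangle$'', but $\langle G\rangle$ is a $3$-space. The paper's argument is: a point $y\in F^*\cap Y$ lies on a line $L\subseteq Y$ meeting both $R_1$ and $R_2$, and such a line is the vertex of some $\xi\in\Xi$ (since $v=1$); the corresponding grid $G\subseteq\Omega$ then spans, together with $y$, a $4$-space $\Sigma=\langle G,y\rangle\subseteq F^*$ with $\Sigma\cap\Omega=G$. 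Lemma~\ref{s22p} now produces a second grid $G'$ with $\langle G'\rangle$ meeting $\Sigma\setminus G$, and this point lies in $\langle\xi,\xi'\rangle$ for the two members $\xi,\xi'\in\Xi$ containing $G,G'$, violating (S2). Your version never identifies the correct $4$-space and the final contradiction (``forcing a point of $\Theta$'s vertex structure to collapse'') is not an argument.
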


\begin{proof}
By Proposition~\ref{1':projsegre}, $\rho(X)$ is the point set of an injective projection of  a Segre geometry $\mathcal{S}_{r',r'}(\K)$, and the elements $\theta \in \Theta$ are in $1-1$-correspondence to the set of $r'$-spaces $S_\theta$. We  construct a legal projection of $\mathcal{S}_{r',r'}(\K)$ \emph{inside} $X$ (where we know that (S2) holds), by choosing a set of points $X'\subseteq X$ such that $\{Y_x \mid x \in X'\}$ is a basis of $(\mathsf{P},\mathsf{B})_Y$ (cf.\ Definition~\ref{1':PB}). 

To that end, take a basis of hyperplanes $V_{1,0},...,V_{1,r'}$ of $R_2$ and a basis of hyperplanes $V_{2,0},...,V_{2,r'}$ of $R_1$. By Corollary~\ref{1':SSV}, there is, for each $0 \leq t \leq r'$, $i=1,2$, a unique member  $\theta_{i,t} \in \Theta_i$ having $V_{i,t}$ as its vertex. Put  $\Pi_{t,u}:=\theta_{1,t} \cap \theta_{2,u}$ for each pair $t,u$. By Lemma~\ref{1':int12}, $\Pi_{t,u}$ coincides with  $\<x'_{t,u},V_{1,t},V_{2,u}\>$, where $x'_{t,u}$ is any $X$-point in $\theta_{1,t} \cap \theta_{2,u}$, or, equivalently (cf.\ Lemma~\ref{1':V1V2}),  $x'_{t,u}$ is  any $X$-point collinear to $\<V_{1,t}, V_{2,u}\>$. In particular, $\dim\Pi_{t,u}=2r'$.

\textit{Claim 1: we can consecutively choose points $X$-points $x_{t,u} \in \Pi_{t,u}$, using the lexicographic order on the pairs $\{(t,u)\mid 0\leq t,u \leq r'\}$, in such a way that $x_{t,u} \perp x_{t,u'}$ for all $0 \leq u' < u$ and $x_{t,u} \perp x_{t',u}$ for all $0\leq t' < t$.}\\The first point $x_{0,0}$ can be chosen as any $X$-point in $\Pi_{0,0}$.  We proceed inductively.
Assume that we have to choose the point $x_{t,u}$ and that all preceding points (i.e., $x_{t',u'}$ with either $t' < t$ or $t'=t$, $u' < u$) are fine. Our requirements imply that $x_{t,u}$ is an $X$-point in the subspace 
\[\Pi'_{t,u}:= \Pi_{t,u} \cap \bigcap_{0\leq t' < t} x_{t',u}^\perp \cap \bigcap_{0 \leq u' <u} x_{t,u'}^\perp.\]

Note that the points $x_{t',u}$ with $t' < t$ belong to $\theta_{2,u} \supset \Pi_{t,u}$ and are hence collinear to a hyperplane $H_{t',u}$ of $\Pi_{t,u}$ with $H_{t',u} \cap Y=\<V_{2,u},V_{1,t} \cap V_{1,t'}\>\subsetneq H_{t',u}$; likewise,  the points $x_{t,u'}$ with $u' < u$  are collinear to a hyperplane $H_{t,u'}$ of $\Pi_{t,u}$ with  $H_{t,u'} \cap Y=\<V_{1,t},V_{2,u'} \cap V_{2,u}\>\subsetneq H_{t,u'}$. Since the set of hyperplanes $\{\<V_{1,t'} \cap V_{1,t}, V_{2,u}\> \mid 0 \leq t' < t\} \cup \{\<V_{1,t}, V_{2,u'} \cap V_{2,u}\> \mid 0 \leq u' < u\}$  of $\<V_{1,t},V_{2,u}\>$ is linearly independent by choice of the $V_{i,j}$, it follows that also the set $\{H_{t',u} \mid 0 \leq t' < t\} \cup \{H_{t,u'} \mid 0 \leq u' < u\}$ of hyperplanes of $\Pi_{t,u}$ is linearly independent. These facts imply that $\dim \Pi'_{t,u} = 2r'-(u+t) \geq 0$ and that $\dim(\Pi'_{t,u} \cap Y) = 2r-(u+t)-1$, i.e., $\Pi'_{t,u} \cap Y$ is a hyperplane of $\Pi_{t,u}'$ and hence $\Pi'_{t,u}$ always contains an $X$-point.  This shows the claim.

Let $t$ and $u$ be arbitrary in $\{0,...,r'\}$. We define $S^t_1:=\<x_{t,0},...,x_{t,r'}\>$ and $S^u_2:=\<x_{0,u},...,x_{r',u}\>$. By construction, $S^t_1$ is a singular subspace inside a unique member of $\Theta_1$ (namely, in $\theta_{1,t}$); moreover, since  $S^t_1 \cap Y \subseteq \bigcap_{u'=0}^{r'} (x_{t,u'}^\perp \cap Y)=V_{1,t}$, we get that $S^t_1$ is not collinear to any point of $R_1$, which is only the case if $S^t_1$ is contained in $X$ and has dimension $r'$.   Likewise, $S^u_2$ is an $r'$-dimensional $X$-space in $\theta_{2,u}\in \Theta_2$.

\textit{Claim 2: each point $x_0\in S^0_1$ is contained in a unique $r'$-dimensional $X$-space that intersects each of the $X$-spaces $S^t_1$ with $t \in \{0,...,r'\}$, and this subspace is contained in a unique member of $\Theta_2$.}\\
If $x_0 =x_{0,u}$ for some $0\leq u \leq r'$, then $x_0 \in S^u_2$ and the assertion follows. So, as a second step, suppose that $x_0$ is on a line joining two of the $r'+1$ chosen $X$-points in $S^0_1$, say, $x_0 \in \<x_{0,0},x_{0,1}\>\setminus\{x_{0,0},x_{0,1}\}$. Let $t \in \{1,...,r'\}$ be arbitrary. Then $x_{0,0}$ and $x_{t,1}$ determine a unique member $\xi\in \Xi$, since $V_{1,0} \neq V_{1,t}$ and $V_{2,0} \neq V_{2,1}$ (cf.\ Lemma~\ref{1':rel}). We get that $x_{t,0}, x_{0,1} \in x_{0,0}^\perp \cap x_{t,1}^\perp \subseteq \xi$ and as such, $x_0$ is collinear to a unique point $x_t$ on the line $\<x_{t,0},x_{t,1}\>$.  Inside $\xi$ it is clear that  $\<x_0,x_t\>$ is an $X$-line which is moreover contained in  $\<x_0,x_t\> \in \theta^{x_0}_2$ by Lemma~\ref{1':xlinesx} (otherwise $\xi$ coincides with $\theta_{1,0}=\theta^{x_0}_1$). Now, if there were a second $X$-line through $x_0$ meeting $S_1^t$ in a point $x'_t$, then $\theta^{x_0}_2 \cap \theta_{1,t}$ contains the $X$-line $\<x_t,x'_t\>$, contradicting Lemma~\ref{1':int12}. 

Secondly, we show that the points $\{x_0,x_1,...,,x_r\}$ thus obtained form an $r'$-dimensional $X$-space, which is moreover contained in a unique member of $\Theta_2$.
Let $t' \in \{1,...,r'\}\setminus\{t\}$  arbitrary. On the one hand, $x_t$ and $x_{t'}$ belong to $[x_{t,0},x_{t',1}]\in \Xi$ (as we choose  $x_t \in \<x_{t,0},x_{t,1}\>$ and $x'_t \in \<x_{t',0},x_{t',1}\>$); on the other hand, $x_t$ and $x'_t$ belong to $\theta^{x_0}_2$.  Therefore (S2) implies that $x_t \perp x_{t'}$, and hence  $\<x_0,...,x_{r'}\>$ is a singular and contained in $\theta_2^{x_0}$. As no point of $R_2$ is collinear to it (as $x_t^\perp \cap R_2 =V_{1,t}$ for $t\in\{0,...,r'\}$), we have, as before, that it has dimension $r'$ and belongs to $X$. Uniqueness follows from the fact that $x_0^\perp \cap S^t_1 = \{x_t\}$ for all $t \in \{1,...,r'\}$, as we obtained at the end of the previous paragraph. We can repeat the above argument for points on lines $\<x,x'\>$ with $x$ and $x'$ on lines joining two points of $\{x_{0,0},...,x_{0,r'}\}$, et cetera. This shows the claim.

Finally, we define $\Omega$ as the union of the $r'$-spaces intersecting  $S^t_1$ non-trivially for each $t\in \{0,...,r'\}$; so   $F^*=\<S^0_1, ..., S^{r'}_1\>$. It can be verified that $\Omega$ is also given by $S^0_1 \times S^0_2$ inside $F^*$ and that its line set coincides with the lines of $F^*$ which are contained in $\Omega$. By Fact~\ref{factuniemb},  $\Omega$ is an injective projection of $\mathcal{S}_{r',r'}(\K)$, which is moreover legal by (S2), as each grid of $\Omega$ is contained in a unique member of $\Xi$. This shows the main assertion.

$(i)$ Let $x\in X$ be arbitrary and recall that $Y_x=\<V^x_1,V^x_2\>$. As $S^0_1$ belongs to $\theta_{1,0}$, it contains a unique point $x_1$ with $V^{x_1}_2=V^x_2$. Let $\pi^{x_1}_2$ be the unique $r'$-space of $\Omega$ meeting $S^0_1$ in exactly $x_1$. Clearly, $\pi^{x_1}_2 \subseteq \theta^{x_1}_2$ (noting that $\theta^{x_1}_1=\theta_{0,1}$). As such, all points of $\pi_2^{x_1}$ are collinear to the vertex $V^{x_1}_2$ of $\theta_2^{x_1}$. Let $x'$ be the unique point of $\pi^{x_1}_2$ collinear to $V^x_1$. Then  $x'$ is the unique point of $\Omega$ collinear to $\<V^x_1,V^x_2\>$ and by Lemma~\ref{1':V1V2}, $x \in \<x',V^x_1,V^x_2\>=\<x,Y_x\>$. Note that $\<x,Y_x\> \cap \Omega = \{x'\}$ since $x'$ was unique. As $x\in X$ was arbitrary, we get $\bigsqcup_{x\in \Omega} \<x,Y_x\>\setminus Y_x =X$ indeed. Consequently,  $\<F^*,Y\>=\mathbb{P}^N(\K)$. 

$(ii)$ Take any $x\in F^* \cap X$.  By definition, $\rho^*(x)=\<x,F^* \cap Y\> \cap F$ and  $\rho(x)=\<x,Y\> \cap F$.  Since $x \in \<\rho^*(x),F^* \cap Y\> \subseteq \<\rho^*(x), Y\>$, we get $\rho^*(x) \in \<x,Y\> \cap F$, so $\rho(x)=\rho^*(x)$. 

$(iii)$ By $(ii)$, it follows from Proposition~\ref{1':projsegre} that the two families of singular $r'$-spaces of $\Omega$ correspond bijectively to the sets $\Theta_1$ and $\Theta_2$, and since $\Omega \subseteq X$, this correspondence is given by containment. 

$(iv)$ Finally, suppose that $r'=2$ (in which case $\Omega$ is isomorphic to $\mathcal{S}_{2,2}(\K)$ by Proposition~\ref{noproj} and hence $\dim(F^*)=8$) and suppose for a contradiction that $F^* \cap Y$ contains a point $y$. Then there is a line $L$ through $y$ in $Y$ meeting both planes $R_1$ and $R_2$ in a point, so $L$ occurs as the vertex of some member $\xi \in \Xi$. In $\Omega$, $\xi$ corresponds to a grid $G$. The $4$-space $\<G,y\>$ is hence contained in the $8$-dimensional subspace generated by the Segre variety $\Omega$ and intersects $\Omega$ in precisely $G$. However, using Lemma~\ref{s22p}, we then obtain a contradiction to (S2). We conclude that $F^* \cap Y=\emptyset$ in this case.\end{proof}

Henceforth we will assume that the subspace $F$ is chosen such that it is contained in $F^*$, with $F^*$ as in the previous proposition.

\subsection{Conclusion}
Finally, we show that $X$ is a mutant of the dual Segre variety $\mathcal{DS}_{r',r'}(\K)$ (see Subsection~\ref{HDS} and Definition~\ref{defmut}). 


\begin{theorem}\label{1':projuni}\label{1:tangent}
Let $(X,Z,\Xi,\Theta)$ be a duo-symplectic pre-DSV with with parameters $(1,v,r',v')$, for which there are at least two members of $\Theta$ through each $X$-point. Then:
\begin{compactenum}[$(i)$]
\item $X$  is the point set of a mutant of the dual Segre variety $\mathcal{DS}_{r',r'}(\K)$ with $\<Z\>$ as subspace at infinity and whose symps are given by $\Xi \cup \Theta$;
\item if additionally $(X,Z,\Xi,\Theta)$ satisfies (S3), then $r'=2$ and $X$ is projectively unique.
\end{compactenum}
\end{theorem}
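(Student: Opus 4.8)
The plan is to combine the two structural results already established: Proposition~\ref{1':projsegre}, which identifies the projection $(\rho(X),\rho(\mathsf{L}))$ with an injective projection of the Segre geometry $\mathcal{S}_{r',r'}(\K)$ together with the bijection between $\Theta_1\cup\Theta_2$ and the two families of maximal $r'$-spaces, and Proposition~\ref{1':FX}, which lifts this projection back inside $X$ to a legal projection $\Omega$ of $\mathcal{S}_{r',r'}(\K)$ with $\<F^*,Y\>=\mathbb{P}^N(\K)$. For part $(i)$, first I would recall the construction of the dual Segre variety $\mathcal{DS}_{\ell,k}(\K)$ from Subsection~\ref{HDS}: one starts from a Segre variety $\mathsf{S}=\mathcal{S}_{r',r'}(\K)$ (here realised as $\Omega$), picks two maximal subspaces $S_1,S_2$ meeting in a point, picks linear dualities $\chi_{S_i}:S_i\to Z_i$ onto two disjoint $r'$-spaces inside $Y=\<Z\>$, extends to $\chi:\Omega\to\<Z_1,Z_2\>$ and forms $\bigsqcup_{x\in\Omega}\<x,\chi(x)\>\setminus\chi(x)$. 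I would then take $S_1:=S^0_1$, $S_2:=S^0_2$ (the two $r'$-spaces produced in the proof of Proposition~\ref{1':FX}), $Z_1:=R_1$, $Z_2:=R_2$, and check that the connection map $\chi:\rho(X)\to Y:\rho(x)\mapsto Y_x$ of Definition~\ref{CHI}, transported to $\Omega$ via Proposition~\ref{1':FX}$(i)$--$(ii)$, restricts on $S_i$ to a linear duality onto $R_i$ — this is exactly the content of Lemma~\ref{1':sympx}$(i)$ and Corollary~\ref{V} (the vertex of $\theta^x_i$ is $R_j\cap Y_x$, and $x\mapsto V^x_j$ is linear on each generator), and the extension rule $\chi(x)=\<V^x_1,V^x_2\>=\<\chi_{S_1}(x_{S_1}),\chi_{S_2}(x_{S_2})\>$ matches the abstract definition because $x_{S_i}$ is the unique point of $S_i$ collinear to $x$. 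Since by Proposition~\ref{1':FX}$(i)$ every $x\in X$ lies on exactly one fibre $\<x',Y_{x'}\>\setminus Y_{x'}$ with $x'\in\Omega$, the point set $X$ is literally $\bigsqcup_{x'\in\Omega}\<x',\chi(x')\>\setminus\chi(x')$, i.e. the point set of a dual Segre variety over the (possibly proper, but legal) projection $\Omega$ of $\mathcal{S}_{r',r'}(\K)$; that it is a \emph{mutant} rather than a genuine dual Segre variety is precisely Definition~\ref{defmut}, since $\Omega$ may be a proper legal projection and $F^*\cap Y$ may be non-empty. Finally, I would identify the symps: a member of $\Xi$ corresponds (Lemma~\ref{1':welldef}) to a grid of $\Omega$ together with the vertex $Y(\xi)=T$, which is exactly a symp of the dual Segre variety through $\<Z\>$-intersection of type $\Xi$; a member of $\Theta$ corresponds to an $r'$-space of $\Omega$ plus its $\chi$-image, a symp of type $\Theta$. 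Convex closures of two points at distance $2$ give these back, as in Proposition~\ref{vervar}.

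For part $(ii)$, assume in addition that (S3) holds. By Corollary~\ref{1':symp} we have $v=2r'-3$ and $v'=r'-1$, and (S3) bounds $\dim(T_x)\le 2d=2(2r+v+1)=2(v+4)$ since $r=1$; combined with the lower bound $\dim(T_x)\ge v'+9$ from Lemma~\ref{1':v,v'} (which used $r'=2$) — more carefully, I would redo the dimension count with general $r'$: through $x$ there are $\theta^x_1,\theta^x_2$ with $\theta^x_1\cap\theta^x_2=\<x,Y_x\>$ of dimension $2r'-1$ by Lemma~\ref{1':2SS}, each $\theta^x_i$ spanning a $(2r'+v'+1)=3r'$-space, so $\dim(T_x)\ge 2\cdot 3r'-(2r'-1)=4r'+1$; whereas $T_x=\<T_x(\xi_1),T_x(\xi_2)\>$ with each $T_x(\xi_j)$ of dimension $v+3=2r'$ forces $\dim(T_x)\le 4r'$ when $r=1$ — wait, these must be reconciled: in fact Proposition~\ref{1':FX}$(i)$ gives $T_x=\<x,X_x,Y_x\>$ and the correct inequality is $\dim(T_x)\le 2d$ with $d=2r+v+1=2r'$, i.e. $\dim(T_x)\le 4r'$, against $\dim(T_x)\ge 4r'+1$ unless something collapses. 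The clean way is: $\Omega\subseteq X$ is a legal projection of $\mathcal{S}_{r',r'}(\K)$, and by Proposition~\ref{noproj}$(i)$ it admits no proper legal projection, so $\Omega\cong\mathcal{S}_{r',r'}(\K)$ and $\dim(F^*)=(r'+1)^2-1$; by Proposition~\ref{1':FX}$(iv)$ (valid whenever $r'=2$) $F^*\cap Y=\emptyset$, and for general $r'$ an analogous argument using Lemma~\ref{s22p} applied to the $\mathcal{S}_{2,2}$-subgeometries of $\Omega$ forces $F^*\cap Y=\emptyset$ as well; then $N=\dim(F^*)+\dim(Y)+1=(r'+1)^2-1+2r'+1+1=r'^2+4r'+2$, matching Corollary~\ref{NDS} with equality. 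Now (S3) forces $\dim(T_x)=2d=4r'$ for the tangent spaces inside a split Veronese set of type $d$; translating through the isomorphism with $\mathcal{DS}_{r',r'}(\K)$, the tangent space at a point is spanned by the two $r'$-spaces of $\Omega$ through its base point together with their $\chi$-images, of dimension $(2r'-1)+(2r'-1)-(r'-2)+\dots$; I would compute this explicitly and show equality with $2d$ holds \emph{only} for $r'=2$, the strictly-larger-dimension tangent spaces for $r'\ge 3$ violating (S3) — this mirrors exactly the filtering done in Subsection~\ref{HDS} after Fact~\ref{factuniemb} and in the remark after Proposition~\ref{vervar}, where it is noted that the dual Segre varieties other than $\mathcal{DS}_{2,2}(\K)$ fail (S3). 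Once $r'=2$ is forced, $\Omega\cong\mathcal{S}_{2,2}(\K)$ is projectively unique, $Y\cong\mathbb{P}^5(\K)$ with the two fixed planes $R_1,R_2$, the dualities $\chi_{S_i}$ are determined up to the action of the stabiliser, and $F^*\cap Y=\emptyset$, so the whole configuration $(X,Z,\Xi,\Theta)$ is determined up to a projectivity of $\mathbb{P}^N(\K)$; this is projective uniqueness, and by Proposition~\ref{vervar} the resulting variety is $\mathcal{V}_2(\K,\CD(\L',0))$.

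The main obstacle I anticipate is part $(ii)$: pinning down $F^*\cap Y=\emptyset$ for general $r'$ (Proposition~\ref{1':FX}$(iv)$ is only stated for $r'=2$), and then carrying out the tangent-space dimension count for the mutant of $\mathcal{DS}_{r',r'}(\K)$ cleanly enough to see that (S3) singles out $r'=2$. The cleanest route is probably to invoke Proposition~\ref{noproj}$(i)$ to get $\Omega\cong\mathcal{S}_{r',r'}(\K)$ first, then argue that $X$ is a genuine dual Segre variety $\mathcal{DS}_{r',r'}(\K)$ (no mutation, using an $\mathcal{S}_{2,2}$-slice argument with Lemma~\ref{s22p} to kill $F^*\cap Y$), and finally quote the dimension bound $N\le r'^2+4r'+2$ from Corollary~\ref{NDS} together with the fact, computable directly from Definition~\ref{svs} and the construction in Subsection~\ref{HDS}, that $\mathcal{DS}_{r',r'}(\K)$ satisfies (S3) if and only if $r'=2$. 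Everything else is bookkeeping on top of Propositions~\ref{1':projsegre} and~\ref{1':FX}.
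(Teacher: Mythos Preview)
Your plan for part $(i)$ is essentially the paper's proof: use Proposition~\ref{1':FX} to get the legal projection $\Omega\subseteq X$, verify that $\chi$ restricted to an $r'$-space $S_i\subseteq\theta_i\in\Theta_i$ is the collinearity duality between $S_i$ and $R_i$ inside the quadric $XY(\theta_i)$, then check the extension rule $\chi(x)=\langle\chi_{S_1}(x_{S_1}),\chi_{S_2}(x_{S_2})\rangle$. That part is fine.

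Part $(ii)$ has a real gap. Your dimension count in the ambient space goes wrong (you noticed this yourself: $4r'+1>4r'$ would kill (S3) even for $r'=2$), and your proposed fix inverts the logical order. You want to invoke Proposition~\ref{noproj} and an analogue of Proposition~\ref{1':FX}$(iv)$ for general $r'$ to conclude $\Omega\cong\mathcal{S}_{r',r'}(\K)$ and $F^*\cap Y=\emptyset$, and only \emph{then} compute tangent dimensions. But Proposition~\ref{noproj}$(i)$ is only proved for $\ell\leq 3$, and Proposition~\ref{1':FX}$(iv)$ only for $r'=2$; you cannot use them before you know $r'$ is small.

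The paper's argument gets $r'=2$ first, by a cleaner device: it works in the projection $\rho(X)\subseteq F$ rather than in $\mathbb{P}^N(\K)$. Writing $T^F_{\rho(x)}$ for the tangent space to $\rho(X)$ at $\rho(x)$ and $T^F_{\rho(x)}(\xi)$ for the tangent to the grid $\rho(X(\xi))$, the decomposition $T_x(\xi_i)=\langle Y(\xi_i),T^F_{\rho(x)}(\xi_i)\rangle$ shows that (S3) forces $T^F_{\rho(x)}=\langle T^F_{\rho(x)}(\xi_1),T^F_{\rho(x)}(\xi_2)\rangle$. Now $T^F_{\rho(x)}$ is spanned by the two $r'$-spaces $\rho(\theta^x_1),\rho(\theta^x_2)$ through $\rho(x)$, so has dimension $2r'$; while each $T^F_{\rho(x)}(\xi_i)$ is the tangent plane to a grid (since $r=1$), so their span is at most $4$-dimensional. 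Hence $2r'\leq 4$, i.e.\ $r'=2$. Only after this does the paper invoke Proposition~\ref{noproj} and Proposition~\ref{1':FX}$(iv)$ to kill mutations and conclude projective uniqueness. You should reorganise part $(ii)$ along these lines.
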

\begin{proof}
$(i)$ By Proposition~\ref{1':FX}, $X$ contains a legal projection $\Omega$ of $\mathcal{S}_{r',r'}(\K)$, and $X=\bigcup_{x\in \Omega} \<x,Y_x\>\setminus Y_x$. Therefore it suffices to show that the map $\chi:\Omega \rightarrow Y: x \mapsto Y_x$ satisfies the properties mentioned in the definition of the dual Segre varieties (cf.\ Subsection~\ref{HDS}), where now $R_i$ plays the role of $Z_i$ for $i=1,2$. 

Let $S_1$ and $S_2$ be two singular $r'$-spaces of $\Omega$ intersecting each other in a point $x_0$ (i.e., belonging to different families). Possibly changing $S_1$ and $S_2$, Proposition~\ref{1':FX} says that there is a unique member $\theta_i \in \Theta_i$ containing $S_i$, $i=1,2$. Denote by $\chi_{i}$ the map taking a point of $S_i$ to $x^\perp \cap R_i$, $i=1,2$.  Inside the quadric $XY(\theta_i)$, it is clear that $\chi_{i}$ coincides with the collinearity relation between the opposite subspaces $S_i$ and $R_i$, so $\chi_i$ is a linear duality between $S_i$ and $R_i$. 

Take $x\in X$ arbitrary. If $x \in S_i$, we put $x_i:=x$ and if $x\notin S_i$, then we let $x_i$ be  the unique point of $S_i$ collinear to $x$, $i=1,2$. We claim that $\chi(x)=\<\chi_1(x_1),\chi_2(x_2)\>$, which is equivalent to showing that $x_i^\perp \cap R_i = x^\perp \cap R_i$ for $i=1,2$. We show the latter statement for $i=1$, the case where $i=2$ being analogous. If $x_1=x$, the statement is trivial, so suppose $x_1 \neq x$. Then there is a unique $\theta \in \Theta_2$ containing the $X$-line $xx'_1$, and it follows that both $x^\perp \cap R_1$ and $x_1^\perp \cap R_1$ coincide with the vertex of $\theta$. This shows the claim. For each pair of non-collinear points  $p_1,p_2$ of $X$, (S1) and Lemma~\ref{convexclosure} imply that the unique symp $\zeta$  through $p_1,p_2$ (defined as their convex closure inside $X$) has $\zeta \cap X=X([p_1,p_2])$. Assertion $(i)$ follows.

$(ii)$ Recall that we assume that $F \subseteq F^*$ is complementary to $Y$ in $\mathbb{P}^N(\K)$. By Proposition~\ref{1':projsegre}, $\rho(X)\subseteq F$ is an injective projection of a Segre variety $\mathcal{S}_{r',r'}(\K)$. Let $T^{F}_{\rho(x)}$ be the set of $\rho(X)$-lines in ${F}$ through $\rho(x)$ and denote by $T^{F}_{\rho(x)}(\xi)$ the tangent space to $\rho(X(\xi))$ at $\rho(x)$ for some $\xi\in \Xi$ with $\rho(x)\in\rho(\xi)$. 

Axiom (S3) yields members $\xi_1,\xi_2\in \Xi$ through $x$ such that $T_x=\<T_x(\xi_1),T_x(\xi_2)\>$. Since for $i=1,2$, $T_x(\xi_i)=\<Y(\xi_i),T^{F}_{\rho(x)}(\xi_i)\>$, we obtain that $T_x=\<T_x(\xi_1),T_x(\xi_2)\>$ is equivalent with $Y_x=\<Y(\xi_1),Y(\xi_2)\>$ and  $T^{F}_{\rho(x)}=\<T^{F}_{\rho(x)}(\xi_1),T^{F}_{\rho(x)}(\xi_2)\>$. On the other hand, $\dim T^{F}_{\rho(x)}=2r'$ as the tangent space at $\rho(x)$ is generated by the two $r'$-dimensional subspaces $\rho(\theta^x_1)$ and $\rho(\theta^x_2)$ of $\rho(X)$ through $\rho(x)$. Furthermore, since $r=1$,  $T^{F}_{\rho(x)}(\xi_1)$ and $T^{F}_{\rho(x)}(\xi_2)$ are just planes, which generate at most a $4$-space in $F$, and so $2r'\leq 2+2=4$. Recalling $r' > r \geq 1$, this means $r'=2$. Since $v=2r'-1=1$ and $\dim Y_x=2r'+1=3$,  the requirement $Y_x=\<Y(\xi_1),Y(\xi_2)\>$ only implies that $\xi_1$ and $\xi_2$  have disjoint vertices. 

Since $r'=2$,  the variety $\mathcal{S}_{r',r'}(\K)$ does not admit legal projections (cf.\ Proposition~\ref{noproj}) and $F^* \cap Y = \emptyset$ by Proposition~\ref{1':FX}$(iii)$, i.e., $F=F^*$. The following are projectively unique: $Y$ and $F$  in $\mathbb{P}^N(\K)$, $R_1$, $R_2$ in $Y$,   $\Omega$ in $F$. Moreover, for $i=1,2$, the projectivity $\chi_{S_i}$ between $S_i$ and the dual of $R_i$ is unique up to a projectivity of $R_i$. As such,  $X$ is projectively unique. 
\end{proof}

\section{The half dual Segre varieties}\label{1SS/point}

Throughout this section, we suppose that $(X,Z,\Xi,\Theta)$ is a duo-symplectic pre-DSV containing at least one $X$-point $x^*$ through which there is exactly one member $\theta^*$ of $\Theta$.

We first show that each point $x\in X$ is contained in a unique member of $\Theta$. 
\begin{lemma}\label{1:YinT}\label{1:uniqueSS}
Each $x\in X$ is contained in a unique member $\theta^x$ of $\Theta$ and $Y(\theta^x)=Y$. In particular, $\dim Y=r'+v'+1$, $\dim Y_x=r'+v'$ and $\Theta$ induces a partition of $X$.
\end{lemma}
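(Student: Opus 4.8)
The plan is to exploit the standing hypothesis that $(X,Z,\Xi,\Theta)$ is duo-symplectic together with the new assumption that the fixed point $x^*$ lies on a \emph{unique} member $\theta^*$ of $\Theta$, and to propagate this uniqueness along $X$-lines and then across non-collinear pairs. First I would analyse $x^*$: by Lemma~\ref{xinSS} there is a member of $\Theta$ through $x^*$ (so $\theta^*$ exists), and the hypothesis is that it is the only one. I would feed this into Lemma~\ref{lineSS}: for every $X$-line $L$ through $x^*$, the number of members of $\Theta$ through $L$ is governed by the number of subspaces of $Y_{x^*}$ containing $Y_L$ as a hyperplane. Since $x^*$ lies on only one member of $\Theta$, no $X$-line through $x^*$ can be a $2$-line, and moreover (chasing Lemma~\ref{lineSS} and Lemma~\ref{1':xperp}-style reasoning) every $X$-line $L \ni x^*$ contained in $\theta^*$ must satisfy $Y_L$ a hyperplane of $Y_{x^*}$ with $Y_{x^*}$ forced to equal $x^{*\perp}\cap Y(\theta^*)$; in particular $Y_{x^*}\subseteq Y(\theta^*)$ and, since there is no second member of $\Theta$, there is no room for a proper subspace, forcing $Y(\theta^*)=Y$ (any $y\in Y\setminus Y(\theta^*)$ would, via Lemma~\ref{planes} applied to $\langle x^*,y\rangle$ and an $X$-line of $\theta^*$ through $x^*$, produce a second member of $\Theta$ through $x^*$, using Corollary~\ref{supersymp/plane}).

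Next I would upgrade this from the single point $x^*$ to all of $X$ by a connectivity argument along $X$-lines, exactly as in the final line of the proof of Proposition~\ref{1':xlines}. Given the established fact that $Y(\theta^*)=Y$, take any $X$-line $L$ through $x^*$; it lies in $\theta^*$ (since $x^*$ is on only that one member of $\Theta$, and by Lemma~\ref{xinSS} every $X$-point of $X$ — in particular every point of $L$ — lies on some member of $\Theta$, which must contain $L$ by Lemma~\ref{planes} hence must be $\theta^*$). Thus every point $x$ of $L$ again lies on the \emph{unique} member $\theta^*$, and $Y(\theta^x)=Y(\theta^*)=Y$, so the hypothesis is inherited by every point collinear to $x^*$. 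Iterating, since $X$ is connected via $X$-lines (this connectivity being implicit in the setup — e.g. every two points lie in a common member of $\Xi\cup\Theta$ by (S1), and quadrics are line-connected), we get that \emph{every} $x\in X$ lies on a unique member $\theta^x\in\Theta$ with $Y(\theta^x)=Y$. I would be slightly careful here that the propagation to a non-collinear point $x'$ also works: if $[x^*,x']\in\Xi\cup\Theta$, then $[x^*,x']\cap X$ contains points collinear to $x^*$, and one rides the connectivity of that quadric; alternatively one simply invokes that every point of $X$ is within two $X$-line steps of $x^*$ inside an appropriate symp.

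Finally I would read off the dimension count: since $\theta^x$ is an $(r',v')$-cone and $Y(\theta^x)=Y$ is precisely a generator $M$ of that cone (by the defining property of members of $\Theta$), $Y$ has dimension $r'+v'+1$; for an $X$-point $x$ of $\theta^x$, $Y_x=x^\perp\cap Y(\theta^x)$ is a hyperplane of $Y$, so $\dim Y_x=r'+v'$; and distinct members of $\Theta$ are then disjoint on $X$ — if $\theta_1,\theta_2\in\Theta$ shared an $X$-point $x$, uniqueness forces $\theta_1=\theta_2$ — so $\Theta$ partitions $X$. The main obstacle I anticipate is the very first step: cleanly forcing $Y(\theta^*)=Y$ from the single-member hypothesis, i.e.\ ruling out the existence of any $y\in Y$ not collinear to $x^*$-via-$\theta^*$ without accidentally producing a second member of $\Theta$; this is where Corollary~\ref{supersymp/plane} and the precise bookkeeping of Lemma~\ref{lineSS} do the work, and one must also use the standing reduction (Lemma~\ref{resY}, Lemma~\ref{projV}) that no $Y$-point is collinear to all of $X$, lest the would-be ``extra'' directions in $Y$ be vacuous.
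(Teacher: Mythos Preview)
Your first step --- forcing $Y(\theta^*)=Y$ --- has the right spirit but misidentifies the mechanism. Once you know $Y_{x^*}\subseteq Y(\theta^*)$ (via Lemma~\ref{lineSS}), any $y\in Y\setminus Y(\theta^*)$ is \emph{not} collinear to $x^*$, so Corollary~\ref{supersymp/plane} (which requires a singular line $\langle x^*,y\rangle$, i.e.\ $y\perp x^*$) does not apply. The correct move is the paper's: pick $z\in Z\setminus\theta^*$ (possible since $Y=\langle Z\rangle$), observe $z\notin Y_{x^*}$ so $x^*\not\perp z$, and then (S1) gives $[x^*,z]\in\Theta$ directly --- a second member through $x^*$, contradiction.

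Your second step --- the connectivity propagation --- is both flawed and unnecessary. It is flawed because the claim ``every $X$-line $L$ through $x^*$ lies in $\theta^*$'' is false at this stage: $0$-lines through $x^*$ (lines contained in no member of $\Theta$) have not been ruled out, and your justification via Lemma~\ref{planes} does not establish that a member of $\Theta$ through some point $x'\in L$ must contain $L$ itself. It is unnecessary because the paper bypasses connectivity entirely with a dimension count: once $\dim Y=r'+v'+1$ is established from $Y(\theta^*)=Y$, \emph{every} $\theta\in\Theta$ automatically satisfies $Y(\theta)=Y$ (since $\dim Y(\theta)=r'+v'+1$ for all $\theta$). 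Then for an arbitrary $x\in X$, Lemma~\ref{xinSS} guarantees some $\theta\ni x$, this $\theta$ has $Y(\theta)=Y$, so $Y_x$ is a hyperplane of $Y$, and uniqueness of $\theta$ follows exactly as it did for $x^*$. No induction along $X$-lines is needed.
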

\begin{proof}
By assumption, there is a point $x^*\in X$ through which there is a unique member $\theta^*\in \Theta$. Suppose for a contradiction that $Y(\theta^*) \subsetneq Y$. Then there is a point $z\in Z\setminus \theta^{*}$. By Lemma~\ref{lineSS}$(ii)$ (applied to any $X$-line $L$ in $\theta^{*}$ through $x^*$),  $\theta^{*}$ contains $Y_{x^*}$ and hence $x^*$ is not collinear to $z$. As such, (S1) implies that $[x^*,z]$  is a second member of $\Theta$ through $x^*$, a contradiction. We conclude that $Y=Y(\theta^{*})$ indeed. In particular, $\dim Y =r'+v'+1$ and $\dim Y_{x^*} = r'+v'$.

Let $x\in X$ be arbitrary. Lemma~\ref{xinSS} guarantees the existence of at least one $\theta \in \Theta$ through $x$. As $\dim Y(\theta) = r'+v'+1 = \dim Y$, we obtain $Y(\theta)=Y$ and hence $Y_{x}$ is a hyperplane of $Y$. Therefore, taking a point $z\in Z\setminus Y_{x}$, $\theta=[x,z]$. It follows that $\theta$ is the only member of $\Theta$ through $x$.\end{proof}

\textbf{Notation.} For any $x\in X$, we denote by $\theta^x$ the unique member of $\Theta$ through $x$. 

\subsection{The $X$-lines through a point of $X$}

We study the structure of the $X$-lines through a point $x$ of $X$. We will prove that the ones that are not contained in $\theta_x$ belong to a unique maximal singular subspace. Along the way we deduce that $v'=-1$, $v=r'-2$ and at the end we can show $r=1$.

\begin{lemma}\label{1:v'=-1}
 For any $X$-line $L$ sharing exactly a point $x \in X$ with $\theta^x$, $Y_L = Y_x=Y_{x'}$ for any $x' \in L$. Moreover, each point $x' \in X\setminus X(\theta^x)$ is collinear to the vertex $V^x$ of $\theta^x$.
\end{lemma}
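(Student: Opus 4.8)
The plan is to treat the two assertions separately. For the equalities, observe that since $L\cap\theta^x=\{x\}$ the line $L$ is not contained in $\theta^x$; but any member of $\Theta$ through the $X$-line $L$ passes through $x$ and hence equals $\theta^x$ by Lemma~\ref{1:uniqueSS}, so $L$ lies in no member of $\Theta$, i.e.\ $L$ is a $0$-line. Lemma~\ref{lineSS}$(i)$ then immediately gives $Y_L=Y_x=Y_{x'}$ for every $x'\in L$. I would record here the elementary remark that the vertex $V^x$ of the cone $XY(\theta^x)$ lies on every generator of that cone, hence is collinear to each of its points; thus $V^x\subseteq Y_q$ for every $q\in X(\theta^x)$, and in particular $V^x\subseteq Y_x$. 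Combined with the first assertion, this already disposes of the \emph{moreover} part for every $x'\in X\setminus X(\theta^x)$ collinear to $x$: then $\langle x,x'\rangle$ is such a $0$-line, whence $V^x\subseteq Y_x=Y_{x'}$.

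Now let $x'\in X\setminus X(\theta^x)$ be arbitrary. By Lemma~\ref{1:uniqueSS} the unique member $\theta^{x'}$ of $\Theta$ through $x'$ differs from $\theta^x$, and $\theta^x\cap\theta^{x'}=Y$ (it is singular by (S2), contains $Y=Y(\theta^x)=Y(\theta^{x'})$, and contains no $X$-point since $\Theta$ partitions $X$, so it lies in $Y$). If $x'$ is collinear to some $X$-point $q$ of $\theta^x$, I would apply Lemma~\ref{collbetweenS} with $\zeta_1=\theta^x$, $\zeta_2=\theta^{x'}$, $S=Y$, and the collinear pair $q\in X(\theta^x)$, $x'\in X(\theta^{x'})$; this forces $Y_q={x'}^{\perp}\cap Y=Y_{x'}$, and since $V^x\subseteq Y_q$ we are done. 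If $x'$ is collinear to no $X$-point of $\theta^x$, then ${x'}^{\perp}\cap\theta^x$ is a clique by Lemma~\ref{convexclosure}, hence a singular subspace of $\theta^x$; since it contains the hyperplane $Y_{x'}$ of the generator $Y$ of $\theta^x$, it is either a full generator $G$ of $\theta^x$ --- but then $G\setminus Y$ consists of $X$-points of $\theta^x$ collinear to $x'$, bringing us back to the previous case --- or exactly $Y_{x'}\subseteq Y$. So the only situation left is ${x'}^{\perp}\cap\theta^x=Y_{x'}\subseteq Y$.

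The hard part will be this last case. I would attack it through the connectedness of $X$ via $X$-lines: choose an $X$-path from $x'$ to a point of $X(\theta^x)$, let $x''$ be its last vertex outside $\theta^x$ (so $x''$ is collinear to an $X$-point of $\theta^x$, whence $V^x\subseteq Y_{x''}$ by the case already settled), and then transport the containment $V^x\subseteq Y_{(\cdot)}$ back along the sub-path from $x''$ to $x'$, all of whose vertices lie outside $\theta^x$. Along a $0$-line edge this costs nothing, since $Y_{(\cdot)}$ is then constant by Lemma~\ref{lineSS}$(i)$; the problematic edges are those contained in a member $\theta''\in\Theta$ with $\theta''\neq\theta^x$, since a priori $V^x$ need not lie in the vertex of $\theta''$. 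I expect this obstacle to be removed either by a minimal-counterexample argument --- equivalently, by first proving that \emph{all} members of $\Theta$ share a common vertex (exploiting that, for $p\in X(\theta'')$, $Y_p$ runs over exactly the hyperplanes of $Y$ containing the vertex of $\theta''$, together with the constraint that Lemma~\ref{collbetweenS} imposes between $X$-points of $\theta''$ and of $\theta^x$) --- or by a dimension count: assuming $V^x\not\subseteq Y_{x'}$ in this last case pins down $v=r'+v'-1$, after which the mutual positions of the $\Xi$-members $[q,x']$ ($q\in X(\theta^x)$) and of the $X$-lines inside $\theta^x$ should contradict (S2). Once the \emph{moreover} part is in hand, $V^x$ is collinear to all of $X(\theta^x)$ (being its vertex) and to all remaining points of $X$ (by what was just shown), hence to all of $X$; the standing assumption that no point of $Y$ is collinear to every point of $X$ then forces $V^x=\emptyset$, i.e.\ $v'=-1$.
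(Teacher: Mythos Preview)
Your treatment of the first assertion and of the case where $x'$ is collinear to an $X$-point of $\theta^x$ is fine (the paper does the latter via the first assertion applied to the $X$-line $qx'$, but your use of Lemma~\ref{collbetweenS} works equally well).

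The genuine gap is in the remaining case, where $x'$ is collinear to no $X$-point of $\theta^x$. You correctly isolate this as ``the hard part'' but then propose a connectedness/path-transport argument that you yourself flag as incomplete, since transporting $V^x\subseteq Y_{(\cdot)}$ across an edge lying in some $\theta''\neq\theta^x$ is exactly the problem you have not solved. The dimension-count alternative is also not carried out. The paper's argument here is short and direct, and you should use it instead: pick any $q\in X(\theta^x)$. Since $q$ and $x'$ are non-collinear and cannot lie in a common member of $\Theta$ (uniqueness would force $\theta^x=\theta^{x'}$), Axiom (S1) gives $\xi:=[q,x']\in\Xi$. Now take two non-collinear $X$-lines $L_1,L_2$ of $\xi$ through $q$. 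Each $L_i$ is either contained in $\theta^x$ (and then trivially $L_i\perp V^x$) or meets $\theta^x$ only in $q$ (and then $V^x\subseteq Y_q=Y_{L_i}$ by the first assertion). Either way $V^x$ is collinear to both $L_1$ and $L_2$, so $V^x$ lies in the vertex of $\xi$ (cf.\ Lemma~\ref{convexclosure}), whence $x'\perp V^x$. This replaces your entire ``hard part'' and completes the proof.
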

\begin{proof}
Note that $L$ is a $0$-line, for otherwise there are at least two members of $\Theta$ through $x$, contradicting Lemma~\ref{1:YinT}. It then follows from Lemma~\ref{lineSS}$(i)$ that  $Y_L = Y_x$, and $Y_{x'}=Y_{x}$ for each $x' \in L$.

Take any point $x' \in X\setminus \theta^x$. Suppose first that $x'$ is collinear to some point $x\in X(\theta^x)$.  By Lemma~\ref{1:YinT} and $x' \notin \theta^x$, $xx'$ is an $X$-line. By the above paragraph, $Y_{x'}=Y_x$, so in particular, $x' \perp V^x$. Next, suppose that $x'$ is not collinear to any point of $X(\theta^x)$. Taking $x \in X(\theta^x)$ arbitrary, (S1) and Lemma~\ref{1:YinT} imply that $x$ and $x'$ are contained in a member $\xi$ of $\Xi$. Let  $L_1$ and $L_2$ be two non-collinear $X$-lines of $\xi$ through $x$. For $i=1,2$, we have that $L_i$ is collinear to $V$ because, if $L_i$ belongs to $\theta^x$ then $L_i \perp V^x$ by definition and if $L_i$ does not belong to $\theta^x$ then $L_i \perp V^x$ by the first paragraph. Consequently, $V$ is contained in the vertex of $\xi$ and, in particular, $x' \perp V^x$. 
\end{proof}
\par\bigskip

By Lemma~\ref{resY}, we can project $(X,Z,\Xi,\Theta)$ from $V$ and obtain a pre-DSV with parameters $(r,v-v'-1,r',-1)$; with which we will continue to work without changing  notation, i.e., we just assume that $(X,Z,\Xi,\Theta)$ has $v'=-1$.

\begin{cor}\label{1:Y=Z}
We have $Y=Z$ and $\dim Y=r'$.
\end{cor}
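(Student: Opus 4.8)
The statement to prove is Corollary~\ref{1:Y=Z}: after projecting so that $v'=-1$, one has $Y=Z$ and $\dim Y = r'$.

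The plan is to read off the first equality from the hyperplane structure of the single symp through a point, and the dimension from Lemma~\ref{1:YinT} together with $v'=-1$. First I would invoke Lemma~\ref{1:YinT}, which already gives $\dim Y = r'+v'+1$; substituting $v'=-1$ immediately yields $\dim Y = r'$. So the only real content is $Y=Z$, i.e.\ that the points of $Z$ span all of $Y$ and in fact exhaust it. Since $Y=\<Z\>$ by definition, ``$Y=Z$'' means that $Z$ is already a (projective) subspace, namely all points of $Y$.

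For that, I would argue as follows. Pick any point $x\in X$; by Lemma~\ref{1:YinT} there is a unique $\theta^x\in\Theta$ through $x$ and $Y(\theta^x)=Y$. Now recall the definition of a member of $\Theta$: $Z(\theta^x)=Z\cap\theta^x$ is the disjoint union of the vertex $V_{\theta^x}$ of the cone $C_{\theta^x}$ and some $r'$-space of the generator $M=Y(\theta^x)$ complementary to $V_{\theta^x}$. With $v'=-1$ the vertex $V_{\theta^x}$ is empty, so $Z(\theta^x)$ is simply an $r'$-dimensional subspace complementary to the empty set inside the $r'$-dimensional generator $M=Y$; hence $Z(\theta^x)=Y$. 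Since $Z(\theta^x)\subseteq Z\subseteq Y$ and $Z(\theta^x)=Y$, we conclude $Z=Y$. (This also re-derives $\dim Y=r'$ directly, but citing Lemma~\ref{1:YinT} is cleaner.)

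The main — and really only — subtlety is bookkeeping around the degenerate-cone conventions of Definition~\ref{svs}: one must be careful that ``$Y(\theta)$ is a generator $M$ of $C_\theta$'' has dimension $r'+v'+1 = r'$ when $v'=-1$, and that ``$Z(\theta)$ is the disjoint union of the vertex and an $r'$-space of $M$ complementary to it'' forces $Z(\theta)$ to be all of $M$ precisely because the vertex is empty and an $r'$-space of an $r'$-space is the whole thing. No inductive or geometric machinery beyond Lemma~\ref{1:YinT} is needed; the projection step legitimising $v'=-1$ was already justified by Lemma~\ref{resY} just before the corollary. I would write the proof in two sentences: one citing Lemma~\ref{1:YinT} for the dimension, one unwinding $Z(\theta^x)$ for the equality.

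\begin{proof}
By Lemma~\ref{1:YinT}, for any $x\in X$ the unique member $\theta^x\in\Theta$ through $x$ satisfies $Y(\theta^x)=Y$, and $\dim Y = r'+v'+1 = r'$ since $v'=-1$. Moreover, by the definition of a member of $\Theta$ (cf.\ Definition~\ref{svs}), $Z(\theta^x)=Z\cap\theta^x$ is the disjoint union of the vertex $V_{\theta^x}$ of $C_{\theta^x}$ and an $r'$-space of the generator $Y(\theta^x)=Y$ complementary to $V_{\theta^x}$; as $v'=-1$, the vertex $V_{\theta^x}$ is empty and hence $Z(\theta^x)$ equals that $r'$-space, which is all of $Y$. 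Therefore $Y = Z(\theta^x) \subseteq Z \subseteq Y$, so $Z=Y$.
\end{proof}
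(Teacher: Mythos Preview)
Your proof is correct and takes essentially the same approach as the paper's: both use Lemma~\ref{1:uniqueSS} to get $Y(\theta)=Y$ and $\dim Y=r'$, then observe that with $v'=-1$ the definition of $Z(\theta)$ forces $Z(\theta)=Y(\theta)=Y$, giving $Y\subseteq Z\subseteq Y$. The paper just compresses the unwinding of Definition~\ref{svs} into the single clause ``$Y=Y(\theta)\subseteq Z$'', whereas you spell it out explicitly.
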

\begin{proof}
Take $\theta \in \Theta$ arbitrary. By Lemma~\ref{1:uniqueSS} and $v'=-1$, $Y=Y(\theta) \subseteq Z$, so $Y=Z$ as $\<Z\>=Y$. The same lemma says $\dim Y=r'+v'+1=r'$.
\end{proof}

\begin{lemma}\label{1:externallines}
Each pair of $X$-lines $L_1,L_2$ through $x$ not contained in $\theta^x$ is contained in a singular plane. Moreover, $v=r'-2$.
\end{lemma}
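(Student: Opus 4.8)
\textbf{Proof plan for Lemma~\ref{1:externallines}.}
The plan is to work inside a single member $\xi\in\Xi$ through $x$ and exploit the dichotomy between $0$-lines and $X$-lines lying in members of $\Theta$, combined with Lemma~\ref{1:v'=-1}, which tells us that every $X$-line through $x$ not in $\theta^x$ is a $0$-line with $Y_L=Y_x$. First I would fix $x\in X$ and two $X$-lines $L_1,L_2$ through $x$, neither contained in $\theta^x$. By Lemma~\ref{1:YinT} the only member of $\Theta$ through $x$ is $\theta^x$, so neither $L_i$ can lie in a member of $\Theta$; hence by Lemma~\ref{planes} either $\<L_1,L_2\>$ is a singular plane (which is what we want) or $[L_1,L_2]\in\Xi$. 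I would assume for contradiction that $[L_1,L_2]=:\xi\in\Xi$, so $L_1$ and $L_2$ are non-collinear $X$-lines of the quadric $X(\xi)$; in particular $Y(\xi)=Y_{L_1}\cap Y_{L_2}$ by Lemma~\ref{convexclosure}. But Lemma~\ref{1:v'=-1} gives $Y_{L_1}=Y_x=Y_{L_2}$, so $Y(\xi)=Y_x$, which has dimension $r'$ by Corollary~\ref{1:Y=Z} (since $v'=-1$, $\dim Y=\dim Y_x=r'$ — here one must be slightly careful: after the projection $Y_x$ is a hyperplane of $Y$, so $\dim Y_x=r'-1$; I would recompute this carefully below). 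The contradiction should come from comparing this with $v=\dim Y(\xi)$: since $\theta^x$ must meet $\xi$ in a singular subspace, and $\theta^x$ has vertex of dimension $v'=-1$ while $\xi$ has vertex of dimension $v$, a dimension count inside $\xi$ forces $v< \dim Y_x$, contradicting $Y(\xi)=Y_x$.

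More precisely, the computation of $v$ is the real content. I would take $\theta:=\theta^x$ and an $X$-line $L\subseteq\theta$ through $x$; by Corollary~\ref{Xlinevertex}, for any $\xi\in\Xi$ through $L$ we have $\theta\cap\xi=\<L,Y(\xi)\>$. Since $r=?$ is not yet known, I would instead use (S3) is \emph{not} available here (we are in a pre-DSV), so the argument must be purely from (S1), (S2) and the residual structure. The clean route: take $\xi_0\in\Xi$ through the $X$-line $L\subseteq\theta^x$ (such $\xi_0$ exists by Lemma~\ref{subspaceinsymp} or directly from (S1) applied to a suitable pair), so that $\theta^x\cap\xi_0=\<L,Y(\xi_0)\>$, a singular subspace of dimension $1+v$. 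On the other hand $\theta^x\cap\xi_0$ is contained in $\theta^x$ whose maximal singular subspaces containing an $X$-line through $x$ have dimension $r'+v'+1=r'$ (using $\dim Y=r'$, $v'=-1$). Comparing with the corresponding maximal singular subspace of $\xi_0$, which has dimension $r+v+1$, and using that $L^\perp\cap Y(\xi_0)=Y(\xi_0)$ sits inside $Y_L$, one gets $v+1+\dim(\text{complement})=\dots$. The upshot I expect is $v=r'-2$, obtained by noting that $Y_L$ has dimension $r'-1$ (it is $L^\perp\cap Y(\theta^x)$, a hyperplane of $Y$), that $Y(\xi_0)\subseteq Y_L$, and that (by a residue argument or by Lemma~\ref{Hperp}-type reasoning applied to $\theta^x$ and a second member of $\Theta$ through a different point of $\xi_0$) $Y(\xi_0)$ must be a hyperplane of $Y_L$, forcing $v=\dim Y(\xi_0)=r'-2$.

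For the first assertion, once $v=r'-2$ is established, the contradiction hypothesis $[L_1,L_2]\in\Xi$ gives $Y([L_1,L_2])=Y_x$, but $Y_x$ (being the full hyperplane $x^\perp\cap Y$ of $Y$) has dimension $r'-1>r'-2=v$, contradicting that the vertex of a member of $\Xi$ has dimension exactly $v$. Hence $\<L_1,L_2\>$ is a singular plane. I would present the computation of $v$ first (as a standalone paragraph or sub-claim), then derive the singular-plane statement as an immediate consequence.

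The main obstacle I anticipate is pinning down $\dim Y(\xi_0)$, i.e., showing that a member of $\Xi$ through an $X$-line of $\theta^x$ has vertex of dimension exactly $r'-2$ rather than something smaller. This requires knowing that $\theta^x$ has a ``companion'' structure: either one invokes that through $x$ there is a member $\xi$ of $\Xi$ whose vertex is as large as possible, or one argues residually (but $r$ might be $1$, in which case the residue is just a point-quadric pair and the argument degenerates). I expect the cleanest fix is to use Lemma~\ref{collY} and Lemma~\ref{subspaceinsymp}: take a hyperplane $S$ of $\<L,Y_L\>$ inside $\theta^x$ containing $L$ but not maximal, apply Lemma~\ref{subspaceinsymp} to get $\xi_0\in\Xi$ through $L$ with $\xi_0\cap\theta^x$ not collinear to $S$, and then read off $\dim Y(\xi_0)$ from $\xi_0\cap\theta^x=\<L,Y(\xi_0)\>$ together with the fact that $Y(\xi_0)$ must avoid the ``rest'' of $Y_L$ — this should pin it to exactly one dimension below $Y_L$, giving $v=r'-2$.
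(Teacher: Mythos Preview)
Your overall strategy matches the paper's: first establish $v=r'-2$, then derive the contradiction from $Y([L_1,L_2])=Y_{L_1}\cap Y_{L_2}=Y_x$ having dimension $r'-1>v$. The second half of your argument is exactly right.

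Where you diverge is in the computation of $v$, and your self-identified ``main obstacle'' is real under your chosen route but evaporates under the paper's. You propose to take an $X$-line $L\subseteq\theta^x$ and then hunt for some $\xi_0\in\Xi$ through $L$ via Lemma~\ref{subspaceinsymp}, after which you would have to argue abstractly that $Y(\xi_0)$ is exactly a hyperplane of $Y_L$. Lemma~\ref{subspaceinsymp} only produces a member of $\Xi\cup\Theta$, and even granting $\xi_0\in\Xi$, nothing in your sketch pins down $\dim Y(\xi_0)$ precisely; you would need extra input to rule out $Y(\xi_0)\subsetneq Y_L$ being of smaller dimension.

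The paper avoids this entirely by choosing $\xi_0:=[L,L_1]$, where $L_1$ is one of the two given lines (and $L\subseteq\theta^x$ is chosen non-collinear to $L_1$ via Lemma~\ref{convexclosure}). Since $L_1\nsubseteq\theta^x$ and $\theta^x$ is the only member of $\Theta$ through $x$, necessarily $[L,L_1]\in\Xi$. Its vertex is $Y_L\cap Y_{L_1}$; but $Y_{L_1}=Y_x$ by Lemma~\ref{1:v'=-1}, and $Y_L\subseteq Y_x$, so the vertex is simply $Y_L$, whose dimension $r'-2$ is read off directly from the quadric $XY(\theta^x)$ (it is $L^\perp\cap Y(\theta^x)$, a hyperplane of $x^\perp\cap Y(\theta^x)=Y_x$). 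No further argument is needed. So: keep your contradiction step, but replace the search for $\xi_0$ by the explicit choice $\xi_0=[L,L_1]$.
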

\begin{proof}
Take any $X$-line $L$ through $x$ inside $\theta$ which is not collinear to $L_1$ (cf.\ Lemma~\ref{convexclosure}). By Lemma~\ref{planes}, $L$ and $L_1$ determine a unique member $\xi$ of $\Xi$ with vertex $Y_L \cap Y_{L_1}$. Lemma~\ref{1:v'=-1} says $Y_{L_1}=Y_{L_2}=Y_x$, so $v=r'-2$. If $L_1$ and $L_2$ would not be collinear, the same argument implies that $[L_1,L_2]\in \Xi$ has vertex $Y_x$, but then $v=r'-1$, a contradiction.  Hence $L_1$ and $L_2$ are collinear indeed.  \end{proof}

We consider the set $\pi(H):=\{x\in X \mid Y_x=H\} \cup H$ for any hyperplane $H$ of $Y$. 

\begin{lemma}\label{1:singsub}
Let $H$ be a hyperplane of $Y$. Then $\pi(H)$ is a (maximal) singular subspace, which intersects each $\theta \in \Theta$ in a maximal singular subspace of $\theta$ of the form $\<x,H\>$ with $x\in X(\theta)$.
\end{lemma}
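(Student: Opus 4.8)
The goal is to show that for each hyperplane $H$ of $Y$, the set $\pi(H)=\{x\in X\mid Y_x=H\}\cup H$ is a maximal singular subspace and intersects each $\theta\in\Theta$ in a maximal singular subspace of the form $\<x,H\>$ with $x\in X(\theta)$. The first thing I would do is fix $\theta\in\Theta$; since $v'=-1$ we have $Y(\theta)=Y$ by Corollary~\ref{1:Y=Z}, and $\theta$ is a (possibly degenerate, but here $v'=-1$ so non-degenerate) hyperbolic quadric with $Y$ as a maximal singular subspace. Inside the quadric $\theta$, the hyperplane $H$ of $Y$ lies on exactly one other maximal singular subspace, say $M_\theta=\<x_\theta,H\>$ for some $x_\theta\in X(\theta)$; this is just the standard geometry of a hyperbolic quadric (the maximal singular subspaces through a submaximal one come in a pencil of size two). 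For each point $x'\in M_\theta\cap X$ we have $x'^\perp\cap Y\supseteq H$ inside $\theta$, and since $Y_{x'}$ is a hyperplane of $Y$ (Lemma~\ref{1:YinT} together with $v'=-1$), we get $Y_{x'}=H$, so $M_\theta\subseteq\pi(H)$. Conversely, if $x\in X(\theta)$ has $Y_x=H$, then $\<x,H\>$ is a singular subspace of $\theta$ containing $H$, hence equals $M_\theta$, so $x\in M_\theta$. This establishes $\pi(H)\cap\theta=M_\theta=\<x_\theta,H\>$.

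Next I would show that $\pi(H)$ is a singular subspace (in particular, that it really is a subspace). Take two points $p_1,p_2\in\pi(H)$. If both lie in $H$, the line $p_1p_2\subseteq H\subseteq X\cup Y$ is singular. If $p_1\in X$ with $Y_{p_1}=H$ and $p_2\in H\subseteq Y_{p_1}=p_1^\perp\cap Y$, then $p_1p_2$ is a singular line by definition of $p_1^\perp$. The essential case is $p_1,p_2\in X$ with $Y_{p_1}=Y_{p_2}=H$. I would argue these two points are collinear: if not, then by (S1) and Lemma~\ref{1:YinT} they lie in a unique member $\zeta\in\Xi\cup\Theta$. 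If $\zeta\in\Xi$ then its vertex is $Y(\zeta)=Y_{p_1}\cap Y_{p_2}=H$, a subspace of dimension $r'-1$, contradicting $\dim(\text{vertex})=v=r'-2$ (Lemma~\ref{1:externallines}). If $\zeta\in\Theta$, then $p_1$ and $p_2$ would have to be non-collinear points of $\zeta$ with $Y_{p_1}=Y_{p_2}=H$; but inside the hyperbolic quadric $\zeta$ (with $Y(\zeta)=Y$, $v'=-1$) two points both collinear to the same submaximal $H$ of $Y$ lie on the single maximal singular subspace $\<x_\zeta,H\>$ through $H$ other than $Y$ itself, hence are collinear, a contradiction. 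So $p_1\perp p_2$, and moreover the line $p_1p_2$ has a point of $X\cup Y$ other than $p_1,p_2$; since that point must also be collinear to $H$, one checks all points of $p_1p_2$ lie in $\pi(H)$, so the line $p_1p_2\subseteq\pi(H)$. Thus $\pi(H)$ is closed under taking lines and is singular.

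Then I would pin down that $\pi(H)$ is a genuine projective subspace and that it is maximal. It is a subspace: by the previous paragraph any line through two of its points lies in it, and collinearity of all its points makes it a singular (hence linear) subspace. For maximality, I would note $\pi(H)\supseteq H$ and $\pi(H)\cap X\neq\emptyset$ (take any $\theta\ni$ some point, or use Lemma~\ref{1:v'=-1}-type arguments), so $\dim\pi(H)\geq\dim H+1=r'$; on the other hand, for any $\theta$ through a point of $\pi(H)$, $\pi(H)\cap\theta=\<x_\theta,H\>$ has dimension $r'$ and is maximal singular in $\theta$, and any further point of $\pi(H)\setminus\theta$ would, together with $X(\theta)$-points, be collinear to $H$ but produce a singular subspace of dimension $>r'$ collinear to $H$ outside $\theta$, which (by choosing non-collinear $X$-points of $\theta$ as in Lemma~\ref{1:v'=-1}) forces it into $\theta$ — a contradiction. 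So $\dim\pi(H)=r'$, and since $r'=\dim Y$ and the singular subspaces of $X\cup Y$ meeting $Y$ in a hyperplane have dimension at most $r'$, $\pi(H)$ is maximal. The main obstacle I expect is the bookkeeping in the collinearity step — making sure that when two $X$-points are collinear to the same hyperplane $H$ of $Y$ and lie in a common member of $\Xi\cup\Theta$, the dimension count of the vertex (using $v=r'-2$, which came from Lemma~\ref{1:externallines}) really does give the contradiction — and handling uniformly the subcase where the two points lie in a common $\theta\in\Theta$ versus a common $\xi\in\Xi$. Everything else is routine hyperbolic-quadric geometry combined with the earlier lemmas of Section~\ref{1SS/point}.
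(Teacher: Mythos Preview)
Your core argument---that $\pi(H)\cap\theta=\langle x_\theta,H\rangle$ for each $\theta\in\Theta$, and that any two $X$-points $p_1,p_2$ with $Y_{p_1}=Y_{p_2}=H$ are collinear because otherwise $[p_1,p_2]\in\Xi$ would have vertex $H$ of dimension $r'-1>v=r'-2$---is correct and is exactly the paper's approach. One small simplification: your separate treatment of the case $\zeta\in\Theta$ is unnecessary. By Lemma~\ref{1:uniqueSS} each $p_i$ lies in a \emph{unique} $\theta^{p_i}\in\Theta$; if $\theta^{p_1}=\theta^{p_2}$ you have already shown $p_1,p_2\in\langle x_\theta,H\rangle$ are collinear, and if $\theta^{p_1}\neq\theta^{p_2}$ then $[p_1,p_2]$ cannot lie in $\Theta$ at all, so it lies in $\Xi$ and your vertex argument applies directly.

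However, your maximality argument contains a genuine error: you claim $\dim\pi(H)=r'$, arguing that a point of $\pi(H)\setminus\theta$ would be ``forced into $\theta$''. This is false. In fact $\dim\pi(H)=r'+k$ with $k\geq 1$ (see Proposition~\ref{1:projsegre} and Remark~\ref{k>0}), so $\pi(H)$ is strictly larger than any single $\langle x_\theta,H\rangle$. The forcing argument breaks down because a point $x'\in\pi(H)$ lying in some $\theta'\neq\theta$ is collinear to all of $\langle x_\theta,H\rangle$ without being in $\theta$; nothing in Lemma~\ref{1:v'=-1} prevents this. Maximality is instead immediate from the definition of $\pi(H)$: any $X$-point $x'\notin\pi(H)$ has $Y_{x'}\neq H$, and since $Y_{x'}$ is a hyperplane of $Y$ this means some point of $H$ is not in $Y_{x'}$, i.e.\ not collinear to $x'$; likewise any point of $Y\setminus H$ fails to be collinear to the $X$-points of $\pi(H)$. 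This is what the paper means by ``maximal by definition''.
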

\begin{proof}
Take any $\theta \in \Theta$. Since $Y(\theta)=Y$ by Lemma~\ref{1:uniqueSS}, $X(\theta)$ contains a point $x$ with $Y_x=H$, and for such a point $x$, clearly $X(\theta) \cap \pi(H)=\<x,H\>$. Take two points $x_1,x_2$ with $Y_{x_1}=Y_{x_2}=H$. Let $\theta_i$ be the unique member of $\Theta$ containing $x_i$ (cf.\ Lemma~\ref{1:uniqueSS}), for $i=1,2$. If $\theta_1=\theta_2$, then $x_2 \in \<x_1,H\>$ and hence $x_1x_2$ is a singular line (with a unique point in $Y$). So suppose $\theta_1 \neq \theta_2$. If $x_1$ and $x_2$ are not collinear, then they determine a member of $\Xi$, which has $H$ as its vertex, contradicting $v=r'-2$ (cf. Lemma~\ref{1:externallines}). Hence $\pi(H)$ is a singular subspace indeed, maximal by definition. 
\end{proof}
\par\bigskip

\textbf{Notation.}   For $x\in X$, we denote by $\pi^x$ the subspace $\pi(Y_x)$. We define $\Pi$ as the set $\{\pi(H) \mid H \text{ a hyperplane of } Y\}$. 

The sets $\Theta$ and $\Pi$ play a role similar as $\Theta_1$ and $\Theta_2$ in the previous section. For instance, the following corollary is the analogue of Lemma~\ref{1':xlinesx}.

\begin{cor}\label{1:Xlines}
Let $x \in X$ be arbitrary. Then each $X$-line through $x$ is contained in exactly on of $\theta^x, \pi^x$. 
\end{cor}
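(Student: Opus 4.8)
The plan is to split the $X$-lines through $x$ according to how many members of $\Theta$ contain them, and to identify the $0$-lines as precisely those lying in $\pi^x$ and the $1$-lines as precisely those lying in $\theta^x$.

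First I would note the dichotomy: every $X$-line $L$ through $x$ is either a $0$-line or a $1$-line. Indeed, if some $\theta\in\Theta$ contained $L$, then $\theta$ would contain $x$ and hence, by Lemma~\ref{1:YinT} (uniqueness of $\theta^x$), coincide with $\theta^x$; so no $X$-line through $x$ lies in two distinct members of $\Theta$, i.e. no such line is a $2$-line. Consequently a $1$-line through $x$ automatically lies in $\theta^x$ (the unique member of $\Theta$ containing it), whereas a $0$-line through $x$ lies in no member of $\Theta$, and in particular not in $\theta^x$.

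Next I would use Lemma~\ref{lineSS} to control membership in $\pi^x=\pi(Y_x)$. If $L$ is a $0$-line through $x$, then Lemma~\ref{lineSS}$(i)$ gives $Y_{x'}=Y_L=Y_x$ for every $x'\in L$, so each point of $L$ lies in $\pi(Y_x)$, whence $L\subseteq\pi^x$; combined with the previous paragraph, $L$ lies in $\pi^x$ but not $\theta^x$. If instead $L$ is a $1$-line through $x$, then Lemma~\ref{lineSS}$(ii)$ says $Y_L$ is a proper hyperplane of $Y_x$; were $L\subseteq\pi^x$, every point $x'\in L$ would satisfy $Y_{x'}=Y_x$, and since $Y_L=\bigcap_{x'\in L}Y_{x'}$ (cf.\ Definition~\ref{YS}) this would force $Y_L=Y_x$, a contradiction. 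Hence $L$ lies in $\theta^x$ but not $\pi^x$.

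Putting the two cases together, every $X$-line through $x$ is contained in exactly one of $\theta^x$ and $\pi^x$, which is the assertion. I do not expect a real obstacle here: the argument only rests on the uniqueness of $\theta^x$ from Lemma~\ref{1:YinT} (to get the $0$-line/$1$-line dichotomy) and on the precise description of $Y_L$ in the two cases provided by Lemma~\ref{lineSS}; the one point requiring a little care is the exclusion of an $X$-line lying simultaneously in $\theta^x$ and $\pi^x$, handled above via the hyperplane behaviour of $Y_L$ along a $1$-line.
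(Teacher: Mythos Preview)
Your proof is correct and follows essentially the same approach as the paper: both split $X$-lines through $x$ according to whether they lie in $\theta^x$ or not, and identify the latter with lines in $\pi^x$ via $Y_{x'}=Y_x$. The only cosmetic difference is the exclusion step: the paper computes $\theta^x\cap\pi^x=\langle x,Y_x\rangle$ directly via Lemma~\ref{1:singsub} (a subspace containing no $X$-line), whereas you argue by contradiction on $\dim Y_L$ using Lemma~\ref{lineSS}$(ii)$; both are equally valid.
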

\begin{proof}
Let $L$ be any $X$-line through $x$ and suppose $L$ is not contained in $\theta^x$. By Lemma~\ref{1:v'=-1}, $Y_{x'}=Y_x$ for all points $x'\in L$. So, by definition,  $L \subseteq \pi^x$. By Lemma~\ref{1:singsub}, $\theta^x \cap \pi^x=\<x,Y_x\>$, so no $X$-line is contained in both $\theta^x$ and $\pi^x$. 
\end{proof}
\par\bigskip

The above implies that $r=1$:

\begin{lemma}\label{1:r=1}
Suppose $\xi\in \Xi$ and $x\in X(\xi)$. Then $\xi \cap \theta^x$ is a maximal singular subspace of both $\theta^x$ and $\xi$ of the form $\<L,Y_L\>$, with $L$ an $X$-line. In particular, $r=1$. 
\end{lemma}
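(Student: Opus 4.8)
The plan is to analyze the intersection $\xi \cap \theta^x$ using the general structure results about intersections of two members of $\Xi \cup \Theta$ sharing a point, and then to argue that this intersection must in fact be a maximal singular subspace of a specific type, which pins down $r = 1$. First I would observe that by Corollary~\ref{1:Xlines}, every $X$-line through $x$ lies in exactly one of $\theta^x$ or $\pi^x$. Inside $\xi$, by Lemma~\ref{convexclosure} the subspace $x^\perp \cap \xi$ is singular (being the tangent space to the quadric $X(\xi)$ at $x$, union the vertex), so all $X$-lines of $\xi$ through $x$ lie in a common singular subspace — but they cannot all lie in $\pi^x$ since $\xi$ is not a singular subspace and $\pi^x$ is. Hence $\xi$ contains at least one $X$-line $L$ through $x$ with $L \subseteq \theta^x$. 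I would then show that $\xi \cap \theta^x$ contains $\langle L, Y_L\rangle$: indeed $Y_L \subseteq \theta^x$ by Lemma~\ref{lineSS} (or Lemma~\ref{1:v'=-1}), and $Y_L = Y(\xi)$-related; more carefully, $\xi$ contains $Y(\xi)$ in its vertex, and since $L \subseteq \theta^x$ one checks $Y(\xi) \subseteq Y_L \subseteq Y$, which forces $\langle L, Y_L\rangle \subseteq \xi \cap \theta^x$.

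Next I would bound $\xi \cap \theta^x$ from above. By (S2) it is singular, and by Lemma~\ref{convexclosure} applied within $\theta^x$, the intersection cannot contain two non-collinear points of $\theta^x$; being singular and containing the $X$-line $L$, it lies in $L^\perp$, so $\xi \cap \theta^x \subseteq L^\perp \cap \theta^x$, which is a maximal singular subspace of $\theta^x$ of the form $\langle L, H\rangle$ for some $H \subseteq Y$ (the generator of the quadric $C_{\theta^x}$ through $L$). Combining with the lower bound, $\xi \cap \theta^x = \langle L, Y_L\rangle$ and this is a maximal singular subspace of $\theta^x$. The same reasoning inside $\xi$ (using that $\xi \cap \theta^x$ is singular, contains $L$, and lies in $L^\perp \cap \xi$, which is maximal singular in $\xi$ since the base of $\xi$ has rank $r+1$) shows $\langle L, Y_L\rangle$ is maximal singular in $\xi$ as well.

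To conclude $r = 1$: the maximal singular subspaces of $\xi$ through the $X$-line $L$ have dimension $r + v$ (a generator of an $(r,v)$-cone containing an extra dimension from $L$ minus overlap — more precisely, a maximal singular subspace of an $(r,v)$-cone has dimension $r + v + 1$, and $\langle L, Y_L\rangle$ has dimension $1 + \dim Y_L = 1 + (r' - 2) = r' - 1$ using Lemma~\ref{1:externallines} and $v = r' - 2$). Meanwhile within $\theta^x$, which is an $(r', -1)$-cone after the projection (so genuine hyperbolic quadric of rank $r'+1$), the maximal singular subspace $\langle L, Y_L\rangle$ has dimension $r'$; but it equals $\langle L, Y_L\rangle$ with $\dim Y_L = r' - 2$ giving dimension $r' - 1$. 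The discrepancy forces a constraint; alternatively, a cleaner route: if $r \geq 2$ then $\xi$ would contain two non-collinear $X$-lines through $x$ both inside $\xi \cap \theta^x \subseteq \theta^x$, contradicting that $\theta^x$'s generator through $x$ in a given direction is unique, or contradicting Corollary~\ref{1:Xlines} combined with Lemma~\ref{convexclosure}. The main obstacle I anticipate is getting the dimension bookkeeping exactly right — tracking which quadric is degenerate, whether $Y_L$ equals $Y(\xi)$ or is strictly larger, and correctly identifying $L^\perp \cap \xi$ as maximal singular — so that the parameter count yields precisely $r = 1$ rather than merely $r \leq 2$; this requires carefully invoking Lemma~\ref{1:externallines} ($v = r' - 2$) and the rank-$(r+1)$ structure of the base of $\xi$.

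Putting it together: $\xi \cap \theta^x = \langle L, Y_L\rangle$ is maximal singular in both, and since a maximal singular subspace of the $(r, v)$-cone $C_\xi$ containing an $X$-line $L$ has the form $\langle L, Y(\xi)\rangle$ extended by complementary directions — but here it is forced to be exactly $\langle L, Y_L\rangle$ with $Y_L = Y_x \supseteq Y(\xi)$ a hyperplane of $Y$ of dimension $r' - 2 = v$ — we see $Y_L$ already accounts for the full vertex plus, leaving no room for a second independent $X$-direction; hence the base of $X(\xi)$ has rank $2$, i.e. $r = 1$, which is the claim.
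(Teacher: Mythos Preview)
Your proposal contains a genuine gap. You correctly start with the key observation from Corollary~\ref{1:Xlines}: every $X$-line through $x$ lies in exactly one of $\theta^x$ or $\pi^x$. But from there you try to prove directly that $\xi\cap\theta^x$ is a \emph{maximal} singular subspace of $\xi$, and then read off $r=1$ from a dimension count. That order is backwards, and the direct arguments you give for maximality in $\xi$ are incorrect: the set $L^\perp\cap\theta^x$ is \emph{not} a single maximal singular subspace of $\theta^x$ when $r'\geq 2$ (it is a cone over a smaller quadric), and likewise $L^\perp\cap\xi$ is not singular when $r\geq 2$. Your ``cleaner route'' assumes that if $r\geq 2$ then two non-collinear $X$-lines through $x$ in $\xi$ both lie in $\theta^x$; nothing you have shown forces this, since one of them could lie in $\pi^x$. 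Finally, the claim $Y_L=Y_x$ in your last paragraph is wrong: for an $X$-line $L\subseteq\theta^x$, the line $L$ is a $1$-line, so $Y_L$ is only a hyperplane of $Y_x$.

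The paper's argument uses the observation you already made but in the right way: \emph{all} singular lines of $\xi$ through $x$ lie in the union of the two singular subspaces $M_1:=\pi^x\cap\xi$ and $M_2:=\theta^x\cap\xi$ (note $\langle x, Y(\xi)\rangle\subseteq\langle x,Y_x\rangle=\theta^x\cap\pi^x$, so this also covers the lines from $x$ into the vertex). In the base quadric of $\xi$, the lines through the image of $x$ therefore lie in the union of two singular subspaces; this is impossible for a hyperbolic quadric of rank $r+1$ with $r\geq 2$. Hence $r=1$, and only \emph{then} does one conclude that $M_2=\langle L,Y(\xi)\rangle$ has dimension $v+2=r'$, which is simultaneously the dimension of a maximal singular subspace of $\xi$ (namely $r+v+1=r'$) and of $\theta^x$ (namely $r'$), giving the maximality assertions and $V=Y_L$. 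The missing idea in your attempt is precisely this ``two singular subspaces cover the residual quadric at $x$'' step, which you set up but never use; you cannot extract $r=1$ from $\theta^x\cap\xi$ alone.
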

\begin{proof}
Let $V$ be the vertex of $\xi$ and observe that  $\<x,V\> \subseteq \<x,Y_x\>= \theta^x \cap \pi^x$. Moreover by  Corollary~\ref{1:Xlines},  $\pi^x \cup \theta^x$ contains all $X$-lines of $\xi$ through $x$, and by the foregoing, in fact all singular lines of $\xi$ through $x$. The singular subspaces $M_1:=\pi^x \cap \xi$ and $M_2:=\theta^x \cap \xi$  hence contains all singular lines of $X(\xi)$ through $x$. This implies that $r=1$, so $M_i=\<L_i,V\>$ for (non-collinear) $X$-lines $L_i$ of $X(\xi)$ through $x$. As $\dim(M_i)=2+v=r'$ (cf.\ Lemma~\ref{1:externallines}), $M_2$ is a maximal singular subspace in both $\xi$ and~$\theta^x$, and hence $V=Y_{L_2}$.
\end{proof}

\subsection{The projection $\rho(X)$ and its connection to $Y$}

What follows contains many similarities compared to the situation in Section~\ref{1':1line}. This is caused by the fact that we will obtain a half dual Segre variety $\mathsf{HSD}_{r',k}(\K)$ for some natural number $k \geq 1$, and in case $k=r'$ this is the projection of the dual Segre variety $\mathsf{HSD}_{r',r'}(\K)$ (as encountered in the previous section) from one of its two $r'$-spaces $R_1$ or $R_2$ in $Z$ (which causes $\Theta_2$ to `collapse' to $\Pi$).  Despite the similarities between both cases there is no upshot in treating them simultaneously as it would boil down to a similar amount of work and obscure some of the arguments. We give the proofs up to the part where there are differences, and point out the parts which are exactly the same.

\par\medskip
We again consider the maps $\rho$ and $\chi$ (cf.\ Definitions~\ref{RHO} and~\ref{CHI}).

 Next, we have the counterpart of Lemma~\ref{1':rhoXlines}. Except for a slight change in assertion $(iii)$, the statement is analogous, however, the proof has to take into account the additional possibility that $Y_{x_1}=Y_{x_2}$.
 
\begin{lemma}~\label{1:rhoXlines}
Suppose $\rho(x_1)$ and $\rho(x_2)$ determine a singular line of $\rho(X)$, for $x_1,x_2 \in X$. Let $x'_i  \in \rho^{-1}(\rho(x_i))$ be arbitrary, for $i=1,2$. Then:

\begin{compactenum}[$(i)$]
\item there is a unique $\zeta \in \Theta \cup \Pi$ containing $x'_1 \cup x'_2$, and $\rho^{-1}(\rho(x_1)) \cup  \rho^{-1}(\rho(x_2))\subseteq \zeta$, with $\zeta\in \Theta$ if and only if $Y_{x_1} \neq Y_{x_2}$;
\item for each $x'_1\in\rho^{-1}(\rho(x_1))$, we can choose $x''_2\in\rho^{-1}(\rho(x_2))$ such that $\<x'_1,x''_2\>$ is an $X$-line;
\item  If $\zeta\in \Theta$, then $\{Y_x \mid \rho(x) \in \<\rho(x_1),\rho(x_2)\>\}$ is the set of all $(r'-1)$-spaces through the $(r'-2)$-space $Y_{x_1}\cap Y_{x_2}$ inside $Y$. 
\end{compactenum}
\end{lemma}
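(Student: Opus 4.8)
The statement is the exact analogue of Lemma~\ref{1':rhoXlines}, but now two $X$-points $x_1,x_2$ with $\rho(x_1),\rho(x_2)$ collinear in $\rho(X)$ can have either $Y_{x_1}\neq Y_{x_2}$ (then they should lie together in a member of $\Theta$) or $Y_{x_1}=Y_{x_2}$ (then they should lie together in a member of $\Pi$). The plan is to run essentially the same argument as in Lemma~\ref{1':rhoXlines}, keeping careful track of which of the two cases we are in. First I would recall that $x'_i\in\rho^{-1}(\rho(x_i))=\<x_i,Y_{x_i}\>\cap X$ by Lemma~\ref{proprhochi}, so in particular $Y_{x'_i}=Y_{x_i}$; hence the whole discussion only depends on $Y_{x_1},Y_{x_2}$ and it suffices to treat $x_1,x_2$ themselves. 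By Lemma~\ref{1:v'=-1}, Lemma~\ref{1:externallines} and Corollary~\ref{1:Xlines}, the mutual position of $x_1$ and $x_2$ is governed by $\dim(Y_{x_1}\cap Y_{x_2})$: if $Y_{x_1}=Y_{x_2}$ they lie in the common maximal singular subspace $\pi^{x_1}=\pi(Y_{x_1})\in\Pi$ (Lemma~\ref{1:singsub}); if $Y_{x_1}\neq Y_{x_2}$ but $\dim(Y_{x_1}\cap Y_{x_2})=r'-2$, then by Lemma~\ref{lineSS}$(ii)$ (applied as in Lemma~\ref{1:YinT}) they lie in a common $\theta\in\Theta$ with $Y(\theta)=Y$; and the remaining case $\dim(Y_{x_1}\cap Y_{x_2})\le r'-3$ is exactly what $(i)$ must exclude.

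For $(i)$, I would argue by contradiction that $\dim(Y_{x_1}\cap Y_{x_2})<r'-2$ cannot happen. Mimicking the proof of Lemma~\ref{1':rhoXlines}$(i)$: since $\rho(x_1)\rho(x_2)$ is a line of $\rho(X)$, there is $x_3\in X$ with $\rho(x_3)$ on $\rho(x_1)\rho(x_2)\setminus\{\rho(x_1),\rho(x_2)\}$, so $x_3\in\<x_1,x_2,Y\>$; a dimension count inside $\<x_1,x_2,Y\>$ (using $\dim\<x_i,Y_{x_i}\>=r'$, $\dim Y=r'$ by Corollary~\ref{1:Y=Z}, and $Y_{x_1}\neq Y_{x_2}$) produces an $X$-line $L$ with $\rho(L)=\<\rho(x_1),\rho(x_2)\>$ — the key sub-step is recognising the spanned line as singular via Lemma~\ref{XYpointsonline} and as an $X$-line because $x_3\notin\<x_1,Y_{x_1}\>$. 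By Corollary~\ref{1:Xlines} this $X$-line lies in $\theta^{x}$ or in $\pi^{x}$ for $x\in L$; in either case $x_1,x_2$ end up in a common member of $\Theta\cup\Pi$, forcing $\dim(Y_{x_1}\cap Y_{x_2})\ge r'-2$, the desired contradiction (when $Y_{x_1}=Y_{x_2}$ there is nothing to exclude). So $\zeta:=[x'_1,x'_2]$ or $\zeta:=\pi^{x_1}$ contains $x'_1,x'_2$; uniqueness follows from Lemma~\ref{uniquesymp} resp.\ from the fact that $\pi^{x_1}$ is the unique maximal singular subspace with $Y$-part $Y_{x_1}$ (Lemma~\ref{1:singsub}); and $\zeta$ then contains $\<x_i,Y_{x_i}\>$ by Lemma~\ref{lineSS}$(ii)$ resp.\ by definition of $\pi^{x_1}$, giving $\rho^{-1}(\rho(x_i))\subseteq\zeta$. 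The dichotomy $\zeta\in\Theta\Leftrightarrow Y_{x_1}\neq Y_{x_2}$ is then immediate.

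For $(ii)$, inside the quadric $XY(\zeta)$ (or the singular subspace $\pi^{x_1}$) the point $x'_1$ is collinear to a hyperplane $M$ of $\<x_2,Y_{x_2}\>$; since $Y_{x_2}\not\subseteq$ this hyperplane is ruled out only in the degenerate sub-case, in general $M\neq Y_{x_2}$ so $M$ contains an $X$-point $x''_2$, and $\<x'_1,x''_2\>$ is an $X$-line — when $Y_{x_1}=Y_{x_2}$ one instead just takes any line of $\pi^{x_1}$ through $x'_1$ and a point of $\rho^{-1}(\rho(x_2))$. For $(iii)$, assuming $\zeta\in\Theta$ (so $Y(\zeta)=Y$ by Lemma~\ref{1:uniqueSS}), pick by $(ii)$ an $X$-line $L\subseteq\zeta$ with $\rho(L)=\<\rho(x_1),\rho(x_2)\>$; then $\rho$ restricts to a bijection $L\to\<\rho(x_1),\rho(x_2)\>$, and inside $\zeta$ the collinearity map $x\mapsto Y_x$ is a bijection from the points of $L$ onto the $(r'-1)$-spaces of $Y$ through $Y_{x_1}\cap Y_{x_2}$; composing gives the claim. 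I expect the only genuinely delicate point to be the dimension bookkeeping in $(i)$ that forces the existence of the $X$-line $L$ — one must be careful that all the relevant subspaces have the dimensions claimed given $v'=-1$, $v=r'-2$, $\dim Y=r'$ — but this is a direct transcription of the corresponding step in Lemma~\ref{1':rhoXlines}, with $2r'$ there replaced by $r'$ here.
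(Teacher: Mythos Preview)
Your overall approach matches the paper's, but you have introduced an unnecessary detour that muddles the logic in~$(i)$. In this section $\dim Y=r'$ (Corollary~\ref{1:Y=Z}) and $\dim Y_{x_i}=r'-1$ (Lemma~\ref{1:YinT}), so $Y_{x_1}$ and $Y_{x_2}$ are \emph{hyperplanes} of $Y$; hence whenever $Y_{x_1}\neq Y_{x_2}$ one has $\dim(Y_{x_1}\cap Y_{x_2})=r'-2$ automatically. There is no case ``$\dim(Y_{x_1}\cap Y_{x_2})\le r'-3$'' to exclude --- this is the essential difference from Lemma~\ref{1':rhoXlines}, where $\dim Y=2r'+1$ left room for that possibility. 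Your contradiction argument is therefore vacuous, and the dimension count you perform inside it (in $\<x_1,x_2,Y\>$, which is genuinely $(r'+2)$-dimensional since $\<Y_{x_1},Y_{x_2}\>=Y$) is valid unconditionally once $Y_{x_1}\neq Y_{x_2}$.

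The paper exploits this simplification: in the case $Y_{x_1}\neq Y_{x_2}$ it runs the dimension count directly (no contradiction framing) to produce the $X$-line $\<x'_1,x''_2\>$, thereby proving $(ii)$ \emph{first}; then Corollary~\ref{1:Xlines} places this line in $\theta^{x'_1}$ (it cannot lie in $\pi^{x'_1}$ since $Y_{x'_1}\neq Y_{x''_2}$), and uniqueness $\zeta=\theta^{x'_1}$ together with $Y_{x''_2}=Y_{x'_2}\subseteq\zeta$ gives $(i)$. Your post-contradiction step ``$\zeta:=[x'_1,x'_2]$'' is the one genuine gap: $x'_1$ and $x'_2$ may be collinear (so $[x'_1,x'_2]$ is not defined as a unique symp), and even when they are not, you still owe an argument that $[x'_1,x'_2]\notin\Xi$ --- the equality $v=r'-2=\dim(Y_{x'_1}\cap Y_{x'_2})$ does not by itself rule out a member of $\Xi$ with that vertex. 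The clean fix is exactly what the paper does: use the $X$-line $\<x'_1,x''_2\>$ you have already constructed, not the symbol $[x'_1,x'_2]$. With that reordering your argument for $(ii)$ and $(iii)$ is fine and coincides with the paper's.
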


\begin{proof}
Again, $x'_i\in\rho^{-1}(\rho(x_i)) = \<x_i, Y_{x_i}\> \cap X$ for $i=1,2$ by Lemma~\ref{1:inj}. We distinguish two cases.

Suppose first that $Y_{x_1} = Y_{x_2}$. Then $\pi^{x_1}$ is the unique member of $\Pi \cup \Theta$ containing both $x_1$ and $x_2$. Moreover, $\<x_1,Y_{x_1}\>$ and  $\<x_2,Y_{x_2}\>=\<x_2,Y_{x_1}\>$ are contained in  $\pi^{x_1}$ by definition of the latter. Assertion $(i)$ follows. Since $\pi^{x_1}$ is a singular subspace and $\<x_1,Y_{x_1}\> \cap \<x_2,Y_{x_1}\> =Y_{x_1}$, any  $x''_2\in\<x_2,Y_{x_2}\>\cap X$ is on an $X$-line with $x'_1$, so $(ii)$ follows.

Next, suppose $Y_{x_1}\neq Y_{x_2}$. We first show assertion $(ii)$. To that end, consider the $(r'+2)$-space $\<x_1,x_2,Y\>$ and note that $\<x_1,x_2,Y\>=\<x_1,x_2,Y_{x_1}, Y_{x_2}\>$. Let $x_3$ be an $X$-point with $\rho(x_3)$ on $\<\rho(x_1),\rho(x_2)\>\setminus\{\rho(x_1),\rho(x_2)\}$. Then $x_3\in \<x_1,x_2,Y\>\setminus (\<x_1,Y_{x_1}\> \cup \<x_2,Y_{x_2}\>)$. Inside $\<x_1,x_2,Y\>$, we see that the $(r'+1)$-space $\<x_3,x_1, Y_{x_1}\>$ intersects the $r'$-space $\<x_2, Y_{x_2}\>$ in an $(r'-1)$-space $M_2$ intersecting $Y$ in the $(r'-2)$-space $Y_{x_1} \cap Y_{x_2}$. Then the line $\<x'_1,x_3\>$ intersects $M_2$ in a point of $X$, say $x''_2$, and as such, it is a singular line. If it would not be an $X$-line, then it is contained in $\<x_1,Y_{x_1}\>$, contradicting the choice of $x_3$. Assertion $(ii)$ follows. 

By Corollary~\ref{1:Xlines}, the $X$-line $x'_1x''_2$ belongs to a member $\zeta$ of $\Theta \cup \Pi$, and since $Y_{x_1} \neq Y_{x_2}$, $\zeta \in \Theta$. Uniqueness follows from $\zeta=\theta^{x'_1}$. Moreover, $\zeta$ contains $\<x'_1,Y_{x_1}\> \cup \<x'_2,Y_{x_2}\>$.

$(iii)$ As in the proof of Lemma~\ref{1':rhoXlines}.  \end{proof}
\par\bigskip

Recall that, for a vertex $V$ of any $\xi \in \Xi$, we denote by $\Xi_V$ the subset of $\Xi$ whose members have vertex $V$ and by $X_V$ the $X$-points collinear to $V$. 

The following is a  weaker version of Lemma~\ref{1':welldef}. Indeed, this time, there is no isomorphism between the points of $\rho(X(\xi))$ and the $(r'-1)$-spaces in $Y$ through $Y(\xi)$; and the set of points of $X$ collinear to $Y(\xi)$ is now given by $\bigcup_{T\subseteq Y_x} \pi^x \cap X$, whose image under $\rho$ is the disjoint union of subspaces which are not necessarily lines (cf.\ Remark~\ref{XT}).

\begin{lemma}~\label{1:welldef}
Suppose $\xi=[x_1,x_2]\in \Xi$ for points $x_1,x_2\in X$ and put $T=Y(\xi)$. Then:
\begin{compactenum}[$(i)$]
\item If $x'_i \in \rho^{-1}(\rho(x_i))$ for $i=1,2$, then $\xi':=[x'_1,x'_2]$ belongs to $\Xi_T$;
\item $\rho(X(\xi'))=\rho(X(\xi))$;
\end{compactenum}
\end{lemma}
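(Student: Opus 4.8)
\textbf{Proof plan for Lemma~\ref{1:welldef}.} The two assertions should be handled in order, with $(i)$ feeding directly into $(ii)$. For $(i)$, recall from Lemma~\ref{1:inj} (the injectivity statement contained in Lemma~\ref{proprhochi}) that $x'_i \in \rho^{-1}(\rho(x_i)) = \<x_i, Y_{x_i}\> \cap X$, so that $Y_{x'_i} = Y_{x_i}$ for $i=1,2$. Since $\xi = [x_1,x_2] \in \Xi$, the points $x_1$ and $x_2$ are non-collinear, and by Lemma~\ref{convexclosure} the vertex $Y(\xi)$ equals $Y_{x_1} \cap Y_{x_2} = T$. Now I would invoke the relational dichotomy that classifies the mutual position of two $X$-points in terms of the subspaces $Y_{x}$ they are collinear to; in the present setting (with $v'=-1$, $v=r'-2$, cf.\ Lemmas~\ref{1:v'=-1},~\ref{1:externallines}), two $X$-points are non-collinear points of a member of $\Xi$ precisely when $Y_{x_1} \cap Y_{x_2}$ is a $(r'-2)$-space and the $Y_x$'s are distinct hyperplanes of $Y$ — this is the $r=1$ analogue of Lemma~\ref{1':rel}$(iii)$, which one should either quote or re-derive in a sentence using Lemma~\ref{1:r=1} and the dimension count $v = r'-2$. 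Since $Y_{x'_1} = Y_{x_1}$ and $Y_{x'_2} = Y_{x_2}$, the pair $x'_1, x'_2$ satisfies exactly the same condition, hence $\xi' := [x'_1,x'_2] \in \Xi$ with vertex $Y_{x'_1} \cap Y_{x'_2} = Y_{x_1} \cap Y_{x_2} = T$, i.e.\ $\xi' \in \Xi_T$.

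For $(ii)$, the goal is to show $\rho(X(\xi')) = \rho(X(\xi))$ for $\xi' \in \Xi_T$. Here I would argue point by point: take $\rho(x') \in \rho(X(\xi'))$ with $x' \in X(\xi')$. By part $(i)$ applied in reverse, or more directly by Lemma~\ref{convexclosure}, every $X$-point of $\xi'$ is collinear to $T = Y(\xi')$, so $T \subseteq Y_{x'}$, and $Y_{x'}$ is a hyperplane of $Y$ (since $\dim Y_{x'} = r' + v' = r'-1 = \dim Y - 1$ by Lemma~\ref{1:uniqueSS} after the projection making $v'=-1$). Conversely the hyperplanes $Y_x$ with $x\in X(\xi)$ sweep out exactly the hyperplanes of $Y$ through $T$ — this follows from the structure of the degenerate hyperbolic quadric $X(\xi)$ together with the fact that the collinearity relation between $X(\xi)$ and $Y$ has $T$ as its ``kernel'' (analogous to the $\sigma_\xi$ analysis in Lemma~\ref{1':welldef}$(i)$, but now only as a surjection, not an isomorphism). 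So for each such $x'$ there is a point $x \in X(\xi)$ with $Y_x = Y_{x'}$. By Lemma~\ref{1:inj}, $\rho^{-1}(\rho(x)) = \<x, Y_x\> \cap X$ and $\rho^{-1}(\rho(x')) = \<x', Y_{x'}\> \cap X = \<x', Y_x\> \cap X$; since these two singular subspaces share the hyperplane $Y_x$ and both are maximal singular subspaces (being of the form $\<\text{point}, Y_x\>$), Lemma~\ref{1:singsub} (or directly the definition of $\pi(Y_x)$) shows that $\<x, Y_x\>$ and $\<x',Y_x\>$ lie in the common maximal singular subspace $\pi^x = \pi(Y_x)$; consequently $\<x,x'\>$ is singular with a unique point in $Y$, which forces $\rho(x) = \rho(x')$ unless $x = x'$, and in either case $\rho(x') = \rho(x) \in \rho(X(\xi))$. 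This gives $\rho(X(\xi')) \subseteq \rho(X(\xi))$, and the reverse inclusion follows by symmetry (interchanging the roles of $\xi$ and $\xi'$, both being members of $\Xi_T$).

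The main obstacle I anticipate is pinning down precisely the claim that ``$Y_x = Y_{x'}$ implies $\rho(x) = \rho(x')$'' in the generality needed — this is really the statement that $\chi$ is injective on $\rho(X)$, i.e.\ the half-dual analogue of Lemma~\ref{1':V1V2}. In the dual Segre case that injectivity was established as Lemma~\ref{1':V1V2}; here one needs the corresponding fact for half dual Segre sets, which should have been (or should be) recorded just before this lemma as a consequence of Lemma~\ref{1:singsub}: namely, for any hyperplane $H$ of $Y$, the points of $X$ with $Y_x = H$ together with $H$ form the single maximal singular subspace $\pi(H)$, and hence $\<x, Y_x\> \cap X = \rho^{-1}(\rho(x))$ depends only on $Y_x$. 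Granting that, the rest of the argument is bookkeeping with dimensions and the quadric structure of $X(\xi)$; the only other mild subtlety is making sure the surjectivity ``$\{Y_x : x \in X(\xi)\} = \{$hyperplanes of $Y$ through $T\}$'' is justified, which I would do by noting $\dim X(\xi) = 2r + v = 2 + (r'-2) = r'$, that $X(\xi)$ is a (degenerate) hyperbolic quadric of rank $1$ with vertex $T$ of dimension $v = r'-2$, and that collinearity to $Y$ induces on $X(\xi)/T$ a non-degenerate correspondence with $Y/T \cong \mathbb{P}^1(\K)$, which is visibly onto.
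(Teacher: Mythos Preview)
Your treatment of $(i)$ is fine in substance and close to the paper's. Where you invoke an unstated ``$r=1$ analogue of Lemma~\ref{1':rel}$(iii)$'', the paper just argues directly: if $x'_1,x'_2$ lay in some $\theta\in\Theta$, then $\theta$ would also contain $\<x'_i,Y_{x'_i}\>\ni x_i$ and hence equal $\xi$, absurd; and if $\<x'_1,x'_2\>$ were singular it would be an $X$-line (as $Y_{x'_1}\neq Y_{x'_2}$), hence a $0$-line by the previous sentence, contradicting Lemma~\ref{1:v'=-1}. That two-line argument is all you need.

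Your approach to $(ii)$, however, has a real gap. The implication ``$Y_x=Y_{x'}\Rightarrow\rho(x)=\rho(x')$'' that you isolate as the crux is \emph{false} in the half dual Segre setting. By Lemma~\ref{1:singsub}, for a hyperplane $H$ of $Y$ the set $\pi(H)=\{x\in X:Y_x=H\}\cup H$ is a maximal singular subspace meeting $Y$ only in $H$; its image $\rho(\pi(H)\cap X)$ is a $k$-space with $k\geq 1$ (Remark~\ref{k>0}, Proposition~\ref{1:projsegre}), not a single point. So $\chi$ is not injective. Even restricted to one $\xi$, the map $\rho(x)\mapsto Y_x$ collapses an entire regulus of the grid $\rho(X(\xi))$ (the one coming from the $\pi$-generators), so $Y_x$ cannot recover $\rho(x)$. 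The paper flags exactly this just before the lemma (``this time, there is no isomorphism between the points of $\rho(X(\xi))$ and the $(r'-1)$-spaces in $Y$ through $Y(\xi)$'') and again in Remark~\ref{XT}.

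This is why the paper's proof of $(ii)$ is hands-on rather than via $\chi$: it uses Corollary~\ref{1:Xlines} and Lemma~\ref{1:r=1} to pin down each generator of $X(\xi)$ as the unique one lying in a specific intersection $\theta^{x_i}\cap\pi^{x_j}=\<x,Y_x\>$; since $Y_{x'_i}=Y_{x_i}$ forces $\theta^{x'_i}=\theta^{x_i}$ and $\pi^{x'_i}=\pi^{x_i}$, the corresponding generator of $X(\xi')$ lies in the \emph{same} subspace $\<x,Y_x\>$, giving $\rho(x')=\rho(x)$. The paper then extends this match point-by-point along the $X$-lines $xx_i$ and $x'x'_i$ (treating the $\theta$-side and the $\pi$-side separately), and finally to a general point of $X(\xi)$. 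Your surjectivity observation about $\{Y_x:x\in X(\xi)\}$ is correct but does not replace this work; you need an argument that tracks the $\theta/\pi$ decomposition of the generators, not just the value of $Y_x$.
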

\begin{proof} 
$(i)$ By Lemma~\ref{1:inj}, $x'_i \in \<x_i, Y_{x_i}\> \cap X$ and $Y_{x'_i}=Y_{x_i}$ for $i=1,2$.   Note that  $T=Y_{x_1} \cap Y_{x_2}$; in particular, $Y_{x_1} \neq Y_{x_2}$. Firstly, suppose $x'_1$ and $x'_2$ belong to a member $\theta \in \Theta$. Then $\theta$ also contains $\<x'_i,Y_{x'_i}\> \ni x_i$ for $i=1,2$, and hence $\theta=\xi$, a contradiction. Secondly, suppose the line  $\<x'_1,x'_2\>$ is singular. Then, since  $Y_{x'_1} \neq Y_{x'_2}$, it is an $X$-line. However, it is not contained in a member of $\Theta$ by the foregoing, and by Lemma~\ref{1':v'=-1}, this implies $Y_{x_1}=Y_{x_2}$, a contradiction. So $x'_1$ and $x'_2$ are also non-collinear points of some member $\xi'$ of $\Xi$, with vertex~$T=Y_{x'_1}\cap Y_{x'_2}$. 

$(ii)$ We show that $\rho(X(\xi))=\rho(X(\xi'))$.  Let $x \in X(\xi)$ be a point contained in $x_1^\perp \cap x_2^\perp$. According to Corollary~\ref{1:Xlines} and renumbering if necessary, $xx_1 \in \theta^x$ and $xx_2 \in \pi^x$. Then $\theta^x=\theta^{x_1}$ and $\pi^x=\pi^{x_2}$, so $\theta^{x_1} \cap \pi^{x_2}=\<x,Y_x\>$. Therefore, $\<x,T\>$ is the unique generator of $X(\xi)$ in $\theta^{x_1} \cap\pi^{x_2}$.  Likewise,  $X(\xi')$ has  a unique generator, say $\<x',T\>$, contained in $\theta^{x'_1} \cap\pi^{x'_2}$. Recalling that $x'_i \in \<x_i,Y_{x_i}\>$ for $i=1,2$, we obtain $x' \in \theta^{x'_1} \cap\pi^{x'_2} = \theta^{x_1} \cap \pi^{x_2}=\theta^x \cap \pi^x=\<x,Y_x\>$, and hence $\rho(x)=\rho(x')$. 

Next, for $i=1,2$, we claim that for each point $u_i$ on the $X$-line $xx_i$, there is a unique point $u'_i$ on the $X$-line $x'x'_i$ with $\rho(u_i) = \rho(u'_i)$, i.e., $u'_i \in \<u_i,Y_{u_i}\>$. By assumption, $x'_i\in \<x_i,Y_{x_i}\>$ for $i=1,2$ and by the previous paragraph, $x'\in\<x,Y_x\>$; in particular $x'x'_i\subseteq \<x,x_i,Y_x,Y_{x'_i}\>$. For $i=1$, we recall that $Y_{x_1}=Y_{x'_1}\neq Y_x = Y_{x'}$ and hence, looking inside $\theta^x$, one sees that collinearity gives a bijection between the hyperplanes of $Y$ through $T=Y_{x} \cap Y_{x'}$ and the points of $xx_1$, and also those of $x'x'_1$. So, for any $u \in xx_1$, there is a unique $u' \in x'x'_1$ with $Y_u=Y_{u'}$.  We claim that $u' \in Y_u$. Suppose the contrary. Then $uu'$ is an $X$-line. There are two options.  Firstly, assume that $xx'$ and $x_2x'_2$ generate a plane. Without loss of generality, $x\neq x'$ (if not then $x_2 \neq x'_2$ and the same argument holds), and hence the $X$-line $uu'$ has a point $u'' \in xx' \cap X$. But then $u''$ is collinear to both $Y_u$ and $Y_x$, a contradiction. So secondly, assume that $xx'$ and $x_2x'_2$ are disjoint lines. Then $y:=xx' \cap Y$ and $y_2:=x_2x'_2 \cap Y$ are disjoint points of $Y$. Recalling that we are in the hyperbolic quadric $\theta^x$, we obtain that the $X$-line $uu'$ contains a point of $yy_2$, a contradiction. The claim follows. 
For $i=2$, the lines $xx_2$ and $x'x'_2$ belong to the singular subspace $\pi^x$ and hence for each point $u\in xx_2 \cup x'x'_2$ we have $Y_u=Y_x$. Since $x'\in \<x,Y_x\>$ and $x'_2\in\<x_2,Y_x\>$, we more precisely have $x'x'_2 \subseteq\<xx_2,Y_x\>$. Hence, for each point $u\in xx_2$, we obtain that $\<u,Y_x\>$ contains a unique point $u'$ of $x'x'_2$ and clearly, $\rho(u)=\rho(u')$.

Finally, let $u$ be an arbitrary point on $X(\xi)\setminus(\<xx_2,T\> \cup \<xx_1,T\>)$. Then $u$ is collinear to unique points $u_2$ on $xx_2$ and $u_1$ on $xx_1$. As in the first paragraph of $(ii)$ (but switching the roles of $1$ and $2$),  $\<u,T\>$ is the unique generator of $X(\xi)$ contained in $\theta^{u_2} \cap \pi^{x_1}$ and $X(\xi')$ contains a unique generator, say $\<u',T\>$,  contained in $\theta^{u'_2} \cap \pi^{u'_1}$, where $\rho(u'_i)=\rho(u_i)$ (as in the previous paragraph). Since $\theta^{u'_2}=\theta^{u_2}=\theta^u$ and $\pi^{u'_1}=\pi^{u_1}=\pi_u$, we obtain that $u'\in \theta^{u'_2} \cap \pi^{u'_1}=\<u,Y_u\>$, i.e., $\rho(u')=\rho(u)$. We conclude that $\rho(X(\xi))=\rho(X(\xi'))$. This shows the assertion.

$(iii)$ Suppose $x \in X$ has $\rho(x)=\rho(x')$ for some $x'\in X(\xi)$. Then $T\subseteq Y_{x'}=Y_x$, so $x\in X_T$ indeed. 
 \end{proof}
 
Recall that we denote by $\mathsf{L}$ be the set of $X$-lines. We  encounter the counterpart of Proposition~\ref{1':projsegre}. The proof is entirely different, since $Y$ is a highly non-injective projection of $\mathcal{S}_{r',k}(\K)$.

\begin{prop}\label{1:projsegre}
The point-line geometry $\mathcal{S}:=(\rho(X),\rho(\mathsf{L}))$ is isomorphic to an injective projection of the Segre geometry $\mathcal{S}_{r',k}(\K)$ where $k=\dim(\pi)-r'$ for any $\pi \in \Pi$. Moreover, we have
\begin{compactenum}
\item[$(i)$] for each maximal singular subspace $S$ in $\mathcal{S}$, there is a unique $\zeta_S \in \Theta\cup \Pi$ with $\rho^{-1}(S)=X(\theta_S)$.
 \item[$(ii)$] the sets $\mathcal{S}\Theta:=\{\rho(X(\theta)) \mid \theta \in \Theta\}$ and $\mathcal{S}\Pi:=\{\rho(X(\pi)) \mid \pi \in \Pi\}$ are the two natural families of maximal singular subspaces of $\mathcal{S}$.
\item[$(iii)$] for each grid $G$ of $\mathcal{S}$, there is a unique  $v$-space $V$ in $Y$ with $\rho^{-1}(G)\subseteq X_V$, and vice versa.
\end{compactenum}
\end{prop}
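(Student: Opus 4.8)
The plan is to transfer the structural information from the subspace $Y$ and from the families $\Theta$, $\Pi$ to the projection $\rho(X)$, arguing analogously to Proposition~\ref{1':projsegre} but with the added twist that $Y$ itself is now a very non-injective projection of one factor. First I would establish the maximal singular subspaces of $\mathcal{S}=(\rho(X),\rho(\mathsf{L}))$. Given $\theta \in \Theta$, Lemma~\ref{proprhochi}$(iii)$ already tells us $\rho(X(\theta))$ is a singular $r'$-space; given $\pi \in \Pi$, one checks from the definition of $\pi$ (a maximal singular subspace meeting each $\theta$ in a generator $\<x,Y_x\>$) that $\rho(X(\pi))$ is a singular subspace of dimension $k:=\dim(\pi)-r'$, independent of $\pi$ by the transitivity implicit in Lemma~\ref{1:singsub}. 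The key lemma to prove is: every point $p \in \rho(X)\setminus S$ is collinear (in $\mathcal{S}$) to at most one point of $S$, for $S$ a maximal singular subspace of $\mathcal{S}$; this is the exact analogue of ``Claim 1'' in the commented-out proof of Proposition~\ref{1':projsegre} and follows from Lemma~\ref{1:rhoXlines} together with Corollary~\ref{1:Xlines} and the fact that $X$-planes lie in no member of $\Theta$ (Corollary~\ref{noXplane}). Combined with Lemma~\ref{1:rhoXlines}$(i)$ (each $\rho(X)$-line lies in a unique member of $\Theta \cup \Pi$), this shows that every maximal singular subspace of $\mathcal{S}$ is $\rho(X(\zeta))$ for a \emph{unique} $\zeta \in \Theta \cup \Pi$, giving assertion $(i)$; and since through any $x\in X$ there is exactly one $\theta^x\in\Theta$ and one $\pi^x\in\Pi$ (Lemmas~\ref{1:uniqueSS} and the definition of $\pi^x$), every point of $\mathcal{S}$ lies on exactly one member of $\mathcal{S}\Theta$ and one of $\mathcal{S}\Pi$, so these are the two natural families, giving $(ii)$.

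Next I would identify the geometry of $\mathcal{S}$. Having the two families of maximal singular subspaces with the ``at most one collinear point'' property, the standard argument (as in the commented-out ``Claim 2'') shows that $\mathcal{S}$ is the direct product of one member $S_1\in\mathcal{S}\Theta$ and one member $S_2\in\mathcal{S}\Pi$ through a common point, i.e. abstractly $\mathsf{A}_{r',1}(\K)\times\mathsf{A}_{k,1}(\K)$, with $\rho(\mathsf{L})$ being exactly the lines of $\mathbb{P}^N(\K)$ contained in $\rho(X)$ (each such line lies in a unique maximal singular subspace, and there projects to an actual line of the factor). By Fact~\ref{factuniemb}, $\mathcal{S}$ is therefore an injective projection of the Segre geometry $\mathcal{S}_{r',k}(\K)$. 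This completes the main assertion.

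For assertion $(iii)$: given a grid $G$ of $\mathcal{S}$, pick non-collinear points $p_1,p_2$ of $G$ and $X$-points $x_i$ with $\rho(x_i)=p_i$; then $\xi:=[x_1,x_2]\in\Xi$ (they are non-collinear in $X$ by Lemma~\ref{1:rhoXlines}$(i)$, since a singular line through them would project into a maximal singular subspace), and $\rho(X(\xi))$ is a grid of $\mathcal{S}$ containing $p_1,p_2$. Since two non-collinear points of $\mathcal{S}$ lie in a unique grid (the convex closure, as $\mathcal{S}$ is a product of two projective spaces), $\rho(X(\xi))=G$; conversely $Y(\xi)$ is the associated $v$-space $V$ in $Y$, and by Lemma~\ref{1:welldef}$(iii)$, $\rho^{-1}(G)=\rho^{-1}(\rho(X(\xi)))\subseteq X_T$ where $T=Y(\xi)$. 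That every $v$-space $V$ of $Y$ arises this way follows from Lemma~\ref{1':V1V2}'s analogue in the present setting (there are $X$-points collinear to any prescribed $v$-space, as $V$ is a hyperplane of some hyperplane $H$ of $Y$ and $\pi(H)$ contains such points in every $\theta$).

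\textbf{Main obstacle.} The delicate step is the ``at most one collinear point'' lemma and, downstream of it, verifying that $\rho$ restricts to an isomorphism of point-line geometries rather than merely a morphism---one must rule out that two $X$-lines with the same $\rho$-image cause $\rho(\mathsf{L})$ to be ``larger'' than the line set of the product, and that collinearity in $\mathcal{S}$ is detected by $X$-lines (not just by projection artefacts). Both are handled by Lemma~\ref{1:rhoXlines}$(ii)$ (any two fibres over collinear points of $\rho(X)$ are joined by an actual $X$-line) and Corollary~\ref{1:Xlines} (each $X$-line lies in $\theta^x$ or $\pi^x$), but assembling these into a clean product-decomposition---especially keeping track of the parameter $k$ and the fact that $k$ can be strictly less than $r'$, unlike in Section~\ref{1':1line}---will require care. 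The subsequent identification of $\mathcal{S}_{r',k}(\K)$ as the universal object then falls out of Fact~\ref{factuniemb} with no further work.
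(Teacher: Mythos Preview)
Your proposal is correct and follows essentially the same approach as the paper: the paper's proof is precisely the detailed ``Claim 1 / Claim 2 / Claim 3'' argument you outline (establishing the at-most-one-collinear-point property, then the direct product structure, then the grid--vertex correspondence), which you have correctly adapted from the commented-out passage in the proof of Proposition~\ref{1':projsegre}. Two minor points: the contradiction in Claim~1 is clinched by Lemma~\ref{1:singsub} (which forces $\pi^x\cap\theta$ or $\theta^x\cap\pi$ to project to a single point) rather than Corollary~\ref{noXplane}; and the paper deduces that $k$ is independent of $\pi$ only \emph{after} the product decomposition (Claim~2), not before, so you should postpone that claim accordingly.
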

\begin{proof}
We start by determining the maximal singular subspaces of  $\mathcal{S}$. Let $\zeta \in \Theta \cup \Pi$ be arbitrary. By Lemma~\ref{proprhochi} and the fact that members of $\Pi$ are singular subspaces, $\rho(X(\theta))$ is a singular subspace of $\rho(X)$, which we denote by $S_\zeta$. 

\textit{Claim 1: each point $p \in \rho(X)\setminus S_\zeta$ is collinear to at most one point of $S_\zeta$.}\\
Suppose for a contradiction that there is a  $p \in \rho(X)\setminus S_\zeta$ collinear to two points $s_1$ and $s_2$ of $S_\zeta$. Take $x \in X$ with $\rho(x)=p$.  Lemma~\ref{1':rhoXlines} implies that, for $i=1,2$, we can choose $x_i \in \rho^{-1}(s_i)$ such that $xx_i$ is an $X$-line, and by the same lemma, $x_1,x_2 \in \zeta$. For $i=1,2$, Corollary~\ref{1:Xlines} and  $x \notin \zeta$ imply that $xx_i$ belongs to either $\pi^{x_i}=\pi^x$ (in case $\zeta\in \Theta$) or to $\theta^{x_i}=\theta^x$ (in case $\zeta\in\Pi$). So, if $\zeta \in \Theta$ then $x_1x_2 \subseteq \pi^x \cap \zeta$ and if $\zeta\in \Pi$ then $x_1x_2 \subseteq \theta^x \cap \zeta$, and hence by Lemma~\ref{1:singsub}, $s_1=\rho(x_1)=\rho(x_2)=s_2$, a contradiction. This shows the claim.

Let $S$ be an arbitrary singular subspace of $\rho(X)$ containing a line $L$. By  Lemma~\ref{1:rhoXlines}, $L$ is contained in $S_\zeta$ for a \emph{unique} $\zeta\in \Theta \cup \Pi$. In case $L \subsetneq S$,  the above claim implies $S \subseteq S_\zeta$. We conclude that each maximal singular subspace of $\mathcal{S}$ is given by $S_\zeta$ for a unique $\zeta \in \Theta\cup \Pi$, and that $S_\zeta \cap S_{\zeta'}$ is at most a point if $\zeta,\zeta'$ are distinct members of $\Theta\cup\Pi$. Assertion $(i)$ is proven.

Recall that $\mathcal{S}\Theta:=\{S_\theta \mid\theta \in \Theta\}$ and $\mathcal{S}\Pi:=\{S_\pi \mid\pi \in \Pi\}$. Let $p\in \rho(X)$ be arbitrary and take $x\in \rho^{-1}(p)$. Let $p\in \rho(X)$ be arbitrary and take $x\in \rho^{-1}(p)$. 
Note that $p \in S_\zeta$ for some $\zeta \in \Theta$ means that $x\in \zeta$ (since $\zeta$ contains $\<x,Y_x\>=\rho^{-1}(p)$) and recall that $\theta^x$ and $\pi^x$ are the unique members of $\Theta$ and $\Pi$ through $x$, respectively (cf.\ Lemma~\ref{1:Xlines}). Then the previous paragraph implies that all singular subspaces of $\mathcal{S}$ through $p$ of dimension at least 1 are contained in either $S^p_\Theta:=S_{\theta^x}$ or $S^p_\Pi:=S_{\pi^x}$. This in particular implies that two maximal singular subspaces of $\mathcal{S}$ can only have a point in common if one of them belongs to $\mathcal{S}\Theta$ and the other to $\mathcal{S}\Pi$. 
Moreover, for any pair $(S_\theta,S_\pi) \in \mathcal{S}\Theta \times \mathcal{S}\Pi$, Lemma~\ref{1:singsub} implies that $S_\theta \cap S_\pi$ contains at least one point, which is unique by the previous paragraph. Assuming that two maximal singular subspaces of $\mathcal{S}$ are in the same ``natural family'' of $\mathcal{S}$ (from which we have not proved what it is yet) if and only if they are disjoint, this shows~$(ii)$.

We can now determine the structure of $\rho(X)$. To that end, let $(S_\Theta,S_\Pi)\in\mathcal{S}\Theta \times \mathcal{S}\Pi$ be arbitrary and denote their unique intersection point by $p$.

\textit{Claim 2: $\rho(X)$ is the direct product of $S_{\Theta}$ and $S_{\Pi}$.}\\
Let $q\in\rho(X)$ be arbitrary. If $q\in S_{\Theta} \cup S_{\Pi}$, then $q=(p,s) \in \{p\} \times S_{\Pi}$ for some $s \in S_{\Pi}$ or $q=(s,p) \in S_{\Theta} \times \{p\}$ for some $s \in S_{\Theta}$. So suppose $q \notin S_\Theta \cup S_\Pi$. As mentioned above, and using the same notation, $S^q_\Theta\in \mathcal{S}\Theta$ and $S^q_\Pi \in \mathcal{S}\Pi$ are the two maximal singular subspaces of $\mathsf{S}$ through $q$. Moreover, as above, $S^q_\Theta \cap S_\Pi$ is a unique point, say $s^q_\Pi$; likewise, $S^q_\Pi \cap S_\Theta$ is a unique point, say $s^q_\Theta$. The points $s^q_\Theta$ and $s^q_\Pi$ determine $q$ uniquely: $S^q_\Pi$ is the unique member of $\mathcal{S}\Pi$ through $s^q_\Theta$ and  $S^q_\Theta$ is the unique member of  $\mathcal{S}\Theta$ through $s^q_\Pi$, and  $S^q_\Theta \cap S^q_\Pi=\{q\}$. This shows the claim.

Observe that this also implies that $p$ and $q$ are the unique points of $\rho(X)$ collinear to both $s^q_{\Theta}$ and $s^q_{\Pi}$. Another important consequence is that $\dim(S_\pi)=\dim(S_{\pi'})$ for all $\pi,\pi' \in \Pi$, as such the value $k:=\dim(S_\pi)=\dim(\pi)-(\dim(Y)-1)-1$ is well-defined.

Take any grid $G$ in $\mathcal{S}$. Since $p$  was a generic point in the above, we may assume that $L_\Theta:=ps_\Theta \subseteq S_\Theta$ and $L_\Pi:=ps_\Pi \subseteq S_\Pi$ are two lines of $G$. Note that $G$ is determined as the convex closure of $s_\Theta$ and $s_\Pi$ in $\mathcal{S}$.  Put $V=\chi(s_\Theta) \cap \chi(s_\Pi)$.

\textit{Claim 3:  $\rho^{-1}(G)\subseteq X_V$}. \\
Let $x_\Theta$ and $x_\Pi$ be $X$-points such that $\rho(x_\Theta)=s_\Theta$ and $\rho(x_\Pi)=s_\Pi$. Since $s_\Theta$ and $s_\Pi$ are distinct points not on a line of $\rho(X)$, $[x_\Theta,x_\Pi]$ is a member $\xi$ of $\Xi$ with vertex $Y_{x_\Theta} \cap Y_{x_\Pi}=V$. By Lemma~\ref{1:welldef}, $\rho(X(\xi))$ does not depend on the choice of $X$-points in the inverse images of $s_\Theta$ and $s_\Pi$. Since $\rho(X(\xi))$ is a full grid of $\rho(X)$-lines containing the points $s_\Theta$ and $s_\Pi$, the observation at the end of Claim 2 implies that $\rho(X(\xi))=G$.  It then follows that $\rho^{-1}(G) \subseteq X_V$.  This shows the claim.

Since, each subspace $v$-space $V$ in $Y$ occurs as $V_1\cap V_2$ of two $(r'-1)$-spaces $V_1$ and $V_2$ of $Y$, Lemma~\ref{1:singsub} implies that there are points $x_1,x_2\in X$ with $Y_{x_1}\cap Y_{x_2}=V$. Clearly, $[x_1,x_2]\in \Xi$. Then $V$ corresponds with the grid of $\mathcal{S}$ determined by $\rho(x_1)$ and $\rho(x_2)$. This completes  the proof of assertion $(iii)$.

Lastly, we claim that $\rho(\mathsf{L})$ is the union of $\{\{s_\Theta\} \times L_\Pi \mid  s_\Theta \in S_\Theta, L_\Pi \text{ line of } S_\Pi\}$ and  $\{L_\Theta \times \{s_\Pi\} \mid L_\Theta \text{ line of } S_\Theta, s_\Pi \in S_\Pi\}$. Indeed, each line $L$ of $\rho(\mathsf{L})$ is contained in a unique member of $\mathcal{S}\Theta \cup \mathcal{S}\Pi$, suppose $L \subseteq S'_\Theta$ for some $S'_\Theta \in \mathcal{S}\Theta\setminus\{S_\Theta\}$ (if $L \subseteq S_\Theta$ then $L_\Theta=\{p\} \times L_\Theta$). Let $s_\Pi$ be the unique intersection point $S_\Pi \cap S'_\Theta$ and put $L_\Theta:=\{s^q_\Theta \mid q \in L\}$. The latter set is a \emph{line} of $S_\Theta$ indeed: take any $q \in L$ and consider the grid $L \times qs_\Theta^q$, which is a well-defined subset of $\rho(X)$ by Claim 2 (reversing the roles of $p$ and $q$ for a moment), and contains $L_\Theta$ as one of its lines. So $L=L_\Theta \times \{s_\Pi\}$.

Since $\dim(S_\Theta)=r'$ and $\dim(S_\Pi)=k$, this concludes the proof of the proposition.
\end{proof}

\begin{rem}\label{k>0}\em
Note that $k \geq 1$: considering any $\theta\in \Theta$ and any point $x\in X\setminus \Theta$ (which exists, for otherwise $|\Xi|=0$), Lemma~\ref{1:singsub} implies that $\pi^x$ contains a point $x' \in X(\theta)$, and hence $xx'$ is an $X$-line.
\end{rem}

\begin{rem}\label{XT}\em
In view of the previous proposition, it does not require too much additional effort to prove the following. For any vertex $T$ of a $\xi \in \Xi$,  $\rho(X_T)$ is, as a subgeometry of $(\rho(X),\rho(\mathcal{L}))$, isomorphic to the Segre geometry $\mathcal{S}_{1,r'}(\K)$, and is given by the direct product of $\rho(\pi^x)$ and $\rho(L)$, where $x$ is any point in $X_T$ (i.e., $T\subseteq Y_x)$ and where $L$ is any $X$-line contained in a member of $\Theta$ with $Y_L=T$. We however do not include a proof, as we do not need this in the sequel.
\end{rem}

\begin{cor}\label{NHDS}
We have $N\leq (r'+1)(k+2)-1$. 
\end{cor}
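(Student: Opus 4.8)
The plan is to bound $N$ by decomposing $\mathbb{P}^N(\K)$ via the complementary subspaces $F$ and $Y$, and then bounding the dimension of each piece. Recall from the construction of the projection $\rho$ that $F$ is a subspace of $\mathbb{P}^N(\K)$ complementary to $Y$, so $N = \dim(F) + \dim(Y) + 1$. Thus it suffices to control $\dim(Y)$ and $\dim(F)$ separately, since $\langle F, Y\rangle = \mathbb{P}^N(\K)$.

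First I would compute $\dim(Y)$. By Corollary~\ref{1:Y=Z} we have $Y = Z$ and $\dim(Y) = r'$. (More precisely, by Lemma~\ref{1:uniqueSS} each $\theta\in\Theta$ satisfies $Y(\theta) = Y$ and $\dim Y = r' + v' + 1$, and after the projection from the vertex $V$ performed just after Lemma~\ref{1:v'=-1} we reduced to $v' = -1$, giving $\dim Y = r'$.) Next I would bound $\dim(F)$, which contains $\rho(X)$. By Proposition~\ref{1:projsegre}, the point-line geometry $(\rho(X),\rho(\mathsf{L}))$ is isomorphic to an injective projection of the Segre variety $\mathcal{S}_{r',k}(\K)$, where $k = \dim(\pi) - r'$ for any $\pi\in\Pi$. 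Since an injective projection cannot increase the dimension of the ambient span, and the Segre variety $\mathcal{S}_{r',k}(\K)$ spans a projective space of dimension $(r'+1)(k+1) - 1$, we get $\dim(\langle\rho(X)\rangle) \leq (r'+1)(k+1) - 1$. As $\rho(X)$ is a spanning subset of $F$ (indeed $F$ was chosen so; alternatively one restricts $F$ to $\langle\rho(X)\rangle$ without loss), $\dim(F) \leq (r'+1)(k+1) - 1$.

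Combining these two bounds: $N = \dim(F) + \dim(Y) + 1 \leq \big[(r'+1)(k+1) - 1\big] + r' + 1 = (r'+1)(k+1) + r' = (r'+1)(k+1) + (r'+1) - 1 = (r'+1)(k+2) - 1$, which is exactly the claimed inequality. The argument is essentially bookkeeping once Proposition~\ref{1:projsegre} and Corollary~\ref{1:Y=Z} are in hand, so I do not anticipate a genuine obstacle here; the only mild subtlety is making sure the ambient space is exactly $\langle F, Y\rangle$ with $F$ and $Y$ complementary (so that dimensions add as $\dim F + \dim Y + 1$), and that replacing $F$ by $\langle\rho(X)\rangle$ if necessary is harmless — both of which follow from the standing conventions set up around Definitions~\ref{RHO} and~\ref{CHI}. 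I would present this as a three-line proof mirroring the proof of Corollary~\ref{NDS}.

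\begin{proof}
By Corollary~\ref{1:Y=Z}, $\dim(Y)=r'$, and by Proposition~\ref{1:projsegre}, $\rho(X)$ is an injective projection of $\mathcal{S}_{r',k}(\K)$, hence $\dim(F)\leq (r'+1)(k+1)-1$. Since $F$ and $Y$ generate $\mathbb{P}^N(\K)$ and are complementary, $N=\dim(F)+\dim(Y)+1\leq (r'+1)(k+1)-1+r'+1=(r'+1)(k+2)-1$.
\end{proof}
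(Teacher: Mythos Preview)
Your proposal is correct and essentially identical to the paper's proof: both bound $N$ via $N=\dim(F)+\dim(Y)+1$, use $\dim(Y)=r'$ (the paper cites Lemma~\ref{1:uniqueSS}, you cite Corollary~\ref{1:Y=Z}, which is equivalent after the reduction to $v'=-1$), and bound $\dim(F)\leq (r'+1)(k+1)-1$ via Proposition~\ref{1:projsegre}. Your observation that it mirrors the proof of Corollary~\ref{NDS} is exactly right.
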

\begin{proof}
By Lemma~\ref{1:uniqueSS}, we know $\dim(Y)=r'$ and by Proposition~\ref{1:projsegre}, $\dim(F)\leq (r'+1)(k+1)-1$. Since $F$ and $Y$ generate $\mathbb{P}^N(\K)$, we obtain $N\leq(r'+1)(k+2)-1$. 
\end{proof}

Recall that $(\rho(X),\rho(\mathsf{L}))$ is a \emph{legal} projection of $\mathcal{S}_{r',k}(\K)$ if (S2) also holds here (cf.\ Definition~\ref{legal}). The next proposition is the counterpart of Proposition~\ref{1:FX}.

\begin{prop}\label{1:FX}
The set $X$ contains a legal projection $\Omega$ of $\mathcal{S}_{r',k}(\K)$ which is such that:\begin{compactenum}[$(i)$]
\item $\bigsqcup_{x\in \Omega} \<{x},Y_{{x}}\>\setminus Y_x=X$ and hence, putting $F^*=\<\Omega\>$, $\<F^*,Y\>=\mathbb{P}^N(\K)$;
\item Re-choosing the subspace $F$ so that it is inside $F^*$, the projection $\rho^*$ of $F^* \cap X$ from $F^*\cap Y$ onto $F$ is the restriction of $\rho$ to $F^* \cap X$;
\item  Containment gives a bijection between the two natural families of $r'$-spaces and $k$-spaces of $\Omega$ and the sets $\Theta$ and $\Pi$;
\item If $r' =2$ and $k \in \{1,2\}$, then $F^* \cap Y = \emptyset$.
\end{compactenum}
\end{prop}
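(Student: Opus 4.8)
The plan is to mirror the proof of Proposition~\ref{1':FX} as closely as possible, constructing a copy of $\mathcal{S}_{r',k}(\K)$ inside $X$ rather than working with the highly non-injective image $Y$ directly, and then verifying the four listed properties one by one. First I would set up the combinatorial skeleton: pick a hyperplane basis of $Y$ to name the $k+1$ members of $\Pi$, and, using Lemma~\ref{1:singsub} together with the partition of $X$ by $\Theta$ (Lemma~\ref{1:YinT}), select suitable members $\theta_0,\dots,\theta_{r'}$ of $\Theta$ so that the intersections $\pi_u \cap \theta_t$ are (by Lemma~\ref{1:singsub}) maximal singular subspaces of the form $\langle x_{t,u},Y_{x_{t,u}}\rangle$. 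The key inductive lemma — the analogue of ``Claim 1'' in Proposition~\ref{1':FX} — is that one can choose the $X$-points $x_{t,u}$ in lexicographic order so that $x_{t,u}\perp x_{t,u'}$ and $x_{t,u}\perp x_{t',u}$ for all earlier indices; the dimension count showing that the relevant residual subspace $\Pi'_{t,u}$ still meets $X$ goes through exactly as before, using linear independence of the hyperplanes $H_{t',u}$ and $H_{t,u'}$ cut out by the collinearity conditions, and using $v=r'-2$ (Lemma~\ref{1:externallines}) and $v'=-1$ (Corollary~\ref{1:Y=Z}).

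Once the $x_{t,u}$ are chosen, I would define the spans $S^t_\Theta:=\langle x_{t,0},\dots,x_{t,k}\rangle$ inside $\theta_t$ and $S^u_\Pi:=\langle x_{0,u},\dots,x_{r',u}\rangle$ inside $\pi_u$, show each is a genuinely $k$- (resp.\ $r'$-) dimensional $X$-space not collinear to any extra point of $Y$, and then run the ``Claim 2''-type argument: every point on a line of $S^0_\Theta$ extends uniquely to a transversal $k$-space meeting each $S^t_\Theta$, the transversal is itself an $X$-space lying in a unique member of $\Pi$, and iterating over lines of lines fills out a set $\Omega$ which is a direct product $S^0_\Theta\times S^0_\Pi$ inside $F^*:=\langle \Omega\rangle$ with line set the lines of $F^*$ in $\Omega$. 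By Fact~\ref{factuniemb}, $\Omega$ is an injective projection of $\mathcal{S}_{r',k}(\K)$, and it is legal because each grid of $\Omega$ is, by (S2), contained in a unique member of $\Xi$. For $(i)$, given $x\in X$ with $Y_x$ a hyperplane $H$ of $Y$, I use that $S^0_\Theta$ meets $\theta^x$, producing a unique point whose $\Theta$-membership is $\theta^x$, then inside the corresponding transversal $k$-space (which lies in $\pi^x$ by Proposition~\ref{1:projsegre}$(ii)$) locate the unique point collinear to $H$; Lemma~\ref{1:welldef} (more precisely the fibre description $\rho^{-1}(\rho(x))=\langle x,Y_x\rangle\cap X$ from Lemma~\ref{proprhochi}) then gives $x\in\langle x',Y_x\rangle$, whence $\bigsqcup_{x\in\Omega}\langle x,Y_x\rangle\setminus Y_x=X$ and $\langle F^*,Y\rangle=\mathbb{P}^N(\K)$. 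Statements $(ii)$ and $(iii)$ are then formal: for $(ii)$ observe $x\in\langle\rho^*(x),F^*\cap Y\rangle\subseteq\langle\rho^*(x),Y\rangle$ so $\rho^*(x)=\rho(x)$; for $(iii)$ combine $(ii)$ with Proposition~\ref{1:projsegre}$(ii)$, the correspondence being by containment since $\Omega\subseteq X$.

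For $(iv)$, I treat $r'=2$ together with $k\in\{1,2\}$. Here $\mathcal{S}_{2,1}(\K)$ and $\mathcal{S}_{2,2}(\K)$ admit no proper legal projections by Proposition~\ref{noproj}$(i)$, so $\Omega\cong\mathcal{S}_{2,k}(\K)$ with $\dim(F^*)=3(k+1)-1\in\{8,11\}$. Suppose for contradiction $F^*\cap Y$ contains a point $y$. In the case $k=2$ one finds, as in Proposition~\ref{1':FX}$(iv)$, a line $L$ of $Y$ through $y$ meeting two disjoint planes $R_1,R_2$ of $Y$ in points — but wait, with $v'=-1$ there is only the single ``$\Pi$'' family and $\dim Y=r'=2$, so instead $y$ lies in $Y$ itself which is a plane, and $y$ together with a grid $G$ of $\Omega$ corresponding to a $v$-space ($=$ point, since $v=0$ when $r'=2$) of $Y$ spans a $4$-space inside the $8$-space $\langle\Omega\rangle$ meeting $\Omega$ in exactly $G$, contradicting (S2) via Lemma~\ref{s22p}; for $k=1$, $\Omega$ sits in a $5$-space and an analogous small computation (or direct application of (S2) to the grid through $y$) forces the contradiction. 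Thus $F^*\cap Y=\emptyset$. \textbf{The main obstacle} I anticipate is the inductive point-selection lemma for general $(r',k)$: keeping track of which collinearity hyperplanes are being imposed at step $(t,u)$ when the two factors have different dimensions, and checking that the imposed hyperplanes remain in general position so that $\dim\Pi'_{t,u}=\dim\Pi_{t,u}-(t+u)$ and the residual subspace still meets $X$ rather than sinking into $Y$; this is where the asymmetry between the $r'$-family $\Theta$ and the $k$-family $\Pi$, and the precise values $v=r'-2$, $v'=-1$, must be used carefully, and it is the step most likely to need the full strength of Lemmas~\ref{1:singsub}, \ref{1:externallines} and Corollary~\ref{1:Xlines}.
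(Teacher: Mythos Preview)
Your overall plan---mirroring Proposition~\ref{1':FX}---is exactly the paper's approach, and your outline of $(ii)$ and $(iii)$ is fine. But you have swapped the roles of $\Theta$ and $\Pi$ in building the frame, and this breaks the argument at several points. A hyperplane basis of $Y$ has $r'+1$ elements (since $\dim Y=r'$), each naming a member of $\Pi$; the paper uses these to fix $\pi_0,\dots,\pi_{r'}$, then chooses a $k$-dimensional $X$-space $S^0_\Pi\subseteq\pi_0$ whose $k+1$ basis points determine $\theta_0,\dots,\theta_k\in\Theta$ via Lemma~\ref{1:uniqueSS}. With your indices, the span $S^u_\Pi=\<x_{0,u},\dots,x_{r',u}\>\subseteq\pi_u$ cannot be an $r'$-dimensional $X$-space when $r'>k$: since $\dim\pi_u=r'+k$ and $\pi_u\cap Y$ has dimension $r'-1$, the maximal $X$-subspaces of $\pi_u$ are only $k$-dimensional. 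Your argument for $(i)$ fails for the same reason: the assertion ``$S^0_\Theta$ meets $\theta^x$'' is false whenever $\theta^x\ne\theta_0$, as $\Theta$ partitions $X$; what you need (and what the paper uses) is that an $r'$-dimensional $X$-space in $\theta_0$ contains a point $x'$ with $Y_{x'}=Y_x$, placing $x'$ in $\pi^x$. Once the swap is undone, your ``main obstacle'' also dissolves: the conditions $x_{t,u}\perp x_{t,u'}$ (same $\Pi$-index $t$) are automatic because $\pi_t$ is a \emph{singular} subspace (Lemma~\ref{1:singsub}), so only the $t$-direction conditions are imposed and the count is simply $\dim\Pi'_{t,u}=r'-t\ge0$.

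For $(iv)$, note $\dim F^*=3(k+1)-1\in\{5,8\}$, not $\{8,11\}$. The case $k=2$ does reduce to Lemma~\ref{s22p} along the lines you sketch after your self-correction (every point of the plane $Y$ occurs as a vertex of some $\xi\in\Xi$ since $v=0$). The case $k=1$, however, is not ``analogous'' and needs its own short argument: with $\Omega\cong\mathcal{S}_{2,1}(\K)$ spanning a $5$-space, any $y\in F^*\cap Y$ lies on a unique line meeting two disjoint planes of $\Omega$ in $X$-points; this line is then singular by Lemma~\ref{XYpointsonline}, forcing its two $\Omega$-points into a common fibre $\<x,Y_x\>$, in contradiction with~$(i)$.
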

\begin{proof}
By Proposition~\ref{1:projsegre}, $\rho(X)$ is the point set of an injective projection of the Segre geometry $\mathcal{S}_{r',k}(\K)$, and the elements $\zeta \in \Theta \cup \Pi$ are in $1-1$-correspondence to the set of maximal singular subspaces $S_\zeta$. We  construct a legal projection of $\mathcal{S}_{r',k}(\K)$ \emph{inside} $X$ (where we know that (S2) holds), by using well-chosen $r'$-dimensional $X$-spaces in certain members of $\Theta$. 

Take a basis of hyperplanes $V_0,...,V_{r'}$ in $Y$. For $t \in \{0,...,r'\}$, and let $\pi_t$ be short for the subspace $\pi({V_t})$ consisting of $V_t$ and the  $X$-points collinear to $V_t$ (cf.\ Lemma~\ref{1:singsub}). Take any $k$-dimensional $X$-space $S^0_\Pi$ in $\pi_0$ complementary to $V_0$ and let $(x_{0,0},...,x_{0,k})$ be a basis of $S^0_\Pi$. For each $u\in \{0,...,k\}$, let $\theta_u$  be short for $\theta^{x_{0,u}}$, the unique member of $\Theta$ through $x_{0,u}$; and for each $t\in \{0,...,r'\}$, put $\Pi_{t,u}:=\pi_t \cap \theta_u$. Recall that Lemma~\ref{1:singsub} says that $\Pi_{t,u}:=\<x'_{t,u},V_t\>$ where $x'_{t,u}$ is any $X$-point in $\theta_u$ collinear to $V_t$. In particular, $\dim \Pi_{t,u}=r'$.
Just like  in Claim 1 of the proof of Proposition~\ref{1':FX}, we can consecutively select $X$-points $x_{t,u} \in \Pi_{t,u}$, using the lexicographic order on the pairs $\{(t,u)\mid 0\leq t \leq r', 0 \leq u \leq k\}$ in such a way that $x_{t,u} \perp x_{t',u}$ with $0 \leq t' <t$ (the condition $x_{t,u} \perp x_{t,u'}$ with $0\leq u' <u$ being trivially fulfilled as $\pi_t$ is a singular subspace). 

Let $t \in \{0,...,.r'\}$ and $u\in \{0,...,k\}$ be arbitrary. We define $S^t_\Pi:=\<x_{t,0},...,x_{t,k}\>\subseteq \pi_t$ and $S^u_\Theta:=\<x_{0,u}, ...,x_{r',u} \>\subseteq \theta_u$. As in the proof of Proposition~\ref{1':FX}, we obtain that $S^t_\Pi$ and $S^u_\Theta$ are $X$-spaces of dimensions $k$ and $r'$, respectively. Using an argument similar to the one in Claim 2 of that same proof, we obtain that  for each point of $S^0_\Pi$, there is a unique $X$-space of dimension  $r'$  through it which intersects $S^{t'}_\Pi$ non-trivially for each $t' \in \{0,...,r'\}$, and this subspace is contained in a unique member of $\Theta$. 

Finally, we define $\Omega$ as the union of the $r'$-spaces intersecting  $S^t_\Pi$ non-trivially for each $t\in \{0,...,r'\}$; so   $F^*=\<S^0_\Theta, ..., S^{k}_\Theta\>$. It can be verified that $\Omega$ is also given by $S^0_\Theta \times S^0_\Pi$ inside $F^*$ and that its line set coincides with the lines of $F^*$ which are contained in $\Omega$. By Fact~\ref{factuniemb},  $\Omega$ is an injective projection of $\mathcal{S}_{r',k}(\K)$, which is moreover legal by (S2), as each grid of $\Omega$ is contained in a unique member of $\Xi$. This shows the main assertion.

$(i)$ Let $x\in X$ be arbitrary. Since $S^0_\Theta$ is an $r'$-dimensional $X$-space of $\Theta_0$ and $Y(\Theta_0)=Y$, there is a unique point $x'\in S^0_\Theta\subseteq \Omega$ with $Y_x=Y_{x'}$, so $x \in \pi^{x'}$. Let $S^{x'}_\Pi$ be the unique member of $\Omega$ meeting $S^0_\Theta$ in $x'$, and note that $S^{x'}_\Pi$ is a $k$-space contained in $\pi^{x'}$. As such, $S^{x'}_\Pi$ contains a unique point $x''$ inside $\<x,Y_x\>$, so $x\in \<x'',Y_{x''}\>$. Note that $\<x,Y_x\> \cap \Pi = \{x''\}$ since $x''$ is unique. As $x\in X$ was arbitrary, we get $\bigsqcup_{x\in X} \<x,Y_x\>\setminus Y_x =X$ indeed. Consequently,  $\<F^*,Y\>=\mathbb{P}^N(\K)$.

$(ii)$, $(iii)$  Same as in the proof of Proposition~\ref{1':FX}.


$(iv)$ Let $r'=2$ and $k\in \{1,2\}$. If $k=2$, then we can use  the same argument as in the proof of Proposition~\ref{1':FX}; so suppose $k=1$. If $y$ is a point of $Y$ contained in $F^*$, then $y$ is on a unique line intersecting two planes of $\Omega$. This line contains three points of $X \cup Y$ and is hence singular, and by the above, this line is an $X$-line, a contradiction.
\end{proof}

Henceforth we will assume that the subspace $F$ is chosen such that it is contained in $F^*$, with $F^*$ as in the previous proposition.

\subsection{Conclusion}
Finally, we show that $X$ is, up to projection from a subspace in $Y$, a mutant of the half dual Segre variety $\mathcal{HDS}_{r',k}(\K)$ (see Subsection~\ref{HDS} and Definition~\ref{defmut}).

\begin{theorem}\label{1:projuni}\label{1:tangent}
Let $(X,Z,\Xi,\Theta)$ be a duo-symplectic pre-DSV with parameters $(r,v,r',v')$, containing an $X$-point through which there is precisely one member of $\Theta$. Then $r=1$ and: \begin{compactenum}[$(i)$]
\item Up to projection from a $v'$-space $V \subseteq Y$  collinear to all points of  $X$, we obtain that $X$ is the point set of a mutant of a half dual Segre variety $\mathcal{HDS}_{r',k}(\K)$ with $Z$ as subspace at infinity and $\Xi \cup \Theta$ as its symps; 
\item if additionally, $(X,Z,\Xi,\Theta)$ satisfies (S3), then $(r',k) \in \{(2,1),(2,2)\}$ and $X$ is projectively unique.
\end{compactenum}
\end{theorem}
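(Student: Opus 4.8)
The plan is to prove Theorem~\ref{1:projuni} in two stages, following the blueprint already used in the dual Segre case (Theorem~\ref{1':projuni}) but adapted to the half-dual geometry produced in Proposition~\ref{1:FX}.

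\textbf{Part $(i)$: identifying $X$ as a mutant of $\mathcal{HDS}_{r',k}(\K)$.} First I would invoke Lemma~\ref{1:r=1} to conclude $r=1$ (so the statement about the parameter $r$ is immediate), and recall from Lemma~\ref{1:v'=-1} and the paragraph following it that, after projecting from the $v'$-space $V \subseteq Y$ collinear to all points of $X$ (which is legitimate by Lemma~\ref{resY}), we may and do assume $v'=-1$, $\dim Y = r'$ (Corollary~\ref{1:Y=Z}, so $Y=Z$) and $v = r'-2$ (Lemma~\ref{1:externallines}). Proposition~\ref{1:FX} then furnishes a legal projection $\Omega$ of $\mathcal{S}_{r',k}(\K)$ inside $X$ with $F^*=\langle\Omega\rangle$, $\langle F^*,Y\rangle = \mathbb{P}^N(\K)$, and $X = \bigsqcup_{x\in\Omega}\langle x,Y_x\rangle \setminus Y_x$. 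So it remains to check that the connection map $\chi:\Omega\to Y:x\mapsto Y_x$ behaves exactly as the map $\chi$ in the construction of $\mathcal{HDS}_{r',k}(\K)$ in Subsection~\ref{HDS}. Concretely: pick an $r'$-space $S$ of $\Omega$ (a member of the family corresponding to $\Theta$ via Proposition~\ref{1:FX}$(iii)$), so $S\subseteq\theta^x$ for the appropriate $\theta^x$; inside the hyperbolic quadric $XY(\theta^x)$ the map $x\mapsto x^\perp\cap Y$ is precisely the linear duality between the opposite maximal singular subspaces $S$ and $Y(\theta^x)=Y$, which is $\chi_S$. For a point $x\in\Omega$ one then shows $\chi(x)=\chi_S(x_S)$ where $x_S$ is the unique point of $S$ collinear to $x$: this reduces, exactly as in the proof of Theorem~\ref{1':projuni}$(i)$, to verifying $x_S^\perp\cap Y = x^\perp\cap Y$, which holds because if $x\neq x_S$ then $xx_S$ is an $X$-line lying in $\pi^x$ (Corollary~\ref{1:Xlines}), hence $Y_{x_S}=Y_x$ by Lemma~\ref{1:v'=-1}. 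Finally, (S1) together with Lemma~\ref{convexclosure} shows that for non-collinear $p_1,p_2\in X$ the convex closure in $X$ equals $X([p_1,p_2])$, so the symps of the resulting structure are exactly $\Xi\cup\Theta$. This establishes $(i)$.

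\textbf{Part $(ii)$: the constraint from (S3) and projective uniqueness.} Assuming (S3), I would run the tangent-space count as in Theorem~\ref{1':projuni}$(ii)$. Re-choosing $F\subseteq F^*$ complementary to $Y$ (permissible by Proposition~\ref{1:FX}), $\rho(X)\subseteq F$ is an injective projection of $\mathcal{S}_{r',k}(\K)$, and for $x\in X(\xi)$ with $\xi\in\Xi$ one has $T_x(\xi)=\langle Y(\xi), T^F_{\rho(x)}(\xi)\rangle$ where $T^F_{\rho(x)}(\xi)$ is the tangent to the grid $\rho(X(\xi))$ at $\rho(x)$, hence a plane (since $r=1$). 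The tangent space $T^F_{\rho(x)}$ to $\rho(X)$ at $\rho(x)$ is $\langle \rho(\theta^x),\rho(\pi^x)\rangle$, of dimension $r'+k$. Axiom (S3) gives $\xi_1,\xi_2\in\Xi$ through $x$ with $T_x=\langle T_x(\xi_1),T_x(\xi_2)\rangle$, which forces $T^F_{\rho(x)} = \langle T^F_{\rho(x)}(\xi_1),T^F_{\rho(x)}(\xi_2)\rangle$; the right-hand side is spanned by two planes, so $r'+k\le 4$. Combined with $r'>r=1$, i.e.\ $r'\ge 2$, and $k\ge 1$ (Remark~\ref{k>0}), this leaves exactly $(r',k)\in\{(2,1),(2,2)\}$. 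For projective uniqueness: in these cases $\mathcal{S}_{r',k}(\K)$ admits no proper legal projection (Proposition~\ref{noproj}) and $F^*\cap Y=\emptyset$ (Proposition~\ref{1:FX}$(iv)$), so $F=F^*$; then $Y$ and $F$ are projectively unique in $\mathbb{P}^N(\K)$, $\Omega\cong\mathcal{S}_{r',k}(\K)$ is projectively unique in $F$, and the linear duality $\chi_S$ between the chosen $r'$-space $S$ and $Y$ is unique up to a projectivity of $Y$; since $X=\bigsqcup_{x\in\Omega}\langle x,Y_x\rangle\setminus Y_x$ is then determined, $X$ is projectively unique.

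\textbf{Expected main obstacle.} The delicate point is not the (S3)-count (which is a short dimension argument copied almost verbatim from Theorem~\ref{1':projuni}$(ii)$) but making the identification in $(i)$ fully rigorous: one must be sure that after the projection from $V$ the hypotheses of Proposition~\ref{1:FX} genuinely apply and that $\chi$ on all of $\Omega$ — not merely on one $r'$-space — matches the defining recipe of $\mathcal{HDS}_{r',k}(\K)$ uniformly, i.e.\ that the map $x\mapsto Y_x$ defined via ``the unique point of $S$ collinear to $x$'' is independent of which $r'$-space $S$ one uses and is consistent with the pencil structure on $Y$ recorded in Lemma~\ref{1:rhoXlines}$(iii)$ and Lemma~\ref{1:singsub}. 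This bookkeeping, together with checking that the symps one gets are exactly $\Xi\cup\Theta$ and nothing more, is where the care is needed; all the geometric inputs, however, are already in place from Sections~\ref{bp} and~\ref{1SS/point}.
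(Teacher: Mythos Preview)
Your approach in both parts is essentially the paper's own argument, and Part $(i)$ is fine. There is, however, a genuine gap in your Part $(ii)$ arithmetic: the inequality $r'+k\le 4$ together with $r'\ge 2$ and $k\ge 1$ does \emph{not} leave only $(2,1)$ and $(2,2)$---it also allows $(r',k)=(3,1)$. You need a further step to exclude this case.

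The missing observation is that when $k=1$, the subspace $\rho(\pi^x)$ is a single line, and every grid $\rho(X(\xi))$ through $\rho(x)$ has its $\Pi$-direction tangent line equal to $\rho(\pi^x)$; hence the two tangent planes $T^F_{\rho(x)}(\xi_1)$ and $T^F_{\rho(x)}(\xi_2)$ necessarily share that line. Their span therefore has dimension at most $3$, forcing $r'+k\le 3$, i.e.\ $r'\le 2$ when $k=1$. This is exactly how the paper disposes of $(3,1)$. Once you insert this, your proof matches the paper's.
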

\begin{proof}
$(i)$  Lemma~\ref{1:v'=-1} yields the $v'$-space $V$ collinear to all points of $X$ and Lemma~\ref{resY} allows us to project from $V$, so that we only need to deal with the case where $v'=-1$. 

By Proposition~\ref{1:FX}, $X$ contains a legal projection $\Omega$ of $\mathcal{S}_{r',k}(\K)$, and $X=\bigcup_{x\in \Omega} \<x,Y_x\>\setminus Y_x$. Therefore it suffices to show that the map $\chi:\Omega \rightarrow Y: x \mapsto Y_x$ satisfies the properties mentioned in the definition of the half dual Segre varieties (cf.\ Subsection~\ref{HDS}).

Let $S$ be a maximal singular subspace of $\Omega$. Proposition~\ref{1:FX} says that there is a unique member $\zeta_S$  in $\Theta \cup \Pi$ containing $S$. Choose $S$ such that $\theta:=\zeta_S\in \Theta$.  Inside the quadric $XY(\theta)$, it is clear that the restriction $\chi_S$ of $\chi$ to $S$ coincides with the collinearity relation between the opposite subspaces $S$ and $Y$, so $\chi_S$ is a linear duality between $S$ and $Y$. 

Take $x\in X$ arbitrary. If $x\notin S$, then there is a unique point $s^x \in S$ collinear to $x$. Since the line $\<x,s^x\>$ belongs to $\pi^x$, we have $Y_{s^x_\theta} =Y_x$, and hence $\chi(s^x)=\chi(x)$. We conclude that $\chi$ is indeed as described in Section~\ref{HDS}. For each pair of non-collinear points  $p_1,p_2$ of $X$, (S1) and Lemma~\ref{convexclosure} imply that the unique symp $\zeta$  through $p_1,p_2$ (defined as their convex closure inside $X$)  has $\zeta \cap X=X([p_1,p_2])$. Assertion $(i)$ follows.

$(ii)$ Recall that we assume that $F \subseteq F^*$ is complementary to $Y$ in $\mathbb{P}^N(\K)$.  By Proposition~\ref{1:projsegre}, $\rho(X)\subseteq F$ is an injective projection of a Segre variety $\mathcal{S}_{r',k}(\K)$. Let $x\in X$ be arbitrary. We denote by $T^{F}_{\rho(x)}$  the set of $\rho(X)$-lines in ${F}$ through $\rho(x)$ and  by $T^{F}_{\rho(x)}(\xi)$ the tangent space to $\rho(X(\xi))$ at $\rho(x)$ for some $\xi\in \Xi$ with $\rho(x)\in\rho(X(\xi))$. 

Axiom (S3) yields members $\xi_1,\xi_2\in \Xi$ through $x$ such that $T_x=\<T_x(\xi_1),T_x(\xi_2)\>$. Since for $i=1,2$, $T_x(\xi_i)=\<Y(\xi_i),T^{F}_{\rho(x)}(\xi_i)\>$, we obtain that $T_x=\<T_x(\xi_1),T_x(\xi_2)\>$ is equivalent with $Y_x=\<Y(\xi_1),Y(\xi_2)\>$ and  $T^{F}_{\rho(x)}=\<T^{F}_{\rho(x)}(\xi_1),T^{F}_{\rho(x)}(\xi_2)\>$. On the other hand, $\dim T^{F}_{\rho(x)}=r'+k$ as the tangent space at $\rho(x)$ is generated by the two maximal singular subspaces $\rho(\theta^x)$ and $\rho(\pi^x)$ of $\rho(X)$ through $\rho(x)$. Furthermore, since $r=1$,  $T^{F}_{\rho(x)}(\xi_1)$ and $T^{F}_{\rho(x)}(\xi_2)$ are just planes, which generate at most a $4$-space in $F$, and so $r+k'\leq 2+2=4$.  Recalling that  $r' > r \geq 1$ by assumption and $k \geq 1$, as noted in Remark~\ref{k>0}, we deduce that $(r',k)\in \{(3,1),(2,1),(2,2)\}$. However, if $k=1$, then the planes $T^{F}_{\rho(x)}(\xi_1)$ and $T^{F}_{\rho(x)}(\xi_2)$  share a line, and hence generate at most a $3$-space, so $r'+k=r'+1\leq 3$, excluding the possibility $(r',k)=(3,1)$, so $r'=2$. Since $v=r'-2=0$ and $\dim Y_x=r'-1=1$,  the requirement $Y_x=\<Y(\xi_1),Y(\xi_2)\>$ only implies that $\xi_1$ and $\xi_2$ have disjoint vertices. 

Since $r'=2$ and $k \in \{1,2\}$,  the variety $\mathcal{S}_{r',k}(\K)$ does not admit legal projections (cf.\ Proposition~\ref{noproj}) and $F^* \cap Y = \emptyset$ by Proposition~\ref{1:FX}$(iv)$, i.e., $F=F^*$.
The following are projectively unique: $Y$ and $F$  in $\mathbb{P}^N(\K)$,   $\Omega$ in $F$. Moreover, the projectivity $\chi_{S}$ between $S$ and the dual of $Y$ is unique up to a projectivity of $Y$. We conclude that $X$ is projectively unique. 
\end{proof}

\section{The dual line Grassmannians}\label{r>1}

Let $(X,Z,\Xi,\Theta)$ be a duo-symplectic pre-DSV with parameters $(r,v,r',v')$ with $r \geq 2$. By Lemma~\ref{resY}, we may assume that no point of $Y$ is collinear to all points of $X$.

Recall that, by Lemma~\ref{res}, for each $x\in X$, the point residue $\Res_X(x)=(X_x,Z_x,\Xi_x,\Theta_x)$ (cf.\ Definition~\ref{ptres}) is a pre-DSV with parameters $(r-1,v,r'-1,v')$, with $H_x=\<X_x,Z_x\>$ a hyperplane of $T_x$ (in particular, $\dim H_x \leq 2d-1$). Moreover, in view of the first paragraph, Lemma~\ref{projV} says no point of $Y_x$ is collinear to all points of $X_x$. Recall as well that the members $\zeta$ of $\Xi$ and $\Theta$ through $x$ are in $1-1$-correspondence with the members $\zeta_x$ of $\Xi_x$ and $\Theta_x$, respectively; and observe that (as is reflected by the parameters), the vertices of $\zeta$ and $\zeta_x$ coincide. 

In  Sections~\ref{r=1,2SS} and~\ref{1SS/point} we dealt with duo-symplectic pre-DSVs such that, respectively:
\begin{compactenum}
\item[(A)] $r=1$ and $|\Theta_x|>1$ for each point of $X$, in which case we showed that $(X,Z,\Xi,\Theta)$ is isomorphic to a mutant of the dual Segre variety $\mathcal{DS}_{r',r'}(\K)$ (cf.\ Theorem~\ref{1':projuni}$(i)$) and in particular $|\Theta_x|=2$ for all $x \in X$;
\item[(B)] for some $x\in X$, $|\Theta_x|=1$, in which case we showed that $(X,Z,\Xi,\Theta)$ is isomorphic to the half dual Segre variety $\mathcal{HDS}_{r',k}(\K)$ (cf.\ Theorem~\ref{1:projuni}$(i)$) and in particular $r=1$ and $|\Theta_x|=1$ for all $x\in X$. 
\end{compactenum}
\subsection{Case distinction and reduction}\label{dist}
In view of the above, the current case distinction will depend on whether there is an $X$-line contained in a unique member of $\Theta$ (recall that we refer to such a line as a $1$-line)  or not. We first show that there are no $0$-lines.

 \begin{lemma}\label{2:uniSS}
No $X$-line of $(X,Z,\Xi,\Theta)$ is a $0$-line. \end{lemma}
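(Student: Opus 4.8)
The plan is to argue by contradiction: suppose $L$ is a $0$-line in a duo-symplectic pre-DSV with $r \geq 2$, and derive a contradiction using the point-residue machinery together with the base cases (A) and (B) already established.

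First I would pick a point $x \in L$ and pass to the residue $\Res_X(x) = (X_x, Z_x, \Xi_x, \Theta_x)$, which by Lemma~\ref{res} is a duo-symplectic pre-DSV with parameters $(r-1,v,r'-1,v')$ containing no $2$-lines. The $0$-line $L$ through $x$ corresponds to a point $\ell$ of $X_x$; since $L$ lies in no member of $\Theta$, by the correspondence between members of $\Theta$ through $x$ and members of $\Theta_x$, the point $\ell$ (as a point of $X_x$) must lie in no member of $\Theta_x$ — that is, in the residue we would get an $X$-point lying in no symp of $\Theta_x$, contradicting Lemma~\ref{xinSS} applied to $\Res_X(x)$. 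Here I need $|\Theta_x| \geq 1$, i.e. the residue is genuinely duo-symplectic; this follows because $x$ itself lies in a member $\theta$ of $\Theta$ (Lemma~\ref{xinSS}), which gives a member $\theta_x$ of $\Theta_x$. So the residue is duo-symplectic and then Lemma~\ref{xinSS} forces every $X_x$-point, in particular $\ell$, to lie in a member of $\Theta_x$ — unless $\ell \notin X_x$, i.e. unless $L$ is not the kind of line that contributes a point to the residue.

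The subtlety I would need to handle carefully is exactly that: a $0$-line $L$ through $x$ is singular and not contained in $Y$, hence by the definition of the residue ($X_x$ consists of points of $X$ on an $X$-line with $x$) the other point of $L$ does yield a point $\ell \in X_x$. So $\ell \in X_x$ genuinely, and the argument of the previous paragraph applies: $\ell$ lies in no member of $\Theta_x$, contradicting Lemma~\ref{xinSS} in the residue. To be fully rigorous I should also verify that $\ell$ being in a member $\theta_x$ of $\Theta_x$ does force $L$ into the corresponding $\theta \in \Theta$: indeed $\theta_x = T_x(\theta) \cap H_x$ corresponds to $\theta \in \Theta$ with $x \in \theta$, and $\ell \in \theta_x$ means the singular line $L = \langle x, \ell\rangle$ lies in $T_x(\theta)$, and since $L$ is an $X$-line through $x$ contained in the tangent space of the quadric $XY(\theta)$ at $x$ and meeting $XY(\theta)$ in at least the point $x$, a standard quadric argument shows $L \subseteq XY(\theta)$, hence $L \subseteq \theta$. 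This contradicts $L$ being a $0$-line.

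The main obstacle I expect is the edge case $r = 2$, where the residue has $r-1 = 1$: there one must be sure the residue $\Res_X(x)$, a duo-symplectic pre-DSV with $r=1$, still satisfies Lemma~\ref{xinSS}; but that lemma is proved for all duo-symplectic pre-DSVs regardless of $r$ (its proof only uses (S1), (S2), Corollaries~\ref{supersymp/plane} and~\ref{collY}, and $|\Theta| \geq 1$), so this causes no trouble. An alternative, more self-contained route avoiding residues entirely: take $x \in L$, use Lemma~\ref{xinSS} to get $z \in Z$ with $z \not\perp x$, so $[x,z] = \theta \in \Theta$ with $x \in \theta$; if $L \subseteq \theta$ we are done, otherwise $L \not\subseteq \theta$ and one uses Lemma~\ref{convexclosure} to find an $X$-line $L'$ in $\theta$ through $x$ non-collinear with $L$, then Lemma~\ref{planes} gives $[L, L'] \in \Xi \cup \Theta$ through $L$; ruling out $[L,L'] \in \Xi$ would need $r$-dimensionality considerations, and forcing $[L,L'] \in \Theta$ contradicts $L$ being a $0$-line. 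I would present whichever of these is cleaner, most likely the residue argument since it is the shortest.
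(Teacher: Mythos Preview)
Your proposal is correct and takes essentially the same approach as the paper: pass to the point residue $\Res_X(x)$ for $x \in L$, observe that the point $\ell$ corresponding to $L$ lies in $X_x$, and invoke Lemma~\ref{xinSS} in the (duo-symplectic) residue to force $\ell$ into some $\theta_x \in \Theta_x$, hence $L$ into the corresponding $\theta \in \Theta$. The paper's proof is a terse two-line version of your first approach; your extra verifications (that $\ell \in X_x$, that $|\Theta_x| \geq 1$, and that membership in $\theta_x$ pulls back to $L \subseteq \theta$) are all implicit in the residue machinery and the paper does not spell them out.
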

\begin{proof}
Suppose that $L$ is an $X$-line in $(X,Z,\Xi,\Theta)$, and let $x\in L$. Then, in $\Res_X(x)$, the point corresponding to the line $L$ is contained in at least one member of $\Theta_x$ by Lemma~\ref{xinSS} (the fact that $|\Theta_x|\geq 1$ follows from the same lemma, since $|\Theta|\geq 1$ by assumption). Hence, $L$ is not a $0$-line. \end{proof}

\textbf{Case distinction.} There are two possibilities:

\begin{enumerate}
\item \textit{There is an $X$-line $L$ contained in a unique member of $\Theta$.} This means that, for $x\in L$, there is a point of $\Res_X(x)$ contained in a unique member of $\Theta_x$. As mentioned in (B) above, this implies that $\Res_X(x)$  is a mutant of the half dual Segre variety  $\mathcal{HDS}_{r'-1,k_x}(\K)$ for some $k_x \geq 1$, and in particular, $r=2$. Note that $k_x < \infty$ since $k_x \leq \dim(T_x)\leq 2d$. 
\item \textit{Each $X$-line is contained in at least two members of $\Theta$.} Here, there are two subcases:
\begin{enumerate}
\item If $r=2$, then as mentioned in (A) above,  $\Res_X(x)$ is a mutant of the dual Segre variety  $\mathcal{DS}_{r'-1,r'-1}(\K)$, for every $x\in X$. 
\item If $r > 2$, we consider the residue $\Res_X(L)$ of an $X$-line $L$, which can be obtained by taking two subsequent point residues of points on $L$. According to Corollary~\ref{noXplane}, each point of $X_L$ (corresponding to an $X$-plane of $X$) is contained in a unique member of $\Theta_L$ (with self-explanatory notation). So, as in (B), $\Res_X(L)$ is a mutant of a half dual Segre variety $\mathcal{HDS}_{r'-2,k_L}(\K)$ for some $k_L\geq 1$ and in particular, $r=3$. Note that, for $x\in L$, the residue $\Res_X(x)$ belongs to Case 1.
\end{enumerate}
\end{enumerate}

When (S3) also holds, only Case 1 will lead to an existing case. We first show that we are always in Case 1 if (S3) holds.

\begin{lemma}\label{reshalf}
Let $(X,Z,\Xi,\Theta)$ be a duo-symplectic DSV with parameters $(2,v,r'v')$. Then there is a $1$-line. \end{lemma}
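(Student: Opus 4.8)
\textbf{Proof plan for Lemma~\ref{reshalf}.}

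The plan is to argue by contradiction: assume $(X,Z,\Xi,\Theta)$ is a duo-symplectic DSV with parameters $(2,v,r',v')$ in which \emph{every} $X$-line is a $2$-line, and derive a contradiction from (S3). By the case distinction above (Case 2 with $r=2$), for every $x\in X$ the point residue $\Res_X(x)=(X_x,Z_x,\Xi_x,\Theta_x)$ is a mutant of the dual Segre variety $\mathcal{DS}_{r'-1,r'-1}(\K)$; in particular, by Theorem~\ref{1':projuni}$(i)$, $|\Theta_x|=2$ for each point of $X_x$, so through each $X$-plane of $X$ there pass exactly two members of $\Theta$. First I would pin down the dimension of $T_x$: in a mutant of $\mathcal{DS}_{r'-1,r'-1}(\K)$ one has, by Corollary~\ref{NDS} (applied in the residue, with $r'$ replaced by $r'-1$), that $\dim H_x = N_x \le (r'-1)^2 + 4(r'-1)+2$, hence $\dim T_x = N_x+1 \le (r'-1)^2+4(r'-1)+3 = r'^2+2r'$, with equality exactly when no legal projection and no re-positioning occurs, i.e.\ when $\Res_X(x)$ is projectively the genuine $\mathcal{DS}_{r'-1,r'-1}(\K)$.

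Next I would bring in (S3): it gives $\xi_1,\xi_2\in\Xi$ through $x$ with $T_x=\<T_x(\xi_1),T_x(\xi_2)\>$. Since $r=2$, each $T_x(\xi_i)$ has dimension $2r+v+1 - $ (something controlled by $v$); more precisely, $\dim T_x(\xi_i) = \dim\xi_i - \big(\dim\xi_i - \dim T_x(\xi_i)\big)$, and for a hyperbolic quadric of the relevant rank the tangent space at a point has dimension $\dim\xi_i - 1 = d$, so $\dim T_x \le 2d+1 = 2(2r+v+1)-1 = 4r+2v+1 = 2v+9$. Comparing with the lower bound $\dim T_x = r'^2+2r' - (\text{legal-projection defect})$ coming from the residue being a (mutant of a) dual Segre variety will force $r'$ to be small; and then I would invoke the parameter relations already established for dual Segre sets — in the residue $(r'-1,v,r'-1,v')$, Corollary~\ref{1':symp} gives $v = 2v'-1 = 2(r'-1)-3 = 2r'-5$ (shifting indices, the residual $r'$ is $r'-1$), and Lemma~\ref{1':YZ} gives $\dim Y = 2(r'-1)+1 = 2r'-1$. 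Feeding $v = 2r'-5$ into $\dim T_x \le 2v+9 = 4r'-1$ and comparing with the dual-Segre value $\dim T_x$ computed from the residue should leave no room: the two constraints are incompatible unless the configuration degenerates, and the degenerate option is ruled out because $|\Theta|\ge 1$ means we are genuinely duo-symplectic (not mono-symplectic), so $v'\ge -1$ is forced and the inequality becomes strict.

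The main obstacle I anticipate is bookkeeping the exact dimension of $T_x$ versus $\dim(T_x(\xi))$ and making sure the residual parameter shifts are applied consistently: (S3) is a statement about $T_x$ in the \emph{ambient} space, and one must relate it cleanly to the ambient/residual dimensions via $\dim T_x = \dim H_x + 1 = N_x + 1$, using that the residue of a symp $\xi$ through $x$ is a tangent hyperplane section. Once the dimension count is set up correctly, the contradiction should be purely numerical. An alternative, possibly cleaner route — and the one I would actually pursue first — is to avoid global dimension counts and instead argue locally as in Section~\ref{r=1,2SS}: in the all-$2$-lines situation one shows (mimicking Lemma~\ref{1':r'=2}, Lemma~\ref{1':v'=-1} and the no-$1$-lines analysis) that the vertices of members of $\Theta$ must all coincide, contradicting the hypothesis that no point of $Y$ is collinear to all of $X$; but here, because $r=2>1$, the plane-based arguments of Lemma~\ref{planes} and Corollary~\ref{noXplane} give extra rigidity (two members of $\Theta$ sharing an $X$-plane coincide), and combining this with the fact that each residue is a dual Segre variety with its two classes $\Theta_1,\Theta_2$ should directly produce two globally defined classes of members of $\Theta$, hence a global structure forcing $v'\ge 0$ and a vertex collinear to all of $X$ — the desired contradiction. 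I would write the proof via whichever of these two routes turns out shorter after checking the dimension arithmetic; the numerical route is more likely to be brief.
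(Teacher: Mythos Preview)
Your numerical route does not close. With $r=2$ and, from the residue being a mutant of $\mathcal{DS}_{r'-1,r'-1}(\K)$, the relations $v=2r'-5$, $v'=r'-2$ and $\dim Y_x=2r'-1$, the naive bound $\dim T_x\le 2d$ gives $\dim T_x\le 4r'$ (your ``$2d+1=\ldots=2v+9$'' is an arithmetic slip). On the other side, two residual members of $\Theta_x$ in different classes span a $(4r'-2)$-space, so $\dim T_x\ge 4r'-1$. These two bounds are compatible for every $r'$, so no contradiction drops out. Even the sharper bound the paper uses, $\dim T_x\le 2r'+8$ (obtained by noting that the vertices $V_1,V_2$ of $\xi_1,\xi_2$ satisfy $\langle V_1,V_2\rangle\subseteq Y_x$), only forces $r'\le 4$; it does not finish the argument. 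The actual proof needs a further geometric step you do not mention: the dimension count is used to show that the $8$-space spanned by a sub-Segre $\Omega'\cong\mathcal{S}_{2,2}(\K)$ inside $\Omega\subseteq X_x$ must meet $Y_x$, and then Lemma~\ref{s22p} produces a second grid whose span hits $\langle G,y\rangle\setminus G$, contradicting (S2). Without this (or an equivalent) geometric lever, the numerics alone are inconclusive.

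Your local Route~2 rests on a false analogy. The ``all vertices coincide'' conclusion of Lemma~\ref{1':v'=-1} belongs to the no-$1$-line subcase of Section~\ref{r=1,2SS}, which is precisely the hypothesis that Proposition~\ref{1':nonex} refutes; in the genuine dual Segre residue $\Res_X(x)$ one has $v'=r'-2\ge 1$ and the members of $\Theta_x$ have pairwise distinct vertices (Lemma~\ref{1':2SS}). So there is no global common vertex to exhibit, and the hoped-for contradiction with ``no $Y$-point is collinear to all of $X$'' does not materialise. Splitting $\Theta$ into two global classes via the residual $\Theta_1,\Theta_2$ is also problematic: you would first need to show this labelling is consistent across different points $x$, which is not automatic.
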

\begin{proof}
Suppose for a contradiction that  there are no $1$-lines, so by Lemma~\ref{2:uniSS}, each $X$-line is contained in at least two members of $\Theta$. According to the above case distinction and recalling that $r=2$, this means that, for each $x\in X$, $\Res_X(x)$ is isomorphic to a mutant of the dual Segre variety $\mathcal{DS}_{r'-1,r'-1}(\K)$. By Lemma~\ref{1':YZ}, $Y_x$ is generated by $(r'-1)$-spaces $R^x_1$ and $R^x_2$, in particular $\dim(Y_x)=2r'-1$. 
Now (S3) yields members $\xi_1,\xi_2$ of $\Xi$ through $x$ such that $T_x$ is generated by $T_x(\xi_1)$ and $T_x(\xi_2)$. Let $V_1$ and $V_2$ be the respective vertices of $\xi_1$ and $\xi_2$. Then $\<V_1,V_2\>\subseteq Y_x$, so by the above
$\dim(\<V_1,V_2\>)\leq  2r'-1$. Since $r=2$, it follows that $\dim\<T_x(\xi_1),T_x(\xi_2)\>\leq (2r'-1)+8+1=2r'+8$. 

As such, $\Res_X(x)$ is contained in a projective space of dimension $2r'+7$, implying that a legal projection $\Omega$ of $\mathcal{S}_{r'-1,r'-1}(\K)$ (cf.\ Lemma~\ref{1':FX}) is contained in a projective space $F^*_x$ in $\Res_X(x)$. Since $r' > r = 2$,  $\Omega$ contains an isomorphic copy $\Omega'$ of $\mathcal{S}_{2,2}(\K)$ (cf.\ Lemma~\ref{noproj}). Since $\dim\<\Omega'\>=8$, there is a point $y\in  \<\Omega'\> \cap Y$. By Proposition~\ref{1':projsegre} and the fact that $\Omega \subseteq X$, each grid $G$ of $\Omega$ corresponds to a unique $(2r'-5)$-space  of $Y$ (namely, the vertex of the unique member of $\Xi_x$ containing $G$), generated by two $(r'-3)$-spaces $V^G_1$ and $V^G_2$ in $R^x_1$ and $R^x_2$, respectively. Restricting to the grids contained in $\Omega'$, we obtain that $\{V^G_i \mid G \text{ grid of } \Omega'\}$ is the set of all $(r'-3)$-spaces in $R^x_i$ containing a certain $(r'-4)$-space $H^G_i$, for $i=1,2$. Thus, the point $y$, being contained in $\<R^x_1,R^x_2\>$, is contained in the vertex of a member of $\Xi_x$ which has a grid $G$ in $\Omega'$. 
The $4$-space $\<G,y\>$ is hence contained in the $8$-dimensional subspace generated by the Segre variety $\Omega'$ and intersects $\Omega'$ in precisely $G$. Just like in the proof of Lemma~\ref{1':FX},  Lemma~\ref{s22p} then leads to a contradiction to (S2). This concludes the proof.
\end{proof}
\par\bigskip

So if (S3) holds, only Cases 1 and 2(b) remain. As noted above, Case 1 is in fact a subcase of Case 2(b). So we start with a study of Case~1 (without assuming that  (S3) holds).

\subsection{Case 1: Duo-symplectic pre-DSVs  containing a $1$-line}\label{cas1}

Throughout this section, let $(X,Z,\Xi,\Theta)$ be a duo-symplectic pre-DSV with parameters $(r,v,r',v')$, such that no point of $Y$ is collinear to all points of $X$, containing a $1$-line.
As deduced in the case distinction of the previous section, this immediately implies that $r=2$.

\begin{lemma}\label{2:Xlines}
Each $X$-line is a $1$-line. \end{lemma}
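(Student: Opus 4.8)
\textbf{Proof plan for Lemma~\ref{2:Xlines}.}

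The plan is to bootstrap from the hypothesis that \emph{one} $X$-line is a $1$-line to the conclusion that \emph{every} $X$-line is a $1$-line, using the point residues and the connectivity of $X$ via $X$-lines. First I would fix a $1$-line $L_0$ and a point $x\in L_0$. By Lemma~\ref{res}, $\Res_X(x)=(X_x,Z_x,\Xi_x,\Theta_x)$ is a duo-symplectic pre-DSV with parameters $(1,v,r'-1,v')$, and the point of $X_x$ corresponding to $L_0$ lies in a unique member of $\Theta_x$. Hence $\Res_X(x)$ falls under ``Case (B)'' of the case distinction in Section~\ref{dist}, so by Theorem~\ref{1:projuni}$(i)$ it is a mutant of a half dual Segre variety $\mathcal{HDS}_{r'-1,k_x}(\K)$; in particular (as recorded in item (B) of Section~\ref{dist}, which invokes Lemma~\ref{1:YinT}) \emph{every} point of $X_x$ lies in exactly one member of $\Theta_x$. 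Translating back through the $1$--$1$ correspondence between members of $\Xi\cup\Theta$ through $x$ and members of $\Xi_x\cup\Theta_x$, this says precisely that every $X$-line through $x$ is a $1$-line.

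Next I would propagate this along $X$-lines. Suppose $x'\in X$ is joined to $x$ by an $X$-line $M$; by the previous paragraph $M$ is a $1$-line, and since $r=2$ it is contained in a member $\theta$ of $\Theta$, hence in a member of $\Theta_{x'}$, so the point of $X_{x'}$ corresponding to $M$ lies in a member of $\Theta_{x'}$ — and, crucially, in a \emph{unique} one, because $M$ being a $1$-line means there is only one $\theta\in\Theta$ through $M$. Therefore $\Res_X(x')$ again satisfies the hypothesis of ``Case (B)'', so the same argument shows every $X$-line through $x'$ is a $1$-line. Finally, the point set $X$ is connected through $X$-lines: given any $X$-line $N$, pick a point $p$ on it; by (S1) and Lemma~\ref{xinSS} any two points of $X$ can be joined by a chain of $X$-lines (one can, e.g., use that through any two points there is a member of $\Xi\cup\Theta$, which is a (tube over a) hyperbolic quadric of rank $r\ge 2$, hence connected by singular lines). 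Walking such a chain from $x$ to $p$, the ``$1$-line'' property of all $X$-lines through the current point is carried along at each step, so $N$ is a $1$-line.

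The routine verifications I am glossing over are: that $X$ is indeed connected via $X$-lines (this is implicit in the parapolar-space structure and follows from Lemma~\ref{convexclosure} together with $r,r'\ge 2$, exactly as used in the proof of Lemma~\ref{res}), and the bookkeeping that a $1$-line through $x$ really corresponds to a point of $\Res_X(x)$ lying in a unique member of $\Theta_x$ (immediate from Definition~\ref{ptres} and the correspondence between members of $\Theta$ through $x$ and members of $\Theta_x$). The only genuine subtlety — and the step I expect to be the main obstacle to state cleanly — is ensuring that ``unique'' is preserved: the residue construction could in principle collapse two members of $\Theta$ through $x$ to the same member of $\Theta_x$, or split one, so I would want to point explicitly to the remark just before Lemma~\ref{res} (each $\zeta\in\Xi\cup\Theta$ through $x$ corresponds to a \emph{unique} member of $\Xi_x\cup\Theta_x$ and conversely) to rule this out. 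Once that correspondence is invoked, the argument is a short induction and poses no further difficulty.
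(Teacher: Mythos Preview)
Your proposal is correct and follows essentially the same route as the paper's proof: pass to the point residue at a point on the given $1$-line, invoke Lemma~\ref{1:YinT} (= Lemma~\ref{1:uniqueSS}) to conclude that every point of $\Res_X(x)$ lies in a unique member of $\Theta_x$, translate back to ``every $X$-line through $x$ is a $1$-line'', and propagate via connectivity through $X$-lines. The only cosmetic difference is that you route through the full classification Theorem~\ref{1:projuni}$(i)$ before citing Lemma~\ref{1:YinT}, whereas the paper cites Lemma~\ref{1:uniqueSS} directly; the latter is all that is needed, so you can shorten your argument accordingly.
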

\begin{proof}
Suppose that $L$ is a $1$-line and let $x$ be a point of $L$. Then $|\Theta_x|=1$. By Lemma~\ref{1:uniqueSS}, this holds for each point of $\Res_X(x)$, i.e., each $X$-line through $x$ is a $1$-line. Since $x\in L$ was arbitrary and since $X$ is connected via $X$-lines by (S1), all $X$-lines are $1$-lines. \end{proof}

As noted in Case 1 above, we have that $\Res_X(x)$ is isomorphic to a mutant of the half dual Segre variety $\mathcal{HDS}_{r'-1,k_x}(\K)$, for some $k_x\geq 1$, $x\in X$. 

We start by relating $\dim Y$ to $k_x$ (and show that the latter does not depend on $x\in X$). 

\begin{lemma}\label{2:Y}
The sets $Y$ and $Z$ coincide, and $\dim Y=r'+k_x$. 
\end{lemma}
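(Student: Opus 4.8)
The plan is to lift the half dual Segre structure of the point residues to all of $(X,Z,\Xi,\Theta)$. Recall from the case distinction of Section~\ref{dist} that here $r=2$ and, by Lemma~\ref{2:Xlines}, every $X$-line is a $1$-line, so for each $x\in X$ the residue $\Res_X(x)$ is a mutant of $\mathcal{HDS}_{r'-1,k_x}(\K)$. Applying Lemma~\ref{1:uniqueSS} inside $\Res_X(x)$, each $X$-line through $x$ lies in a \emph{unique} member of $\Theta$, $\dim Y_x=r'+v'$, and, by Proposition~\ref{1:projsegre} applied to the residue, through $x$ there is an $X$-space $S_x$ of projective dimension $k_x+1$ corresponding to a member of the $\Pi$-family of $\Res_X(x)$, with $k_x$ independent of $x$. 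As in Lemma~\ref{1:v'=-1}, one produces a $v'$-space of $Y$ collinear to all of $X$; projecting it away (Lemma~\ref{resY}) we may assume $v'=-1$, so that $\dim Y_x=r'-1$ and, for every $\theta\in\Theta$, $Z(\theta)=Y(\theta)$ is a full $r'$-subspace of $Y$.

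I would then compute $\dim Y$. For the lower bound, fix $x\in X$ and a frame of $k_x+1$ lines of $S_x$ through $x$. Each such line is a $1$-line, hence lies in a unique $\theta\in\Theta$, and no two of them lie in a common member of $\Theta$: such a member would force a line of a $\Pi$-member of $\Res_X(x)$ to lie in a $\Theta$-member, contradicting Corollary~\ref{1:Xlines} for the residue. For the corresponding members $\theta_0,\dots,\theta_{k_x}$ we have $Y_x\subseteq Y(\theta_i)$ with $Y_x$ a hyperplane of the $r'$-space $Y(\theta_i)$, so $Y(\theta_i)=\langle Y_x,y_i\rangle$; using (S2) and the explicit description of $\theta_i\cap\theta_j$ from Corollary~\ref{Xlinevertex} (together with Lemma~\ref{Hperp}) one checks that $y_0,\dots,y_{k_x}$ are independent modulo $Y_x$, giving $\dim Y\ge\dim Y_x+k_x+1=r'+k_x$. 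For the reverse inequality, the same frame computation shows that the span of the spaces $Y(\theta)$, as $\theta$ runs over the members of $\Theta$ meeting $x$ in an $X$-line, is exactly $\langle Y_x,y_0,\dots,y_{k_x}\rangle$, of dimension $r'+k_x$; since $X$ is connected via $X$-lines and every member of $\Theta$ is reached from $x$ by pivoting (as in the proofs of Lemma~\ref{resYx} and Corollary~\ref{1:Y=Z}), every $Y(\theta)$ — hence every point of $Z$, hence all of $Y=\langle Z\rangle$ — lies in this span, so $\dim Y\le r'+k_x$.

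Finally, $Y=Z$: by the previous step every point of $Y$ lies in the span of the $Y(\theta)=Z(\theta)\subseteq Z$; more directly, given $y\in Y$ one picks (using that no point of $Y$ is collinear to all of $X$) an $X$-point $x$ with $y\notin Y_x$, an $X$-line $L$ through $x$ in some $\theta\in\Theta$, so $Y_L<Y_x$ by Lemma~\ref{lineSS}, and a point $y'$ of $Y_x$ off $Y_L$; Lemma~\ref{planes} produces $\theta'=[xy',L]\in\Theta$ with $Y_x\subseteq Y(\theta')$, and by repeatedly pivoting $\theta$ about $X$-lines through suitable points one reaches a member $\theta$ with $y\in Y(\theta)=Z(\theta)\subseteq Z$. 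Thus $Y\subseteq Z$, and since $Z$ spans $Y$ by definition, $Y=Z$. The delicate point throughout is the independence/spanning claim for the $y_i$ modulo $Y_x$ — essentially that $L\mapsto Y(\theta_L)/Y_x$ is, up to the projection above, a linear isomorphism from $\mathbb{P}(S_x/x)$ onto $\mathbb{P}(Y/Y_x)$ — and this is where Axioms (S1)–(S2) and the residue analysis of Section~\ref{cas1} do the real work.
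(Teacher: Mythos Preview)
Your outline follows the same broad strategy as the paper --- lift the half dual Segre structure from $\Res_X(x)$, show $v'=-1$, and recover $\dim Y$ from the collection $\{Y(\theta)\mid x\in\theta\in\Theta\}$ --- but the step you yourself flag as ``delicate'' is not actually proved. Your claim that the $y_i$ are independent modulo $Y_x$ (and, dually, that \emph{all} $Y(\theta)$'s through $x$ lie in $\langle Y_x,y_0,\dots,y_{k_x}\rangle$) cannot be extracted from Corollary~\ref{Xlinevertex} or Lemma~\ref{Hperp}: those results describe $\theta_i\cap\theta_j$ when an $X$-line is shared, whereas your $\theta_i$'s meet only in $\langle x,Y_x\rangle$, and Lemma~\ref{Hperp} in that situation controls $X$-lines in $\theta_i$ versus $\theta_j$, not the position of $Y(\theta_i)$ relative to $Y(\theta_j)$. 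Pairwise distinctness of the $Y(\theta_i)$ (which does follow from (S2)) is not enough for linear independence once $k_x\ge 2$, and your appeal to ``the residue analysis of Section~\ref{cas1}'' is circular, as that is the present section.

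The paper fills this gap with a concrete construction you do not use: instead of arbitrary $y_i\in Y(\theta_i)\setminus Y_x$, it takes the specific point $z_i\in Y(\theta_i)$ collinear with the $(r'-1)$-space $S_i\subseteq\Omega$ (the residual Segre piece in $\theta_i$), and then, for any two $\theta_1,\theta_2$, picks a \emph{transversal} line $L$ of $\Omega$ meeting all $S_j$ in the regulus through $S_1,S_2$; the unique $\theta_L\in\Theta$ through $L$ then exhibits $\langle z_1,z_2\rangle$ and $L$ as opposite lines in the quadric $XY(\theta_L)$, forcing every $z_j$ in that regulus onto $\langle z_1,z_2\rangle$. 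This is what proves the linearity of $\theta_i\mapsto Y(\theta_i)/Y_x$ and hence simultaneously the lower and upper bounds on $\dim Y$. Your $Y=Z$ argument is also more complicated than needed: once $\langle Y_x,M\rangle$ is known, any $z\in Z\setminus Y_x$ satisfies $[x,z]\in\Theta$ (so equals some $\theta_i$) and hence $z\in Y(\theta_i)\subseteq\langle Y_x,M\rangle$ immediately --- no pivoting is required.
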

\begin{proof}
According to Proposition~\ref{1:FX}, $X_x$ (recall that $\Res_X(x)$ is isomorphic to a mutant of $\mathcal{HDS}_{r'-1,k_x}(\K)$) contains a legal projection $\Omega$ of the Segre variety $\mathcal{S}_{r'-1,k}(\K)$, and $F^*_x:=\<\Omega\>$ and $Y_x$ generate $H_x$. Assertion $(ii)$ of the same proposition, together with Proposition~\ref{1:projsegre}, implies that containment gives a bijection between the 
$(r'-1)$-spaces of $\Omega$ and  the members of $\Theta_x$ (and containment also gives a bijection between them and the members of $\Theta$ through $x$). Furthermore, by Lemmas~\ref{1:v'=-1},~\ref{projV} and the assumption on $Y$, we have $v'=-1$; and each member of $\Theta_x$ contains $Y_x$ by Lemma~\ref{1:uniqueSS}. Finally,  $Y_x=Z_x$ and $\dim Y_x=r'-1$ by Corollary~\ref{1:Y=Z}. 

Let $I$ be an index set such that $\{\theta_i \in \Theta \mid i \in I\}$ ranges over all members of $\Theta$ containing $x$. Let $i\in I$ be arbitrary.  By the above, there is a unique $(r'-1)$-space $S_i$ in $\Omega$ contained in $\theta_i$. Recalling that $v'=-1$, we get that $\dim  Y(\theta_i)=r'$ and hence $Y(\theta_i)\subseteq Z$ contains a unique point  $z_i$ collinear to $S_i$. Obviously, $z_i \notin Y_x$ since $S_i$ and $Y_x \cap \theta_i$ are opposite singular $(r'-1)$-spaces in $\theta_i$.  So  $\theta_i = [x,z_i]$ and $Y(\theta_i)=\<Y_x,z_i\>$. Since, for $i,i' \in I$ with $i \neq i'$, we have $\theta_i \cap \theta_{i'} = Y_x$ as the subspaces $S_i$ and $S_{i'}$ are disjoint, $z_{i'}\in \theta_i$ if and only if $i=i'$.
We claim that  set $M:=\{z_i \mid i \in I\}$ contains the points of a $k_x$-dimensional subspace $K_x$ of $Y$ complementary to $Y_x$. 
To that end, take two arbitrary members $\theta_1,\theta_2 \in \{\theta_i \mid i\in I\}$  and let $J \subseteq I$ be such that $\{S_j \mid j \in J\}$ is the unique regulus of $\Omega$ determined by $S_1$ and $S_2$. We show that the points $\{z_j \mid j\in J\}$ are the points of a line of $Y$.

Take any point $x_1 \in S_1$. Then there is a unique line $L$ through $x_1$ meeting each $S_j$ for $j \in J$ in a point, say $x_j$. Note that, for each $j \in J$, $\theta_j$ is the unique member of $\Theta$ containing the $X$-line $xx_j$; so Lemma~\ref{lineSS}$(ii)$ then implies that $Y_{x_j} \subseteq \theta_j$. 
By Lemma~\ref{2:Xlines}, there is a unique member $\theta_L \in \Theta$ containing $L$, and by the foregoing, $L$ contains $Y_{x_j}$ (and hence $z_j$) for all $j\in J$. Then, inside $\Theta_L$, we see that $\<z_1,z_2\>$ and $\<x_1,x_2\>$ are opposite singular lines, so $\<z_1,z_2\>$ contains a unique point $z'_j$ collinear to $x_j$ for each $j \in J$ (clearly, $z'_1=z_1$ and $z'_2=z_2$). Since $z'_j \in Y_{x_j} \subseteq \theta_j$, we obtain $\<z_1,z_2\> \cap \theta_j = \{z'_j\}$ for each $j \in J$. Varying the point $x_1 \in S_1$, we obtain that $z'_j$ is collinear to all points of $S_j$, so $z'_j=z_j$. This shows the claim: $M$ carries the same structure as a $k_x$-space of $\Omega$ intersecting all subspaces $\{S_i \mid i \in I\}$.

Clearly, $M$ is disjoint from $Y_x$ and $\<Y_x,M\>=\bigcup_{i\in I} \<Y_x,z_i\>=\bigcup_{i\in I} Y(\theta_i) \subseteq Z$. Suppose for a contradiction that $\<Y_x,M\>$ is a strict subspace of $Y$. Since $\<Z\>=Y$, there is a point $z\in Z$ in  $Y\setminus\<Y_x,M\>$. But then $[z,x]=\theta_i$ for some $i\in I$, and hence $z \in Y(\theta_i)=\<Y_x,z_i\>\subseteq\<Y_x,M\>$, a contradiction. We conclude that $Y=\<Y_x,M\>\subseteq Z$, so $\dim Y=r'+k_x$ indeed.
\end{proof}

Henceforth, we write $k$ instead of $k_x$ since the latter does not depend on $x\in X$. 

Next, we have an analogue of Lemma~\ref{1':rel}, which will in particular allow us to prove that $k=1$.

\begin{lemma}\label{2:rel}
Take two distinct points $x_1,x_2 \in X$. Then
\begin{compactenum}[$(i)$]
\item $x_1x_2$ is a singular line with a unique point in $Y$ $\Leftrightarrow Y_{x_1}= Y_{x_2}$;
\item $x_1$ and $x_2$ belong to a member of $\Theta$ $\Leftrightarrow  \dim(Y_{x_1} \cap Y_{x_2})=r'-2$;
\item $x_1$ and $x_2$ are non-collinear points of a member of $\Xi$  $\Leftrightarrow \dim(Y_{x_1} \cap Y_{x_2})=r'-3$.
\end{compactenum}
\end{lemma}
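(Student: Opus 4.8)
The statement to prove is Lemma~\ref{2:rel}: a trichotomy for two distinct $X$-points $x_1,x_2$ in terms of $\dim(Y_{x_1}\cap Y_{x_2})$. The plan is to first observe that since each $X$-line is a $1$-line (Lemma~\ref{2:Xlines}) and no two $X$-points are in more than one member of $\Xi\cup\Theta$ (Lemma~\ref{uniquesymp}), the possible mutual positions of $x_1$ and $x_2$ are exactly: on an $X$-line (with no point in $Y$); joined by a singular line with a unique point in $Y$; non-collinear and contained in a unique member of $\Theta$; or non-collinear and contained in a unique member of $\Xi$. (They cannot be on a singular line lying entirely in $Y$, since $x_1,x_2\in X$.) Since these possibilities are exhaustive and mutually exclusive, it suffices to prove the three ``$\Rightarrow$'' implications of $(i)$, $(ii)$, $(iii)$ together with the observation that the $X$-line case also gives $Y_{x_1}=Y_{x_2}$ (which merges with $(i)$), exactly as was done for Lemma~\ref{1':rel}.

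For $(i)$, ``$\Rightarrow$'': if $x_1x_2$ is a singular line with a unique point $y$ in $Y$, then by Lemma~\ref{lineSS}$(i)$ this line is a $0$-line in the residue sense — but more directly, the line $x_1x_2$ meets $Y$ in $y$, so $y\in Y_{x_1}\cap Y_{x_2}$, and then for any $\theta\in\Theta$ through $x_1$ (which exists by Lemma~\ref{xinSS}), Lemma~\ref{lineSS}$(ii)$ gives $Y_{x_1}=x_1^\perp\cap Y(\theta)$, a subspace of dimension $r'-1$ (using $v'=-1$ and $\dim Y(\theta)=r'$, cf.\ proof of Lemma~\ref{2:Y}); and since $x_1x_2\subseteq Y(\theta)^\perp$ — rather, since $x_2\in\langle x_1,Y_{x_1}\rangle$ forces $Y_{x_2}\supseteq Y_{x_1}$ and both have dimension $r'-1$, we get $Y_{x_1}=Y_{x_2}$. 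The $X$-line case is handled identically using that an $X$-line $x_1x_2$ is a $1$-line contained in a unique $\theta$, so $Y_{x_1}=Y_{x_2}=Y_{x_1x_2}=x_1^\perp\cap Y(\theta)$ by Lemma~\ref{lineSS}$(ii)$ applied along the line. For $(ii)$, ``$\Rightarrow$'': if $x_1,x_2$ lie in a common $\theta\in\Theta$ (non-collinearly), then $Y_{x_1}\cup Y_{x_2}\subseteq Y(\theta)$ by Lemma~\ref{lineSS}$(ii)$, and inside the hyperbolic quadric $\theta$ two non-collinear points have tangent-intersections with the maximal singular subspace $Y(\theta)$ meeting in a subspace of codimension $2$ in each; since $\dim Y_{x_i}=r'-1$ and $\dim Y(\theta)=r'$, a quick computation in the quadric gives $\dim(Y_{x_1}\cap Y_{x_2})=r'-2$ (if it were $r'-1$ then $Y_{x_1}=Y_{x_2}$ and $x_1,x_2$ would be on an $X$-line or coincide). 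For $(iii)$, ``$\Rightarrow$'': if $\xi:=[x_1,x_2]\in\Xi$, then by Lemma~\ref{convexclosure} the vertex of $\xi$ equals $Y_{x_1}\cap Y_{x_2}$, and since $r=2$ the vertex has dimension $v$; combined with Lemma~\ref{1:externallines}-type relations (here $v=r'-3$, which should follow from the residue structure: $\Res_X(x)$ being a mutant of $\mathcal{HDS}_{r'-1,k}(\K)$ has $v$ equal to the $v$-parameter there, which is $r'-1-2=r'-3$), we obtain $\dim(Y_{x_1}\cap Y_{x_2})=r'-3$.

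The only genuine subtlety — and the expected main obstacle — is making the quadric computations in $(ii)$ and the vertex-dimension bookkeeping in $(iii)$ precise, i.e.\ pinning down that $v=r'-3$ from the residue being $\mathcal{HDS}_{r'-1,k}$ and that $\dim Y_x=r'-1$ uniformly. These are essentially the analogues of Lemmas~\ref{1':2SS},~\ref{1':symp} and~\ref{1:externallines} transported through the residue, and I expect the cleanest route is: (a) record $\dim Y_x=r'-1$ and $v'=-1$ from Corollary~\ref{1:Y=Z} applied inside $\Res_X(x)$ (giving $\dim Y_x=\dim Y_{x_x}$ via the bijection of members, already used in Lemma~\ref{2:Y}); (b) deduce $v=r'-3$ by noting that a member $\xi\in\Xi$ through $x$ corresponds to $\xi_x\in\Xi_x$ with the same vertex, and in $\Res_X(x)=\mathcal{HDS}_{r'-1,k}$ the vertices are the intersections of two distinct hyperplanes of $Y_x$, hence of dimension $\dim Y_x-2=r'-3$; (c) then the three implications follow by the dimension arithmetic above. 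No new geometric idea is needed beyond faithfully using the residue classification of Section~\ref{1SS/point}; the risk is purely in getting the off-by-one dimension counts right, so I would state $\dim Y_x=r'-1$, $v'=-1$, $v=r'-3$ as an explicit preliminary observation before launching into the case analysis.
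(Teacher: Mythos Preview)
Your overall strategy—the cases are exhaustive and mutually exclusive, so it suffices to compute $\dim(Y_{x_1}\cap Y_{x_2})$ in each and check the values are distinct—is exactly the paper's approach, and your treatments of the singular-line-with-$Y$-point case, the non-collinear-in-$\Theta$ case, and the non-collinear-in-$\Xi$ case are all correct (including your route to $v=r'-3$ via the residue).

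There is, however, a genuine error in where you place the $X$-line case. You claim that an $X$-line $L=x_1x_2$ gives $Y_{x_1}=Y_{x_2}$, merging it with case $(i)$. This is false. Lemma~\ref{lineSS}$(ii)$ says that for a $1$-line $L$, the subspace $Y_L$ is a \emph{hyperplane} of $Y_{x_i}$—not that $Y_L=Y_{x_i}$. In fact, inside the quadric $\theta$ containing $L$, if $Y_{x_1}=Y_{x_2}$ as hyperplanes of $Y(\theta)$, then the linear forms $\langle x_1,\cdot\rangle$ and $\langle x_2,\cdot\rangle$ restricted to $Y(\theta)$ are proportional, forcing a point of $L$ into $Y(\theta)^\perp=Y(\theta)\subseteq Y$, contradicting that $L$ is an $X$-line. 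So $Y_{x_1}\neq Y_{x_2}$ are distinct hyperplanes of $Y(\theta)$, giving $\dim(Y_{x_1}\cap Y_{x_2})=r'-2$. The $X$-line case therefore belongs to $(ii)$, not $(i)$—which is exactly why the paper's statement of $(ii)$ reads ``belong to a member of $\Theta$'' (covering both collinear and non-collinear pairs), and why the paper opens its proof by noting that $X$-lines are $1$-lines and hence already fall under $(ii)$.

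This matters for the logic of your exhaustion argument: if your claim about $X$-lines were true, the $\Leftarrow$ direction of $(i)$ would fail (since $Y_{x_1}=Y_{x_2}$ would then also occur on $X$-lines), so the lemma as stated would be false. The fix is simply to move the $X$-line case under $(ii)$ and observe the computation above; once you do that, your proof is complete and essentially identical to the paper's.
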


\begin{proof}
If $x_1x_2$ is an $X$-line, then it is a $1$-line by Lemma~\ref{2:Xlines}. As such, the possibilities for $x_1,x_2$ described in $(i)$, $(ii)$ and $(iii)$ exhaust the mutual positions between $x_1$ and $x_2$, and therefore we only need to verify the ``$\Rightarrow$''s. 

$(i), \Rightarrow$: This is clear.

$(ii), \Rightarrow$: Suppose $x_1,x_2\in\theta$ for some $\theta\in \Theta$. Then $Y_{x_1} \cup Y_{x_2} \subseteq Y(\theta)$ and hence one deduces, similarly as in the proof of Lemma~\ref{1':rel}, that $\dim(Y_{x_1} \cap Y_{x_2})=\dim Y(\theta)-2=r'-2$. 

$(iii), \Rightarrow$: If $[x_1,x_2]=\xi\in \Xi$, then $Y(\xi)=Y_{x_1} \cap Y_{x_2}$ and hence the dimension of the latter is $v=r'-3$. 
\end{proof}

We record an obvious but important consequence.

\begin{cor}\label{2:rho}
For all  $x_1,x_2 \in X$, we have $Y_{x_1}=Y_{x_2} \Leftrightarrow \rho(x_1)=\rho(x_2)$.
\end{cor}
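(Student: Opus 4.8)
\textbf{Proof plan for Corollary~\ref{2:rho}.}

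The statement is the specialisation to the $r\geq 2$ setting of Corollary~\ref{1':SSV}'s philosophy: I want to show that, in a duo-symplectic pre-DSV with $r=2$ containing a $1$-line, the projection map $\rho$ identifies exactly those $X$-points with the same ``companion'' subspace $Y_x$. One implication is essentially built in: if $\rho(x_1)=\rho(x_2)$ with $x_1\neq x_2$, then by Lemma~\ref{proprhochi}$(i)$ we have $x_2 \in \langle x_1, Y_{x_1}\rangle \cap X$, whence $Y_{x_2}=Y_{x_1}$ (the standard consequence of $\rho^{-1}(\rho(x))=\langle x,Y_x\rangle\cap X$ already recorded in the proof of Lemma~\ref{proprhochi}). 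So the real content is the converse: if $Y_{x_1}=Y_{x_2}$ for distinct $x_1,x_2$, then $\rho(x_1)=\rho(x_2)$, i.e.\ the line $x_1x_2$ meets $Y$.

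To prove the converse I would invoke Lemma~\ref{2:rel}$(i)$: the three cases of that lemma exhaust the mutual positions of $x_1,x_2$, and in cases $(ii)$ and $(iii)$ one has $\dim(Y_{x_1}\cap Y_{x_2})\in\{r'-2,r'-3\}<r'-1=\dim Y_{x_1}$, forcing $Y_{x_1}\neq Y_{x_2}$. Hence $Y_{x_1}=Y_{x_2}$ can only occur in case $(i)$, which says precisely that $x_1x_2$ is a singular line with a unique point in $Y$; and then $\rho(x_1)=\rho(x_2)$ by definition of $\rho$. (Here I am using that $\dim Y_x = r'-1$ for every $x\in X$, which is Corollary~\ref{1:Y=Z}, valid since we have reduced to the case $v'=-1$ via Lemmas~\ref{1:v'=-1} and~\ref{projV}.) This closes both directions, so the corollary is immediate once Lemma~\ref{2:rel} is in hand.

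There is essentially no obstacle here: Corollary~\ref{2:rho} is a packaging of Lemma~\ref{2:rel} together with the dimension count $\dim Y_x=r'-1$, and the only thing to be slightly careful about is that Lemma~\ref{2:rel} already presupposes all $X$-lines are $1$-lines (Lemma~\ref{2:Xlines}), which is exactly the hypothesis under which we are working in this subsection. So I expect the proof to be a one-line deduction: ``This follows immediately from Lemma~\ref{2:rel}, noting that $\dim Y_{x_i}=r'-1$ for $i=1,2$ (cf.\ Corollary~\ref{1:Y=Z}), so that the options $\dim(Y_{x_1}\cap Y_{x_2})\in\{r'-2,r'-3\}$ in $(ii)$ and $(iii)$ are incompatible with $Y_{x_1}=Y_{x_2}$.''
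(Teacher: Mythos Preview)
Your proposal is correct and follows essentially the same route as the paper's proof: both directions are deduced from Lemma~\ref{2:rel} (the exhaustive trichotomy forces $Y_{x_1}=Y_{x_2}$ to land in case~$(i)$, using $\dim Y_{x_i}=r'-1$) together with Lemma~\ref{proprhochi}$(i)$ (equivalently Lemma~\ref{1:inj}) for the translation between ``singular line meeting $Y$'' and ``$\rho(x_1)=\rho(x_2)$''. The paper compresses this into two sentences, but the logical content is identical to what you wrote.
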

\begin{proof}
By the previous lemma, $Y_{x_1}=Y_{x_2}$ is equivalent with $x_1x_2$ being a singular line with a unique point in $Y$, i.e., with $x_2 \in \<x_1, Y_{x_1}\>$. By Lemma~\ref{1:inj}, this is at its turn equivalent with $\rho(x_1)=\rho(x_2)$. 
\end{proof}

Lemma~\ref{2:rel} becomes a powerful tool if we can show that each $(r'-1)$-space of $Y$ occurs as $Y_x$ for some $x\in X$:

\begin{lemma}\label{2:r'sp}
For each $(r'-1)$-space $H$ in $Y$, there is a point $x\in X$ such that $Y_x=H$, and all $X$-points collinear to $H$ are precisely the points in $\<x,H\>\setminus H$.
\end{lemma}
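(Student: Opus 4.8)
\textbf{Proof strategy for Lemma~\ref{2:r'sp}.} The plan is to proceed by induction on the number of $X$-points already shown to realise a given $(r'-1)$-space, in the style of Lemma~\ref{1':V1V2}. First I would fix an arbitrary $X$-point $x_0$ and set $H_0 := Y_{x_0}$, which is an $(r'-1)$-space of $Y$ by Lemma~\ref{1:uniqueSS} (applied inside the point-residue) together with Lemma~\ref{2:Y}. The point-residue $\Res_X(x_0)$ is a mutant of $\mathcal{HDS}_{r'-1,k}(\K)$, and inside it the role of ``$Y$'' is played by $Y_{x_0}=H_0$; I would use Lemma~\ref{1:singsub} applied to $\Res_X(x_0)$ to get that every hyperplane of $H_0$ occurs as $Y_L$ for an $X$-line $L$ through $x_0$, and every $(r'-2)$-space of $Y$ through such a hyperplane that lies in $\theta^{x_0}$ is of the form $Y_x$ for an $X$-point $x\perp x_0$ with $x_0x$ an $X$-line. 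Translating through Lemma~\ref{lineSS}$(ii)$ and Lemma~\ref{2:rel}, this gives: every $(r'-1)$-space of $Y$ that shares an $(r'-2)$-space with $H_0$ and lies in $Y(\theta^{x_0})$ is realised. This is the base mechanism.

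The heart of the argument is then a connectivity step in the Grassmannian of $(r'-1)$-spaces of $Y$. Let $H$ be an arbitrary $(r'-1)$-space of $Y$. I would connect $H$ to $H_0$ by a chain $H_0 = H^{(0)}, H^{(1)}, \dots, H^{(m)} = H$ in which consecutive members meet in an $(r'-2)$-space (such chains exist since the $(r'-1)$-subspaces of the fixed projective space $Y$ form a connected collinearity graph under ``meet in a hyperplane''). The induction hypothesis is that $H^{(j)}$ is realised by some $X$-point, say $x_j$ with $Y_{x_j}=H^{(j)}$; I then need the inductive step: given $x_j$ with $Y_{x_j}=H^{(j)}$ and given $H^{(j+1)}$ meeting $H^{(j)}$ in an $(r'-2)$-space $T$, produce an $X$-point $x_{j+1}$ with $Y_{x_{j+1}}=H^{(j+1)}$. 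To do this I would take a point $z\in Z\setminus H^{(j)}$ (recall $Z=Y$ by Lemma~\ref{2:Y}) and consider $\theta:=[x_j,z]\in\Theta$; inside $\theta$, whose $Y(\theta)$ is an $r'$-space containing $Y_{x_j}=H^{(j)}$, every $(r'-1)$-subspace of $Y(\theta)$ through the $(r'-2)$-space $x_j^\perp\cap Y(\theta)$'s appropriate section occurs as $Y_x$ for an $X$-point $x$ of $\theta$. By choosing $z$ so that $Y(\theta)=\<H^{(j)}, H^{(j+1)}\>$ (possible precisely because $H^{(j)}$ and $H^{(j+1)}$ meet in an $(r'-2)$-space, so their span is an $r'$-space, and one picks $z$ in $H^{(j+1)}\setminus H^{(j)}$), the $(r'-1)$-space $H^{(j+1)}$ lies in $Y(\theta)$ and is complementary-in-the-right-sense to the $X$-side, hence equals $Y_x$ for some $X$-point $x\in X(\theta)$. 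This closes the induction.

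For the second assertion, that the $X$-points collinear to $H$ are exactly those of $\<x,H\>\setminus H$: the inclusion $\<x,H\>\setminus H\subseteq X$ (and collinear to $H$) is immediate since $\<x,H\>$ is a maximal singular subspace of $\theta^x$ by Lemma~\ref{1:singsub}. For the converse, suppose $x'\in X$ is collinear to all of $H=Y_x$. If $x'\not\perp x$ then $[x,x']\in\Xi\cup\Theta$; if it is in $\Xi$ its vertex would contain $H$, contradicting $v=r'-3<\dim H$ (Lemma~\ref{2:rel}$(iii)$ gives $\dim(Y_x\cap Y_{x'})=r'-3$, incompatible with $H\subseteq Y_{x'}$); if it is in $\Theta$ then $Y(\theta^x)\supseteq Y_x\cup Y_{x'}\supseteq H$, and the unique maximal singular subspace of $\theta^x$ through $H$ not inside $Y$ contains both $x$ and $x'$, so $x'\in\<x,H\>$. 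If $x'\perp x$ with $xx'$ an $X$-line, then $xx'$ is a $1$-line (Lemma~\ref{2:Xlines}) lying in $\theta^x$ and $Y_{xx'}\subseteq Y_x=H$, so $Y_{x'}=H$ forces $x'\in\<x,H\>$ by Lemma~\ref{1:singsub} again.

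\textbf{Main obstacle.} The delicate point is the inductive step: one must be sure that the $X$-point produced inside $\theta=[x_j,z]$ actually realises $H^{(j+1)}$ and not merely some $(r'-1)$-space through $T$. This requires a careful bookkeeping of which $(r'-1)$-subspaces of the $r'$-space $Y(\theta)$ arise as $Y_x$ for $x\in X(\theta)$ — namely all of them except $Y(\theta)\cap$ (the part forced by $v'=-1$), i.e.\ one uses that in a hyperbolic quadric $XY(\theta)$ with vertex empty, the map $x\mapsto Y_x$ is a bijection from a system of generators of $X(\theta)$ to the system of $(r'-1)$-dimensional subspaces of $Y(\theta)$. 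Getting the span $\<H^{(j)},H^{(j+1)}\>$ to coincide exactly with $Y(\theta)$ (by the correct choice of $z$) is where care is needed; once that is arranged, the bijection inside the quadric hands over the desired point. Everything else is a routine transcription of the $r'$-residue statements (Lemmas~\ref{1:uniqueSS},~\ref{1:singsub},~\ref{1:v'=-1}) combined with Lemma~\ref{2:rel}.
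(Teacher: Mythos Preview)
Your approach to the first assertion is essentially the same as the paper's: reduce via connectivity in the Grassmannian of $(r'-1)$-spaces of $Y$ to a single step (neighbouring $(r'-1)$-spaces meeting in an $(r'-2)$-space), pick $z\in H\setminus Y_x$, form $\theta=[x,z]\in\Theta$, observe that $Y(\theta)$ contains both $Y_x$ and $z$ and hence $H$, and extract $x'\in X(\theta)$ with $Y_{x'}=H$ from the quadric structure. The paper does this in three lines; your elaboration is correct but more verbose than needed, and the preliminary paragraph invoking residue lemmas from Section~\ref{1SS/point} (Lemmas~\ref{1:uniqueSS},~\ref{1:singsub}) is an unnecessary detour.

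For the second assertion you are working too hard and importing the wrong tools. If $x'\in X$ is collinear to all of $H$, then $H\subseteq Y_{x'}$; since $\dim Y_{x'}=r'-1=\dim H$, this forces $Y_{x'}=H=Y_x$, and Lemma~\ref{2:rel}$(i)$ then gives $x'\in\<x,Y_x\>=\<x,H\>$ immediately. Your case analysis (non-collinear in $\Xi$, non-collinear in $\Theta$, collinear via an $X$-line) treats configurations that cannot occur once $Y_{x'}=Y_x$ is known, and your appeals to Lemma~\ref{1:singsub} and to the notation $\theta^x$ are borrowed from Section~\ref{1SS/point}, where the standing hypotheses are different. The paper simply says the second part follows from Lemma~\ref{2:rel}, and that is all that is needed.
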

\begin{proof}
Take $x\in X$ arbitrary. Recall that  $\dim Y= r'+k$ by Lemma~\ref{2:Y}, so in particular $\dim Y < \infty$. This implies that it suffices to show that, for each $(r'-1)$-space $H'$ of $Y$ with $\dim(Y_x \cap H')=r'-2$, we have $H'=Y_{x'}$ for some $x'\in X$, as then connectivity argument finishes the argument. So without loss of generality, $\dim(Y_x \cap H)=r'-2$.  Let $z \in H \setminus Y_x$ be arbitrary. Recall that $z \in Z$ by Lemma~\ref{2:Y}. So by (S1), $[z,x]$ is a member of $\Theta$, which contains $Y_x$ and $x$, and hence $H$. As such, $[x,z]$ contains an $X$-point $x'$ collinear to $H$, i.e., $Y_{x'}=H$. 
The second part of the assertion follows from Lemma~\ref{2:rel}.
\end{proof}

As promised, the above leads us to $k=1$:

\begin{cor}\label{2:kis1}
We have $k=1$.
\end{cor}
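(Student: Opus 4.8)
The strategy is to exploit the residue $\Res_X(x)$, which (by Case~1 of the distinction in Section~\ref{dist}) is isomorphic to a mutant of the half dual Segre variety $\mathcal{HDS}_{r'-1,k}(\K)$, together with the tight control we now have over the subspaces $Y_x$ from Lemma~\ref{2:rel} and Lemma~\ref{2:r'sp}. Concretely, I would fix a point $x\in X$ and a member $\xi\in\Xi$ through $x$, with vertex $V=Y(\xi)$ of dimension $v=r'-3$ (recall $r=2$). Inside $\Res_X(x)$, the member $\xi_x\in\Xi_x$ corresponding to $\xi$ has vertex $V$ as well; and since $\Res_X(x)$ is a (mutant of a) half dual Segre variety $\mathcal{HDS}_{r'-1,k}(\K)$, its structure is pinned down by Theorem~\ref{1:projuni} together with Propositions~\ref{1:projsegre} and~\ref{1:FX}. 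The key local fact I want to extract is that in $\Res_X(x)$, the tangent space $T^{F}_{\rho_x(\cdot)}$ at a point is generated by two maximal singular subspaces, one of dimension $r'-1$ (an image of a member of $\Theta_x$) and one of dimension $k$ (an image of a member of $\Pi_x$).

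The heart of the argument should be a dimension/position count that mimics the proof of Proposition~\ref{1:FX}$(iv)$ and Lemma~\ref{reshalf}, using Lemma~\ref{s22p}. First I would observe, via Lemma~\ref{2:r'sp}, that every $(r'-1)$-space of $Y$ is of the form $Y_{x'}$, and that two such points $x',x''$ with $\dim(Y_{x'}\cap Y_{x''})=r'-2$ lie in a common member of $\Theta$ (Lemma~\ref{2:rel}$(ii)$), while those with intersection of dimension $r'-3$ lie in a common $\xi\in\Xi$ (Lemma~\ref{2:rel}$(iii)$). So the combinatorics of the $Y_x$'s inside the $(r'+k)$-space $Y=Z$ is rigid. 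Now suppose for a contradiction that $k\geq 2$. Then $\Res_X(x)=\mathcal{HDS}_{r'-1,k}(\K)$ contains a copy $\Omega'$ of a Segre variety with a $2$-dimensional factor — concretely a copy of $\mathcal{S}_{\ell,2}(\K)$ inside the Segre variety $\Omega$ of Proposition~\ref{1:FX} that embeds in $X_x$ — whose ambient space has dimension large enough that it must meet $Y_x$ (this is exactly the mechanism of Lemma~\ref{reshalf}, using $r'>r=2$ so that $\mathcal{S}_{2,2}(\K)\subseteq\Omega'$, and $\dim\langle\mathcal{S}_{2,2}(\K)\rangle=8$). The grids of this sub-Segre variety correspond to vertices of members of $\Xi_x$, i.e. to $(r'-3)$-dimensional subspaces of $Y_x$ through a fixed $(r'-4)$-space; so a point $y\in\langle\Omega'\rangle\cap Y_x$ lies in the vertex of a member $\xi'\in\Xi_x$ that contains a grid $G$ of $\Omega'$. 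Then the $4$-space $\langle G,y\rangle$ sits inside $\langle\Omega'\rangle\cong\mathbb{P}^8(\K)$ and meets $\Omega'$ in exactly $G$, so Lemma~\ref{s22p} produces two non-collinear points whose members of $\Xi$ violate (S2) — a contradiction. Hence $k=1$.

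\textbf{Main obstacle.} The delicate point is making precise \emph{why} a $2$-dimensional factor in the residue's half dual Segre variety forces the ambient subspace of a $\mathcal{S}_{2,2}(\K)$-subvariety to intersect $Y_x$, and \emph{why} the point of intersection lands in the vertex of a member of $\Xi_x$ having a grid inside that subvariety. This requires carefully tracking the correspondence (from Proposition~\ref{1:projsegre} and Proposition~\ref{1:FX}, applied in the residue) between grids of $\Omega$ and $v$-dimensional vertices in $Y_x$, and checking that restricting to grids of the sub-Segre $\Omega'$ gives exactly the $(r'-3)$-spaces of $R^x_1$, resp.\ $R^x_2$ — wait, in the $v'=-1$ half-dual setting $Y_x$ is a single $(r'-1)$-space, not a join of two reguli, so the relevant statement is that grids of $\Omega'$ correspond to $(r'-3)$-spaces of $Y_x$ through a fixed $(r'-4)$-space, which is a linear subspace of $Y_x$ of the right codimension. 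The dimension bookkeeping ($\dim\langle\Omega'\rangle=8$ versus $\dim Y_x=r'-1$ and $r'>2$) is what guarantees a non-trivial intersection; once that is set up, the contradiction via Lemma~\ref{s22p} and (S2) is the same mechanism already used twice in the paper, so it should go through with little extra effort. I would also keep in mind the degenerate possibility $r'=3$, where $r'-4=-1$ and "fixed $(r'-4)$-space" means the empty space, so the argument still reads correctly (all grids of $\Omega'$ are allowed), and $r'=3$ is exactly the smallest case where $\mathcal{S}_{2,2}(\K)\subseteq\Omega$ needs $k\geq 2$ to even occur.
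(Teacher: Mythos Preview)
Your approach has a genuine gap, and it is also far more complicated than what is needed.

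\textbf{The gap.} Corollary~\ref{2:kis1} is stated and used in the \emph{pre}-DSV setting of Case~1 (no (S3) assumed); the result is needed for Theorem~\ref{2:projuni}$(i)$, which is a pre-DSV statement. Your mechanism, however, is exactly that of Lemma~\ref{reshalf}, and that mechanism crucially relies on (S3): one needs an upper bound on $\dim H_x$ to force $\langle\Omega'\rangle\cap Y_x\neq\emptyset$. Without (S3) there is no such bound. Concretely, the residue is a \emph{mutant} of $\mathcal{HDS}_{r'-1,k}(\K)$, and in a mutant nothing prevents $F^*_x\cap Y_x=\emptyset$; since $\langle\Omega'\rangle\subseteq F^*_x$, your $8$-dimensional span of $\mathcal{S}_{2,2}(\K)$ may well miss $Y_x$ entirely. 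Even if you grant yourself (S3), the bound $\dim H_x\le 2d-1=2r'+3$ together with $\dim\langle\mathcal{S}_{2,2}(\K)\rangle=8$ and $\dim Y_x=r'-1$ only forces an intersection when $r'\le 4$; for $r'\ge 5$ your chosen $\Omega'$ is too small. You could try enlarging $\Omega'$ to $\mathcal{S}_{r'-1,2}(\K)$, but then you lose the clean applicability of Lemma~\ref{s22p}, and you still need (S3). There is also a secondary looseness: in the half dual Segre residue the grid--vertex correspondence (Proposition~\ref{1:projsegre}$(iii)$) is only an inclusion $\rho^{-1}(G)\subseteq X_V$, not a bijection as in the dual Segre case, so your sentence ``grids of $\Omega'$ correspond to $(r'-3)$-spaces of $Y_x$ through a fixed $(r'-4)$-space'' needs more justification than a pointer to Proposition~\ref{1':projsegre}.

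\textbf{The paper's argument, by contrast, is two lines.} Lemma~\ref{2:rel} exhausts all mutual positions of $x_1,x_2\in X$ and shows $\dim(Y_{x_1}\cap Y_{x_2})\ge r'-3$ always, with equality attained because $|\Xi|\ge 1$. Lemma~\ref{2:r'sp} says \emph{every} $(r'-1)$-space of $Y$ is a $Y_x$. Hence any two $(r'-1)$-subspaces of $Y$ meet in dimension at least $r'-3$, which forces $\dim Y\le r'+1$; and the attained equality forces $\dim Y\ge r'+1$. Since $\dim Y=r'+k$ by Lemma~\ref{2:Y}, $k=1$. No residues, no (S3), no Lemma~\ref{s22p}. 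You already cite Lemmas~\ref{2:rel} and~\ref{2:r'sp} in your plan; the entire argument is the immediate combinatorial consequence of those two lemmas, and you should simply use it.
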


\begin{proof}
  Lemma~\ref{2:rel} implies that $\dim(Y_{x_1} \cap Y_{x_2}) \geq r'-3$ for all $x_1,x_2 \in X$ and that there is a pair for which equality is reached (since $|\Xi|\geq 1$). In view of Lemma~\ref{2:r'sp}, this means that all pairs of  $(r'-1)$-spaces of $Y$ should share at least an $r'-3$-space and there is a pair whose intersection is precisely an $(r'-3)$-space. Thus, $\dim Y = r'+1$. Since Lemma~\ref{2:Y} says that  $\dim Y= r'+k$, we conclude that $k=1$.
\end{proof}

Another corollary is the following.

\begin{cor}\label{2:r'th}
For each $r'$-space $Y'$ in $Y$, there is a unique $\theta \in \Theta$ with $Y(\theta)=Y'$. Moreover, if $x\in X$ has $Y_x \subseteq Y'$, then $x \in \theta$. 
\end{cor}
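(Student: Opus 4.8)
\textbf{Proof plan for Corollary~\ref{2:r'th}.}

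The plan is to exploit that, by Corollary~\ref{2:kis1}, $\dim Y = r'+1$, so an $r'$-space $Y'$ of $Y$ is a hyperplane of $Y$, and by Lemma~\ref{2:Y} all points of $Y$ lie in $Z$. First I would fix an $r'$-space $Y'$ of $Y$, choose a point $z\in Y'$, and (using $\dim Y'=r'\geq 2$, so that $Y'$ has at least two points) pick a second point $z'\in Y'\setminus\{z\}$; I then want a point $x\in X$ such that $Y_x$ is an $(r'-1)$-space of $Y'$ missing neither $z$ nor $z'$ — such an $(r'-1)$-space of $Y'$ exists since $\dim Y'=r'$ and $r'\geq 2$, and Lemma~\ref{2:r'sp} produces the desired $x\in X$. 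Then $z\in Y'\setminus Y_x\subseteq Z\setminus Y_x$ (using Lemma~\ref{2:Y}), so by (S1) the pair $x,z$ is non-collinear (as $z\notin Y_x$) and hence lies in a member $\theta:=[x,z]$ of $\Xi\cup\Theta$; since $Y([x,z])$ must contain a point non-collinear with $x$, or more directly since a member of $\Xi$ through $x$ would have $z$ in its vertex while $\dim\langle Y_x,z\rangle=r'>v=r'-3$, we get $\theta\in\Theta$. By Lemma~\ref{1:uniqueSS} (applicable since each $X$-line is a $1$-line, Lemma~\ref{2:Xlines}, so the hypotheses of Section~\ref{1SS/point} locally apply — alternatively one reads this off $\Res_X(x)$), $\theta$ contains $Y_x$, hence $Y(\theta)\supseteq\langle Y_x,z\rangle$, and since $\dim Y(\theta)=r'$ (recall $v'=-1$) this forces $Y(\theta)=\langle Y_x,z\rangle\subseteq Y'$, whence $Y(\theta)=Y'$ as both have dimension $r'$.

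For uniqueness, suppose $\theta_1,\theta_2\in\Theta$ both have $Y(\theta_i)=Y'$. Pick $X$-points $x_i\in X(\theta_i)$; then $Y_{x_i}$ is an $(r'-1)$-space of $Y'=Y(\theta_i)$. If we choose $x_1,x_2$ so that $Y_{x_1}=Y_{x_2}$ (possible because each $(r'-1)$-space $H$ of $Y'$ equals $Y_x$ for some $X$-point of $\theta_i$ — this is the "every generator is $Y_x$ for some point" property of the quadric $X(\theta_i)$ together with Lemma~\ref{2:r'sp}), then by Lemma~\ref{2:rel}$(i)$ the line $x_1x_2$ is a singular line with a unique point in $Y$, so $x_2\in\langle x_1,Y_{x_1}\rangle\subseteq\theta_1$ (using again that $\theta_1\supseteq Y_{x_1}$). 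Thus $\theta_1\cap\theta_2$ contains the $X$-point $x_2$ together with the maximal singular subspace $Y'=Y(\theta_2)$ of $\theta_2$, so $\theta_1\cap\theta_2\supseteq\langle x_2,Y'\rangle$, a maximal singular subspace of $\theta_1$ as well; but then $\theta_1$ and $\theta_2$ share a maximal singular subspace containing an $X$-point, and a standard rank argument (two hyperbolic quadrics of the same rank sharing a hyperplane-sized singular subspace, or just: $\langle x_2,Y'\rangle$ is not a maximal singular subspace of the symp unless the two symps coincide — apply Corollary~\ref{noXplane}-style reasoning, i.e. Lemma~\ref{subspaceinsymp}/the uniqueness statements available) forces $\theta_1=\theta_2$. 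Concretely, I would invoke that two symps of $\Xi\cup\Theta$ meeting in $\langle x_2,Y'\rangle$ would by (S2) have this as a singular subspace, and since it is maximal in each, a member of $\Theta$ is determined by any of its $X$-points together with $Y(\theta)$ via $\theta=[x_2,z]$ for $z\in Y'\setminus Y_{x_2}$, giving $\theta_1=[x_2,z]=\theta_2$.

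Finally, the "moreover" is essentially immediate: if $x\in X$ has $Y_x\subseteq Y'$, then $Y_x$ is an $(r'-1)$-space of the $r'$-space $Y'$, so there is a point $z\in Y'\setminus Y_x\subseteq Z\setminus Y_x$, and as above $[x,z]\in\Theta$ with $Y([x,z])=\langle Y_x,z\rangle=Y'$; by the uniqueness just proved $[x,z]=\theta$, so $x\in\theta$. The main obstacle I anticipate is pinning down the uniqueness cleanly — i.e. arguing that two members of $\Theta$ with the same $Y$-part and a common $X$-point must coincide — since this requires correctly identifying which maximal singular subspaces of the symps are involved and appealing to the right earlier lemma (Lemma~\ref{subspaceinsymp}, Corollary~\ref{noXplane}, or the structure of $\Res_X(x)$ from Section~\ref{1SS/point}); everything else is routine dimension bookkeeping using $\dim Y=r'+1$, $v=r'-3$, $v'=-1$.
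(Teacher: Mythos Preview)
Your proof is correct and follows essentially the same route as the paper: pick an $(r'-1)$-space $H\subseteq Y'$, use Lemma~\ref{2:r'sp} to find $x\in X$ with $Y_x=H$, take $z\in Y'\setminus Y_x$ (which lies in $Z$ by Lemma~\ref{2:Y}), and set $\theta=[x,z]\in\Theta$ with $Y(\theta)=\langle Y_x,z\rangle=Y'$. Your final argument for uniqueness --- that once $x_2\in\theta_1\cap\theta_2$ and $z\in Y'\setminus Y_{x_2}$ then $\theta_1=[x_2,z]=\theta_2$ by Lemma~\ref{uniquesymp} --- is exactly right and is all you need; the earlier detours through maximal singular subspaces and Corollary~\ref{noXplane}-style reasoning are unnecessary.

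The only organisational difference from the paper is the order: the paper proves the ``moreover'' clause \emph{first} (directly, by finding inside $\theta$ a point $x''$ with $Y_{x''}=Y_{x'}$ and invoking Corollary~\ref{2:rho} to get $x'\in\langle x'',Y_{x''}\rangle\subseteq\theta$), and then uniqueness is immediate since any competing $\theta'$ with $Y(\theta')=Y'$ has all of its $X$-points absorbed into $\theta$. You instead prove uniqueness first and derive ``moreover'' from it by re-running the existence construction. Both orderings work; the paper's is slightly shorter since it avoids the need to match up points $x_1,x_2$ with equal $Y_{x_i}$ across two symps. Also, your worry about needing $z,z'$ and an $(r'-1)$-space ``missing neither'' is superfluous: any hyperplane $H$ of $Y'$ and any $z\in Y'\setminus H$ suffice.
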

\begin{proof}
Take any $(r'-1)$-space $H$ in $Y'$. By Lemma~\ref{2:r'sp}, we know that $H=Y_x$ for some $x\in X$. Take any point $z\in Y'\setminus H$. Then $\theta:=[x,z]$ is a member of $\Theta$ with $Y(\theta)=Y'$. Let $x'\in X$ be such that $Y_{x'} \subseteq Y'$. Then $X(\theta)$ contains a point $x''$ with $Y_{x''}=Y_{x'}$, so by Corollary~\ref{2:rho}, $x' \in \<x'',Y_{x''}\>\subseteq \theta$. This also shows that $\theta$ is the unique member of $\Theta$ containing $Y'$.
\end{proof}

We proceed similarly as in Section~\ref{r=1,2SS} and nail down the structure of $(X,Z,\Xi,\theta)$, using the projection $\rho$ and the connection map $\chi$ (cf.\ Definitions~\ref{RHO} and~\ref{CHI}).

\begin{lemma}~\label{2:rhoXlines}
Let $\rho(x_1)$ and $\rho(x_2)$ be distinct points on a line of $\rho(X)$, for $x_1,x_2 \in X$. Let $x'_i \in \rho^{-1}(\rho(x_i))$ be arbitrary, for $i=1,2$. Then:
\begin{compactenum}[$(i)$]
\item there is a unique $\theta \in \Theta$ containing  $x'_1 \cup x'_2$, and $\rho^{-1}(\rho(x_1)) \cup  \rho^{-1}(\rho(x_2))\subseteq \theta$;
\item there  is an $x''_2\in\rho^{-1}(\rho(x_2))$ such that $\<x'_1,x''_2\>$ is an $X$-line. 
\item  $\{Y_x \mid \rho(x) \in \<\rho(x_1),\rho(x_2)\>\}$ is the set of all $(r'-1)$-spaces through the $(r'-2)$-space $Y_{x_1}\cap Y_{x_2}$ inside the $r'$-space $Y(\theta)$. 
\end{compactenum}
\end{lemma}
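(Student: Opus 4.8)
The plan is to mimic the proof of Lemma~\ref{1':rhoXlines} from Section~\ref{r=1,2SS} but now adapted to the situation $r=2$, $k=1$, where $\dim Y = r'+1$, all $X$-lines are $1$-lines (Lemma~\ref{2:Xlines}), and $v'=-1$. The key structural facts I would use are: $\rho^{-1}(\rho(x)) = \<x,Y_x\>\cap X$ and the well-definedness of $\chi$ (Lemma~\ref{proprhochi}), the collinearity/position dictionary of Lemma~\ref{2:rel}, Corollary~\ref{2:rho} (equal $Y$-fibres iff equal $\rho$-images), and Corollary~\ref{2:r'th} (each $r'$-space of $Y$ is $Y(\theta)$ for a unique $\theta\in\Theta$, and points whose $Y_x$ lies inside it are contained in $\theta$). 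Throughout, for $i=1,2$ I note $x'_i\in\rho^{-1}(\rho(x_i))=\<x_i,Y_{x_i}\>\cap X$, so $Y_{x'_i}=Y_{x_i}$.

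For assertion $(i)$ the crucial point is to establish $\dim(Y_{x_1}\cap Y_{x_2})=r'-2$. Since $\rho(x_1)\neq\rho(x_2)$, Corollary~\ref{2:rho} gives $Y_{x_1}\neq Y_{x_2}$; both are $(r'-1)$-spaces of the $(r'+1)$-dimensional $Y$, so their intersection has dimension $r'-2$ or $r'-3$. I would rule out dimension $r'-3$ (equivalently, $\<Y_{x_1},Y_{x_2}\>=Y$, i.e.\ $x_1,x_2$ are non-collinear in a member of $\Xi$ by Lemma~\ref{2:rel}$(iii)$) by the same mechanism as in Lemma~\ref{1':rhoXlines}: take a third point $x_3\in X$ with $\rho(x_3)$ on the line $\<\rho(x_1),\rho(x_2)\>$ strictly between $\rho(x_1)$ and $\rho(x_2)$; then $x_3$ lies in $\<x_1,x_2,Y\>=\<x_1,Y_{x_1},x_2,Y_{x_2}\>$, a space of dimension at most $(2r'+1)$, and inside it a dimension count (using $\dim\<x_i,Y_{x_i}\>=r'$) produces a singular line meeting $\<x_1,Y_{x_1}\>$ and $\<x_2,Y_{x_2}\>$ in distinct points outside $Y$ through $x_3$; that line is an $X$-line by Lemma~\ref{XYpointsonline}, hence a $1$-line, so it lies in some $\theta\in\Theta$ containing both $x_1$ and $x_2$, contradicting $\dim(Y_{x_1}\cap Y_{x_2})=r'-3$ via Lemma~\ref{2:rel}$(ii)$. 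Once $\dim(Y_{x_1}\cap Y_{x_2})=r'-2$ is known, Lemma~\ref{2:rel}$(ii)$ gives a member $\theta\in\Theta$ with $x_1,x_2\in\theta$; since $x'_i\in\<x_i,Y_{x_i}\>$ and $\<x_i,Y_{x_i}\>\subseteq\theta$ (Lemma~\ref{1:uniqueSS}, as $Y_{x_i}$ is a hyperplane of $Y=Y(\theta)$), we get $\rho^{-1}(\rho(x_i))\subseteq\theta$; uniqueness of $\theta$ follows since $\theta=\theta^{x_1}$, the unique member of $\Theta$ through $x_1$ in the $r=2$, Case~1 situation — actually more carefully, $\theta$ is the unique member of $\Theta$ through $x_1$ containing $x_2$, but as all $X$-lines are $1$-lines and by Lemma~\ref{lineSS} the relevant uniqueness holds; alternatively uniqueness follows directly from Corollary~\ref{2:r'th} applied to $Y(\theta)=\<Y_{x_1},Y_{x_2}\>$.

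For $(ii)$, work inside the hyperbolic quadric $XY(\theta)$: $x'_1$ is collinear there to a hyperplane of the maximal singular subspace $\<x_2,Y_{x_2}\>$ of $\theta$; that hyperplane does not equal $Y_{x_2}$ (since $Y_{x_1}\neq Y_{x_2}$), hence it contains an $X$-point $x''_2\in\rho^{-1}(\rho(x_2))$, and $\<x'_1,x''_2\>$ is an $X$-line. For $(iii)$, pick by $(ii)$ an $X$-line $L$ in $\theta$ with $\rho(L)=\<\rho(x_1),\rho(x_2)\>$; then $\rho$ restricts to a bijection between the points of $L$ and the points of $\<\rho(x_1),\rho(x_2)\>$, and inside the quadric $\theta$ the collinearity map $u\mapsto Y_u$ is a bijection between the points of $L$ and the $(r'-1)$-spaces of $Y=Y(\theta)$ containing the $(r'-2)$-space $L^\perp\cap Y = Y_{x_1}\cap Y_{x_2}$; composing gives the claim. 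The main obstacle is the dimension-count argument excluding $\dim(Y_{x_1}\cap Y_{x_2})=r'-3$ in $(i)$ — getting the bookkeeping right in the ambient space $\<x_1,x_2,Y\>$ and verifying that the constructed line is genuinely an $X$-line (not contained in some $\<x_i,Y_{x_i}\>$) — but this is essentially transcribed from the proof of Lemma~\ref{1':rhoXlines}, so I expect no new difficulty, only careful adaptation of the dimensions.
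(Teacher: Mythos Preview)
Your approach is essentially identical to the paper's own proof, which explicitly says the argument is the same as that of Lemma~\ref{1':rhoXlines}, the only adaptation being the dimension check in $(i)$: excluding $\dim(Y_{x_1}\cap Y_{x_2})=r'-3$ (equivalently $\langle Y_{x_1},Y_{x_2}\rangle=Y$) by the third-point construction. Your plan for $(ii)$ and $(iii)$ is verbatim the method used there.

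One small slip to fix: in your argument for $(i)$ you justify $\langle x_i,Y_{x_i}\rangle\subseteq\theta$ by citing Lemma~\ref{1:uniqueSS} and writing ``$Y_{x_i}$ is a hyperplane of $Y=Y(\theta)$''. That is false in the present setting: here $\dim Y=r'+1$ while $\dim Y(\theta)=r'$, so $Y(\theta)\subsetneq Y$, and Lemma~\ref{1:uniqueSS} (from the half-dual-Segre section, where $Y(\theta)=Y$) does not apply. The correct justification is the one you yourself give a line later: $Y(\theta)=\langle Y_{x_1},Y_{x_2}\rangle$ and Corollary~\ref{2:r'th} then forces $x'_i\in\langle x_i,Y_{x_i}\rangle\subseteq\theta$ (since $Y_{x'_i}=Y_{x_i}\subseteq Y(\theta)$); uniqueness of $\theta$ also comes from Corollary~\ref{2:r'th}, not from any ``unique $\theta^{x_1}$'' (there are many members of $\Theta$ through $x_1$ here). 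Drop the erroneous citation and the equality $Y=Y(\theta)$, and your proof goes through.
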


\begin{proof} This can be proven similarly as Lemma~\ref{1':rhoXlines};  the only difference being that, in $(i)$, we now have to show that $\dim(Y_{x_1}\cap Y_{x_2})=r'-2$, but assuming the contrary leads as well to the situation where $\<Y_{x_1},Y_{x_2}\>=Y$. \end{proof}

\begin{lemma}\label{2:welldef}
Suppose $\xi=[x_1,x_2]\in \Xi$ for points $x_1,x_2\in X$ and put $T=Y(\xi)$. Then:
\begin{compactenum}[$(i)$]
\item $\sigma_\xi: \rho(X(\xi)) \rightarrow \{H \mid T \subseteq H \subseteq Y, \dim H = r'-1\}: \rho(x) \mapsto Y_x$ is an isomorphism;
\item  If $x'_i \in \rho^{-1}(\rho(x_i))$ for $i=1,2$, then $\xi':=[x'_1,x'_2]$ belongs to $\Xi_T$; 
\item for each $\xi'\in \Xi_T$, $\rho(X(\xi'))=\rho(X(\xi))$;
\end{compactenum}
\end{lemma}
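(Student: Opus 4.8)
The plan is to prove Lemma~\ref{2:welldef} in close analogy with the $r=1$ case treated in Lemma~\ref{1':welldef}, using the dictionary between the geometry of $X(\xi)$ (a rank-$2$ hyperbolic quadric, since $r=2$) and the geometry of $(r'-1)$-spaces of $Y$ through the vertex $T$. The three assertions build on each other, so I would prove them in the stated order.

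For $(i)$: first note $\sigma_\xi$ is well-defined because, for $x\in X(\xi)$, the subspace $Y_x$ contains $Y(\xi)=T$ (each point of $X(\xi)$ is collinear to the vertex of $\xi$ by Lemma~\ref{convexclosure}) and $\dim Y_x=r'-1$ by Corollary~\ref{2:kis1} together with Lemma~\ref{2:Y} (or directly: each $X$-line is a $1$-line, so $Y_x$ is the intersection with a $\theta$ of dimension $r'$... wait, $\dim Y_x=r'-1$ holds by the residue being $\mathcal{HDS}_{r'-1,1}$, i.e.\ by Corollary~\ref{2:rho}'s surrounding setup). Since $\dim T=v=r'-3$ and $\dim Y=r'+1$, the residue $\Res_Y(T)$ is a projective $3$-space $\Pi_T(\K)$, and the $(r'-1)$-spaces of $Y$ through $T$ correspond to the points of $\Pi_T(\K)$. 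Now I invoke Lemma~\ref{2:rel} applied inside $X(\xi)$: for $x,x'\in X(\xi)$, $\rho(x)=\rho(x')\Leftrightarrow Y_x=Y_{x'}$ (Corollary~\ref{2:rho}); $xx'$ is an $X$-line $\Leftrightarrow \dim(Y_x\cap Y_{x'})=r'-2$; and $x,x'$ non-collinear $\Leftrightarrow \dim(Y_x\cap Y_{x'})=r'-3$, i.e.\ $Y_x\cap Y_{x'}=T$, meaning the corresponding points of $\Pi_T(\K)$ are distinct. So $\sigma_\xi$ is an injective map from $\rho(X(\xi))$ — which, since $r=2$, is the point-set of a hyperbolic quadric $\mathcal{G}_{4,2}(\K)$ (a Klein quadric, the $\mathsf{A}_{3,2}$-geometry) — into the point-set of $\mathbb{P}^3(\K)=\Pi_T(\K)$, which under the Klein correspondence is exactly $\mathsf{A}_{3,2}(\K)$. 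Lemma~\ref{2:rhoXlines}$(iii)$ shows $\sigma_\xi$ sends the full lines of $\rho(X(\xi))$ onto full planar line pencils, so $\sigma_\xi$ is an isometric embedding of one copy of $\mathsf{A}_{3,2}(\K)$ into another over the same field; hence it is onto, so $\sigma_\xi$ is an isomorphism.

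For $(ii)$: by Lemma~\ref{1:inj} (via Lemma~\ref{proprhochi}$(i)$), $x'_i\in\langle x_i,Y_{x_i}\rangle\cap X$ and so $Y_{x'_i}=Y_{x_i}$ for $i=1,2$. Since $\xi=[x_1,x_2]$, Lemma~\ref{2:rel}$(iii)$ gives $\dim(Y_{x_1}\cap Y_{x_2})=r'-3$, hence $\dim(Y_{x'_1}\cap Y_{x'_2})=r'-3$ as well, and by Lemma~\ref{2:rel} again $x'_1$ and $x'_2$ are non-collinear points of some $\xi'\in\Xi$ with $Y(\xi')=Y_{x'_1}\cap Y_{x'_2}=Y_{x_1}\cap Y_{x_2}=T$, so $\xi'\in\Xi_T$. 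For $(iii)$: given $\xi'\in\Xi_T$, part $(i)$ applied to both $\xi$ and $\xi'$ gives isomorphisms $\sigma_\xi,\sigma_{\xi'}$ onto the same target $\{H\mid T\subseteq H\subseteq Y,\dim H=r'-1\}$; composing, $\sigma_{\xi'}^{-1}\circ\sigma_\xi$ is an isomorphism $\rho(X(\xi))\to\rho(X(\xi'))$, and for each $\rho(x)\in\rho(X(\xi))$ it produces the unique $\rho(x')\in\rho(X(\xi'))$ with $Y_{x'}=Y_x$; by Corollary~\ref{2:rho} (or Lemma~\ref{2:r'sp}, uniqueness part) this forces $\rho(x')=\rho(x)$, whence $\rho(X(\xi'))=\rho(X(\xi))$.

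The main obstacle I anticipate is pinning down $(i)$ cleanly: one must be careful that $\rho(X(\xi))$ is genuinely (the point-set of) a hyperbolic quadric of rank $r=2$ in $\rho(X)$ — this is Lemma~\ref{proprhochi}$(ii)$ — and that its point-line geometry is $\mathsf{A}_{3,2}(\K)$, and then that the set of $(r'-1)$-spaces of $Y$ through $T$, with full planar line-pencils as lines, is also isomorphic to $\mathsf{A}_{3,2}(\K)$ via the Klein correspondence inside $\Pi_T(\K)=\mathbb{P}^3(\K)$; the word ``isometrically'' is doing real work, so I would spell out that $\sigma_\xi$ preserves collinearity, non-collinearity and the line structure (by Lemma~\ref{2:rel} and Lemma~\ref{2:rhoXlines}$(iii)$), and that an isometric embedding between two copies of a thick generalized quadrangle's point-graph — here the $\mathsf{A}_{3,2}$ collinearity graph — over the same field is necessarily surjective since the fields of definition agree. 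Everything else is a routine transcription of the $r=1$ arguments with ``$r'-1$-space of $Y$ through $T$'' in place of ``$\langle H_1,H_2\rangle$'', and with $\dim Y=r'+1$ (Corollary~\ref{2:kis1}) in place of $\dim Y=2r'+1$.
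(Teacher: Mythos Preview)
Your approach is essentially the paper's: it too refers back to Lemma~\ref{1':welldef} and notes that the only change is that in $(i)$ the target geometry is the set of lines of the $3$-space $\Pi_T(\K)$, identified with the Klein quadric via the Klein correspondence. One slip to fix: since $\dim T=r'-3$ and $\dim H=r'-1$, the $(r'-1)$-spaces $H$ of $Y$ through $T$ correspond to \emph{lines} of $\Pi_T(\K)$, not points (their image in $\Res_Y(T)\cong\mathbb{P}^3(\K)$ has projective dimension $(r'-1)-(r'-3)-1=1$); this is precisely why the Klein correspondence enters, and once corrected your argument goes through as written.
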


\begin{proof} This can be proven completely similarly as Lemma~\ref{1':welldef}; the only difference being that in $(i)$, the line set $\{L(x) \mid \rho(x)\in \rho(X(\xi))\}$ is precisely the set of lines of the $3$-space $\Pi_T(\K)$, which is, equipped with full planar point pencils, isomorphic to the Klein quadric $Q$ (using the Klein correspondence). 
\end{proof}

We  use $Y$ to define the following point-line geometry $(\mathsf{P},\mathsf{B})_Y$.

\begin{defi}\label{2:PB}
Let  $\mathsf{P}$ denote the set of $(r'-1)$-dimensional subspaces of $Y$. For subspaces $S_-\subseteq S_+ \subseteq Y$ with $\dim S_\pm=r'-2\pm 1$, we define the pencil $P(S_-,S_+)$ as the set $\{P \in \mathsf{P} \mid S_- \subseteq P \subseteq S_+\}$. Then we denote by $\mathsf{B}$ the set $\{P(S_1,S_2) \mid S_1 \subseteq S_2 \subseteq Y,  \dim S_\pm=r'-2\pm 1\}$. 
\end{defi}

\begin{lemma}\label{2:Ystructuur}
The point-line geometry $(\mathsf{P},\mathsf{B})_Y$ is isomorphic to the (point-line truncation of the) line Grassmannian $\mathcal{G}_{r'+2,2}(\K)$. 
\end{lemma}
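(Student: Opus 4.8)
The statement to prove is that the point-line geometry $(\mathsf{P},\mathsf{B})_Y$ defined in Definition~\ref{2:PB} is isomorphic to the point-line truncation of the line Grassmannian $\mathcal{G}_{r'+2,2}(\K)$. Recall that $\dim Y = r'+1$ by Lemma~\ref{2:Y} and Corollary~\ref{2:kis1}, so $Y$ is a projective space $\mathbb{P}^{r'+1}(\K)$. The key observation is that the points of $\mathsf{P}$ are the $(r'-1)$-dimensional subspaces of $\mathbb{P}^{r'+1}(\K)$, i.e., the subspaces of codimension $2$; and the members of $\mathsf{B}$ are the pencils $P(S_-,S_+)$ where $S_-$ has codimension $3$ and $S_+$ has codimension $1$ (a hyperplane). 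So I would first rephrase the geometry in terms of codimensions rather than dimensions to make the duality transparent.

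The plan is then to apply the standard projective duality of $\mathbb{P}^{r'+1}(\K)$. Under this duality, a $(r'-1)$-dimensional (codimension $2$) subspace of $Y$ corresponds to a $1$-dimensional subspace (a line) of the dual space $Y^* \cong \mathbb{P}^{r'+1}(\K)$; a hyperplane $S_+$ of $Y$ corresponds to a point of $Y^*$, and a codimension-$3$ subspace $S_-$ corresponds to a plane of $Y^*$. The condition $S_- \subseteq P \subseteq S_+$ defining a member of $\mathsf{B}$ dualizes exactly to: the line (of $Y^*$) corresponding to $P$ passes through the point corresponding to $S_+$ and lies in the plane corresponding to $S_-$. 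That is, $\mathsf{B}$ becomes precisely the set of planar line pencils of $Y^*$. Hence $(\mathsf{P},\mathsf{B})_Y$ is, via this duality, isomorphic to the point-line geometry whose points are the lines of $\mathbb{P}^{r'+1}(\K)$ and whose lines are the planar line pencils — which is exactly the (point-line truncation of the) line Grassmannian $\mathcal{G}_{r'+2,2}(\K)$ of $\mathbb{P}^{r'+1}(\K)$, noting that $\mathbb{P}^{r'+1}(\K)$ is a projective space of projective dimension $r'+1$, i.e., ``$n+1 = r'+2$'' dimensions in the Grassmannian notation $\mathcal{G}_{n+1,2}$.

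Concretely I would (1) record $\dim Y = r'+1$ from the earlier results; (2) set up the correlation $\delta\colon Y \to Y^*$ sending a subspace to its annihilator, noting it is inclusion-reversing and bijective on subspaces; (3) check that $\delta$ maps $\mathsf{P}$ bijectively onto the set of lines of $Y^*$ (since codimension $2$ dualizes to projective dimension $1$); (4) check that $\delta$ maps a pencil $P(S_-,S_+)$ onto $\{L \text{ line of } Y^* : \delta(S_+) \in L \subseteq \delta(S_-)\}$, which is the full planar pencil of lines through the point $\delta(S_+)$ in the plane $\delta(S_-)$, and that every such planar pencil arises this way; (5) conclude that $\delta$ is an isomorphism of point-line geometries onto the line Grassmannian geometry of $Y^*$, and since $Y^* \cong \mathbb{P}^{r'+1}(\K)$, this geometry is $\mathcal{G}_{r'+2,2}(\K)$. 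There is essentially no obstacle here: the only mild care needed is bookkeeping of the index shift between ``projective dimension'' and the subscript convention in $\mathcal{G}_{n+1,2}$, and confirming that the incidence in Definition~\ref{2:PB} is exactly the Grassmannian incidence (point lies on line iff the corresponding flag condition holds) — both are routine. I would expect the whole proof to be three or four lines once the duality is invoked; the harder conceptual work has already been done in establishing $\dim Y = r'+1$.
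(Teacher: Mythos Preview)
Your proposal is correct and follows exactly the same approach as the paper: invoke the self-duality of the projective space $Y\cong\mathbb{P}^{r'+1}(\K)$ to carry the codimension-$2$ subspaces (points of $\mathsf{P}$) to lines and the pencils $P(S_-,S_+)$ to planar line pencils, thereby identifying $(\mathsf{P},\mathsf{B})_Y$ with the point-line truncation of $\mathcal{G}_{r'+2,2}(\K)$. The paper compresses this to a single sentence, but your unpacking of the duality is accurate and the bookkeeping you describe is exactly what underlies that sentence.
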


\begin{proof}
Since a projective space is self-dual and $\dim Y=r'+1$, the point-line geometry $(\mathsf{P},\mathsf{B})_Y$ (with natural incidence relation) is by definition isomorphic to the (point-line truncation of the) line Grassmannian $\mathcal{G}_{r'+2,2}(\K)$. 
\end{proof}

\begin{prop}\label{2:projgras}
The point-line geometry $\mathcal{G}:=(\rho(X),\rho(\mathsf{L}))$ is isomorphic to an injective projection of the line Grassmannian $\mathcal{G}_{r'+2,2}(\K)$. Moreover, we have:
\begin{compactenum}
\item[$(i)$] for each singular $r'$-space $S$ in $\mathcal{G}$, there is a unique $\theta_S \in \Theta$ with
 $\rho^{-1}(S)=X(\theta_S)$.
\item[$(ii)$] for each symp $Q$ of $\mathcal{G}$ (viewing the latter as a parapolar space), there is a unique  $v$-space $V$ in $Y$ with $\rho^{-1}(Q)=X_V$ and vice versa.
\end{compactenum}
\end{prop}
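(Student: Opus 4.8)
The plan is to run the exact same machinery as in Proposition~\ref{1':projsegre}, only now the ``$Y$-side'' geometry is a line Grassmannian rather than a Segre variety, and the ``$F$-side'' residue is a half dual Segre variety rather than a dual Segre variety. First I would establish the abstract isomorphism: I claim that the connection map $\chi:\rho(X)\to Y:\rho(x)\mapsto Y_x$ induces an isomorphism between the point-line geometry $(\rho(X),\rho(\mathsf{L}))$ and $(\mathsf{P},\mathsf{B})_Y$ of Definition~\ref{2:PB}. That $\chi$ is a well-defined bijection on points is exactly Corollary~\ref{2:rho} together with Lemma~\ref{2:r'sp} (every $(r'-1)$-space of $Y$ is hit, and $\rho^{-1}(\rho(x))=\<x,Y_x\>\cap X$ by Lemma~\ref{1:inj}); that $\chi$ sends a member of $\rho(\mathsf{L})$ to a member of $\mathsf{B}$, and conversely, is Lemma~\ref{2:rhoXlines}$(iii)$ (the images of the points of a $\rho(X)$-line through $x_1,x_2$ are precisely the $(r'-1)$-spaces between the $(r'-2)$-space $Y_{x_1}\cap Y_{x_2}$ and the $r'$-space $Y([x_1,x_2]_\Theta)$, which is a line pencil of $\mathcal{G}_{r'+2,2}(\K)$). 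In particular this shows every line of $\rho(X)$ is $\rho(L)$ for an actual $X$-line $L$, so $\rho(\mathsf{L})$ really is the full line set of $\rho(X)$. Combined with Lemma~\ref{2:Ystructuur}, which identifies $(\mathsf{P},\mathsf{B})_Y$ with the point-line truncation of $\mathcal{G}_{r'+2,2}(\K)$, and Fact~\ref{factuniemb} (universality of $\mathcal{G}_{r'+2,2}(\K)$ as an embedding of $\mathsf{A}_{r'+1,2}(\K)$), we conclude that $\mathcal{G}=(\rho(X),\rho(\mathsf{L}))\subseteq F$ is an injective projection of the line Grassmannian $\mathcal{G}_{r'+2,2}(\K)$.

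For $(i)$: let $S$ be a maximal singular subspace of $\rho(X)$, i.e.\ a maximal singular subspace of $\mathcal{G}_{r'+2,2}(\K)$. The two families of maximal singulars of a line Grassmannian are the point-stars and the plane-pencils; under the identification with $(\mathsf{P},\mathsf{B})_Y$ these are ``all $(r'-1)$-spaces inside a fixed $r'$-space $Y'$ of $Y$'' and ``all $(r'-1)$-spaces containing a fixed $(r'-2)$-space $S_-$''. Since each $X$-line is a $1$-line (Lemma~\ref{2:Xlines}), a plane-pencil cannot be the full $\rho$-image of a $\Theta$-member (a $1$-line lies in only one $\theta$; but one checks a plane-pencil of lines through $x$ does not come from a single $\theta$ — actually I expect the relevant statement is simply that the plane-pencils are not maximal of the right type, or that only point-stars occur as $\rho(X(\theta))$), whereas a point-star $Y'$ is $\rho(X(\theta_{Y'}))$ for the unique $\theta_{Y'}\in\Theta$ with $Y(\theta_{Y'})=Y'$ given by Corollary~\ref{2:r'th}. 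I would argue: take a line $L$ in $S$; by Lemma~\ref{2:rhoXlines}$(i)$ there is a unique $\theta\in\Theta$ with $\rho(X(\theta))\supseteq L$, and $\rho(X(\theta))$ is a maximal singular $r'$-space (Lemma~\ref{1:inj}$(iii)$ applied in this setting, i.e.\ Lemma~\ref{proprhochi}$(iii)$); since through a line of $\mathcal{G}_{r'+2,2}(\K)$ there is a unique maximal singular subspace of each family and the point-star one is the relevant one, $S=\rho(X(\theta))$; and $\rho^{-1}(S)=X(\theta)$ because $\theta$ contains $\<x,Y_x\>=\rho^{-1}(\rho(x))$ for each of its $X$-points by Lemma~\ref{1:uniqueSS}/Lemma~\ref{lineSS}. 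For $(ii)$: the symps of the parapolar space $\mathcal{G}_{r'+2,2}(\K)$ are the $\mathcal{G}_{4,2}(\K)$'s — Klein quadrics — sitting between a point $p$ and a plane $\pi\ni p$ of the underlying $\mathbb{P}^{r'+1}(\K)=Y$; dually (since $Y$ is self-dual and $\dim Y=r'+1$) these correspond to $(r'-3)$-spaces $V=Y(\xi)$ of $Y$, and under $\chi$ the symp of $\mathcal{G}$ through $\rho(x_1),\rho(x_2)$ is $\rho(X(\xi))$ with $\xi=[x_1,x_2]$, $Y(\xi)=Y_{x_1}\cap Y_{x_2}=V$ (Lemma~\ref{2:rel}$(iii)$, Lemma~\ref{2:welldef}$(i)$). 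Then $\rho^{-1}(\rho(X(\xi)))=X_V$ is Lemma~\ref{2:welldef}$(iii)$ (just as $\rho^{-1}(G)=X_V$ was handled in Proposition~\ref{1':projsegre}$(iii)$), and conversely every $(r'-3)$-space $V$ of $Y$ arises as such a $Y(\xi)$ by Lemma~\ref{2:r'sp} applied twice to produce $x_1,x_2$ with $Y_{x_1}\cap Y_{x_2}=V$ and then $[x_1,x_2]\in\Xi$ by Lemma~\ref{2:rel}$(iii)$.

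The main obstacle I anticipate is purely bookkeeping-flavoured but genuine: correctly matching the \emph{two} systems of maximal singular subspaces of $\mathcal{G}_{r'+2,2}(\K)$ against the geometric data, and verifying that it is the point-stars (not the plane-pencils) that are realised as $\rho(X(\theta))$, $\theta\in\Theta$ — this is where the ``$1$-line'' hypothesis (Lemma~\ref{2:Xlines}, Corollary~\ref{noXplane}) is doing real work and must be invoked carefully, exactly as the distinction between $\Theta_1,\Theta_2$ was the delicate point in the Segre case. A secondary subtlety is that, unlike the Segre case where $\rho(X)$ was a \emph{direct product} and one could coordinatise points of $\rho(X)$ by pairs, here $\rho(X)$ has the parapolar structure of a line Grassmannian and I must phrase the arguments (collinearity, convex closures giving symps) in parapolar-space language rather than product language; but all the needed inputs — Lemmas~\ref{2:rhoXlines}, \ref{2:welldef}, \ref{2:r'sp}, Corollaries~\ref{2:rho}, \ref{2:r'th} — are already in place, so this should go through by transcribing the proof of Proposition~\ref{1':projsegre} line by line with ``grid $\to$ symp'' and ``Segre $\to$ line Grassmannian''.
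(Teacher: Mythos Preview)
Your proposal is correct and matches the paper's proof almost verbatim: establish that $\chi$ is an isomorphism $(\rho(X),\rho(\mathsf{L}))\to(\mathsf{P},\mathsf{B})_Y$ via Corollary~\ref{2:rho}, Lemma~\ref{2:r'sp}, and Lemma~\ref{2:rhoXlines}$(iii)$, then invoke Lemma~\ref{2:Ystructuur} and Fact~\ref{factuniemb}; for $(i)$ pick a line $L\subseteq S$, apply Lemma~\ref{2:rhoXlines}$(i)$ to get the unique $\theta$, and use that there is only one $r'$-space of $\mathcal{G}_{r'+2,2}(\K)$ through $L$; for $(ii)$ lift two non-collinear points of $Q$, apply Lemma~\ref{2:rel}$(iii)$ to get $\xi\in\Xi$, and use Lemma~\ref{2:welldef} together with the fact that symps are determined by any pair of non-collinear points.

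Your anticipated ``main obstacle'' is, however, a non-issue and you are over-preparing for it. In $\mathcal{G}_{r'+2,2}(\K)$ the two families of maximal singular subspaces have dimensions $r'$ (point-stars) and $2$ (plane-pencils); since $r'>r=2$ these are distinct, and the statement of $(i)$ asks only about \emph{$r'$-spaces}, so the point-stars are singled out by dimension alone. The paper simply writes ``the properties of the line Grassmannian $\mathcal{G}_{r'+2,2}(\K)$ imply that there is a unique $r'$-space through $L$'' and moves on --- no appeal to the $1$-line hypothesis or Corollary~\ref{noXplane} is needed at this step.
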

\begin{proof}
 We  claim that $\chi$ induces an isomorphism between the abstract point-line geometries $(\rho(X),\rho(\mathsf{L}))$ and $(\mathsf{P},\mathsf{B})_Y$. Indeed, the fact that $\chi: \rho(X) \rightarrow \mathsf{P}: x \mapsto \chi(x)=Y_x$ is a bijection between $\rho(X)$ and $\mathsf{P}$ follows immediately from Corollary~\ref{2:rho} (injectivity) and Lemma~\ref{2:r'sp} (surjectivity). The fact that a member of $\rho(\mathsf{L})$ is mapped by $\chi$ to a member of $\mathsf{B}$ follows from Lemma~\ref{2:rhoXlines}$(iii)$. This shows the claim. By Fact~\ref{factuniemb} and Lemma~\ref{2:Ystructuur},  $(\rho(X),\rho(\mathsf{L}))\subseteq F$ arises as an injective projection of the line Grassmannian $\mathcal{G}_{r'+2,2}(\K)$.

$(i)$ Let $S$ be a maximal singular subspace of $\rho(X)$ of dimension $r'$ and take a line $L$ in $S$. By Lemma~\ref{2:rhoXlines}$(i)$, there is a unique $\theta\in \Theta$ containing $L$. The properties of the line Grassmannian $\mathcal{G}_{r'+2,2}(\K)$ imply that there is a unique $r'$-space through $L$; as such, the $r'$-space $\rho(X(\theta))$ coincides with $S$.

$(ii)$ Lastly, let $Q$ be any symp of $\mathcal{G}$ (so $Q$ is a Klein quadric since $r=2$).  By Lemma~\ref{2:welldef}$(iii)$, it suffices to show that $Q$  coincides with $\rho(X(\xi))$ for some $\xi \in \Xi$.  Let $p_1$ and $p_2$ be non-collinear points of $Q$ and take points $x_1,x_2 \in X$ with $\rho(x_i)=p_i$. Then, since $p_1$ and $p_2$ are distinct and non-collinear,  $\xi:=[x_1,x_2]\in \Xi$. Now, $\rho(X(\xi))$ is a Klein quadric in $\rho(X)$ containing the points $p_1$ and $p_2$, and since two non-collinear points determine a unique symp in $\rho(X)$, we obtain $\rho(X(\xi))=Q$.  \end{proof}

\begin{lemma}\label{2:FX}
The set $X$ contains a legal projection $\Omega$ of $\mathcal{G}_{r'+2,2}(\K)$ which is such that: 
\begin{compactenum}[$(i)$]
\item $\bigsqcup_{x\in \Omega} \<{x},Y_{{x}}\>\setminus Y_x=X$ and hence, putting $F^*=\<\Omega\>$, $\<F^*,Y\>=\mathbb{P}^N(\K)$;
\item Re-choosing the subspace $F$ so that it is inside $F^*$, the projection $\rho^*$ of $F^* \cap X$ from $F^*\cap Y$ onto $F$ is the restriction of $\rho$ to $F^* \cap X$;
\item  Containment gives a bijection between the singular $r'$-spaces of $\Omega$ and the set $\Theta$;
\item If $r' =4$, then $F^* \cap Y = \emptyset$.
\end{compactenum}

 \end{lemma}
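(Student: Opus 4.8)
This Lemma~\ref{2:FX} is the direct analogue of Propositions~\ref{1':FX} and~\ref{1:FX} for the dual line Grassmannian case, so the plan is to mirror that construction, replacing the Segre variety by the line Grassmannian $\mathcal{G}_{r'+2,2}(\K)$ (equivalently, $\mathsf{A}_{r'+1,2}(\K)$) and using the already-established Proposition~\ref{2:projgras} in place of Proposition~\ref{1':projsegre}. The key point is to build a legal copy $\Omega$ of the line Grassmannian \emph{inside} $X$ (where (S2) is known to hold), so that afterwards all the properties are inherited from $\Omega\subseteq X$ rather than from the mere projection $\rho(X)$.

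First I would set up coordinates on $Y$: since $\dim Y = r'+1$ by Lemma~\ref{2:Y} and Corollary~\ref{2:kis1}, the residue $(\mathsf{P},\mathsf{B})_Y$ is the line Grassmannian of a projective space $P\cong\mathbb{P}^{r'+1}(\K)$ whose ``lines'' are the $(r'-1)$-subspaces of $Y$. I would fix a full flag / basis of $P$ so that I get a basis of this line Grassmannian, i.e.\ a collection of $(r'-1)$-subspaces $H_\alpha$ of $Y$ whose images under $\chi^{-1}$ span, and such that the incidence structure is controlled. By Lemma~\ref{2:r'sp}, each such $H_\alpha$ equals $Y_{x_\alpha}$ for some $x_\alpha\in X$, and by Corollary~\ref{2:r'th} each $r'$-subspace $Y'$ of $Y$ determines a unique $\theta_{Y'}\in\Theta$ with $Y(\theta_{Y'})=Y'$, containing every $x\in X$ with $Y_x\subseteq Y'$. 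The maximal singular $r'$-spaces of $\Omega$ will be built inside these $\theta_{Y'}$'s. Then, exactly as in Claim~1 of the proof of Proposition~\ref{1':FX}, I would consecutively choose $X$-points (ordered lexicographically on the relevant index set) inside the intersections $\theta_{Y'}\cap\theta_{Y''}$, imposing pairwise collinearity within each singular subspace being built; the dimension count — using that each $x$ is collinear to a hyperplane of each $\theta$ containing $Y_x$, and that the chosen $Y$-flags are in general position — guarantees at each step that the relevant intersection subspace still meets $X$. This produces mutually intersecting $X$-spaces $S_\beta$ of dimension $r'$, and an argument like Claim~2 of that proof shows that through each point of a fixed ``starting'' $S_0$ there is a unique $r'$-dimensional $X$-space meeting all the others. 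Defining $\Omega$ as the union of all these $r'$-spaces and $F^*:=\langle\Omega\rangle$, Fact~\ref{factuniemb} identifies $\Omega$ as an injective projection of $\mathcal{G}_{r'+2,2}(\K)$, and (S2) forces it to be legal because each symp of $\Omega$ lies in a unique member of $\Xi$.

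With $\Omega$ in hand, part $(i)$ follows as in the earlier propositions: given $x\in X$, the unique $r'$-space $S_0$ in $\Theta$ with $Y(\theta_{S_0})=Y$ (up to the chosen starting flag) contains a unique point $x'$ with $Y_{x'}=Y_x$, hence $x\in\langle x',Y_x\rangle$, and $\langle x,Y_x\rangle\cap\Omega=\{x'\}$; so $X=\bigsqcup_{x\in\Omega}\langle x,Y_x\rangle\setminus Y_x$ and $\langle F^*,Y\rangle=\mathbb{P}^N(\K)$. Part $(ii)$ is the same elementary computation as in Proposition~\ref{1':FX}$(ii)$: for $x\in F^*\cap X$, since $x\in\langle\rho^*(x),F^*\cap Y\rangle\subseteq\langle\rho^*(x),Y\rangle$ we get $\rho^*(x)\in\langle x,Y\rangle\cap F=\{\rho(x)\}$. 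Part $(iii)$ follows from $(ii)$ together with Proposition~\ref{2:projgras}$(i)$ and $\Omega\subseteq X$: containment gives the bijection between singular $r'$-spaces of $\Omega$ and $\Theta$. For part $(iv)$, when $r'=4$ the variety $\mathcal{G}_{6,2}(\K)$ admits no proper legal projection by Proposition~\ref{noproj}$(ii)$, so $\Omega$ is a genuine $\mathcal{G}_{6,2}(\K)$ with $\dim\langle\Omega\rangle=14$; if $F^*\cap Y$ contained a point $y$, then $y$ would lie in the vertex of some $\xi\in\Xi$ whose image in $\Omega$ is a Klein quadric $A'\cong\mathsf{A}_{4,2}(\K)$ (inside some $\mathsf{A}_{5,2}(\K)$-part $A$ of $\Omega$), and the $10$-space $\langle A',y\rangle$ would meet $A\cong\mathsf{A}_{5,2}(\K)$ in exactly $A'$; Lemma~\ref{a52p} then shows $A'\subsetneq\langle A',y\rangle\cap A$, contradicting (S2) (just as in the proof of Lemma~\ref{reshalf}). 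I expect the main obstacle to be the bookkeeping in the inductive point-selection step — verifying that the ``general position'' of the chosen $Y$-flags really does keep the successive intersection subspaces large enough to contain $X$-points, which requires tracking the hyperplanes $H_{\cdot}\cap Y$ carefully as in Claim~1 of Proposition~\ref{1':FX}, now in the line-Grassmannian incidence pattern rather than the Segre one.
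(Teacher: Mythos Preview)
Your overall strategy matches the paper's, but there is a genuine structural gap in the construction of $\Omega$. You invoke ``an argument like Claim~2'' of Proposition~\ref{1':FX}, asserting that through each point of a fixed $r'$-space $S_0$ there is a \emph{unique} $r'$-dimensional $X$-space meeting all the others. This is false for line Grassmannians: in $\mathcal{G}_{r'+2,2}(\K)$ the $r'$-spaces correspond to points of $\mathbb{P}^{r'+1}$, and through a given point of $S_0$ (corresponding to a line $L$ through the base point $p_0$) there is one $r'$-space for \emph{each} point of $L$, not a unique one. The Segre variety is a direct product, so Claim~2 there really does produce all of $\Omega$ from one fibre; the line Grassmannian is not, and the construction has to be different. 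The paper handles this by proving a separate claim: after selecting the base points $x_{i,j}$ and the initial $r'$-spaces $R'_0,\ldots,R'_{r'+1}$, one shows directly that \emph{every} $\theta\in\Theta$ contains an $r'$-dimensional $X$-space $R'_\theta\subseteq F^*$, via a connectivity argument through the $(r'-1)$-subspaces $H_{i,j}$ of $Y$ (using Lemma~\ref{2:rel} and (S2) to verify collinearities inside well-chosen $\xi\in\Xi$), and then sets $\Omega:=\bigcup_{\theta\in\Theta}R'_\theta$. You would need this step or an equivalent one.

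Your sketch of $(iv)$ also has a confusion: you write that $y$ lies in the vertex of some $\xi\in\Xi$ whose image is ``a Klein quadric $A'\cong\mathsf{A}_{4,2}(\K)$'', but a Klein quadric is $\mathsf{A}_{3,2}(\K)$ and is the image of a single symp, which is too small for Lemma~\ref{a52p}. The correct object is $\Omega':=\{x\in\Omega\mid y\in Y_x\}$; via the duality $\chi$ this is a full $\mathcal{G}_{5,2}(\K)=\mathsf{A}_{4,2}(\K)$ inside $\Omega$, not a single symp. One must also argue (and the paper does) that $\langle\Omega'\rangle\cap X=\Omega'$ and that $y\notin\langle\Omega'\rangle$ before Lemma~\ref{a52p} applies; you assert the analogous statement without justification.
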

\begin{proof}
By Proposition~\ref{2:projgras},  $\rho(X)$ is the point set of an injective projection of the line Grassmannian $\mathcal{G}_{r'+2,2}(\K)$, and its set of singular $r'$-spaces is in $1-1$-correspondence to the members of $\Theta$. We  construct a legal projection of $\mathcal{G}_{r'+2,2}(\K)$ \emph{inside} $X$ (where we know that (S2) holds), by choosing a set of points $X'\subseteq X$ such that $\{Y_x \mid x \in X'\}$ is a basis of $(\mathsf{P},\mathsf{B})_Y$ (cf.\ Definition~\ref{2:PB}). 

To that end, let $\mathcal{B}:=\{p_0,...,p_{r'+1}\}$ be the set of points of a basis of $Y$. Put $A=\{(i,j) \mid 0 \leq i, j \leq r'+1 \text{ and } i\neq j\}$. For each pair $(i,j) \in A$, let $H_{ij}$ be the $(r'-1)$-space  generated by the points of $\mathcal{B}\setminus\{p_i,p_j\}$. By Lemma~\ref{2:r'sp} and Corollary~\ref{2:rho}, each $H_{i,j} \in \mathcal{H}$ corresponds to a unique $r'$-space $\overline{H}_{i,j}:=\<x'_{i,j},H_{i'j}\>$ whose $X$-points are precisely the set of $X$-points collinear to $H_{i,j}$. 
For each $i\in\{0,...,r'+1\}$, Lemma~\ref{2:r'th} yields a unique $\theta_i \in \Theta$ with  $Y(\theta_i)=\<\mathcal{B}\setminus\{p_i\}\>$, and by the same lemma, $\theta_i$ contains $\overline{H}_{i,j}$ for all $j\in \{0,...,r'+1\}$.
Just like in the proof of Lemma~\ref{1':FX} (more precisely, Claim 1), we can consecutively choose points $X$-points $x_{i,j} \in \overline{H}_{i,j}$, using the lexicographic order on the pairs in $A$, in such a way that  $x_{i,j}$ is collinear to $x_{i,j'}$ for each $j' \in \{0,...,j-1\}$ and to $x_{i',j}$ with $i' \in \{0,...,i-1\}$. 

For each $i\in\{0,...,r'+1\}$, we define $R'_i:=\<x_{i,j}\mid 0 \leq j \leq r' \>$. By construction, $R'_i$ is a singular subspace of $\theta_i$. Arguments analogous to those in the proof of Lemma~\ref{1':FX}, show that $R'_i$ is an $X$-space of dimension $r'$.
Put $F^*=\<R'_0,...,R'_{r'}\>$. 

\textit{Claim: Each $\theta\in \Theta$ contains an $r'$-space $R'_\theta$ in $F^* \cap X$.}\\ Recall that, for each $r'$-space $Y'$ in $Y$, there is a unique member $\theta_{Y'}$ of $\Theta$ with $Y(\theta_{Y'})=Y'$ (cf.\ Corollary~\ref{2:r'th}). Firstly, let $H$ be any $r'$-space through $H_{0,1}$. Then $H$ meets the line $\<p_0,p_1\>$ in a point $p$. If $p=p_i$, for $i\in\{0,1\}$, then $\theta_H=\theta_i$ and $R'_i \subseteq F^*\cap X$ by definition of $F^*$; so suppose $p \notin \{p_0,p_1\}$.  Now, for each $j \in \{2,...,r'+1\}$,  Lemma~\ref{2:rhoXlines} implies that the $X$-line $\<x_{0,j},x_{1,j}\>$ contains a unique point $q_j$ which is collinear to the $(r'-1)$-space generated by $p$ and $\<\mathcal{B}\setminus\{p_0,p_1,p_j\}\>$; and $q_j \in \theta_H$ by Corollary~\ref{2:r'th}. We show that $\<x_{0,1},q_2,...,q_{r'+1}\>$ is a singular $r'$-space in $X$ (clearly, it is contained in $F^* \cap \theta_H$). 

Take $j,j' \in \{2,...,r'+1\}$ with $j\neq j'$. Then $\dim (H_{0,1} \cap H_{j,j'})=r'-3$, so Lemma~\ref{2:rel} implies that $\xi_{j,j'}:=[x_{0,1},x_{j,j'}] \in \Xi$. Clearly, the points $x_{e,f}$ with $e\in\{0,1\}$ and $f\in\{j,j'\}$  belong to $x_{0,1}^\perp \cap x_{b,c}^\perp\subseteq X(\xi_{j,j'})$.  It follows that $x_{0,1},q_j,q_{j'} \subseteq \xi_{j,j'} \cap \theta_H$, so by (S2) these points belong to a singular subspace. The fact that $\<x_{0,1},q_2,...,q_{r'+1}\>$  is an $r'$-space in $X$ then follows as it is a set of pairwise collinear points in $\theta_H$ which is, by construction, collinear to no point of $Y$. We conclude that the claim holds for each subspace through one of the $(r'-1)$-spaces $H_{i,j}$ with $(i,j)\in A$, and we can repeat this for each $(r'-1)$-space that arises as the intersection of any two such $r'$-spaces. Continuing like this, the claim follows. 

We define $\Omega$ as $\bigcup_{\theta\in \Theta} R'_\theta$. By the previous claim , $\Omega \subseteq F^* \cap X$, and as $R'_i=R'_{\theta_i}$, $R'_i  \subseteq \Omega$ for $i\in\{0,...,r'+1\}$,  so $\<\Omega\>= F^*$. We observe that points $x_1,x_2 \in \Omega$ are on an $X$-line $L$ if $\dim(Y_{x_1}\cap Y_{x_2})=r'-2$ and that $L \subseteq \Omega$: Indeed, if $\dim(Y_{x_1}\cap Y_{x_2})=r'-2$, then there is a member $\theta\in \Theta$ containing $x_1,x_2$ by Lemma~\ref{2:rel}, and hence $x_1,x_2 \in L \subseteq R'_\theta\subseteq \Omega$. In particular,  it follows that $\Omega$ is a subspace with respect to $X$-lines. 

$(i)$ Let $x\in X$ be arbitrary and take any $\theta \in \Theta$ through $x$. Then $\theta$ contains an $X$-space  $R'_\theta$ of dimension $r'$ in $\Omega$ by definition of the latter. Clearly, $R'_\theta$ contains a unique point $x'$ with $Y_x=Y_{x'}$. As such, $x \in \<x',Y_x\>$ indeed and $\<x',Y_x\>\cap \Omega=\{x'\}$. As $x\in X$ was arbitrary, we get $\bigsqcup_{x\in \Omega} \<x,Y_x\>\setminus Y_x =X$ indeed. Consequently,  $\<F^*,Y\>=\mathbb{P}^N(\K)$. 

$(ii)$, $(iii)$ Same as in the proof of Proposition~\ref{1':FX}.

$(iv)$ Finally, suppose  $r'=4$ (in which case $\Omega$ is isomorphic to  $\mathcal{G}_{6,2}(\K)$ by Proposition~\ref{noproj} and hence $\dim F^*=14$)  and suppose for a contradiction that $F^* \cap Y$ contains a point $y$. Set $\Omega':=\{x\in \Omega \mid y \in Y_x\}$. Then $\Omega'$ is isomorphic to $\mathcal{G}_{5,2}(\K)$ (cf.\ Lemma~\ref{2:Ystructuur}); in particular $\dim \<\Omega'\>=9$. We first claim that $\<\Omega'\>\cap X=\Omega'$. Indeed, if $x\in \<\Omega'\>\setminus \Omega'$, then it is a property of $\mathcal{G}_{5,2}(\K)$ that $x$ lies on a line $\<x_1,x_2\>$ with $x_1,x_2$ points of $\Omega'$ not on an $X$-line. By the above, $\dim (Y_{x_1} \cap Y_{x_2})=r'-3$ and hence, Lemma~\ref{2:rel} implies that $[x_1,x_2]\in \Xi$. But then $x\in \<x_1,x_2\>\setminus\{x_1,x_2\}$ contradicts the structure of $X(\xi)$. The claim follows. In particular, $\dim(\<y,\Omega'\>=10$, for $y\in \<\Omega'\>$ yields a point $x\in X$ (on a line $\<x',y\>$ with $x'\in\Omega'$) in $\<\Omega'\>\setminus\Omega'$, contradicting the previous claim. By Lemma~\ref{a52p}, the subspace $\<y,\Omega'\>$ contains a point $p$ of $\Omega \setminus\Omega'$. Let $p'$ be the unique point of $\<y,p\> \cap \<\Omega'\>$. Note that $p' \notin \Omega'$, for $\Omega$ contains no two points that lie on a singular line with a point in $Y$ by $(i)$. As in the previous claim, $p' \in [x_1,x_2]\in \Xi$ for two non-collinear points $x_1,x_2\in\Omega'$. But then $p\in[x_1,x_2]$ is a contradiction to the prescribed structure of $X([x_1,x_2])$.
\end{proof}

Putting everything together, we obtain the following rather general classification result. 

\begin{theorem}\label{2:projuni}
Let $(X,Z,\Xi,\Theta)$ be a duo-symplectic pre-DSV with parameters $(2,v,r',v')$ containing an $X$-line through which there is precisely one member of $\Theta$. Then:
\begin{compactenum}[$(i)$]
\item Up to projection from a $v'$-space $V \subseteq Y$ collinear to all points of $X$, we obtain that $X$ is the point set of a mutant of the dual line Grassmannian $\mathcal{DG}_{r'+2,2}(\K)$ with $Z$ as subspace at infinity and $\Xi\cup \Theta$ as its symps;
\item if additionally $(X,Z,\Xi,\Theta)$ satisfies (S3), then $r'=4$ and $X$ is projectively unique.
\end{compactenum}
\end{theorem}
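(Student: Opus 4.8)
The plan is to mirror exactly the structure used for the dual Segre and half dual Segre cases (Theorems~\ref{1':projuni} and~\ref{1:projuni}), since all the hard structural work has already been packaged into Propositions~\ref{2:projgras} and~\ref{2:FX}, and Lemma~\ref{resY}. For part $(i)$, first I would invoke Lemma~\ref{2:v'=-1}-type reasoning (here Lemmas~\ref{1:v'=-1}, \ref{projV} together with Lemma~\ref{2:Xlines} and the fact that $r=2$) to produce a $v'$-space $V\subseteq Y$ collinear to all points of $X$, and then use Lemma~\ref{resY} to project from $V$, reducing to the case $v'=-1$. With $v'=-1$ in force, Lemma~\ref{2:FX} gives a legal projection $\Omega$ of $\mathcal{G}_{r'+2,2}(\K)$ sitting inside $X$ with $X=\bigsqcup_{x\in\Omega}\langle x,Y_x\rangle\setminus Y_x$, $F^*=\langle\Omega\rangle$ and $\langle F^*,Y\rangle=\mathbb{P}^N(\K)$. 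So it remains only to check that the connection map $\chi:\Omega\rightarrow Y:x\mapsto Y_x$ satisfies the defining properties of a dual line Grassmannian as set up in Subsection~\ref{DLG}: namely that, identifying $\mathcal{G}:=\Omega$ with the Plücker image of a projective $(r'+1)$-space $\mathbb{P}$ and $Y$ with an $(r'+1)$-space ($\dim Y=r'+1$ by Corollary~\ref{2:kis1} combined with Lemma~\ref{2:Y}), $\chi$ restricted to the Plücker preimages of lines of $\mathbb{P}$ is a linear duality $\chi':\mathbb{P}\to Y$. This is read off from Proposition~\ref{2:projgras}: the claim $\chi:(\rho(X),\rho(\mathsf{L}))\to(\mathsf{P},\mathsf{B})_Y$ is an isomorphism, and $(\mathsf{P},\mathsf{B})_Y\cong\mathcal{G}_{r'+2,2}(\K)$ via Lemma~\ref{2:Ystructuur}, says precisely that the correspondence $x\mapsto Y_x$ is, on each singular $r'$-space of $\mathcal{G}$, the collinearity duality between that $r'$-space and $Y$ (as one sees inside each $\theta\in\Theta$, using Corollary~\ref{2:r'th}), and that for $x\notin S$ the value $Y_x$ equals $Y_{x_S}$ for the unique $x_S\in S$ collinear to $x$. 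Finally, that the members of $\Xi\cup\Theta$ are the symps (convex closures of pairs of non-collinear points inside $X$) follows from (S1) and Lemma~\ref{convexclosure}, exactly as in the proof of Theorem~\ref{1':projuni}$(i)$.

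For part $(ii)$, I would assume (S3) and run the tangent-space dimension count verbatim from the proof of Theorem~\ref{1:projuni}$(ii)$. Pick $F\subseteq F^*$ complementary to $Y$; then $\rho(X)\subseteq F$ is an injective projection of $\mathcal{G}_{r'+2,2}(\K)$. For $x\in X$, the tangent space $T^F_{\rho(x)}$ to $\rho(X)$ at $\rho(x)$ has dimension $2r'-1$ (the tangent space of the line Grassmannian $\mathsf{A}_{r'+1,2}$ at a point has projective dimension $2(r'-1)+1=2r'-1$), being spanned by the maximal singular $r'$-spaces through $\rho(x)$; and for $\xi\in\Xi$ with $\rho(x)\in\rho(X(\xi))$, $T_x(\xi)=\langle Y(\xi),T^F_{\rho(x)}(\xi)\rangle$, so (S3) becomes $Y_x=\langle Y(\xi_1),Y(\xi_2)\rangle$ together with $T^F_{\rho(x)}=\langle T^F_{\rho(x)}(\xi_1),T^F_{\rho(x)}(\xi_2)\rangle$. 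Since $r=2$, each $T^F_{\rho(x)}(\xi_i)$ is the tangent plane to a Klein quadric $\rho(X(\xi_i))$, i.e.\ a $5$-space; two such generate at most an $11$-space, forcing $2r'-1\le 11$, so $r'\le 6$; a closer look (the two tangent $5$-spaces meet along the pencil of lines through $\rho(x)$ inside the common $r'$-space(s), cutting the dimension of their span) pins $r'$ down to exactly $4$, the smallest value $>r=2$ compatible with the Grassmannian occurring as a residue-type object. With $r'=4$, Proposition~\ref{noproj} says $\mathcal{G}_{6,2}(\K)$ admits no proper legal projection, so $\Omega\cong\mathcal{G}_{6,2}(\K)$, and Lemma~\ref{2:FX}$(iv)$ gives $F^*\cap Y=\emptyset$, i.e.\ $F=F^*$; then $Y$, $F$ and $\Omega\subseteq F$ are projectively unique, $\dim Y=5$ with $Y_x$ a hyperplane-complement structure forced by $v=r'-3=1$, and the duality $\chi'$ is unique up to a projectivity of $Y$. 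Hence $X$ is projectively unique.

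The step I expect to be the main obstacle is the fine dimension bookkeeping in part $(ii)$ that upgrades $r'\le 6$ to $r'=4$: one has to argue carefully that the two tangent spaces $T^F_{\rho(x)}(\xi_1)$ and $T^F_{\rho(x)}(\xi_2)$ overlap in at least the planar line-pencil at $\rho(x)$ inside each common maximal singular subspace, so that their span falls short of the naive $5+5+1=11$. This requires knowing how two Klein-quadric symps through a common point sit inside the line Grassmannian (they share either a maximal singular $4$-space or a plane, and in the relevant configuration the latter), and translating that into the precise codimension of $\langle T^F_{\rho(x)}(\xi_1),T^F_{\rho(x)}(\xi_2)\rangle$; a useful sanity check is that $r'=4$ makes everything consistent with $\dim Y=r'+1=5$, $v=r'-3=1$ and the known geometry of $\mathcal{V}_2(\K,\mathbb{S})$ from Proposition~\ref{vervar}. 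Everything else is a faithful transcription of the two earlier conclusion arguments, so I would keep the exposition brief, citing Theorems~\ref{1':projuni} and~\ref{1:projuni} for the parts that are literally identical.
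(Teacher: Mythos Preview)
Your plan for part $(i)$ is correct and matches the paper's proof: project away the common $v'$-vertex via Lemmas~\ref{1:v'=-1}, \ref{projV}, \ref{resY}; invoke Lemma~\ref{2:FX} for $\Omega$; and verify that $\chi$ realises the dual line Grassmannian construction. The paper spells out the linearity of the induced duality $\chi':\mathbb{P}\to Y$ via an explicit claim (using the collinearity relation inside a fixed $\theta\in\Theta$ to see linearity on each $R'_\theta$), but this is exactly what your ``read off from Proposition~\ref{2:projgras}'' amounts to.

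Part $(ii)$ contains a genuine error and a missing ingredient. First, the tangent space $T^F_{\rho(x)}(\xi_i)$ to the Klein quadric $\rho(X(\xi_i))\subseteq\mathbb{P}^5$ at a point is a \emph{$4$-space} (a tangent hyperplane), not a $5$-space. Two $4$-spaces through $\rho(x)$ span at most an $8$-space, so $2r'-1\le 8$ gives $r'\le 4$ immediately; there is no need for the overlap analysis you flag as the main obstacle, and indeed that analysis as you sketch it is not how the bound is obtained. Second, the upper bound alone leaves $r'\in\{3,4\}$, and you have not explained how to exclude $r'=3$. The paper uses the \emph{other} half of the (S3) decomposition, namely $Y_x=\langle Y(\xi_1),Y(\xi_2)\rangle$: since $\dim Y(\xi_i)=v=r'-3$ and $\dim Y_x=r'-1$, one needs $r'-1\le 2(r'-3)+1$, i.e.\ $r'\ge 4$. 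Combining both constraints forces $r'=4$. Once you correct the tangent-space dimension and add this $Y$-side inequality, the rest of your argument (no legal projections for $\mathcal{G}_{6,2}(\K)$, $F^*\cap Y=\emptyset$, projective uniqueness) goes through exactly as in the paper.
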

\begin{proof}
$(i)$ Take any $x\in X$. By Lemma~\ref{1:v'=-1}, $Y_x$ contains a $v'$-space $V$ collinear to all points of $X_x$, and by Lemma~\ref{projV}, $V$ is collinear to all points of $X$ (note that $V$ is the common vertex of all members of $\Theta$). Lemma~\ref{resY} allows us to project from $V$, so henceforth we assume $v'=-1$.

 By Proposition~\ref{2:FX}, $X$ contains a legal projection $\Omega$ of $\mathcal{G}_{r'+2,2}(\K)$, and $X=\bigsqcup_{x\in \Omega} \<x,Y_x\>\setminus Y_x$. Therefore it suffices to show that the map $\chi:\Omega \rightarrow Y: x \mapsto Y_x$ satisfies the properties mentioned in the definition of the dual Line Grassmannians varieties (cf.\ Subsection~\ref{HDS}). 

Let $\mathbb{P}$ be a projective space of dimension $r'+1$, such that $\Omega$ is the image of $\mathbb{P}$ under the Pl\"ucker map $\mathsf{pl}$ (cf.\ Subsection~\ref{DLG}). Consider the map $\chi':\mathbb{P}\rightarrow Y$, taking a line $L\in \mathbb{P}$ to $\chi(\mathsf{pl}^{-1}(L))=Y_{\mathsf{pl}^{-1}(x)}$. 

\textit{Claim: $\chi'$ induces a linear duality between $\mathbb{P}$ and $Y$.}\\ Let $p$ be any point of $\mathbb{P}$. Let $x_L$ denote the point $\mathsf{pl}^{-1}(L)$ of $\Omega$. Then $\{ x_L \mid L \text{ line of } \mathbb{P} \text{ with } p\in L\}$ gives the points of a singular $r'$-space $R'_p$ of $\Omega$, and by Lemma~\ref{2:FX}$(iii)$, there is a unique member $\theta_p \in \Theta$ containing $R'_p$. Clearly, the set of $(r'-1)$-spaces in $Y(\theta_p)$ is precisely the set $\{Y_x \mid x\in R'_p\}$, so $\chi'(p)=Y(\theta_p)$. Since $\mathsf{pl}$ gives a bijection between the points of $\mathbb{P}$ and the $r'$-spaces of $\Omega$, and by Lemma~\ref{2:FX}$(iii)$, $\chi'$ is a bijection between the point set of $\mathbb{P}$ and the hyperplanes  of $Y$  (i.e., the point set of the dual of $Y$). 
Next, let $L$ be a line of $\mathbb{P}$ and recall that $\chi'(L)=Y_{x_L}$. Take an auxiliary point $r \in \mathbb{P}\setminus L$. Let $I$ be an index set such that $\{p_i\mid i\in I\}$ gives the set of points on $L$. For each $i\in I$, let $L_i$ denote the line $\<p_i,r\>$. For ease of notation, put $x_i:=x_{L_i}$.  Note that the image $\chi'(p_i)=Y(\theta_{p_i})$ is also given by $\<Y_{x_L},Y_{x_i}\>$ and that $Y_{x_i}$ contains the $(r'-2)$-space $Y_x \cap Y(\theta_r)$. In particular, $Y(\theta_{p_i})$ contains $Y_{x_L}$ for each $i\in I$. This already shows that $\chi'$ is a collineation between $\mathbb{P}$ and the dual of $Y$. To see that $\chi'$ is linear, we consider $\theta_r$. Indeed, in $\theta_r$, the map $x_i \mapsto Y_{x_i}$ is given by the collinearity relation in the quadric $XY(\theta_r)$, and restricted to $R'_r$, this is a linear collineation between $R'_r$ and the dual of $Y(\theta_r)$. Since $\chi'(p_i)=\<Y_{x_L},Y_{x_i}\>$, the claim follows.

 For each pair of non-collinear points  $p_1,p_2$ of $X$, (S1) and Lemma~\ref{convexclosure} imply that the unique symp $\zeta$  through $p_1,p_2$ (defined as their convex closure inside $X$) has $\zeta \cap X=X([p_1,p_2])$. The assertion follows.

$(ii)$ Recall that we assume that $F \subseteq F^*$ is complementary to $Y$ in $\mathbb{P}^N(\K)$.  By Proposition~\ref{2:projgras}, $\rho(X)\subseteq F$ is an injective projection of a Line Grassmannian $\mathcal{G}_{r'+2,2}(\K)$. Let $x\in X$ be arbitrary. We denote by  $T^{F}_{\rho(x)}$  the set of $\rho(X)$-lines in ${F}$ through $\rho(x)$ and  by $T^{F}_{\rho(x)}(\xi)$ the tangent space to $\rho(X(\xi))$ at $\rho(x)$ for some $\xi\in \Xi$ with $\rho(x)\in\rho(X(\xi))$. 

Axiom (S3) yields members $\xi_1,\xi_2 \in \Xi$ through $x$ such that $T_x$ is generated by $T_x(\xi_1)$ and $T_x(\xi_2)$.  Since for $i=1,2$, $T_x(\xi_i)=\<Y(\xi_i),T^{F}_{\rho(x)}(\xi_i)\>$, we obtain that $T_x=\<T_x(\xi_1),T_x(\xi_2)\>$ is equivalent with $Y_x=\<Y(\xi_1),Y(\xi_2)\>$ and  $T^{F}_{\rho(x)}=\<T^{F}_{\rho(x)}(\xi_1),T^{F}_{\rho(x)}(\xi_2)\>$. On the other hand, $\dim T^{F}_{\rho(x)}=2r'-1$ as $\Res_X(x)$ is isomorphic to $\mathcal{S}_{r'-1,1}(\K)$. Furthermore, since $r=2$,  $T^{F}_{\rho(x)}(\xi_1)$ and $T^{F}_{\rho(x)}(\xi_2)$ are just $4$-spaces, which generate at most an $8$-space in $F$, and so $2r'-1\leq 8$.  Recalling that  $r' > r \geq 2$ by assumption, we deduce that $r'\in\{3,4\}$. Since $v=r'-3$ and $\dim Y_x=r'-1$,  the requirement $Y_x=\<Y(\xi_1),Y(\xi_2)\>$ implies that $r'=4$ (and that $\xi_1$ and $\xi_2$ have disjoint vertices).

Since $r'=4$,  the variety $\mathcal{G}_{r'+2,2}(\K)$ does not admit legal projections (cf.\ Proposition~\ref{noproj}) and $F^* \cap Y = \emptyset$ by Proposition~\ref{2:FX}$(iv)$, i.e., $F=F^*$.
The following are projectively unique: $Y$ and $F$  in $\mathbb{P}^N(\K)$,   $\Omega$ in $F$. Moreover, the projectivity $\chi'$ between $\mathbb{P}$ and the dual of $Y$ is unique up to a projectivity of $Y$. We conclude that $X$ is projectively unique. 
\end{proof}

\subsection{Case 2: Duo-symplectic DSVs containing no $1$-lines}\label{cas2}

Let $(X,Z,\Xi,\Theta)$ be a duo-symplectic DSV with parameters $(r,v,r',v')$,  containing no $Y$-points collinear to all points of $X$, and containing no $1$-lines.

Note that we assume that (S3) holds.

\begin{prop}\label{r>2kanni}
There are no duo-symplectic dual split Veronese sets $(X,Z,\Xi,\Theta)$ with parameters $(r,v,r',v')$, where $r>1$, without $1$-lines.
\end{prop}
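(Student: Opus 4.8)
The plan is to derive a contradiction from the existence of such a DSV by climbing down to residues until we land in a situation already handled. Suppose $(X,Z,\Xi,\Theta)$ is a duo-symplectic DSV with parameters $(r,v,r',v')$, $r>1$, in which (by Lemma~\ref{2:uniSS}) every $X$-line lies in at least two members of $\Theta$. By Lemma~\ref{reshalf} we may assume $r\geq 3$ (the case $r=2$ is impossible under (S3)). First I would invoke the case distinction of Section~\ref{dist}: since there are no $1$-lines, we are in Case~2, and since $r>2$, we are in Case~2(b). Thus for any $X$-line $L$ the residue $\Res_X(L)$ is a mutant of a half dual Segre variety $\mathcal{HDS}_{r'-2,k_L}(\K)$, forcing $r=3$; and for $x\in L$, the point residue $\Res_X(x)$ falls into Case~1, i.e.\ is a mutant of $\mathcal{DG}_{r'+1,2}(\K)$ by Theorem~\ref{2:projuni}$(i)$ (with parameters $(2,v,r'-1,v')$, since a point residue drops all four parameters' first two indices appropriately — precisely, parameters $(r-1,v,r'-1,v')=(2,v,r'-1,v')$).

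The heart of the argument will then be a dimension count in $\Res_X(x)$ exactly parallel to the proof of Lemma~\ref{reshalf}. Since $\Res_X(x)$ is (up to projection from the common vertex of its $\Theta$-members) a dual line Grassmannian $\mathcal{DG}_{r'+1,2}(\K)$, Lemma~\ref{2:Y} (applied inside the residue) gives $\dim(Y_x)=(r'-1)+1=r'$, and Proposition~\ref{2:FX} produces a legal projection $\Omega_x$ of $\mathcal{G}_{r'+1,2}(\K)$ sitting inside $X_x$ with $\<\Omega_x,Y_x\>=H_x$. Now (S3) applied at $x$ yields $\xi_1,\xi_2\in\Xi$ through $x$ with $T_x=\<T_x(\xi_1),T_x(\xi_2)\>$; writing $T_x(\xi_i)=\<Y(\xi_i),T^F_{\rho(x)}(\xi_i)\>$ and using that $\rho(X(\xi_i))$ has rank $r=3$ inside $\rho(X)$ forces $\dim T^F_{\rho(x)}(\xi_i)$ to be bounded (it is the tangent space to a hyperbolic quadric of rank $3$, hence a $6$-space through a generator), so $\dim T^F_{\rho(x)}\leq 2\cdot 6=12$ and $\dim(Y_x)=\dim\<Y(\xi_1),Y(\xi_2)\>\leq 2v+1$; combined with $\dim Y_x=r'$ and $v=\dim Y(\xi_i)\le$ the vertex dimension forced by the residual structure, one gets $r'$ bounded in terms of $r=3$, say $r'\le 6$ or so. Then inside $\Omega_x\cong$ (a legal projection of) $\mathcal{G}_{r'+1,2}(\K)$, since $r'>r=3$, there is an isomorphic copy $\Omega'$ of $\mathcal{G}_{5,2}(\K)$ (using $r'\ge 4$ and Proposition~\ref{noproj}); as $\dim\<\Omega'\>=14$ is too small relative to $\<\Omega_x,Y_x\>$, the subspace $\<\Omega'\>$ meets $Y_x$ in a point $y$. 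By Proposition~\ref{2:projgras} (inside the residue) $y$ lies in the vertex of a member of $\Xi_x$ meeting $\Omega'$ in a sub-Grassmannian, and then Lemma~\ref{a52p} supplies a point of $\Omega_x$ outside $\Omega'$ inside $\<y,\Omega'\>$, which — exactly as at the end of the proof of Lemma~\ref{2:FX}$(iv)$ — contradicts (S2).

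The step I expect to be the main obstacle is getting the bookkeeping on the parameters and dimensions to actually close: one needs the inequality coming from (S3) (an upper bound on $\dim T^F_{\rho(x)}$, hence on $r'$) to be strong enough that, after passing to the copy $\mathcal{G}_{5,2}(\K)$ inside $\Omega_x$, the span $\<\Omega'\>$ is genuinely too small to avoid $Y_x$ — i.e.\ that $\dim\<\Omega_x\>+\dim Y_x$ exceeds $N_x$ by enough. This is the analogue of the numerics in Lemma~\ref{reshalf}, but one level deeper (residue of a residue in the $r=3$ world), so the constants must be tracked carefully; in particular one must confirm $r'\ge 4$ is forced (ruling out $r'=$ some small value where no $\mathcal{G}_{5,2}$ sits inside) and that the legal projection $\Omega_x$ from Proposition~\ref{2:FX} really does span a space of the claimed small dimension. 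Once those inequalities are pinned down, the contradiction to (S2) is obtained verbatim as in Lemmas~\ref{1':FX}$(iv)$, \ref{1:FX}$(iv)$ and \ref{2:FX}$(iv)$ via Lemmas~\ref{s22p} and~\ref{a52p}. Since this is the final reduction feeding into Case~1, concluding it completes the proof that a DSV with $r>1$ always contains a $1$-line, landing us in the scope of Theorem~\ref{2:projuni} and hence Main Result~\ref{main}$(iii)$. \qed
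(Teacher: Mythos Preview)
Your overall architecture is right: reduce to $r=3$ via Lemma~\ref{reshalf} and the case distinction, identify $\Res_X(x)$ as a mutant of $\mathcal{DG}_{r'+1,2}(\K)$ via Theorem~\ref{2:projuni}$(i)$, and then kill it with (S3). But your proposed endgame is both more complicated than necessary and contains concrete errors that would block it.

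First, the numerics: you write $\dim\<\Omega'\>=14$ for $\Omega'\cong\mathcal{G}_{5,2}(\K)$, but $\mathcal{G}_{5,2}(\K)$ spans $\mathbb{P}^9$, not $\mathbb{P}^{14}$; it is $\mathcal{G}_{6,2}(\K)$ that spans $\mathbb{P}^{14}$. Second, Lemma~\ref{a52p} is a statement about a $10$-space inside $\<\mathcal{G}_{6,2}(\K)\>$, so to invoke it you would need $\Omega_x$ to contain a genuine $\mathcal{G}_{6,2}(\K)$, i.e.\ $r'\geq 5$; the case $r'=4$ (where $\Omega_x\cong\mathcal{G}_{5,2}(\K)$) is not covered by your plan at all. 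Third, your dimension bounds from (S3) are left vague (``$\leq 12$ or so'', ``$r'\leq 6$ or so''), and you never pin down $v$.

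The paper's actual argument is a clean two-sided dimension count on $B:=\dim\<\Omega_x\>$, with no need to locate a point of $Y_x$ inside $\<\Omega'\>$ or to invoke Lemma~\ref{a52p}. Precisely: from the half dual Segre structure of $\Res_X(L)$ one gets $v=r'-4$, so a residual symp $\xi_i\in\Xi_x$ spans an $(r'+2)$-space, whence (S3) gives $\dim H_x\leq 2r'+5$. Since $\dim Y_x=r'$, this forces $B\leq r'+4$. On the other hand $B\geq 2r'-2$ (two maximal singular $(r'-1)$-spaces of $\Omega_x$ meeting in a point), so $r'\in\{4,5,6\}$. For $r'\in\{5,6\}$, $\Omega_x$ contains a $\mathcal{G}_{6,2}(\K)$ (Proposition~\ref{noproj}), hence $B\geq 14>r'+4$; for $r'=4$, $\Omega_x\cong\mathcal{G}_{5,2}(\K)$ (again Proposition~\ref{noproj}), hence $B=9>8=r'+4$. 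Either way, contradiction. So: keep your reduction, but replace the Lemma~\ref{reshalf}-style ``find $y\in\<\Omega'\>\cap Y_x$ and contradict (S2)'' with this direct comparison of upper and lower bounds on $B$.
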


\begin{proof}
Suppose for a contradiction that such a DSV $(X,Z,\Xi,\Theta)$ exists. Recall that, according to  the case distinction in the beginning of this section, there were to cases: Case 2(a), which is already excluded by Lemma~\ref{reshalf}; and Case 2(b), in which $\Res_X(x)$ (for any $x\in X$) is a duo-symplectic pre-DSV $(X_x,Z_x,\Xi_x,\Theta_x)$ with parameters $(2,v,r'-1,v')$, containing no $Y_x$-points collinear to all points of $X_x$ and such that each $X_x$-line is contained in a unique member of $\Theta_x$. It  follows from Proposition~\ref{2:projuni}$(i)$ that $X_x$ is the point-set of  a mutant of the dual line Grassmannian $\mathcal{DG}_{r'+1,2}(\K)$. 

Let $x\in X$ be arbitrary. Axiom (S3) yields two members $\xi_1,\xi_2$ of $\Xi_x$ such that $X_x$ is generated by $\xi_1$ and $\xi_2$. Noting that $v=r'-4$, we obtain $\dim \xi_i=r'+2$, $i=1,2$, and hence $\dim\<X_x\> \leq 2r'+5$. Moreover, $\dim(Y_x)=r'$ and projected from $Y_x$, we get an injective projection $\Omega$ of $\mathcal{G}_{r'+1,2}(\K)$ (cf.\ Proposition~\ref{2:projgras}), and we put $B:=\dim \<\Omega\>$. So $\dim\<X_x\>=B+r'+1$. Combined, this yields $B \leq r' +4$. Since $B \geq 2r'-2$ (an injective projection of $\mathcal{G}_{r'+1,2}(\K)$ contains two $(r'-1)$-spaces intersecting each other in precisely a point), we obtain $r' \leq 6$. Recall that $r' > r=3$, so $r' \in \{4,5,6\}$. 
As the variety $\mathcal{G}_{6,2}(\K)$ is contained in  $\mathcal{G}_{7,2}(\K)$ and does not admit legal projections (cf.\ Proposition~\ref{noproj}), we would have $B \geq 14$, contradicting $B \leq r'+4 \leq 10$. We conclude that $r'=4$. By the same token, $\mathcal{G}_{5,2}(\K)$ does not admit legal projections, hence $B=9$,  contradicting $B \leq r'+4=8$.
This shows the proposition.
\end{proof}

From Proposition~\ref{r>2kanni} and Lemma~\ref{2:uniSS} we conclude that, if $(X,Z,\Xi,\Theta)$ is a duo-symplectic DSV with parameters $(r,v,r',v')$, then it contains a $1$-line and hence $r=2$. Main Result~\ref{main}$(iii)$  follows from Proposition~\ref{2:projuni}.

\par\bigskip
As can be double-checked in the structure of the proof (cf.\ Section~\ref{sotp}), this finishes the proof of Main Result~\ref{main}.

\end{document}